\newcommand{\Dom}{\operatorname {Dom}}
\renewcommand{\sp}{{\ \ }}
\renewcommand{\Re}{\operatorname {Re}}
\renewcommand{\Im}{\operatorname {Im}}
\newcommand{\norm}[1]{\left\lVert#1\right\rVert}
\newcommand\LL{{\mathcal L}}
\newcommand\MM{\mathcal{M}}
\newcommand\TT{\mathcal{T}}
\newcommand\HH{\boldsymbol{\Delta}}%{\mathcal{H}}
\newcommand\Hors{{\mathcal H}} 
\newcommand{\bDelta}{{\mathbb{\Delta}}}
\newcommand{\bLambda}{{\mathbb{\Lambda}}}
\newcommand{\bPi}{{\mathbb{\Pi}}}
\newcommand{\bA}{{\mathbb{A}}}
\newcommand\RR{\mathcal{R}}
\newcommand\cRR{R}%{\mathcal{R}}
\newcommand\mRR{\boldsymbol R}%{\mathcal{R}}
\newcommand\prm{{\operatorname{prm}}}
\newcommand\RRc{\mathcal{R}_{\prm}}
\newcommand\cRRc{R_{\prm}}%{\mathcal{R}_{\prm}}
\newcommand\mRRc{ \boldsymbol R_{\prm}}%{\mathcal{R}_{\prm}}
\newcommand\FF{\mathcal{F}}
\newcommand\intr{\operatorname{int}}
\newcommand\Mol{{\boldsymbol{\mathcal{M}ol }}}
\newcommand\seq{{\operatorname{\mathbf s}}}
\newcommand\seqo{{\operatorname{\mathbf s}_{\bullet}}}
\newcommand\oseq{{\bullet}}
\newcommand\Jul{{\mathfrak J}}
\newcommand\Kfilled{{\mathfrak K}}
\newcommand{\bF}{{\mathbf F}}
\newcommand{\bX}{{\mathbf X}}
\newcommand{\bW}{{\mathbf W}}
\newcommand{\bD}{{\mathbf D}}
\newcommand{\bS}{{\mathbf S}}
\newcommand{\bU}{{\mathbf U}}
\newcommand{\bH}{{\mathbf H}}
\newcommand{\bG}{{\mathbf G}}
\newcommand{\fH}{{\mathbb H}}
\newcommand{\bK}{{\mathbf K}}
\newcommand{\bbf}{{\mathbf f}}
\newcommand{\bbw}{{\mathbf w}}
\newcommand{\bbh}{{\mathbf h}}
\newcommand{\btau}{{ \boldsymbol {\tau}}}%{{\mathbf{ \tau}}}
\newcommand{\bbg}{{\mathbf g}}
\newcommand{\Rect}{{\mathfrak R }}
\newcommand{\bgamma}{{\boldsymbol{ \gamma}}}
\newcommand{\str}{{\star}}
\newcommand{\pp}{\mathfrak{p}}
\newcommand{\ee}{\mathbf e}
\renewcommand{\aa}{{\mathfrak{a}}}
\newcommand{\bb}{{\mathfrak{b}}}
\renewcommand{\qq}{\mathfrak{q}}
\renewcommand{\tt}{\mathfrak{t}}
\newcommand{\rr}{\mathfrak{r}}
\renewcommand{\ss}{\mathfrak{s}}
\newcommand{\kk}{\mathfrak{k}}
\newcommand{\nn}{\mathfrak{n}}
\newcommand{\A}{\mathbf A}
\newcommand{\B}{\mathbf B}
\newcommand\wzero{{\widetilde 0}}
\newcommand\wall{{Q}}
\newcommand\inn{{\Omega}}
\newcommand\wbeta{{\widetilde \beta}}
\newcommand\wgamma{{\widetilde \gamma}}
\newcommand\WW{\mathcal{W}}
\newcommand\UU{\mathcal{U}}
\newcommand\BB{\mathcal{B}}
\newcommand\PP{\mathcal{P}}
\newcommand\ext{{\operatorname{ext}}}
\newcommand\new{{\operatorname{new}}}
\newcommand\frb{{\operatorname{frb}}}
\newcommand\crt{{\mathfrak o}}
\newtheorem{claim}{Claim}
\newtheorem{claim2}{Claim}
\newtheorem{claim3}{Claim}
\newcommand{\C}{\mathbb{C}}
\newcommand{\Q}{\mathbb{Q}}
\newcommand{\R}{\mathbb{R}}
\newcommand{\N}{\mathbb{N}}
\newcommand{\Z}{\mathbb{Z}}
\newcommand{\Disk}{\mathbb{D}^1}
\newcommand{\Lbb}{{\mathbb L}}
\newcommand{\Ibb}{{\mathbb I}}
\newcommand{\Sbb}{{\mathbb X}}
\newcommand{\Sby}{{\mathbb Y}}
\newcommand{\Sbz}{{\mathbb T}}
 \newcommand{\Fol}{{\mathcal F}}
  \newcommand{\bFol}{{\boldsymbol{ \mathcal F}}}
\newtheorem{thm}{Theorem}[section]
\newtheorem{cor}[thm]{Corollary}
\newtheorem{lem}[thm]{Lemma}
\newtheorem{keylem}[thm]{Key Lemma}
\newtheorem{prop}[thm]{Proposition}
\newtheorem{rem}[thm]{Remark}
\theoremstyle{remark}
\newtheorem{conj}[thm]{Conjecture}
\numberwithin{equation}{section}
\theoremstyle{definition}
\newtheorem{defn}[thm]{Definition}
\font\nt=cmr7
\def\be{\begin{equation}}
\def\ssk{\smallskip}
\def\nin{\noindent}
\newcommand{\Mandel}{{\mathcal{M}}}
\newcommand{\per}{{\mathrm{per}}}
\newcommand{\bnd}{{\mathrm{bnd}}}
\newcommand{\fast}{{\mathrm{fast}}}
\newcommand{\irr}{{\mathrm{irr}}}
\newcommand{\cp}{{\mathrm{cp}}}
\newcommand{\cyl}{{\mathrm{cyl}}}
\newcommand{\di}{\partial}
\newcommand{\ra}{\rightarrow}
\def\sm{\smallsetminus}
\newcommand{\dist}{\operatorname{dist}}
\renewcommand{\mod}{\operatorname{mod}}
\newcommand{\Diff}{\operatorname{Diff}}
\newcommand{\orb}{\operatorname{orb}}
\newcommand{\id}{\operatorname{id}}
\newcommand{\const}{\mathrm{const}}
\def\loc{{\mathrm{loc}}}
\newcommand{\la}{{\lambda}}
\newcommand{\CC}{{\mathcal C}}
\def\note#1
\begin{document}

\title[Pacmen]{Pacman renormalization\\and self-similarity of the Mandelbrot set \\near Siegel parameters}
\author{Dzmitry Dudko}
%\email{dzmitry.dudko@gmail.com}
\author{Mikhail Lyubich}
\author{Nikita Selinger}
%\address{Mathematisches Institut, Georg-August Universit\"at zu G\"ottingen}
%\thanks{Partially supported by ??? and DFG grant BA4197/6-1}

%\date{October 26, 2016}
\begin{abstract}
 In the 1980s Branner and Douady discovered a surgery relating various limbs of the Mandelbrot set. We put this surgery in the framework of ``Pacman Renormalization Theory'' that combines features of quadratic-like and Siegel renormalizations. We show that Siegel renormalization
periodic points (constructed by McMullen in the 1990s) can be promoted to pacman renormalization periodic points. Then we prove that these periodic points are hyperbolic with one-dimensional unstable manifold resolving a long standing problem. As a consequence, we obtain the scaling laws for the centers of satellite components of the Mandelbrot set near the corresponding Siegel parameters.
\end{abstract}
\maketitle

\setcounter{tocdepth}{1}

\tableofcontents

\section{Introduction}

Renormalization was introduced into Dynamics in the mid 1970s by Feigenbaum,
Coullet and Tresser and since then has established itself as  a powerful tool of penetrating into the small-scale structure of  phase portraits and bifurcation loci.
It turned out to be challenging to develop a rigorous mathematical theory of renormalization (for example, to prove hyperbolicity of the renormalization operator),
but every time when this is achieved, plentiful  deep consequences  reward the effort.

The complex quadratic family provided us with several important renormalization schemes: quadratic-like,  near-parabolic, and Siegel. 
All three are intimately related to the 
observed self-similarity of the Mandelbrot set $\MM$  and to the 
celebrated MLC Conjecture on the local connectivity of $\MM$. 
The conjecture comes in two flavors, ``primitive" and ``satellite".  Development of the quadratic-like renormalization has led to a substantial progress in the primitive case, while the near-parabolic renormalization has given an insight into the satellite situation. 

In this paper we design a new ``pacman'' renormalization  and prove the hyperbolicity of the corresponding renormalization operator. It 
implies the hyperbolicity of the Siegel renormalization for arbitrary periodic combinatorics  (resolving a  problem going back to the early 1980s) and gives an insight into the self-similarity of the Mandelbrot set near the main cardioid. In the 2nd part of this project \cite{DL} we will give further applications  by 
proving local connectivity of the Mandelbrot set at some satellite parameters of bounded type (that had been previously out of reach) and  showing that the corresponding  Julia sets have positive area.

\subsection{Statements of the results}

\begin{figure}[t!]
\[\begin{tikzpicture}[scale=1.3]

\coordinate  (w0) at (-3.9,1.5);

\node (v0) at (-2.9,1.4)  {};

\draw  (v0) ellipse (3.5 and 2);

\node[shift={(0.15,0)}]  at (w0)  {$\alpha$};

\coordinate (v1) at (-2.2,1.3) {};
\coordinate (v2) at (-0.6,2.1) {} {};
\coordinate (v3) at (-0.6,0.6) {} {};

  \draw[ line width=0.5pt,red] 
            (v3) 
            .. controls (v3) and (v1) ..
            (v1)
            .. controls (v1) and (v2) ..
            (v2)
            .. controls (-7.3,4.4) and (-7.1,-1.5) ..
            (v3);

\coordinate (w1) at (-4.7,2.4) {} {};

\draw[line width=0.5pt,red]  (w0)--(w1);

\coordinate (w2) at (-6.1,0.6)  {};
\draw  [line width=0.5pt]  (w0)--(w2);

\draw[line  width=0.5pt,-latex] (-3.1,1.6) .. controls (-2.6,2.1) and (-3.1,3.2) .. (-3.8,2.8);

\node at (-5.5,0.7) {$\gamma_1$};
\node at (-2.8,2.7) {$f$};
\node[red] at (-4.5,1.9) {$\gamma_0$};

\node[red] at (-1.2,1.6) {$\gamma_+$};
\node[red] at (-1.1,0.9) {$\gamma_-$};
\node at (-1.9,4.6) {};
\node[red] at (-3.5,0.8) {$U$};
\node at (-1.6,0.1) {$V$};
\node at (-2.3,1.3) {$\alpha'$};
\end{tikzpicture}\]
\caption{A (full) pacman is a $2:1$ map $f:U
\to V$ such that the critical arc $\gamma_1$ has $3$ preimages: $\gamma_0$, $\gamma_+$ and $\gamma_-$. }
\label{Fg:Pacman}
\end{figure}
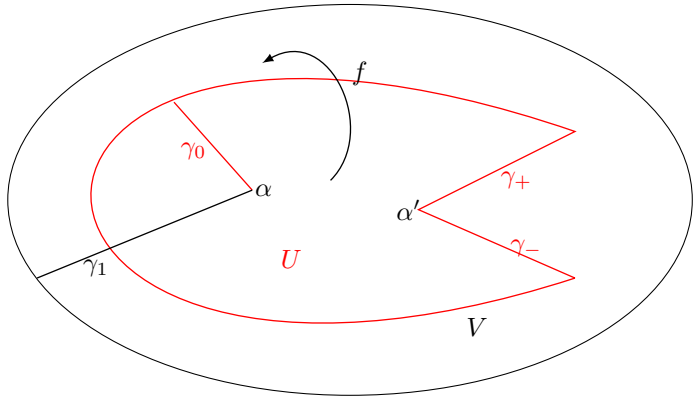

Though the Mandelbrot set $\Mandel$ is highly non-homogeneous, it  possesses some remarkable 
self-similarity features. Most notable is the presence of baby Mandelbrot sets  inside $\Mandel$
which are almost indistinguishable from $\Mandel $ itself. 
The explanation of this phenomenon
is provided by the Renormalization Theory for quadratic-like maps, which has been  a central theme
in Holomorphic Dynamics since the mid-1980s
(see \cites{DH,S,McM2,L:FCT} and references therein).

By exploring the pictures, one can also observe that the Mandelbrot set has self-similarity features near its main cardioid. For instance, as Figure \ref{Fig:GeomScal} indicates, near the (anti-)golden mean  point, 
the $(\pp_n/\pp_{n+2} )$-limbs of $\Mandel$ scale down at rate $\la^{-2n}$,  
where $\la = (1+\sqrt{5} ) /2 $ and $\pp_n$ are the Fibonacci numbers.  The goal of this paper is to develop a renormalization theory responsible 
for this phenomenon.

\begin{figure} 
\centering{\begin{tikzpicture}
  \node at (0,0){\includegraphics[scale=0.6]{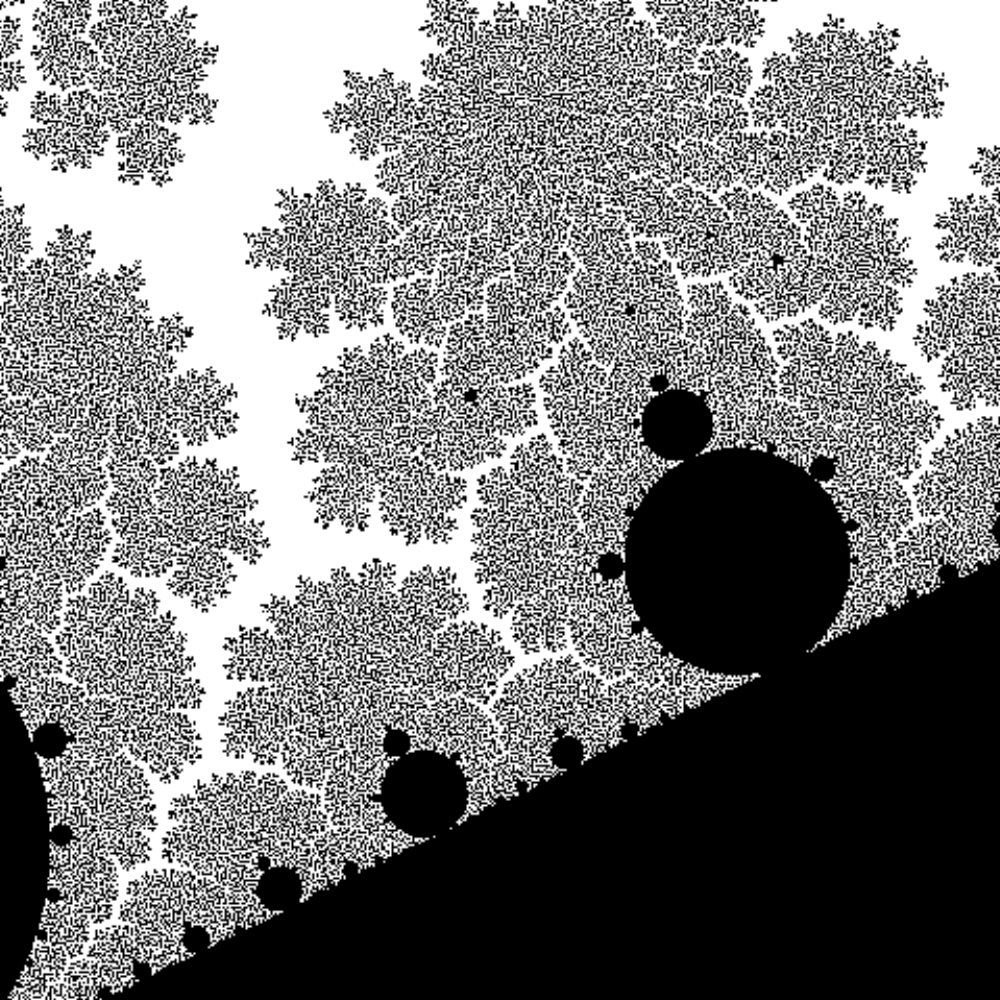}};
   \draw[white] node at (3.1,-2.1) {$21/55$}; 
    \draw[white] node at (1.25,-3) {$55/144$}; 
 \draw[white] node at (1.5,-4) {Golden}; 
      \draw[white] node at (1.5,-4.4) {Siegel map}; 
     \draw[ ->,white] 
            (1.5,-3.72) 
            .. controls (0.75,-3.52) and (0.6,-3.22) ..
            (0.4,-2.92); 
     \draw[white] node at (-0.15,-3.7) {$34/89$};
      \node at (0,-9.2){\includegraphics[scale=0.395]{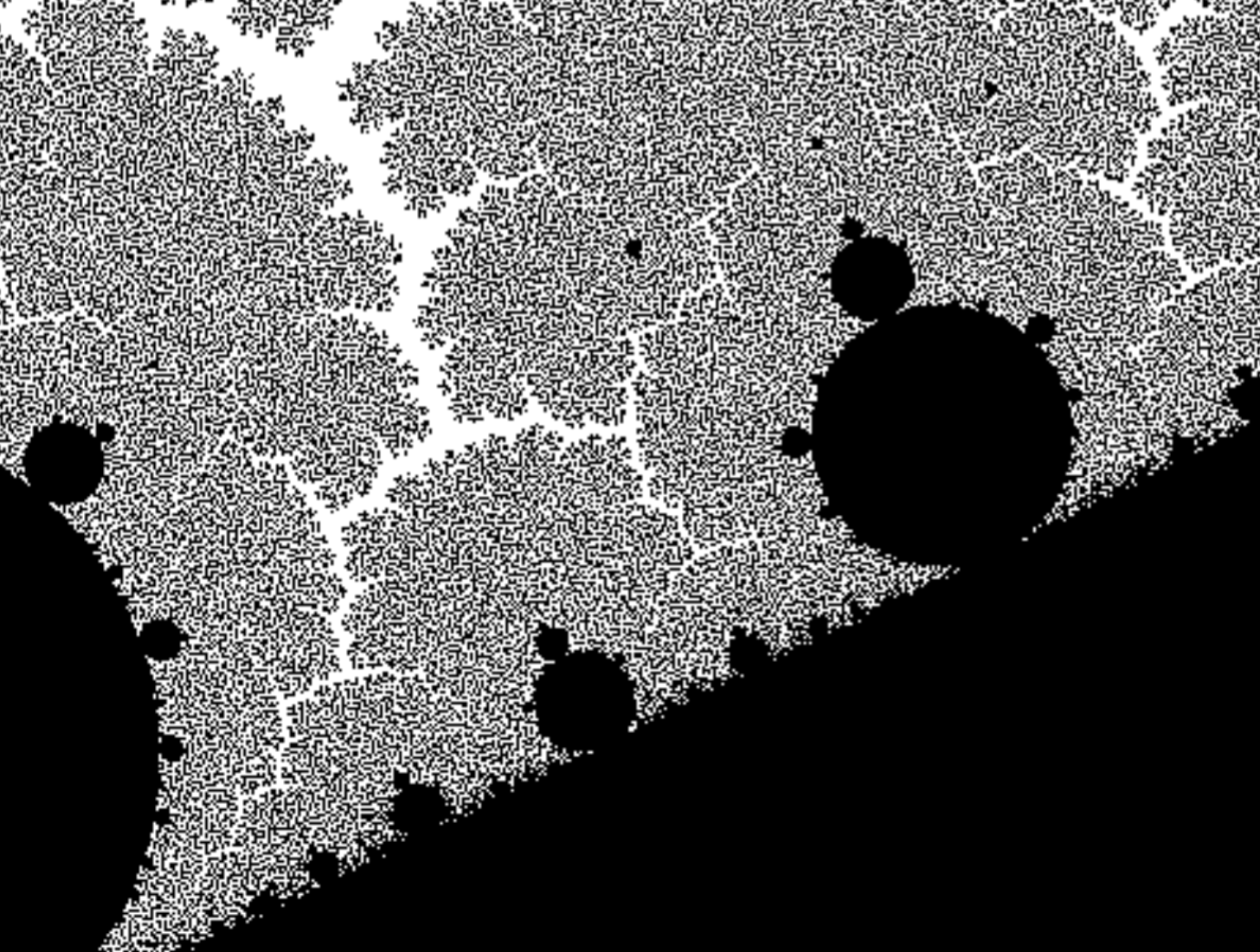}};  
   \draw[white] node at (3.3,-10.2) {$55/144$}; 
 \draw[white] node at (1.7,-12.05)  {Golden}; 
      \draw[white] node at (1.7,-12.45)  {Siegel map}; 
     \draw[ ->,white] 
            (1.7,-11.85) 
            .. controls (0.95,-11.65) and (0.8,-11.25) ..
            (0.6,-11.05);    
   
    \draw[white] node at (1.57,-11.05) {$144/377$}; 
     \draw[white] node at (0.27,-11.7) {$89/233$};
     \draw[white] node at (-4.5,-11.5) {$34/89$};

 \end{tikzpicture}}
\caption{Limbs $8/21,21/55, 55/144, 144/377,\dots $ scale geometrically fast on the right-hand side of the (anti-)golden Siegel parameter, while limbs $5/13,13/34,34/89, 89/233,\dots$ scale geometrically fast on the left-hand side. The bottom picture is a zoom of the top picture.}
\vspace{128in} \label{Fig:GeomScal}
\end{figure}

Our renormalization operator acts on the space of ``pacmen'', 
which are holomorphic  maps $f:  (U,\alpha) \ra (V,\alpha)$ between two nested  domains, see Figure \ref{Fg:Pacman},
such that  $f: U\setminus \gamma_0\ra V\sm \gamma_1$ is a double branched covering,
where $\gamma_1$ is  an arc connecting $\alpha$ to $\di V$. 
The {\em pacman renormalization} $\RR f$ of $f$ (see Figure~\ref{Fg:PacmanRenorm}) is defined by removing the sector $S_1$
 bounded by $\gamma_1$ and its image $\gamma_2$,and taking the first return map to the remaining space; see \S\ref{sec:pacmen} for precise definitions. Note that it acts on the rotation numbers as
\begin{equation}
\label{eq:ActOnRotNumb}
\theta\longrightarrow \frac{\theta}{1-\theta}\sp \mbox{ if }0\le \theta \le \frac{1}{2};\quad \sp \theta\longrightarrow\frac{2\theta-1}{\theta} \sp \mbox{if }\frac{1}{2}\le \theta\le 1; 
\end{equation}
the graph of~\eqref{eq:ActOnRotNumb} is shown on Figure~\ref{Fg:eq:ActOnRotNumb}, see Appendix~\ref{ss:ap:SecRen}, in particular, \eqref{eq:R_prm}.

Let us denote by $\Theta_\per$ the set of {\em  combinatorially periodic}
rotation numbers; i.e., rotation numbers periodic under~\eqref{eq:ActOnRotNumb}. Numbers in $\Theta_\per$ belong to the cycles of numbers with periodic continued fraction expansion.

A pacman is called {\em Siegel}  with rotation number $\theta$ if $\alpha$ is a 
Siegel fixed point with  rotation number $\theta$ whose closed Siegel disk is a quasidisk compactly contained in $U$ (subject of extra technical assumption, see Definition~\ref{dfn:SiegPcm}).

\begin{thm}[Hyperbolicity of the Renormalization] \label{renorm thm}
For any rotation number $\theta\in \Theta_\per$,  
the pacman renormalization operator $\RR$ has a unique periodic  point $f_\str$ which is a
Siegel pacman with rotation number $\theta$.    
This periodic point is hyperbolic with one-dimensional unstable manifold. 
Moreover, the stable manifold of $f_\str$ consists of all Siegel pacmen. 
\end{thm}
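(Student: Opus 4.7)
The plan has four components: existence of a periodic point, uniqueness, hyperbolicity with one‑dimensional unstable manifold, and the characterization of the stable manifold as the set of all Siegel pacmen.

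\textbf{Existence via promotion from Siegel renormalization.} I would start from McMullen's result that the \emph{sector} (Siegel) renormalization operator $\RRsec$ has a periodic point $g_\str$ for every rotation number $\theta$ that is periodic under the Gauss-type map \eqref{eq:ActOnRotNumb} (as recalled in the appendix). The idea is that a pacman renormalization is essentially a sector renormalization together with the auxiliary data of a pacman domain $U\to V$ that houses the invariant sector. Starting from $g_\str$, I would construct a pacman $f_\str$ by cutting along the arcs $\gamma_0,\gamma_1,\gamma_\pm$ in the combinatorial pattern dictated by the rotation number and gluing back by the dynamics on the boundary of the removed sector. The $\RR$-periodicity of $f_\str$ follows from the $\RRsec$-periodicity of its restriction to a sector neighborhood of $\alpha$ together with compatibility of the surgery across periods.

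\textbf{Uniqueness and stable manifold.} Uniqueness inside the space of Siegel pacmen reduces to uniqueness of McMullen's $\RRsec$-periodic point plus the fact that, for a fixed rotation number, the pacman geometry outside the Siegel disk is determined up to conjugacy by the inner dynamics via a Branner–Douady type surgery. Granted contraction (see below), this same argument shows that for any Siegel pacman $f$ of rotation number $\theta$, the iterates $\RR^n f$ converge to the periodic cycle of $f_\str$, so the stable manifold contains—and hence equals—the set of all Siegel pacmen.

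\textbf{Hyperbolicity.} I would follow the Sullivan–McMullen–Lyubich scheme in three steps. First, a \emph{compactness/a priori bounds} step: using the Siegel quasidisk hypothesis and the distortion provided by the pacman double-cover structure, show that $\RR$ maps a Banach neighborhood of $f_\str$ into a space of maps with strictly larger domain of analyticity; hence $D\RR(f_\str)$ is compact, and its spectrum is discrete outside $0$. Second, identify the \emph{unstable direction}: the rotation number $\theta$ is a continuous functional on the space of pacmen and is transformed by $\RR$ according to \eqref{eq:ActOnRotNumb}, which is an expanding Gauss-type map at every periodic point. Deforming $f_\str$ along a one-parameter family that changes $\theta$ gives a genuine expanding direction, and this accounts for one eigenvalue of $D\RR(f_\str)$ of modulus $>1$. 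Third, \emph{contraction on the transverse slice} of fixed rotation number: here I would transport McMullen's Teichmüller contraction for Siegel renormalization to the pacman setting, using that the surgery producing a pacman from a sector renormalization fixed point is quasiconformally rigid. Combined with compactness, this forces all remaining spectrum of $D\RR(f_\str)$ to lie in the open unit disk, giving hyperbolicity with exactly one-dimensional unstable manifold.

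\textbf{Main obstacle.} The most delicate step is transporting McMullen's contraction from sector to pacman renormalization while simultaneously controlling the expansion in the rotation-number direction; in particular, one must show that the pacman surgery does not create spurious neutral or expanding eigenvalues transverse to the $\theta$-direction. I expect this to rely on a careful analysis of the tangent space decomposition into a ``rotation-number'' factor and a ``conformal'' factor, combined with quasiconformal rigidity of Siegel disks to rule out nontrivial infinitesimal deformations inside the slice of fixed $\theta$. The uniqueness of the periodic point and the description of the stable manifold are then essentially corollaries of hyperbolicity plus the surgery dictionary between pacman and sector renormalizations.
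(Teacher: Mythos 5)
Your plan for existence, uniqueness, and the description of the stable manifold tracks the paper's strategy closely: promote McMullen's Siegel renormalization periodic point to a pacman by cutting along a sector and gluing (this is Lemma~\ref{lem:FixedSiegelPacman}, relying on the ``improvement of the domain'' in Theorem~\ref{thm:RenFixedPoint}), use hybrid equivalence of Siegel pacmen with the same rotation number (Theorem~\ref{thm:HyrbrEquivOfPacm}), and invoke McMullen's exponential convergence (Theorem~\ref{thm:ExpConv}). Your observation that compactness of $D\RR$ plus expansion of the rotation-number functional yields \emph{at least} one unstable direction is also correct and corresponds to Yampolsky's earlier result cited in the introduction.

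The gap is in your third step, where the unstable dimension must be shown to be \emph{exactly} one. You propose to ``rule out nontrivial infinitesimal deformations inside the slice of fixed $\theta$'' via ``quasiconformal rigidity of Siegel disks,'' but that rigidity applies only once you already know the maps in question are Siegel. A priori, a deformation of $f_\str$ inside $\WW^u$ that fixes the multiplier $\lambda_\str$ need not yield a Siegel pacman at all: the fixed point could become Cremer, or the Siegel disk could exist but fail to be a quasidisk compactly contained in $U$, and then the rigidity argument has nothing to grab onto. This is precisely the difficulty, and the paper disposes of it by a long chain of preparations that is absent from your sketch: control of pullbacks (Key Lemma~\ref{keylem:ContrOfPullbacks}), $\sigma$-proper maximal prepacmen on $\WW^u$ (Theorem~\ref{thm:MaxCommPair:long}), the fact that parabolic maps on $\WW^u$ trap their critical orbit in an attracting valuable flower (Corollary~\ref{cor:wHp:has:0}), and a holomorphic-motion argument \`a la Yampolsky passing from parabolic to irrational multiplier (Corollaries~\ref{cor:UnifMotOfP} and~\ref{cor:UnifMotOfP:2}). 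Only after this does one conclude that the slice $\FF(\lambda_\str)\cap\WW^u$ consists of Siegel maps, hence is contained in the stable manifold, hence is zero-dimensional (Corollary~\ref{cor:WWu has dim 1}). I do not see how your proposed tangent-space decomposition into ``rotation-number'' and ``conformal'' factors substitutes for this global a priori control of the critical orbit. A secondary omission: to go from orbit-wise contraction on the Siegel locus to spectral hyperbolicity (no eigenvalues on the unit circle), the paper invokes the Small Orbits Theorem of \cite{L:FCT}, which your sketch does not address.
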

\noindent The problem of hyperbolicity goes back to the work of physicists \cites{Wi, MN,MP}, see~\S\ref{ss:hist comments} for the description of the previous progress in the area.

\begin{cor}[Stability of Siegel maps]
Let $f$ be a Siegel pacman with rotation number $\theta\in \Theta_\per$. Consider the space $N_\theta(f)$ of maps sufficiently close to $f$ whose $\alpha$-fixed point has rotation number $\theta$. Then the Siegel disk of $g$ depends continuously on $g\in N_\theta(f)$.
\end{cor}
\noindent In fact, the stability of Siegel disks on the unstable manifold is one of the main steps in the proof of Theorem~\ref{renorm thm}.

\begin{figure}
\begin{tikzpicture}
      \draw[->] (-0.5,0) -- (4.8,0)node[right] {$\theta$} ;%
      \draw[->] (0,-0.5) -- (0,4.8) ;%node[above] {$y$}
      \filldraw (0,0) circle (0.04cm);
      \node[ below left] at (0,0) {$0$};
      \filldraw (2,0) circle (0.04cm);
      \node[ below ] at (2,0) {$0.5$};
      \filldraw (4,0) circle (0.04cm);
      \node[ below ] at (4,0) {$1$};
      \filldraw (0,4) circle (0.04cm);
      \node[ left ] at (0,4) {$1$};
      \draw[dashed] (0,4) -- (4,4);
     \draw[dashed] (4,4) -- (4,0);
      \draw[scale =4, domain=0:0.5,smooth,variable=\x,red] plot ({\x},{\x + \x*\x +\x*\x*\x +\x*\x*\x*\x+\x*\x*\x*\x*\x+\x*\x*\x*\x*\x*\x+\x*\x*\x*\x*\x*\x *\x
      +\x*\x*\x*\x*\x*\x *\x*\x});    
      
       \draw[scale =4, domain=0.5:1,smooth,variable=\x,red] plot ({\x},{2-1/\x});

  %    \draw[scale=0.5,domain=-3:3,smooth,variable=\x,blue] plot ({\x},{\x*\x});
 %     \draw[scale=0.5,domain=-3:3,smooth,variable=\y,red]  plot ({\y*\y},{\y});
    \end{tikzpicture}
\caption{The map~\eqref{eq:ActOnRotNumb} induces a $2:1$ map on $\R/\Z$.}
\label{Fg:eq:ActOnRotNumb}
\end{figure}

 Let $c(\theta)$, $\theta\in \R/\Z$,  be the parameterization of the main cardioid $\CC$ by the rotation number 
$\theta$. At any parabolic point $c(\pp/\qq)$, there is a satellite hyperbolic component $\HH_{\pp/\qq}$ of $\Mandel$
  attached to $c(\pp/\qq)$.  Let $a_{\pp/\qq}$ be the {\em center} of this component, i.e., the unique superattracting
parameter inside $\HH_{\pp/\qq} $. 

In this paper, 
notation $ \alpha_n \sim \beta_n$  will mean that  $\alpha_n/\beta_n \to \const \not= 0 $. 

\begin{thm}[Scaling Theorem]\label{scaling thm}
  Let $\theta\in \Theta_\per$ be a  rotation number,
and let $\pp_n/\qq_n$ be its continued fraction approximands. 
Then 
\[
  | c(\theta)  - a_{\pp_n/\qq_n}  | \sim \frac 1 {\qq_n^2 } . 
\]  
\end{thm}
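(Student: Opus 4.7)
The plan is to convert the hyperbolic self-similarity of $\RR^p$ at the Siegel periodic pacman $f_\str$ (Theorem~\ref{renorm thm}) into the stated scaling on the parameter plane, by embedding the quadratic family into the space of pacmen. For $c$ in a small neighborhood of $c(\theta)$, I would associate to the quadratic $P_c(z)=z^2+c$ a pacman $f_c$ by restricting $P_c$ to a suitably chosen pacman domain anchored at the non-escaping fixed point $\alpha(c)$. This produces a holomorphic one-parameter family $\Phi\colon c\mapsto f_c$; at $c=c(\theta)$ the pacman $\Phi(c(\theta))$ is Siegel with rotation number $\theta$ and therefore lies on the stable manifold $W^s(f_\str)$ by Theorem~\ref{renorm thm}.

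Because $W^s(f_\str)$ is complex codimension one --- the rotation numbers periodic under~\eqref{eq:ActOnRotNumb} are isolated relative to the continuous rotation-number function on the main cardioid --- the curve $\Phi$ is transverse to $W^s(f_\str)$ at $c(\theta)$, realizing the quadratic family as a local unstable transversal. The centers $a_{\pp_n/\qq_n}$ of the satellite components $\HH_{\pp_n/\qq_n}$ correspond under $\Phi$ to specific points of this transversal, combinatorially characterized by the attachment of a satellite hyperbolic component to the main cardioid at angle $\pp_n/\qq_n$. The crucial combinatorial input is that the action~\eqref{eq:ActOnRotNumb} is conjugate (up to the two branches) to a shift of the continued-fraction expansion of $\theta$, so a full period of $\RR$ shifts the CF by $p$ positions and sends a satellite center of type $\pp_n/\qq_n$ to one of type $\pp_{n-p}/\qq_{n-p}$.

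Iterating $\RR^p$ exactly $k=\lfloor n/p\rfloor$ times then brings $a_{\pp_n/\qq_n}$ to a parameter of bounded combinatorial type, a bounded distance from $c(\theta)$ along the transversal. By hyperbolicity of $\RR^p$ with one-dimensional unstable direction and multiplier $\la$, $|\la|>1$,
\[
|c(\theta)-a_{\pp_n/\qq_n}| \asymp |\la|^{-k} \asymp |\la|^{-n/p}.
\]
To convert this into $\qq_n^{-2}$ it suffices to identify $|\la|^{1/p}=\mu^2$, where $\mu=\lim\qq_n^{1/n}$ is the Perron eigenvalue of the continued-fraction matrix of $\theta$. I would obtain this by applying the same expansion argument to the parabolic roots $c(\pp_n/\qq_n)$ on $\CC$: the natural rotation-number parametrization of the main cardioid satisfies $|c(\theta)-c(\pp_n/\qq_n)|\asymp|\theta-\pp_n/\qq_n|\asymp \qq_n^{-2}$, and comparing with the renormalization scaling $|\la|^{-n/p}$ pins down $|\la|^{1/p}=\mu^2$.

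The main obstacle is the combinatorial bookkeeping: identifying $a_{\pp_n/\qq_n}$ under $\Phi$ with a specific point of the transversal whose image under $\RR^p$ is genuinely the satellite center of the shifted combinatorial type. This requires showing that pacman renormalization preserves the satellite structure of the main cardioid --- sending satellite hyperbolic components of $\Mandel$ to satellite hyperbolic components with the correct attachment combinatorics --- together with a straightening-type argument relating the pacmen $f_c$ to quadratic polynomials in a Mandelbrot-like parameter slice. The transversality of $\Phi$ to $W^s(f_\str)$, while standard in spirit, also demands an analytic verification that the rotation number at $\alpha(c)$ varies non-degenerately with $c$, but this is considerably easier than the combinatorial step above.
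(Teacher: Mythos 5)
Your high-level plan follows the same strategic contour as the paper: realize the quadratic family as an unstable transversal through $f_\str$, use hyperbolicity of $\RR$ to convert the multiplier of $\Diff\RR$ along $\WW^u$ into a parameter-plane scaling rate, and identify that multiplier with the square of the growth rate of the $\qq_n$ via the cardioid parametrization. That skeleton is correct, and your observation that the unstable eigenvalue must equal $(\cRRc^{\kk})'(\theta_\str)$ because $\WW^u$ is parametrized by the multipliers of $\alpha$-fixed points is exactly the mechanism used.

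The genuine gap is that you have labeled the hard part ``combinatorial bookkeeping'' when it is the core analytic content of Section~\ref{s:ScalinThm}, and your sketch does not resolve it. To compare $a_{\pp_n/\qq_n}$ to $c(\theta)$ via iterates of $\RR$, you need the centers to be represented coherently along the unstable manifold, which means two hard facts. First, for \emph{parabolic} parameters, one needs a global forward-invariant ``valuable flower'' around $\alpha$ that contains the critical point and that survives arbitrarily many anti-renormalizations (Theorem~\ref{thm:GlobParabFlower}); this is not a formal consequence of hyperbolicity but requires the control of Julia rays, bubble chains and the wall/fence machinery of Sections~\ref{s:MaxParPrep}, \ref{ss:ValFlower}, and Appendix~\ref{s:SectRenAndAntiren}. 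Second, the center $a_{\pp_n/\qq_n}$ is a \emph{superattracting} parameter where $\alpha$ is repelling, so it does not interact with the pacman renormalization at $\alpha$ in any direct way; one must deform from the root $c(\pp_n/\qq_n)$ to the center by a quasiconformal deformation (and a qc-surgery for the superattracting endpoint) carried out globally in the maximal prepacman plane, producing a $\RR$-invariant codimension-one lamination $\bFol$ whose leaves are ``constant multiplier of the satellite cycle'' slices (Sections~\ref{ss:ScalThm:QC-deform}, \ref{ss:ScalThm:QC-surj}). Your statement that $\RR^p$ ``brings $a_{\pp_n/\qq_n}$ to a parameter of bounded type at bounded distance from $c(\theta)$'' is precisely what these constructions certify, and without them the argument assumes the estimate $|a_{\pp/\qq}-c(\pp/\qq)|=O(\qq^{-2})$ rather than proves it.

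Two smaller points. The conclusion of Theorem~\ref{scaling thm} is $\sim$ (ratio tends to a nonzero constant), not $\asymp$; pure hyperbolicity plus base-case boundedness gives $\asymp$, whereas the stronger $\sim$ comes from the asymptotic conformality of the holonomy of the lamination $\bFol$, invoked in the paper as \cite[Appendix 2, the $\lambda$-lemma (quasi-conformality)]{L:FCT}. Also, the map $\Phi\colon c\mapsto f_c$ as a direct restriction of $P_c$ to a pacman domain does not land at $f_\str$ when $c=c(\theta)$: a preliminary pacman renormalization $\RR_2$ of the Siegel map (Corollary~\ref{cor:SP embeds in SM}, Lemma~\ref{lem:R_2 of Sieg map}) is needed to obtain a pacman in the Banach ball $\BB$, and it is the image of the quadratic slice under $\RR_2$, not the slice itself, that is shown transverse to $\bFol$.
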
   
\noindent See Theorem~\ref{thm:ScalThm} for a more precise version of the Scaling Theorem. In particular, Theorem~\ref{thm:ScalThm} is stated for any one-parameter space witnessing the bifurcation of a Siegel map.

 Self-similarity of the Mandelbrot set near the (anti-)golden parameter is illustrated on Figure~\ref{Fig:GeomScal}. Theorem~\ref{thm:ScalThm} says that the centers of satellite hyperbolic components indeed scale as the picture predicts. In~\cite{DL} we combine methods of Theorem~\ref{scaling thm} with methods and ideas from transcendental dynamics to obtain a scaling law for a much larger class of parameters. The self-similarity of the whole limbs is still an open question. This problem is closely related to the problem of the realization of parameter rays for the transcendental family on the unstable manifold.

We believe that our methods allow to prove Theorems~\ref{renorm thm} and~\ref{scaling thm} in the case of rotation numbers of
{\em bounded} type, the details will appear elsewhere.  We  conjecture that an analogous statement is true for arbitrary combinatorics, which  would provide us with a good geometric control 
of the {\em molecule} of the Mandelbrot set (see Appendix~\ref{S:ApMolec}).

\subsection{Outline of the proof}
\label{ss:out of the proof}
We let:
\newline \ssk\nin -- $\ee(z)=e^ {2\pi i z};$ 
\newline \ssk\nin 
-- $p_\theta : z\mapsto \ee(\theta) z + z^2$;
\newline\ssk\nin
-- $\PP_\theta$ be the set of pacmen with rotation number $\theta\in \R/\Z$;
\newline \ssk\nin
--  $\Theta_\bnd$ be the set of {\em  combinatorially bounded}
rotation numbers (i.e., rotation numbers with  continued
fraction expansion where all its coefficients are bounded).

Let us first review Siegel Renormalization theory which is the most relevant to our results; for extra historical comments for the progress in this program see~\S\ref{ss:hist comments}.

Any holomorphic map  $f : (U_f, \alpha) \ra (\C, \alpha)$  whose fixed
point $\alpha$ is neutral with rotation
number $\theta\in \Theta_\per$ is locally linearizable near $\alpha$.
Its maximal completely invariant linearization domain $Z_f$ is
called the {\em Siegel disk} of $f$.  
If $\overline Z_f$ is a quasidisk compactly contained in $U_f$ whose boundary
contains exactly one  critical point, then $f$ is called  
a  (unicritical) {\em Siegel map}. 
For any $\theta\in \Theta_\per $, the  quadratic polynomial $p_\theta $ and
any Siegel pacman give examples of Siegel maps (see \S\ref{s:SiegPacm}, in particular Theorem~\ref{thm:SiegJulSet P is LocCon}).

There are two versions of the Siegel Renormalization theory: 
 {\em holomorphic  commuting pairs} renormalization and 
the {\em cylinder renormalization}.  The former was developed by McMullen \cite{McM3} (see also an earlier work by Stirnemann~\cite{St})
who proved, for any rotation number $\theta\in \Theta_\per$,  the existence of a renormalization periodic point $f_\str$
and the exponential convergence  of the renormalizations $\RR_\cp^n (p_\theta) $  to the orbit of $ f_\str$.  
McMullen has also studied the maximal domain of analyticity for $f_\str$.

The cylinder renormalization $\RR_\cyl$  was introduced by
Yampolsky who showed that $f_\str$ can be transformed into a periodic point for $\RR_\cyl$ 
with a {\em finite codimension} stable manifold $\WW^s(f_\str)$  
and {\em at least one-dimensional} unstable manifold $\WW^u(f_\str)$ 
\cite{Ya-posmeas}. Inou and Shishikura \cite{IS} established hyperbolicity of cylinder renormalization for high type Siegel parameters and Gaidashev and Yampolsky \cite{GY} proved it for the golden rotation number (see~\S\ref{ss:hist comments}). However, the general conjecture that $f_\str$ is hyperbolic 
with $\dim \WW^u(f_\str)=1 $ remained open.

Let us now select our favorite $\theta\in \Theta_\per$;
it is fixed under some iterate of~\eqref{eq:ActOnRotNumb}.  
Then  the  corresponding iterate of the Siegel
renormalization fixes $f_\str$, so below we will refer to the
$f_\str$ as ``renormalization fixed points''.

We start our paper (\S \ref{sec:pacmen}) by discussing an interplay between 
a ``pacman'' and a ``prepacman''. The latter (see Figure~\ref{Fg:Prepacman}) is a piecewise
holomorphic map with two branches $f_\pm : U_\pm \ra S$, one of which
is univalent while the other has ``degree $1.5$'', with a single
critical point. Such an object can be obtained from a pacman by
cutting along the critical
arc $\gamma_1$. For a technical reason, we ``truncate''  both pacmen and prepacmen
by removing a small disk around the co-$\alpha$ point, see Figure~\ref{Fg:TruncPacman}.

Then we define, in three steps,  the pacman renormalization.
 First we define a ``pre-renormalization'' (Definition~\ref{den:PacmRenorm}) as a prepacman obtained 
as the first return map to an appropriate sector $S$. 
Then, by gluing the boundary arcs of $S$,
 we obtain an ``abstract'' pacman. 
Finally, we embed this pacman back to the complex plane.

There are some choices involved in this definition. We proceed to show that near any
renormalizable pacman $f$, the  choices can be made  so that we
obtain a holomorphic operator $\RR$  in a Banach ball (Theorem~\ref{thm:RenOper}).

In Section~\ref{s:SiegPacm}
we analyze the structure of Siegel pacmen $f$. 
The key result is  that any Siegel  map 
can be renormalized (in an appropriate sense) to a Siegel pacman (Corollary~\ref{cor:SP embeds in SM}), where the rotation number changes as an iterate of~\eqref{eq:ActOnRotNumb}, see Lemma~\ref{lem:eq:S3:R_prm}.

In case when  $f=f_\str$  is the Siegel renormalization fixed point,
this provides us with the pacman renormalization fixed point (\S \ref{ss:FixedREnormPts}). 
% We conclude that  the Siegel renormalization fixed point $f=f_\str$ 
% admits a pacman analytic continuation. 
Moreover, the pacman  renormalization $\RR$  becomes a compact holomorphic operator in a
Banach neighborhood of $f_\str$, with at least one-dimensional  unstable
manifolds $\WW^u(f_\str)$, see Theorem~\ref{thm: An Pacm  self ope}. 

% Along the lines, we  define the filled Julia set $K(f)$  as the set of
% non-escaping points, and the Julia set $Jul(f)$ as its boundary.
Along the lines, we  introduce and discuss the  associated  geometric objects (\S\ref{ss:LocConnK_p}):
the  pacman ``Julia sets'' $\Kfilled(f)$ and  $\Jul(f)$,
``bubble chains'',  and ``external rays''. 
We also use them to show, via the Pullback Argument,
that any two combinatorially equivalent Siegel pacmen are hybrid equivalent (Theorem~\ref{thm:HyrbrEquivOfPacm}),
i.e.~there is a qc conjugacy between them which is 
conformal on the Siegel disk.

For a Siegel pacman $f_\str$, any renormalization prepacman can be
``spread around'' to provide us with a dynamical tiling of a
neighborhood of the Siegel disk, see~\S\ref{ss:RenTiling} and Figures~\ref{Fg:Prepacman:OrbitOfUi} and~\ref{Fig:RenormTiling:2levels}. Moreover, this tiling is robust under perturbations of $f_\str$, even when the rotation number gets changed, see Theorem~\ref{thm:ComPsConj}.
In this case, the domain filled with the tiles can be used as  the central
``bubble'' for  the perturbed  map $ f$, replacing for many purposes the original Siegel 
disk $Z_\str$  of $f_\str$.  In particular, it allows us to control long-term
$ f^n$-pullbacks of small disks $D$ centered at $\di Z_\str$
(making sure that these pullbacks are not   ``bitten''  %    ``swallowed'' 
 by the pacman mouth). This is the crucial technical result of this paper (Key Lemma~\ref{keylem:ContrOfPullbacks}).

When $f_\str$ is the renormalization fixed point and the perturbed
map $ f$ belongs to its unstable
manifold $\WW^u(f_\str)$ then we can apply this construction to the
anti-renormalizations $\RR^{-n} f$ . This allows us to
show that  the maximal holomorphic extension of the associated
prepacman is a $\sigma$-proper map $ \bF=(\bbf_\pm: \bX_\pm \ra \C)$,
where $\bX_\pm$ are plane domains  (Theorem~\ref{thm:MaxCommPair:short}).

Applying this result to a parabolic map $f\in W^u(f_\str)$,
we conclude that its attracting Leau-Fatou flower  
contains the critical point, so the critical point is non-escaping 
under the dynamics  (Corollary~\ref{cor:wHp:has:0}).

After this preparation,  we are ready for proving Theorem~\ref{renorm thm}, see \S\ref{s:HypThm}.
Assuming for the sake of contradiction that  $\dim \WW^u (f_\str) >1$,
we can find a holomorphic curve $\Gamma_\str \subset  \WW^u (f_\str) $
through $f_\str$ 
consisting of Siegel pacmen with the same rotation number. 
 Approximating  this curve  with  parabolic curves $\Gamma_n\subset \WW^u(f_\str)$,
we conclude that the critical point is non-escaping for $f\in
\Gamma_\str$.
This allows us to apply Yampolsky's holomorphic motions argument \cite{Ya-posmeas} to show that
$\dim \WW^u (f_\str) =1$.

Finally, using the Small Orbits argument of \cite{L:FCT},
we prove that $f_\str$ is hyperbolic under the pacman renormalization,
completing the proof. 

Along the lines we prove the stability of Siegel maps (see Corollary~\ref{cor:Sieg disk depends cont}): if a small perturbation of a Siegel map $f$ fixes the multiplier of the $\alpha$-fixed point, then the new map $g$ is again a Siegel map. Moreover, the Siegel quasidisk $\overline Z_g$ is in a small neighborhood of $\overline Z_f$.

To derive Theorem \ref{scaling thm} from Theorem \ref{renorm thm}, 
we need to show that the centers of the hyperbolic components in question are represented on the
unstable manifold $\WW^u(f_\str)$. We first show that the roots of these components
are represented on  $\WW^u(f_\str)$
 which requires good control  of the corresponding pacmen Julia sets
 (see \S \ref{ss:ValFlower}), and robustness of the renormalization with respect to a particular choice of cutting arcs, see Appendix~\ref{s:SectRenAndAntiren}.
Then we use quasiconformal deformation techniques to reach the
desired centers from the parabolic points,
see  \S\ref{s:ScalinThm}.

Throughout the paper we use Appendix~\ref{s:SectRenAndAntiren} containing a topological preparation justifying robustness
 of the anti-renormalizations with respect to the choice of cutting
 arcs.

In Appendix~\ref{S:ApMolec} we formulate the Molecule Conjecture on existence of a pacman hyperbolic operator with the one-dimension unstable foliation whose horseshoe is parametrized by the boundary of the main molecule of the Mandelbrot set. This conjecture would imply the MLC for all infinitely renormalizable parameters of satellite type.

\subsection{More historical comments}
\label{ss:hist comments}

% Renormalization ideas were introduced to dynamics in the mid-1970s by Feigenbaum, Coullet and Tresser.  
 Renormalization of Siegel  maps appeared first in the work by physicists (see \cites{Wi, MN,MP})
as a mechanism for self-similarity of the golden mean Siegel disk near the critical point.
A few years later, Douady and Ghys  discovered a surgery that reduces
previously unaccessible  geometric problems for Siegel disks%
\footnote{The original  surgery applies to Siegel  polynomials only. Its
  extension to general Siegel maps leads to {\em quasicritical} circle maps,
  see \cite{AL-posmeas}.}  
 of bounded type to much better understood problems for critical circle maps. 
This led, in particular, to the local connectivity result for Siegel Julia sets of bounded type (Petersen \cite{Pe}) and
 also  became a  key to the mathematical study of the Siegel renormalization.
In particular,  McMullen-Yampolsky theory mentioned above (see~\S\ref{ss:out of the proof}) is based upon this machinery. 

Holomorphic commuting pairs (as well as almost commuting holomorphic pairs)
were studied by Stirnemann. In~\cite{St}, he gave a computer-assisted proof of the existence of a renormalization fixed point with a golden-mean Siegel disk and showed that the  quadratic
polynomial with a golden-mean Siegel disk converges to that fixed point. Recently, Gaidashev and Yampolsky gave a computer-assisted proof of the hyperbolicity of the renormalization for the golden mean rotation number~\cite{GY}.

On the other hand, in the mid 2000's, Inou and Shishikura proved the existence and hyperbolicity
 of Siegel renormalization fixed points {\em of  sufficiently high combinatorial type} using a completely
different approach, based upon the parabolic perturbation theory \cite{IS}. For a different viewpoint on this result see~\cite{Ya-posmeas}. 

The proof in~\cite{IS} involves certain computer estimates. A computer-free proof of hyperbolicity in high type was presented by Cheritat~\cite{Che:nearpar}. His approach also gives a proof of hyperbolicity for high type in the unicritical case $z^d+c$.

The Siegel renormalization theory achieved further prominence when it was used for 
constructing examples of Julia sets of positive area 
(see Buff-Cheritat \cite{BC} and Avila-Lyubich \cite{AL-posmeas}). 

A different line of research emerged in the 1980s in the work of Branner and Douady
who discovered a {\em surgery} that embeds the $1/2$-limb of the Mandelbrot set into the
$1/3$-limb \cite{BD}. This surgery is the prototype for the pacman renormalization 
that we are developing in this paper. 

Note also that according to the Yoccoz inequality, the $\pp/\qq$-limb of the Mandelbrot set has size
 $O(1/\qq)$. It is believed, though, that  $ 1/\qq^2 $  is the right scaling.
 The pacman renormalization can eventually provide an insight into this problem.  

\begin{rem}
Genadi Levin has informed us about his unpublished work where it is proven, by different methods, that 
\begin{equation}
\label{eq:est:apq and cpq} |a_{\pp/\qq}-c(\pp/\qq)| \le \frac{C}{ \qq^2}, \sp\sp\sp\sp  C>0 
\end{equation} where $a_{\pp/\qq}$ is the center of the $\pp/\qq$-satellite hyperbolic component and $c(\pp/\qq)$ is its root. He has also informed us that~\eqref{eq:est:apq and cpq} has been independently established by Mitsuhiro Shishikura. Note that Theorem~\ref{scaling thm} gives a precise asymptotics for $|a_{\pp/\qq}-c(\pp/\qq)|$.
\end{rem}

\subsection{Notation}
\label{ss:Notations}
We often write a partial map as $f\colon W\dashrightarrow W$; this means that $\Dom f\cup \Im f \subset W$.

A \emph{simple arc} is an embedding a closed interval. We often say that a simple arc $\ell\colon [0,1]\to \C$ \emph{connects} $\ell(0)$ and $\ell(1)$. A \emph{simple closed curve} or a \emph{Jordan curve} is an embedding of the unit circle. A \emph{simple curve} is either a simple closed curve or a simple arc.

A \emph{closed topological disk} is a subset of a plane homeomorphic to the closed unite disk. In particular, the boundary of a closed topological disk is a Jordan curve. A \emph{quasidisk} is a closed topological disk qc-homeomorphic to the closed unit disk.

Given a subset $U$ of the plane, we denote by $\intr U $ the interior of $U$.

Let $U$ be a closed topological disk. For simplicity we say that a homeomorphism $f\colon U\to \C$ is \emph{conformal} if $f\mid \intr U $ is conformal. Note that if $U$ is a quasidisk, then such an $f$ admits a qc extension through $\partial U$.

A \emph{closed sector}, or \emph{topological triangle} $S$ is a closed topological disk with two distinguished simple arcs $\gamma_-~,\gamma_+$ in $\partial S$ meeting at the \emph{vertex $v$ of $S$} satisfying $\{v\}=\gamma_-\cap \gamma_+$. Suppose further that $\gamma_-~,\intr S, \gamma_+$ have clockwise orientation at $v$. Then $\gamma_-$ is called the \emph{left boundary of $S$} while $\gamma_+$ is called the \emph{right boundary} of $S$. A closed \emph{topological rectangle} is a closed topological disk with four marked sides.

Let $f : (W, \alpha) \ra (\C, \alpha)$  be a holomorphic map with a distinguished $\alpha$-fixed point. We will usually denote by $\lambda$ the multiplier at the $\alpha$-fixed point. If $\lambda=\ee(\phi)$ with $\phi\in \R$, then $\phi$ is called the \emph{rotation number of $f$}. If, moreover, $\phi=\pp/\qq\in \Q$, then $\pp/\qq$ is also the \emph{combinatorial rotation number}: there are exactly $\qq$ local attracting petals at $\alpha$ and $f$ maps the $i$-th petal to $i+\pp$ counting counterclockwise.

Consider a continuous map $f\colon U\to \C$ and let $S\subset \C$ be a connected set. An \emph{$f$-lift} is a connected component of $f^{-1}(S)$. Let \[x_0,x_1,\dots x_n,\sp x_{i+1}=f(x_i)\]
be an $f$ orbit with $x_n\in S$. The connected component of $f^{-n}(S)$ containing $x_0$ is called the \emph{pullback of $S$ along the orbit $x_0,\dots, x_n$.}

To keep notations simple, we will often suppress indices. For example, we denote a pacman by $f\colon U_f\to V$, however a pacman indexed by $i$ is denoted as $f_i\colon U_i\to V$ instead of $f_i\colon U_{f_i}\to V$.

Consider two partial maps $f\colon  X\dashrightarrow X$ and $g\colon Y\dashrightarrow Y$. A  homeomorphism $h\colon X\to Y$ is \emph{equivariant} if 
\begin{equation}
\label{eq:defn:equiv}
 h\circ f(x)=g\circ h (x)
 \end{equation}
for all $x$ with $x\in \Dom f$ and $h(x)\in \Dom g$. If~\eqref{eq:defn:equiv} holds for all $x\in T$, then we say that $h$ is \emph{equivariant on $T$.} 

We will usually denote an analytic renormalization operator as ``$\RR$'', i.e.~$\RR f$ is a renormalization of $f$ obtained by an analytic change of variables. A renormalization postcomposed with a straightening will be denoted by ``$\mRR$''; for example, $\mRR_{s} \colon \MM_s\to \MM$ is the Douady-Hubbard straightening map from a small copy $\MM_s$ of $\MM$ to the Mandelbrot set. The action of the renormalization operator on the rotation numbers will be denoted by ``$\cRR$''.

Slightly abusing notations, we will often identify a triangulation (or a lamination) with its support.

\subsection*{Acknowledgments} The first author was partially supported by Simons Foundation grant at the IMS, DFG grant BA4197/6-1, and ERC grant ``HOLOGRAM''. The second author thanks the NSF for their continuing support.

The results of this paper were first announced at the North-American Workshop in Holomorphic Dynamics, May 27--June 4, 2016, Canc\'un, M\'exico.

Figures~\ref{Fig:GeomScal},~\ref{Fg:Bubble Chain},~\ref{Fig:SattValFlow},~\ref{Fg:Molec} are made with W. Jung's program \emph{Mandel}.

\section{Pacman renormalization operator}
\label{sec:pacmen}

\begin{defn}[Full pacman]
\label{dfn:Pacman}
Consider a closed topological disk $\overline{V}$ with a simple arc  $\gamma_1$ connecting a boundary point of $V$ to a point $\alpha$ in the interior. We will call $\gamma_1$ the \emph{critical arc} of the pacman.

A \emph{full pacman} is a map \[f:\overline{U}
\to \overline{V}\] such that (see  Figure~\ref{Fg:Pacman})
%as it shown on  Figure~\ref{Fg:Pacman} 
\begin{itemize}
\item $f(\alpha)=\alpha$;
\item   $\overline{U}$ is a closed topological disks with $\overline{U}\subset V$;

\item the critical arc $\gamma_1$ has exactly $3$ lifts $\gamma_0\subset U$ and $\gamma_-~,\gamma_+ \subset \partial U$ such that $\gamma_0$   start at the fixed point $\alpha$ while $\gamma_-~,\gamma_+$ start at the pre-fixed point $\alpha'$; we assume that $\gamma_1$ does not intersect $\gamma_0,\gamma_-~,$  $\gamma_+$ away from $\alpha$;
\item $f:U\to V$ is analytic and $f:U\setminus \gamma_0\to V\setminus \gamma_1$ is a two-to-one branched covering;  
\item $f$ admits a locally conformal extension through $\partial U\setminus \{\alpha'\}$.
\end{itemize}
\end{defn}

Since $f:U\setminus \gamma_0\to V\setminus \gamma_1$ is a two-to-one branched cover, $f$ has a unique critical point, called $c_0(f)$, in $U\setminus \gamma_0$. We denote by $c_1(f)$ the image of $c_0$.

 %We may think about the region of $V\setminus U$ attached $\gamma_- \cup\gamma_+$ as ``the forbidden sector''.

We will mostly consider  truncated pacmen or simply pacmen defined as follows. Consider first a full pacman $f\colon U\to V$ and let $O$ be a small closed topological disk around $\alpha \in \intr O\not\ni c_1(f)$ and assume that $\gamma_1$ cross-intersects $\partial O$ at single point. Then $f^{-1}(O)$ consists of two connected components, call them $ O_0\ni \alpha$ and $O'_0\ni \alpha'$. We obtain a truncated pacman
\begin{equation}
\label{eq:TrunPacm}
f\colon (U\setminus O'_0, O_0)\to (V,O).
\end{equation}
 A \emph{pacman} is an analytic map as in~\eqref{eq:TrunPacm} admitting a locally conformal extension through $\partial U$ such that $f$ can be topologically extended to a full pacman, see Figure~\ref{Fg:TruncPacman}. In particular, every point in $V\setminus O$ has two preimages while every point in $O$ has a single preimage.

\begin{figure}[t!]
\[\begin{tikzpicture}[scale=1.3]

\coordinate  (w0) at (-3.9,1.5);

\draw (w0) circle (0.2cm);
\draw (-3.5,1.5) node {$O$};

\node (v0) at (-2.9,1.4)  {};

\draw  (v0) ellipse (3.5 and 2);

\coordinate (v1) at (-2.2,1.3) {};
\coordinate (v1a) at (-2.02,1.39) {};
\coordinate (v1b) at (-2.02,1.21) {};

\coordinate (v2) at (-0.6,2.1) {} {};
\coordinate (v3) at (-0.6,0.6) {} {};

  \draw[ line width=0.5pt,orange] 
            (v3) 
            .. controls (v3) and (v1b) ..
            (v1b);
    \draw[orange, line width=0.5pt]           
            (v1a).. controls (v1a) and (v2)..(v2);
    \draw[line width=0.5pt,red]  
            (v2)
            .. controls (-7.3,4.4) and (-7.1,-1.5) ..
            (v3);

\draw [orange, shift={(-2.2,1.3)}, scale=0.2] plot[domain=-2.74468443448354:2.6325217651674095,variable=\t]({-1.*1.0089598604503545*cos(\t r)+0.*1.0089598604503545*sin(\t r)},{-0.*1.0089598604503545*cos(\t r)+1.*1.0089598604503545*sin(\t r)});

\coordinate (w1) at (-4.7,2.4) {} {};

\draw[line width=0.5pt,red]  (w0)--(w1);

\coordinate (w2) at (-6.1,0.6)  {};
\draw  [line width=0.5pt]  (w0)--(w2);

\draw[line  width=0.5pt,-latex] (-3.1,1.6) .. controls (-2.6,2.1) and (-3.1,3.2) .. (-3.8,2.8);

\node at (-5.5,0.5) {$\gamma_1$};
\node at (-2.8,2.7) {$f$};
\node[red] at (-4.5,1.9) {$\gamma_0$};

%\node[red] at (-1.2,1.6) {$\gamma_+$};
%\node[red] at (-1.1,0.9) {$\gamma_-$};
\node at (-1.9,4.6) {};
\node[red] at (-3.5,0.8) {$U$};
\node at (-1.6,0.1) {$V$};
\node[red] at (-3.6,-0.05) {$\partial ^\ext U$};
\node[orange] at (-1.1,1.3) {$\partial ^\frb U$};
\end{tikzpicture}\]
\caption{A pacman is a truncated version of a full pacman, see Figure~\ref{Fg:Pacman}; it is an almost $2:1$ map $f:(U,O_0)\to (V,O)$ with $f(\partial U)\subset \partial V\cup \gamma_1\cup \partial O$.}
\label{Fg:TruncPacman}
\end{figure}
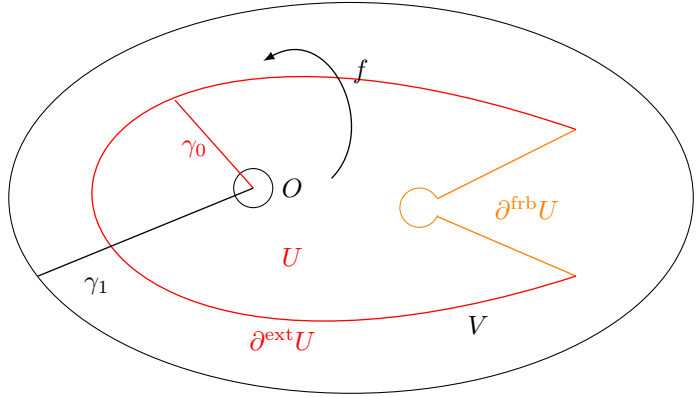

\subsection{Dynamical objects}
\label{ss:DynObj}
Let us fix a pacman $f: U \to V$. Note that  objects below are sensitive to   small deformations of $\partial U$.

The \emph{non-escaping} set of a pacman  is  \[\Kfilled_{f}:=\bigcap_{n\ge 0} f^{-n}(\overline U).\] The \emph{escaping set} is $V\setminus \Kfilled_f$. 

We recognize the following two subsets of the boundary of $U$:  the \emph{external boundary} $\partial^{\ext} U := f^{-1}(\partial V)$ and \emph{the forbidden part of the boundary} $\partial^{\frb} U:= \overline {\partial U\setminus \partial^{\ext} U}$.

Suppose $\ell_0:[0,1]\to \overline V$ is an arc connecting a point in $\Kfilled_f$ to $\partial V$. We define inductively images $\ell_m:[0,1]\to V$ for $m\le M\in \{1,2,\dots, \infty\}$ as follows. Suppose $t_m\le 1$ be the maximal such that the image of $[0,t_m]$ under $\ell_m$ is within $\overline U$. If $\ell_m(t_m)\in \partial^{\ext} U$, then we say $\ell_{m+1}$ is \emph{defined} and we set $\ell_{m+1}(t):=f(\ell_m(t/t_m))$ for $t\le 1$.  Abusing notation, we write \[\ell_m= f(\ell_{m-1}).\]

We define \emph{external rays} of a pacman in the following way. Let us embed a rectangle $\Rect$ in $\overline{V} \setminus U$ so that bottom horizontal side $B$ is equal to $\partial^{\ext} U $ and the top horizontal side $T$ is a subset of $\partial V$. The images of the vertical lines within $\Rect$ form a lamination of $\overline{V} \setminus U$. We pull back this lamination to all iterated preimages $f^{-n}(\Rect)$. Leaves of this lamination that start at $\partial V$ are called \emph{external ray segments} of  $f$; infinite external ray segments are called \emph{external rays} of $f$. Note that if $\gamma$ is an external ray, then $f(\gamma)$, as defined in the previous paragraph, is also an external ray.

We have two maps from $B$ to $T$: one is the natural identification $\pi$ along the vertical lines, the other is the map $f\colon B \dashrightarrow T$ which is defined only on $f^{-1}(T)$. Composition thereof, $\phi = \pi^{-1} \circ f \colon B \dashrightarrow B$ is a partially defined two-to-one map. We consider the set $\mathcal{A}\subset B$ of all the points with whole forward orbits are well defined. Then $\mathcal{A}$ is completely invariant and there is a unique orientation preserving map $\theta \colon \mathcal{A} \to \mathbb{S}^1$   which semi-conjugates  $\phi \colon \mathcal{A} \to \mathcal{A}$ to the doubling map of the circle.
We say that $\theta(a)$ is the \emph{angle} of the  external ray segment passing through the point $a$.

An external ray segment passing through a point $a\in \mathcal{A}$ is infinite (i.e.~it is an external ray) if and only if it hits neither an iterated precritical point nor an iterated lift of $\partial^{\frb} U$. The latter possibility is a major technical issue we have to deal with.

\subsection{Prime pacman renormalization}
\label{sec:PrimePacmenRenorm}  
Let us first give an example of a prime renormalization of full pacmen where we cut out the sector bounded by $\gamma_1$ and $\gamma_2$, see Figure~\ref{Fg:PacmanRenorm}. This renormalization is motivated by the surgery procedure that Branner and Douady \cite{BD} used to construct a map between the Rabbit $\mathcal{L}_{1/3}$ and the Basilica  $\mathcal{L}_{1/2}$ limbs of the Mandelbrot set, see Appendix~\ref{ss:ap:BranDouad}. Pacman renormalization will be defined in~\S\ref{sec:PacmenRenorm}.

Recall that a sector $S$ is a closed topological disk with two distinguished  arcs in $\partial S$ meeting at single point, called the vertex of $S$. Suppose $f:U\to V$ is a full pacman and  
\begin{itemize}
\item[(A)] $\gamma_0$, $\gamma_1$, and $\gamma_2 :=f(\gamma_1)$ are mutually disjoint except for the fixed point $\alpha$.
\end{itemize}
Denote by  $S_{1}$ the closed sector of $V$ bounded by $\gamma_1\cup \gamma_2$ and not containing $\gamma_0$. Let us further assume that 
\begin{itemize}
\item[(B)] $S_{1}$ does not contain the critical value; and
\item[(C)]  $\gamma_-\cup \gamma_+\subset V\setminus S_{1}$. %$\gamma_-\cup \gamma_+$ is disjoint from $\gamma_0\cup \gamma_1\cup\gamma_2$.  
\end{itemize}

 Let $\widehat V$ be the Riemann surface with boundary obtained from $\overline{V}\setminus \intr{S_{1}}$ by gluing $\gamma'_1:= f^{-1}(\gamma_2)\cap \gamma_1$ and $\gamma_2$ along $f$. This means that there is a quotient map 
 \[\psi: \overline{V}\setminus\intr{S_{1}}\to \widehat V\] such that $\psi$ is conformal in  $V\setminus S_1$ while $\psi(z)=\psi(f(z))\in \widehat V$ for all $z\in \gamma'_1$. Let us select an embedding  $\widehat V \hookrightarrow \C$.

The sector $S_{1}$ has two $f$-lifts; let $S_0$ be the lift of $S_{1}$ attached to $\alpha$ and let $S'_0$ be the lift of $S_{1}$ attached to $\alpha'$. Condition (B) implies that $\gamma_-\cup \gamma_+\subset V\setminus S_{0}$.
 Define
 \[\bar f(z):=\begin{cases}
f(z), & \text{ if } z\in U\setminus (S_{1}\cup S_{0}\cup  S'_{0})\\
f^2(z)& \text{ if } z\in S_{0}\cap f^{-1}(U).
\end{cases}.\]
Then the map $\bar f$ descends via $\psi$ into a full pacman $\hat f: \widehat U\to \widehat V$ with the critical ray $\hat\gamma_1$.

\begin{figure}[t!]

\centering{\[\begin{tikzpicture}[ scale=0.88]

\coordinate  (w0) at (-3.9,1.5);

\node (v0) at (-2.9,1.4)  {};

\draw  (v0) ellipse (3.5 and 2);

\node[shift={(0.15,0)}]   at (w0)  {$\alpha$};

\coordinate (v1) at (-2.2,1.3) {};
\coordinate (v2) at (-0.6,2.1) {} {};
\coordinate (v3) at (-0.6,0.6) {} {};

  \draw[ line width=0.5pt,red] 
            (v3) 
            .. controls (v3) and (v1) ..
            (v1)
            .. controls (v1) and (v2) ..
            (v2)
            .. controls (-7.3,4.4) and (-7.1,-1.5) ..
            (v3);

\coordinate (w1) at (-4.7,2.4) {} {};

\draw[line width=0.5pt,red]  (w0)--(w1);

\coordinate (w2) at (-5.4,0.9) {}  {};

\draw[line  width=0.5pt,-latex] (-3.1,1.6) .. controls (-2.6,2.1) and (-3.1,3.2) .. (-3.8,2.8);

\node at (-5.7,0.9) {$\gamma_1$};
\node at (-2.8,2.7) {$f$};

\node at (-1.9,4.6) {};
\node[red] at (-3.5,0.8) {$U$};
\node at (-1.6,0.1) {$V$};
\node at (-2.3,1.3) {$\alpha'$};

\coordinate (w5) at (-6.1,0.6) {} {};
\draw  [line width=0.5pt,blue]  (w0)--(w2);
\draw  (w2) edge (w5);
\coordinate (w6) at (-4.2,-0.5) ;
\draw  [line width=0.5pt,blue]  (w0)--(w6);
\coordinate (w7) at (-1.7,2.4) {};
\draw [blue] (v1) edge (w7);
\draw[red]  plot[smooth, tension=.7] coordinates {(-5.1,1) (-5.2,1.5) (-5,2) (-4.5,2.2)};
\node[blue] at (-3.9,0.3) {$\gamma_2$};

\node[red] at (-4.7,1.8) {$f^2$};
\node[blue] at (-4.8,0.2) {(delete)};
\node[blue] at (-4.7,0.6) {$S_{1}$};
\node[blue] at (-1.4,2) {$S'_{0}$};
\node[red] at (-5.15,1.4) {$S_{0}$};

  \begin{scope}[shift={(-0.4,0)}, scale =1]
\coordinate (w8) at (5.9,1.6) {} {} {};
\coordinate (w9) at (3.6,1.5) {} {};
\draw[red] (2.6,2.3) -- (w9);
\draw (w9);
\draw[blue] (w9) -- (2.6,0.2);
\draw (2.2,-0.3) -- (2.6,0.2);
\draw (2.2,-0.3) .. controls (-1.6,3.1) and (7.6,5.4) .. (7.6,1.4);
\draw (2.6,0.2) .. controls (3.5,-0.7) and (7.5,-1.2) .. (7.6,1.4);
\draw[blue] (w8) -- (5.9,2.6);
\draw[red] (w8) -- (6.9,1.2);
\draw[red] (5.9,2.6) .. controls (4.8,3.3) and (3.5,2.9) .. (2.6,2.3);
\draw[red] (2.8,2.1) .. controls (2.4,1.8) and (2.4,1.1) .. (3,0.6);
\draw[red] (3,0.6) .. controls (4.1,-0.2) and (6.1,-0.1) .. (6.9,1.2);
\node[shift={(0.15,0)}]  at (3.6,1.5) {$\alpha$};
\node at (5.9,1.6) {$\alpha'$};
\node[red] at (4.6,1.3) {$\widehat U$};
\draw[line  width=0.5pt,-latex] (4.5,2) .. controls (4.6,2.6) and (4,3) .. (3.5,3.1);
\node at (4.4,2.8) {$\hat f$};
\node at (6.5,0.3) {$\widehat V$};
\node[blue] at (3.4,0.9) {$\hat \gamma_1$};

\end{scope}
\end{tikzpicture}\]}

\caption{Prime renormalization of a pacman: delete the sector $S_{1}$, forget in $U$ the sector $S'_{0}$ attached to $\alpha'$, and iterate $f$ twice on $S_{0}$. By gluing $\gamma_1$ and $\gamma_2$ along $f:\gamma_1\to \gamma_2$ we obtain a new pacman $\hat f:\widehat U\to \widehat V.$
 }
\label{Fg:PacmanRenorm}
\end{figure}
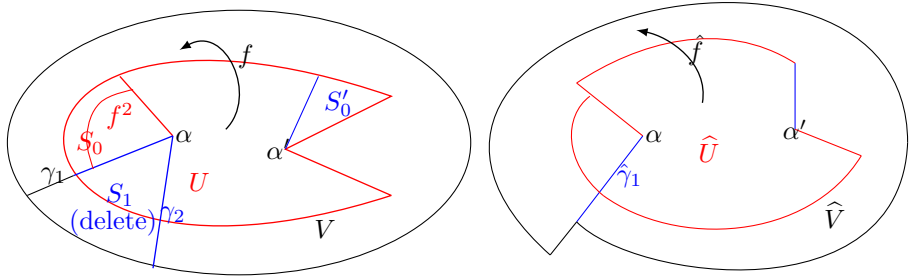

\subsection{Pacman renormalization}
\label{sec:PacmenRenorm} 
 Let us start with defining an analogue of commuting pairs for pacmen.

A map $\psi \colon S \to \overline{V}$ from a closed sector $(S,\beta_-~,\beta_+)$ onto a closed topological disk $\overline{V}\subset \C$ is called a \emph{gluing} if $\psi$ is conformal in the interior of $S$, $\psi(\beta_-)=\psi(\beta_+)$, and  $\psi$ can be conformally extended to a neighborhood of any point in $\beta_-\cup \beta_+$ except the vertex of $S$.

%A gluing map $\psi$ of a sector $(S,\beta_-~,\beta_+)$ is a map $\psi

\begin{defn}[Prepacmen, Figure~\ref{Fg:Prepacman}]
\label{defn:Prepacmen} Consider a sector $S$ with boundary rays $\beta_-~, \beta_+$ and with an interior ray $\beta_0$ that divides $S$ into two subsectors $T_-~, T_+$.  Let $f_- \colon U_- \to S,\sp f_+ \colon U_+ \to S$ be a pair of holomorphic maps, defined on $U_- \subset T_-~,U_+ \subset T_+$. We say that $F=(S,f_-~,f_+)$ is a \emph{prepacman} if there exists a gluing $\psi$ of $S$ which projects $(f_-~,f_+)$ onto a pacman $f\colon U \to V$ where $\beta_-~, \beta_+$ are mapped to the critical arc $\gamma_1$ and $\beta_0$ is mapped to $\gamma_0$. 

The map $\psi$ is called a \emph{renormalization change of variables.}
\end{defn}
The definition implies that $f_-$ and $f_+$ commute in a neighborhood of $\beta_0$. Note that every pacman  $f\colon U\to  V$ has a prepacman obtained by cutting $V$ along the critical arc $\gamma_1$.

Dynamical objects (such as the non-escaping set) of a prepacman $F$ are preimages of the corresponding dynamical objects of $f$ under $\psi$.

\begin{defn}[Pacman renormalization, Figure~\ref{Fg:Prepacman:OrbitOfUi}]
\label{den:PacmRenorm} We say that a pacman ${f\colon U\to   V}$ is \emph{renormalizable} if there exists a prepacman \[G=(g_-=f^\aa \colon U_-\to S,\sp g_+=f^\bb \colon U_+\to S)\] defined on a sector  $S \subset V$ with vertex at $\alpha$  such that $g_-~, g_+$ are iterates of $f$ realizing the first return map to $S$ and such that the $f$-orbits of $U_-~,U_+$ before they return to $S$ cover a neighborhood of $\alpha$ compactly contained in $U$. We call $G$ the \emph{pre-renormalization} of $f$ and the pacman $g\colon \widehat{U}\to   \widehat{V} $ is the \emph{renormalization} of $f$.

The numbers $\aa,\bb$ are the \emph{renormalization return times}.

The renormalization of $f$ is called \emph{prime} if $\aa+\bb=3$. 
\end{defn}
\noindent Similarly, a \emph{pacman renormalization} is defined for any map $f\colon U\to V$ with a distinguished fixed point which will be called $\alpha$. For example, we will show in Corollary~\ref{cor:SP embeds in SM} that any Siegel map is pacman renormalizable.

Combinatorially, a general pacman renormalization is an iteration of the prime renormalization -- see details in Appendix~\ref{ss:ap:SecRen}, in particular Lemma~\ref{lem:SectRen is prime power}.

\emph{We define $\bDelta=\bDelta_G$} to be the union of points in the $f$-orbits of $\overline U_-~,\overline U_+$ before they return to $S$. Naturally, $\bDelta$ is a triangulated neighborhood of $\alpha$, see Figure~\ref{Fg:Prepacman:OrbitOfUi}. We call $\bDelta$ a \emph{renormalization triangulation} and we will often say that $\bDelta$ is obtained by \emph{spreading around} $U_-~,U_+$.

\begin{defn}[Conjugacy respecting prepacmen]
\label{defn:ConjBetweenPreRenorm}
Let $f$ and $g$ be any two maps with distinguished $\alpha$-fixed points and let $R$ and $Q$ be two prepacmen in the dynamical plane of $f$ and $g$ defining some pacman renormalizations. Let $h$ be a local conjugacy between $f$ and $g$ restricted to neighborhoods of their $\alpha$-fixed points. Then \emph{$h$ respects $R$ and $Q$} if $h$ maps the triangulation $\bDelta_R$ to $\bDelta_Q$ so that the image of $(S_R, U_{R,\pm})$ is $(S_Q,U_{Q,\pm})$.
\end{defn}

\begin{figure}[t!]

\centering{\begin{tikzpicture}[ scale=1.18]

%%%%%%%%%%%%%%%%%here

\draw[ draw opacity=0 ,fill=red, fill opacity=0.2]
  (0,0) --(-1.34,3.54)--  (-1.34+ 1.09,3.54+0.72)-- (-3 + 1.09 ,2.4+0.72)--(-2.7 + 1.09 ,1.38+0.72)--(-4.22 + 1.09, 1.6+0.72)--(-4.9127182890505665,1.0896661385188597) --(0,0);

\draw[blue] (0,0) edge node[below] {$\beta_-$}  (-6.34,1.44)
(-6.34,1.44) edge (0.26,5.82)
(0.26,5.82) edge node[right] {$\beta_+$}  (0,0);

\draw[blue] (0,0) edge node[below] {$\beta$}   (-4.78,2.47);

\draw[red] (-2.7+ 1.09 ,0.72+1.38)--(-4.22+ 1.09 ,0.72+ 1.6);
%\draw[red] (-3,2.4)-- (-3.74,1.92);
\draw[red] (-4.9127182890505665,1.0896661385188597)-- (-4.22+ 1.09 ,0.72+ 1.6);
%\draw[red] (-3.74,1.92)-- (-2.7,1.38);%(0.19, 4.26)

\draw[red,fill=red, fill opacity=0.1] (-2.7+ 1.09 ,0.72+1.38)--(-3+ 1.09 ,0.72+2.4)--(-3.74+ 1.09 ,0.72+1.92)--(-2.7+ 1.09 ,0.72+1.38);

\draw[draw opacity=0,fill=green, fill opacity=0.1] (0,0)--(-1.34,3.54)-- (0.21, 4.64)--(0,0);

\draw[red]  (0,0) --(-1.34,3.54)-- (-3+ 1.09 ,0.72+2.4);

\draw[red] (-1.34,3.54)-- (0.21, 4.64);

%\draw[red] (-1.18, 3.08)-- (0.19, 3.88);

\draw[blue] (-4.48, 1.92) node{$S_+$};
\draw[blue] (-1.9, 3.8) node{$S_-$};

\draw[red] (-1.64, 1.28) node{$\Upsilon_-$};
\draw[red] (-3.15+ 1.09 ,0.72+ 1.90) node{$\Upsilon_+$};

\draw (-3.15+ 1.09 ,0.72+ 2.10) edge[->,bend right]  (-4.48, 2.12);

%\draw[red] (-3.15, 1.90) edge[->,bend left] node [below left] {$\psi_{-1}$} (-4.48, %1.92);

\draw[red] (-0.48, 2.8) node{$U_+$};

\draw(-0.38, 3.1)  edge[->,bend right]  (-4.58, 2.22);

\draw  (-2.34, 1.28)  edge[->,bend left]   node[left ]{$2:1$} (-1.9, 3.6);

\draw[dashed,red] (-1.71,4.51) edge node[right] {$\beta_0$} (-1.34, 3.54);

\draw (-1.74, 4.76) edge[->,bend right] node[above]{gluing} node[below]{ $\beta_\pm\simeq \gamma_1$} (-3.74, 5.2);

%%%%%%%%%%%%%%%%%%%%%%%%%%%%%%%%%%
%%%%%%%%%%%%%%%%%%%%%%%%%%%%%%%%%%

\begin{scope}[shift={(-4.5,6.2)} ,yscale=-1,scale=0.7]

\coordinate  (w0) at (-3.9,1.5);

\node (v0) at (-2.9,1.4)  {};

\draw[blue]  (v0) ellipse (3.5 and 2);

%\node  at (w0)  {$\alpha$};

\coordinate (v1) at (-2.2,1.3) {};
\coordinate (v2) at (-0.6,2.1) {} {};
\coordinate (v3) at (-0.6,0.6) {} {};

\coordinate (h1) at (-4.7,2.4);
\coordinate (h2) at (-5.44,0.87);
\coordinate (h3) at (-4.04, 0.1);

\coordinate (hh) at (-1.8,0.1); 
\coordinate (hh2) at (-1.7,2.44);

%HERE!!!
\draw[ draw opacity=0 ,fill=red, fill opacity=0.2]
(hh)
..   controls (-1.4,0.15) and (-1,0.35) ..
(v3)
..   controls (v3) and (v1) ..
(v1);

\draw[ draw opacity=0 ,fill=red, fill opacity=0.2]
(v1)
..   controls (v1) and (w0) ..
(w0)
..   controls (w0) and (h2) ..
(h2)
  ..     controls (-5.24,0.37) and  (-4.64, 0.15)..
              (h3)
        ..   controls (-3.14, 0.0) and  (-2.7,0.)..
           (hh);
\draw[ draw opacity=0 ,fill=red, fill opacity=0.2]
(v1)
..   controls (v1) and (v2) ..
(v2)
   .. controls (-2.1,2.6)  and (-3.2,2.9) ..
        (h1)
..   controls (h1) and (w0) ..
(w0);

\draw[ draw opacity=0 ,fill=green, fill opacity=0.1]
(w0)
..   controls (w0) and (h1) ..
(h1)
    ..   controls (-5.4,2.1) and (-5.64,1.17)..
        (h2);

\filldraw[white] (v1)--(hh2) --(v2);
\filldraw[red, opacity=0.1] (v1)--(hh2) --(v2);

\draw[red, line width=0.5pt]  (v1)--(hh2);

  \draw[ line width=0.5pt,red] 
            (v3) 
            .. controls (v3) and (v1) ..
            (v1)
            .. controls (v1) and (v2) ..
            (v2)
            .. controls (-2.1,2.6)  and (-3.2,2.9) ..
            (h1)
         ..   controls (-5.4,2.1) and (-5.64,1.17)..
            (h2)
        ..     controls (-5.24,0.37) and  (-4.64, 0.15)..
              (h3)
        ..   controls (-3.14, 0.00) and  (-2.7,0.00)..
           (hh)
          ..   controls (-1.4,0.15) and (-1,0.35) ..
           (v3) ;
           
%\draw[draw opacity=0,fill=green, fill opacity=0.1] (0,0)--(-1.34,3.54)-- (0.21, 4.64)--(0,0);

%(v1)-- (v3) --(hh);

%        (v2)
%            .. controls (-7.3,4.4) and (-7.1,-1.5) ..
%            (v3);

\coordinate (w1) at (-4.7,2.4) {} {};

%\draw[red, line width=0.5pt]  (v1)--(hh);

\draw[red, line width=0.5pt]  (w0)--(w1);

\coordinate (w2) at (-6.1,0.6)  {};
\draw  [blue, line width=0.5pt]  (w0)--(w2);

\draw [blue, line width=0.5pt]  (w0)edge node[right]{$\gamma_2$}(-4.1,-0.5);

%\node at (v1) {v1};            
%\node at (v2) {v2};            
%\node at (v3) {v3};            
           
%w0 : (-3.9,1.5)  
% w1:(-4.7,2.4) 
% h_0  (-4.7,2.4)- (-3.9,1.5) = (-0.8,0.9) 
% 0.7*(-0.8,0.9)  = (-0.56, 0.63)
%(-0.56, 0.63)+ (-3.9,1.5)= (-4.46,2.13)
%\node (h1) at (-4.46,2.13) {h1};

%w3: (-4.1,-0.5)
% h_1  (-4.1,-0.5) - (-3.9,1.5) = (-0.2,-2) 
% 0.7*(-0.2,-2)   = (-0.14, -1.4)
%(-0.14, -1.4)+ (-3.9,1.5)= (-4.04,0.1)

%w2: (-6.1,0.6)
% h_2  (-6.1,0.6)- (-3.9,1.5) = (-2.2,-0.9) 
% 0.7*(-2.2,-0.9)  = (-1.54, -0.63)
%(-1.54, -0.63)+ (-3.9,1.5)= (-5.44,0.87)

% \node at (w1) {w1};     
%  \node at (w2) {w2}; 
%\node at (-4.1,-0.5) {w3};

\node[blue] at (-5.55,0.65) {$\gamma_1$};
%\node at (-2.8,2.7) {$f$};
\node[red] at (-4.55,1.9) {$\gamma_0$};

%\node[red] at (-1.2,1.6) {$\gamma_+$};
%\node[red] at (-1.1,0.9) {$\gamma_-$};
\node at (-1.9,4.6) {};
%\node[red] at (-3.5,0.8) {$U$};
%\node at (-1.6,0.1) {$V$};
%\node at (-2.3,1.3) {$\alpha'$};
\end{scope}

\end{tikzpicture}}

\caption{A (full) prepacman $(f_-\colon U_-\to S,\sp f_+\colon U_+\to S)$. We have $U_-=\Upsilon_-\cup \Upsilon_+$ and $f_-$ maps $\Upsilon_-$ two-to-one to $S_-$ and $\Upsilon_+$ to $S_+$. The map $f_+$ maps $U_+$ univalently onto $S_+$. After gluing dynamically $\beta_-$ and $\beta_+$ we obtain a full pacman: the arcs $\beta_-$ and $\beta_+$ project to $\gamma_1$, the arc $\beta_0$ projects to $\gamma_0$, and the arc $\beta$ projects to $\gamma_2$.
 }
\label{Fg:Prepacman}
\end{figure}
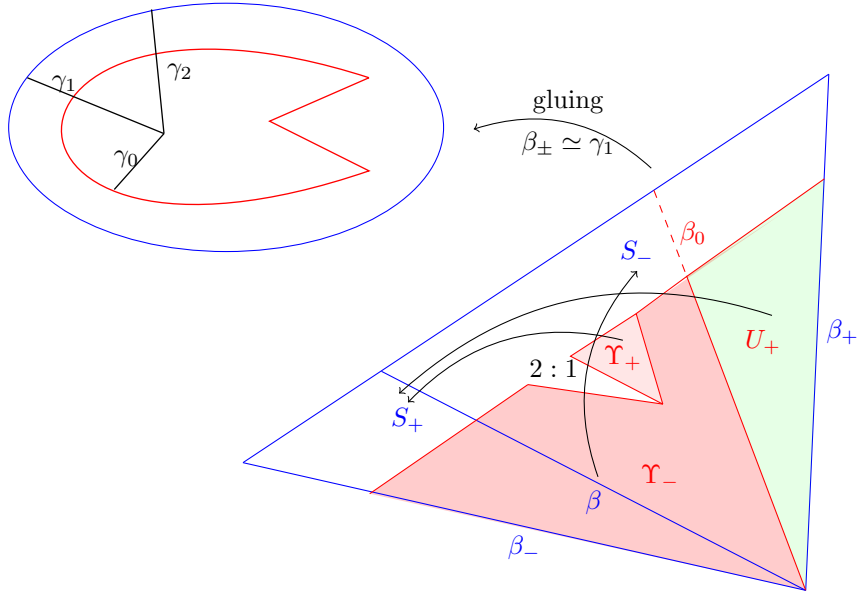

\subsection{Banach neighborhoods}
\label{ss:BanNeig and Ren Oper}
Consider a pacman $f:U_f\to V$ with a non-empty truncation disk $O$. We assume that there is a topological disk $\widetilde U\Supset U_f$ with a piecewise smooth boundary such that $f$ extends analytically to $\widetilde U$ and continuously to its closure.
Choose a small $\varepsilon>0$ and define $N_{\widetilde U}(f, \varepsilon)$ to be the set of analytic maps $g:\widetilde U\to \C$ with continuous extensions to $\partial \widetilde U$ such that
\[\sup_{z\in \widetilde U} |f(z)-g(z)| < \varepsilon.\]
Then $N_{\widetilde U}(f,\varepsilon)$ is a Banach ball.

We say a curve $\gamma$ lands at $\alpha$ at a \emph{well-defined angle} if there exists a tangent line to $\gamma$ at $\alpha$.

\begin{lem}
\label{lmm:PacmN}
Suppose $\gamma_0,\gamma_1$ land at $\alpha$ at distinct well-defined  angles. If $\varepsilon>0$ is sufficiently small, then for every $g\in N_{\widetilde U}(f, \varepsilon)$ there is a domain $U_g\subset \widetilde U$ such that $g:U_g \to V$ is a pacman with the same $V, \gamma_1 , O$ (up to translation). 
\end{lem}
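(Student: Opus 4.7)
The proof is a standard application of analytic perturbation theory (implicit function theorem and Rouch\'e's theorem) to the distinguished dynamical features of $f$. The plan is to produce perturbed versions of all relevant objects and then to verify the pacman axioms for $g$. The hypothesis that $\gamma_0$ and $\gamma_1$ land at $\alpha$ at distinct well-defined angles precludes $\alpha$ from being parabolic and implies in particular that $\alpha$ is a simple, non-critical fixed point of $f$. For $\varepsilon$ sufficiently small, Rouch\'e's theorem applied to $g(z)-z$ then gives a unique fixed point $\alpha_g$ of $g$ close to $\alpha$ (inside $\intr O$); the implicit function theorem analogously produces a unique pre-fixed point $\alpha'_g$ close to $\alpha'$ with $g(\alpha'_g)=\alpha_g$, and a unique simple critical point $c_0(g)$ close to $c_0(f)$.

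Next, I would construct the boundary of $U_g$. By analyticity of $f$ on $\widetilde U\supset U_f$ and piecewise smoothness of $\partial V$, the external boundary $\partial^{\ext}U_f=f^{-1}(\partial V)\cap\widetilde U$ consists of smooth arcs meeting $\partial V$ transversally. For $g$ close to $f$, the implicit function theorem applied pointwise along these arcs produces $\partial^{\ext}U_g=g^{-1}(\partial V)\cap\widetilde U$, which is $C^0$-close to $\partial^{\ext}U_f$. Similarly, $g^{-1}(\partial O)$ near $\alpha'_g$ yields the forbidden boundary $\partial^{\frb}U_g$ close to $\partial O'_0$. Together these arcs enclose a topological disk $U_g\subset\widetilde U$ containing $\alpha_g$ and compactly contained in $V$.

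Finally, I would verify the pacman axioms for $g\colon U_g\to V$. Most are automatic from the construction: analyticity, the fixed point at $\alpha_g\in\intr O$, disk structure of $U_g$, and local conformal extension through $\partial U_g$ inherited from the extension of $g$ to $\widetilde U$. The delicate axiom is the existence of exactly three lifts of the critical arc $\gamma_1$ with the correct combinatorial structure: an interior lift $\gamma_0(g)$ from $\alpha_g$ and two boundary lifts $\gamma_\pm(g)$ from $\alpha'_g$, all meeting at $c_0(g)$. Here the hypothesis of distinct well-defined landing angles at $\alpha$ enters critically: it ensures that near $\alpha_g$ and $\alpha'_g$ initial segments of $\gamma_1$ can be lifted via local branches of $g^{-1}$ to arcs close to the corresponding $f$-lifts, which are then extended by analytic continuation. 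I expect this last step to be the main obstacle: one must verify that the three locally constructed lifts extend globally to arcs meeting at $c_0(g)$ and together exhaust $g^{-1}(\gamma_1)\cap U_g$ with the correct topology. This topological persistence follows from the sup-norm closeness of $g$ to $f$ together with the transversality built into the original pacman structure, but it requires careful bookkeeping along the continuations to rule out any unexpected bifurcation of the lift configuration.
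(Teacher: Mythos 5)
Your route differs from the paper's, and at the step you yourself flag as the ``main obstacle'' it is underspecified. You propose to rebuild $\partial U_g$ piece by piece (Rouch\'e for the perturbed fixed and pre-fixed points, implicit function theorem for the boundary arcs) and then argue by ``careful bookkeeping'' that the three lifts of $\gamma_1$ persist without ``unexpected bifurcation''; that persistence is precisely what needs a mechanism, and you do not supply one. The paper instead constructs $U_g$ in one stroke via an isotopy of $\partial U_f$: set $g_\delta = f+\delta(g-f)$ and $\psi_\delta = g_\delta^{-1}\circ f$ on $\partial U_f$, with the inverse branch fixed by $\psi_0=\id$. Since $g_\delta\circ\psi_\delta=f$ and $f(\partial U_f)\subset\partial V\cup\gamma_1\cup\partial O$, the curve $\psi_\delta(\partial U_f)$ lies in $g_\delta^{-1}(\partial V\cup\gamma_1\cup\partial O)$ for every $\delta$, so $\psi_1(\partial U_f)$ is automatically the candidate $\partial U_g$. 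The substance of the paper's proof is that $\psi_\delta$ is well-defined and injective on $\partial U_f$ for all $\delta\in[0,1]$: choosing a closed annular neighborhood $A\Subset\widetilde U$ of $\partial U_f$ free of critical points of $f$, every $g\in N_{\widetilde U}(f,\varepsilon)$ with $\varepsilon$ small has uniformly bounded non-vanishing derivative on (a slight shrinking of) $A$, whence $\psi_\delta$ stays in $A$, is close to the identity, and is locally then globally injective. Injectivity of $\psi_\delta$ for all $\delta$ is exactly what rules out the bifurcation of the lift configuration you worry about, with no extra bookkeeping needed, because the isotopy transports the whole boundary with its combinatorial markings at once. Both proofs invoke the distinct-well-defined-angles hypothesis at the same point, namely to keep the inner lift $\gamma_0(g)$ disjoint from $\gamma_1$. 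But the paper never needs to explicitly locate $\alpha_g$, $\alpha'_g$, or $c_0(g)$: their persistence is built into the isotopy, whereas your version lists them up front, which is harmless but unnecessary. If you insist on the IFT-piecewise route, you would need to reprove a global injectivity/closing-up statement along $\partial U_f$ anyway, so you might as well use the isotopy directly.
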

\begin{proof}
For $g\in N_{\widetilde U}(f, \varepsilon)$ with small $\varepsilon$, set \[
\begin{array}{cc}
V(g)&\coloneqq V(f)+(\alpha(g)-\alpha(f)),\\ 
\gamma_1(g)&\coloneqq \gamma_1(f)+(\alpha(g)-\alpha(f)),\\
O(g)&\coloneqq O(f)+(\alpha(g)-\alpha(f)),
 \end{array}
\]  and set $\gamma_0(g)$ to be the lift of $\gamma_1(g)$ landing at $\alpha$. Since $\gamma_0(f),\gamma_1$ land at distinct well-defined  angles, so are $\gamma_0(g),\gamma_1$ if $\varepsilon$ is small; i.e.~$\gamma_0(g),\gamma_1$ are disjoint.

Set $g_\delta = f+\delta(g-f)$ and $T_\delta(z)\coloneqq z+(\alpha(g_\delta)-\alpha(f))$, where $\delta\in[0,1]$. Define $\psi_\delta(z)= g_\delta^{-1} \circ T_\delta \circ f(z)$ on $\partial U_f$ where the inverse branch is chosen so that $\psi_0(z)=z$ and $\psi_\delta(z)$ is continuous with respect to $\delta$. We claim that $\psi_\delta$ is well defined and that $\psi_\delta (\partial U_f)$ is a simple closed curve for all $\delta \in [0,1]$. Indeed, let $A \Subset \widetilde U$ be a closed annular neighborhood of $\partial U_f$ that contains no critical points of $f$. For $\varepsilon$ small enough, the derivative of any $g\in N_{\widetilde U}(f, \varepsilon)$ is uniformly bounded and non-vanishing on a slightly shrunk
 $A$; in particular $g$ has no critical points in $A$.

It follows that 
$\psi_\delta\mid A$ has uniformly bounded derivative and (choosing yet smaller $\varepsilon$, if necessary) is close to the identity map,  hence $\psi_\delta(\partial U_f)\subset A$ is well-defined for all $\delta$. 
Since $f$ has no critical values in $A$, it is locally injective, which implies that $\psi_\delta (x)\neq \psi_\delta (y)$ when $x$ is sufficiently close to $y$. We conclude that $ \psi_\delta$ is injective on $\partial U_f$. Therefore $\psi_1(\partial U_f)$ is a simple closed curve; let $U_g$ be the disk enclosed by $\psi_1(\partial U_f)$.  It is straightforward to check that $g:U_g \to  V $ is a pacman with critical arc $\gamma_1$ and truncation disk $O$.
\end{proof}

Consider a pacman $f\colon U_f\to V$. Applying the $\lambda$-lemma, we can endow all $g\colon U_g\to V$ from a small neighborhood of $f$ with a foliated rectangle $\Rect_g$ as in~\S\ref{ss:DynObj} such that $\Rect_g$ moves holomorphically and the holomorphic motion of $\Rect_g$ is equivariant. As a consequence, an external ray $R(g)$ with a given angle depends holomorphically on $g$ unless $R(g)$ hits an iterated lift of $\partial ^\frb U_g$ or an iterated precritical point.

\begin{lem}[Stability of periodic rays]
\label{lem:PerRaysStable}
Suppose a periodic ray $R(f)$ lands at a repelling periodic point $x$ in the dynamical plane of $f$. Then the ray $R(g)$ lands at $x$  for all $g$ in a small neighborhood of $f$. Moreover, the closure $\overline R(g)$ is contained in a small neighborhood of $\overline R(f)$.
\end{lem}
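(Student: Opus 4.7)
The plan is to split the ray into a compact body $B := \overline{R\setminus L}$ far from the landing point and a tail $T := R\cap L$ close to $x$, where $L$ is a small Koenigs linearization neighborhood, and to handle these two pieces by separate stability mechanisms: the body by the holomorphic motion of the foliated rectangle $\Rect_g$ already at hand, and the tail by persistence of the repelling periodic point together with its Koenigs coordinate.

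First I would deal with the tail. Let $n$ be the period of $R$, so that $f^n(R)\subset R$, and write $\mu_f=(f^n)'(x)$; by hypothesis $|\mu_f|>1$. The implicit function theorem applied to $g^n(z)=z$ supplies a holomorphic continuation $g\mapsto x(g)$ with multiplier $\mu_g$ varying holomorphically and still satisfying $|\mu_g|>1$. A standard Koenigs construction then yields a neighborhood $L_g\ni x(g)$, varying continuously in $g$, and a conformal coordinate $\phi_g\colon L_g\to\mathbb D$, holomorphic in $(z,g)$, conjugating $g^n$ to $w\mapsto\mu_g w$. In these coordinates the $f$-tail $T=R\cap L$ lies on a logarithmic spiral invariant under $w\mapsto\mu_f w$. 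Replacing $\mu_f$ by $\mu_g$ gives a canonical candidate spiral $T_g$ in $L_g$ that accumulates at $x(g)$; uniqueness of $g^n$-invariant curves through a given boundary point of $L_g$ pins $T_g$ down once its endpoint on $\partial L_g$ is specified.

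For the body, note that $B$ is a compact subarc of the external ray, made up of finitely many segments living in iterated preimages $f^{-k}(\Rect_f)$. Because $R$ is an \emph{infinite} external ray of $f$, the arc $B$ avoids all iterated precritical points and all iterated lifts of $\partial^\frb U_f$. Both obstruction sets depend continuously on $g$ within a fixed compact region, so for $g$ in a small neighborhood of $f$, $B$ remains disjoint from the $g$-versions as well. The holomorphic motion of $\Rect_g$ (equivariant, by the remarks preceding the lemma) therefore extends across $B$ and produces a continuous family of compact arcs $B(g)$, with outer endpoint on $\partial V$ and inner endpoint $p(g)\in\partial L_g$ varying continuously in $g$. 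Anchoring the spiral $T_g$ at $\phi_g(p(g))$, I then set $R(g):=B(g)\cup T_g\cup\{x(g)\}$; this is a periodic external ray of $g$ landing at $x(g)$. Hausdorff continuity of all three pieces in $g$ yields $\overline{R(g)}\to\overline{R(f)}$, which is the second assertion.

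The main obstacle is the body step: one has to rule out new obstructions appearing after perturbation, i.e.\ that an iterated precritical point of $g$ or an iterated lift of $\partial^\frb U_g$ could intersect $B$ even though the corresponding $f$-sets do not. For the precritical points this follows from persistence: only finitely many precritical levels are relevant on $B$ (since $B$ is compact and is crossed by finitely many iterates of $\Rect_f$), and each such precritical point moves holomorphically, so a small open neighborhood of $B$ remains critical-free. For the forbidden-boundary issue, $\partial^\frb U_g$ is a compact arc depending continuously on $g$ by Lemma~\ref{lmm:PacmN}, and its iterated lifts form a locally finite family on the compact region crossed by $B$; continuity then keeps $B$ clear of them.
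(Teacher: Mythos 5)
Your argument is correct and follows the same route as the paper's (very terse) proof: persistence of the repelling point $x$ via the implicit function theorem, stability of the compact body of $R$ away from $x$ by continuity of the holomorphic motion of $\Rect_g$ (including checking that the body stays clear of iterated precritical points and lifts of $\partial^\frb U_g$), and landing of the tail from stability of the Koenigs linearizing coordinate. The paper compresses these three steps into three sentences; you have merely supplied the details, so no further comparison is needed.
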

\begin{proof}
Since $x$ is repelling periodic, it is stable by the implicit function theorem. Present $R(g)$ as a concatenation of arcs $R_1 R_2 R_3\dots$ such that $R_{i+1}(g)$ is an iterated lift of $R_i(g)$. By continuity, $R_i(g)$ is stable for $i\le n$, where $n$ is big if $g$ is sufficiently close to $f$. If $n$ is sufficiently big and $g$ is sufficiently close to $f$, then $R_{n}(g)$ is in a small neighborhood of $x(g)$ and, since $x(g)$ is repelling, $R_{n+1}(g)$ is in an even smaller neighborhood of $x(g)$. Proceeding by induction, we obtain that $R_{n+j}(g)$ shrinks to $x(g)$; i.e.~$R(g)$ lands at $x(g)$. It also follows that $\overline R(g)$ is in a small neighborhood of $\overline R(f)$.
\end{proof}

%If for every $g\in N_{\widetilde U}(f,\varepsilon)$ there is a pacman $g:U_g \to (V,\gamma, O)$, then we say that $N_{\widetilde U}(f,\varepsilon)$ is a \emph{neighborhood} of $f:U_f\to (V,\gamma,O)$. We say $N_{\widetilde U}(f,\varepsilon)$ is \emph{sufficiently small} if $\varepsilon$ is sufficiently small (for a fixed $\widetilde U$).

\subsection{Pacman analytic operator}
\label{ss:Pacm}

Suppose that $\hat f:\widehat U\to  \widehat V $ is a renormalization of $f:U_f\to   V $ via a quotient map  $\psi_f:   S_f\to \widehat V$ that extends analytically through $\partial   S_f\setminus \{\alpha\}$ (this actually follows from the definition of renormalization) where $  S_f\subset V$ is the domain of a prepacman $\widehat F$ such that curves $\beta_0, \beta_+, \beta_-$ all land at $\alpha$ at pairwise distinct well-defined  angles. We claim that there exists an analytic renormalization operator defined on a neighborhood of $f$.

We note that $\beta_\pm =f^{k_\pm}(\beta_0)$ for some integers $k_+,k_-$. For a map $g$ that is sufficiently close to $f$, the fact that the three curves land at different angles implies that $\beta_0, g^{k_+}(\beta_0), g^{k_-}(\beta_0)$ are disjoint. Define $\tau_g \colon \beta_0\cup \beta_- \cup  \beta_+   \to \C$ by \[\tau_g\colon z\mapsto z+\alpha(g)-\alpha(f)\sp \text{ on $\beta_0$}\sp\sp\text{ and }\sp  \sp \tau_g=g^{k_\pm}\circ \tau_g\circ  f^{-k_\pm}  \sp \text{ on $\beta_\pm$}.\] Then $\tau_g$ is an equivariant holomorphic motion of $\beta_0\cup \beta_- \cup  \beta_+ $ over a neighborhood of $f$. By the $\lambda$-lemma \cites{BR,ST} $\tau_g$ extends to a holomorphic motion of $S_f$  over a possibly smaller neighborhood of $f$. Denote by $\mu_g$ the Beltrami differential of $\tau_g$. Define a Beltrami differential $\nu_g$ on $\C$ as $\nu_g=(\psi_f)_* \mu_g$ on $\widehat{V}$ and $\nu_g=0$ outside of $\widehat V$ and let $\phi_g$ be the solution of the Beltrami equation \[ \frac{\partial\phi_g}{\partial \bar z}=\nu_g \frac{\partial\phi_g}{\partial   z}\] that fixes $\alpha$, $\infty$, and the critical value. We see that $\psi_g := \phi_g \circ \psi_f \circ \tau_g^{-1}$ is conformal on $  S_g:=\tau_g( {S})$. It follows, that $\psi_g$ depends analytically on $g$ (see Remark on page 345 of \cite{L:FCT}).

We claim now that $\widehat G = (S_g, g^{k_-}, g^{k_+})$ is a prepacman. Indeed, by definition of $\tau_g$, we have $g^{k_\pm}(\tau_g(\beta_0))=\beta_\pm$ and  $\psi_g$  glues $\widehat G$ to a map $\hat g$ which is close to $\hat f$. By Lemma~\ref{lmm:PacmN}, $\hat g$ restricts to a pacman with the same range as $\hat f$. We have:

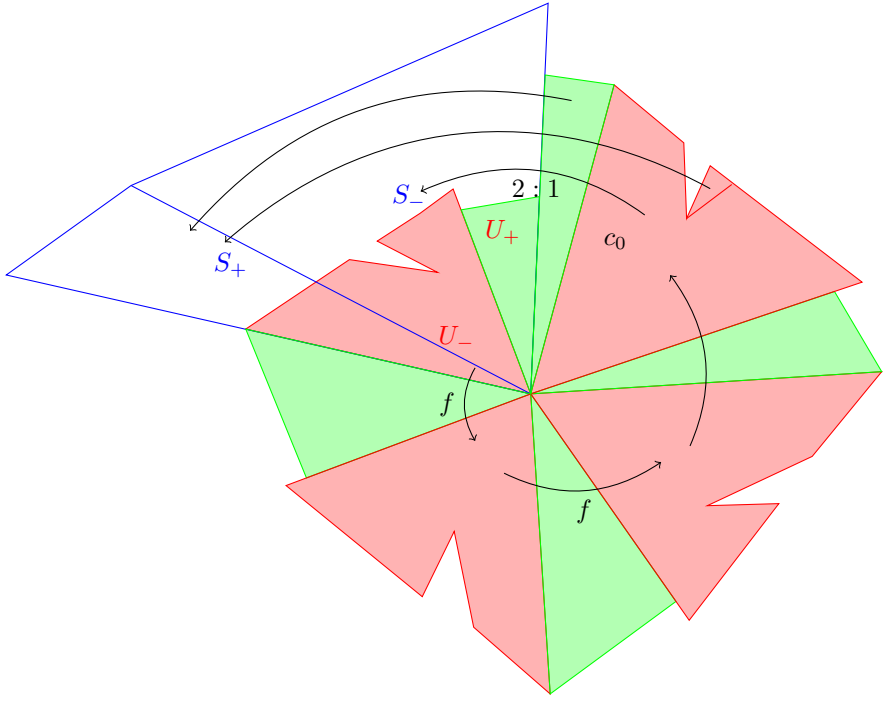
\begin{figure}[t!]

\centering{\begin{tikzpicture}[ scale=0.77]

%%%%%%%%%%%%%%%%%here

\draw[red, fill=red, fill opacity=0.3]
  (0,0) --(-1.34,3.54)--  (-3+ 1.09 ,0.72+2.4)--(-3.74+ 1.09 ,0.72+1.92)--(-2.7+ 1.09 ,0.72+1.38)--(-4.22+ 1.09 ,0.72+ 1.6)--(-4.92, 1.12) --(0,0);

\draw[green,fill=green, fill opacity=0.3] (0,0)--(-1.2, 3.18)-- (0.18/1.2, 4.09/1.2)--(0,0);

\draw[blue] (0,0)-- (-9.96/1.1, 2.26/1.1) -- (-6.9, 3.6)--(0.3, 6.75) --(0,0);

\draw[blue] (0,0) edge   (-6.9, 3.6);

%\draw[red] (-1.18, 3.08)-- (0.19, 3.88);

%\draw[red] (-3.15, 1.90) edge[->,bend left] node [below left] {$\psi_{-1}$} (-4.48, %1.92);

\draw[red] (-0.48, 2.8) node{$U_+$};

%%%%%%%%%%%%%%%%%%%%%%%%%%%%%%%%%

\draw[green ,fill=green, fill opacity=0.3] (0,0)--(-4.92, 1.12)--(-3.52*1.1,-1.32*1.1)--(0.,0.);
\draw[red, fill=red, fill opacity=0.3,scale=1.2] (0,0)--(-3.52,-1.32)--(-1.06 -0.50062, 0.42016-3.34)--(-0.6 -0.50062, 0.42016-2.4)
--(-0.32 -0.50062, 0.42016-3.78)
--(0.28, -4.32)
--(0.,0.);

%\draw (0,0)--(-0.32,-3.78)-- (0.28,-4.32) --(0,0);

\draw[green,fill=green, fill opacity=0.3] (0.,0.)-- (0.28*1.2, -4.32*1.2)-- (2.28*1.1, -3.26*1.1)--(0,0);

\draw[red,fill=red, fill opacity=0.3,scale=1.2] (0,0)--(2.28, -3.26)-- (3.82-0.25,-0.3-1.28)
-- (2.74-0.2,-0.25-1.36)
-- (4.3-0.25,-0.3-0.6)
-- (5.05, 0.32)
-- (0.,0.);

\draw[green,fill=green, fill opacity=0.3] (0,0)--(5.05*1.2, 0.32*1.2)
--(4.7675*1.1, 1.60875*1.1)-- (0.,0.);

% (4.18,2.36)

\draw[red,fill=red, fill opacity=0.3,scale=1.2] (0,0)--(4.7675, 1.60875)-- (1.58+0.50125*2,-0.6775*2+4.64)-- (1.24+0.50125*2,-0.6775*2+3.88)--(1.2+0.50125*2,-0.6775*2+ 4.97)-- (1.2, 4.45)--(0,0);

\draw[red,scale=1.2] (1.24+0.50125*2,-0.6775*2+3.88)--(1.89+0.50125*2,-0.6775*2+ 4.37);
%%%%%%%%%%%%%%%%%%%%%%%%%%%%%%%%%

\draw[green,fill=green, fill opacity=0.3] (0,0)--(1.2*1.2, 4.45*1.2)--(0.25, 5.51)--(0,0);

%\draw  (-2.14, 1.18)  edge[->,bend left]   node[right ]{$2:1$} (-1.9, 3.6);

%\draw (-3.15, 2.10) edge[->,bend right]  (-4.48, 2.22);

\draw (0.705, 5.06812)edge[->, bend right]  (-4.68-1.2,+0.5+ 2.32);

\draw[blue] (-4.78-0.4,+0.3+ 1.92) node{$S_+$};

\draw[blue] (-1.7-0.4,-0.3+ 3.7) node{$S_-$};
\draw[red] (-1.58+0.3,+ 0.97) node{$U_-$};

\draw(1.58*1.2+0.50125*2.4,-0.6775*2.4 +4.31*1.2)  edge[->,bend right=35]  (-4.68-0.6,+0.3+ 2.32);

\draw  (-0.96, 0.45) edge[->,bend right]  node[left]{$f$} (-0.56-0.4,+0.3 -1.11); 
\draw  (-0.46, -1.37) edge[->,bend right]  node[below]{$f$} (2.25, -1.18);
\draw  (2.75, -0.9) edge[->,bend right] (2.41, 2.05);
\draw  (1.79*1.1, 2.81*1.1)edge[->,bend right] node[below]{$2:1$} (-1.5-0.4,-0.3+3.8);
\draw[scale=1.1] (1.33, 2.39) node{$c_0$};

\end{tikzpicture}}

\caption{ Pacman renormalization of $f$: the first return map from $U_-\cup U_+$ back to $S=S_-\cup S_+$ is a prepacman. Spreading around $U_\pm$: the orbits of $U_-$ and $U_+$ before returning back to $S$ triangulate a neighborhood $\bDelta$ of $\alpha$; we obtain $f\colon \bDelta\to \bDelta\cup S$, and we require that $\bDelta\cup S$ is compactly contained in $\Dom f$.}
\label{Fg:Prepacman:OrbitOfUi}
\end{figure}

\begin{thm}[Analytic renormalization operator]
\label{thm:RenOper}
Suppose that $\hat f:\widehat U\to  \widehat V $ is a renormalization of $f:U_f\to   V $ via a quotient map  $\psi_f:S_f\to \widehat V$. Assume that the curves $\beta_0, \beta_-,  \beta_+$ (see Definition~\ref{defn:Prepacmen}) land at $\alpha$ at pairwise distinct well-defined angles. Then for every sufficiently small neighborhood $N_{\widetilde U}(f,\varepsilon)$, %of $f:U_f\to  (V,\gamma_1, O)$ 
there exists a compact analytic pacman renormalization operator $\mathcal{R}\colon g \mapsto \hat g$ defined on $  N_{\widetilde U}(f,\varepsilon) $ such that $\mathcal{R}(f)=\hat{f}$. Moreover, the gluing map $\psi_g$, used in this renormalization, also depends analytically on $g$.\qed
\end{thm}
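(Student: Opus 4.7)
The plan is to follow exactly the construction sketched in the paragraph preceding the statement and to verify that each step produces holomorphic dependence and preserves the pacman structure. First, because $\beta_\pm = f^{k_\pm}(\beta_0)$ and the three curves land at $\alpha$ at distinct well-defined angles, the formula $\tau_g = \mathrm{id}$ on $\beta_0$ and $\tau_g = g^{k_\pm}\circ f^{-k_\pm}$ on $\beta_\pm$ defines an equivariant holomorphic motion of $\beta_0\cup \beta_-\cup \beta_+$ over some $N_{\widetilde U}(f,\varepsilon)$, where I shrink $\varepsilon$ so that the three perturbed arcs remain pairwise disjoint off $\alpha$ (the distinct tangent directions are what makes this shrinking possible). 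By the $\lambda$-lemma of Bers–Royden / Sullivan–Thurston, $\tau_g$ extends to a holomorphic motion of all of $S_f$ over a possibly smaller neighborhood.

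Next I push forward the Beltrami coefficient $\mu_g$ of $\tau_g^{-1}$ by $\psi_f$ to $\widehat V$, extend by zero outside, and solve the Beltrami equation with normalization fixing $\alpha$, $\infty$, and the critical value; this yields $\phi_g$ depending analytically on $g$ by the standard analytic dependence of the Measurable Riemann Mapping Theorem (as in \cite{L:FCT}). Then $\psi_g := \phi_g\circ \psi_f\circ \tau_g^{-1}$ is conformal on $S_g := \tau_g(S_f)$, agrees with $\psi_f$ when $g=f$, and depends analytically on $g$.

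To finish, I verify that $\widehat G_g := (S_g,\, g^{k_-}|_{U_{-,g}},\, g^{k_+}|_{U_{+,g}})$ is a prepacman: equivariance of $\tau_g$ on the cutting arcs gives $g^{k_\pm}(\tau_g(\beta_0)) = \tau_g(\beta_\pm)$, so the iterates $g^{k_\pm}$ descend under $\psi_g$ to a well-defined holomorphic map $\hat g$ on a topological disk inside $\widehat V$ that is $C^0$-close to $\hat f$. Applying \lemref{lmm:PacmN} to $\hat f$ upgrades $\hat g$ to a genuine pacman $\hat g\colon \widehat U_g \to \widehat V$ with the same critical arc and truncation disk as $\hat f$, provided $\varepsilon$ is small. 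Setting $\mathcal R(g) := \hat g$ gives a map into a fixed Banach ball $N_{\widetilde{\widehat U}}(\hat f,\varepsilon')$; compactness is immediate because $\hat g$ extends analytically to a strictly larger domain than $\widehat U$ (coming from the analytic extension of $\psi_f$ through $\partial S_f\setminus\{\alpha\}$ and of $f$ through $\partial U$), so the inclusion of this wider Banach space into the target Banach ball is compact by Montel.

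The main technical obstacle is the interplay at the vertex $\alpha$: $\tau_g$ need not be holomorphic at $\alpha$ a priori, and $\psi_f$ has a singular gluing there, so one must check that the pushed-forward Beltrami coefficient is genuinely $L^\infty$ with norm less than $1$ uniformly in $g\in N_{\widetilde U}(f,\varepsilon)$ and that the resulting $\psi_g$ glues the two branches $g^{k_\pm}$ continuously across $\alpha$. The distinct-angles hypothesis is precisely what controls the geometry near $\alpha$ so that the $\lambda$-lemma extension stays quasiconformal up to the vertex and the equivariance equation on the boundary arcs forces the two descended branches to agree as a single holomorphic map on a neighborhood of the image of $\alpha$.
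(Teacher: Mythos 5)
Your proposal follows the paper's argument essentially step for step: the same equivariant motion $\tau_g$ on the cutting arcs, the same $\lambda$-lemma extension, the same construction of $\psi_g=\phi_g\circ\psi_f\circ\tau_g^{-1}$ via a solution of the Beltrami equation, the same invocation of Lemma~\ref{lmm:PacmN}, and compactness via an analytic extension to a strictly larger domain (the paper phrases this as composing with a compact restriction operator $N_{\widetilde U}(f,\varepsilon)\to N_{U_2}(f,\varepsilon)$, which is the same mechanism). One small slip: to make $\psi_g$ conformal one should push forward by $\psi_f$ the Beltrami differential of $\tau_g$ (a coefficient on $S_f$), not of $\tau_g^{-1}$; otherwise your final paragraph raises the right sanity checks (uniform bound on $\|\nu_g\|_\infty$ from the $\lambda$-lemma, the gluing behaviour at $\alpha$) that the paper leaves implicit.
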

\begin{proof}
We have already shown that $\hat g$ depends analytically on $g\in N_{\widetilde U}(f,\varepsilon)$. Choose an intermediate domain $\widetilde U'\Subset \widetilde U$ with $U_f \Subset \widetilde U' \Subset \widetilde U$ so that the operator $\RR$ is the composition of the restriction operator $ N_{\widetilde U}(f,\varepsilon)\to  N_{ \widetilde U'}(f,\varepsilon)$ and the pacman renormalization operator defined on $N_{ \widetilde U'}(f,\varepsilon)$. Since the former is compact, we conclude that $\RR$ is compact.
\end{proof}

\section{Siegel pacmen}
\label{s:SiegPacm}
We say a holomorphic map $f\colon U\to V$ is  \emph{Siegel} if it has a fixed point $\alpha$, a Siegel quasidisk $\overline Z_f\ni \alpha$ compactly contained in $U$, and a unique critical point $c_0\in U$ that is on the boundary of $ Z_f$. Note that in~\cite{AL-posmeas} a Siegel map is assumed to satisfy additional technical requirements; these requirements are satisfied by restricting $f$ to an appropriate small neighborhood of $\overline Z_f$.

 Let us foliate Siegel disk $Z_{f}$ of $f$ by equipotentials parametrized by their heights ranging from $0$ (the height of $\alpha$) to $1$ (the height of $\partial Z_f$). Namely, if $h\colon Z_f\to \Disk$ is a linearizing map conjugating $f\mid Z_f$ and the rotation $z\mapsto \ee(\theta) z$, then the preimage under $h$ of the circle with radius $\eta$ is the equipotential of $Z_f$ at height $\eta$.

\begin{defn}
\label{dfn:SiegPcm}
A pacman $f:U\to V$ is \emph{Siegel} if 
\begin{itemize}
%\item $f$ restricted is a Siegel map with its Siegel disk $Z_f$ centered at $\alpha$; 
\item $f$ is a Siegel map with Siegel disk $Z_f$ centered at $\alpha$;
\item the critical arc $\gamma_1$ is the concatenation of an external ray $R_1$ followed by an inner ray $I_1$ of $Z_f$ such that the unique point in the intersection $\gamma_1\cap \partial Z_f$ is not precritical;  and
\item writing $f\colon (U\setminus O'_0, O_0)\to (V,O)$ as in~\eqref{eq:TrunPacm}, the disk  $O $ is a subset of $Z_f$ bounded by its equipotential.
\end{itemize}
\end{defn}
The \emph{rotation number} of a Siegel pacman (or a Siegel map)
 is $\theta \in \R/\Z$ so that $\ee(\theta)$ is the multiplier at $\alpha$. It follows (see Theorem~\ref{thm:SiegJulSet P is LocCon}) that the rotation number of Siegel map is in $ \Theta_\bnd$. The level of \emph{truncation} of $f$ is the height of $\partial O$.  
 
 Since $\gamma_1$ is a concatenation of an external ray $R_1$ and an internal ray $I_1$, so is $\gamma_0$: it is a concatenation of an external ray $R_0$ and an internal ray $I_0$ with $f(R_0\cup I_0)=R_1\cup I_1$. Two Siegel pacmen $f\colon U_f\to V_f$ and $g\colon U_g\to V_g$ are combinatorially equivalent if they have the same rotation number and if $R_0(f_1)$ and $R_0(f_2)$ have the same external angles, see~\eqref{ss:DynObj}. Starting from~\S\ref{ss:StandardPacmen} we will normalize $\gamma_0$ so that it passes through the critical value.

A \emph{hybrid conjugacy} between Siegel maps $f_1\colon U_1\to V_1$ and $f_2\colon U_2\to V_2$ is a qc-conjugacy $h\colon U_1\cup V_1\to U_2\cup V_2$ that is conformal on the Siegel disks. A hybrid conjugacy between Siegel pacmen is defined in a similar fashion.  We will show in Theorem~\ref{thm:HyrbrEquivOfPacm} that combinatorially equivalent pacmen are hybrid equivalent.

We will often refer to the connected component $Z'_f$ of $f^{-1}(Z_f)\setminus Z_f$ attached to $c_0$ as \emph{co-Siegel disk}.

\subsection{Local connectivity and bubble chains}
\label{ss:LocConnK_p}
Consider a quadratic polynomial $p_\theta\colon z\mapsto \ee(\theta) z + z^2$. 

\begin{thm}
\label{thm:SiegJulSet P is LocCon}
If $\theta\in \Theta_\bnd$, then the closed Siegel disk $\overline Z$ of $p_\theta$ is a quasidisk containing the critical point of $p_\theta$.

Conversely, suppose a holomorphic map $f\colon U\to V$ with a single critical point has a fixed Siegel quasidisk $\overline Z_f\Subset U\cap V$ containing the critical point of $f$. Then $f$ has a rotation number of bounded type.
\end{thm}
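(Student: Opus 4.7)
The plan is to reduce both implications to the Herman--\'Swi\c{a}tek dichotomy for real-analytic critical circle maps (a critical circle map is quasisymmetrically conjugate to a rigid rotation if and only if its rotation number is of bounded type), using the Douady--Ghys surgery and its inverse to pass between the Siegel picture and the circle picture.

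For $\theta\in\Theta_\bnd\Rightarrow \overline Z$ a quasidisk, I would follow Petersen's approach. Glue to $p_\theta|_{\C\sm Z}$ the standard degree-$3$ Blaschke model $B_\theta$ with an invariant unit circle on which $B_\theta$ acts as a real-analytic critical circle map of rotation number $\theta$, identifying $\partial Z$ and $\mathbb S^1$ by a diffeomorphism. Herman--\'Swi\c{a}tek supplies a quasisymmetric conjugacy of $B_\theta|_{\mathbb S^1}$ to $R_\theta\colon z\mapsto\ee(\theta)z$; extending it quasiconformally into $\overline{\mathbb D}$, pulling the resulting almost-complex structure back through the surgery, and straightening via the Measurable Riemann Mapping Theorem produces a quadratic polynomial with multiplier $\ee(\theta)$, necessarily equal (up to affine conjugacy) to $p_\theta$ itself by rigidity of the quadratic family. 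The straightening carries $\mathbb S^1$ onto $\partial Z$, so $\partial Z$ is a quasicircle. The critical point belongs to $\partial Z$ by the classical Fatou--Sullivan argument: $\partial Z$ is contained in the $\omega$-limit set of the critical orbit, which is bounded because $\Kfilled_{p_\theta}$ is compact, and $\partial Z$ is $p_\theta$-invariant, forcing $c_0\in\partial Z$.

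For the converse I would run the surgery in reverse. The quasiconformal uniformisation $\phi\colon(\overline{\mathbb D},0)\to(\overline Z_f,\alpha)$ linearising $f|_{\overline Z_f}$ to $R_\theta$ restricts on $\mathbb S^1$ to a quasisymmetric conjugacy of $R_\theta$ to the boundary dynamics $g:=f|_{\partial Z_f}$. Paste $f$ on $U\sm\intr Z_f$ to $R_\theta$ on $\overline{\mathbb D}$ along $\phi|_{\mathbb S^1}$; the result is a quasiregular map whose dilatation is bounded and compactly supported, coming from $\bar\partial\phi$. Solving the Beltrami equation and straightening yields a holomorphic map in a neighbourhood of an invariant analytic circle whose restriction is a real-analytic critical circle map $\tilde g$ (the unique critical point of $f$ on $\partial Z_f$ is transported onto the invariant circle), and $\tilde g$ is quasisymmetrically conjugate to $R_\theta$ by construction. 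The converse direction of the Herman--\'Swi\c{a}tek theorem then forces $\theta\in\Theta_\bnd$.

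The main obstacle is the careful setup of the surgery, particularly in the converse: one must check that the pasted map is quasiregular, that straightening yields a genuinely holomorphic map in a neighbourhood of the invariant circle, and that the unique critical point of $f$ persists as a critical point of $\tilde g$ with the correct local degree. Once these technicalities are handled, both implications collapse to the Herman--\'Swi\c{a}tek dichotomy; no new dynamical input beyond the surgery and this classical theorem is needed.
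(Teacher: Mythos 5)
Your first direction is essentially the standard Douady--Ghys/Petersen surgery argument, and that is the right strategy, but the surgery runs in the opposite direction from how you phrase it. One starts from the Blaschke model $B_\theta$, replaces its dynamics on $\Disk$ by a qc conjugate of $R_\theta$ (possible when $\theta\in\Theta_\bnd$ by Herman--\'Swi\c{a}tek), straightens the resulting quasiregular map, and then identifies the output with $p_\theta$ by uniqueness in the quadratic family; the quasicircle $\partial Z$ and the critical point on it are \emph{produced} by the construction. You cannot ``glue $B_\theta$ to $p_\theta|_{\C\sm Z}$ identifying $\partial Z$ with $\mathbb S^1$ by a diffeomorphism,'' because a priori $\partial Z$ is not even a Jordan curve. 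Your Fatou argument that $c_0\in\partial Z$ also does not close: $\partial Z\subset\omega(c_0)$ does not force $c_0\in\partial Z$. Both conclusions simply fall out of the surgery once it is set up in the correct order.

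The converse contains the genuine gap, and it is precisely the obstruction that led Avila--Lyubich to introduce quasicritical circle maps, which is the route the paper takes (\cite{AL-posmeas}). You assert that solving the Beltrami equation yields a holomorphic map in a neighbourhood of an \emph{analytic} invariant circle whose restriction, carrying the critical point, is a real-analytic critical circle map. That configuration is impossible. After any straightening, the new map $\tilde g$ is holomorphic on the image of the full disk $\Disk$ (not merely an outer annulus) and has an irrationally indifferent fixed point there, so $\tilde g$ restricted to its Siegel disk is conformally conjugate to $R_\theta$. If the invariant curve were analytic, the Riemann map of the straightened Siegel disk would extend analytically across it, and the linearising conjugacy to $R_\theta$ would extend by Schwarz reflection to a full two-sided neighbourhood of the curve; $\tilde g$ would then be a conformal conjugate of the rotation in a neighbourhood of the curve and could have no critical point there. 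So the straightened invariant curve is necessarily a quasicircle, the boundary dynamics is not a critical circle map in the classical Herman--\'Swi\c{a}tek framework, and the final step of your argument cannot be taken. The correct resolution is to forgo holomorphy: work directly with the quasiregular model $\phi^{-1}\circ f\circ\phi$ near $\mathbb S^1$ as a \emph{quasicritical} circle map, establish the real a priori bounds in that generality, and deduce bounded type from the quasisymmetry of the linearizer, as in \cite{AL-posmeas}. A reverse surgery followed by the classical Herman--\'Swi\c{a}tek dichotomy does not reach the conclusion.
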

\noindent In particular, $p_\theta$ is a Siegel map. The first part of Theorem~\ref{thm:SiegJulSet P is LocCon} follows essentially from the Douady-Ghys surgery, see~\cite{D-Siegel}. Conversely, if $f\colon U\to V$ is a Siegel map, then applying the inverse Douady-Ghys surjery we obtain a quasicritical circle map $\widetilde f$, see~\cite{AL-posmeas}*{Definitions 3.1}. By~\cites{H,Sw,AL-posmeas}, the restriction of $\widetilde f$ to the unit circle is quasisymmetrically conjugate to the rigid rotation if and only if the rotation number of $\widetilde f$ is bounded. (Compare to~ \cite{GJ}.)

Let us now fix a polynomial $p=p_\theta$ with $\theta\in \Theta_\bnd$. A \emph{bubble} of $p$ is either
\begin{itemize}
\item $Z_0\coloneqq \overline Z_p$, or
\item $Z_1\coloneqq  \overline Z'_p = \overline {p^{-1}(Z_p)\setminus Z_p}$, or
\item an iterated $p$-lift of $\overline Z'_p$ (see~\S\ref{ss:Notations} for the definition of a lift).
\end{itemize}
The \emph{generation} of a bubble $Z_k$ is the smallest $n\ge0$ such that $p^n(Z_k)\subset  Z_0$. In particular, $ Z_0$ has generation $0$ and $ Z_1$ has generation $1$. If the generation of $Z_k$ is at least $2$, then $p\colon Z_k\to p(Z_k)$ admits a conformal extension through $\partial Z_k$ (because  $p(Z_k)\not\ni c_1$).

We say that a bubble $Z_n$ is \emph{attached} to a bubble $Z_{n-1}$ if $Z_{n}\cap Z_{n-1}\not=\emptyset$ and the generation of $Z_{n}$ is greater than the generation of $Z_{n-1}$.

A \emph{limb} of a bubble $Z_k$  is the closure of a connected component of $\Kfilled_p\setminus Z_k$ not containing the $\alpha$-fixed point. A limb of $Z_0=\overline Z_p$ is called \emph{primary}. 

\begin{thm}[\cite{Pe}]
\label{thm:K_P is LocConn}
The filled-in Julia set $\Kfilled_p$ is locally connected. Moreover, for every $\varepsilon>0$ there is an $n\ge 0$ such that every connected component of $\Kfilled_p$ minus all bubbles with generation at most $n$ has diameter less than $\varepsilon$.
\end{thm}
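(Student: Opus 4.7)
The plan is to follow Petersen's original strategy, based on the Douady--Ghys surgery. Since $\theta\in\Theta_\bnd$, the surgery replaces $p=p_\theta$ on a neighborhood of $\Kfilled_p\setminus Z_p$ by a degree--$3$ Blaschke product, under which $p\mid \partial Z_p$ becomes a critical circle homeomorphism of bounded type. Such circle maps satisfy Yoccoz's real \emph{a priori} bounds: combinatorially consecutive return intervals around the critical point are $K(\theta)$-comparable. Transporting these bounds back to the dynamical plane of $p$ yields uniform control on the conformal shape and relative size of $\overline Z'_p$ and its iterated lifts sitting along $\partial Z_p$, and, by pullback, on all bubbles.

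Using bubbles as the basic combinatorial units, I would construct a puzzle-type partition of $\Kfilled_p$. The first part of Theorem~\ref{thm:SiegJulSet P is LocCon} gives that $\partial Z_p$ is a quasicircle; since each higher-generation bubble admits a univalent $p$-extension onto its image (the target is never the critical bubble), all bubbles are quasidisks with uniformly bounded dilatation. For fixed $n$, let $\YY_n$ be the closure of a connected component of $\Kfilled_p\setminus \bigcup\{\text{bubbles of generation}\le n\}$. Any such $\YY_n$ is a \emph{limb} attached to a unique generation--$n$ bubble $Z_n$ along a single boundary point; it suffices to bound its diameter.

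The core estimate is a uniform annular lower bound. By the real \emph{a priori} bounds applied to the critical circle model, each primary limb $L$ of $Z_p$ admits a definite annulus $A(L)\subset \C\setminus\overline Z_p$ separating $L$ from the remaining primary limbs and from $\infty$, with $\mod A(L)\ge \mu(\theta)>0$. Pulling $A(L)$ back along a bubble chain $Z_p = Z_0 \supset Z_1 \supset \dots \supset Z_n$, each pullback step is univalent (the critical value lies on $\partial Z_p$ and the bubble chain avoids the critical orbit after the first step), so by the Gr\"otzsch inequality the limb $\YY_n$ attached to $Z_n$ is surrounded by $n$ disjoint annuli of definite moduli, giving $\diam \YY_n \le C\, e^{-\kappa(\theta)\,n}$. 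This both establishes local connectivity at every point of $\partial Z_p$ and its preimages and yields the quantitative ``moreover'' statement.

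The main obstacle is precisely that the critical point lies on $\partial Z_p$, so naive pullbacks would pass through $c_0$ and lose univalence. The resolution is that the bubble chain itself is defined by the orbit structure of $c_1$ on $\partial Z_p$: after factoring out the first step (which lands us off the critical bubble), all subsequent pullbacks are univalent with bounded Koebe distortion, and the bounded-type hypothesis is exactly what ensures that the postcritical set on $\partial Z_p$ does not cluster too densely against any given bubble attachment point. This is where the Douady--Ghys surgery and the real \emph{a priori} bounds are indispensable; without them the moduli along a bubble chain could collapse and the quantitative shrinking would fail.
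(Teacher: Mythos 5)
The paper does not prove this theorem: it is a black-box citation of Petersen's local connectivity result, so your blind sketch has to be judged against Petersen's original argument rather than against anything in this text.

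Your identification of the ingredients is correct: Douady--Ghys surgery, the critical circle map of bounded type, real \emph{a priori} bounds, and a shrinking-annuli argument along bubble chains. But the central step is stated as an assumption rather than proved, and that assumption is essentially the hard part of Petersen's paper. You assert that ``each primary limb $L$ of $Z_p$ admits a definite annulus $A(L)\subset\C\setminus\overline Z_p$ separating $L$ from the remaining primary limbs and from $\infty$, with $\mod A(L)\ge \mu(\theta)>0$,'' and that this follows from the real \emph{a priori} bounds. It does not follow directly: the real bounds control only the one-dimensional return geometry on the circle, and upgrading them to uniform two-dimensional annular estimates for puzzle/limb pieces is exactly the ``complex a priori bounds'' theorem, which is the technical core of Petersen's argument (and of the Świątek--Herman/Yampolsky complex bounds machinery it builds on). Taking these annuli as given amounts to assuming the main estimate.

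The univalence claim along bubble chains is also not correct as stated. Bubble chains are chains of \emph{attached} bubbles, not nested ones (your notation $Z_0\supset Z_1\supset\dots$ is off), and the critical orbit is dense on $\partial Z_p$, so it does cluster against every attachment point; bounded type only controls the \emph{rate} of approach. Consequently, pullbacks of a fixed annulus $A(L)$ along a bubble chain are \emph{not} automatically univalent, and handling the steps that pass near the critical value is precisely what forces Petersen to set up a genuine puzzle partition and track critical returns combinatorially, rather than work with a single annulus per primary limb. Your ``resolution'' paragraph gestures at this but does not resolve it, and the claim that bounded type prevents the postcritical set from ``clustering too densely'' at a given attachment point is misleading: it clusters everywhere, and the argument must survive that.

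In short, the strategy is the right one and the appeal to surgery, real bounds, and nested-annulus shrinking is faithful to Petersen's approach, but the proposal has a genuine gap: the two load-bearing claims (definite separating annuli for primary limbs, and moduli-preserving pullbacks along bubble chains) are the content of Petersen's complex bounds and puzzle construction, and they cannot be taken for granted.
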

In particular the diameter of bubbles in $\Kfilled_p$ tends to $0$: for every $\varepsilon>0$ there are at most finitely many bubbles with diameter greater than $\varepsilon$. Similarly, the diameter of limbs of any bubble tends to $0$.

An (infinite) \emph{bubble chain of $\Kfilled_p$} is an infinite sequence of bubbles $B=(Z_1,Z_2,\dots )$ such that $Z_1$ is attached to $\overline Z_p$ and $Z_{n+1}$ is attached to $Z_n$, see Figure~\ref{Fg:Bubble Chain}.

As a consequence of Theorem~\ref{thm:K_P is LocConn}, every bubble chain $B=(Z_1,Z_2,\dots )$ \emph{lands}: there is a unique $x\in \Kfilled_p$ such that for every neighborhood $U$ of $x$ there is an $m\ge 0$ such that $\displaystyle\bigcup_{i\ge m}Z_i$ is within $U$. Conversely, if $x\in  \Kfilled_p$ does not belong to any bubble, then there is a bubble chain $B=(Z_1,Z_2,\dots )$ landing at $x$. A point $x$ is periodic if and only if $B$ is periodic: there is an $m>1$ and $q\ge 1$ such that $p^q$ maps $(Z_m,Z_{m+1},\dots)$ to $(Z_1,Z_2,\dots )$.

Let $f\colon U\to V$ be a Siegel pacman. \emph{Limbs, bubbles, and bubble chains for $f$} are defined in the same way as for quadratic polynomials with Siegel quasidisks. In particular, a bubble of $f$ is either $\overline Z_f$, or $\overline Z'_f = \overline {f^{-1}(Z_f)\setminus Z_f}$, or an $f^{n-1}$-lift of $\overline Z'_f$, where $n$ is the \emph{generation} of the bubble. Since $\overline Z_f$ is the only bubble intersecting $\{c_1\}\cup \gamma_1$, all bubbles of positive generation are conformal lifts of $\overline Z'_f$. 

We define the \emph{Julia set} of $f$ as
\begin{equation}
\label{eq:Jul f}
\Jul_f\coloneqq \overline{ \bigcup_{n\ge 0} f^{-n} (\partial Z_f) }.\end{equation}
We will show in Theorem~\ref{thm:LocConnOfJul for StandPacm} that Theorem~\ref{thm:K_P is LocConn} holds for standard Siegel pacmen and that $\Jul_f$ is the closure of repelling periodic points. 

\emph{Limbs, bubbles, and bubble chains of a prepacman $F$} are preimages of the corresponding dynamical objects of $f$.

\begin{figure}[t!]
\centering{\begin{tikzpicture}[ scale=0.88]
  \node at (0,0.1){\includegraphics[scale=0.6]{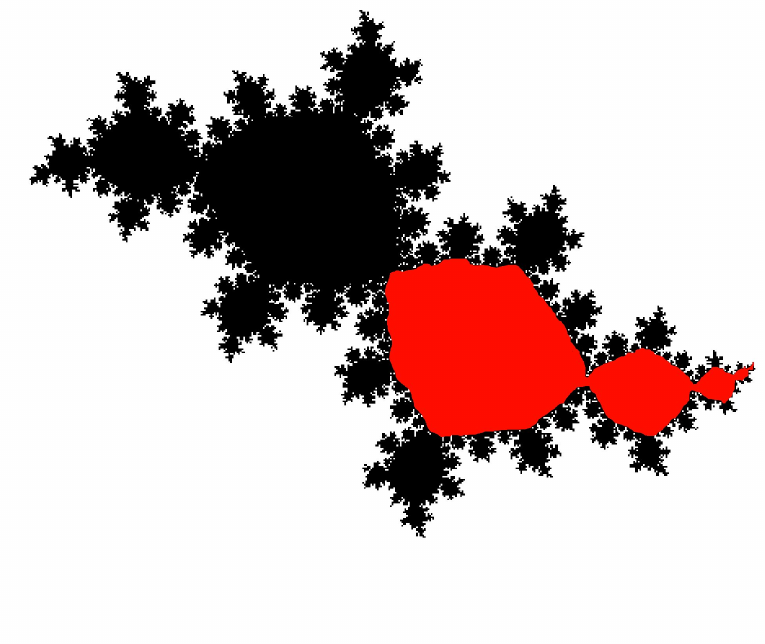}};
  \node[white] at (-0.8,1.5) {$Z_0$};
   \node[white] at (1.1,-0.27) {$Z_1$};
   \node[white] at (3,-0.71) {$Z_2$};
   \node[red,right] at (4.34,-0.5){$\beta$};
\end{tikzpicture}}
\caption{A bubble chain (red) landing at the $\beta$-fixed point.
 }
\label{Fg:Bubble Chain}
\end{figure}
 
\subsection{Siegel prepacmen}
\label{ss:SiegelPrepacmen}
A prepacman $Q$ of a Siegel pacman $q$ is also called \emph{Siegel}; the \emph{rotation number} and \emph{level of truncation} of $Q$ are those of $q$. Recall that $Q$ consists of two commuting maps $q_-\colon U_-\to S_Q,$ $q_+ \colon U_+\to S_Q$ such that $U_{-}$ and $U_+$ are separated by $\beta_0$. Given a Siegel map $f$ we say that $f$ has a \emph{prepacman $Q$ around $x\in \partial Z_f$} if $q_-~,q_+$ are iterates of $f$, the vertex of $S_Q$ is at $\alpha(f)$, and  $\beta_0(Q)$ intersects $\partial Z_{f}$ at $x$.

\begin{lem}
\label{lem:SiegPacmExist}
Suppose that $p$ is a Siegel quadratic polynomial with rotation number $\theta\in \Theta_\bnd$. Consider a point $x\in \partial Z_p$ such that $x$ is neither the critical point of $p$ nor its iterated preimage. Then for every $r\in (0,1)$ and every $\varepsilon>0$, the map $p$ has a Siegel prepacman 
 \begin{equation}
 \label{eq:lem:SiegPacmExist}
   Q=(q_-\colon U_-\to S_Q,\sp q_+ \colon U_+\to S_Q)
\end{equation}
around $x$ such that
\begin{itemize}
\item the rotation number of $Q$ is a renormalization of $\theta$ -- iteration of~\eqref{eq:R_prm};
\item for every $z\in U_{-}\cup U_+$ the orbit $z, p(z), \dots , p^k(z)$ is in the $\varepsilon$-neighborhood of $\overline Z_p$ for $k$ such that $p^k(z)=q_{\pm}(z)$; and  
\item $r$ is the level of truncation of $Q$;
\item every external ray segment (see~\S\ref{ss:DynObj}) of $Q$ is within an external ray of $p$.
\end{itemize}
\end{lem}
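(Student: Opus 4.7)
The construction proceeds by selecting iterated preimages of $x$ on $\partial Z_p$, cutting the plane along concatenations of internal and external rays through these points, and verifying that appropriate iterates of $p$ give the two branches of a prepacman.

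\emph{Combinatorics.} Since $\theta\in\Theta_\bnd$, Theorem~\ref{thm:SiegJulSet P is LocCon} provides a quasiconformal linearization $\phi\colon \overline Z_p\to\overline{\mathbb{D}}$ conjugating $p|_{\overline Z_p}$ to the rotation $R_\theta(w)=\ee(\theta) w$. The action~\eqref{eq:ActOnRotNumb} on rotation numbers is precisely the prime renormalization step for circle rotations: one step sends $\theta$ to the rotation number of the first return of $R_\theta$ to a designated sub-arc. Iterating this $n$ times produces a pair of return times $a=a(n),b=b(n)$, coming from consecutive continued-fraction convergents of $\theta$, such that $R_\theta^a$ and $R_\theta^b$ generate the renormalized rotation on an arc of $\partial\mathbb{D}$ adjacent to $\tilde x=\phi(x)$. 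Because $\theta\in\Theta_\bnd$, the real \emph{a priori} bounds (obtained via the Douady--Ghys surgery to critical circle maps, see the historical discussion) ensure that the sectors on which these return maps act shrink geometrically as $n\to\infty$.

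\emph{Cutting arcs.} Given $\varepsilon>0$, choose $n$ so large that the first-return trajectory of $R_\theta^a$ and $R_\theta^b$, transported back to the dynamical plane, stays in the $\varepsilon/2$-neighborhood of $\overline Z_p$. Let $x_-\in\partial Z_p$ be the point with $p^a(x_-)=x$ and $\phi(x_-)$ on one side of $\tilde x$, and let $x_+\in\partial Z_p$ satisfy $p^b(x_+)=x$ on the other side. By local connectivity of $\mathcal{K}_p$ (Theorem~\ref{thm:K_P is LocConn}) together with the assumption that $x,x_\pm$ are not iterated preimages of $c_0$, each of these points is the landing point of a unique external ray of $p$. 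Define $\beta_0$ as the concatenation of the internal ray $\phi^{-1}([0,\tilde x])$ with the external ray of $p$ landing at $x$; define $\beta_\pm$ analogously through $x_\pm$. Let $S_Q$ be the closed sector with vertex at $\alpha$ bounded by $\beta_-,\beta_+$, the arc of the equipotential of height $r$ in $Z_p$, and an outer boundary.

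\emph{Prepacman structure.} Set $q_-:=p^a$ and $q_+:=p^b$, and let $U_\pm\subset T_\pm$ be the connected component of $q_\pm^{-1}(S_Q)$ attached to $\alpha$. By construction $q_\pm(\beta_\pm)=\beta_0$, and since both maps are iterates of $p$ realizing the first return to $S_Q$ on opposite sides of $\beta_0$, they commute in a neighborhood of $\beta_0$. Exactly one of $U_\pm$ contains the critical point $c_0$ of $p$ (determined by which of the orbits $\{w,p(w),\dots,p^{a-1}(w)\}$ or $\{w,p(w),\dots,p^{b-1}(w)\}$ passes through $c_0$), giving the $2$-to-$1$ branch; the other branch is univalent. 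The required properties now follow: the rotation number of $Q$ is the $n$-fold iterate of~\eqref{eq:ActOnRotNumb} by construction; the truncation level is $r$ by choice of equipotential; external ray segments of $Q$ are pullbacks of the rectangle foliation between $\beta_-$ and $\beta_+$ under iterates of $p$, so they lie within external rays of $p$.

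\emph{Main obstacle.} The only delicate point is the orbit-proximity requirement that $p^j(z)\in\varepsilon\text{-nbhd}(\overline Z_p)$ for $0\le j\le a$ (resp.\ $b$) and all $z\in U_\pm$. This is where we genuinely need $\theta\in\Theta_\bnd$: under bounded-type combinatorics the successive renormalization sectors are uniformly thin around $\overline Z_p$, so for $n$ chosen sufficiently large in Step~2 the whole orbit of every $z\in U_\pm$ is confined to the prescribed $\varepsilon$-neighborhood. This follows from uniform continuity of the finitely many iterates $p,p^2,\dots,p^{\max(a,b)}$ on a fixed compact neighborhood of $\overline Z_p$ combined with the geometric smallness of $U_\pm$ guaranteed by the real \emph{a priori} bounds.
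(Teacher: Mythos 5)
The construction of the cutting arcs and the prepacman branches in your proposal matches the paper's strategy, but your treatment of the orbit-proximity requirement (your ``Main obstacle'' paragraph) has a genuine gap.

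Your claim that the orbit $z,p(z),\dots,p^k(z)$ stays in the $\varepsilon$-neighborhood of $\overline Z_p$ because of ``uniform continuity of the finitely many iterates $p,p^2,\dots,p^{\max(a,b)}$'' together with ``geometric smallness of $U_\pm$'' does not work. The return times $\aa=a(n)$ and $\bb=b(n)$ grow without bound as you increase the number $n$ of renormalization steps, so the modulus of continuity of $p^{\max(a,b)}$ degrades as $n\to\infty$; there is no fixed compact family of iterates to appeal to. Moreover, smallness of $U_\pm$ itself does not control the intermediate images $p^j(U_\pm)$: a point $z\in U_\pm\cap\Kfilled_p$ may lie in a primary limb $L$ attached to the preimage $c'_0\in\partial X$ of the critical point, and as $z$ is iterated, its orbit passes through the limbs $L_{-\ell},\dots,L_{-1},L_0$ attached to $c_{-\ell},\dots,c_{-1},c_0$; these limbs (in particular $L_0$, containing the co-Siegel disk $\overline Z'_p$) have size bounded below, so being in one of them is not automatically being $\varepsilon$-close to $\overline Z_p$. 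Your argument nowhere uses the truncation at level $r$, which is where the paper's proof does the real work: the truncation removes points whose $p^\aa$-image lies in the sub-disk of $Z_p$ of height $t=r^\delta$, which is close to $1$ since $\delta$ is small; combined with the fact that the pullback sector $W$ attached to $\alpha'$ has small angle $\delta$, this forces the critical iterate $p^j(z)$ to lie in a small neighborhood of $c_0$. Only then does continuity over the \emph{fixed} number $\ell+1$ of iterates (where $\ell$ depends on $\varepsilon$ only, via Theorem~\ref{thm:K_P is LocConn}) control the adjacent points $p^{j-i}(z)\in L_{-i}$, $i\le\ell$. Without this truncation argument, and with escaping points handled separately via the small external equipotential height $\sigma$ and the landing property of rays (Theorem~\ref{thm:SiegJulSet P is LocCon}), the proximity claim does not follow.
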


\begin{figure}[t!]

\centering{\begin{tikzpicture}[ scale=0.9]

\draw (-4.,0.)-- (0.58,3.04);
\draw (0.58,3.04)-- (4.,0.);
\draw (4.,0.)-- (-0.68,-3.);
\draw (-0.68,-3.)-- (-4.,0.);
\draw (4.,0.)-- (6.,1.56);
\draw (6.02,-1.9)-- (4.,0.);

\draw[ draw opacity=0 ,fill=red, fill opacity=0.1]
  (0,0) --(-1.34,3.54)--  (-1.34,3.54)-- (-3+ 1.09 ,0.72+2.4)--(-2.7+ 1.09 ,0.72+1.38)--(-4.22+ 1.09 ,0.72+ 1.6)--(-4.9127182890505665,1.0896661385188597) -- (-3.047927460454034,0.6319433450261434)--(0,0);

\draw[orange] (0.,0.)-- (-2.24, 1.14);
\draw[orange] (0.,0.)-- (0.15034270964325902,2.754812628234827);
\draw[orange] (0.,0.)-- (-3.047927460454034,0.6319433450261434);

\draw[orange] (0.,0.)-- (-0.8,2.12);

\draw[orange] (-1.98, 0.1) node {$I_-$};

\draw[orange] (-0.36, 1.48) node {$I_0$};
\draw[orange] (-1.5, 1.08) node {$I$};
\draw[orange] (0.34, 1.76) node {$I_+$};

%%%%%%%%%%%%%%%%%here

\draw[blue] (-3.047927460454034,0.6319433450261434)-- (-6.34,1.44);
\draw[blue] (-6.34,1.44)-- (0.26,5.82);
\draw[blue] (0.15034270964325902,2.754812628234827)-- (0.26,5.82);

\draw[blue,dashed] (-4.82,2.45) edge  node [above] {$R$}(-6, 3);
\draw[blue]  (-2.24, 1.14) edge   (-4.82,2.45);

\draw[blue, dashed] (-6.34,1.44) edge  node [below] {$R_-$}  (-7.62, 1.76);
\draw[blue,dashed] (0.26,5.82)edge  node [right] {$R_+$} (0.26,6.84);

\draw[red]  (-0.8,2.12) --(-1.34,3.54);
\draw[red, dashed] (-1.34,3.54) edge  node [left] {$R_0$} (-2.28, 5.76);

\draw[red] (-1.34,3.54)-- (-3+ 1.09 ,0.72+2.4);
\draw[red] (-3+ 1.09 ,0.72+2.4)-- (-2.7+ 1.09 ,0.72+1.38);

\draw[red] (-2.7+ 1.09 ,0.72+1.38)--(-4.22+ 1.09 ,0.72+1.6);
\draw[red] (-3+ 1.09 ,0.72+2.4)-- (-3.74+ 1.09 ,0.72+1.92);
\draw[red] (-4.9127182890505665,1.0896661385188597)-- (-4.22+ 1.09 ,0.72+ 1.6);
\draw[red] (-3.74+ 1.09 ,0.72+1.92)-- (-2.7+ 1.09 ,0.72+1.38);
\draw[red] (-1.34,3.54)-- (-1.18, 3.08);
\draw[red] (-1.18, 3.08)-- (0.19, 3.88);

\draw[blue] (-4.58, 1.82) node{$S_+$};
\draw[blue] (-1.9, 3.8) node{$S_-$};

\draw[red] (-1.64- 0.79 ,-0.72+ 2.18) node{$\Upsilon_-$};
\draw[red] (-3.15+ 1.09 ,0.72+ 1.90) node{$\Upsilon_+$};

\draw (-3.25+ 1.0 ,0.72+ 2.10) edge[->,bend right]  (-4.38, 2.12);

%\draw[red] (-3.15, 1.90) edge[->,bend left] node [below left] {$\psi_{-1}$} (-4.48, %1.92);

\draw[blue] (-0.86, 5.62) node{$E$};

\draw[red] (-0.48, 2.8) node{$U_+$};

\draw(-0.38, 3.1)  edge[->,bend right=80]  (-4.78, 2.22);

\draw  (-2.44, 1.88)  edge[->,bend left]   (-1.9, 3.6);

\draw (-0.5,-1) node{$Z_\str$};

\draw (5,-0) node{$Z'_\str$};

\draw (0,-0.15) node{$\alpha$};
\draw (-2.7+ 1.09 ,0.72+1.23) node{$\omega$};
\end{tikzpicture}}

\caption{A full Siegel prepacman, compare with Figure~\ref{Fg:Prepacman}. In the dynamical plane of a quadratic polynomial $p$, the sector $S_Q=S_-\cup S_+$ is bounded by $R_-\cup I_-\cup I_+\cup R_+$ and truncated by an equipotential at small height. Pulling back $S_-~, S_+$ along appropriate branches of $p^\aa,p^\bb$ we obtain $U_-=\Upsilon_- \cup \Upsilon_+$ and $U_+$ so that $(p^\aa\mid U_-~,$ $p^\bb\mid U_+)$ is a full prepacman\eqref{eq:lem:prf:FullSiegPacmExist}. Truncating $(p^\aa\mid U_-~,$ $p^\bb\mid U_+)$ at $\omega$ and at the vertex where $R_+$ meets $E$ (see Figure~\ref{Fg:ConstrOfSiegPrepac Axil:part 2}) we obtain a required prepacman~\eqref{eq:lem:SiegPacmExist}.}
\label{Fg:SiegelPrepacman}
\end{figure}

Before proceeding with the proof let us define a sector renormalization of $p\mid \overline Z_p$. Consider the rotation of $\Lbb_\theta\colon z\mapsto \ee(z)$ of the unit disk $\overline \Disk$. Let $h\colon \overline Z_p\to \overline \Disk$ be the unique conformal conjugacy between $p\mid \overline Z_g$ and $\Lbb_\theta\mid \overline \Disk$ normalized such that $h(x)=1$. A sector \emph{pre-renormalization} of $\Lbb_\theta$ is a commuting pair ${(\Lbb^\aa\mid \Sbb_-~,\sp  \Lbb^\bb \mid \Sbb_+)}$ realizing the first return map to $\Sbb_-\cup  \Sbb_+$ (see Figure~\ref{Fg:RenOfRotDisc}), where $\Sbb_-,\ \Sbb_+$ are closed sectors of $\overline \Disk$ such that $\Sbb_-\cap \Sbb_+$ is the internal ray towards $1$; see details in~\S\ref{ss:RenRotUnitDisc}. Denote by $\delta$ the angle of $\Sbb=\Sbb_-\cup \Sbb_+$ at $0$. The gluing map  $z\to z^{1/\delta}$ projects  $(\Lbb^\aa\mid \Sbb_-~,\sp  \Lbb^\bb \mid \Sbb_+)$ to a new rotation.

\begin{defn}
\label{defn:SectRenormZp}
A \emph{sector pre-renormalization of $p\mid \overline Z_p$ around $x\in \partial Z_p$} is a commuting pair ${(p^\aa\mid X_-~,\sp  p^\bb \mid X_+)}$ obtained by pulling back a sector pre-renormalization $(\Lbb^\aa_\theta\mid \Sbb_-~,\sp  \Lbb_\theta^\bb \mid \Sbb_+)$ by $h$ where
\begin{itemize}
\item $X_-\coloneqq h^{-1}(\Sbb_-),$ $X_+\coloneqq h^{-1}(\Sbb_+)$ and $X\coloneqq h^{-1}(\Sbb)=X_-\cup X_+$ are closed sectors of $\overline Z_f$,
\item the internal ray $I_0\coloneqq X_-\cap X_+$  lands at $x$.
\end{itemize}
The gluing map $z\mapsto z^{1/\delta}$ descents to \[\psi_x\coloneqq h^{-1}\circ[z\to z^{1/\delta}] \circ h\] with $\psi_x(X)=\overline Z_f$. 
\end{defn}
\begin{proof}[Proof of Lemma~\ref{lem:SiegPacmExist}]
Consider the sector renormalization  $(p^\aa\mid X_-~,\sp  p^\bb \mid X_+)$ from Definition~\ref{defn:SectRenormZp} and assume that the angle $\delta$ of $\Sbb$ is small.  We will now extend $(p^\aa\mid X_-~,\sp  p^\bb \mid X_+)$ beyond $\overline Z_p$ to obtain a prepacman~\eqref{eq:lem:SiegPacmExist}, see Figure~\ref{Fg:SiegelPrepacman}. 
Set \[I_-\coloneqq p^{\bb}(I_0),\sp I_+\coloneqq p^{\aa}(I_0),\sp I\coloneqq f^{\aa+\bb}(I_0).\]  Then the sector $X_-$ is bounded by $I_-,I_0$ and $X_+$ is bounded by $I_0$, $I_+$.

 Since $x$ is not precritical, there are unique external rays $R_-~, R_+~, R$ extending $I_-~, I_+~, I$ beyond $\overline Z_p$. Let $S_Q$ be the closed sector bounded by $R_-\cup I_-\cup I_+\cup R_+$ and truncated by an external equipotential $E$ at a small height $\sigma>0$. The curve $R\cup I$ divides $S$ into two closed sectors  $S_+$ and $S_-$ such that  $S_+$ is between $R_-\cup I_-$ and $R\cup I$ while $S_-$ is between $R\cup I$ and $R_+\cup I_+$. We note that $p^\aa(X_-)\subset S_-$ and $p^\bb(X_+)\subset S_+$.

Let us next specify $U_-\supset X_-~,\sp U_+\supset X_+$ such that  \begin{equation}
 \label{eq:lem:prf:FullSiegPacmExist}Q= (q_-~,q_+)= (p^\aa\mid U_-~, \sp p^\bb\mid U_+)
 \end{equation}
is a full prepacman. Since the $p$-orbits of $X_-~,X_+$ cover $\overline Z_\str$ before they return back to $X$, we see that $\partial X\cap \partial Z_p$ has a unique precritical point, call it $c'_0$, that travels though the critical point of $p$ before it returns to $X$. Below we assume that $c'_0\in X_-$; the case  $c'_0\in X_+$ is analogous. Then $S_+$ has a conformal pullback $U_+$ along $p^\bb\colon X_+\to  S_+$. We have $U_+\subset S_Q$ because rays and equipotentials bounding $S_Q$ enclose $U_+$.

\begin{figure}[t!]

\centering{\begin{tikzpicture}[ scale=0.88]

\draw (-4.,0.)-- (0.58,3.04);
\draw (0.58,3.04)-- (4.,0.);
\draw (4.,0.)-- (-0.68,-3.);
\draw (-0.68,-3.)-- (-4.,0.);
\draw (4.,0.)-- (6.4,1.76);
\draw (6.02,-1.9)-- (4.,0.);

\draw (-0.5,-1) node{$Z_\str$};

\draw (6.5,-1.5) node{$Z'_\str$};

\draw (0,-0.25) node{$\alpha$};
\draw (7.64,-0.05) node{$\alpha'$};
\draw[red] (-2.04,2.2) node{$S_Q$};
\draw[red] (4.1,1.) node {$W'$};
\draw (4.05,-0.4) node {$c_0$};

\draw[red,draw opacity=0 ,fill=red, fill opacity=0.3] (0.08,-0.08)-- (-2.34,1.5)-- (-1.32,2.14)-- (0.08,-0.08);
\draw (0.08,-0.08)-- (-2.34,1.5)-- (-1.32,2.14)-- (0.08,-0.08);

\draw (0.08,-0.08)-- (3.6201379310344826,0.3376551724137933)-- (3.76,0.7);
\draw (3.76,0.7)-- (4.28,0.68);
\draw (4.28,0.68)-- (4.393485734290215,0.3024554524768074);
\draw (4.393485734290215,0.3024554524768074)-- (6.72,0.1);
\draw (6.74,-0.32)-- (4.527983634757543,-0.4655855688316517);
\draw (4.527983634757543,-0.4655855688316517)-- (4.42,-0.84);
\draw (4.42,-0.84)-- (3.56,-0.84);
\draw (3.56,-0.84)-- (3.3606616961789375,-0.4098322460391426);
\draw (3.3606616961789375,-0.4098322460391426)-- (0.08,-0.08);
\draw (4.84, 0.26) --(4.84,-0.45);

\draw[red, draw opacity=0.1 ,fill=red, fill opacity=0.3] (0.08,-0.08)-- (3.6201379310344826,0.3376551724137933)-- (3.76,0.7)-- (4.28,0.68)--(4.393485734290215,0.3024554524768074)-- (4.84, 0.26) --(4.84,-0.45)--(4.527983634757543,-0.4655855688316517)  -- (4.42,-0.84)-- (3.56,-0.84)-- (3.3606616961789375,-0.4098322460391426)-- (0.08,-0.08);

%\draw [shift={(7.8,-0.14)}] %plot[domain=2.93998954493695:3.311578052658732,variable=\t]%({1.*2.796640842153315*cos(\t %r)+0.*2.796640842153315*sin(\t r)},{0.*2.796640842153315*cos(\t %r)+1.*2.796640842153315*sin(\t r)});

\draw (3.14, -0.04) edge [->,bend right]  node[above] {$f^{k-j}$} (-1.24, 1.18); 
\end{tikzpicture}}

\caption{Since $W'$ is truncated by an equipotential of $Z'_p$ at small height, the point $p^j(z)\in W'\setminus \overline Z_p$ is in a small neighborhood of $c_0$.
 }
\label{Fg:ConstrOfSiegPrepac Axil}
\end{figure}
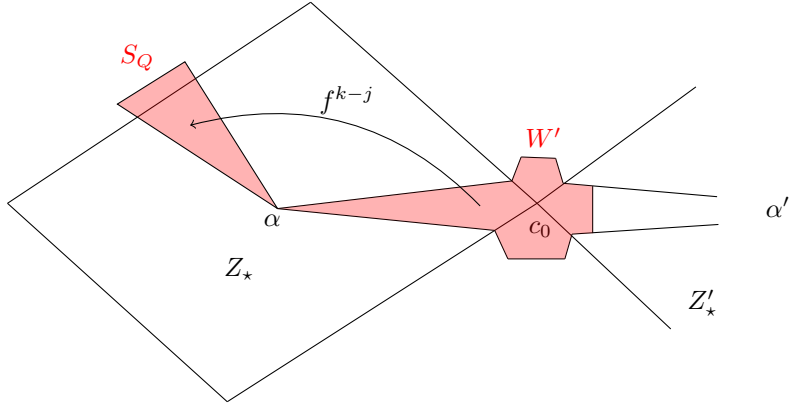

The sector $S_-$ has a degree two pullback $\Upsilon_-$ along $p^\aa\colon X_-\to  S_-$. Under $p^\aa\colon \Upsilon_-\to S_-$ the fixed point $\alpha$ has two preimages, one of them is $\alpha$, we denote the other preimage by $\omega$. Let $\Upsilon_+$ be the conformal pullback of $S_+$ along the orbit $p^\aa\colon \{\omega\}\to \{\alpha\}$. We define $U_-\coloneqq \Upsilon_-\cup \Upsilon_+\subset S_Q$ and we observe that $Q$ in~\eqref{eq:lem:prf:FullSiegPacmExist} is a full prepacman. 

By Theorem~\ref{thm:K_P is LocConn}, primary limbs of $\Kfilled_p$ intersecting a small neighborhood of $x$ have small diameters. By choosing $\delta$ and $\sigma$ sufficiently small we can guarantee that $S_Q\setminus \overline Z_p$ is in a small neighborhood of $x$.

Let us now truncate $Q$ at level $r$ and let us show that the orbit \[z, p(z), \dots ,  p^k(z)=q_{\pm}(z),\sp\sp k\in \{\aa,\bb\} \] of any $z\in U_{\pm}$ is in a small neighborhood of $\overline Z_p$. The truncation of $Q$ at level $r$ removes points in $U_-=\Upsilon_-\cup \Upsilon_+$ with $p^\aa$- images in the subdisk of $Z_p$ bounded by the equipotential at height $t\coloneqq r^\delta$. Since $\delta$ is small, we obtain that $t$ is close to $1$.

Since $\Kfilled_p$ is locally connected (Theorem~\ref{thm:K_P is LocConn}), all the external rays of $p$ land. For $z\in U_{\pm} \setminus \Kfilled_f$, define $\rho(z)\in \Kfilled_p$ to be the landing point of the external ray passing through $z$. Since $S_Q$ is truncated by an equipotential at a small height, the orbit of $z$ stays close to that of $\rho(z)$. This reduces the claim to the case  $z\in \Kfilled_p\cap U_{\pm}$. 

By Theorem~\ref{thm:SiegJulSet P is LocCon}, there is an $\ell\ge 0$ such that all of the \emph{big} (with diameter at least $\varepsilon$) primary limbs of $p$ are attached to one of $c_0, c_{-1},\dots, c_{-\ell}$, where $c_0$ is the critical point of $p$ and $c_{-i}$ is the unique preimage of $c_0$ under $p^i\mid \overline Z_p$. Since $\delta $ is assumed to be small, the orbit of $c'_0$ travels through all $c_{-\ell}, \dots, c_{0}$ before it returns to $S_Q$.

 Let us denote by $L$ the primary limb of $p$ containing $z$ (the case $z\in \overline Z_p$ is trivial). If $L$ is not attached to $c'_0$, then by the above discussion all $L, p(L), \dots , p^k(L)=q_{\pm}(L)$ are small and the claim follows.

 Suppose that $L$ is attached to $c'_0$. Denote by $L_{-i}$ the connected component of $\Kfilled_p\setminus \overline Z_p$ attached to $c_{-i}$. Since $c'_0$ travels through a critical point, we have $L=L_{-j}$ for some $j< k$. 
 
 Let $W$ be the pullback of $S_Q$ along \[p^{k-j}\colon c_0=p^j(c'_{0})\to p^k(c'_0)\]
and let $W'$ be $W$ truncated by the equipotential of $Z'_p$ at height $t$, see~Figure \ref{Fg:ConstrOfSiegPrepac Axil}. Since $t= r^\delta$ is close to $1$, we obtain that $W'\cap L'_0$ is in a small neighborhood of $c_0$ because the angle of $W$ at $\alpha'$ (the non-fixed preimage of $\alpha$) is small -- it is equal to $\delta$. Therefore, $p^j(z)$ is close to $c_0$, and by continuity all $p^{j-1}(z),p^{j-2}(z),\dots , p^{j-\ell}(z)$ are close to $c_0, c_{-1},\dots , c_{-\ell}$. Recall that $p^{j-i}(z)\in L_{-i}$. Since $L_0,L_{-1}, \dots , L_{-\ell}$ are the only big limbs, we obtain that the orbit $z,p(z), \dots, p^k(z)$ is in a small neighborhood of $\overline Z_p$.

It remains to specify external rays for $Q$. As it shown on the Figure~\ref{Fg:ConstrOfSiegPrepac Axil:part 2} we slightly truncate $S_Q$ at the vertex where $R_+$ meets the equipotential $E$ and we slightly truncate $U_\pm$ such the truncations are respected dynamically and such that the preimage of the $\partial S_Q\setminus (R_-\cup R_+)$ under $Q$  consists of exactly two curves that project to $\partial^\ext U_q$, where $q\colon U_q\to V_q$ is the pacman of $Q$. We now can embed in $S_Q\setminus (U_-\cup U_+)$ two rectangles $\Rect_-$ and $\Rect_+$ that define external rays of $Q$ as in~\S\ref{ss:DynObj}.
\end{proof}

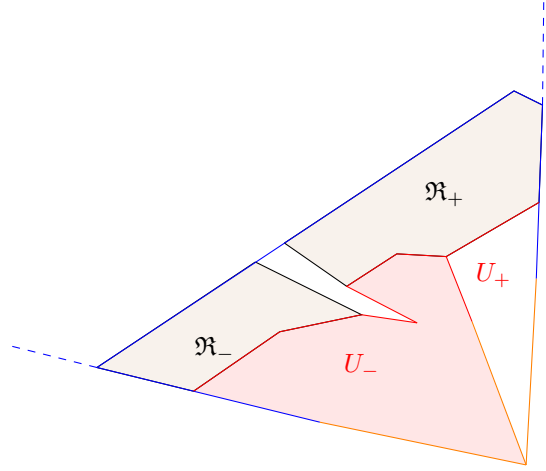
\begin{figure}[t!]

\centering{\begin{tikzpicture}[ scale=0.9]

\draw[ draw opacity=0 ,fill=red, fill opacity=0.1]
  (0,0) --(-1.18, 3.08)--   (-3+ 1.09 ,0.72+2.4)-- (-3.74+ 1.09 ,0.72+1.92)--(-2.7+ 1.09 ,0.72+1.38)--(-2.43,2.22)--(-3.6395543480712296,1.9683336865125984)--(-4.9127182890505665,1.0896661385188597) -- (-3.047927460454034,0.6319433450261434)--(0,0);

\draw[ fill=brown, fill opacity=0.1] (-3.57, 3.28)-- (-2.65,2.64)-- (-1.91,3.12)-- (-1.18,3.08)-- (0.19,3.88)--  (0.2419562160010615,5.31992941488656)--(-0.17901543404670395,5.528653393769006)--(-3.57, 3.28);

\draw[ fill=brown, fill opacity=0.1] (-4,3)--  (-6.34,1.44)-- (-4.9127182890505665,1.0896661385188597)-- (-3.6395543480712296,1.9683336865125984)--(-2.43,2.22)--(-4,3);

\draw (-1.2, 4.) node  {$\Rect_+$}
(-4.6, 1.72) node{$\Rect_-$};

\draw[red] (-1.18, 3.08)--   (-3+ 1.09 ,0.72+2.4);
%\draw (-1.34,3.54)--   (-3+ 1.09 ,0.72+2.4); HERE!!!

\draw[orange] (0.,0.)-- (0.15034270964325902,2.754812628234827);
\draw[orange] (0.,0.)-- (-3.047927460454034,0.6319433450261434);

\draw[orange] (0.,0.)-- (-0.8,2.12);

%%%%%%%%%%%%%%%%%here

\draw[blue] (-3.047927460454034,0.6319433450261434)-- (-6.34,1.44);
\draw[blue] (-6.34,1.44)--(-0.17901543404670395,5.528653393769006)-- (0.2419562160010615,5.31992941488656);
\draw[blue] (0.15034270964325902,2.754812628234827)-- (0.2419562160010615,5.31992941488656);

\draw[blue, dashed] (-6.34,1.44) edge  (-7.62, 1.76);
\draw[blue,dashed] (0.2419562160010615,5.31992941488656) edge (0.26,6.84);

\draw[red]  (-0.8,2.12) --(-1.18, 3.08);

%\draw[red] (-1.34,3.54)-- (-3+ 1.09 ,0.72+2.4);

\draw[red] (-2.7+ 1.09 ,0.72+1.38)--(-2.43,2.22)--(-3.6395543480712296,1.9683336865125984);

\draw[red] (-4.9127182890505665,1.0896661385188597)-- (-3.6395543480712296,1.9683336865125984);
\draw[red] (-3+ 1.09 ,0.72+2.4)-- (-3.74+ 1.09 ,0.72+1.92)-- (-2.7+ 1.09 ,0.72+1.38);
%\draw[red] (-1.34,3.54)-- (-1.18, 3.08);
\draw[red] (-1.18, 3.08)-- (0.19, 3.88);

\draw[red] (-0.48, 2.8) node{$U_+$};
\draw[red] (-1.64- 0.79 ,-0.72+ 2.18) node{$U_-$};

%\draw[red] (-3.15, 1.90) edge[->,bend left] node [below left] {$\psi_{-1}$} (-4.48, %1.92);

\end{tikzpicture}}
\caption{By truncating the prepacman from Figure~\ref{Fg:SiegelPrepacman} and embedding rectangles $\Rect_\pm$, we endow the prepacman with external rays.
 }
\label{Fg:ConstrOfSiegPrepac Axil:part 2}
\end{figure}

\subsection{Pacman renormalization of Siegel maps}An immediate consequence of~\cite[Theorem 3.19, Proposition 4.3]{AL-posmeas} is 
\begin{thm}
\label{thm:HybrCongSiegMaps}
Any two Siegel maps with the same rotation number are hybrid conjugate on neighborhoods of their closed Siegel disks.
\end{thm}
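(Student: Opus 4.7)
The plan is to build a qc homeomorphism $h \colon W_1 \to W_2$ between open neighborhoods of $\overline Z_{f_1}$ and $\overline Z_{f_2}$ that conjugates $f_1$ to $f_2$ and is conformal on $Z_{f_1}$. I would proceed in three stages: (i) conjugate on the Siegel disks by linearization; (ii) transport the conjugacy across $\partial Z_{f_i}$ via the Douady--Ghys surgery; (iii) upgrade to a full equivariant conjugacy on a neighborhood by a pullback/holomorphic-motion argument.

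First, let $\psi_i \colon (Z_{f_i},\alpha_i) \to (\mathbb{D},0)$ be the unique conformal linearization conjugating $f_i | Z_{f_i}$ to the rotation $R_\theta\colon z\mapsto \ee(\theta)z$, normalized so that $\psi_i(c_0(f_i))=1$. Then $\phi_0 \coloneqq \psi_2^{-1}\circ\psi_1$ is a conformal conjugacy $Z_{f_1}\to Z_{f_2}$ sending $c_0(f_1)$ to $c_0(f_2)$. Since $\overline Z_{f_i}$ are quasidisks, $\phi_0$ extends to a qc homeomorphism $\overline Z_{f_1}\to\overline Z_{f_2}$; its boundary trace is quasisymmetric and conjugates $f_1|\partial Z_{f_1}$ to $f_2|\partial Z_{f_2}$.

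Second, I would extend $\phi_0$ across $\partial Z_{f_1}$. The Douady--Ghys surgery descends each Siegel map (restricted to a suitable neighborhood of $\overline Z_{f_i}$) to a quasicritical circle map $\tilde f_i$ of rotation number $\theta\in\Theta_\bnd$. Real \textit{a priori} bounds for critical circle maps of bounded type imply that $\tilde f_1$ and $\tilde f_2$ are qs conjugate, with qs norm controlled by $\theta$ only. Reflecting or interpolating this boundary qs conjugacy through the co-Siegel side yields a qc map $\phi_1$ defined on a fundamental collar annulus $A_1 \supset \partial Z_{f_1}$ which agrees with $\phi_0$ on $\overline Z_{f_1}$ and conjugates the boundary dynamics.

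Third, I would spread $\phi_1$ equivariantly by pullback. Away from $c_0(f_i)$ the map $f_i$ is locally univalent, so pullbacks of $\phi_1$ under the iterates of $f_1$ (on the source side) matched against pullbacks under $f_2$ (on the target side) are well-defined qc maps with the same dilatation; they glue with $\phi_1$ along $\partial A_1$ and produce an equivariant qc map $h$ on a full neighborhood $W_1$ of $\overline Z_{f_1}$ that is conformal on $Z_{f_1}$. The main obstacle is controlling the qc dilatation of the successive pullbacks through neighborhoods of the critical point $c_0$; this is precisely where the bounded-type hypothesis and the real \textit{a priori} bounds are needed, and it is the content invoked from \cite[Thm.~3.19, Prop.~4.3]{AL-posmeas}. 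With those bounds in hand, the dilatation of $h$ is uniformly controlled and the hybrid conjugacy is obtained.
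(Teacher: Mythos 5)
Your sketch reconstructs the argument behind the cited result, whereas the paper treats this theorem as a one-line corollary of \cite[Theorem 3.19, Proposition 4.3]{AL-posmeas} and does not reproduce the internals at all. Your three-stage decomposition --- conformal linearization on the Siegel disks, the Douady--Ghys/quasicritical-circle-map descent with real \textit{a priori} bounds of bounded type, then a pullback to spread the conjugacy equivariantly --- matches the mechanism of the Avila--Lyubich result that the paper invokes, so the content is consistent; you are opening the black box the paper leaves closed.

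Two points of imprecision worth fixing. First, the claim in stage (iii) that ``the main obstacle is controlling the qc dilatation of the successive pullbacks'' misidentifies where the work is. Since $f_1$ and $f_2$ are holomorphic, the pullback $h \mapsto f_2^{-1}\circ h\circ f_1$ preserves Beltrami coefficients exactly; the dilatation of the limit equals that of the initial qc map $\phi_1$. The real \textit{a priori} bounds are not needed to tame any dilatation growth (there is none) but to guarantee that the critical circle maps $\tilde f_1, \tilde f_2$ of the common bounded-type rotation number are quasisymmetrically conjugate at all, with a qs constant that can be promoted to a qc extension $\phi_1$ in the first place. Second, the phrase ``reflecting or interpolating through the co-Siegel side'' is vague: the initial collar map $\phi_1$ need not be equivariant off $\partial Z_{f_1}$ --- one only needs a qc extension of the boundary qs conjugacy (Ahlfors--Beurling suffices), after which the pullback in stage (iii) produces equivariance in the limit. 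Stated this way, the split between (ii) and (iii) is cleaner: (ii) gives a qc map agreeing with $\phi_0$ on $\overline Z_{f_1}$ and conjugating only on $\partial Z_{f_1}$; (iii) upgrades it to a conjugacy on an invariant neighborhood.
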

\begin{proof} By~\cite[Proposition 4.3]{AL-posmeas} any Siegel maps $f,g$ can be obtained  by performing the Douady-Ghys surgery on quasicritical circle maps $\tilde f, \tilde g$. By \cite[Theorem 3.19]{AL-posmeas}, there is a qc map $\tilde h $ conjugating $\tilde f$ and $\tilde g$ in a small neighborhood of the unit circle. Then $\tilde h$ descents into a qc map $h$ conjugating $ f$ and $ g$ in small neighborhoods of the boundaries of their Siegel disks. A hybrid conjugacy between $f$ and $g$ is obtained by setting  $h\mid Z_f$ to be the canonical conformal conjugacy between $f\mid Z_f$ and $g\mid Z_g$ and running the pullback argument.
\end{proof}

As a corollary Theorem~\ref{thm:HybrCongSiegMaps} and Lemma~\ref{lem:SiegPacmExist}  we obtain. 
\begin{cor}
\label{cor:SP embeds in SM}
Every  Siegel map $f\colon U\to V$ is pacman renormalizable. 

Moreover the following holds. Let $f$ be a Siegel map and let $p$ be the unique quadratic polynomial with the same rotation number as $f$. Let $h$ be a hybrid conjugacy from a neighborhood of $\overline Z_f$ to a neighborhood of $\overline Z_p$ respectively. Then there are prepacmen $R$ and $Q$ in the dynamical planes of $f$ and $p$ respectively such that $h$ respects $R$ and $Q$ in the sense of Definition~\ref{defn:ConjBetweenPreRenorm}.
\end{cor}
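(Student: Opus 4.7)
The strategy is to pull back a prepacman from the quadratic model $p$ to $f$ via the hybrid conjugacy. First, since $f$ is Siegel with rotation number $\theta$, Theorem~\ref{thm:SiegJulSet P is LocCon} forces $\theta\in\Theta_\bnd$, so Lemma~\ref{lem:SiegPacmExist} is applicable to $p = p_\theta$. By Theorem~\ref{thm:HybrCongSiegMaps} there exists a hybrid conjugacy $h\colon N_f\to N_p$ between neighborhoods of the closed Siegel disks $\overline Z_f$ and $\overline Z_p$ satisfying $h\circ f = p\circ h$ on $N_f$, with $h$ conformal on $Z_f$. Since $h$ maps the critical point of $f$ on $\partial Z_f$ to the critical point of $p$ on $\partial Z_p$ (hybrid conjugacies preserve critical points), precritical points correspond under $h$.

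Pick a point $x\in\partial Z_p$ which is not precritical for $p$; then $h^{-1}(x)\in\partial Z_f$ is not precritical for $f$. Apply Lemma~\ref{lem:SiegPacmExist} to produce a Siegel prepacman
\[
Q=(q_-=p^\aa\colon U_-\to S_Q,\sp q_+=p^\bb\colon U_+\to S_Q)
\]
around $x$, choosing the sector angle $\delta$, the truncation level $r\in (0,1)$, and the neighborhood size $\varepsilon$ small enough that $S_Q$ together with all $p$-orbits of points in $U_-\cup U_+$ lie inside $N_p$. This is possible since $S_Q\setminus \overline Z_p$ shrinks to $x$ as $\delta$ and the truncation sharpen, and the key estimate of Lemma~\ref{lem:SiegPacmExist} keeps the pre-return orbits within an $\varepsilon$-neighborhood of $\overline Z_p$.

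Now define $R$ by pulling back through $h$: set $S_R\coloneqq h^{-1}(S_Q)$, $U_\pm(R)\coloneqq h^{-1}(U_\pm(Q))$, and take $r_\pm\coloneqq f^\aa, f^\bb$ on the corresponding domains. Since $h$ conjugates $f$ to $p$, we indeed have $h\circ r_\pm = q_\pm\circ h$, so $r_\pm$ are honest iterates of the holomorphic map $f$, and they commute on a neighborhood of $\beta_0(R)\coloneqq h^{-1}(\beta_0(Q))$ by transport of the commutation for $Q$. The $f$-orbits of $U_\pm(R)$ before their first return to $S_R$ triangulate a neighborhood of $\alpha(f)$ because the corresponding $p$-orbits do so around $\alpha(p)$. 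To produce a gluing $\psi_R\colon S_R\to \widehat V_R$ projecting $(r_-,r_+)$ to a holomorphic pacman, consider the qc map $\sigma\coloneqq\psi_Q\circ h\colon S_R\to\widehat V_Q$: it is conformal on $S_R\cap Z_f$ (where $h$ is conformal) and carries a Beltrami differential $\mu_h$ elsewhere. Pushing $\mu_h$ forward via $\sigma$ to a Beltrami $\nu$ on $\widehat V_Q$, extending by zero, and solving the Beltrami equation yields a qc map $\phi$ making $\psi_R\coloneqq \phi\circ \sigma$ conformal on $\intr S_R$, with $\widehat V_R\coloneqq\phi(\widehat V_Q)$. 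Because $\hat q$ preserves $\nu$ (being obtained from $q_\pm$, which descend from iterates of $p$ that preserve $\mu_h$), the conjugate $\hat r\coloneqq\phi\circ\hat q\circ\phi^{-1}$ is holomorphic, giving the required pacman. This simultaneously verifies that $f$ is pacman renormalizable and that, by construction, $h$ maps $(S_R,U_\pm(R),\bDelta_R)$ onto $(S_Q,U_\pm(Q),\bDelta_Q)$, so $h$ respects $R$ and $Q$ in the sense of Definition~\ref{defn:ConjBetweenPreRenorm}.

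\textbf{Main obstacle.} The delicate point is arranging all the geometric data of $Q$ to lie inside the domain $N_p$ of the hybrid conjugacy; this is controlled precisely by the $\varepsilon$-clause in Lemma~\ref{lem:SiegPacmExist}. A secondary technical point is producing the holomorphic gluing $\psi_R$ from the qc gluing $\psi_Q\circ h$, which is routinely handled by the straightening argument already used in the proof of Theorem~\ref{thm:RenOper}.
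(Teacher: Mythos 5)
Your proof follows the same approach as the paper's: invoke Theorem~\ref{thm:HybrCongSiegMaps} to get a hybrid conjugacy $h$ to the quadratic model $p$, apply Lemma~\ref{lem:SiegPacmExist} with $\varepsilon$ and the sector angle small enough that the resulting prepacman $Q$ and all its pre-return orbits lie inside the image of $h$, and pull $Q$ back through $h$ to get $R$. The extra paragraph you devote to constructing a \emph{conformal} gluing $\psi_R$ from the qc map $\psi_Q\circ h$ via a Beltrami straightening is a genuine gap in the paper's one-line proof that you correctly filled in; the cleanest way to phrase it is that $\nu$ should be the Beltrami differential of $(\psi_Q\circ h)^{-1}$ (rather than ``$\mu_h$ pushed forward''), and then holomorphicity of $\hat r=\psi_R\circ r_\pm\circ\psi_R^{-1}$ is automatic since $r_\pm$ are iterates of the holomorphic $f$ and $\psi_R$ is conformal, so you do not even need to check separately that $\hat q$ preserves $\nu$.
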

\begin{proof}

 Choose a small $\varepsilon>0$ such that the  $\varepsilon$-neighborhood of $Z_f$ is in the domain of $h$. Then $h$ pullbacks a prepacman $Q$ from Lemma~\ref{lem:SiegPacmExist} to a prepacman $R$ in the dynamical plane of $f$. This shows that $f$ is pacman renormalizable.   
\end{proof}

\begin{lem}
\label{lem:thm:K_P is LocConn}
Suppose that a Siegel pacman $f$ is a renormalization of a quadratic polynomial. Then the non-escaping set $\Kfilled_f$ is locally connected. 

Moreover, for every $\varepsilon>0$ there is an $n\ge 0$ such that every connected component of $\Kfilled_f$ minus all the bubbles with generation at most $n$ is less than $\varepsilon$. All the external rays of $f$ land and the landing point in $\Jul_f$. Conversely, every point in $\Jul_f$ is the landing point of an external ray. The Julia set $\Jul_f$ is the closure of repelling periodic points.
\end{lem}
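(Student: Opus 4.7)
The strategy is to transfer the properties of $\Kfilled_p$ (which is locally connected by Theorem~\ref{thm:K_P is LocConn}) to $\Kfilled_f$ via the renormalization change of variables. Concretely, by hypothesis there exist iterates $q_\pm = p^{a_\pm}$ forming a prepacman $Q = (q_-\colon U_-\to S_Q,\ q_+\colon U_+\to S_Q)$ in the dynamical plane of $p$, together with a gluing $\psi\colon S_Q \to V_f$ that descends $Q$ to $f\colon U_f \to V_f$. Using Lemma~\ref{lem:SiegPacmExist} we may assume that all external ray segments of $Q$ extend along genuine external rays of $p$, and that the renormalization triangulation $\bDelta = \bDelta_Q$ obtained by spreading $\overline U_\pm$ around lies inside the domain of $p$.

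First, I would identify bubbles and the non-escaping set on both sides. A point $z\in S_Q$ satisfies $\psi(z)\in \Kfilled_f$ if and only if the $Q$-orbit of $z$ stays in $\overline S_Q$; spreading this orbit around via $p$ shows that $\psi^{-1}(\Kfilled_f) = \Kfilled_p \cap \overline S_Q$ (after removing the finitely many iterated preimages of the truncation). Moreover $\psi$ conformally sends bubbles of $p$ meeting $\intr S_Q$ of generation $\ge 1$ to bubbles of $f$ of the same relative generation, and the $p$-orbits of such bubbles in $\bDelta$ correspond to $f$-bubbles via the spreading construction. By Theorem~\ref{thm:K_P is LocConn} applied to $p$, for every $\varepsilon>0$ all but finitely many bubbles of $p$ have diameter less than $\varepsilon$; the same then holds for bubbles of $f$, and the connected components of $\Kfilled_f$ outside the union of bubbles of generation $\le n$ are the $\psi$-images (or first-return images via the spreading) of the corresponding pieces for $p$, whose diameters tend to $0$. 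This proves the second assertion, and classical arguments then give local connectivity of $\Kfilled_f$.

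Next, I would handle external rays. A dynamical external ray $R$ of $f$ lifts through $\psi$ to a curve in $S_Q$ which, by construction of the rectangle defining the external structure of $Q$ in the proof of Lemma~\ref{lem:SiegPacmExist}, lies on an external ray of $p$. Since $\Kfilled_p$ is locally connected, every external ray of $p$ lands at a point of $\Jul_p$; projecting back under $\psi$ (and using the dynamical identification on $\beta_\pm$) we conclude that every external ray of $f$ lands at a point of $\Jul_f$. Conversely, any $x\in \Jul_f$ lifts to some $\tilde x \in \Jul_p \cap \overline S_Q$ which is the landing point of an external ray of $p$, and this ray descends to an external ray of $f$ landing at $x$. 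For the density of repelling periodic points, I would use that repelling periodic points of $p$ are dense in $\Jul_p$ (a standard fact for rational maps), and that repelling periodic orbits of $p$ that are entirely contained in $\bDelta$ correspond bijectively to repelling periodic orbits of $f$; such orbits accumulate every $\tilde x \in \Jul_p \cap \overline S_Q$ since they are dense in $\Jul_p$ and the generation-$n$ bubbles containing $\tilde x$ shrink to $\tilde x$ and are eventually captured inside $\bDelta$.

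The main technical obstacle is the behavior near the vertex $\alpha$ of $S_Q$, where $\psi$ behaves like a power map and the identification of the two boundary arcs $\beta_\pm$ is not smooth. Here I would rely on the local dynamical picture: in a neighborhood of $\alpha$ both $p$ and $f$ are linearizable with multiplier $\ee(\theta)$, $\theta\in\Theta_\bnd$, and $\overline Z_p$, $\overline Z_f$ are quasidisks (Theorem~\ref{thm:SiegJulSet P is LocCon}); hence $\psi$ extends quasiconformally across the vertex, which suffices to transport local connectivity, landing of rays, and density of repelling periodic points across $\alpha$. A secondary nuisance is that finitely many external rays of $p$ may hit iterated preimages of $\partial S_Q$, and the corresponding rays of $f$ must be defined by taking one-sided limits of nearby rays; local connectivity of $\Kfilled_f$ (already established) guarantees that such one-sided limits land, by the standard argument of Douady.
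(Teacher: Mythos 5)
Your proof follows essentially the same route the paper takes: transfer Petersen's local connectivity of $\Kfilled_p$ to $\Kfilled_f$ via the gluing $\psi$ coming from the renormalization, track bubbles and limbs through the correspondence, and deduce the landing of external rays from Lemma~\ref{lem:SiegPacmExist}'s statement that ray segments of $Q$ sit inside external rays of $p$. The paper's proof is terser but rests on the same observation; you make the external-ray and vertex considerations explicit, which the paper dismisses as straightforward.

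Two points deserve correction, however. First, the asserted equality $\psi^{-1}(\Kfilled_f) = \Kfilled_p\cap\overline S_Q$ is not right as stated: a point $z\in U_-\cup U_+$ can lie in $\Kfilled_p$ and nevertheless have its first return $q_\pm(z)$ land in $S_Q\setminus(U_-\cup U_+)$, after which the $Q$-orbit escapes — so $\psi(z)\notin\Kfilled_f$. This is in fact precisely the mechanism the paper highlights: each bubble of $f$ is a bubble of $p$ with an open sector removed, and \emph{every limb attached inside the removed sector is removed along with it}. Your argument survives because the inclusion $\psi^{-1}(\Kfilled_f)\subset\Kfilled_p\cap\overline S_Q$ is all you use for the diameter estimates, but the equality would mislead a reader about what $\Kfilled_f$ actually looks like near the cut.

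Second, your treatment of density of repelling periodic points in $\Jul_f$ has a genuine gap. You claim that repelling periodic orbits of $p$ entirely contained in $\bDelta$ accumulate every $\tilde x\in\Jul_p\cap\overline S_Q$, because ``the generation-$n$ bubbles containing $\tilde x$ shrink to $\tilde x$ and are eventually captured inside $\bDelta$.'' This does not by itself produce \emph{periodic orbits of $p$ that remain in $\bDelta$}: a periodic orbit of $p$ passing through a small bubble near $\tilde x$ may well visit parts of $\Jul_p$ far outside $\bDelta$ at other moments. One needs an argument that manufactures $p$-periodic orbits confined to $\bDelta$, and this requires using the first-return structure rather than merely the density of repelling periodic orbits for $p$. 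The paper's approach (see the discussion around $\Jul^\loc_f$ in \S\ref{ss:Ret rays of SP}) instead transfers the statement via the hybrid conjugacy between the \emph{local} non-escaping sets $\Kfilled^\loc_p$ and $\Kfilled^\loc_f$, combined with the fact that the external rays of the pacman are parametrized by angles in $\mathbb{S}^1$ under the doubling map — periodic angles are dense, and the associated periodic rays land at repelling periodic points of $\Jul^\loc_f$. You should either adopt that route or supply an explicit expansion/Schwarz-lemma argument (as in the proof of Lemma~\ref{lem:SP ExternalRays}) to produce the required periodic orbits inside $\bDelta$.
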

\begin{proof}
Follows from Theorem \ref{thm:K_P is LocConn}. Suppose that $f$ is obtained from a quadratic polynomial $p$. Then every bubble $Z_\alpha$ of $f$ is obtained from a bubble $\widetilde Z_\alpha$  of $p$ by removing an open sector. All of the limbs of $\widetilde Z_\alpha$ attached to the removed sector are also removed.  It follows from Theorem \ref{thm:K_P is LocConn} that for $\varepsilon>0$ there is an $n\ge 0$ such that every connected component of $\Kfilled_f$ minus all of the bubbles with generation at most $n$ is less than $\varepsilon$. Since bubbles of $f$ are locally connected, so is $\Kfilled_f$. The landing property of external rays is straightforward.  
\end{proof}

\begin{figure}[t!]

\centering{\begin{tikzpicture}[ scale=0.88]

\draw (-4.,0.)-- (0.58,3.04);
\draw (0.58,3.04)-- (4.,0.);
\draw (4.,0.)-- (-0.68,-3.);
\draw (-0.68,-3.)-- (-4.,0.);
\draw (4.,0.)-- (6.4,1.76);
\draw (6.02,-1.9)-- (4.,0.);

\draw (-0.5,-1) node{$Z_\str$};

\draw (6.5,-1.5) node{$Z'_\str$};

\draw (0,-0.15) node[right]{$\alpha$};

%\draw[red,draw opacity=0 ,fill=red, fill opacity=0.3] (0.08,-0.08)-- (-2.34,1.5)-- %(-1.32,2.14)-- (0.08,-0.08);

%\draw (4.393485734290215,0.3024554524768074)-- (6.72,0.1);
%\draw (6.74,-0.32)-- (4.527983634757543,-0.4655855688316517);
\draw (-1.8371026540472566,1.4356349195843536)-- (-3.06,2.08);
\draw (-3.06,2.08)-- (-2.88,3.);
\draw (-2.88,3.)-- (-2.26,2.36);
\draw (-2.26,2.36)-- (-2.,3.04);
\draw (-2.,3.04)-- (-1.48,2.64);
\draw (-1.48,2.64)-- (-1.8371026540472566,1.4356349195843536);
\draw (-3.06,2.08)-- (-3.76,2.02);
\draw (-3.76,2.02)-- (-3.92,2.74);
%\draw (-3.92,2.74)-- (-3.44,2.82);
\draw (-3.92,2.74)-- (-3.74,2.92)--(-3.6,2.52)--(-3.44,2.82);
\draw (-3.44,2.82)-- (-3.06,2.08);
\draw (-3.92,2.74)-- (-4.24,2.82);
\draw (-4.24,2.82)-- (-4.36,3.26);
\draw (-4.36,3.26)-- (-4.14,3.22)--(-4.11,3.02)--(-4.0,3.13)--(-3.92,2.74);

%\draw (-4.36,3.26)-- (-4.04,3.12);
%\draw (-4.04,3.12)-- (-3.92,2.74);

\draw[red,fill=red, fill opacity=0.1]  (-1.8371026540472566,1.4356349195843536).. controls (-2.35, 1.7) ..
(-2.35, 1.7) ..controls (-3.04, 1.12) and (-5.66, 1.14)..
 (-6.66, 1.38) .. controls   (-6.34, 3.12) and (-4.62, 6.18)..
 (-2.48, 7.4) .. controls (-0.94, 6.06) and (-0.32, 3.74)..
 (-1.72, 1.83).. controls (-1.72, 1.83)..
 (-1.8371026540472566,1.4356349195843536);

\draw[red,,fill=red, fill opacity=0.3] 
 (-3.06,2.08) .. controls (-3.26, 2.5)..
 (-3.26, 2.5)..controls (-3.18, 3.46) and (-3.32, 4.32)..
 (-3.72,5.04) .. controls (-4.58, 4.44) and  (-5.34, 2.38)..
 (-5.32,1.82) .. controls  (-4.84, 1.72) and  (-3.52, 1.8).. 
  (-3.44, 2.02) ..controls  (-3.44, 2.02)..
 (-3.06,2.08);
 
 \draw (-3.84, 3.98) edge [->,bend left]  node[above] {$f^p$} (-3.28, 5.88); 

\draw[red] (-4.12, -2.74) edge node[above] {$R_1$} (-2.32, -1.54);

\draw  (0,-0.15) edge node[above]{$I_1$} (-2.32, -1.54);

\draw[red,fill=red] (-2.32, -1.54) circle (0.05cm);
\node[above,red] at  (-2.32, -1.54) {$x$};

\draw (-2.8, 5.78) node {$D$}
(-4.54, 0.82) node[red] {$R_\ell$}
(-0.7, 3.6) node[red] {$R_\rho$}
(-4.56, 3.26) node {$y$};
\draw (-3.74, 3.7) node {$D_p$};
\draw (-2.05,1.9) node {$Z_1$};
\end{tikzpicture}}
\caption{Illustration to the proof of Lemma~\ref{lem:SP ExternalRays}. The ray $R_1$ has preimages $R_\ell$ and $R_\rho$ that land at $Z_1$ such that $R_\ell \cup R_\rho$ together with together with appropriate arcs in $\partial V\cup Z_1$ bound a disk $D$ containing $y$. The disk $D$ has a univalent lift $D_p\Subset D$. By Schwarz lemma, $f^p\colon D_p \to D$ is expanding, which implies that there is an external ray landing at $y$.}
 
\label{Fg:RatRaysForSiegPacm}
\end{figure}
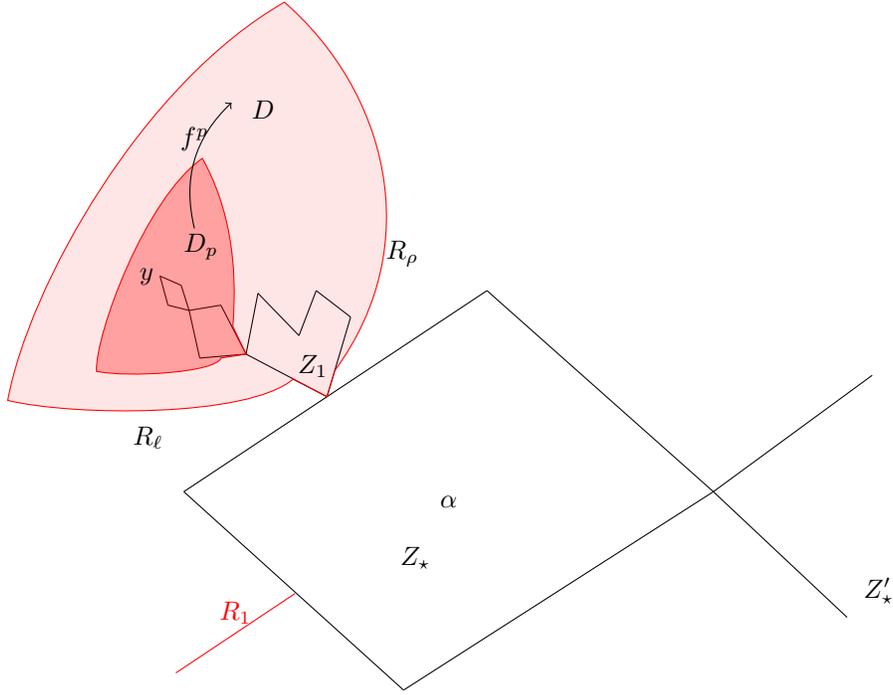

\subsection{Rational rays of Siegel pacmen}
\label{ss:Ret rays of SP}
By a \emph{rational point}  we mean either a periodic or preperiodic point. Similarly, a periodic or preperiodic  ray is \emph{rational.} 

Let us fix pacmen $f,p$ and prepacmen $R,Q$ as in Corollary~\ref{cor:SP embeds in SM}. Let $\Kfilled_R$ be the non-escaping set of $R$. By definition, $\Kfilled_R\subset \Kfilled_f$; spreading around $\Kfilled_R$ we define the \emph{local non-escaping set of $f$:}
\begin{equation}
\label{eq:K f loc}
\Kfilled^\loc_f\coloneqq \bigcup_{n\ge 0} f^{n}(\Kfilled_R).
\end{equation}
This is the set of point that do not escape $\bDelta_R$ under $f\colon \bDelta_R\to \bDelta_R\cup S_R$, see Figure~\ref{Fg:Prepacman:OrbitOfUi}. Similarly we define  
\[ \Kfilled^\loc_p\coloneqq \bigcup_{n\ge 0} p^{n}(\Kfilled_Q).\]
It is immediate that $h$ conjugates $f\mid \Kfilled^\loc_f$ and $p\mid \Kfilled^\loc_p$. As a consequence, the \emph{local Julia set}
\[\Jul^\loc_f\coloneqq \overline{\bigcup_{n\ge 0} \left[f\mid \Kfilled^\loc_{f}\right]^{-n} (\partial Z_f) }\]
is the closure of repelling periodic points because so is $\Jul^\loc_p$. (Indeed, every $y\in \Jul^\loc_p$ is the landing point of an external ray $R_y$ because $\Jul_p$ is locally connected. Since external rays in a pacman are parametrized by angles in  $\mathbb{S}^1$ (see \S\ref{ss:DynObj}), $p$ has a periodic external ray $R_x$ landing at $x\in \Jul^\loc_p$.) Moreover, for every periodic point $y\in \Kfilled^\loc_f$ there is a unique periodic bubble chain $B_y$ of $\Kfilled^\loc_f$ landing at $y$.

\begin{lem}[External rays]
\label{lem:SP ExternalRays}
Let $y\in \Jul^\loc_f$ be a periodic point. Then there is a periodic external ray $R_y$ landing at $y$ with the same period as $y$. 
\end{lem}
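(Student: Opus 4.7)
The plan is to construct the periodic external ray $R_y$ landing at $y$ via a Douady--Hubbard type pullback argument using the Schwarz lemma, guided by the periodic bubble chain $B_y$ that lands at $y$. The figure in the excerpt outlines exactly this strategy, so I would follow it step by step.

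First, I would produce a starting periodic external ray $R_1$ of $f$ landing at some periodic repelling point $y_0 \in \partial Z_f$ that is not precritical. Such a ray exists: by \S\ref{ss:DynObj} external rays of $f$ are parametrized by angles via a semi-conjugacy with the doubling map of $\mathbb{S}^1$, which has dense periodic orbits; by Lemma~\ref{lem:thm:K_P is LocConn} each such ray lands, and it is straightforward (using the conjugacy with the quadratic polynomial $p$ provided by Corollary~\ref{cor:SP embeds in SM}, which sends periodic boundary points of $\overline{Z_f}$ to periodic boundary points of $\overline{Z_p}$ together with their landing rays) to arrange the landing point to be a periodic repelling point on $\partial Z_f$ away from precritical points.

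Next, recall from the text preceding the lemma that $y$ is the landing point of a unique periodic bubble chain $B_y = (Z_1,Z_2,\dots)$ of period $p$. I would pull back $R_1$ along appropriate orbits: since each $Z_k$ is an iterated conformal preimage of $\overline{Z_f}$, there are two iterated preimages $R_\ell, R_\rho$ of $R_1$ landing at two distinct points of $\partial Z_1$ that bracket the tail $(Z_2, Z_3, \dots)$ of the chain. Together with suitable arcs on $\partial V$ and on $\partial Z_1$, these two rays bound a Jordan disk $D \subset V$ whose boundary contains $y$ as a limit point and whose interior contains all bubbles $Z_2, Z_3, \dots$. Since the chain is periodic of period $p$, there is a univalent pullback $D_p$ of $D$ along the $f^p$-orbit of $y$ satisfying $D_p \Subset D$ (strict compact containment follows because the map $f^p$ sends $(Z_{m+1}, Z_{m+2}, \dots)$ to $(Z_1, Z_2,\dots)$, so the two bounding rays of $D_p$ are strictly deeper iterated preimages of $R_1$ than those bounding $D$, and limbs of bubbles shrink by Lemma~\ref{lem:thm:K_P is LocConn}).

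The final step is the Schwarz lemma. The inverse branch $g \coloneqq (f^p\restrict{D_p})^{-1} \colon D \to D_p \Subset D$ is a univalent self-map of $D$ fixing $y$, hence a strict hyperbolic contraction. Starting with any arc $\gamma \subset \overline{R_\ell} \cap \overline{D \setminus D_p}$ connecting a point of $\partial D$ to a point of $\partial D_p$, the iterates $g^n(\gamma)$ accumulate exponentially on $y$, and their concatenation produces an $f^p$-invariant curve $R_y$ from $\partial V$ to $y$. By construction $R_y$ lies in iterated preimages of $R_1$, so it is a legitimate external ray of $f$ in the sense of~\S\ref{ss:DynObj}; its $f^p$-invariance together with Lemma~\ref{lem:PerRaysStable} forces its period to divide $p$, and minimality of the period of $y$ as a landing point then gives equality.

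The main obstacle I anticipate is ensuring that the candidate pullback rays $R_\ell, R_\rho$ and the iterated preimages $g^n(\gamma)$ remain genuine external rays, i.e.\ that they avoid iterated lifts of $\partial^{\frb} U$ and iterated precritical points (this was flagged as a technical issue in \S\ref{ss:DynObj}). I would handle this by choosing $R_1$ among a generic dense set of periodic angles so that its full backward orbit misses the countable set of precritical points on $\partial Z_f$, and by shrinking the truncation $O$ if necessary so that the bounding arcs of $D, D_p$ stay away from the forbidden boundary; the contraction estimate from Schwarz is then robust enough to push the construction through. A secondary bookkeeping point is verifying $D_p \Subset D$ rather than just $D_p \subset D$, which is where strict periodicity of the bubble chain (as opposed to mere invariance) is used.
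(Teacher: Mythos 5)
Your overall strategy coincides with the paper's: use the bubble chain $B_y$ and two iterated lifts $R_\ell, R_\rho$ of a ray landing on $\partial Z_f$ to cut out a disk $D\ni y$, pull back once to get $D_p\Subset D$, and invoke the Schwarz lemma for $f^p\colon D_p\to D$. However, there are two places where the details do not work as written. First, you ask for $R_1$ to land at a \emph{periodic repelling} point $y_0\in\partial Z_f$; but $f\mid\partial Z_f$ is conjugate to an irrational rotation, so $\partial Z_f$ contains no periodic points at all. The paper sidesteps this by using the canonical ray $R_1$ that is the external piece of the critical arc $\gamma_1$: by Definition~\ref{dfn:SiegPcm} it lands at the non-precritical point $x=\gamma_1\cap\partial Z_f$, which is all that is needed for its iterated lifts $R_\ell,R_\rho$ to be genuine external rays landing on $\partial Z_1$. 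Periodicity of $R_1$ is neither available nor required.

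Second, and more substantively, your construction of $R_y$ as a concatenation $\bigcup_n g^n(\gamma)$ has a gap. The arc $\gamma$ you propose, $\gamma\subset\overline{R_\ell}\cap\overline{D\setminus D_p}$ ``connecting a point of $\partial D$ to a point of $\partial D_p$'', is not well-posed: $R_\ell\subset\partial D$ and $D_p\Subset D$, so $\overline{R_\ell}$ does not meet $\partial D_p$. Even for a generic cross-cut $\gamma$ of the annulus $D\setminus D_p$, the concatenated curve lands at $y$ but is merely an access path, not an external ray — each $g^n(\gamma)$ typically lies on a \emph{different} leaf of the ray foliation, so the union is not a single leaf. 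The paper instead uses the semi-conjugacy of the external-ray dynamics with angle doubling (\S\ref{ss:DynObj}): the angles of rays entering $D$ form an arc of $\mathbb S^1$ on which the relevant inverse branch of the $p$-fold doubling is a strict contraction, hence has a unique fixed angle, producing a genuine periodic external ray $R_y$ of period $p$ inside $D$. The Schwarz-lemma expansion of $f^p\colon D_p\to D$ is then used only to show that this already-existing ray $R_y$ converges to $y$, not to build $R_y$ in the first place. Replacing your concatenation step by this fixed-angle argument closes the gap.
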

\begin{proof}
Let $B_y=(Z_1,Z_2,\dots)$ be the bubble chain in $\Kfilled^\loc_f$ landing at $y$. Denote by $x$ the unique point in the intersection of $\gamma_1\cap \partial Z_\str$. By Definition~\ref{dfn:SiegPcm}, the external ray $R_1$ lands at $x$. There are two iterated preimages $x_\ell, x_{\rho}\in \partial Z_1$ of $x$ (by density of those) such that the rays $R_\ell$, $R_\rho$ (iterated lifts of $R_1$) landing at $x_\ell,x_\rho$ 
 together with $Z_1$ separate $y$ from $\overline Z_f$, see Figure~\ref{Fg:RatRaysForSiegPacm}. We denote by $D$ the open subdisk of $V$  bounded by $R_\ell, R_\rho$, $Z_1$ and containing $y$. 
 Let $D_p$ be the (univalent) pullback of $D$ along $f^p \colon \{y\}\to \{y\}$. Then $D_p\Subset D$. By the Schwarz lemma, $f^p\colon D_p\to D$ expands the hyperbolic metric of $D$. 
 
There is a unique periodic external ray  $R_y$ in $D$ with period $p$. We claim that $R_y$ lands at $y$. Indeed, parametrize $R$ as $R\colon \R_{>0}\to V$ with $f^p(R_y(t+p))= R_y(t)$. Since all the points in $D$ away from $y$ escape in finite time under $f^p\colon D_p\to D$, the Euclidean distance between $R(t)$ and $y$ goes to $0$ as $t\to +\infty$.
\end{proof}

The next lemma is a preparation for a Pullback Argument.
\begin{lem}[Rational approximation of $\gamma_1$]
\label{lem:SP per pts and rays}
For every $\varepsilon>0$, there are 
\begin{itemize}
\item periodic points $x_\ell,x_\rho\in \Jul^\loc_f$,
\item external rays $R_\ell$ and $R_\rho$ landing at $x_\ell, x_\rho$ respectively,
\item periodic bubble chains $B_\ell$ and $B_\rho$ in $\Kfilled^\loc_f$ landing at $x_\ell, x_\rho$ respectively, and  
\item internal rays $I_\ell$ and $I_\rho$ of $\overline Z_f$ landing at the points at which $B_\ell$ and $B_\rho$ are attached  
\end{itemize}
such that $ R_\ell \cup B_\ell\cup I_\ell$ and $R_\rho \cup B_\rho\cup I_\rho$ are in the $\varepsilon$-neighborhood of $\gamma_1$ and such that $R_\ell\cup B_\ell\cup I_\ell$ is on the left of $\gamma_1$ while $R_\rho\cup B_\rho\cup I_\rho$ is on the right of $\gamma_1$.
\end{lem}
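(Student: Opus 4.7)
The approach is to approximate $\gamma_1$ on each side by a chain consisting of a rational internal ray, a periodic bubble chain, and a periodic external ray. Let $x$ denote the unique point of $\gamma_1\cap \partial Z_f$, which by assumption is non-precritical. I work with the Siegel pacman $f$ and the quadratic polynomial $p$ related by the hybrid conjugacy $h$ from Corollary~\ref{cor:SP embeds in SM}, and I use Lemma~\ref{lem:thm:K_P is LocConn} freely: $\Kfilled^\loc_f$ is locally connected, diameters of bubbles and their limbs decay uniformly with generation, and repelling periodic points are dense in $\Jul^\loc_f$.

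First I would fix $\eta>0$ much smaller than $\varepsilon$ so that every connected component of $\Kfilled^\loc_f$ intersected with the disk of radius $2\eta$ around $x$ has Euclidean diameter less than $\varepsilon/3$; such $\eta$ exists by local connectedness at $x$. Then I would pick repelling periodic points $x_\ell, x_\rho \in \Jul^\loc_f$ on the left and right sides of $\gamma_1$ respectively, at distance less than $\eta$ from $x$, and not lying on the boundary of any bubble (possible by density). Lemma~\ref{lem:SP ExternalRays} then produces periodic external rays $R_\ell, R_\rho$ landing at $x_\ell, x_\rho$ with matching periods. Since $x_\ell$ lies on no bubble boundary, there is a unique periodic bubble chain $B_\ell$ of $\Kfilled^\loc_f$ landing at $x_\ell$, whose first bubble is attached to $\partial Z_f$ at a precritical point $y_\ell$; the internal ray $I_\ell$ of $\overline Z_f$ landing at $y_\ell$ is then rational. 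An identical construction on the right side yields $I_\rho, B_\rho, R_\rho$.

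The quantitative estimates then follow: the chain $B_\ell$ lies in the connected component of $\Kfilled^\loc_f$ intersected with the $2\eta$-disk around $x$ that contains $x_\ell$, hence in the $\varepsilon/3$-neighborhood of $x$. This forces $y_\ell$ to be $\eta$-close to $x$, so that $I_\ell$ is close to $I_1$ by continuity of the internal ray parametrization of the quasi-disk $\overline Z_f$. The external angle of $R_\ell$ is close to that of $R_1$ by continuity of landing on the locally connected $\Jul^\loc_f$, and external rays of nearby angles stay uniformly close by the rectangle-foliation construction of \S\ref{ss:DynObj} iterated under the pullback dynamics. Shrinking $\eta$ further if necessary, the full chain $R_\ell\cup B_\ell\cup I_\ell$ lies in the $\varepsilon$-neighborhood of $\gamma_1$.

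The main obstacle is controlling which side of $\gamma_1$ each ingredient lies on. Since $x$ is non-precritical, a punctured neighborhood of $x$ in the plane minus $\gamma_1$ has two connected components; the cyclic orders of internal angles on $\partial Z_f$ and of external angles on the external circle are compatible with this spatial decomposition at $x$. Once $x_\ell$ is chosen on the left of $\gamma_1$, both the attachment point $y_\ell$ on $\partial Z_f$ and the external angle of $R_\ell$ fall on the corresponding left sides, so $R_\ell\cup B_\ell\cup I_\ell$ lies entirely to the left of $\gamma_1$; the symmetric argument handles the right chain, completing the construction.
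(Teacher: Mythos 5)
Your overall plan is close in spirit to the paper's (choose periodic points close to the point $x_1 = \gamma_1\cap\partial Z_f$ on each side, use Lemma~\ref{lem:SP ExternalRays} to get periodic external rays and bubble chains, then argue that the chains converge to $\gamma_1$), but there is a genuine gap at the crucial quantitative step. You assert that ``external rays of nearby angles stay uniformly close by the rectangle-foliation construction of \S\ref{ss:DynObj} iterated under the pullback dynamics,'' and that shrinking $\eta$ then forces the whole chain $R_\ell\cup B_\ell\cup I_\ell$ into an $\varepsilon$-neighborhood of $\gamma_1$. This does not follow. The rectangle foliation controls the ray segments for \emph{finitely many} pullbacks: rays of nearby angles stay close over bounded time. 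It gives no control whatsoever of the tails of the rays, i.e., the parts accumulating on $\Jul^\loc_f$. A pacman is not uniformly expanding on its Julia set, and without some source of expansion, rays of nearby angles can in principle separate arbitrarily before landing, so the landing points being close does not automatically make the rays close (only the converse holds). Shrinking $\eta$ only moves the \emph{landing point} closer to $x_1$; it does not by itself tame the ray.

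The missing ingredient is precisely the paper's expansion argument. One first produces a finite repelling tree: periodic points $y_1,\dots,y_p$, their external rays $R_{y,i}$ and bubble chains $B_{y,i}$, such that removing $\overline{Z_f}\cup\bigcup_i(B_{y,i}\cup R_{y,i})$ from $U$ leaves finitely many components, the one adjacent to $\partial^\frb U_f$ being singled out as $W_p$, and (adding more periodic points if needed) the critical point $c_0$ is kept off $\partial W_p$. Setting $W = W_1\cup\dots\cup W_{p-1}$, the Schwarz lemma makes $f\restriction W$ \emph{expand} the hyperbolic metric of $W$. It is this expansion that lets one conclude, for a sequence $x_{\ell,j}\to x_1$ of periodic points with orbits staying in $\overline W$, that the ray tails $R_{\ell,j}([T,\infty))$ contract into a small neighborhood of $x_{\ell,j}$, hence that $\overline{R_{\ell,j}}$ converges to $\overline{R_1}$. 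Your proof needs this (or an equivalent uniform estimate) before the final ``Shrinking $\eta$ further'' step; as written, the conclusion that the chains lie in the $\varepsilon$-neighborhood of $\gamma_1$ is unsupported. The remaining parts of your argument (existence of bubble chains landing at the chosen periodic points, their small diameter from the no-big-limbs property near $x_1$, and the left/right bookkeeping via compatible cyclic orders) are fine and agree with the paper.
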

\begin{proof}
Consider a finite set of periodic points $y_1,y_2,\dots , y_p\in \Jul^\loc_f$. By Lemma~\ref{lem:SP ExternalRays} each $y_i$ is the landing point of an external periodic ray, call it $R_{y,i}$, and the landing point of a periodic bubble chain, call it $B_{y,i}$. Let $\{W_1,W_2,\dots, W_p\}$ be the set of connected components of 
\[U\setminus \overline {Z_f\bigcup_i (B_{y,i}\cup R_{y,i})};\]
we assume that $\partial W_p$ contains $\partial^\frb U_f$.
By adding more periodic points we can also assume that $c_0\not\in \partial W_p$. Set 
\[W\coloneqq W_1\cup W_2\cup \dots \cup W_{p-1}.\]

By Schwarz lemma, $f\mid W$ is expanding with respect to the hyperbolic metric of $W$. Since $c_0\not\in \partial W_p$, there is a sequence of periodic points $x_{\ell, j}\in \Jul^\loc_f$ such that the orbit of $x_\ell$ is in $\overline W$ and such that  $x_{\ell, j}$ converges from the left to the unique point $x_1$ in $\gamma_1\cap \partial Z_f$. 

We claim that the external rays  $R_{\ell,j}$ landing at $x_{\ell,j}$ converge to the external ray landing at $x_1$. Indeed, since $x_{\ell,j}\to x_1$, the external angle of $R_{\ell,j}$ (see~\S\ref{ss:DynObj}) converges to the external angle of $R_1$. By continuity, $R_{\ell,j}([0,T])$ converges to $R_{1}([0,T])$ for any $T\in  \R_{>0}$. Since $f\mid W$ is expanding, $R_{\ell,j}([T,+\infty))$ is in a small neighborhood of $x_{\ell,j}$ which converges to $x_1$.

The bubble chains $B_{\ell,j}$ of $\Kfilled^\loc_f$ landing at $x_{\ell,j}$ shrink because there are no big limbs in a neighborhood of $x_1$. Define $I_{\ell, j}$ to be the internal ray of $Z_f$ landing at the point where $B_{\ell,j}$  is attached. Then $R_{\ell,j}\cup B_{\ell,j}\cup I_{\ell,j}$ is a required approximation for sufficiently big $j$. Similarly, $R_{\rho}\cup B_{\rho}\cup I_{\rho}$ is constructed.
\end{proof}

\subsection{Hybrid equivalence}
Recall~\S\ref{sec:pacmen} that a pacman $f\colon U_f\to V_f$ is required to have a locally analytic extension through $\partial U_f$. By means of the Pullback Argument, we will now show the following.

\begin{thm}
\label{thm:HyrbrEquivOfPacm}
Let $f\colon U_f\to V_f$ and $g\colon U_g\to V_g$  be two combinatorially equivalent Siegel pacmen and suppose that $f$ and $g$ have the same truncation level.  
  Then $f$ and $g$ are hybrid equivalent.    
\end{thm}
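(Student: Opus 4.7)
The plan is to run the standard Pullback Argument, anchored on the Siegel disks.

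\emph{Initial hybrid datum.} By Theorem~\ref{thm:HybrCongSiegMaps}, since $f$ and $g$ are Siegel maps with the same rotation number $\theta$, there exists a hybrid conjugacy $\phi_0$ between open neighborhoods $W_f\supset \overline Z_f$ and $W_g\supset \overline Z_g$, conformal on $Z_f$ and sending $\partial Z_f$ onto $\partial Z_g$ equivariantly. After slightly shrinking $W_f$, we may assume $\phi_0$ extends to a qc homeomorphism of a closed topological disk whose boundary contains a preassigned truncation equipotential.

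\emph{Matching the combinatorics off the Siegel disks.} Using Lemma~\ref{lem:SP per pts and rays}, choose, for each side of $\gamma_1(f)$, rational ray-bubble-internal ray chains $R^f_\ell\cup B^f_\ell\cup I^f_\ell$ and $R^f_\rho\cup B^f_\rho\cup I^f_\rho$ in the $\varepsilon$-neighborhood of $\gamma_1(f)$, and do likewise in the dynamical plane of $g$. By combinatorial equivalence (same rotation number and same external angles of $R_0$, hence of $R_1=f(R_0)$), the external angles of the periodic rays $R^f_{\ell/\rho}$ and $R^g_{\ell/\rho}$ can be chosen to coincide; Lemma~\ref{lem:PerRaysStable} together with Lemma~\ref{lem:thm:K_P is LocConn} guarantees a canonical bijection of the corresponding periodic points, bubble chains, and internal rays between $f$ and $g$.

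\emph{Construction of $\Phi_0$.} Extend $\phi_0$ to a qc homeomorphism
\[
\Phi_0 \colon V_f \setminus \gamma_1(f) \longrightarrow V_g \setminus \gamma_1(g)
\]
with the following properties: (i) $\Phi_0=\phi_0$ on a neighborhood of $\overline Z_f$, so $\Phi_0$ is conformal on $Z_f$; (ii) $\Phi_0$ carries the matched rational rays/bubbles/internal rays to their $g$-counterparts; (iii) $\Phi_0$ sends the two sides of $\gamma_1(f)$ to the two sides of $\gamma_1(g)$ in a way that is compatible with the dynamics on $\partial Z_f$ (via $\phi_0$) and on $\partial^{\ext} U_f$ (by choosing a qc identification of the collar $V_f\setminus U_f$ with $V_g\setminus U_g$ respecting the external-ray foliations of \S\ref{ss:DynObj}). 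The non-escaping set being locally connected (Lemma~\ref{lem:thm:K_P is LocConn}), the Julia-set complement decomposes into tractable pieces on which (iii) is a standard interpolation.

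\emph{Pullback.} Inductively define
\[
\Phi_{n+1}(z)=\begin{cases} \Phi_n(z), & z\in V_f\setminus U_f,\\ g^{-1}\circ \Phi_n\circ f(z), & z\in U_f,\end{cases}
\]
choosing on each component of $U_f\setminus \gamma_0(f)$ the branch of $g^{-1}$ determined by $\Phi_0$ on the two sides of $\gamma_1(g)$. Combinatorial matching and equivariance of $\Phi_0$ on $\partial U_f\cup \partial Z_f$ guarantee that $\Phi_{n+1}$ is a well-defined qc homeomorphism, still conformal on $\bigcup_{k\le n+1} f^{-k}(Z_f)$. Because lifting by holomorphic maps does not increase the Beltrami coefficient, $\Dil(\Phi_n)\le \Dil(\Phi_0)$ uniformly in $n$.

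\emph{Convergence and conclusion.} By compactness of qc maps with uniformly bounded dilatation, pass to a subsequential limit $\Phi$. By construction $\Phi$ conjugates $f$ to $g$ on the escaping set, hence on all of $V_f$ by continuity and the local connectivity of $\Jul_f$. Since $\Phi$ is a limit of maps that are conformal on the nested union $\bigcup_n f^{-n}(Z_f)$, and in particular on $Z_f$ itself, $\Phi$ is conformal there. Thus $\Phi$ is the desired hybrid conjugacy.

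\emph{Main obstacle.} The delicate point is the construction of $\Phi_0$: one must glue the hybrid datum $\phi_0$ (living on a neighborhood of $\overline Z_f$) with the qc identification of the escaping region so that the matching is dynamically consistent across $\gamma_1$ and across $\partial Z_f$. This is precisely why we need the rational approximation of $\gamma_1$ from Lemma~\ref{lem:SP per pts and rays}: it replaces the a priori arbitrary arc $\gamma_1$ with a combinatorially rigid skeleton that can be transported from $f$ to $g$ using Lemma~\ref{lem:PerRaysStable} and Lemma~\ref{lem:thm:K_P is LocConn}. Once $\Phi_0$ is in place, the remainder is a routine pullback argument.
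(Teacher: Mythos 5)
Your overall strategy---anchor a hybrid conjugacy on the Siegel disks via Theorem~\ref{thm:HybrCongSiegMaps}, match the combinatorics across $\gamma_1$ using the rational ray/bubble/internal-ray skeletons of Lemma~\ref{lem:SP per pts and rays}, and then run the Pullback Argument---is exactly the paper's approach, and the skeleton transport in your ``matching the combinatorics'' paragraph is correct. (The paper routes the initial conjugacy through the quadratic polynomial $p$ as an intermediary, but that difference is cosmetic.)

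The gap is in the pullback step, and it is precisely the point the paper has to work to fix. You define
\[
\Phi_{n+1}(z)=\begin{cases}\Phi_n(z), & z\in V_f\setminus U_f,\\ g^{-1}\circ\Phi_n\circ f(z), & z\in U_f,\end{cases}
\]
and assert that equivariance of $\Phi_0$ on $\partial U_f\cup\partial Z_f$ makes this consistent. But $\partial U_f$ has two very different pieces. On the external boundary $\partial^{\ext}U_f=f^{-1}(\partial V_f)$ your argument is fine. On the forbidden boundary $\partial^{\frb}U_f$ (the arcs $\gamma_{\pm}$ and the truncation circle $\partial O'_0$), the image under $f$ lands on $\gamma_1\cup\partial O$---exactly the set your $\Phi_0$ is \emph{not} defined on (you set it up on $V_f\setminus\gamma_1(f)$), and where there is no reason for $g^{-1}\circ\Phi_n\circ f$ to agree with $\Phi_n$. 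Without further adjustment the two clauses of the recursion clash along $\partial^{\frb}U_f$ and $\Phi_{n+1}$ is not continuous there. The paper resolves this by slightly enlarging $U_f$ to a domain $\mathfrak U_f$ with $\gamma_{\pm}$ nudged so that $f(\partial^{\frb}\mathfrak U_f)$ lies inside $Z_f\cup\overline{B_\ell\cup R_\ell\cup B_\rho\cup R_\rho}$, i.e.\ inside the forward-invariant set $T_f$ on which $h$ is already equivariant, and makes the corresponding adjustment for $g$ so that the boundary data genuinely lift. This is not a routine interpolation: it is the one place where the ``combinatorially rigid skeleton'' you build is actually consumed, and your sketch should make explicit that the domain of the pullback has to be deformed so that its forbidden boundary maps into that skeleton. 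The equal-truncation-level hypothesis also enters here (it lets you match $\partial O$ with $\partial O(g)$ via the linearizing coordinate), which you use implicitly but never state.
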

\begin{proof}
Let $p$ be the unique quadratic polynomial with the same rotation number as $f$ and $g$. Let $h_f$ and $h_g$ be hybrid conjugacies from neighborhoods of $\overline Z_f$ and $\overline Z_g$ to a neighborhood of $\overline Z_p$ respectively. As in Corollary~\ref{cor:SP embeds in SM}, there are prepacmen $Q$, $R$, $S$ in the dynamical planes of $p$, $f$, and $g$ respectively such that $h_f$ and $h_g$ are conjugacies respecting prepacmen $R,Q$ and  $S,Q$ respectively, see Definition~\ref{defn:ConjBetweenPreRenorm}. The composition $h\coloneqq h_g^{-1}\circ h_f$ is a conjugacy respecting $R, S$.

As in \eqref{eq:K f loc} we define $\Kfilled^\loc_f$, similarly $\Kfilled^\loc_g$ and $\Kfilled^\loc_p$ are defined. Then $h$ conjugates $f\mid \Kfilled^\loc_f$ and $g\mid \Kfilled^\loc_g$. 

As in Lemma~\ref{lem:SP per pts and rays} let $ R_\ell(f) \cup B_\ell(f)\cup I_\ell(f)$ and $R_\rho (f)\cup B_\rho(f)\cup I_\rho(f)$ be approximations of $\gamma_1(f)$ from the left and  from the right respectively. Similarly, let $ R_\ell(g) \cup B_\ell(g)\cup I_\ell(g)$ and $R_\rho (g)\cup B_\rho(g)\cup I_\rho(g)$ be approximations of $\gamma_1(g)$. We choose the approximations in the compatible ways:
\begin{itemize}
\item  $B_\ell(g),I_\ell(g), B_\rho(g),I_\rho (g)$ are the image of $B_\ell(f),I_\ell(f), B_\rho(f),I_\rho (f)$ under $h$;
\item $R_\ell(g),R_\rho(g)$ have the same external angles as $R_\ell(f),R_\rho(f)$.
\end{itemize}

Write \[T_f\coloneqq \Kfilled^{\loc}_f \bigcup_{n\ge 0} f^n(R_\rho\cup R_\ell )\sp \text{ and }\sp T_g\coloneqq \Kfilled^{\loc}_g \bigcup_{n\ge 0} g^n(R_\rho\cup R_\ell ).\]
Then $T_f$ and $T_g$ are forward invariant sets such that $V_f\setminus T_f$ and $V_g\setminus T_g$ consist of finitely many connected components. Since $R_\ell(g),R_\rho(g)$ have the same external angles, we can extend $h$ to a qc map $h\colon V_f\to V_g$ such that $h$ is equivariant on $ T_f\cup \partial^\ext U_f$.

We now slightly increase $U_f $ by moving $\partial^\frb U_f$ so that the new disk $\mathfrak U_f$ satisfies \[f(\partial^\frb \mathfrak U_f)\subset Z_f \cup \overline {B_\ell\cup R_\ell \cup B_\rho \cup R_\rho}.\]
(Indeed, we can slightly move $\gamma_-\subset \partial^\frb U_f$ so that its image is within $R_\ell\cup B_\ell\cup Z_f$ and we can slightly move $\gamma_+\subset \partial^\frb U_f$ so that its image is within $R_\ell\cup B_\ell\cup Z_f$.) 
Similarly, we slightly increase $U_g$ by moving $\partial^\frb U_g$ so that the new disk $\mathfrak U_g$ satisfies \[g(\partial^\frb \mathfrak U_g)\subset Z_g \cup \overline {B_\ell\cup R_\ell \cup B_\rho \cup R_\rho}\] and such that $h\mid T_f$ lifts to a conjugacy between $f \mid \partial \mathfrak U_f$ and $g \mid \partial \mathfrak U_g$. This allows us to apply the \emph{Pullback Argument}: we set $h_0\coloneqq h$ and we construct qc maps 
\[ h_n:V_f\to V_g \mbox{ by }h_n(x)\coloneqq \begin{cases} g^{-1}\circ h_{n-1}\circ f(x) &\mbox{if } x\in \mathfrak  U_f,\\
  h_{n-1} (x) &\mbox{if } x\notin  \mathfrak  U_f.
  \end{cases} \]
  We can choose $h$ such that $h_0$ and $h_1$ are connected by an isotopy $\tilde h\colon [0,1]\times V_f\to V_g$ that is constant on $T_f$ and uniformly continuous in the hyperbolic metrics of $V_f\setminus T_f$, $V_g\setminus T_g$. This implies that the Euclidean distance between $h_n$ and $h_{n+1}$ tends to $0$. Since the space of qc maps with uniformly bounded dilatation is compact, we may pass to the limit and construct a hybrid conjugacy between $f$ and $g$.
\end{proof}

\subsection{Standard Siegel pacmen}
\label{ss:StandardPacmen}
We say a Siegel pacman is \emph{standard} if $\gamma_0$ passes through the critical value; equivalently if $\gamma_1$ passes through the image of the critical value.

  A \emph{standard prepacman} $R$ in the dynamical plane of a Siegel map $g$ is a prepacman around (see~\S\ref{ss:SiegelPrepacmen}) the critical value of $g$. Then the pacman $r$ obtained from $R$ is a standard and the renormalization  change of variables $\psi_R$ respects the internal ray towards the critical value: 
  \begin{equation}
  \label{eq:I_1 is respect}
  \psi_R(I_1(g))=I_1(r).
  \end{equation} The pacman renormalization associated with $R$ is called a \emph{standard pacman renormalization of $g$}.

By Theorem~\ref{thm:HyrbrEquivOfPacm}, two standard Siegel pacmen are hybrid equivalent if and only if they have the same rotation number. 

\begin{thm}
\label{thm:LocConnOfJul for StandPacm}
Let $f$ be a standard Siegel pacman. Then $\Kfilled_f$ is locally connected. Moreover, for every $\varepsilon>0$ there is an $n\ge 0$ such that every connected component of $\Kfilled_f$ minus all of the bubbles with generation at most $n$ is less than $\varepsilon$.
\end{thm}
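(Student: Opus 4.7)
The plan is to reduce the theorem to Lemma~\ref{lem:thm:K_P is LocConn} by realizing $f$, up to hybrid equivalence, as the pacman renormalization of a quadratic polynomial. Let $\theta\in\Theta_\bnd$ be the rotation number of $f$. Each branch of~\eqref{eq:ActOnRotNumb} is a surjective homeomorphism of $[0,1]$, so there exists $\theta'\in\Theta_\bnd$ with $\cRRc(\theta')=\theta$ (for instance $\theta'=\theta/(1+\theta)$, which remains of bounded type because it corresponds to prepending one digit to the continued fraction expansion of $\theta$). Consider the Siegel quadratic polynomial $p\coloneqq p_{\theta'}$ and apply Lemma~\ref{lem:SiegPacmExist} to $p$ at the point $x\coloneqq c_1(p)\in \partial Z_p$; this choice is permissible since $\theta'$ is irrational and hence $c_1(p)$ is not precritical. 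Tuning the sector angle $\delta$ so that the return times are $\aa+\bb=3$ (prime renormalization), and tuning the truncation parameter $r\in(0,1)$ in Lemma~\ref{lem:SiegPacmExist}, we obtain a standard prepacman $Q$ (in the sense of~\S\ref{ss:StandardPacmen}, since it is centered at the critical value) whose associated pacman $q$ is a standard Siegel pacman of rotation number $\theta$ with the same truncation level as $f$.

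By construction $q$ is a renormalization of the quadratic polynomial $p$, so Lemma~\ref{lem:thm:K_P is LocConn} applies: $\Kfilled_q$ is locally connected, and for every $\varepsilon>0$ there is an $n\ge 0$ such that every connected component of $\Kfilled_q$ minus the bubbles of generation at most $n$ has diameter less than $\varepsilon$. Since $f$ and $q$ are standard Siegel pacmen with matching rotation number and truncation level, Theorem~\ref{thm:HyrbrEquivOfPacm} provides a hybrid conjugacy $h$ between them. The map $h$ is a qc homeomorphism of (a neighborhood of) the domain of the pacman that conjugates $q$ to $f$ as pacmen, so it carries $\Kfilled_q$ homeomorphically onto $\Kfilled_f$ and sends bubbles of $q$ to bubbles of $f$ of the same generation. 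Local connectivity is a topological invariant and therefore passes across $h$, while the quantitative bubble-shrinking statement transfers because $h$ is uniformly continuous on the closure of its bounded domain, so it sends sets of small diameter to sets of small diameter uniformly.

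The step I expect to require the most care is the first paragraph: building a prepacman of $p_{\theta'}$ that is \emph{simultaneously} (i) centered at $c_1(p_{\theta'})$ so the associated pacman is standard, (ii) of prime combinatorics $\aa+\bb=3$ so its rotation number is exactly $\theta=\cRRc(\theta')$ and not a deeper iterate, and (iii) of prescribed truncation level. The flexibility built into Lemma~\ref{lem:SiegPacmExist} (choice of sector angle $\delta$ and truncation height) is precisely calibrated for these three requirements, but checking that a single choice of sector realizes all three compatibly is the delicate point. Once it is in place, local connectivity of $\Kfilled_f$ and the shrinking-diameter property follow by the routine transfer of a topological-metric property along a qc conjugacy on a bounded domain.
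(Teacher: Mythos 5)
Your overall strategy is the same as the paper's: construct a standard Siegel pacman $q$ of rotation number $\theta$ that arises as a renormalization of a quadratic polynomial, then transfer the local connectivity of $\Kfilled_q$ (Lemma~\ref{lem:thm:K_P is LocConn}) to $\Kfilled_f$ via the hybrid conjugacy of Theorem~\ref{thm:HyrbrEquivOfPacm}. Your transfer step is correct as written.

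The gap is in how you build $q$. You take $\theta'=\theta/(1+\theta)$, so $\cRRc(\theta')=\theta$, and then want to tune the sector of Lemma~\ref{lem:SiegPacmExist} so that the renormalization is \emph{prime} ($\aa+\bb=3$), which would produce rotation number $\cRRc(\theta')=\theta$. But Lemma~\ref{lem:SiegPacmExist} does not give you that freedom. Its proof begins with ``assume that the angle $\delta$ of $\Sbb$ is small,'' and this smallness is used in an essential way three times: to ensure $S_Q\setminus\overline Z_p$ is contained in a small neighborhood of $x$ (so the orbit of a point in $U_\pm$ stays near $\overline Z_p$); to make the truncation parameter $t=r^\delta$ close to $1$; and to force the orbit of $c'_0$ to pass through all of $c_{-\ell},\dots,c_0$ before returning to $S_Q$. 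For a prime renormalization the retained sector has angle $1-\omega$ with $\omega$ the rotation angle, and since $\theta'$ is of bounded type this is bounded away from $0$, so the hypotheses that drive the proof of Lemma~\ref{lem:SiegPacmExist} are simply not met. What the lemma delivers is a prepacman whose rotation number is $\cRRc^m(\theta')$ for some \emph{unspecified, large} $m$, not $m=1$.

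The fix is cheap and brings you exactly to the argument the paper sketches: instead of aiming for primality, go further back under $\cRRc$. Since $\theta\in\Theta_\bnd$, prepend $k$ bounded digits to its continued fraction to get $\theta''\in\Theta_\bnd$ with $\cRRc^{m(k)}(\theta'')=\theta$, with $m(k)\to\infty$. Choose the sector pre-renormalization of $p_{\theta''}\mid\overline Z_{p_{\theta''}}$ around $x=c_1(p_{\theta''})$ that corresponds to stripping exactly those prepended layers; its angle $\delta$ decays geometrically in $m$ (by Lemma~\ref{lem:ap:CompTiling} the sector angles at each step are bounded away from $0$ and $1$ in $\Theta_N$). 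Taking $k$ large makes $\delta$ as small as Lemma~\ref{lem:SiegPacmExist} requires, while the resulting pacman $q$ has rotation number $\theta$, is standard (the prepacman is centered at $c_1$), and can be given any prescribed truncation level $r$. Then your last paragraph goes through unchanged.
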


\noindent As a consequence, every periodic point of $\Jul_f$ is the landing point of a bubble chain.

\begin{proof}
For every $\theta\in \Theta_\bnd$, there is a Standard pacman $g$ with rotation number $\theta$ such that is $g$ is a renormalization of a quadratic polynomial.  The statement now follows from Theorem~\ref{thm:HyrbrEquivOfPacm} combined with Lemma~\ref{lem:thm:K_P is LocConn}.
\end{proof}

\subsection{A fixed point under renormalization}
\label{ss:FixedREnormPts}
Consider a Siegel map $f$ with rotation number $\theta\in \Theta_\per$ and consider $x\in \partial Z_f$ such that $x$ is neither the critical point nor its iterated preimage. Let ${(f^\aa\mid X_{-,x}~,\sp  f^\bb \mid X_{+,x})}$ be the sector pre-renormalization of $f\mid \overline Z_f$ as in Definition~\ref{defn:SectRenormZp}. Since $\theta\in \Theta_\per$, we can assume (see~\S\ref{sss:ap:SectRen:PerCase}) that the renormalization fixes $f\mid \overline Z_f$: the gluing map $\psi_x:X_x\to \overline Z_f$ projects ${(f^\aa\mid X_{-,x}~,\sp  f^\bb \mid X_{+,x})}$ back to $f\mid \overline Z_f.$ For $x\in\{c_0,c_1\}$ we write \[\psi_0=\psi_{c_0}, \sp X_0=X_{c_0}, \sp X_{\pm,0}=X_{\pm,c_0}\] and \[\psi_1=\psi_{c_1}, \sp X_1=X_{c_1}, \sp X_{\pm,1}=X_{\pm,c_1}.\]

\begin{thm}[\cite{McM3}]
\label{thm:RenFixedPoint}
For every $\theta\in \Theta_\per$, there is a Siegel map $g_\str\colon U_\str\to V_\str$ with rotation number $\theta$ such that for a certain sector pre-renormalization of $g_\str\mid \overline Z_{g_\str}$ as above the gluing map $\psi_0$ extends analytically through $\partial Z_\str\cap \partial X_0$ to a gluing map $\psi_0$ projecting $(g^\aa_\str\mid S_{-,0}~,\sp  g_\str^\bb \mid S_{+,0})$ back to $g_\str\colon U_\str\to V_\str$, where $S_{\pm,0}\subset U_\str$. Moreover, there is an \emph{improvement of the domain}: the forward orbits \[\bigcup_{i\in\{0,1,\dots, \aa\}} g^{i}_\str (S_{-,0})\bigcup_{j\in\{0,1,\dots, \bb\}} g^{j}_\str (S_{+,0})\] are compactly contained in $U_\str\cap V_\str$.
\end{thm}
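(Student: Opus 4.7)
The plan is to import McMullen's construction of Siegel renormalization periodic points and translate it into the sector-renormalization formalism developed above. Since $\theta\in\Theta_\per$, after replacing the operator by a suitable iterate we may treat $\theta$ as a fixed point of the action~\eqref{eq:ActOnRotNumb}, so it suffices to produce a fixed point of the corresponding sector renormalization. The starting point is the quadratic polynomial $p=p_\theta$: Lemma~\ref{lem:SiegPacmExist} together with Definition~\ref{defn:SectRenormZp} gives a sector pre-renormalization of $p\mid \overline Z_p$ around $c_0$, providing an initial holomorphic commuting pair $(p^\aa\mid X_{-,0},\; p^\bb\mid X_{+,0})$ whose rotation number is again $\theta$.

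Next I would invoke complex \emph{a priori} bounds for Siegel commuting pairs with rotation number in $\Theta_\bnd\supset \Theta_\per$. These bounds, together with Theorem~\ref{thm:HybrCongSiegMaps}, guarantee that every iterated sector renormalization of $p$ extends holomorphically to a definite annular neighborhood of the closed Siegel disk, and that the resulting commuting pairs lie in a compact family. Extract a subsequential limit via Montel's theorem, then upgrade subsequential convergence to full convergence by McMullen's contraction argument in the Teichm\"uller metric on hybrid classes (using that hybrid classes with the same rotation number have uniformly bounded Teichm\"uller diameter along renormalization). The limit is a Siegel map $g_\str$ fixed by sector renormalization on $\overline Z_{g_\str}$.

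The extension of the gluing map $\psi_0$ across $\partial Z_\str\cap\partial X_0$ is then forced by the fixed-point identity. Indeed, on $X_0$ the map $\psi_0$ is determined by the linearizing coordinate at $\alpha$ (as in Definition~\ref{defn:SectRenormZp}), and the fixed-point property says that this $\psi_0$ conjugates the commuting pair back to $g_\str\mid \overline Z_{g_\str}$. Since $g_\str$ is holomorphic in a full neighborhood of $\overline Z_{g_\str}$ and $(g_\str^\aa,g_\str^\bb)$ are defined beyond $X_{\pm,0}$, the inverse of $g_\str$ off the Siegel disk supplies an analytic extension of $\psi_0$ to larger sectors $S_{\pm,0}$. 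Concretely, one pulls back a collar of $\partial Z_\str$ by $g_\str$ and transports $\psi_0$ through the dynamical identification; the extended $\psi_0$ projects $(g_\str^\aa\mid S_{-,0},\; g_\str^\bb\mid S_{+,0})$ onto $g_\str\colon U_\str\to V_\str$, completing the fixed-point structure on a full neighborhood of $\alpha$.

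Finally, the improvement-of-domain statement follows from compactness of the renormalization operator once the extension is in place. If the forward orbits of $S_{-,0},S_{+,0}$ failed to be compactly contained in $U_\str\cap V_\str$, then iterating the fixed-point identity would produce a sequence of rescaled renormalizations whose domains shrink to $\overline Z_\str$, contradicting the uniform complex bounds. The main obstacle in this program is the contraction of the renormalization operator used to promote subsequential convergence to genuine fixed-point existence and uniqueness; this is precisely the hard analytic content of McMullen's theorem, which in turn rests on Petersen's local connectivity of bounded-type Siegel Julia sets and the Douady--Ghys surgery relating Siegel maps to critical circle maps.
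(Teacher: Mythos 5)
The paper does not reprove this statement: Theorem~\ref{thm:RenFixedPoint} is imported directly from McMullen~\cite{McM3} and used as a black box, so there is no internal argument to compare against. Your outline is a faithful reconstruction of McMullen's construction --- reduce to $p_\theta$, invoke complex a priori bounds (via Douady--Ghys surgery, real bounds for critical circle maps, and Petersen's local connectivity) for compactness of the renormalization sequence, extract a Montel limit, and argue that the limit is a fixed point carrying the extended gluing map and the improvement of the domain. The one substantive deviation is the mechanism you cite for convergence and uniqueness: you attribute it to Teichm\"uller-metric contraction on hybrid classes, but McMullen's actual argument in~\cite{McM3} rests on a ``deep point'' geometric-limit argument, not a Teichm\"uller estimate. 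The paper itself records this distinction in the remark following Theorem~\ref{thm:ExpConv}, where the Schwarz-lemma/Teichm\"uller contraction route (following \cite{L:FCT}, \cite{AL-horseshoe}) is described as an alternative to McMullen's deep-point method. Either route works, so this is a mismatch of attribution rather than a gap. One point worth making explicit: Montel alone gives normality, and it is the complex a priori bounds --- which you invoke for compactness --- that also prevent the limit from degenerating to a trivial rotation or a collapsed Siegel disk; non-degeneracy should be sourced there rather than inferred from the improvement-of-domain conclusion.
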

\noindent We note that the improvement of the domain follows from complex \textit{a priori} bounds for quasicritical circle maps~\cite{AL-posmeas}*{\S 3.3} after applying the inverse Douady-Ghys surjery; see also~\cite{Ya-posmeas}*{Proposition 3.2}. Up to conformal conjugacy, $g_\str$ is unique in a neighborhood of $\overline Z_{g_\str}$. The improvement of the domain will allow us in Theorem~\ref{thm: An Pacm  self ope} to construct a pacman analytic self-operator $\RR\colon\BB\dashrightarrow \BB$.

\begin{cor}
\label{cor:psi1:thm:RenFixedPoint}
The gluing map $\psi_1$ extends analytically through $\partial Z_{g_\str}\cap \partial X_1$ and, up to replacing $\psi_1$ with its iterate, satisfies the same properties as $\psi_0$ in Theorem~\ref{thm:RenFixedPoint}; in particular, the improvement of the domain holds for $\psi_1$.
\end{cor}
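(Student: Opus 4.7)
The plan is to transport the analytic-extension and improvement-of-domain properties of $\psi_0$ to $\psi_1$ via iterates of the dynamics $g_\str$, exploiting that $c_1 = g_\str(c_0)$ is not itself critical, so $g_\str$ is a local biholomorphism in a neighborhood of $c_1$. The sector renormalizations at the critical-orbit points $c_0, c_1, c_2, \ldots \in \partial Z_\str$ are conjugate (in the linearizing coordinate on $\overline Z_\str$) to rotated copies of one another, and periodicity of $\theta$ under~\eqref{eq:ActOnRotNumb} ensures that an appropriate iterate of the sector renormalization at $c_1$ produces a commuting pair with combinatorial data matching that at $c_0$ in Theorem~\ref{thm:RenFixedPoint}. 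This iterate is precisely the one alluded to in the statement.

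The construction then proceeds in two steps. First, set up the iterated sector renormalization at $c_1$ using Appendix~\ref{ss:ap:SecRen} to obtain a commuting pair $(g^{\aa'}_\str \mid X_{-, 1},\, g^{\bb'}_\str \mid X_{+, 1})$ inside $\overline Z_\str$ whose gluing $\psi_1 \colon X_1 \to \overline Z_\str$ reproduces $g_\str \mid \overline Z_\str$. Second, extend $\psi_1$ analytically through $\partial Z_\str \cap \partial X_1$ by pulling back through a local branch $\phi$ of $g_\str^{-1}$ defined near $c_1$: on the sector side, $\psi_1$ may be written as a conjugate of $\psi_0$ by an iterate of $g_\str$, so the extension of $\psi_0$ across $\partial Z_\str \cap \partial X_0$ furnished by Theorem~\ref{thm:RenFixedPoint} transfers to an extension of $\psi_1$ across $\partial Z_\str \cap \partial X_1$, producing sectors $S_{\pm, 1} \subset U_\str$. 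For the improvement of domain, one transports the corresponding property of $\psi_0$: the forward $g_\str$-orbits of $S_{\pm, 1}$ are obtained by shifting those of $S_{\pm, 0}$ by an iterate of $g_\str$, hence are again compactly contained in $U_\str \cap V_\str$.

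The main obstacle will be the combinatorial bookkeeping: choosing the correct iterate of $\psi_1$ so that the return times $\aa', \bb'$ and the sector widths at $c_1$ align with the renormalization fixed-point data at $c_0$, and verifying that the local conjugation by $\phi$ yields a genuine commuting-pair structure on the entire extended sectors rather than only in a small neighborhood of $c_1$. These details follow the same formalism as the proof of Theorem~\ref{thm:RenFixedPoint} once the matching rotation combinatorics are laid out.
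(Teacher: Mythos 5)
Your high-level strategy (conjugate $\psi_0$ by the dynamics $g_\str$, i.e.~set $\psi_1 = g_\str \circ \psi_0 \circ g_\str^{-1}$) matches the paper's, but your proposal glosses over the one point that the paper's entire proof is devoted to: the well-definedness of $\psi_1$ near $c_1$. The sentence ``exploiting that $c_1=g_\str(c_0)$ is not itself critical, so $g_\str$ is a local biholomorphism in a neighborhood of $c_1$'' misplaces where the difficulty lies. The problem is not the behavior of $g_\str$ near $c_1$; it is that $c_1$ is the critical \emph{value} of $g_\str$, so $g_\str^{-1}$ has a branch point at $c_1$, and ``a local branch $\phi$ of $g_\str^{-1}$ defined near $c_1$'' is only defined on a slit neighborhood of $c_1$. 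Using one such branch, the formula $g_\str \circ \psi_0 \circ \phi$ gives a function on a slit disk around $c_1$; to obtain the analytic extension of $\psi_1$ across a full neighborhood of $c_1$ (which the statement requires, since $c_1 \in \partial Z_{g_\str}\cap\partial X_1$), you must verify that the monodromy around $c_1$ is trivial. Concretely, if $z$ is near $c_1$ and $\{z_1,z_2\}=g_\str^{-1}(z)$ are the two preimages near $c_0$, you need $g_\str(\psi_0(z_1)) = g_\str(\psi_0(z_2))$.

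That identity is not automatic, and proving it is the content of the paper's argument: since $\psi_0$ projects the commuting pair $(g_\str^\aa, g_\str^\bb)$ to $g_\str$, and since $g_\str^\aa$ and $g_\str^\bb$ are both two-to-one near $c_0$ with the same local deck involution as $g_\str$ (the later iterates are locally biholomorphic), the semiconjugacy relations $\psi_0 \circ g_\str^\aa = g_\str\circ\psi_0$ and $\psi_0\circ g_\str^\bb = g_\str\circ\psi_0$ force $\psi_0$ to send the fiber $\{z_1,z_2\}$ to a $g_\str$-fiber. Without this step your $\psi_1$ would not be single-valued: tracking a single branch of $g_\str^{-1}$ around $c_1$ through the exterior of $\overline Z_\str$ lands you on $\partial Z'_\str$ rather than back on $\partial Z_\str$, so the na\"ive continuation does not close up. Once this is repaired, the rest of your outline --- iterating $\psi_1$ to match the periodicity of $\theta$ and transporting improvement of domain --- is in line with the paper, but as written the proposal has a genuine gap at the branch point.
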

\begin{proof}
We need to check that $\psi_1\coloneqq g_\str \circ \psi_0\circ g_\str^{-1}$ is well defined. Since $\psi_0$ projects $(g_\str^{\aa}, g_\str^{\bb})$ to $g_\str$ and since the maps $g_\str^{\aa}, g_\str^{\bb}$ are two-to-one in a neighborhood of $c_0$, we obtain that for $z$ close to $c_1$ the gluing map $\psi_0$ maps $g_\str^{-1}(z)$ to a pair of points that have the same $g_\str$-image. This shows that $\psi_1$ is well defined. Up to replacing $\psi_1$ with its iterate we can guarantee that the improvement of the domain holds for $\psi_1$.
\end{proof}

Note that $\psi_1$ is expanding on $ \overline Z_{g_\str}\cap \partial X_1$ because $\psi_1\mid \overline Z_{g_\str}$ is conjugate to \[\overline \Disk\to\overline \Disk,\sp  z\to z^{1/t},\sp\sp t>1.\]

\begin{lem}[Fixed Siegel pacman]
\label{lem:FixedSiegelPacman}
For any $\theta\in \Theta_\per$ there is a standard Siegel pacman $f_\str\colon U_\str\to V_\str$ that has a standard Siegel prepacman \[F_\str = (f^\aa_\str\mid U_-\to S_\str, \sp f^\bb_\str\mid U_+ \to S_\str)\] together with a gluing map $\psi_\str\colon S_\str\to \overline V_\str$ projecting $F_\str$ back to $f_\str.$ Moreover, the improvement of the domain holds for the renormalization:  
\begin{equation}
\label{eq:lem:FixedSiegelPacman}
\bDelta_{F_\str}\Subset f_\str^{-1}(U_\str).
\end{equation} (See~\rm{\S\ref{sec:PacmenRenorm}} for the definition of $\bDelta_{F_\str}$.)
\end{lem}
\noindent The pacman $f_\str$ is conformally conjugate to $g_{\str}$ in a neighborhood of $Z_{\str}\coloneqq Z_{f_\str}.$ 
\begin{proof}
Consider a Siegel map $g_\str$ from Theorem~\ref{thm:RenFixedPoint} and $\psi_1$ from Corollary~\ref{cor:psi1:thm:RenFixedPoint}.

By Corollary~\ref{cor:SP embeds in SM},  $g_\str$ has a standard prepacman $Q\colon U_{Q, \pm}\to S_Q$ such that $S_Q\setminus Z_{g_\str}$ is in a small neighborhood of $c_1$. Since $\theta$ is of periodic type, we can prescribe $Q$ to have rotation number $\theta$. Since $\psi_1$ is expanding on $\partial \overline Z_{g_\str}$, for a sufficiently big integer $t\ge 1$ the prepacman \[(\psi_1^t)^* (Q)\coloneqq (\psi_1^{-t} \circ q_{\pm}\circ \psi_1^{ t}\colon \psi_1^{-t}(U_{Q,\pm})\to \psi_1^{-t} (S_{Q}) )\]
 has the property that $\psi_1^{-t} (S_{Q})\setminus Z_{g_\str}$ is in a much smaller neighborhood of $c_1$.
 
 Let $f_\str\colon U_\str\to V_\str$ be a pacman obtained from $Q$. The prepacman $(\psi_1^t)_* (Q)$ projects to the standard prepacman, call it 
 \[F_\str\colon (f_{\str,\pm}\colon U_{\str,\pm}\to S_{\str})\]
 such that $S_\str\setminus Z_{f_\str}$ is in a small neighborhood of $c_1$. The map $\psi_1^{t}$ descents to a gluing map, call it $\psi_\str$, projecting $F_\str$ back to $f_\str$.

If $t$ is sufficiently big, then $\bDelta_{F_\str}$ is compactly contained in $f_\str^{-1}(U_\str)$.
\end{proof}

\subsection{Analytic renormalization self-operator} 
 Applying Theorem~\ref{thm:RenOper} to $f_\str$ from Lemma~\ref{lem:FixedSiegelPacman} we obtain 
\begin{thm}[Analytic operator $\RR\colon \BB\dashrightarrow \BB$]
\label{thm: An Pacm  self ope}
Let $f_\str\colon U_\str\to V_\str$ be a pacman and $F_\str$ be a prepacman from Lemma~\ref{lem:FixedSiegelPacman}. Then there are small neighborhoods $N_{\widetilde U}(f_\str,\varepsilon), N_{\widetilde U}(f_\str,\delta)$ of $f_\str$ with $\varepsilon<\delta$ and there is an analytic pacman renormalization operator $\RR\colon N_{\widetilde U}(f_\str,\varepsilon)\to N_{\widetilde U}(f_\str,\delta)$ such that $\RR f_\str =f_\str$. Moreover, the operator $\RR$ is compact, so its spectrum is a sequence converging to $0$. The pre-renormalization of $\RR f_\str$ is $F_\str$.
\qed
\end{thm}
\begin{proof}
Let $f_\str\colon U'\to V'$ be a pacman obtained form $f_\str\colon U_\str\to V_\str$ by slightly decreasing $U_\str$ so that $U'\Subset U_\str$ and $\bDelta_{F_\str}\Subset  f_\str^{-1}(U')$. Since the renormalization is defined on $\bDelta_{F_\str}$, by Theorem~\ref{thm:RenOper} there is a compact analytic pacman renormalization operator $N_{\widetilde U'}(f_\str,\varepsilon)\to N_{\widetilde U}(f_\str,\delta)$, where $\widetilde U'$ and $\widetilde U$ are small neighborhoods of the closures of $U'$ and $U_\str$. Precomposing with the restriction operator $N_{\widetilde U}(f_\str,\varepsilon)\to N_{\widetilde U'}(f_\str,\varepsilon)$, we obtain the required operator $\RR$.
\end{proof}

 To simplify notations, we will often write an operator in Theorem~\ref{thm: An Pacm  self ope} as $\RR\colon \BB\dashrightarrow \BB$ with $\BB=N_{\widetilde U}(f_\str,\delta)$. We can assume (by Lemma~\ref{lem:SiegPacmExist}) that $f_\str$ has any given truncation level between $0$ and $1$.

\begin{cor}
\label{cor:UnstManExist}
In a small neighborhood of $f_\str$, the operator $\RR\colon \BB\dashrightarrow \BB$ has an analytic finite-dimensional unstable submanifold $\WW^u$ tangent to the unstable direction of $\RR$.
\end{cor}
\noindent We will show in Theorem~\ref{thm:RR is hyper} that $\WW^u$ has dimension $1$.
\begin{proof}
Since $\RR$ is compact, it has a finite-dimensional unstable direction.

\cite{HPS}*{Corollary (5.4)} asserts that $\WW^u$ exists as a $C^\infty$-smooth submanifold. The corollary is proven by showing that the graph transform on the space of submanifolds in a sufficiently small cone-neighborhood of the unstable direction of $\RR$ (i.e.~``candidates'' to $\WW^u$) has the unique fixed point $\WW^u$. In our analytic setup, the graph transform iteratively applied to an analytic submanifold gives a sequence of analytic submanifolds converging exponentially fast and uniformly to $\WW^u$ (see the estimate on page 55 of~\cite{HPS}). Therefore, $\WW^u$ is an analytic submanifold.
\end{proof}

An \emph{indifferent pacman} is a pacman with indifferent $\alpha$-fixed point. The \emph{rotation number} of an indifferent pacman $f$ is $\theta \in \R/\Z$ so that $\ee(\theta)$ is the multiplier at $\alpha(f)$. If, moreover, $\theta\in \Q$, then $f$ is \emph{parabolic}.

We denote by $\theta_\str$  the multiplier of $f_\str$.

\begin{lem}
\label{lem:eq:S3:R_prm}
Let $\cRRc\colon \R/\Z\to \R/\Z$ be the map defined by
\begin{equation}
\label{eq:S3:R_prm}
 \cRRc (\theta)  = \begin{cases} \frac{\theta}{1-\theta}& \mbox{if }0\le \theta \le \frac{1}{2} \\
\frac{2\theta-1}{\theta} & \mbox{if }\frac{1}{2}\le \theta\le 1, \end{cases}
\end{equation} see~\eqref{eq:R_prm}. Then there is a $\kk\ge 1$ such that the following holds. Let $f\in \BB$ be an indifferent pacman with rotation number $\theta$. Then $\RR f$ is again an indifferent pacman with rotation number $\cRRc^\kk (\theta)$. 
\end{lem}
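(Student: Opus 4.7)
The plan is to identify $\kk$ combinatorially, to compute the action on rotation numbers in the linear (pure rotation) model, and then to transfer the result to a general indifferent $f\in\BB$.

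First, I identify $\kk$. By Lemma~\ref{lem:FixedSiegelPacman} the fixed prepacman $F_\str$ arises by composing sector renormalizations of the Siegel map $g_\str$ on its Siegel disk, so the combinatorics of $F_\str$ at the fixed vertex $\alpha$ is itself that of a sector renormalization. By Lemma~\ref{lem:SectRen is prime power} of Appendix~\ref{ss:ap:SecRen}, every sector renormalization is a $\kk$-fold iterate of the prime sector renormalization for some integer $\kk\ge 1$; this is the $\kk$ of the statement. By~\eqref{eq:R_prm} the prime sector renormalization of the rotation $\Lbb_\theta\colon z\mapsto \ee(\theta)z$ on $\overline{\Disk}$ is again a rotation, by the angle $\cRRc(\theta)$; so its $\kk$-fold iterate sends $\Lbb_\theta$ to $\Lbb_{\cRRc^\kk(\theta)}$. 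Concretely, the commuting pair $(\Lbb_\theta^\aa\mid S_-,\Lbb_\theta^\bb\mid S_+)$ with the combinatorics of $F_\str$, glued via the power map $z\mapsto c z^{1/\delta}$ of the sector angle $\delta$, descends to $\Lbb_{\cRRc^\kk(\theta)}$.

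Second, I transfer to a general indifferent $f\in \BB$. By Theorem~\ref{thm:RenOper}, for every $f\in \BB$ the renormalization $\RR f$ uses the same return times $\aa,\bb$ and the same combinatorial gluing at $\alpha$ as $F_\str$; in particular the vertex of the sector $S_f$ is the $\alpha$-fixed point of $f$ and is fixed by $\RR f$. The multiplier $(\RR f)'(\alpha)$ is then determined by the germ of $(f^\aa, f^\bb)$ at $\alpha$ together with $\psi_f$ near $\alpha$. In a formal linearizing coordinate at $\alpha$ (available whenever $\ee(\theta)$ is not a root of unity), the two branches become $\ee(\aa\theta)z$ and $\ee(\bb\theta)z$, and $\psi_f$ takes the form $z\mapsto c z^{1/\delta}$ of the previous paragraph; the linear computation above then yields $(\RR f)'(\alpha)=\ee(\cRRc^\kk(\theta))$. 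In particular the multiplier lies on $S^1$, so $\RR f$ is an indifferent pacman with rotation number $\cRRc^\kk(\theta)$.

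The main obstacle is making the reduction to the linear model rigorous when $f$ is not holomorphically linearizable at $\alpha$ (parabolic $\theta=\pp/\qq$ or non-Brjuno $\theta$), since then the formal linearization does not converge. I would handle this by an analyticity-plus-density argument: both $f\mapsto f'(\alpha)$ and $f\mapsto(\RR f)'(\alpha)$ are holomorphic on $\BB$; the real-analytic hypersurface $\{f\in\BB:|f'(\alpha)|=1\}$ contains a dense subset of Siegel pacmen with rotation number in $\Theta_\per$ (produced by Corollary~\ref{cor:SP embeds in SM}); on this dense subset the identity $(\RR f)'(\alpha)=\ee(\cRRc^\kk(\theta))$ holds by holomorphic linearization on the Siegel disk combined with the first paragraph; continuity along the indifferent slice then extends the identity, and in particular the indifference of $\RR f$, to every indifferent $f\in\BB$.
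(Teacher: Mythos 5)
Your outline shares the same skeleton as the paper's proof — identify $\kk$ via Lemma~\ref{lem:SectRen is prime power} by recognizing $\RR$ as a $\kk$-fold prime renormalization, then verify the multiplier formula on a dense subset of the indifferent slice and extend by continuity of $f\mapsto(\RR f)'(\alpha)$. The difference is \emph{which} dense subset you use, and here your proposal has a genuine gap. You invoke Siegel pacmen with rotation number in $\Theta_\per$ as a dense subset of $\{f\in\BB:|f'(\alpha)|=1\}$, citing Corollary~\ref{cor:SP embeds in SM}. That corollary only asserts that an \emph{abstract} Siegel map is pacman renormalizable; it produces no density statement whatsoever inside $\BB$. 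Knowing that, near $f_\str$, every $\theta\in\Theta_\per$ close to $\theta_\str$ is realized by a Siegel pacman in $\BB$ — let alone by a dense family — is precisely the kind of stability result the paper only establishes much later (Corollary~\ref{cor:inf ren are hybr conj}, Theorem~\ref{thm:RR is hyper}, Corollary~\ref{cor:Sieg disk depends cont}), all of which sit downstream of the present lemma. Used here, the claim is either unjustified or circular.

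The paper avoids this by choosing a dense set that is trivially dense and for which the verification is purely combinatorial: parabolic pacmen. Rationals are dense in $\R/\Z$, so parabolic pacmen are dense in the indifferent slice of $\BB$ with no further work. At a parabolic pacman with rotation number $\pp/\qq$ one has exactly $\qq$ local attracting petals at $\alpha$; the prime renormalization deletes the sector between $\gamma_1$ and $\gamma_2=f(\gamma_1)$, which contains $\pp$ petals if $\pp\le\qq/2$ and $\qq-\pp$ petals otherwise, giving the new rotation number $\cRRc(\pp/\qq)$ by direct count — no linearization, no formal coordinates, no Siegel theory needed. If you want to rescue your version, you would replace the "dense Siegel parameters" step by this elementary parabolic petal count; everything else in your first two paragraphs (identifying $\kk$, the power-map gluing on the linear model) is consistent with the paper and correct.
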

\noindent In particular, $\cRRc^\kk (\theta_\str)=\theta_\str$.
\begin{proof}
Recall that the renormalization $\RR$ of $ f_\str$ is an extension of a sector renormalization of $f\mid \overline Z_\str$, see Definition~\ref{defn:SectRenormZp} and Appendix~\ref{ss:ap:SecRen}. By Lemma~\ref{lem:SectRen is prime power}, a sector renormalization is an iteration of the prime renormalization. Therefore, $\RR$ is an iteration of the prime pacman renormalization $\RRc$, see Definition~\ref{den:PacmRenorm}. We need to check that if $f$ is an indifferent pacman with rotation number $\theta$, then $\RRc f$ is again an indifferent pacman with rotation number $\cRRc (\theta)$. By continuity, it is sufficient to assume that $f$ is a parabolic pacman with rotation number $\pp/\qq$. Then $f$ has $\qq$ local attracting petals in a small neighborhood of $\alpha$. If $\pp\le \qq/2$, then $\RRc$ deletes $\pp$ local attracting petals; otherwise $\RRc$ deletes $\qq-\pp$ local attracting petals. We see that $\RRc f$ has rotation number $\cRRc(\pp/\qq)$.
\end{proof}

\begin{rem}
\label{lem:min ren per}
We will show in~\cite{DL} that $\RR:\BB \dashrightarrow\BB$ can be constructed so that $\kk$ is the minimal period of $\theta_\str$ under $\RRc$. 
\end{rem}

\section{Control of pullbacks}

Let us fix the renormalization operator \[\RR\colon \BB\dashrightarrow \BB, \sp\sp \RR f_\str=f_\str\] from Theorem~\ref{thm: An Pacm  self ope} around a fixed Siegel pacman $f_\str$. By Corollary~\ref{cor:UnstManExist} $\RR$ has an unstable manifold $\WW^u$ at $f_\str$.

\subsection{Renormalization triangulations}
\label{ss:RenormalTriangul}

Suppose that $f_0\in \BB$ is renormalizable $n\ge 0$ times (this is always the case if $f_0$ is sufficiently close to $f_\str$) and anti-renormalizable $-m\ge 0$ times.
We write $[f_k\colon U_k\to V]\coloneqq \RR^k f_0$ for the $k$th (anti-) renormalization of $f_0$, where $m \le k\le n$. We denote by $\psi_{k}\colon S_k\to V$ the renormalization change of variables realizing the renormalization of $f_{k-1}$ (compare with the left side of Figure~\ref{Fig:Sf1dash}). We write
\[\phi_k\coloneqq \psi_k^{-1}.\]

Let us cut the dynamical plane of $f_k\colon U_k\to V$, with $ k\in \{m, \dots,   n\}$, along $\gamma_1$; we denote the resulting 
prepacman by
\begin{equation}
\label{eq:defn:F_k}
F_{k} = \left(f_{k, \pm } \colon U_{k,\pm}\to V\setminus \gamma_1\right). 
\end{equation}

\begin{lem}
\label{lem:psi is adjusted}
By restricting $\RR$ to a smaller neighborhood of $f_\str$, the following is true. Suppose $f_0$ is renormalizable $n\ge 1$ times. Then the map 
\[ \Phi_{n} \coloneqq \phi_{1}\circ \phi_{2}\circ \dots\circ \phi_{n}\] 
admits a conformal extension from a neighborhood of $c_1(f_n)$ (where $\Phi_{n}$ is defined canonically) to $V\setminus \gamma_1$. The map $\Phi_{n}\colon V\setminus \gamma_1 \to V$ embeds the prepacman $F_n$~\eqref{eq:defn:F_k} to the dynamical plane of $f_0$; we denote the embedding by 
\begin{align*}
 F^{(0)}_n= &\left(f^{(0)}_{n,\pm }\colon U^{(0)}_{n,\pm}\to S_{n}^{(0)}\right)\\=& \left(f^{\aa_{n}}_{0}\colon U^{(0)}_{n,-}\to S_{n}^{(0)}, \sp f^{\bb_{n}}_{0}\colon U^{(0)}_{n,+}\to S_{n}^{(0)}\right),
\end{align*}
where the numbers $\aa_{n}, \bb_{n}$ are the renormalization return times satisfying~\eqref{eq:RernReturns}.

Let $\bDelta_n$ be the triangulation obtained by spreading around $U^{(0)}_{n,-}$ and $U^{(0)}_{n,+}$, see{\rm{~\S\ref{sec:PacmenRenorm}}} and Figure~{\rm\ref{Fg:Prepacman:OrbitOfUi}}. In the dynamical plane of $f_0$ we have 
\[\bDelta_0\coloneqq \overline U_0\Supset \bDelta_1\Supset \bDelta_2 \Supset \dots \Supset \bDelta_n,\]
$\bDelta_1(f_0)$ is close in Hausdorff topology to $\bDelta_1(f_\str)$, and moreover $f_0(\bDelta_n)\Subset \bDelta_{n-1}$.
\end{lem}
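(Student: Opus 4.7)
My plan is to prove this by induction on $n$, using the improvement-of-domain property from Lemma~\ref{lem:FixedSiegelPacman} together with continuity of the renormalization operator $\RR$ (compact and analytic by Theorem~\ref{thm: An Pacm  self ope}).

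For the base case $n=1$, the map $\Phi_1 = \phi_1$ will be (up to the indexing convention) the inverse of the renormalization change of variables, which by construction is conformal on $V \setminus \gamma_1$ with image the interior of the pre-renormalization sector $S_1 \subset V$ in the plane of $f_0$. Spreading around $U_{1,\pm}^{(0)}$ then produces $\bDelta_1(f_0)$. At the fixed point, Lemma~\ref{lem:FixedSiegelPacman} delivers $\bDelta_1(f_\str) \Subset f_\str^{-1}(U_\str) \Subset U_\str = \bDelta_0(f_\str)$, which gives both the nesting and the compact containment $f_\str(\bDelta_1) \Subset \bDelta_0$; Hausdorff closeness of $\bDelta_1(f_0)$ to $\bDelta_1(f_\str)$ will follow from the analytic dependence of $\psi_0$ on $f_0 \in \BB$ proved in Theorem~\ref{thm:RenOper}.

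For the inductive step, assume that $\Phi_{n-1}$ has already been constructed, embedding the prepacman $F_{n-1}$ as a deep prepacman $F_{n-1}^{(0)}$ in the plane of $f_0$ with all the stated properties at level $n-1$. Inside the plane of $f_{n-1}$, renormalizability of $f_{n-1}$ yields a pre-renormalization prepacman in a sector $S_n \subset V$. The next step is to transport this prepacman back to level $0$: since $\Phi_{n-1}$ semi-conjugates the dynamics of $f_{n-1}$ on the source to iterates of $f_0$ on $F_{n-1}^{(0)}$, it will transport $S_n$ to a sector $S_n^{(0)} \subset S_{n-1}^{(0)}$, and the commuting pair $(f_{n-1}^{\aa}, f_{n-1}^{\bb})$ of the pre-renormalization will become a commuting pair $(f_0^{\aa_n}, f_0^{\bb_n})$ with the multiplicative return times dictated by the iteration of renormalization. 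The composition $\Phi_n \coloneqq \Phi_{n-1} \circ \phi_n$ will then be canonically defined and conformal on a neighborhood of $c_1(f_n)$ (because $\phi_n$ sends $c_1(f_n)$ to $c_1(f_{n-1})$, inside the canonical domain of $\Phi_{n-1}$); its extension to all of $V \setminus \gamma_1$ will come from the Riemann mapping theorem, by identifying $\Phi_n$ as the unique conformal uniformization from the simply connected domain $V \setminus \gamma_1$ onto the simply connected interior of $S_n^{(0)}$ that agrees with the canonical definition at $c_1$. The inclusions $\bDelta_n \Subset \bDelta_{n-1}$ and $f_0(\bDelta_n) \Subset \bDelta_{n-1}$ will follow at $f_\str$ from the base-case inclusion transported by $\Phi_{n-1}^{\str}$ (exploiting $\RR f_\str = f_\str$), and then will transfer to $f_0 \in \BB$ by continuity and compactness of $\RR$.

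The main obstacle will be ensuring that a single Banach neighborhood $\BB$ of $f_\str$ works uniformly for all $n$: a priori the inductive step requires $f_1, f_2, \ldots, f_n$ to all remain in a prescribed small neighborhood of $f_\str$, and at each intermediate level the analytic continuation of $\phi_k$ across $\gamma_1(f_k)$ must be controlled. I plan to shrink $\BB$ in advance so that the improvement-of-domain margin at $f_\str$ absorbs all these small perturbations, and to pin down the consistent choice of branch of $\phi_k$ across $\gamma_1(f_k)$ by propagating the canonical normalization at the critical value $c_1$ inductively. With these choices in place, the composition $\phi_1 \circ \cdots \circ \phi_n$ will match the Riemann uniformization on $V \setminus \gamma_1$, and all the Hausdorff estimates will propagate because $\RR$ is continuous in the Banach topology and $\phi_k$ is conformal on domains with definite moduli inherited from the fixed-point geometry.
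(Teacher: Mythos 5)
Your proposal correctly identifies the inductive structure and the role of the improvement-of-domain property and continuity of $\RR$, and your claim that the compact containments propagate from the fixed point by continuity is along the right lines.  However, there is a genuine gap at the heart of the construction of $\Phi_n$, and the route you propose to fill it does not work.

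The crucial difficulty, flagged explicitly in the paper just after the statement of the lemma, is the \emph{spiraling} of the gluing maps at $\alpha$: if $f_k$ has $\alpha$ nearly but not exactly neutral, then the gluing map $\psi_k$ (and hence $\phi_k=\psi_k^{-1}$) need not send arcs landing at $\alpha$ at a well-defined angle to arcs landing at a well-defined angle; the image can wind infinitely around $\alpha$ (Figure~\ref{Fig:SpirAt0}).  Consequently, the composition $\phi_1\circ\cdots\circ\phi_n$, defined canonically near $c_1(f_n)$, does not obviously extend to all of $V\setminus\gamma_1$: one must show that the iterated lifts of the cutting arcs $\gamma_0,\gamma_1$ stay disjoint and do not obstruct the spreading around of the triangles.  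You cannot fix this with the Riemann mapping theorem as you propose, because the target ``sector $S_n^{(0)}$'' is not available in advance --- establishing that $S_n^{(0)}$ (and more generally the triangulation $\bDelta_n(f_0)$) is a well-defined topological disk in the plane of $f_0$ is precisely the content of the lemma, and the spiraling is exactly what could make the naive spreading around break down.  The Riemann mapping theorem would give you a conformal bijection between two simply connected domains, but it tells you nothing about whether this bijection is the dynamically defined map $\Phi_n$, nor does it build the domain for you.

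The paper resolves this in two steps that your proposal does not engage with at all.  First, Lemma~\ref{lem:SiegWall} constructs the \emph{wall} $\bPi_n$ (a closed annulus approximating $\partial Z_\str$) by working away from a small neighborhood of $\alpha$, where no spiraling arises and where uniform expansion of the branches $\Psi_{n,i}$ gives geometric control.  Second, the interior of the wall is triangulated using Theorem~\ref{thm:RobAntiRen} from Appendix~\ref{s:SectRenAndAntiren}, which says that a sector anti-renormalization is robust under a change of the cutting arcs $\gamma_0,\gamma_1$ inside the region enclosed by a fence.  This is what lets one replace the possibly spiraling lift $\gamma_0^{(n-1)}$ by a harmless adjusted arc $\gamma_0^{\new}$, identify the two resulting anti-renormalizations, and thereby conclude that the full lift $\bDelta_n(f_0)$ of $\bDelta_{n-1}(f_1)$ exists.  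Without invoking the wall construction and the anti-renormalization robustness theorem (or a substitute argument for the spiraling problem), the inductive step in your proposal does not close, so the extension of $\Phi_n$ to $V\setminus\gamma_1$ remains unjustified.
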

\noindent We call $\bDelta_n$ the \emph{$n$th renormalization triangulation.} Examples of $\bDelta_0,\bDelta_1,\bDelta_2$ are shown in Figures~\ref{Fig:DfnDelta} and~\ref{Fig:RenormTiling:2levels}. We say that $\bDelta_n(f_0)$ is the \emph{full lift} of $\bDelta_{n-1}(f_1)$. Similarly (i.e.~by lifting and then spreading around), a \emph{full lift} will be defined for other objects.

In the proof of Lemma~\ref{lem:psi is adjusted} we need to deal with the fact that $\psi_1(\gamma_1)$ can spiral around $\alpha$, see Figure~\ref{Fig:SpirAt0} for illustration. We will first show in Lemma~\ref{lem:SiegWall} that Lemma~\ref{lem:psi is adjusted} holds in a neighborhood of $\partial Z_\str$. By the topological robustness of anti-renormalization (Theorem~\ref{thm:RobAntiRen}), Lemma~\ref{lem:psi is adjusted} holds also inside $Z_\str$.

\subsubsection{Combinatorics of triangles}
Before giving the proof of Lemma~\ref{lem:psi is adjusted}, let us introduce additional notations. For consistency, we set $\Phi_0\coloneqq \id$; then $\bDelta_0=\overline U_0$ is a triangulation consisting of two closed triangles -- the closures of the connected components of $U_0\setminus (\gamma_0\cup \gamma_1)$. We denote these triangles by $\Delta_0(0)$ and $\Delta_0(1)$ so that $\intr(\Delta_0(0)), \gamma_0, \intr(\Delta_0(1))$ have counterclockwise orientation around $\alpha$, see Figure~\ref{Fig:DfnDelta}. The triangulation $\bDelta_0(f_n)$ is defined similarly.

 Let $\Delta_n(0,f_0), \Delta_n(1,f_0)$ be the images of $\Delta_0(0,f_n), \Delta_0(1,f_n)$ via the map $\Phi_{n}$ from Lemma~\ref{lem:psi is adjusted}. By definition, $\bDelta_n$ is a triangulated neighborhood of  $\alpha$ obtained by spreading around $\Delta_n(0,f_0), \Delta_n(1,f_0)$. We enumerate in counterclockwise order these triangles as $\Delta_n(i)$ with $i\in \{0,1,\dots, \qq_n-1\}$. By construction, $\Delta_n(0)\cup \Delta_n(1)\ni c_1(f_n)$.

We remark that $f_0\mid \bDelta_n$ is an anti-renormalization of $f_n\colon U_n\to V$ in the sense of Appendix~\ref{ss:ap:SecRen}. Moreover, there is a $\pp_n$ such that \begin{equation}
 \label{eq:f_0 resp triangl}
 f_0\colon  \Delta_n(i)\to\Delta_n(i+\pp_n)
 \end{equation}
is conformal for all $i\not\in \{-\pp_n, -\pp_n+1 \}$ with the index taken modulo $\qq_n$. For the exceptional triangles, we have an almost two-to-one map  \begin{equation}
 \label{eq:f_0 viol triangl}f_0\colon \Delta_n(-\pp_n)\cup \Delta_n(-\pp_n+1) \to S_0^{(n)}\supset \Delta_n(0)\cup \Delta_n(1).
 \end{equation} 
We will show in Theorem~\ref{thm:ComPsConj} that if $f_0$ is close to $f_\str$, then $\displaystyle{\bDelta_n= \bigcup_i \Delta_n(i)}$ approximates $\overline Z_\str$ dynamically and geometrically.

By construction, for every triangle $\Delta_n(i,f_0)$ there is a $t\ge 0$ and $j\in \{0,1\}$ such that a certain branch of $f^{-t}_0$ maps conformally $\Delta_n(i,f_0)$ to $\Delta_n(j,f_0)$. We define $\Psi_{n,i}$ on $\Delta_n(i,f_0)$ by
\begin{equation}
\label{eq:PsiDefn}
 \Psi_{n,i} \coloneqq \Phi_{n}^{-1}\circ f_0^{-t} \colon \Delta_n(i, f_0)\to \Delta_0(j,f_n). 
\end{equation}

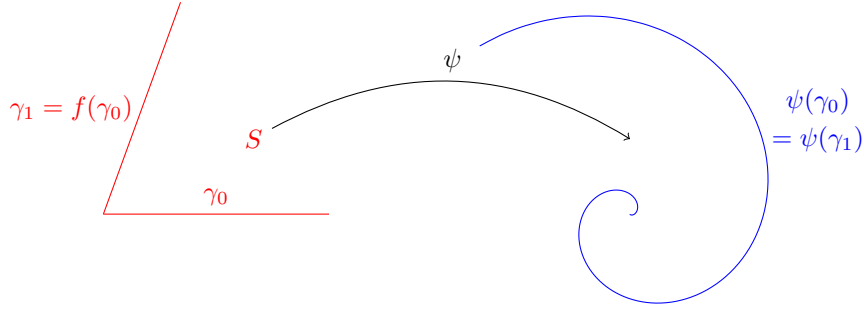
\begin{figure}
\centering{\begin{tikzpicture}
        \draw [scale=1.2,blue,domain=0:10,variable=\t,smooth,samples=100]
        plot ({3\t r}: {0.025*\t*\t});
   
 \begin{scope}[shift={(-7,0)}]     
        \draw[red]
          (0,0) edge node[above]{$\gamma_0$} (3,0);
       \draw[red,rotate=70]
          (0,0) edge node[left]{$\gamma_1=f(\gamma_0)$}  (3,0); 
 \end{scope}
     \draw (-5,1) node[red]{$S$} edge[bend left,->] node[above]{$\psi$} (0,1);
          \node[blue] at (2.5,1.5) {$\psi(\gamma_0)$};
     \node[blue] at (2.5,1) {$=\psi(\gamma_1)$};
 \end{tikzpicture}}       
 \caption{Suppose $f(z)=\lambda z$ with $\lambda\not \in \R_{+}$ and let $S$ be the sector between $ \gamma_0$ and $\gamma_1=f(\gamma_0)$. Let $\psi \colon S\to \C$ be the gluing map identifying dynamically $\gamma_0$ and $\gamma_1$. If $|\lambda|\not=1$, then $\psi(\gamma_0)$ does not land at $0$ at a well defined angle.}
 \label{Fig:SpirAt0}
\end{figure}

\subsubsection{Walls}
Let $A$ be a closed annulus, and let $O$ be the bounded component of $\C\setminus A$. We say that $A$ is a \emph{univalent $N$-wall} if the restriction  $f_0\mid  O\cup  A$ is univalent and for all $z\in O$ and all $j$ with $|j|\le N$ we have \[(f_0\mid  O\cup  A)^{j}(z)\subset O\cup A.\]
More generally, we say that $A$ is an \emph{$N$-wall} if $A$ contains a univalent $N$-wall $A'$ such that $O$ is in the bounded component of $\C\setminus A'$.

Fix a small $r>0$ and denote by $Z_\str^{ r}$ the open subdisk of $Z_\str$ bounded by the equipotential at height $r$. Set $\bPi_0\coloneqq \overline U_0\setminus Z^{r}_\str$. It is a closed annulus enclosing $\alpha$. We decompose $\bPi_0$ into two closed rectangles $\Pi_0(0)=\bPi_0\cap \Delta_0(0)$ and $\Pi_0(1)=\bPi_0\cap \Delta_0(1)$; they are the closures of the connected components of $\bPi_0\setminus (\gamma_0\cap \gamma_1)$. The following lemma proves that the \emph{wall of $\bDelta_n$} exists.

\begin{lem}[The wall of $\bDelta_n$]
\label{lem:SiegWall}
Suppose all $f_0,f_1,\dots, f_n$ are in a small neighborhood of $f_\str$. Then there exists a wall $\bPi_n(f_0)$ with the following properties.
\begin{enumerate}
\item The map $\Phi_n$ extends from a neighborhood of $c_1(f_n)$ to $\bPi_0\setminus \gamma_1$;
\label{Claim:Axil:on expansion:1}
\item Let $\Pi_n(0,f_0)$ and $\Pi_n(1,f_0)$ be the images of $\Pi_0(0,f_n)$ and $\Pi_0(1,f_n)$ under $\Phi_n$. Then, by spreading around $\Pi_n(0,f_0)$ and $\Pi_n(1,f_1)$, we obtain an annulus $\bPi_n$ enclosing $\alpha$. We enumerate counterclockwise rectangles in $\bPi_n$ as $\Pi_n(i)$ with $i\in \{0,1,\dots, \qq_n-1\}$. \label{Claim:Axil:on expansion:2}
\item We have $\bPi_0\Supset \bPi_1\Supset \dots \Supset \bPi{}_n$ with $\bPi_0(f_0)$ close to $\bPi_0(f_\str)$.\label{Claim:Axil:on expansion:3}
\item For every $\Pi_n(i)$, there is a $t\ge0$ such that a certain branch of $f^{-t}_n$ maps $\Pi_n(i)$ onto $\Pi_n(j)$ with $j\in\{0,1\}$. Then 
\begin{equation}
\label{eq:PsiDefn:2}
 \Psi_{n,i} \coloneqq \Phi_{n}^{-1}\circ f_0^{-t} \colon \Pi_n(i, f_0)\to \Pi_0(j,f_n). 
\end{equation}
is conformal. If $n$ is sufficiently big, then all $\Psi_{n,i}$ expand the Euclidean metric and the expanding constant is at least $\eta^n$ for a fixed $\eta>1$. In particular, the diameters of the rectangles in $\bPi_n$ tend to $0$. 
\label{Claim:Axil:on expansion:4}
\item The wall $\bPi_n(f_0)$ approximates $\partial Z_\str$ in the following sense: $\partial Z_\str$ is a concatenation of arcs $J_0J_1\dots J_{\qq_n-1}$ such that $\Pi_n(i)$ and $J_i$ are close in the Hausdorff topology.
\label{Claim:Axil:on expansion:5}
\end{enumerate}
\end{lem}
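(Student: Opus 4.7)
The plan is to first prove all five assertions at the fixed pacman $f_0=f_\str$, where $\Phi_n=\phi_\str^{\circ n}$ is the iterate of a single contracting map, and then to transport the conclusions to nearby $f_0$ using the analytic dependence of $\RR$ (Theorem~\ref{thm: An Pacm  self ope}). The structural inputs I will rely on are the \emph{improvement of the domain} (Lemma~\ref{lem:FixedSiegelPacman}), which forces $\bDelta_1(f_\str)\Subset\bDelta_0(f_\str)$ and hence nesting on every subsequent level, together with the fact that $\psi_\str$ on $\overline Z_\str$ is conformally conjugate to $z\mapsto z^{1/\delta}$ with $\delta<1$ (Theorem~\ref{thm:RenFixedPoint}, Corollary~\ref{cor:psi1:thm:RenFixedPoint}). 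Consequently $\phi_\str=\psi_\str^{-1}$ extends analytically across $\partial Z_\str\cap\partial S_\str$ and contracts $\partial Z_\str$ toward $\alpha$ by a definite factor.

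For item (1) I would start from the canonical definition of $\Phi_n$ near $c_1(f_n)$ and extend by analytic continuation. At $f_\str$ this is just iteration of $\phi_\str$, which is single-valued on $V_\str\setminus\gamma_1$; the slit $\gamma_1$ must be removed because the gluing identifies its two sides dynamically, so $\Phi_n$ would otherwise be two-valued across it. For $f_0$ close to $f_\str$ the same continuation works by analyticity of each $\phi_k$ in $f_0$. Item (2), the spreading-around construction, is then carried out via~\eqref{eq:f_0 resp triangl}--\eqref{eq:f_0 viol triangl}; at $f_\str$ the pieces close up into a genuine annulus by the periodicity $\cRRc^\kk(\theta_\str)=\theta_\str$ (Lemma~\ref{lem:eq:S3:R_prm}), and this persists by continuity. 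Item (3) is immediate from the improvement of the domain at $f_\str$ and stability.

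For item (4), the conformality of $\Psi_{n,i}$ follows from that of $\Phi_n$ and univalence of the relevant branch of $f_0^{-t}$ on $\Pi_n(i)$, which holds because $\bPi_n$ avoids critical values (again by the improvement of domain). To extract Euclidean expansion I note that in the linearizing coordinate on $\overline Z_\str$ the map $\phi_\str$ acts as $z\mapsto z^\delta$, contracting Euclidean distance to $\partial Z_\str$ by the factor $\delta$; Koebe distortion transfers this to a uniform Euclidean contraction constant $\delta_0<1$ for $\phi_\str$ on a small one-sided neighborhood of $\partial Z_\str$. Composing $n$ such contractions gives $|\Phi_n'|\le C\delta_0^n$ on $\bPi_n$, hence $|\Psi_{n,i}'|\ge C^{-1}\delta_0^{-n}\ge\eta^n$ for any $\eta<\delta_0^{-1}$ and $n$ large; for $f_0$ near $f_\str$ the bound persists, with slightly worse $\eta$. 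Item (5) follows by stability: at $f_\str$ the shrinking collar $\bPi_n$ decomposes $\partial Z_\str$ into arcs $J_0\cdots J_{\qq_n-1}$ with $\Pi_n(i,f_\str)\to J_i$ in Hausdorff topology, and analytic dependence carries this over to $f_0$.

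The main obstacle is obtaining \emph{Euclidean} expansion of magnitude $\eta^n$ rather than merely hyperbolic expansion: Schwarz's lemma applied to $\Psi_{n,i}$ gives hyperbolic expansion for free, but the target $\Pi_0(j,f_n)$ is macroscopic while the source $\Pi_n(i,f_0)$ degenerates onto $\partial Z_\str$. The resolution depends on the exact linear model for $\psi_\str$ on $\overline Z_\str$ and a Koebe-type comparison with the adjacent exterior collar; it is precisely the improvement-of-domain property that supplies room for this comparison at a uniform scale.
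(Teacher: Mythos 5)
Your high-level architecture (establish the lemma at $f_0=f_\str$, then transport by stability) matches the paper, and your handling of items (1)--(3), (5) is essentially what the proof does. The gap is in item (4), and it sits precisely where you flag the ``main obstacle.''

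Two issues. First, the step ``Koebe distortion transfers this to a uniform Euclidean contraction constant $\delta_0<1$ for $\phi_\str$'' is not automatic: the linearizing coordinate $h$ is a conformal map of $\overline Z_\str$ onto $\overline\Disk$, and $\partial Z_\str$ is merely a quasicircle, so $h$ need not extend across the boundary with controlled derivative; moreover $\bPi_0$ is a \emph{two-sided} collar of $\partial Z_\str$, and the $z\mapsto z^\delta$ model lives only on the inner side. One can still run a contraction argument, but it requires the Koebe room explicitly, which is the role of the enlarged rectangles $\widetilde\Pi_0(0),\widetilde\Pi_0(1)$ the paper introduces: $\Psi_{n,i}$ extends conformally to $\widetilde\Pi_n(i)$, so Koebe controls the derivative uniformly on $\Pi_n(i)$ in terms of the shrinkage of diameters forced by the improvement of domain. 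Second, and more fundamentally, your perturbation step ``for $f_0$ near $f_\str$ the bound persists, with slightly worse $\eta$'' is exactly where the real work lies, because the hypothesis is that the whole orbit $f_0,\dots,f_n$ stays near $f_\str$; one must compose $n$ maps $\phi_1,\dots,\phi_n$, each close to $\phi_\str$ but evaluated at points that drift over $n$ steps, and naively the errors could accumulate. The paper's resolution is a \emph{block decomposition}: fix $k$ so that $\Psi_{k,i}\mid\widetilde\Pi_k(i)$ is Euclidean-expanding by a definite factor $>1$ at the fixed point, then write
\[
\Psi_{n,i}\mid\widetilde\Pi_n(i)=\Psi_{k,j_1}(f_{n-k})\circ\Psi_{k,j_2}(f_{n-2k})\circ\dots\circ\Psi_{k,j_t}(f_{n-tk})\circ\Psi_{n-tk,i_t}(f_0),
\]
so that each factor is a bounded-depth renormalization chunk, hence close to the $f_\str$-version and still expanding by continuity (with the Koebe room on $\widetilde\Pi$ absorbing the perturbation). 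Multiplying $t\approx n/k$ definite factors $>1$ yields $\eta^n$. This fixed-scale compositional device is what replaces your single-step derivative estimate; without it the passage from the fixed point to a nearby orbit is not justified.
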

\noindent As with renormalization triangulation, we say that  $\bPi_n(f_0)$ is the \emph{full lift} of $\bPi_{n-1}(f_1)$.  
\begin{proof}
The proof follows from the robustness of the renormalization change of variables in a neighborhood of $\partial Z_\str$. Such change of variables is eventually expanding.

Consider first the case $f_0=f_1=\dots= f_n= f_\str$. It follows from the improvement of the domain that the wall $\bPi_n(f_\str)$ is well defined and, moreover, the diameters of the rectangles in $\bPi_n(f_\str)$ tend to $0$ as $n$ increases. Choosing a sufficiently big $k$ and applying the Schwarz lemma (after a slight enlargement of the rectangles), we obtain that all the $\Psi_{k,i}  \colon \Pi_k(i, f_\str)\to \Pi_0(j,f_\str)$ expand the Euclidean metric. 

By continuity and the assumption that $f_0,f_1,\dots, f_n$ are sufficiently close to $f_\str$, the maps $\Psi_{k,i}  \colon \Pi_k(i, f_{s})\to \Pi_0(j,f_{s+k})$ also expand the Euclidean metric. Decomposing $\Psi_{n,i}  \colon \Pi_n(i, f_{0})\to \Pi_0(j,f_{n})$ into a composition of $\lfloor \frac n k \rfloor$ maps of the form $\Psi_{k,t}$ and one remaining map, we see that $\Psi_{n,i}  \colon \Pi_n(i, f_{0})\to \Pi_0(j,f_{n})$ is a required expanding map. This implies Claim~\eqref{Claim:Axil:on expansion:4}; other claims are consequences of Claim~\eqref{Claim:Axil:on expansion:4}.
\end{proof}

\subsubsection{Proof of Lemma~\ref{lem:psi is adjusted}}
We will now apply Theorem~\ref{thm:RobAntiRen} to show that the full lift $\bDelta_{n}(f_0)$ of $\bDelta_{0}(f_n)$ exists.

Let $\wall_0\subset Z_\str$ be the closed annulus bounded by the equipotentials at heights $r$ and $2r$. Then $\wall_0\subset \bPi_0$ and we decompose $\wall_0$ into two rectangles $\wall_0(0)=\Pi_0(0)\cap \wall_0$ and  $\wall_0(1)=\Pi_0(1)\cap \wall_0$. Let $\wall_n(0,f_0)$ and $\wall_n(1,f_0)$ be the images of $\wall_0(0,f_n)$ and $\wall_0(1,f_n)$ under $\Phi_n$. By spreading around $\wall_n(0,f_0)$ and $\wall_n(1,f_0)$, we obtain (by  Lemma~\ref{lem:SiegWall}) an annulus $\wall_n$ enclosing $\alpha$. We enumerate counterclockwise rectangles in $\wall_n$ as $\wall_n(i)$ with $i\in \{0,1,\dots, \qq_n-1\}$. We have $\wall_n(i)\subset \Pi_n(i)$. 

Denote by $\inn_n$ the open topological disk enclosed by $\wall_n$. Then $f_0\mid \inn_n \cup \wall_n$ is an anti-renormalization of $f_1\mid \inn_{n-1} \cup \wall_{n-1}$ (in the sense of Appendix~\ref{s:SectRenAndAntiren}) with respect to the dividing pair of curves $\gamma_0,\gamma_1$.

By induction, we will now extend the wall $\bPi_n$ to the triangulation $\bDelta_n$. Suppose the statement is verified for $n-1$.  In the dynamical plane of $f_1$, we denote by $\gamma_0^{(n-1)}$ the lift of $\gamma_0(f_n)$ under the $(n-1)$-anti-renormalization specified so that $\gamma_0^{(n-1)}$ crosses $\wall_{n-1}$ at $\wall_{n-1}(0)\cap \wall_{n-1}(1)$. Note that $\wall_n(0)\cup \wall_n(1)$ is in a small neighborhood of $c_1$ because $\Phi_n$ is contracting. Therefore,  $\gamma_0^{(n-1)}\cap \wall_{n-1}$ is uniformly close to $\gamma_0\cap \wall_{n-1}$. We can sightly adjust $\gamma_0$ in a neighborhood of $\wall_{n-1}$, such that the new $\gamma_0^\new$ crosses $\wall_{n-1}$ at $\wall_{n-1}(0)\cap \wall_{n-1}(1)$. Let $\gamma_1^{(n-1)}$ and $\gamma_1^\new$ be the images of $\gamma_0^{(n-1)}$ and $\gamma_0^\new$ respectively. Since a wall contains a fence (see Remark~\ref{rem:wall has a fence}), by Theorem~\ref{thm:RobAntiRen} the anti-renormalization of $f_1\mid  \inn_{n-1} \cup \wall_{n-1}$ with respect to $\gamma_0^{(n-1)}, \gamma_1^{(n-1)}$ is naturally conjugate to the corresponding anti-renormalization of $f_1\mid  \inn_{n-1} \cup \wall_{n-1}$ with respect to  $\gamma_0^\new, \gamma_1^\new$. Therefore,  the full lift $\bDelta_{n}(f_0)$ of $\bDelta_{n-1}(f_1)$ exists; $\bDelta_{n}(f_0)$ is a  required triangulation of $\bPi_n\cup \inn_n$.

  By~\eqref{eq:lem:FixedSiegelPacman} combined with continuity, we have $f_0(\bDelta_1)\subset \bDelta_0$.  Applying induction on $n$, we obtain $f_0(\bDelta_{n+1})\Subset \bDelta_n$.

We can now define $F_n^{(0)}=\left(f^{(0)}_{n,\pm }\colon U^{(0)}_{n,\pm}\to S_{n}^{(0)}\right)$ as the lift of $F_n$ (see~\eqref{eq:defn:F_k}) to the dynamical plane of $f_0$, where \[S_n^{(0)}\coloneqq f_0\left( \Delta_n(-\pp_n)\cup\Delta_n(-\pp_n+1) \right)\]  
(compare with~\eqref{eq:f_0 viol triangl}). \qed

\subsubsection{Changing $\gamma_1$}
In fact, the exact behavior of $\gamma_1$ in a small neighborhood of $\alpha$ is irrelevant in the proof of Lemma~\ref{lem:psi is adjusted}. We have

\begin{lem}
\label{lem:gam1 rotate}
 Let $\gamma_0^\new, \gamma_1^\new=f_n(\gamma_0^\new)$ be a new pair of curves in the dynamical plane of $f_n$ such that
\begin{itemize}
\item $\gamma_0\setminus Z^r_\str=\gamma_0^\new \setminus Z^r_\str$ and $\gamma_1\setminus Z^r_\str=\gamma_1^\new \setminus Z^r_\str$; and
\item $\gamma_0^\new$ and $\gamma_1^\new$ are disjoint away from $\alpha$. 
\end{itemize}

Then Lemma~\ref{lem:psi is adjusted} still holds after replacing $\gamma_0,\gamma_1$ with $\gamma^\new_0,\gamma_1^\new$. More precisely, let $\Delta^\new_0(0,f_n),\Delta_0^\new(1,f_n)$ be the closures of the connected components of $U_0\setminus (\gamma^\new_0\cup \gamma^\new_1)$ in the dynamical plane of $f_n$. As in Lemma~\ref{lem:psi is adjusted} the map $\Phi_n$ extends from a neighborhood of $c_1(f_n)$ to $V\setminus \gamma_1^\new$; let $\Delta^\new_n(0,f_0),\Delta_n^\new(1,f_0)$ be the images of $\Delta^\new_0(0,f_n),\Delta_0^\new(1,f_n)$ under the new $\Phi_n$. By spreading around $\Delta^\new_n(0,f_0),\Delta_n^\new(1,f_0)$  we obtain a new triangulated neighborhood $\bDelta_n^\new$ of $\alpha$. 
\end{lem}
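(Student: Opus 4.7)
The plan is to run the proof of Lemma~\ref{lem:psi is adjusted} verbatim, with $\gamma_0^\new, \gamma_1^\new$ in place of $\gamma_0, \gamma_1$, and to isolate the one step in that proof where the specific shape of $\gamma_1$ actually enters. The crucial observation is that the entire wall machinery of Lemma~\ref{lem:SiegWall} is built from data that is supported in $\overline{U}_0\setminus Z_\str^{r}$: the rectangles $\Pi_0(0)=\bPi_0\cap\Delta_0(0)$ and $\Pi_0(1)=\bPi_0\cap\Delta_0(1)$, the sub-wall $\wall_0$ used in the induction, and the inner disk $\inn_n$ enclosed by $\wall_n$ all remain unchanged under our modification, since $\gamma_0$ and $\gamma_0^\new$ (respectively $\gamma_1$ and $\gamma_1^\new$) agree outside $Z_\str^{r}$. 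In particular, the walls $\bPi_0\Supset \bPi_1\Supset \cdots \Supset \bPi_n$, the sub-walls $\wall_k$, the expansion decomposition~\eqref{eq:DecompPsi n i}, and all associated geometric estimates transfer unchanged to the new setup.

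With the wall fixed, only the inductive step that constructs the full lift of the interior triangulation across $\wall_{n-1}$ needs to be rechecked. In the original proof this step appeals to Theorem~\ref{thm:RobAntiRen}, which asserts that anti-renormalizations are naturally conjugate with respect to any two dividing pairs of cutting arcs that agree on a fence. By Remark~\ref{rem:wall has a fence} the wall $\wall_{n-1}$ contains such a fence. Since $\gamma_0^\new$ (respectively $\gamma_1^\new$) coincides with $\gamma_0$ (respectively $\gamma_1$) on $\overline{U}_0\setminus Z_\str^{r}\supset \wall_{n-1}$, the two dividing pairs cross $\wall_{n-1}$ at exactly the same points, so the hypothesis of Theorem~\ref{thm:RobAntiRen} is satisfied. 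The theorem then yields the full lift $\bDelta_n^\new(f_0)=\bPi_n\cup\inn_n$, with the same $\Pi_n$-decomposition on the wall and a new internal subdivision of $\inn_n$ corresponding to $\Delta_0^\new(0,f_n),\Delta_0^\new(1,f_n)$.

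It remains to extend $\Phi_n^\new$ conformally through $V\setminus\gamma_1^\new$ and to verify the nesting $\bDelta_0^\new\Supset \bDelta_1^\new\Supset\cdots\Supset\bDelta_n^\new$ together with $f_0(\bDelta_n^\new)\Subset \bDelta_{n-1}^\new$. The extension of $\Phi_n^\new$ is immediate outside $Z_\str^{r}$, where it coincides with $\Phi_n$ of Lemma~\ref{lem:psi is adjusted}; inside $Z_\str^{r}$ it is obtained canonically by pulling back the new triangles $\Delta_0^\new(0,f_n), \Delta_0^\new(1,f_n)$ through the (unchanged) orbit data on the wall, as in~\eqref{eq:PsiDefn} and~\eqref{eq:PsiDefn:2}. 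The nesting and invariance then follow from exactly the same continuity argument as in Lemma~\ref{lem:psi is adjusted}, because the Hausdorff proximity of $\bDelta_n^\new(f_0)$ to $\overline{Z}_\str$ is controlled entirely by the wall contraction, which we have preserved.

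The only point demanding genuine care is the compatibility with Theorem~\ref{thm:RobAntiRen}: one must check that the new cutting arcs $\gamma_0^{\new,(n-1)}$ and $\gamma_1^{\new,(n-1)}$ lifted to the $(n-1)$-th anti-renormalization still cross the fence in $\wall_{n-1}$ at the same combinatorial position as their counterparts in the original proof of Lemma~\ref{lem:psi is adjusted}. This is a straightforward combinatorial bookkeeping, identical to the one already performed in that proof, once one notes that the contraction of $\Phi_n$ forces the lifted arcs to land near $c_1$ and hence well inside the disk bounded by the fence. No new estimate is needed.
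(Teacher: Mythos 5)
Your proof is correct and takes essentially the same approach as the paper's: both observe that the wall machinery of Lemma~\ref{lem:SiegWall} is supported outside $Z^r_\str$ and hence unchanged, and then repeat the inductive lifting argument of Lemma~\ref{lem:psi is adjusted} via Theorem~\ref{thm:RobAntiRen}. The paper's proof is a single sentence ("since the new curves coincide with the old ones away from $Z^r$, the wall $\bPi_n$ is unaffected, so we can repeat the proof"); you have simply spelled out the details that this sentence elides.
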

\noindent Note that $\bDelta_n^\new$ and $\bDelta_n$ triangulate the same neighborhood of $\alpha$. 
\begin{proof}
Since $\gamma^\new_1,\gamma^\new_0$ coincide with $\gamma_1,\gamma_0$ outside $Z^r$, the wall $\bPi_n$ is unaffected; thus we can repeat the proof of Lemma~\ref{lem:psi is adjusted} for $\gamma^\new_1$.
\end{proof}

\subsubsection{Siegel triangulations}
\label{ss:SiegTriang} We will also consider triangulations that are perturbations of $\bDelta_n$. Let us introduce appropriate notations.
Consider a pacman $f\in \BB$. A \emph{Siegel triangulation} $\bDelta$ is a triangulated neighborhood of $\alpha$ consisting of closed triangles, each has a vertex at $\alpha$, such that
\begin{itemize}
\item triangles of $\bDelta$ are $\{\Delta(i)\}_{i\in \{0,\dots \qq\}}$ enumerated counterclockwise around $\alpha$ so that $\Delta(i)$ is attached to $\Delta(i-1)$ (on the right) and to $\Delta(i+1)$ (on the left); all other pairs of triangles are disjoint away from $\alpha$;
\item there is a $\pp>0$ such that $f$ maps $\Delta(i)$ to $\Delta(i+\pp)$ for all $i\not\in \{-\pp, -\pp+1\}$, while $f(\Delta(-\pp, -\pp+1))\cap \bDelta=\Delta(0,1)$;
\item $\bDelta$ has a distinguished $2$-wall $\bPi$ enclosing $\alpha$ and containing $\partial \bDelta$ such that each $\Pi(i)\coloneqq \bPi\cap \Delta(i)$ is connected and $f$ maps $\Pi(i)$ to $\Pi(i+\pp)$ for all $i\not\in \{-\pp, -\pp+1\}$; and 
\item  $\bPi$ contains a univalent $2$-wall $\wall$ such that each $\wall(i)\coloneqq \wall\cap \Pi(i)$ is connected and $f$ maps $\wall(i)$ to $\wall(i+\pp)$ for all $i\not\in \{-\pp, -\pp+1\}$.
 \end{itemize}
The $n$th renormalization triangulation is an example of a Siegel triangulation.

Similar to Lemma~\ref{lem:SiegWall}, Part~\eqref{Claim:Axil:on expansion:5}, we say that $\bPi$ \emph{approximates} $\partial Z_\str$ if $\partial Z_\str$ is a concatenation of arcs $J_0J_1\dots J_{\qq-1}$ such that $\Pi(i)$ and $J_i$ are close in the Hausdorff topology.

\begin{lem}
\label{lem:SiegTriangLifting}
Let $f\in \BB$ be a pacman such that all $f,\RR f,\dots, \RR^n f$ are in  a small neighborhood of $f_\str$. Let $\bDelta(\RR^n f)$ be a Siegel triangulation in the dynamical plane of $\RR^n f$ such that $\bPi(\RR^n f)$ approximates $\partial Z_\str$. Then $\bDelta(\RR^n f)$ has a full lift $\bDelta(f)$ which is again a Siegel triangulation. Moreover, $\bPi(f)$ also approximates $\partial Z_\str$.
\end{lem}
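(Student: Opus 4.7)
The plan is to proceed by induction on $n$, reducing to a single-step full lift from $\RR f$ to $f$. The case $n=0$ is trivial. For $n\ge 1$, the inductive hypothesis applied to $\RR f$ with the given triangulation $\bDelta(\RR^n f) = \bDelta(\RR^{n-1}(\RR f))$ produces a Siegel triangulation $\bDelta(\RR f)$ in the plane of $\RR f$ whose wall $\bPi(\RR f)$ approximates $\partial Z_\str$. It then remains to lift one further step to the plane of $f$.

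For the one-step lift, I would mirror the construction in the proof of Lemma~\ref{lem:psi is adjusted}. The map $\phi_1 = \psi_1^{-1}$ extends conformally from a neighborhood of $c_1(f)$ to $V \setminus \gamma_1$ by Part~\eqref{Claim:Axil:on expansion:1} of Lemma~\ref{lem:SiegWall}. Applying it to the two ``seed'' triangles $\Delta(\RR f, 0)$ and $\Delta(\RR f, 1)$ — together with their wall pieces $\Pi(\RR f, 0), \Pi(\RR f, 1)$ and univalent-wall pieces $\wall(\RR f, 0), \wall(\RR f, 1)$ — embeds them as sectors attached at $\alpha$ in a small neighborhood of $c_1(f)$. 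Since $\gamma_1$ may spiral around $\alpha$ (cf.~Figure~\ref{Fig:SpirAt0}), I invoke Theorem~\ref{thm:RobAntiRen} to ensure that this lift does not depend on the fine behavior of $\gamma_1$ near $\alpha$, with the univalent $2$-wall $\wall(\RR f)$ supplying the required fence. I then spread the seeds around by $f$-iteration along the renormalization orbit pattern of $F_1^{(0)}$, producing the triangulated neighborhood $\bDelta(f)$.

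Verification of the Siegel-triangulation axioms for $\bDelta(f)$ is then largely bookkeeping: the cyclic enumeration and the action $\Delta(f, i) \mapsto \Delta(f, i+\pp)$ for $i \notin \{-\pp, -\pp+1\}$ follow from combining the $\RR f$-action on $\bDelta(\RR f)$ with the renormalization return times $\aa_1, \bb_1$; the $2$-wall $\bPi(f)$ and univalent $2$-wall $\wall(f)$ are obtained as full lifts of $\bPi(\RR f)$ and $\wall(\RR f)$ by the same spreading procedure, with conformality and univalence of $\phi_1$ away from $\gamma_1$ transferring the respective axioms. For the approximation claim, Part~\eqref{Claim:Axil:on expansion:4} of Lemma~\ref{lem:SiegWall} yields contraction of $\Phi_n$ by at least $\eta^{-n}$, so the image of $\bPi(\RR^n f)$ lands in a small neighborhood of $c_1(f)$, close to the arc of $\partial Z_\str$ adjacent to $c_1(f)$; subsequent spreading by $f$ then distributes the tiles along all of $\partial Z_\str$, and the Hausdorff estimate transfers arc by arc from the assumed approximation on $\bPi(\RR^n f)$.

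The chief obstacle is the combinatorial intertwining: confirming that the $f$-action on the spread-around seeds correctly realizes $\Delta(f,i) \mapsto \Delta(f, i+\pp)$ for all but two exceptional indices amounts to verifying that the iterate of a Siegel triangulation under the renormalization return map is again a Siegel triangulation. Since the given $\bDelta(\RR f)$ need not agree with the renormalization triangulation $\bDelta_1$, the spread must proceed along abstract orbit patterns dictated solely by $F_1^{(0)}$; the robustness required for this to succeed is exactly what Theorem~\ref{thm:RobAntiRen} of Appendix~\ref{ss:ap:SecRen} provides.
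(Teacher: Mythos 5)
Your proposal follows essentially the same line as the paper: reduce to the single-step lift from $\RR f$ to $f$, use the univalent $2$-wall as the fence for Theorem~\ref{thm:RobAntiRen}, and invoke Lemma~\ref{lem:SiegWall}, Part~\eqref{Claim:Axil:on expansion:4}, for the contraction that controls the approximation. One step you leave implicit that the paper makes explicit: since the rectangles $\Pi(i,\RR f)$ are small, $\gamma_0$ can be slightly perturbed near $\bPi(\RR f)$ so that it crosses the wall at a tile boundary $\Pi(i,\RR f)\cap\Pi(i+1,\RR f)$ for some $i\not\in\{-\pp-1,-\pp,-\pp+1\}$; this is the concrete realization of ``making the spreading compatible with the given triangulation'' that you allude to abstractly, and it is then Theorem~\ref{thm:RobAntiRen} that certifies the adjustment is harmless. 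You also skip the bookkeeping point that the full lift of a $2$-wall is again a $2$-wall, which requires $\aa_1,\bb_1\ge 2$ (via Lemma~\ref{lem:ap:full lift of wall} and \eqref{eq:app:aa1 bb1}); this is needed for the induction to close, not a cosmetic detail, so it should be stated. Neither omission is a conceptual error, but both are load-bearing in the written-out proof.
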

\begin{proof}
It is similar to the proof of  Lemma~\ref{lem:psi is adjusted}. Suppose first ${n=1}$. Since all $\Pi(i, \RR f)$ are small, the arc $\gamma_0$ can be slightly adjusted\footnote{The lift of the triangulation will depend on this adjustment.} in a neighborhood of $\bPi$ so that $\gamma_0$ crosses $\bPi$ along $\Pi(i, \RR f)\cap \Pi(i+1, \RR f)$ with $i\not\in \{-\pp,-\pp+1\}$. This allows us to construct a full lift $\bPi(f)$ of $\bPi(\RR f)$. By Corollary~\ref{cor:lift of 2wall is 2wall}, the annuli $\bPi(f)$ and $\wall(f)$ are again $2$-walls. Applying Theorem~\ref{thm:RobAntiRen} from Appendix~\ref{s:SectRenAndAntiren} we construct a full lift $\bDelta(f)$ of $\bDelta(\RR f)$. Lemma~\ref{lem:SiegWall} Part~\eqref{Claim:Axil:on expansion:4} allows to apply induction on $n$: for big $n$, the wall $\bPi(f)$ approximates $\partial Z_\str$ better than $\bPi(\RR^n f)$ approximates $\partial Z_\str$.
\end{proof}

\subsection{Renormalization tilings}
\label{ss:RenTiling}

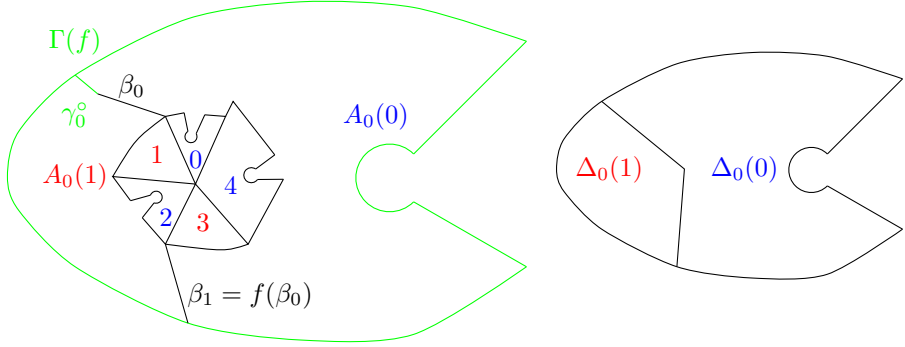
\begin{figure}[t!]
\centering{\begin{tikzpicture}[scale=1.5]

\begin{scope}[shift={(2,0)},rotate=45,scale =0.3]
\draw[green] (0,0) arc (0:270:1);
\coordinate (A1) at (0,0);
\coordinate (B1) at (-1,-1);
\end{scope}
\draw[green] (A1) -- (3,1);
\draw [green] (B1)-- (3,-1);
\draw[green] plot[smooth,tension=0.5] 
coordinates{ (3,1) (2,1.3) (1,1.35) (0,1.2)(-1,0.7) (-1.5, 0.2) (-1.6,-0.2)
 (-1.5, -0.5) (-1,-1) (0,-1.5) (1,-1.65) (2,-1.6) (3,-1)};
 \draw (0,-1.5)-- (0.1,-0.2);
 \draw (0.1,-0.2) --  (-0.78, 0.52); %(-1.44/2,0.95/2);
 \draw[green] (-0.78, 0.52)--(-1,0.7);% (-1.44/2,0.95/2) 

\node[below,green] at (-1,0.6){$\gamma_0^\circ$}; 
\node[above, green] at (-0.6,1) {$\Gamma(f)$};
 
 \draw[blue] (0.9,-0.2) node {$\Delta_0(0)$};
  \draw[red] (-0.8,-0.4) node {$\Delta_0(1)$};
  
  \begin{scope}[scale=0.666,shift={(-0.3,0)}]
  
  \begin{scope}[shift={(-6.5,0)}, scale =1.5,green]
\begin{scope}[shift={(2,0)},rotate=45,scale =0.3]
\draw (0,0) arc (0:270:1);
\coordinate (A2) at (0,0);
\coordinate (B2) at (-1,-1);
\end{scope}
\draw (A2) -- (3,1);
\draw (B2) -- (3,-1);
\draw plot[smooth,tension=0.5] 
coordinates{ (3,1) (2,1.3) (1,1.35) (0,1.2)(-1,0.7) (-1.5, 0.2) (-1.6,-0.2)
 (-1.5, -0.5) (-1,-1) (0,-1.5) (1,-1.65) (2,-1.6) (3,-1)};
 \coordinate (C) at (0,-1.5); 
\coordinate (D) at (-1,0.7);
\draw (-1,1) node{$\Gamma(f)$};
\draw (-1,0.35) node{$\gamma_0^\circ$};
\draw[black] (-0.5,0.6) node{$\beta_0$};
\draw[black] (0.55,-1.25) node{$\beta_1=f(\beta_0)$};
  \end{scope}

 \begin{scope}[shift={(-6.5,0.3)},rotate=120,scale =0.08]
\draw (0,0) arc (0:270:1);
\coordinate (A3) at (3,-1);
\draw (0,0)--(A3);
\coordinate (B3) at (1,-3.8);
\draw (B3) --(-1,-1);
\end{scope} 
\draw (A3)-- (-6.8,0.5) -- (-6.4,-0.4)--(-6,0.5)--(B3); 

\begin{scope}[ scale =0.7]
\draw[red] (-9.87, -0) node {$1$}
(-9., -1.3)  node {$3$}; 
\draw[blue] (-9.14, -0.1) node {$0$}
(-9.7, -1.2) node {$2$}
(-8.47, -0.6) node {$4$};
\end{scope}

 \begin{scope}[shift={(-6.95,-0.65)},rotate=250,scale =0.08]
\draw (0,0) arc (0:270:1);
\coordinate (A4) at (3,-1);
\draw (0,0)--(A4);
\coordinate (B4) at (1,-3.8);
\draw (B4) --(-1,-1);
\end{scope} 
\draw (B4)-- (-7.5,-0.3)-- (-6.4,-0.4)--(-6.8,-1.2)--(A4);
\draw(-7.5,-0.3) ..controls (-7.2,0.2)..(-6.8,0.5);

\draw (-6.8,-1.2) .. controls (-6,-1.3) .. (-5.7,-1.2);

\begin{scope}[shift={(-5.6,-0.2)},rotate=55,scale =0.1]
\draw (0,0) arc (0:270:1);
\coordinate (A4) at (3,-1);
\draw (0,0)--(A4);
\coordinate (B4) at (1,-3.8);
\draw (B4) --(-1,-1);
\end{scope} 

\draw (B4) --(-5.7,-1.2)-- (-6.4,-0.4);
\draw (A4) -- (-5.9,0.7) --(-6,0.5);

\draw[green] (D)--(-6.8-0.9,0.5+0.3);
\draw (-6.8-0.9,0.5+0.3)-- (-6.8,0.5);
\draw (C)--(-6.8,-1.2);

\draw[blue] (-4,0.5) node {$A_{0}(0)$};
\draw[red] (-8,-0.3) node {$A_{0}(1)$};  
\end{scope}
\end{tikzpicture}}
\caption{Renormalization tiling. Right: triangles $\Delta_0(0),\Delta_0(1)$ are the closures of the connected components of $U_f\setminus (\gamma_0\cup \gamma_1)$. They form a renormalization tiling of level $0$. Left: the triangles labeled by $0$ and $1$, i.e.~$\Delta_1(0,f)$ and $\Delta_1(1,f)$ respectively, are anti-renormalization embeddings of $\Delta_0(0,f_1),\Delta_0(1,f_1)$; the forward orbit of $\Delta_1(0,f),\Delta_1(1,f)$ triangulates a neighborhood of $\alpha$. Together with $A_0(0)\cup A_0(1)$, this gives a tiling of $U_f$ of level $1$.} \label{Fig:DfnDelta}
\end{figure}
In this subsection we will show the robustness of renormalization triangulations. Along the lines we will also extend $\bDelta_n(f)$ to a tiling of $U_f$.

Let $\gamma_0^\circ $ be the preimage of $\overline{\gamma_1\setminus U_f}$ under $f\colon \bar \gamma_0\to \bar \gamma_1$, see Figure~\ref{Fig:DfnDelta}. In other words, $\gamma_0^\circ$ is the subcurve of $\bar\gamma_0$ consisting of points that escape $U_f$ after one iteration. Set $\Gamma(f)\coloneqq \partial U_f\cup \gamma_0^\circ (f)$.

\begin{lem}
\label{lem:InitHolMotion}
For every $i$ we have 
\[  \Psi_{1,i}(\partial \bDelta_1(f_0)\cap \partial \Delta_1(i,f_0))\subset \Gamma(f).\]
Moreover, there is an $i$ such that $\gamma_0^\circ\subset  \Psi_{1,i}(\partial \bDelta_1(f_0)\cap \partial \Delta_1(i,f_0))$. The set $\Gamma(f)$ is disjoint from $\bDelta_0(f)$.

There are disjoint arcs $\beta_0$ and $\beta_1=f(\beta_0)$ such that
\begin{itemize}
\item the concatenation of $\gamma^\circ_0$ and $\beta_0$ connects $\partial \bDelta_0$ to $\partial \bDelta_1$; and
\item $\beta_1$ connects $\partial \bDelta_0$ to $\partial \bDelta_1$.
\end{itemize}
In a small neighborhood of $f_\str$ the curves $\beta_0,\beta_1$ can be chosen so that there is a holomorphic motion of 
\begin{equation}
\label{eq:0:lem:InitHolMotion}
\left[\bDelta_1\cup \partial \bDelta_0\cup \gamma^\circ_0\cup \beta_0\cup\beta_1\right] (f_0)
\end{equation} that is \emph{equivariant} with the following maps
\begin{enumerate}
\item $f_0\colon \beta_0(f_0)\to \beta_1(f_0)$;\label{eq:1:lem:InitHolMotion}
\item $f_0\colon \Delta_1(i,f_0)\to \Delta_1(i+\pp_1,f_0)$ for $i\not \in \{-\pp_1,-\pp_1+1\}$;\label{eq:2:lem:InitHolMotion}
\item $\Psi_{1,i}\colon \partial \bDelta_1(f_0)\cap \Delta_1(i,f_0)\to \Gamma(f_1)$.\label{eq:3:lem:InitHolMotion}
\end{enumerate}
\end{lem}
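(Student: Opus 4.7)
The strategy is to first identify $\Gamma(f_1)$ explicitly as a set, then to construct the arcs $\beta_0,\beta_1$, and finally to promote the construction to an equivariant holomorphic motion via the $\lambda$-lemma.

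For the identification of $\Gamma(f_1)$, I unwind the definition $\Psi_{1,i}=\Phi_1^{-1}\circ f_0^{-t}$ from Lemma~\ref{lem:psi is adjusted}. Each $\Psi_{1,i}$ is a conformal bijection of the topological triangle $\Delta_1(i,f_0)$ onto $\Delta_0(j_i,f_1)$ with $j_i\in \{0,1\}$, so it sends the outer side $\partial \bDelta_1(f_0)\cap \Delta_1(i,f_0)$ onto one of the three sides of $\Delta_0(j_i,f_1)$, hence into $\gamma_0(f_1)\cup \gamma_1(f_1)\cup \partial U_1(f_1)$. For the non-critical triangles $i\notin \{-\pp_1,-\pp_1+1\}$, the iterate $f_0^{-t}$ is univalent, and the inductive orbit structure $\Delta_1(i,f_0)=f_0^t(\Delta_1(j_i,f_0))$ forces the outer side to be transported onto a subarc of $\partial U_1(f_1)$; taking the union over all such $i$ yields exactly $\partial U_1$. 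For the two critical-preimage triangles $\Delta_1(-\pp_1),\Delta_1(-\pp_1+1)$ -- whose shared internal edge carries $c_0(f_0)$ and whose joint image $S_0^{(1)}=f_0(\Delta_1(-\pp_1)\cup \Delta_1(-\pp_1+1))$ strictly contains $\Delta_1(0)\cup \Delta_1(1)$ -- the combinatorial ``overshoot'' caused by the two-to-one fold at $c_0$ means that part of their outer boundary is transported by $\Psi_{1,\cdot}$ onto an arc $\gamma_0^\circ\subset \gamma_0(f_1)$, uniquely characterized by $f_1(\gamma_0^\circ)\not\in \intr U_1$. Hence $\Gamma(f_1)=\partial U_1\cup \gamma_0^\circ$. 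The disjointness $\Gamma(f_1)\cap \partial\bDelta_1(f_1)=\emptyset$ follows from Lemma~\ref{lem:psi is adjusted} applied with $f_1$ in place of $f_0$: it gives $\bDelta_1(f_1)\Subset U_1$, so $\partial\bDelta_1(f_1)\subset \intr U_1$, while $\partial U_1\cup \gamma_0^\circ$ lies on or outside $\partial U_1$.

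For the arcs $\beta_0,\beta_1$, I work in $f_0$'s plane, where $\gamma_0^\circ$ is understood as $\Phi_1(\gamma_0^\circ)$. I take $\beta_0$ to be a short simple arc in the annulus $\overline U_0\setminus \intr \bDelta_1(f_0)$ joining the outer endpoint of $\Phi_1(\gamma_0^\circ)$ on $\partial\bDelta_1(f_0)$ to a point on $\partial U_0=\partial \bDelta_0$, placed on one side of the critical arc $\gamma_1(f_0)$ and short enough that $\beta_1\coloneqq f_0(\beta_0)$ is a disjoint simple arc on the other side that also connects $\partial \bDelta_1(f_0)$ to $\partial\bDelta_0$. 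This is possible because, near $f_\str$, the wall $\bPi_0$ approximates $\partial Z_\str$ (Lemma~\ref{lem:SiegWall}) and $f_0$ acts on a neighborhood of it nearly as the rotation by $\theta_\str$.

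Finally, to produce the holomorphic motion of the set in~\eqref{eq:0:lem:InitHolMotion}, I note that $\bDelta_1(f_0)$, $\partial\bDelta_0=\partial U_0$, $\Phi_1(\gamma_0^\circ)$, and the critical arc $\gamma_1(f_0)$ all depend holomorphically on $f_0\in \BB$ by Theorems~\ref{thm: An Pacm  self ope} and~\ref{thm:RenOper}. Picking $\beta_0(f_\str)$ first and defining $\beta_0(f_0)$ by parallel transport, then setting $\beta_1(f_0)\coloneqq f_0(\beta_0(f_0))$, produces an initial holomorphic motion on the skeleton on which the equivariance conditions (1)--(3) hold by construction: (1) is direct, (2) is Lemma~\ref{lem:psi is adjusted} applied to the conformal triangles, and (3) is the definition of $\Psi_{1,i}$. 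The $\lambda$-lemma (\cite{BR},~\cite{ST}), applied as in the proof of Theorem~\ref{thm:RenOper}, extends this motion to the full set. The main obstacle is the combinatorial bookkeeping in the first paragraph: verifying that the two-to-one fold of $f_0$ at $c_0$ contributes precisely $\gamma_0^\circ$ to the image and that no other extra pieces of $\gamma_0\cup \gamma_1$ sneak into $\Gamma(f_1)$.
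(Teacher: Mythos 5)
Your identification of $\Gamma(f_1) = \partial U_1 \cup \gamma_0^\circ$ is in the right spirit, though the paper traces the decomposition more carefully through the three sides $\lambda(i),\rho(i),\ell(i)$ of each triangle and identifies $\gamma_0^\circ$ with the mismatch $\Psi_{1,i}(\overline{\lambda(i)\setminus\rho(i+1)})\cup\Psi_{1,i+1}(\overline{\rho(i+1)\setminus\lambda(i)})$ at the indices where $\lambda(i)\neq\rho(i+1)$, rather than attributing it to the two critical-preimage triangles. That part is recoverable. Two steps, however, are genuinely broken.

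Your argument for $\Gamma(f_1)\cap\partial\bDelta_1(f_1)=\emptyset$ fails. You assert that ``$\partial U_1\cup\gamma_0^\circ$ lies on or outside $\partial U_1$,'' but $\gamma_0^\circ$ is a subarc of $\gamma_0(f_1)$, which sits \emph{inside} $U_1$ (only one of its endpoints reaches $\partial U_1$); it can perfectly well penetrate the region where $\bDelta_1(f_1)$ lives. The disjointness is a dynamical fact, not a positional one: if $z\in\gamma_0^\circ\cap\bDelta_1(f_1)$, then $f_1(z)\in f_1(\bDelta_1(f_1))\Subset\intr\bDelta_0(f_1)=\intr U_1$ by Lemma~\ref{lem:psi is adjusted}, contradicting $f_1(\gamma_0^\circ)\not\in\intr U_1$; combined with $\partial U_1\cap\bDelta_1(f_1)=\emptyset$ this gives the claim.

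The construction of the equivariant motion is also not established. Citing Theorems~\ref{thm: An Pacm  self ope} and~\ref{thm:RenOper} for ``holomorphic dependence,'' and citing Lemma~\ref{lem:psi is adjusted} for equivariance condition~\eqref{eq:2:lem:InitHolMotion}, does not produce what is claimed: that lemma constructs the triangulation, it says nothing about equivariance of a motion. Equivariance with~\eqref{eq:2:lem:InitHolMotion} is a nontrivial compatibility constraint between the motions of the different triangles $\Delta_1(i,f_0)$, and it does not hold ``by construction'' from a parallel-transported skeleton. The paper's proof first builds (via the $\lambda$-lemma) a motion of the base triangulation $\bDelta_0$ that is equivariant with $f|\gamma_0$; this base equivariance is exactly what makes the pullbacks of the base motion through the $\Psi_{1,i}$ agree across shared triangle edges, yielding a single motion of $\partial\bDelta_0\cup\bDelta_1\cup\Gamma$ which is then automatically equivariant with~\eqref{eq:2:lem:InitHolMotion} and~\eqref{eq:3:lem:InitHolMotion}. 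Only after this lift is the $\lambda$-lemma applied a second time to incorporate $\beta_0,\beta_1$ and enforce~\eqref{eq:1:lem:InitHolMotion}. Your proposal omits both the preparatory base equivariance and the lift through $\Psi_{1,i}$, which together are the actual content of the equivariance assertion.
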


\begin{proof}
Each triangle $\Delta_{1}(i)$ has three distinguished closed sides; we denote them by $\lambda(i)$, $\rho(i)$, and $\ell(i)$ such that $\lambda(i)$ and $\rho(i)$ are the left and right sides meeting at the $\alpha$-fixed point while $\ell(i)$ is the opposite to $\alpha$ side. We have:
\[\partial \bDelta_1(f_0)\cap \partial \Delta_1(i,f_0))=\ell_i\cup \big(\overline{\lambda(i)\triangle \rho(i+1)}\big)\cup \big(\overline{\rho(i)\triangle \lambda(i-1)}\big), \]
where $\lambda(i)\triangle \rho(i+1)$ is the symmetric difference between $\lambda(i)$ and $\rho(i+1).$
Note that  $\Psi_{1,i}( \ell (i))\subset \partial \bDelta_0$ and, moreover, $\displaystyle \bigcup_i\Psi_{1,i}(\ell(i)) =\partial \bDelta_0$.

 Let us analyze $ \overline{\lambda(i)\triangle \rho(i+1)}$. We assume that $\lambda(i)\not= \rho(i+1)$. Then one of the curves in $\{\lambda(i), \rho(i+1)\}$ is a $\Psi$-preimage of $\gamma_0(f_1)$ while the other is a preimage of $\gamma_1(f_1)$. We have: 
 \[ \Psi_{1,i}\big(\overline{\lambda(i)\triangle \rho(i+1)}\big)=\gamma_0^\circ.\]
It is clear (see Appendix~\ref{ss:ap:SecRen}) that $\lambda(i)\not= \rho(i+1)$ for at least one $i$.

The property  $\Gamma(f)\cap \partial \bDelta_1(f)=\emptyset$ follows from $\partial \bDelta_0\cap f(\bDelta_1)=\emptyset$, see Lemma~\ref{lem:psi is adjusted}. Since $\Gamma(f)\cap \partial \bDelta_1(f)=\emptyset$, we can find $\beta_0$ such that $\gamma_0^\circ\cup \beta_0$ is in a small neighborhood of $\gamma_0$ and $\gamma_0^\circ\cup \beta_0$  connects $\partial \bDelta_0$ to $\partial (\bDelta_1\setminus (\Delta_1(-\pp_1)\cup  \Delta_1(-\pp_1-1)))$. Then $\beta_1=f(\beta_0)$ is disjoint from $\gamma^\circ_0\cup \beta_0$ and $\beta_1$ connects $\partial \bDelta_0$ to $\partial \bDelta_1$.

In a small neighborhood of $f_\str$ we have a holomorphic motion of $\partial \bDelta_0(f_0).$ Applying the $\lambda$-lemma, we obtain a holomorphic motion of the triangulation $\bDelta_0$ that is equivariant with $f_0\mid \gamma_0$. Lifting this motion via $\Psi_{1,i}$, we obtain a holomorphic motion of $\bDelta_1\cup \Gamma$ equivariant with~\eqref{eq:2:lem:InitHolMotion} and~\eqref{eq:3:lem:InitHolMotion}. Applying again the $\lambda$-lemma, we extend the latter motion to the motion of~\eqref{eq:0:lem:InitHolMotion} that is also equivariant with~\eqref{eq:1:lem:InitHolMotion}. 
\end{proof}

 Let $\bA_0$ be the closed annulus between $\partial \bDelta_0$  and $\partial  \bDelta_1$. The arcs $\gamma^\circ_0\cup \beta_0,\beta_1$ split $\bA_0$ into two closed \emph{rectangles $A_0(0), A_0(1)$} (see Figure~\ref{Fig:DfnDelta}) enumerated such that  $\intr(A_0(0)),\gamma_0^\circ \cup \beta_0, \intr( A_0(1)), \beta_1$ have counterclockwise orientation.

Let $\bA_n$ be the closed annulus between $\partial \bDelta_n$ and $\partial \bDelta_{n+1}$. Define \[A_n(0,f_0)\coloneqq \Phi_n(A_0(0,f_n))\sp\text{ and }\sp  A_n(1,f_0)\coloneqq \Phi_n(A_0(1,f_n))\] and spread dynamically $A_n(0,f_0),A_n(1,f_0)$ (compare with the definition of $\Delta_n(i)$ in~\S\ref{ss:RenormalTriangul}); we obtain the partition of $\bA_n(f_0)$ by rectangles $\{A_n(i,f_0)\}_{0\le i<\qq_n}
$ enumerated counterclockwise. Similar to~\eqref{eq:PsiDefn:2} we define the map $ \Psi_{n,i}\colon A_n(i,f_0)\to A_0(j,f_n)$ with $j\in \{0,1\}$.

\emph{The $n$th renormalization tiling} is the union of all the triangles of $\bDelta_{n}$ and the union of all the rectangles of all $\bA_m$ for all $m<n$. The $n$th renormalization tiling is defined as long as $f_0,\dots, f_n$ are in a small neighborhood of $f_\str$.

A \emph{qc combinatorial pseudo-conjugacy of level $n$ between $f_0$ and $f_\str$} is a qc map $h\colon \overline U_{0}\to \overline U_{\str}$ that is compatible with the $n$th renormalization tilings as follows:

\begin{itemize}
\item $h$ maps $\Delta_n(i,f_0)$ to $\Delta_n(i,f_\str)$ for all $i$;
\item $h$ maps $A_m(i,f_0)$ to $A_m(i,f_\str)$ for all $i$ and $m<n$;
\item $h$ is equivariant on $ \Delta_n(i,f_0)$ for all $i\not \in \{-\pp_n, -\pp_n+1\}$; and
\item  $h$ is equivariant on $A_m(i,f_0)$ for all $i\not \in \{-\pp_m, -\pp_m+1\}$ and $m<n$.
\end{itemize}

\begin{figure}[t!]
\centering{\begin{tikzpicture}[scale=2.2]

\draw (0.29125,0.29) .. controls  (1.16,1.04)..
(1.16,1.04).. controls  (1.16-0.5,1.04+0.9) and (-0.6025+0.5,2.17125-0.1)..
(-0.6025,2.17125).. controls (-0.6025-0.3,2.17125+0.1) and (-2.5781103515625037+0.3,2.1729345703124996+0.1)..
(-2.5781103515625037,2.1729345703124996).. controls
(-2.5781103515625037-0.25,2.1729345703124996) and (-3.804916992187504+0.1,1.1383886718749987+0.25)
..  (-3.804916992187504,1.1383886718749987)
.. controls 
(-3.804916992187504-0.07,1.1383886718749987) and
(-4.19625,0.115+0.1)..
(-4.19625,0.115).. controls
(-4.19625,0.115-0.25) and
 (-3.612656250000004-0.1,-0.9856347656250029)..
 (-3.612656250000004,-0.9856347656250029).. controls
 (-3.612656250000004+0.1,-0.9856347656250029-0.1) and (-2.5323339843750037-0.3,-1.7272119140625035)..
(-2.5323339843750037,-1.7272119140625035).. controls (-0.463242187500003-0.8,-1.9103173828125035).. (-0.463242187500003,-1.9103173828125035)
..controls 
(-0.463242187500003+0.5,-1.9103173828125035) and
(1.20375,-0.4053125000000028-0.5)..
(1.20375,-0.4053125000000028)..
 controls (0.29125,0.29)..(0.29125,0.29);
\draw (-2.2689062500000015,1.4228124999999963)-- (-3.073593750000002,0.8290624999999966);
\draw (-3.073593750000002,0.8290624999999966)-- (-3.366562500000002,-0.014687500000003066);
\draw (-3.343125000000002,-0.206093750000003)-- (-2.675156250000002,-0.7783593750000036);
\draw (-2.675156250000002,-0.7783593750000036)-- (-1.309921875000001,-0.9658593750000037);
\draw (-1.309921875000001,-0.9658593750000037)-- (-0.426621093750003,-0.22574707031250227);
\draw (-0.426621093750003,-0.22574707031250227)-- (-0.7845312500000009,-0.05375);
\draw (-0.7845312500000009,-0.05375)-- (-0.380844726562503,0.08553222656249797);
\draw (-2.2689062500000015,1.4228124999999963)-- (-1.5306250000000012,1.4228124999999963);
\draw (-1.5306250000000012,1.4228124999999963)-- (-1.5345312500000012,1.1376562499999965);
\draw (-1.5345312500000012,1.1376562499999965)-- (-1.3157812500000012,1.3485937499999963);
\draw (-1.3157812500000012,1.3485937499999963)-- (-0.9993750000000009,0.9442968749999974);
\draw (-0.9993750000000009,0.9442968749999974)-- (-0.7836767578125031,1.1109228515624987);
\draw (-0.380844726562503,0.08553222656249797)-- (-0.7836767578125031,1.1109228515624987);
\draw (-2.2689062500000015,1.4228124999999963)-- (-2.1185156250000023,0.8930273437499984);
\draw (-3.073593750000002,0.8290624999999966)-- (-2.624375000000002,0.5009374999999966);
\draw (-2.675156250000002,-0.7783593750000036)-- (-2.2845312500000015,-0.4014062500000029);
\draw (-1.309921875000001,-0.9658593750000037)-- (-1.5032812500000012,-0.44046875000000285);
\draw (-0.9993750000000009,0.9442968749999974)-- (-1.342148437500002,0.6513281249999983);

\draw[red,fill=red, fill opacity=0.3]  (-2.1185156250000023,0.8930273437499984)-- (-1.9056250000000015,0.2040624999999968)--
(-2.4095312500000015,0.7548437499999966)--
(-2.1185156250000023,0.8930273437499984);
%\draw (-1.9056250000000015,0.2040624999999968)-- (-2.4095312500000015,0.7548437499999966);
%\draw[blue,fill=red, fill opacity=0.3] (-2.4095312500000015,0.7548437499999966)-- (-2.1185156250000023,0.8930273437499984)--
%(-1.9056250000000015,0.2040624999999968)--
%(-2.4095312500000015,0.7548437499999966);
\draw[blue,fill=blue, fill opacity=0.3] (-1.9056250000000015,0.2040624999999968)-- (-2.624375000000002,0.5009374999999966)--
(-2.4095312500000015,0.7548437499999966)--
(-1.9056250000000015,0.2040624999999968);

\draw[red,fill=red, fill opacity=0.3] 
(-1.9056250000000015,0.2040624999999968)--
(-2.624375000000002,0.5009374999999966)-- (-2.647812500000002,0.09468749999999689)--
(-1.9056250000000015,0.2040624999999968);

\draw[blue,fill=blue, fill opacity=0.3] (-1.9056250000000015,0.2040624999999968)-- (-2.538437500000002,-0.17875)-- (-2.647812500000002,0.09468749999999689)--
(-1.9056250000000015,0.2040624999999968);

\draw[blue,fill=blue, fill opacity=0.3] (-1.9056250000000015,0.2040624999999968)-- (-2.2845312500000015,-0.4014062500000029)-- (-2.4533836490637815,-0.12729767042129922)--
(-1.9056250000000015,0.2040624999999968);

\draw[red,fill=red, fill opacity=0.3] 
(-1.9056250000000015,0.2040624999999968)-- (-2.2845312500000015,-0.4014062500000029)-- (-1.8939062500000015,-0.5342187500000029)--
(-1.9056250000000015,0.2040624999999968);

\draw[blue,fill=blue, fill opacity=0.3] (-1.9056250000000015,0.2040624999999968)-- (-1.5032812500000012,-0.44046875000000285)-- (-1.8939062500000015,-0.5342187500000029)--
(-1.9056250000000015,0.2040624999999968);

\draw[red,fill=red, fill opacity=0.3] 
(-1.9056250000000015,0.2040624999999968)-- (-1.5032812500000012,-0.44046875000000285)-- (-1.206406250000001,-0.13968750000000302)--
(-1.9056250000000015,0.2040624999999968);

\draw[blue,fill=blue, fill opacity=0.3] (-1.9056250000000015,0.2040624999999968)-- (-1.206406250000001,-0.13968750000000302)-- (-1.061875000000001,0.37593749999999665)--
(-1.9056250000000015,0.2040624999999968);

\draw[blue,fill=blue, fill opacity=0.3] (-1.9056250000000015,0.2040624999999968)-- (-1.342148437500002,0.6513281249999983)-- (-1.2060654017122159,0.34656538113269364)--
 (-1.9056250000000015,0.2040624999999968);

\draw[red,fill=red, fill opacity=0.3] 
(-1.9056250000000015,0.2040624999999968)-- (-1.342148437500002,0.6513281249999983)-- (-1.5423437500000012,0.8485937499999966)--
(-1.9056250000000015,0.2040624999999968);

\draw[blue,fill=blue, fill opacity=0.3] (-1.9056250000000015,0.2040624999999968)-- (-1.5423437500000012,0.8485937499999966)-- (-1.8768164062500021,0.9662695312499984)--
(-1.9056250000000015,0.2040624999999968);

\draw[blue,fill=blue, fill opacity=0.3] (-1.9056250000000015,0.2040624999999968)-- (-2.1185156250000023,0.8930273437499984)-- (-1.8807868631382758,0.8612206634093975)--
(-1.9056250000000015,0.2040624999999968);

\draw (-3.366562500000002,-0.014687500000003066)-- (-3.007187500000002,-0.09671875000000303);
\draw (-3.007187500000002,-0.09671875000000303)-- (-3.343125000000002,-0.206093750000003);
\draw (-2.2689062500000015,1.4228124999999963)-- (-2.5781103515625037,2.1729345703124996);
\draw (-2.675156250000002,-0.7783593750000036)-- (-2.5323339843750037,-1.7272119140625035);

\draw[red] (-3.70421, 0.28695) node{$A_0(1)$}
(-2.57811, 0.90951) node{$A_1(1)$}
(-1.69005, -0.75675) node{$A_1(3)$};
\draw[blue](0.07692, -0.5828) node {$A_0(0)$}
(-1.87315, 1.16585) node {$A_1(0)$}
(-2.77953, -0.34477) node {$A_1(2)$}
(-0.7379, 0.26864) node{$A_1(4)$};

\draw (-1.97386, 0.7203) node{$0$}
(-2.16917, 0.68978) node{$1$}
(-2.37669, 0.52499) node{$2$}
(-2.48045, 0.29305) node{$3$}
(-2.47435, 0.00619) node{$4$}
(-2.27293, -0.16471) node{$5$}
(-2.02879, -0.3234) node{$6$}
(-1.74803, -0.3173) node{$7$}
(-1.49779, -0.1525) node{$8$}
(-1.3513, 0.13436) node{$9$}
(-1.47948, 0.40292) node{$10$}
(-1.58934+0.03, 0.61654) node{$11$}
(-1.79076+0.05, 0.75692) node{$12$};

\draw (-1.90672, -0.76286) edge[<-,bend left=35] (-2.76122-0.05, 0.8912);

\end{tikzpicture}}
\caption{Renormalization tiling of level $2$; tilings of smaller levels are depicted on Figure~\ref{Fig:DfnDelta}. There are $\qq_2=12$ triangles in $\bDelta_2$ with rotation number $\pp_2/\qq_2=5/12$. Geometry of triangles in $\bDelta_2$ is simplified. The image of $\Delta_2(8)\cup \Delta_2(9)$ is a sector slightly bigger than $\Delta_2(0)\cup \Delta_2(1)$ -- compare with Figure~\ref{Fg:Prepacman:OrbitOfUi}.} \label{Fig:RenormTiling:2levels}
\end{figure}
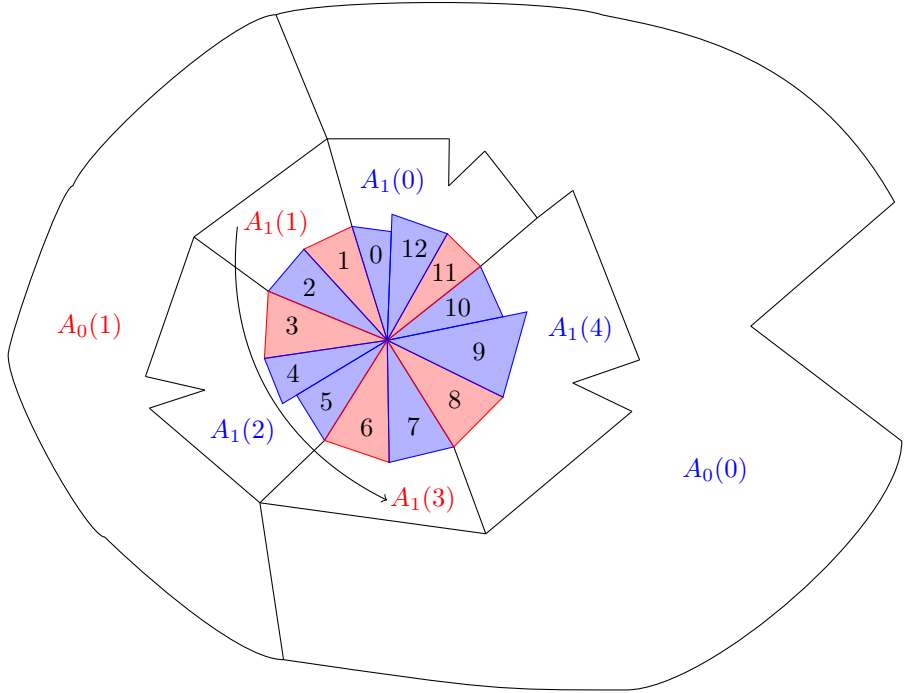

The following theorem says that $f\mid \bDelta_n(f)$ \emph{approximates} $f_\str\mid \overline Z_\str$ both dynamically and geometrically.
\begin{thm}[Combinatorial pseudo-conjugacy]
\label{thm:ComPsConj}
Consider an $n$th renormalizable pacman $f$ and set
\[d\coloneqq \max_{i\in\{0,1,\dots, n\}} \dist(\RR^i f,f_\str).\]

If $d$ is sufficiently small, then there is a qc combinatorial pseudo-conjugacy $h$ of level $n$ between $f$ and $f_\str$ and, moreover, the following properties hold. The qc dilatation and the distance between $h\mid \bDelta_n(f)$ and the identity on $\bDelta_n(f)$ are bounded by constants $K(d), M(d)$ respectively with $K(d)\to 1$ and $M(d)\to 0$ as $d\to 0$.
\end{thm}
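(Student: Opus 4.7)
The plan is to construct $h$ level by level from the inside out, using at each renormalization level the equivariant holomorphic motion provided by Lemma~\ref{lem:InitHolMotion}. The renormalization tiling decomposes $\overline U_0$ as the disjoint union
\[
\overline U_0(f) \;=\; \bDelta_n(f) \;\cup\; \bigsqcup_{m=0}^{n-1} \bA_m(f),
\]
so it suffices to define $h$ on each piece compatibly.

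\emph{Base step.} Since $\RR^n f$ lies within distance $d$ of $f_\str$, Lemma~\ref{lem:InitHolMotion} applied at the Banach parameter $\RR^n f$ yields a qc map $H_n \colon \overline U_{\RR^n f}\to\overline U_{f_\str}$ whose dilatation is $1+O(d)$ and whose $C^0$-distance from the identity is $O(d)$; $H_n$ sends $\Delta_0(i,\RR^n f)$ to $\Delta_0(i,f_\str)$ and is equivariant on $\gamma_0(\RR^n f)$. Using the conformal maps $\Psi_{n,i}\colon \Delta_n(i,f)\to\Delta_0(j,\RR^n f)$ from~\eqref{eq:PsiDefn}, pull $H_n$ back piecewise to define $h$ on $\bDelta_n(f)$. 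Equivariance of each $\Psi_{n,i}$ with $f^{-t}$ together with the equivariance of $H_n$ on $\gamma_0$ forces the resulting $h$ to be equivariant on $\Delta_n(i,f)$ for $i\notin\{-\pp_n,-\pp_n+1\}$. Uniform contraction of $\Phi_n=\Psi_{n,i}^{-1}$ from Lemma~\ref{lem:SiegWall}, Part~\eqref{Claim:Axil:on expansion:4} preserves the $O(d)$-distance to the identity on $\bDelta_n(f)$.

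\emph{Inductive extension across $\bA_m$.} Suppose $h$ has been defined on $\bDelta_{m+1}(f)$ for some $m<n$. Apply Lemma~\ref{lem:InitHolMotion} at the parameter $\RR^m f$: this gives an equivariant holomorphic motion $\tau$ of $[\bDelta_1\cup\partial\bDelta_0\cup\gamma^\circ_0\cup\beta_0\cup\beta_1](\RR^m f)$ which, by the $\lambda$-lemma, extends to a motion of the closed annulus $\bA_0(\RR^m f)$ that is equivariant on each $A_0(i,\RR^m f)$ away from the pair $i\in\{-\pp,-\pp+1\}$. Lift this motion by $\Phi_m$ into the dynamical plane of $f$ and spread it around; the result is a holomorphic motion of $\bA_m(f_\str)$ compatible with the tiling $\{A_m(i)\}$. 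Evaluating at the parameter $f$ defines $h$ on $\bA_m(f)$. The inner boundary $\partial\bDelta_{m+1}(f)$ coincides with $\Phi_m(\partial\bDelta_1(\RR^m f))$, and since $\partial\bDelta_1$ is part of the motion in Lemma~\ref{lem:InitHolMotion}, the new $h$ on $\bA_m(f)$ matches the previously constructed $h$ on $\partial\bDelta_{m+1}(f)$. Iterating from $m=n-1$ down to $m=0$ completes the construction of $h$ on all of $\overline U_0(f)$.

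\emph{Controls and main obstacle.} At each of the $n$ levels the $\lambda$-lemma introduces a dilatation factor $1+O(d)$; by shrinking the Banach ball $\BB$ we force these factors into a neighborhood of $1$ small enough that their product is bounded by $K(d)\to 1$ uniformly in $n$. The distance $|h-\operatorname{id}|$ on $\bDelta_n(f)$ originates solely from the base-step map $H_n$, composed with the contraction $\Phi_n$, hence is bounded by $M(d)\to 0$. The main technical obstacle is the compatibility across successive levels: one must verify that the motion used to fill $\bA_m$ glues continuously, and with the right equivariance, to the map already fixed on $\partial\bDelta_{m+1}$ coming from level $m+1$. This hinges on the fact that the same set $\partial\bDelta_1\cup\gamma^\circ_0\cup\beta_0\cup\beta_1$ is simultaneously moved equivariantly by the level-$m$ motion (as part of $\bA_m$'s interior structure) and serves, after lifting by $\Phi_{m+1}$, as the outer skeleton of $\bDelta_{m+1}$ at the next level; keeping these two incarnations of the same set in sync is the key bookkeeping at each inductive step.
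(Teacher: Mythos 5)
Your construction is the same as the paper's: on each tile $\Delta_n(i)$ (respectively $A_m(i)$), $h$ is the conjugate $\Psi_{n,i}(f_\str)^{-1}\circ\tau\circ\Psi_{n,i}(f)$ of the holomorphic motion $\tau$ of the first renormalization tiling by the conformal $\Psi$-charts of \eqref{eq:PsiDefn}, and well-definedness across tile boundaries follows from the equivariance of $\tau$ established in Lemma~\ref{lem:InitHolMotion}. Your level-by-level inductive phrasing is a presentation choice, not a different argument. However, both quantitative controls have genuine gaps.

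First, the dilatation bound. You write that each of the $n$ levels contributes a factor $1+O(d)$ and that ``their product is bounded by $K(d)\to 1$ uniformly in $n$'' after shrinking $\BB$. This cannot be arranged: for any fixed $d>0$ the $n$-fold product $(1+O(d))^n$ tends to $\infty$ with $n$, and shrinking $\BB$ only shrinks the admissible $d$, not the exponent. The point is that no product occurs at all. Since $h$ is defined \emph{piecewise} and on each tile is a conformal pre/post-composition of the single map $\tau$, its dilatation on that tile equals the dilatation of $\tau$ evaluated at the appropriate level $f_m$. Hence $\Dil(h)=\max_{0\le m\le n}\Dil(\tau_{f_m\to f_\str})\le K(d)$; there is no compounding across levels, and this is exactly how the paper argues.

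Second, the distance-to-identity bound. You claim $|h-\id|$ on $\bDelta_n(f)$ ``originates solely from the base-step map $H_n$, composed with the contraction $\Phi_n$''. This overlooks that $\Phi_n$ depends on the pacman: $h|_{\Delta_n(i,f)}=\Psi_{n,i}(f_\str)^{-1}\circ\tau\circ\Psi_{n,i}(f)$, and $\Psi_{n,i}(f_\str)\ne\Psi_{n,i}(f)$. Even if $\tau$ were the identity, the map $\Psi_{n,i}(f_\str)^{-1}\circ\Psi_{n,i}(f)$ sends $\bDelta_n(f)$ to $\bDelta_n(f_\str)$ and is not the identity, and the product of the derivative ratios $\mu_\str/\mu_i$ over the $n$ levels is not controllable pointwise for fixed $d$. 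The paper instead observes that for $n>1$ one has $\bDelta_n(f)\Subset U_0$, that $h$ is $K(d)$-qc on $U_0$ with boundary values $\tau$ close to identity, and then invokes compactness of $K(d)$-qc maps: as $d\to 0$ the family converges to the identity uniformly on compact subsets of $U_0$, giving $M(d)\to 0$ independently of $n$. You should replace your heuristic with this normal-family argument.
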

\begin{proof}
By Lemma~\ref{lem:InitHolMotion}, the set~\eqref{eq:0:lem:InitHolMotion} moves holomorphically with $f$ in a small neighborhood of $f_\str$. Applying the $\lambda$-lemma, we obtain a holomorphic motion $\tau$ of the first renormalization tiling with $f$ in a small neighborhood $\UU$ of $f_\str$.

Suppose now that $d$ is so small that all $f_i\coloneqq \RR^i f$ are in $\UU$. For every $\Delta_n(i)$ of $f_0$ or of $f_\str$ consider the map $ \Psi_{n,i}\colon \Delta_n(i)\to \Delta_0(j)$, where $\Delta_0(j)$ is the corresponding triangle of $f_n$ or of $f_\str$. Then $h$ on $\Delta_n(i,f_0)$ is defined by applying first $\Psi_{n,i}\colon \Delta_n(i,f_0)\to \Delta_0(j,f_n)$ (see~\eqref{eq:PsiDefn}), then applying the motion $\tau$ from $ \Delta_0(j,f_n)$ to $ \Delta_0(j,f_\str)$, and then applying $\Psi_{n,i}^{-1}\colon  \Delta_0(j,f_\str)\to  \Delta_n(i,f_\str)$.

Similarly, for every $A_m(i)$ of $f_0$ or of $f_\str$ consider the map $\Psi_{m,i}\colon A_m(i)\to A_0(j)$, where $A_0(j)$ is the corresponding rectangle of $f_{m}$ or of $f_\str$. Then $h$ on $A_m(i,f_0)$ is defined by applying first $\Psi_{m,i}\colon A_m(i,f_0)\to A_0(j,f_m)$, then applying the motion $\tau$ from $A_m(j,f_0)$ to $A_m(j,f_\str)$, and then applying $\Psi_{m,i}^{-1}\colon  A_0(j,f_\str)\to  A_m(i,f_\str)$. 

Observe now that $h$ is well defined for all the points on the boundaries of all the rectangles and all the triangles because $\tau$ is equivariant with~\eqref{eq:1:lem:InitHolMotion},~\eqref{eq:2:lem:InitHolMotion},
~\eqref{eq:3:lem:InitHolMotion} -- see Lemma~\ref{lem:InitHolMotion}. Therefore, all points have well defined images under $h$.

The qc dilatation of $h$ is bounded by the qc dilatation of $\tau$ at $f_i$ with $i\in\{0,1,\dots, n\}$. This bounds the qc dilatation of $h$ by $K(d)$ as above with $K(d)\to 1$ as $d\to 0$.

If $n=1$, then since $\tau$ is continuous, the distance between  $h\mid \bDelta_1(f_0)$ and the identity on $\bDelta_1(f_0)$ is bounded by $M(d)$ as required. If $n>1$, then $\bDelta_n(f_0)\Subset U_{0}$ and the claim follows from the compactness of qc maps with bounded dilatation
\end{proof}

\begin{cor} 
\label{cor:inf ren are hybr conj}
There is an $\varepsilon>0$ with the following property. Suppose that $f\in \BB$ is infinitely renormalizable and that all $\RR^n  f$ for $n\ge 0$ are in the $\varepsilon$-neighborhood of $f_\str$. Then there is a qc map $h\colon U_f\to U_\str$ such that $h^{-1}$ is a conjugacy on $\overline Z_\str$. Therefore, a certain restriction of $f$ is a Siegel map and $f,f_\str$ are hybrid conjugate on neighborhoods of their Siegel disks.
\end{cor}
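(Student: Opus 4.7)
The plan is to apply Theorem~\ref{thm:ComPsConj} at every renormalization level, extract a qc subsequential limit $h$, identify $D_f\coloneqq h^{-1}(\overline Z_\str)$ as a forward-invariant quasidisk on which $f$ acts as a topological rotation, and then invoke Theorem~\ref{thm:HybrCongSiegMaps} for the hybrid conjugacy.

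First, for $\varepsilon$ small enough, Theorem~\ref{thm:ComPsConj} produces at each level $n$ a qc combinatorial pseudo-conjugacy $h_n\colon\overline U_f\to\overline U_\str$ with dilatation bounded by some $K=K(\varepsilon)$, uniformly in $n$. Since the outer rectangles $A_0(i,f)$ are related to $A_0(i,f_\str)$ by the level-zero holomorphic motion from Lemma~\ref{lem:InitHolMotion}, the restrictions $h_n\mid\partial U_f$ agree for all $n$. By compactness of $K$-qc maps with prescribed boundary values, I extract a subsequence $h_{n_k}\to h$ converging uniformly on compacts to a $K$-qc map $h\colon\overline U_f\to\overline U_\str$.

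Next, I would track equivariance. Each $h_{n_k}$ is equivariant on every rectangle $A_m(i,f)$ with $m<n_k$ and $i\notin\{-\pp_m,-\pp_m+1\}$, and by continuity across their boundaries also on the excluded rectangles. Passing to the limit, $h$ is equivariant on the union $\bigcup_{m\ge 0}\bA_m(f)$, which equals $\overline U_f$ minus the interior of $D_f\coloneqq\bigcap_n\bDelta_n(f)$. By the improvement of the domain (Theorem~\ref{thm:RenFixedPoint}) $\bigcap_n\bDelta_n(f_\str)=\overline Z_\str$; combined with the closeness estimate $h_n\mid\bDelta_n(f)\approx\mathrm{id}$ from Theorem~\ref{thm:ComPsConj} and the Hausdorff approximation of $\partial Z_\str$ by $\bPi_n$ from Lemma~\ref{lem:SiegWall}, this forces $h(D_f)=\overline Z_\str$ and $D_f$ to be a quasidisk. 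Since $f(\bDelta_{n+1})\Subset\bDelta_n$, I conclude $f(D_f)=D_f$; continuity yields that $h\mid D_f$ conjugates $f\mid D_f$ to $f_\str\mid\overline Z_\str$, establishing the qc map claimed in the first sentence of the corollary.

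For the hybrid conjugacy, note that $f_\str\mid\overline Z_\str$ is conformally linearizable to the rigid rotation $\Lbb_{\theta_\str}$, so $f\mid D_f$ is topologically conjugate to an irrational rotation; in particular the multiplier of $f$ at $\alpha$ equals $\ee(\theta_\str)$. By classical linearization theory, the presence of a forward-invariant quasidisk on which $f$ acts as irrational rotation forces $f$ to be holomorphically linearizable at $\alpha$, with $D_f$ contained in its Siegel disk. A suitable restriction of $f$ is therefore a Siegel map in the sense of \S\ref{s:SiegPacm}, and Theorem~\ref{thm:HybrCongSiegMaps} applied to this restriction and $f_\str$ produces the hybrid conjugacy on a neighborhood of the Siegel disks. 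The principal obstacle is the identification $h(D_f)=\overline Z_\str$ together with the quasidisk structure of $D_f$: this relies on combining the closeness estimate $M(d)\to 0$ from Theorem~\ref{thm:ComPsConj} with the geometric approximation of Lemma~\ref{lem:SiegWall}, and care is needed to ensure that the nested triangulations $\bDelta_n(f)$ do not drift or collapse.
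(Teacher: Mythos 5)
Your proposal follows essentially the same route as the paper's (very terse) proof: apply Theorem~\ref{thm:ComPsConj} at each level to get uniformly $K$-qc pseudo-conjugacies $h_n$, pass to a qc limit $h$ by compactness, identify $D_f=h^{-1}(\overline Z_\str)=\bigcap_n\bDelta_n(f)$ as an invariant quasidisk, and invoke Theorem~\ref{thm:HybrCongSiegMaps}. Your explicit argument for why $f$ must be a Siegel map, via linearization of $f$ on the invariant quasidisk $D_f$, is a useful expansion of the paper's one-line assertion.

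The equivariance step, however, has a small gap. You establish equivariance of $h=\lim h_n$ only on $\bigcup_{m\ge 0}\bA_m(f)=\overline U_f\setminus\intr D_f$, and then assert that ``continuity yields that $h\mid D_f$ conjugates $f\mid D_f$ to $f_\str\mid\overline Z_\str$.'' But equivariance on $\overline U_f\setminus\intr D_f$ reaches at most $\partial D_f$; a functional equation cannot be pushed into an open region by continuity alone, since $\intr D_f$ lies outside the set where you verified it. The remedy is already contained in Theorem~\ref{thm:ComPsConj}: each $h_n$ is also required to be equivariant on the triangles $\Delta_n(i,f)$ for $i\notin\{-\pp_n,-\pp_n+1\}$. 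These triangles cover $\bDelta_n(f)\supset D_f$ except for two wedges whose angular aperture at $\alpha$ tends to zero as $n\to\infty$, so in the limit $h$ is equivariant on $D_f$ away from a single radial segment, and there genuine continuity does close the argument. With this modification your proposal matches the paper's proof.
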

\begin{proof}
If $\varepsilon$ is sufficiently small, then by Theorem~\ref{thm:ComPsConj}, for every $n\ge 0$ there exists qc combinatorial pseudo-conjugacy $h_n$ of level $n$ between $f$ and $f_\str$ such that the dilation of $h_n$ is uniformly bounded for all $n$. By compactness of qc maps, we may pass to the limit and construct a qc map $h\colon U_f\to U_\str$ such that $h^{-1}$ is a conjugacy on $\overline Z_\str$ and $c_0(f)\in h^{-1}(\overline Z_\str)$. It follows, in particular, that $f$ is a Siegel map. By Theorem~\ref{thm:HybrCongSiegMaps}, the maps $f,f_\str$ are hybrid conjugate on neighborhoods of their Siegel disks.
\end{proof}
\subsection{Control of pullbacks}
Recall from Lemma~\ref{lem:psi is adjusted} that  $\aa_n, \bb_n$ denote the closest renormalization return times computed by~\eqref{eq:RernReturns}. By definition, $\aa_n+ \bb_n=\qq_n$. We now restrict out attention to $f\in \WW^u$.

\begin{keylem}
\label{keylem:ContrOfPullbacks}
There is a small open topological disk $D$ around $c_1(f_\str)$ and there is a small neighborhood $\UU\subset \WW^u$ of $f_\str$ such that the following property holds. For every sufficiently big $n\ge 1$, for each $\tt\in \{\aa_n,\bb_n\}$, and for all $f\in \RR^{-n} (\UU)$, we have ${c_{1+\tt}(f)\coloneqq f^\tt(c_1)\in D
}$ and $D$ can be pulled back along the orbit $c_1(f), c_2(f),\dots , c_{1+\tt}(f)\in D$ to a disk $D_{0}$ such that $f^{\tt}\colon D_{0} \to D$ is a branched covering; moreover, $D_0\subset U_{f}\setminus \gamma_1$.
\end{keylem}

\begin{proof}
The main idea of the proof is to block the forbidden part of the boundary $\partial^{\frb} U_f$ from the backward orbit of $D$.
The proof is split into short subsections. We start the proof by introducing conventions and additional terminology. The central argument will be presented in Claim~\ref{cl:ContrD_k}, Part~\eqref{cl:ContrD_k:4}.

\begin{figure}[t!]
\centering{\begin{tikzpicture}

\draw (-3,0)--(0,2)--(3,0)--(0,-2)--(-3,0);

\draw (0,0)-- (-1.5,1.4)--(-1,1.75)--(0,0);
\draw[fill, opacity=0.1] (0,0)-- (-1.5,1.4)--(-1,1.75)--(0,0);
\node[right] at (-1,1.75) {$S$};

\node[below] at (0,-1) {$\bDelta$};

\draw (0,0)-- (2.85,-0.1);%--(-1,1.75)--(0,0);
\draw (0,0)-- (2.4,0.4);

\node[above,red] at (-2.45,2.0) {$D_{j}$};

\node[above,red] at (2.45,2.0) {$D_{j-1}$};

\draw (2.15,2.0) edge[->, bend right] node[above] {$f$}(-2.05,2.0);

\draw (1.5,0.1) edge[->, bend left=80] node[above] {} (-0.7,0.8);

\draw[red](-2.2,0.56)
.. controls (-2.1,1.85) and (-1.8,1.65)..
 (-1.5,1.45) 
.. controls (-1.4,1.35) and (-1.3,1.35)..
(-1.2,1.55) 
.. controls (-1.7,1.95) and (-2.6,2.45)
..
(-2.6,0.3).. controls (-2.4,0.05) and (-2.2,0.3)
..(-2.2,0.56);

\draw[red,fill=red, opacity=0.2](-2.2,0.56)
.. controls (-2.1,1.85) and (-1.8,1.65)..
 (-1.5,1.45) 
.. controls (-1.4,1.35) and (-1.3,1.35)..
(-1.2,1.55) 
.. controls (-1.7,1.95) and (-2.6,2.45)
..
(-2.6,0.3).. controls (-2.4,0.05) and (-2.2,0.3)
..(-2.2,0.56);

\begin{scope}[shift={(1.1,-1.07)}, rotate=-90 ]

\draw[red](-2.3,-0.2)
.. controls (-3.65,1.7) and (-1.8,1.65)..
 (-1.5,1.45) 
.. controls (-1.4,1.35) and (-1.3,1.35)..
(-1.2,1.55) 
.. controls (-1.7,1.95) and (-3.9,2.05)
..
(-2.6,-0.5).. controls (-2.4,-0.65) and (-2.3,-0.5)
..(-2.3,-0.2);

\draw[red,fill=red, opacity=0.2](-2.3,-0.2)
.. controls (-3.65,1.7) and (-1.8,1.65)..
 (-1.5,1.45) 
.. controls (-1.4,1.35) and (-1.3,1.35)..
(-1.2,1.55) 
.. controls (-1.7,1.95) and (-3.9,2.05)
..
(-2.6,-0.5).. controls (-2.4,-0.65) and (-2.3,-0.5)
..(-2.3,-0.2);
\end{scope}

\draw[line width=0.6mm] (-3+0.3,0.2)--(-3+0.9,0.6);
\node[below right] at (-3+0.5,0.35){$I$};
\draw[line width=0.6mm] (0+0.51,2-0.4+0.06)--(1.2-0.03,2-0.8+0.02);
\node[below left] at (1.2-0.03,2-0.8+0.02) {$f^{-1}(I)$};

%\draw[fill =red] (-1,0)--(0,2)--(1,0) --(1,0)--(0,1)--(-1,0); 

\end{tikzpicture}}
\caption{If $D_{j}$ intersects $S=\Delta(0)\cup \Delta(1)$ and $S$ is disjoint from $\Delta(I)$, then $D_{j-1}$ may intersect $\bDelta\setminus \Delta(f^{-1}(I))$ because $f(\bDelta)=\bDelta\cup S$.} \label{Fig:DfnLambda}
\end{figure}

\subsubsection{The triangulated disk $\bDelta$ approximates $\overline Z_\str$}
\label{sss:KeyLmmConvent}
Throughout the proof we will often say that a certain object is \emph{small} if it has  small size independently of $n$. Choose a big $s\gg 0$ and choose a small neighborhood $\UU$ of $f_\str$ such that every $f\in \RR^{-n}(\UU)$ is at least $m\coloneqq n+s$ renormalizable and each $f_i\coloneqq \RR^i f$ with $i\in  \{0,1,\dots ,m\}$ is close to $f_\str.$

Consider the $m$-th renormalization triangulation $\Delta_{m}(i)$ of $f$. Let $h$ be a qc combinatorial pseudo-conjugacy of level $m$ as in Theorem~\ref{thm:ComPsConj}. To keep notation simple, we sometimes drop the subindex $m$ and write $\Delta(i), \bDelta, \qq, \pp$ for $\Delta_{m}(i),\bDelta_{m}, \qq_{m}, \pp_{m}$.

Since $f_i$ with $i\in  \{0,1,\dots ,m\}$ are close to $f_\str$, the map $h\mid \bDelta$ is close to the identity (by Theorem~\ref{thm:ComPsConj}). In particular, $\bDelta(f)=h^{-1}(\bDelta(f_\str))$ approximates $\overline Z_\str$. Since $s$ is big and since $\aa_i,\bb_i$ have exponential growth with the same exponent~\eqref{eq:RernReturns}, we have
 \begin{equation}
 \label{eq:t over q is small}
 \tt/\qq_m\in \{\aa_n/\qq_{n+s}~,\sp \bb_{n}/\qq_{n+s}\} \sp \text{ is arbitrary small.}
\end{equation}

\subsubsection{Disks $D_k\ni f^{k}(c_1)$}  For convenience, we will write $c^\str_0 = c_0(f_\str)$ and $c^\str_1 = c_1(f_\str)$. Let us show that $D\ni f^{\tt}(c_1)$. Consider first the dynamical plane of $f_\str$. Since $n$ is big, we see that $f_\str^{\aa_n}(c^\str_1), f_\str^{\bb_n}(c^\str_1)$ are arbitrary close to $c^\str_1$; i.e.~$D\ni f^{\tt}_\str(c_1)$. It follows from~\eqref{eq:t over q is small} that  
\begin{equation}
\label{eq:a_n less than a_m}
\min\{\aa_m, \bb_m\}-1>  \max\{\aa_n, \bb_n\}\ge \tt.
\end{equation}
This shows that $c^\str_1, \dots , f_\str^{\tt}(c^\str_1)$ do not visit triangles $ \Delta( -\pp_m,f_\str) \cup  \Delta( -\pp_m+1,f_\str)$ as it takes either $\aa_m-1$ or $\bb_m-1$ iterations for a point in $ \Delta( 0,f_\str) \cup  \Delta(1,f_\str)$ to visit them. Since $h$ is a conjugacy away from $ \Delta( -\pp) \cup  \Delta( -\pp+1)$, we obtain that $h^{-1}$ maps $c^\str_1, \dots , f_\str^{\tt}(c^\str_1)$ to  $c_1, \dots , f^{\tt}(c_1)$. Since $h$ is close to the identity, $f^{\tt}(c_1)$ is close to $f_\str^{\tt}(c^\str_1)$; thus $f^\tt(c_1)\in D$.

 Let $D_{0}, D_{1},\dots  D_{\tt}=D$ be the pullbacks of $D$ along the orbit $c_1,\dots,$ $f^\tt(c_1)\in D$; i.e.~$D_\tt\coloneqq D\ni f^\tt(c_1)$ and $D_{i}$ is the connected component of $f^{-1}(D_{i+1})$ containing $f^{i}(c_1)$. Our main objective is to show that the $D_{i}$ do not intersect $\partial ^\frb U_f$; this will imply that the maps $f\colon D_{i}\to D_{i+1}$ are branched coverings for all $i\in \{0,\dots, \tt-1\}$.

\subsubsection{Sectors $ \Delta(I)$ and $ \Lambda(I)$}
\label{sss:Delta Lambda}
An \emph{interval} $I$ of $\Z/\qq\Z$ is a set of consecutive numbers $i,i+1,\dots , i+j $ taken modulo $\qq$. We define the \emph{sector parametrized by $I$} as $\displaystyle{\Delta(I)\coloneqq \bigcup _{i\in I}\Delta(i)}$. Furthermore, we let
\begin{equation}
\label{eq:DefnI_k}
f^{-1}(I)\coloneqq \begin{cases} I-\pp&\text{if } I\cap \{0,1,\pp,\pp+1\} =\emptyset \\
      (I -\pp)\cup  \{-\pp,-\pp+1\} & \text{if } I\cap \{0,1\} \not =\emptyset\\
      (I -\pp)\cup  \{0,1\} &\text{if } I\cap \{\pp,\pp+1\} \not =\emptyset.
    \end{cases}
\end{equation}

    In other words, we require that if $I-\pp$ contains one of $ -\pp, -\pp+1$, then it also contains the other number; and similarly with pair $ 0, 1$. By~\eqref{eq:f_0 resp triangl} and~\eqref{eq:f_0 viol triangl}
\begin{claim}
\label{cl:B1} The preimage of $\Delta(I)$ under $f\mid \bDelta$ is within $\Delta(f^{-1}(I))$.\qed
\end{claim}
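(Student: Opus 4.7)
The plan is a direct case analysis on the two structural equations~\eqref{eq:f_0 resp triangl} and~\eqref{eq:f_0 viol triangl} describing how $f$ acts on the renormalization triangulation $\bDelta=\bigcup_{j\in \Z/\qq\Z} \Delta(j)$. I would decompose
\[
f^{-1}(\Delta(I))\cap \bDelta \;=\; \bigcup_{j\in \Z/\qq\Z} \bigl(\Delta(j)\cap f^{-1}(\Delta(I))\bigr)
\]
and analyze the contribution of each $\Delta(j)$ separately, according to whether $j$ is a \emph{regular} index $j\notin\{-\pp,-\pp+1\}$ or one of the two \emph{folding} indices.

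For regular $j$, equation~\eqref{eq:f_0 resp triangl} gives that $f\colon \Delta(j)\to\Delta(j+\pp)$ is conformal, so $\Delta(j)$ contributes to $f^{-1}(\Delta(I))$ exactly when $j+\pp\in I$, i.e.\ when $j\in I-\pp$; in particular the union of regular contributions lies in $\Delta(I-\pp)$. For the folding triangles, equation~\eqref{eq:f_0 viol triangl} gives that $f\bigl(\Delta(-\pp)\cup\Delta(-\pp+1)\bigr)=S_0^{(n)}\supseteq \Delta(0)\cup\Delta(1)$, so these triangles contribute to $f^{-1}(\Delta(I))$ only when $\Delta(I)\cap S_0^{(n)}\neq\emptyset$. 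Because $S_0^{(n)}$ is only a slight thickening of $\Delta(0)\cup\Delta(1)$ inside $\bDelta_{n-1}$, this intersection condition reduces to the combinatorial alternative in the second clause of~\eqref{eq:DefnI_k}, and in that case the folding contribution is contained in $\Delta(\{-\pp,-\pp+1\})$. Combining the two cases yields the asserted inclusion $f^{-1}(\Delta(I))\cap\bDelta\subseteq\Delta(f^{-1}(I))$.

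The only technical point worth checking is that $S_0^{(n)}$, although strictly larger than $\Delta(0)\cup\Delta(1)$, does not protrude into the neighboring triangles $\Delta(-1)$ or $\Delta(2)$ when intersected with $\bDelta$; this is ensured by the improvement-of-domain construction in Lemma~\ref{lem:psi is adjusted}, which places the ``excess'' $S_0^{(n)}\setminus(\Delta(0)\cup\Delta(1))$ outside $\bDelta_n$ rather than in adjacent triangles of $\bDelta_n$. Beyond this observation the argument is pure bookkeeping, which is consistent with the author treating the claim as immediate from the two cited displays, so I expect no serious obstacle.
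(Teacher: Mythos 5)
Your high-level plan — decompose $f^{-1}(\Delta(I))\cap\bDelta$ triangle by triangle and treat the two cases of~\eqref{eq:f_0 resp triangl} and~\eqref{eq:f_0 viol triangl} — is exactly the implicit argument behind the \texttt{\textbackslash qed}, and the regular case is handled correctly. The trouble is in the folding case, where there are two distinct slips that together leave a genuine gap.

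First, you assert that $\Delta(I)\cap S_0^{(n)}\neq\emptyset$ ``reduces to the combinatorial alternative in the second clause of~\eqref{eq:DefnI_k}''. That clause fires when $I\cap\{\pp,\pp+1\}\neq\emptyset$, i.e.\ when $(I-\pp)\cap\{0,1\}\neq\emptyset$. But the geometric fact you correctly isolate — that the excess of $S_0^{(n)}$ over $\Delta(0)\cup\Delta(1)$ leaves $\bDelta$ — gives the condition $I\cap\{0,1\}\neq\emptyset$, which concerns $I$ itself, not $I-\pp$. For generic $\pp$ these conditions are different, so the dichotomy in~\eqref{eq:DefnI_k} is not the one governing when the folding triangles contribute. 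Second, and more seriously, the conclusion ``the folding contribution is contained in $\Delta(\{-\pp,-\pp+1\})$'' is trivially true (the folding triangles \emph{are} $\Delta(-\pp)\cup\Delta(-\pp+1)$) but does not yield the claim: you would need $\{-\pp,-\pp+1\}\subseteq f^{-1}(I)$, and by~\eqref{eq:DefnI_k} this holds only when $\{0,1\}\subseteq I$, not merely when $I\cap\{0,1\}\neq\emptyset$. Take $I=\{1\}$ with $\pp\notin\{0,1\}$: then $f^{-1}(I)=\{-\pp+1\}$, so your argument would require the folding preimage of $\Delta(1)$ to avoid $\Delta(-\pp)$ entirely — a statement you have not established. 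What is actually needed, and what your write-up skips, is the finer pairing: the $f$-preimage of $\Delta(0)$ inside $\Sigma=\Delta(-\pp)\cup\Delta(-\pp+1)$ lies in $\Delta(-\pp)$, and the preimage of $\Delta(1)$ lies in $\Delta(-\pp+1)$. With that, both the regular and the folding contributions land in $\Delta(I-\pp)\subseteq\Delta(f^{-1}(I))$ and the claim follows. This pairing is a consequence of how the two branches $f_{\pm}$ of the prepacman (one univalent, one of ``degree $1.5$'') sit on the two sides of $\beta_0$ in the construction of Lemma~\ref{lem:psi is adjusted}, but it is not pure bookkeeping from the two cited displays alone, and your argument as written does not reach it.
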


 Unfortunately, we do not have the property that \[\text{if }D_j\cap \bDelta\subset \Delta(I),\sp \text{ then }D_{j-1}\cap\bDelta \subset \Delta( f^{-1}(I))\] because the image of $\Delta(-\pp)\cup \Delta(-\pp+1)$ is slightly bigger than $\Delta(0)\cup \Delta(1)$, see~\eqref{eq:f_0 viol triangl}. To handle this issue, we will adjust $\bDelta$ to a slightly smaller triangulated neighborhood $\bLambda$ such that 
 \begin{equation}
 \label{eq:prop bLambda}
   \bLambda \subseteq f^i(\bLambda)\subseteq \bDelta 
\end{equation}
for all $i\in  \{0,1,\dots, \min\{\aa_m,\bb_m\}\}.$  

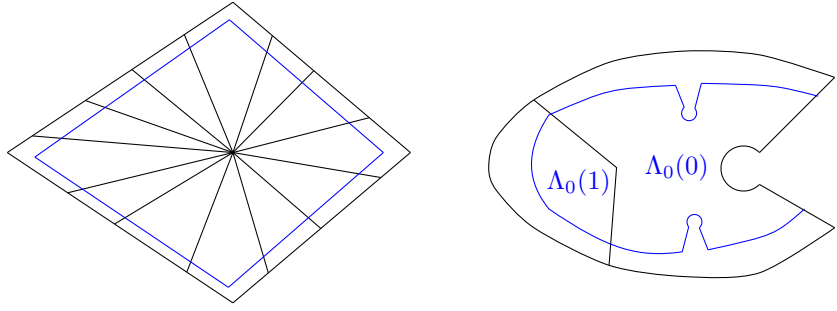
\begin{figure}[t!]
\centering{\begin{tikzpicture}

\begin{scope}[shift={(2,0)},rotate=45,scale =0.3]
\draw (0,0) arc (0:270:1);
\coordinate (A1) at (0,0);
\coordinate (B1) at (-1,-1);
\end{scope}
\draw (A1) -- (3,1);
\draw (B1) -- (3,-1);
\draw plot[smooth,tension=0.5] 
coordinates{ (3,1) (2,1.3) (1,1.35) (0,1.2)(-1,0.7) (-1.5, 0.2) (-1.6,-0.2)
 (-1.5, -0.5) (-1,-1) (0,-1.5) (1,-1.65) (2,-1.6) (3,-1)};
 \draw (0,-1.5)-- (0.1,-0.2) --(-1,0.7);
 
 \draw[blue] (0.9,-0.2) node {$\Lambda_0(0)$};
  \draw[blue] (-0.4,-0.4) node {$\Lambda_0(1)$};

 \begin{scope}[blue, shift={(1,0.6)},rotate=130,scale =0.1]
\draw (0,0) arc (0:270:1);
\coordinate (AA1) at (3,-1);
\draw (0,0)--(AA1);
\coordinate (BB1) at (1,-3.8);
\draw (BB1) --(-1,-1);
\end{scope}   
\draw[blue] (BB1) .. controls (2.2,0.9).. (2.78,0.75);

 \begin{scope}[blue, shift={(1.2,-1)},rotate=310,scale =0.1]
\draw (0,0) arc (0:270:1);
\coordinate (AA2) at (3,-1);
\draw (0,0)--(AA2);
\coordinate (BB2) at (1,-3.8);
\draw (BB2) --(-1,-1);
\end{scope}

\draw[blue] (AA2) .. controls (2.2,-1.1).. (2.6,-0.75);

\draw[blue] (BB2) ..controls (0.4,-1.4) and (0,-1.3).. (-0.8,-0.75) ..  controls (-1,-0.5) and (-1.2,-0.1) ..  (-0.8,0.5) .. controls (0,0.85) .. (AA1);
 
%%%%%%%%%%%%%%%%%%%%%%%%%%%%%%%%%%%    
\begin{scope}[shift={(-5,0)}]
\draw (-3.,0.)-- (0.,2.);
\draw (0.,2.)-- (2.36,0.);
\draw (2.36,0.)-- (0.,-2.);
\draw (0.,-2.)-- (-3.,0.);
\draw (0.,0.)-- (0.5242033104831968,1.5557599063701721);
\draw (0.,0.)-- (-0.6461538461538461,1.5692307692307692);
\draw (0.,0.)-- (-1.3430769230769233,1.1046153846153846);
\draw (0.,0.)-- (-1.9615384615384615,0.6923076923076924);
\draw (0.,0.)-- (-2.667692307692308,0.22153846153846146);
\draw (0.,0.)-- (-2.196923076923077,-0.5353846153846151);
\draw (0.,0.)-- (-1.5738461538461537,-0.9507692307692308);
\draw (0.,0.)-- (-0.6138461538461539,-1.5907692307692307);
\draw (0.,0.)-- (0.47034275204815246,-1.6014044474168199);
\draw (0.,0.)-- (1.1683598060524996,-1.0098645711419494);
\draw (0.,0.)-- (1.9675873599732485,-0.33255308476843337);
\draw (0.,0.)-- (1.8125163016218024,0.46396923591372663);
\draw (0.,0.)-- (1.075238254472496,1.0887811402775456);
\draw[blue] (-0.06,-1.793333333333333)-- (2.,0.);
\draw[blue] (-0.046666666666666294,1.7666666666666664)-- (2.,0.);
\draw[blue] (-0.046666666666666294,1.7666666666666664)-- (-2.6333333333333333,-0.06);
\draw[blue] (-2.6333333333333333,-0.06)-- (-0.06,-1.793333333333333);
\end{scope}

\end{tikzpicture}}
\caption{Right: $\Lambda_0(0,f_m)$ and $\Lambda_0(1,f_m)$ are shrunk versions of $\Delta_0(0,f_m)$ and $\Delta_0(1,f_m)$. Left: by transferring $\Lambda_0(0,f_m)$ and $\Lambda_0(1,f_m)$ to $\Lambda_m(0,f_0)$ and $\Lambda_m(1,f_0)$ by $\Phi_{m,0}$, and spreading these triangles dynamically, we obtain the triangulated neighborhood $\bLambda_m$ of $\alpha$ such that $\bLambda_m$ is a slightly shrunk version of $\bDelta_m$; compare with Figures~\ref{Fig:DfnDelta} and~\ref{Fig:RenormTiling:2levels}.} \label{Fig:DfnLambda}
\end{figure}

Consider the dynamical plane of $f_m=\RR^m f$ and let $\Lambda_0(0,f_{m})$ and $\Lambda_0(1,f_{m})$ be the closures of the connected components of $f_{m}^{-1}(U_{m})\setminus (\gamma_1\cup \gamma_{0})$ attached to $\alpha$ such that $\Lambda_0(0,f_{m})\subset \Delta_0(0,f_{m})$ and  $ \Lambda_0(1,f_{m})\subset \Delta_0(1,f_{m})$, see Figure~\ref{Fig:DfnLambda}. Writing $\bLambda_0(f_m)= \Lambda_0(0,f_{m})\cup  \Lambda_1(1,f_{m})$ we obtain a shrunk version of $\bDelta_0(f_m)$. The map $\Phi_{m}$ embeds $\Lambda_0(0,f_{m})$ and $\Lambda_1(1,f_{m})$ to the dynamical plane of $f_0$; spreading around the embedded triangles, we obtain a triangulated neighborhood $\bLambda$ of $\alpha$ such that~\eqref{eq:prop bLambda} holds. 

Let us also give a slightly different description of $\bLambda$.    Recall~\eqref{eq:PsiDefn} that $\Psi_{0,m}$ maps each $\Delta_m(i,f_{0})$ conformally to some $\Delta_0(j,f_{m})$. Then $\Lambda(i)=\Lambda_m(i,f_{0})\subset \Delta_m(i,f_{0})$ is the preimage of $\Lambda_0(j,f_{m})$ under that map. We define \[\bLambda \coloneqq \bigcup_{0\le i< \qq} \Lambda(i) \sp\text{ and }\sp \Lambda(I)=\bigcup_{i\in I} \Lambda(i).\] For the same reason as for $\bDelta(f_\str)$, the triangulation $\bLambda(f_\str)$ approximates $\overline Z_\str$. And since $h\mid \bDelta$ is close to the identity, $\bLambda(f)$ also approximates $\overline Z_\str$ in the sense of Theorem~\ref{thm:ComPsConj}. For the same reason as for Claim~\ref{cl:B1}, we have:
\begin{claim}
\label{cl:B2} We have $\Lambda(i)=\bLambda\cap \Delta(i)$ for every $i$. The preimage of $\Lambda(I)$ under $f\mid \bLambda$ is within $\Lambda(f^{-1}(I))$.\qed
\end{claim}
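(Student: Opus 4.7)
The plan is to handle the first assertion directly using the structure of the Siegel triangulation, and then to reduce the second assertion to the already-proved Claim~\ref{cl:B1}.

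For the first assertion $\Lambda(i) = \bLambda \cap \Delta(i)$, the inclusion $\Lambda(i) \subseteq \Delta(i) \cap \bLambda$ is immediate from the definition. For the reverse inclusion I would pick $x \in \bLambda \cap \Delta(i)$, write $x \in \Lambda(j)$ for some $j$, and show $x \in \Lambda(i)$. If $j = i$ we are done, so assume $j \neq i$; then $x \in \Delta(i) \cap \Delta(j)$, which by the definition of a Siegel triangulation is either $\{\alpha\}$ (when $j$ is non-adjacent to $i$) or the common chord bounding the two adjacent triangles. The non-adjacent case is trivial since $\alpha \in \Lambda(i)$. The adjacent case is the only one requiring real work: one has to check that $\Lambda(j)$ and $\Lambda(i)$ agree on the shared chord. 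My plan here is to exploit the symmetry of the construction of the seed triangles $\Lambda_0(0,f_m)$ and $\Lambda_0(1,f_m)$, which are by definition two connected components of the \emph{same} set $f_m^{-1}(U_m) \setminus (\gamma_0 \cup \gamma_1)$. Their common boundary on the shared chord is precisely the point where $\partial f_m^{-1}(U_m)$ crosses the chord, and this point is the same from either side. Transferring the pair $(\Lambda_0(0,f_m),\Lambda_0(1,f_m))$ through $\Phi_m$ and the subsequent conformal $f_0$-iterates used to spread the triangulation preserves this boundary compatibility along every adjacency in $\bDelta$, so $\Lambda(j) \cap \Delta(i) \subseteq \Lambda(i)$.

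For the second assertion, the plan is the following short deduction. Given $x \in \bLambda$ with $f(x) \in \Lambda(I)$, one uses $\bLambda \subseteq \bDelta$ and $\Lambda(I) \subseteq \Delta(I)$ to bring $x$ and $f(x)$ into the setting of Claim~\ref{cl:B1}, which yields $x \in \Delta(f^{-1}(I))$, i.e.~$x \in \Delta(j)$ for some $j \in f^{-1}(I)$. Combined with $x \in \bLambda$ and the first assertion, this gives $x \in \bLambda \cap \Delta(j) = \Lambda(j) \subseteq \Lambda(f^{-1}(I))$. The main obstacle is therefore the adjacency check in the first assertion; once that is in hand, the second assertion is an immediate corollary of Claim~\ref{cl:B1}.
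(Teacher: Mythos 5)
Your reduction of the second assertion to Claim~\ref{cl:B1} via the first assertion is clean and correct, and agrees in spirit with how the paper uses the statement. The first assertion is where the content lies, and your treatment of it has a gap.

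You correctly observe that only adjacent triangles matter, and that the check reduces to agreement of $\Lambda(i)$ and $\Lambda(i+1)$ on the shared chord $c=\Delta(i)\cap\Delta(i+1)$. Your argument then points to the fact that the two seed pieces $\Lambda_0(0,f_m)$ and $\Lambda_0(1,f_m)$ are components of the \emph{same} set $f_m^{-1}(U_m)\setminus(\gamma_0\cup\gamma_1)$, hence meet the separating arcs $\gamma_0$ and $\gamma_1\cap U_m$ in the same sub-arc, and that transferring by $\Phi_m$ and the spreading iterates of $f_0$ ``preserves this compatibility along every adjacency.'' This is exactly the step that is not justified as stated. The adjacencies in $\bDelta_m$ fall into two types (see the dynamical gluing in~\eqref{eq:deff:g_k}): those of type $(\A,\B)$ or $(\B,\A)$, glued by the \emph{identity} on one of the arcs $\gamma_0$, $\gamma_1\cap U_m$, where your argument applies verbatim; and those of type $(\A,\A)$ or $(\B,\B)$, glued by $f_m^{\mp 1}$ between $\gamma_1\cap U_m$ and $\gamma_0$, where the two adjacent triangles are lifts of the \emph{same} seed at different iterate times. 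For the latter, compatibility on $c$ is not inherited from the seed-pair adjacency: one must instead check that the defining set $f_m^{-1}(U_m)$ is itself compatible with the gluing map $f_m^{-1}\colon\gamma_1\cap U_m\to\gamma_0$. This is a genuinely different verification, and it is not automatic: $f_m^{-1}(\gamma_1\cap \overline{f_m^{-1}(U_m)})=\gamma_0\cap\overline{f_m^{-2}(U_m)}$, whereas the seed boundary arc on $\gamma_0$ is $\gamma_0\cap\overline{f_m^{-1}(U_m)}$, and these are nested but not equal. So the naive ``transfer the seed compatibility'' argument gives only a one-sided inclusion across $(\A,\A)$ and $(\B,\B)$ chords, and one needs to look at what part of the chord is actually shared by the two embedded triangles in $\C$ to conclude.

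For context, the paper states Claim~\ref{cl:B2} with $\qed$ and no proof, treating it as immediate from the definitions of $\bLambda$ and $\Lambda(i)$; so the step you are eliding is precisely the non-trivial content that the paper also compresses. To close your argument you would need to separate the two adjacency types and, for the same-type ones, argue explicitly from the structure of the full lift (that the chord $\Delta(i)\cap\Delta(i+1)$ is contained in the locus where the gluing $f_m^{\pm1}$ is defined and where the two seed sub-arcs coincide), rather than appealing to a blanket preservation of compatibility.
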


The following claim is a refinement of~\eqref{eq:prop bLambda}. This will help us to control the intersections of $D_k$ with $\bLambda$.

\begin{claim}
\label{cl:B3} Let $I$ be an interval. Consider $z\in \bLambda$. If $f^i(z)\in \Delta(I)$ for $i<\min\{\aa, \bb\}$, then $z\in \Lambda(f^{-i}(I)).$

As a consequence, if $T\cap \bDelta\subset \Delta(I)$ for an interval $I$ and a set $T\subset V$,  then \[f^{-i}(T)\cap \bLambda \subset \Lambda(f^{-i}(I))\]
for all $i<\min\{\aa, \bb\}$.
\end{claim}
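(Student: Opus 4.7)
The plan is to induct on $i$. The base case $i=0$ is immediate from Claim~\ref{cl:B2}, which asserts $\Lambda(i) = \bLambda \cap \Delta(i)$ for each index: combining $z \in \bLambda$ with $z = f^0(z) \in \Delta(I)$ gives $z \in \bLambda \cap \Delta(I) = \Lambda(I) = \Lambda(f^0(I))$.

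For the inductive step, I would assume the claim for $i$ and consider $z \in \bLambda$ with $f^{i+1}(z) \in \Delta(I)$, where $i+1 < \min\{\aa, \bb\}$. First I would use \eqref{eq:prop bLambda} to conclude that $f^i(z) \in f^i(\bLambda) \subseteq \bDelta$, which is legitimate because $i \leq \min\{\aa, \bb\}$. With $f^i(z) \in \bDelta$ and $f(f^i(z)) = f^{i+1}(z) \in \Delta(I)$, Claim~\ref{cl:B1} places $f^i(z)$ in $\Delta(f^{-1}(I))$. A direct check from \eqref{eq:DefnI_k} shows that $f^{-1}(I)$ is again an interval of $\Z/\qq\Z$: the translate $I - \pp$ is an interval, and in the second case the appended two-element piece overlaps $I - \pp$, so the union is still an interval. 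Applying the inductive hypothesis to $I' = f^{-1}(I)$ then delivers $z \in \Lambda(f^{-i}(I')) = \Lambda(f^{-(i+1)}(I))$, closing the induction.

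For the consequence, suppose $T \cap \bDelta \subset \Delta(I)$ and take $z \in f^{-i}(T) \cap \bLambda$ with $i < \min\{\aa, \bb\}$. Then $f^i(z) \in T$, and \eqref{eq:prop bLambda} gives $f^i(z) \in f^i(\bLambda) \subseteq \bDelta$, so $f^i(z) \in T \cap \bDelta \subset \Delta(I)$; the first part of the claim now yields $z \in \Lambda(f^{-i}(I))$. The only point in the whole argument requiring any care is the observation that the combinatorial pullback $I \mapsto f^{-1}(I)$ preserves intervals, so Claim~\ref{cl:B1} remains applicable at each iterate; everything else is a direct, iterative assembly of the previously established pieces.
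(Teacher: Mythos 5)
Your proposal is correct and follows essentially the same route as the paper: the paper's proof also iterates Claim~\ref{cl:B1}, relying on the inclusion $f^i(\bLambda)\subseteq\bDelta$ from~\eqref{eq:prop bLambda} to keep each intermediate iterate inside $\bDelta$, and then intersects with $\bLambda$ via Claim~\ref{cl:B2}. Your formulation as an explicit induction, together with the sanity check that $I\mapsto f^{-1}(I)$ preserves the interval property, is a slightly more careful presentation of the same argument; the paper leaves those points implicit.
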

\begin{proof}
Since $f^i(z)\in \Delta(I)$, every preimage of $f^i(z)$ under the $i$-th iterate of $f\mid \bDelta$ is within $\Delta(f^{-1}(I))$ by Claim~\ref{cl:B1}. By Claim~\ref{cl:B2}, $z\in\Delta(f^{-i}(I))\cap \bLambda \subset\Lambda(f^{-i}(I))$.

The second statement follows from the first because points in $\bLambda$ do not escape $\bDelta$ under $f^i$ for all $i<\min\{\aa, \bb\}$, see~\eqref{eq:prop bLambda}.
\end{proof}

\subsubsection{Truncated sectors $S_k$ and disks $\mathfrak D_k\supset \mathfrak D'_k\supset D_k$}
\label{sss:wideD_j}
Let $I_{\tt}$ be the smallest interval containing $\{0,1\}$ such that $\Delta(I_{\tt},f)\supset D_{\tt}\cap \bDelta(f)$ for all $f$ subject to the condition of Key Lemma. Set $I_{\tt-j}\coloneqq f^{-j}(I_{\tt})$. By Claim~\ref{cl:B3} we have $D_k\cap \bLambda\subset \Lambda(I_k)$. 

 Recall that the intersection of each $\Delta(i, f_\str)$ with $\overline Z_\str$ is a closed sector of $\overline Z_\str$ bounded by two closed internal rays of $\overline Z_\str$. Let us fix $p>1$ which will be specified in \S\ref{sss:BubbleChains} as a certain period.  Since $D_0$ is small, we obtain:
\begin{claim}
\label{cl:C}  \begin{enumerate}
\item All $|I_k| /\qq$ are small. All $\Delta( I_k,f_\str)$ have a small angle at the $\alpha$-fixed point. \label{cl:C:1} 
\item For every $j\le  \tt-3-p$, the intervals $I_j, I_{j+1},\dots ,I_{j+p+3}$ are pairwise disjoint. \label{cl:C:2} 
\item  Moreover, the intervals $I_{0}, I_{1}, \dots, I_{p+1}$  are disjoint from $\{-\pp, -\pp+1\}$.  \label{cl:C:3} 
\end{enumerate}
\end{claim}
\begin{proof}
It is sufficient to prove the statement for $f_\str$; the map $h$ transfers the result to the dynamical plane of $f$.

 All $\Delta(i,f_\str)$ have comparable angles (see Lemma~\ref{lem:ap:CompTiling}): there are $x< y$ independent of $n$ such that the angle of $\Delta(i)$ at $\alpha$ is between $x/\qq$ and $y/\qq$.

Let $\chi$ be the angle of $\Delta(I_{\tt})$ at $\alpha$. The angle $\chi$ is small because $D=D_{\tt}$ is small. By definition of $I_{k}=f^{-1}(I_{k+1})$ (see~\eqref{eq:DefnI_k}) the angle of $\Delta(I_{k+1})$ at $\alpha$ is bounded by the angle of  $\Delta(I_{k})$ at $\alpha$ plus $y/\qq$. Therefore, the angle at $\alpha$ of every $\Delta(I_{k})$ is bounded by $\chi+(2+\tt)y/\qq$, where the number $(2+\tt)y/\qq$ is still small by~\eqref{eq:t over q is small}. We obtain that all $\Delta(I_{k})$ have small  angles.

Since $f_\str\mid \overline Z_\str$ is an irrational rotation and $|I_k|/ \qq$ are small, we see that $I_j, I_{j+1},\dots ,I_{j+p+3}$ are disjoint. Since $I_0$ contains $\{0,1\}$ we see that  $I_{0}, I_{1}, \dots, I_{p+1}$ do not intersect $\{-\pp, -\pp+1\}\subset f^{-1}(I_0)$. 
\end{proof}

Recall from~\S\ref{s:SiegPacm} that the Siegel disk $Z_\str$ of $f_\str$ is foliated by equipotentials parametrized by their heights ranging from $0$ (the height of $\alpha$) to $1$ (the height of $\partial Z_\str$). We denote by $Z_\str^{r}$ the open subdisk of $Z_\str$ bounded by the equipotential at height $r$.

\begin{figure}[t!]
\centering{\begin{tikzpicture}

\node[above] at (0,0) {$D_k$};

\draw[] (0,0) circle (0.5cm);
\draw[fill, opacity=0.1] (0,0) circle (0.5cm);

\draw[blue] (0,-3)--(1.2,0)--(-1.2,0)--(0,-3)
(-0.8, -1) --(0.8, -1);

\draw[blue, fill=blue, opacity=0.1] (1.2,0)--(-1.2,0) -- (-0.8, -1) --(0.8, -1) -- (1.2,0);
\node[above, blue] at (0,-1) {$S_k$}; 

\node[below, blue] at (0,-1.3) {$\Lambda(I_k)$}; 

\end{tikzpicture}}
\caption{The rectangle $S_k$ is an appropriate truncation of $\Lambda(I_k)$ such that $S_k\supset D_k \cap \bLambda$ and $S_k\supset (f\mid \bLambda)^{-1}(S_{k+1})$.} \label{Fig:DfnS_k}
\end{figure}

Next we will construct a rectangle $S_k$ by truncating $\Lambda(I_k)$ by a curve in ${h^{-1}(Z_\str^{r}\setminus Z_\str^{r-\varepsilon})}$ such that the family $S_k$ is backward invariant in the following sense: ${S_k\supset (f\mid \bLambda)^{-1}(S_{k+1})}$ for all $k\in \{0,1,\dots, \tt-1\}$, see Figure~\ref{Fig:DfnS_k}.  Assume that $r <1$ is close to $1$ and choose $\varepsilon>0$ such that $1-r $ is much bigger than $\varepsilon$. Consider an interval $I_k$ for $k\le \tt$ and consider $i\in I_k$. 
\begin{itemize}
\item  If for all $\ell\in \{ k,k+1,\dots, \tt\}$ we have $i+\pp (\ell-k) \not \in \{-\pp,-\pp+1\}$, then define \[S_k(i)\coloneqq \Lambda(i)\setminus h^{-1}(Z_\str^{r  });\]
\item  otherwise define \[S_k(i) \coloneqq \Lambda(i)\setminus h^{-1}(Z_\str^{r  -\varepsilon}).\] 
\end{itemize}

Set $\displaystyle S_k\coloneqq \bigcup _{i\in I_k} S_k(i)$. Since $\Lambda(I_k, f_\str)$ has a small angle at $\alpha$ (see Claim~\ref{cl:C}) and the truncation level $r$ is close to $1$, we have

\begin{claim}
\label{cl:S_k is small} All $S_k$ are small.\qed
\end{claim}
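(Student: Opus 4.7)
The plan is to transfer the question to the dynamical plane of $f_\str$ via the pseudo-conjugacy $h$ and then argue purely geometrically using the fact that a small-angle sector of $Z_\str$ minus a deep subpotential is thin.

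First I would observe that by Theorem~\ref{thm:ComPsConj} the map $h$ is uniformly close to the identity on $\bDelta$; in particular $h^{-1}(Z_\str^{r-\varepsilon})$ is contained in a small neighborhood of $Z_\str^{r-\varepsilon}$, and $\bLambda(f)$ approximates $\overline Z_\str$ in the Hausdorff sense. Hence it suffices to show that the ``model'' truncated sector
\[
\widetilde S_k \;\coloneqq\; (\Lambda(I_k,f_\str)\cap \overline Z_\str) \setminus Z_\str^{\,r-\varepsilon}
\]
is small; the actual $S_k$ lies in a small neighborhood of $\widetilde S_k$ plus a small outer collar coming from the fact that $\bLambda(f)$ deviates from $\overline Z_\str$ only by a small amount.

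Next, using the conformal conjugacy between $f_\str\mid \overline Z_\str$ and the rigid rotation on $\overline\Disk$, the set $\Lambda(I_k,f_\str)\cap \overline Z_\str$ corresponds to a circular sector $\Sigma_k\subset \overline\Disk$ with vertex at $0$. By Claim~\ref{cl:C} the angle of $\Sigma_k$ at $0$ is bounded by $\chi+(2+\tt)y/\qq$, which is small by~\eqref{eq:t over q is small} (here we use that $\tt\in\{\aa_n,\bb_n\}$ while $\qq=\qq_m$ with $m=n+s$ and $s$ large). The truncation removes the disk of radius corresponding to height $r-\varepsilon$, which, since $r$ is close to $1$ and $\varepsilon\ll 1-r$, is a disk whose complement in $\overline\Disk$ is a thin annulus. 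The intersection of a sector of small angle with a thin outer annulus has small Euclidean diameter, uniformly in $n$. Pulling this back by the conformal linearizer (which is uniformly bi-Lipschitz on the relevant piece, since it is a fixed map independent of $n$), we conclude $\widetilde S_k$ is small.

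Finally I would combine the two estimates: $S_k \subset h^{-1}(\widetilde S_k')$ for a slightly enlarged $\widetilde S_k'$, and since $h^{-1}$ is close to the identity and $\widetilde S_k'$ is small, $S_k$ is small. The main (minor) obstacle is bookkeeping the uniformity: one must check that the ``smallness'' constants do not depend on $k\in\{0,1,\dots,\tt\}$ nor on $n$, which follows from the uniform angle bound in Claim~\ref{cl:C}, the uniform closeness of $h$ to the identity from Theorem~\ref{thm:ComPsConj}, and the fact that the bi-Lipschitz constant of the linearizer of $f_\str\mid\overline Z_\str$ is a fixed quantity.
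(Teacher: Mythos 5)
Your proposal is correct and takes the same route the paper (implicitly) intends: the paper gives no separate proof of Claim~\ref{cl:S_k is small} but marks it $\qed$ immediately after noting that $\Lambda(I_k,f_\str)$ has a small angle at $\alpha$ (Claim~\ref{cl:C}) and that $r$ is close to $1$. Your write-up simply fills in that one-sentence justification carefully: transfer by $h$ (close to the identity by Theorem~\ref{thm:ComPsConj}), linearize to the unit disk, observe that a small-angle sector intersected with the thin annulus $\overline\Disk\setminus Z_\str^{\,r-\varepsilon}$ has small diameter, and pull back, accounting for the small outer collar where $\bLambda$ overshoots $\overline Z_\str$. The only phrasing I would tighten is the appeal to "bi-Lipschitz": uniform continuity of the linearizer's inverse on the compact $\overline\Disk$ is enough and avoids a claim about Lipschitz regularity of the boundary extension.
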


\begin{claim}
\label{cl:D1} For every $k\le \tt$, the preimage of $S_{k+1}$ under $f\mid \bLambda$ is within $S_{k}$.
\end{claim}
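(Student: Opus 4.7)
To show $f^{-1}(S_{k+1})\cap\bLambda\subset S_k$, take $z\in\bLambda$ with $f(z)\in S_{k+1}$. Since $S_{k+1}\subset\Lambda(I_{k+1})$, Claim~\ref{cl:B3} (with $i=1$) places $z\in\Lambda(I_k)$; pick some $i\in I_k$ with $z\in\Lambda(i)$. It suffices to show $z\notin h^{-1}(Z_\str^{r'_i})$, where $r'_i\in\{r,r-\varepsilon\}$ is the truncation level prescribed for $S_k(i)$. The argument splits on whether step $k$ is non-critical ($i\notin\{-\pp,-\pp+1\}$) or critical ($i\in\{-\pp,-\pp+1\}$).

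In the non-critical case, Theorem~\ref{thm:ComPsConj} gives $h\circ f=f_\str\circ h$ on $\Lambda(i)$, and since $f_\str\mid\overline Z_\str$ is an irrational rotation it fixes every equipotential $Z_\str^{r'}$ setwise. Hence $f$ sends $h^{-1}(Z_\str^{r'})\cap\Lambda(i)$ bijectively onto $h^{-1}(Z_\str^{r'})\cap\Lambda(i+\pp)$ for any $r'$. Inspecting the forward-orbit conditions prescribing $r'_i$ and $r'_{i+\pp}$, they differ only by the non-critical position $i$, so $r'_i=r'_{i+\pp}$. From $f(z)\in S_{k+1}\cap\Lambda(i+\pp)$ we get $f(z)\notin h^{-1}(Z_\str^{r'_{i+\pp}})$, and equivariance then yields $z\notin h^{-1}(Z_\str^{r'_i})$, as required.

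In the critical case, $r'_i=r-\varepsilon$ by definition. The identity $I_k=f^{-1}(I_{k+1})\ni i$ forces $\{0,1\}\subset I_{k+1}$, and since $f$ maps $\Lambda(-\pp)\cup\Lambda(-\pp+1)$ almost two-to-one onto a region containing $\Lambda(0)\cup\Lambda(1)$ while $f(z)\in\bLambda\subset\bDelta$, the image satisfies $f(z)\in\Lambda(j)$ for some $j\in\{0,1\}$. The forward orbit of $j$ from step $k+1$ to step $\tt$ has size $\tt-k\le\tt\ll\qq$ by~\eqref{eq:t over q is small}, consists of distinct residues mod $\qq$ (as $\gcd(\pp,\qq)=1$), and by the interval-disjointness in Claim~\ref{cl:C} avoids $\{-\pp,-\pp+1\}$; therefore $r'_j=r$. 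Although $h$ is not a conjugacy on the critical triangles, Theorem~\ref{thm:ComPsConj} bounds the defect $|h\circ f(z)-f_\str\circ h(z)|$ by a quantity $\delta$ that shrinks as the neighborhood $\UU$ does. Assuming $z\in h^{-1}(Z_\str^{r-\varepsilon})$, the $f_\str$-invariance of $Z_\str^{r-\varepsilon}$ gives $f_\str(h(z))\in Z_\str^{r-\varepsilon}$, so $h(f(z))$ lies in a $\delta$-neighborhood of $Z_\str^{r-\varepsilon}$; once $\delta\ll\varepsilon$ this neighborhood is contained in $Z_\str^r$. Thus $f(z)\in h^{-1}(Z_\str^r)=h^{-1}(Z_\str^{r'_j})$, contradicting $f(z)\in S_{k+1}(j)$; hence $z\notin h^{-1}(Z_\str^{r-\varepsilon})$ as needed.

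\textbf{Main obstacle.} The critical case is the delicate point: the $\varepsilon$-gap between the two truncation levels is engineered precisely to swallow the error introduced by $h$ failing to conjugate $f$ to $f_\str$ on the two critical triangles, while the combinatorial control of the intervals $I_k$ from Claim~\ref{cl:C} together with $\tt\ll\qq$ guarantees that the image triangle $\Lambda(j)$ carries the deep truncation $r'_j=r$, exactly matching the size of the available gap.
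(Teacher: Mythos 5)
Your two-case split (non-critical vs.\ critical index $i$) and the way you close the critical case---using the $\varepsilon$-gap between the two truncation levels to absorb the failure of $h$ to conjugate on $\Lambda(-\pp)\cup\Lambda(-\pp+1)$---is exactly the paper's argument, and the non-critical case, with its observation that $r'_i=r'_{i+\pp}$ because the two forward-orbit conditions differ only in the non-critical step $k$, is a correct filling-in of what the paper leaves as ``$f\colon S_k(i)\to S_{k+1}(i+\pp)$ is a homeomorphism.''

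However, your justification in the critical case that $r'_j=r$ is not right as stated. You argue that the forward orbit of $j\in\{0,1\}$ for $\tt-k$ steps avoids $\{-\pp,-\pp+1\}$ because its residues mod $\qq$ are distinct and because of ``the interval-disjointness in Claim~\ref{cl:C}.'' Distinctness of residues does not by itself keep the orbit away from $\{-\pp,-\pp+1\}$, and Claim~\ref{cl:C} only asserts disjointness of the (interval-valued) $I_0,\dots,I_{p+1}$ from $\{-\pp,-\pp+1\}$---that covers only $p+1$ steps, whereas $\tt-k$ can be much larger, and it concerns the intervals $I_\ell$ rather than the single-index orbit of $j$. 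What is actually needed, and what the paper invokes, is inequality~\eqref{eq:a_n less than a_m}: $\min\{\aa_m,\bb_m\}-1>\tt$. Since $\aa_m,\bb_m$ are the first return times of $\{0,1\}$ to itself under the index map $i\mapsto i+\pp$, the indices $-\pp,-\pp+1$ are precisely the ones reached one step before the return, so the orbit of $j\in\{0,1\}$ cannot reach $\{-\pp,-\pp+1\}$ in fewer than $\min\{\aa_m,\bb_m\}-1>\tt\ge \tt-k-1$ steps. With that replacement your critical case closes exactly as you wrote it; the rest of the proposal is sound.
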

\begin{proof}
By Claim~\ref{cl:B2} we only need to check that the truncation is respected by backward dynamics. The proof is based on the fact that points in $S_k$ pass at most once through the critical sector $\bLambda(-\pp)\cup \bLambda(-\pp+1)$ under the iteration of $f, f^2,\dots, f^{\tt-k}$. 

The sector $S_k$ consists of subsectors $S_k(i)$, where $i\in I_k$. If $i\not \in \{-\pp,-\pp+1\}$, then \[f\colon S_k(i)\to S_{k+1}(i+\pp)\] is a homeomorphism. Suppose $i\in  \{-\pp,-\pp+1\}$. Then $S_{k+1}(i+\pp)=\Lambda(i)\setminus h^{-1}(Z_\str^{r  })$ because $i+\pp,\dots, i+\pp (\tt-k)$ are disjoint from $\{-\pp,-\pp+1\}$ by~\eqref{eq:a_n less than a_m}. On the other hand, by definition of $S_k$,
 \[S_{k}\supset  h^{-1}\left((\Lambda(-\pp,f_\str)\cup \Lambda(-\pp+1,f_\str))\setminus Z_\str^{r-\varepsilon}\right).\]
Since $h$ is close to identity (see~\S\ref{sss:KeyLmmConvent}), the preimage of $S_{k+1}(i+\pp)$ under $f\mid \bLambda$ is within $S_{k}(i)\subset S_k$.
\end{proof}

We can assume that $D_{\tt}$ is so small that it does not intersect $h^{-1}(Z_\str^{r})$. Then $D_{\tt}\cap \bLambda\subset S_{\tt}$; using Claims~\ref{cl:B3} and~\ref{cl:D1} we obtain $D_k\cap \bLambda \subset S_k$.

Next let us inductively enlarge $D_k$ as $\mathfrak D_{k}\supset \mathfrak D'_{k}\supset D_k$. Set \[\mathfrak D_{\tt}=\mathfrak D'_{\tt}\coloneqq  D_{\tt}\]  and define $\mathfrak D'_{k}$ to be the connected component of $f^{-1}(\mathfrak D_{k+1})$ containing $D_{k}$. We define $\mathfrak D_{k}$ to be the filled-in $\mathfrak D'_{k}\cup  \intr S_{k}$; i.e.~ $\mathfrak D_{k}$ is $\mathfrak D'_{k}\cup  \intr S_{k} $ plus all of the bounded components of $\C\setminus (\mathfrak D'_{k}\cup    \intr S_{k})$. 

\begin{claim}
\label{cl:D2} For all $k\le \tt$ the intersection $\mathfrak D_k\cap\bLambda$ is connected and we have $ {S_k= \overline{ \mathfrak D_k}\cap\bLambda}$.
\end{claim}
\begin{proof}
Follows from $D_k\cap \bLambda \subset S_k$, Claim~\ref{cl:D1}, and the definition of $\mathfrak D_k$.
\end{proof}

\subsubsection{Bubble chains}
\label{sss:BubbleChains} 
Below we will separate the forbidden part of the boundary $\partial ^\frb U_f$ from all $\mathfrak D_j$ by external rays and bubble chains (see Figure~\ref{Fig:SepForbSector}). Recall from~\S\ref{ss:LocConnK_p} that for $f_\str$ a bubble chain is a sequence of iterated lifts of $\overline Z_\str$; for $f$ the role of $\overline Z_\str$ will be played by $\bLambda$.

\begin{figure}[t!]
{\usetikzlibrary{arrows}
\pagestyle{empty}
%\definecolor{qqqqff}{rgb}{0.,0.,1.}
%\definecolor{xdxdff}{rgb}{0.49019607843137253,0.49019607843137253,1.}
\begin{tikzpicture}
\begin{scope}[scale =2]

\begin{scope}[shift={(0.66,0.088)},scale =0.2]
\draw (-2.,0.)-- (0.,2.);
\draw (0.,2.)-- (2.08,0.);
\draw (2.08,0.0)-- (2.08,-0.3);
\draw (2.08,-0.3)-- (0.,-2.);
\draw (0.,-2.)-- (-2.,0.);
\draw (2.08,0)-- (3.72,2.18);
\draw (3.72,2.18)-- (5.,1.16);
\draw (5.,1.16)-- (4.,0.);
\draw (4.,0.)-- (5.02,-1.16);
\draw (5.02,-1.16)-- (4.,-2.);
\draw (4.,-2.)-- (2.08,-0.3);
\draw (3.72,2.18)-- (3.24,2.84);
\draw (3.24,2.84)-- (3.82,3.4);
\draw (3.82,3.4)-- (4.26,2.98);
\draw (4.26,2.98)-- (3.9,2.82);
\draw (3.9,2.82)-- (4.28,2.68);
\draw (4.28,2.68)-- (3.72,2.18);
\draw (4.,-2.)-- (3.42,-2.6);
\draw (3.42,-2.6)-- (3.82,-3.02);
\draw (4.34,-2.38)-- (4.,-2.);
\draw (3.82,-3.02)-- (4.32,-2.75);
\draw (4.32,-2.75)-- (4.09,-2.55);
\draw (4.09,-2.55)-- (4.34,-2.38);
\draw (3.82,-3.02)-- (3.67,-3.22);
\draw (3.67,-3.22)-- (3.84,-3.39);
\draw (3.84,-3.39)-- (4.,-3.2);
\draw (4.,-3.2)-- (3.82,-3.02);
\draw (3.82,3.4)-- (3.64,3.66);
\draw (3.64,3.66)-- (3.86,3.93);
\draw (3.86,3.93)-- (4.1,3.68);
\draw (4.1,3.68)-- (3.81064,3.41352);
\draw (3.86,3.93)-- (3.85,4.19);

\draw[blue]  (4.58,6.728)-- (3.85,4.19);

\draw[blue] (4.25,-6.682)-- (3.8319859383222643,-3.6859085160679514);
\draw (3.8319859383222643,-3.6859085160679514)-- (3.84,-3.39);
\draw [red](-1.8, 1.8) node{$D_0$};
\draw[red,fill=red, fill opacity=0.4]  (-1,1) circle (0.4cm);

\end{scope}

\draw [shift={(0.8200819142056478,0.11146554573543112)}] plot[domain=1.0540413778662818:5.179936858384536,variable=\t]({1.*7.610224188310985*cos(\t r)/5+0.*7.610224188310985*sin(\t r)/5},{0.*7.610224188310985*cos(\t r)/5+1.*7.610224188310985*sin(\t r)/5});

\draw[blue] (0.67,0) node{$\bLambda$};
\draw[blue] (1.34,0.1) node{$\bLambda'$};
\draw[blue] (2.34,0.1) node{$\partial ^\frb U_\str$};
\draw[blue] (1.7, 1) node {$R_x$};
\draw[blue] (1.7,-1) node {$R_y$};
\end{scope}
\end{tikzpicture}
\caption{Separation of $\partial ^\frb U_f$ from $\alpha$. Disks $\bLambda$ and $\bLambda'$ approximate $\overline Z_\str$ and $\overline Z'_*$. Iterated lifts of $\bLambda'$ form periodic bubble chains $B_x$ and $B_y$ landing at periodic points $x$ and $y$. Together with external rays $R_x,R_y$ the bubble chains $B_x,B_y$ separate $\partial ^\frb U_f$ from the critical value. The configuration is stable because of the stability of local dynamics at $x$ and $y$. Disks $D_k$ may intersect $\bLambda'$ but, by Claim~\ref{cl:ContrD_k}, they do not intersect $B_x\cup B_y\setminus \bLambda'$.} \label{Fig:SepForbSector}}
\end{figure}
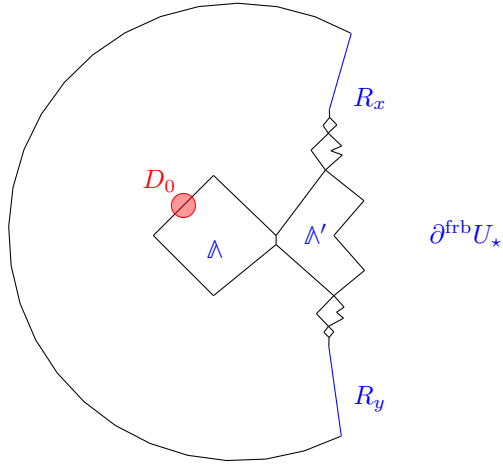
 
Consider first the dynamical plane of $f_\str$. Recall from Theorem~\ref{thm:LocConnOfJul for StandPacm} that the non-escaping set $\Kfilled_\str$ of $f_\str$ is locally connected and that $\Kfilled_\str$ has at most finitely many limbs (and bubbles) with diameter at least $\varepsilon$ for every $\varepsilon>0$. Let $Z_a$ and $Z_b$ be two bubbles attached to $\overline Z'_\str$ such that $Z_a$ and $Z_b$ are close to $\gamma_-$ and $\gamma_+$ respectively.  Let $f_\str^{p_a}$ be the iterate of $f_\str$ such that $f_\str^{p_a}(Z_a)=\overline Z'_\str$. Recall from Definition~\ref{dfn:SiegPcm} that $\gamma_1$, as well as  $\gamma_0$, is a concatenation of an external ray and an internal ray. We also recall from~\S\ref{ss:StandardPacmen} that we normalized $\gamma_0$ to pass through the critical value. Therefore, the critical point $c_0^\str$ is the landing point of two external rays $R_{-}, R_+$; we denote by $W'$ the open wake of $\overline Z'_\str$: the connected component of $V\setminus (R_-\cup R_+)$ containing $Z'_\str$. Let $W_a$ be the univalent pullback of $W'$ under ${f_\str^{p_a}\colon \intr(Z_a)\to Z'_\str}$. Then $W_a$ is the wake of $Z_a$ and we have $W_a\Subset W'$. By the Schwarz lemma, the map $f^{p_a}\colon W_a\to W'$ has a unique fixed point; we call it $x$.

Set $Z_2\coloneqq Z_a$ and for $i\ge 2$ define $Z_{i+1}$ to be the unique preimage of $Z_i$ under $f^{p_a}\colon W_a\to W'$. By the expansion of $f_\str^{p_a}\colon W_a\to W'$, the $Z_i$ shrink to $x$. We have constructed the bubble chain $B_x$
\begin{equation}
\label{eq:B_x}
 Z_{1}=  \overline Z'_{*},  Z_{2},  Z_{3}, \dots \end{equation} 
 landing at $x$. Write $p_x\coloneqq p_a$ and denote by $B'_x\subset B_x$ the subchain obtained by removing $\overline Z'$ in $B_x$. Then $p_x$ is the minimal period of $B_x$ because ${f^{p_x}(B'_x)=B_x}$.

 Similarly, we define $y$ to be the unique fixed point of $f_\str^{p_b}\colon W_b\to W'\Supset W_b$ and $B_y$ to be the bubble chain landing at $y$. The minimal period of $B_y$ is $p_y$. We have 
 \begin{equation}
\label{eq:B'_x}
B_x=f^{p_x}_* (B'_x)\;  \;\text{ and }\; B_y=f^{p_y}_*(B'_y),\sp\sp p_x,\ p_y\ge 2.
\end{equation}
We denote by $p$ the least common period of $B_x$ and $B_y$.

Since $f$ is close to $f_\str$, by Lemma~\ref{lem:PerRaysStable}, periodic rays $ R_x,R_y$ exist in the dynamical plane of $f$ and are close to the corresponding rays in the dynamical plane of $f_\str$.

Set $\bLambda'$ to be the closure of the connected component of $f^{-1}(\bLambda)\setminus \bLambda$ that has a non-empty intersection with $\bLambda$. Then $\bLambda'$ is connected and 
\begin{equation}
\label{eq:f of Lambda'}\bLambda\cap \bLambda'\subset \Lambda(-\pp)\cup  \Lambda(-\pp+1)\sp \text{ and }\sp f(\bLambda')\subset \bLambda.
\end{equation}
We say that $\bLambda'$ is \emph{attached} to $\bLambda$, or more specifically that $\bLambda'$ is \emph{attached} to $\Lambda(-\pp)\cup \Lambda(-\pp+1)$.

 Observe also that $\bLambda'$ approximates $\overline Z'_*$ because $\bLambda$ is close to $\overline Z_*$ and $f$ is close to $f_\str$. 

A \emph{bubble of generation $e+1\ge 1$} for $f$ is an $f^e$-lift of $\bLambda'$. Fix a big $M\gg 1$. We assume that the neighborhood $\UU\subset \WW^u$ in the statement of Key Lemma~\ref{keylem:ContrOfPullbacks} is selected sufficiently small, depending in particular on $M$. Since $\bLambda'$ is close to $\overline Z'_*$, the map $f$ is close to $f_\str$, and $\partial\bLambda \cap (\Lambda(0)\cup \Lambda(1))$ is small, we obtain:
\begin{claim}
\label{cl:ContOfBubbles}
Every bubble $Z_\delta$ of $f_\str$ of generation up to $M$ is approximated by a bubble $\bLambda_\delta$ of $f$ such that 
\begin{enumerate}
\item $\bLambda_\delta$ is close to $Z_\delta$ and $f\mid \bLambda_\delta$ is close to $f_\str \mid Z_\delta$;
\item if $Z_\delta$ is attached to $Z_{\gamma}$, then $\bLambda_\delta$ is attached to $\bLambda_\gamma$; and 
\item if $Z_\delta$ is attached to $Z_\str$, then  $\bLambda_\delta$ is attached to $\bLambda\setminus (\Lambda(0)\cup \Lambda(1))$. \qed.\label{cl:ContOfBubbles:part:3}
\end{enumerate}
\end{claim}

Using Claim~\ref{cl:ContOfBubbles}, we approximate the bubbles $Z_k$ in $B_x$  (see~\eqref{eq:B_x}) with $k\le M$ by the corresponding bubbles $\bLambda_k$. We can assume that the remaining $ Z_{M+j}$ are within the linearization domain of $x$. Taking pullbacks within the linearization domain of $x$, we construct the \emph{bubble chain $B_x(f)$} landing at $x$ as a sequence $\bLambda'=\bLambda_1,\bLambda_2,\dots$  Similarly, $B_y(f)$ is constructed. The chains $B_x\cup B_y(f)$ are close to $B_x\cup B_y(f_\str)$ in the following sense: there are continuous maps \[g_1\colon B_x\cup B_y(f) \to B_x\cup B_y(f_\str) \sp \text{ and }\sp g_2\colon B_x\cup B_y(f_\str) \to B_x\cup B_y(f) \] close to the identities. Equation~\eqref{eq:B'_x} holds in the dynamical planes of $f$. Thus we have constructed $\left(R_x\cup B_x\cup B_y \cup R_y\right)(f)$ that is close to  $\left(R_x\cup B_x\cup B_y \cup R_y\right)(f_\str)$.

Assume that $D$ is so small that it is disjoint from the forward orbit of $R_x\cup R_y$. As a consequence, we obtain: 
\begin{claim}
\label{cl:Separ} All $\mathfrak D_k$ are disjoint from $R_x\cup R_y$.
\end{claim}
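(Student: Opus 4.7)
The plan is to run a simple backward induction, strengthening the claim so as to exploit forward-invariance of the rays. Let $p$ be the common period of $R_x$ and $R_y$ and set
\[
  \Sigma \;\coloneqq\; \bigcup_{j=0}^{p-1} f^{j}(R_x\cup R_y),
\]
a finite union of external rays. Since $R_x,R_y$ are periodic, $f$ permutes the rays in $\Sigma$, so $f(\Sigma)=\Sigma$ and in particular $\Sigma\subseteq f^{-1}(\Sigma)$. The base case is given to us: $\mathfrak D_\tt=D$ is disjoint from the whole forward orbit, hence from $\Sigma$. I then want to show by downward induction on $k$ that $\mathfrak D_k\cap\Sigma=\emptyset$ for every $k\le\tt$.

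For the inductive step, assume $\mathfrak D_{k+1}\cap\Sigma=\emptyset$. Taking preimages, $f^{-1}(\mathfrak D_{k+1})$ is disjoint from $f^{-1}(\Sigma)\supseteq\Sigma$; since $\mathfrak D'_k$ is a connected component of $f^{-1}(\mathfrak D_{k+1})$, we get $\mathfrak D'_k\cap\Sigma=\emptyset$. The remaining task is to check that the enlargement from $\mathfrak D'_k$ to $\mathfrak D_k=\mathfrak D'_k\cup\intr S_k\cup(\text{bounded complementary components in }V)$ does not pick up $\Sigma$. The piece $\intr S_k\subset\bLambda$ is handled by a continuity/stability argument: in the dynamical plane of $f_\str$ the finitely many rays making up $\Sigma(f_\str)$ land at repelling periodic points bounded away from $\overline Z_\str$, hence $\Sigma(f_\str)$ is at positive distance from $\overline Z_\str$. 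By Lemma~\ref{lem:PerRaysStable} the rays in $\Sigma(f)$ lie in a small Hausdorff neighborhood of $\Sigma(f_\str)$, while by Theorem~\ref{thm:ComPsConj} the set $\bLambda(f)$ lies in a small Hausdorff neighborhood of $\overline Z_\str$; shrinking $\UU$ if necessary, this forces $\Sigma(f)\cap\bLambda(f)=\emptyset$, and a fortiori $\Sigma\cap\intr S_k=\emptyset$.

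The filled-in components are handled topologically: each ray $R\in\Sigma$ is a connected arc meeting $\partial V$ at its outer endpoint, and $\mathfrak D'_k\cup\intr S_k$ is a relatively compact subset of $V\setminus\partial V$ (it is contained in $f^{-1}(U)$, which is compactly contained in $V$). Hence the endpoint of $R$ on $\partial V$ lies in the unbounded component of $V\setminus(\mathfrak D'_k\cup\intr S_k)$, and, $R$ being connected and disjoint from $\mathfrak D'_k\cup\intr S_k$, the entire ray lies in that unbounded component. Therefore no point of $\Sigma$ is added when filling in, completing the induction and in particular giving $\mathfrak D_k\cap(R_x\cup R_y)=\emptyset$ as required.

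The only genuinely delicate point is the disjointness $\Sigma(f)\cap\bLambda(f)=\emptyset$ uniformly for all $f\in\RR^{-n}(\UU)$ and all $n$, i.e.\ ensuring that the approximation of rays given by Lemma~\ref{lem:PerRaysStable} and the approximation of $\bLambda$ coming from Theorem~\ref{thm:ComPsConj} are compatible choices. Once $\UU$ is selected small enough (and the periodic points $x,y$ are fixed once and for all near $\partial^\frb U_\str$, independent of $n$), this uniformity is built into the setup of the Key Lemma, so the argument above goes through without further issue.
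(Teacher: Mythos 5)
Your proof is correct and is a faithful expansion of the one-line argument the paper takes for granted; upgrading to disjointness from the full forward orbit $\Sigma$ (which is the set $D$ is assumed to avoid) is exactly what makes the downward induction close, and the split into the pullback $\mathfrak D'_k$, the sector $\intr S_k$, and the filled-in complementary components is the right decomposition.

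Two small remarks. First, the inclusion ``$f^{-1}(\Sigma)\supseteq\Sigma$'' is not literally true as stated, since a point of $\Sigma$ lying outside $\Dom f$ (e.g.\ in $\overline V\setminus U_f$) need not be in $f^{-1}(\Sigma)$; what you actually use --- and what does hold --- is $\Sigma\cap\Dom f\subseteq f^{-1}(\Sigma)$, which suffices because $\mathfrak D'_k\subset\Dom f$. (The underlying fact is that each $f^j(R_x)$ crosses $\partial U_f$ exactly once, through $\partial^\ext U_f$, so $f$ maps its $U_f$-portion into $f^{j+1}(R_x)$.) Second, you are right to flag $\Sigma(f)\cap\bLambda(f)=\emptyset$ as the real content: since $\Sigma(f_\str)$ is a compact union of closed rays landing at repelling periodic points lying off $\overline Z_\str$, it has positive distance from $\overline Z_\str$, and Lemma~\ref{lem:PerRaysStable} together with Theorem~\ref{thm:ComPsConj} preserve this gap uniformly once $\UU$ is fixed small; this is indeed built into the Key Lemma's setup as you say.
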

\begin{proof}
We proceed by induction on $k\in \{\tt, \tt-1,\dots, 0\}$. Since $\mathfrak D_{k+1}$ is disjoint from the forward orbit of $R_x\cup R_y$, so is $\mathfrak D'_{k}\cup \intr(S_k)$; the latter surrounds $\mathfrak D_k$.
\end{proof}

\subsubsection{Control of $\mathfrak D_k$}
\begin{claim}
\label{cl:ContrD_k}
 For all $k\in  \{0,1,\dots , \tt \}$ the following holds
\begin{enumerate}
\item $\mathfrak D_{k}$ intersects $\bLambda'$ if and only if $I_k\supset \{-\pp,-\pp+1\}$;
\label{cl:ContrD_k:1}
\item If $\mathfrak D_{k}$ intersects $\bLambda'$, then
\[\mathfrak D_{k}\cap \bLambda' \subset f^{-1}(S_{k+1})\] is contained in a small neighborhood of $c_0$; 
\label{cl:ContrD_k:2}
\item If $\mathfrak D_{k}$ intersects $\bLambda'$ for $k<\tt-1$, then $k<\tt-1-p$ and $\mathfrak D_{k+1},$ $\mathfrak D_{k+2},\dots , \mathfrak D_{k+p+1}$ are disjoint from $\bLambda'$;
\label{cl:ContrD_k:3}
\item  If $\mathfrak D_{k}$ intersects $B_x\cup B_y$, then the intersection is within $\bLambda'$ and, in particular, $I_k\supset \{-\pp,-\pp+1\}$;
\label{cl:ContrD_k:4}
\item $\mathfrak D_{k}$ is an open disk disjoint from $\partial^\frb U_f$; in particular, $f\colon \mathfrak D'_{k}\to \mathfrak D_{k+1}$ is a branched covering (of degree one or two) for $k<\tt$.
\label{cl:ContrD_k:5}
\end{enumerate}
\end{claim}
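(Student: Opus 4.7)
I will prove Claim~\ref{cl:ContrD_k} by backward induction on $k$, starting at $k=\tt$ and decreasing to $k=0$. The base case $k=\tt$ holds trivially: by construction $\mathfrak D_\tt=D$ is a small open disk around $c_1(f_\str)$, disjoint from $\bLambda'$ (which clusters near $c_0$), from the separating chains $B_x\cup B_y$, and from $\partial^\frb U_f$ in view of Claim~\ref{cl:Separ}; and $I_\tt$, the minimal interval containing $\{0,1\}$, is disjoint from $\{-\pp,-\pp+1\}$ by Claim~\ref{cl:C}.

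For the inductive step, assume (1)--(5) at all indices exceeding $k$. Parts (1) and (2) will follow from the local geometry of $\bLambda'$: it meets $\bLambda$ in a small neighborhood of $c_0\subset\Lambda(-\pp)\cup\Lambda(-\pp+1)$, and the image $f(\bLambda'\setminus\bLambda)$ lies in a small neighborhood of $c_1\in\Lambda(0)\cup\Lambda(1)$. Since $\mathfrak D'_k$ is a connected component of $f^{-1}(\mathfrak D_{k+1})$, it meets $\bLambda'\setminus\bLambda$ precisely when $\mathfrak D_{k+1}$ reaches $\Lambda(0)\cup\Lambda(1)$; by Claim~\ref{cl:B3} and the inductive version of (2), this is equivalent to $I_{k+1}\supset\{0,1\}$, equivalently (by~\eqref{eq:DefnI_k}) to $I_k\supset\{-\pp,-\pp+1\}$, yielding (1). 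In that case $\mathfrak D'_k\cap\bLambda'$ is the $f$-preimage of a small $c_1$-neighborhood, hence localized near $c_0$, yielding (2); the filling-in operation adds only points from $\intr S_k\subset\bLambda$, so $\mathfrak D_k\cap\bLambda'=\mathfrak D'_k\cap\bLambda'$.

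Part~(3) is the central observation. If $\mathfrak D_k$ meets $\bLambda'$, then by (2) the intersection clusters near $c_0$, so $\mathfrak D_{k+1}=f(\mathfrak D'_k)$ meets a small neighborhood of $c_1\in\Lambda(0)\cup\Lambda(1)$, which forces $I_{k+1}\supset\{0,1\}$. Claim~\ref{cl:C} then guarantees that $I_{k+1},I_{k+2},\ldots,I_{k+p+2}$ are pairwise disjoint and that $I_{k+1},\ldots,I_{k+p+1}$ all avoid $\{-\pp,-\pp+1\}$. Applying (1) at each of these higher indices shows $\mathfrak D_{k+j}\cap\bLambda'=\emptyset$ for all $1\le j\le p+1$.

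For (4), suppose $\mathfrak D_k$ meets some bubble $\bLambda_i$ with $i\ge 2$. Since $\bLambda_i\cap\bLambda=\emptyset$ for $i\ge 2$ while $\intr S_k\subset\bLambda$, we must have $\mathfrak D'_k\cap\bLambda_i\ne\emptyset$; under the univalent map $f\colon\bLambda_i\to\bLambda_{i-1}$, the disk $\mathfrak D_{k+1}$ meets $\bLambda_{i-1}$, and the inductive form of (4) forces $\bLambda_{i-1}=\bLambda'$, i.e.\ $i=2$. Because $\mathfrak D'_k$ is connected and also contains $f^k(c_1)\in\bLambda$, any arc in $\mathfrak D'_k$ from $f^k(c_1)$ to a point in $\bLambda_2$ must traverse $\bLambda'$ and therefore pass near both $c_0$ (where $\bLambda'$ is attached to $\bLambda$) and the precritical point $c_{-1}$ (where $\bLambda_2$ is attached to $\bLambda'$). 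The $f$-image of this arc lies in $\mathfrak D_{k+1}$ and visits both $c_1=f(c_0)$ and $c_0=f(c_{-1})$, contradicting the smallness of $\mathfrak D_{k+1}\cap\bLambda\subset S_{k+1}$ from Claim~\ref{cl:S_k is small}. Finally, (5) follows from (4) together with Claim~\ref{cl:Separ}: $\mathfrak D_k$ does not cross the separating set $R_x\cup R_y\cup((B_x\cup B_y)\setminus\bLambda')$, so it remains on the critical-value side, avoiding $\partial^\frb U_f$; by induction $\mathfrak D_{k+1}$ is an open disk off $\partial^\frb U_f$, hence $\mathfrak D'_k$ is a disk branched-covering $\mathfrak D_{k+1}$ under $f$, and filling in $\intr S_k$ preserves the disk structure. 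The main obstacle I anticipate is the quantitative bookkeeping in (4): one must verify that the smallness of $S_{k+1}$ (and hence of $\mathfrak D_{k+1}\cap\bLambda$) really beats the fixed macroscopic distance between $c_0$ and $c_1$ in $\overline Z_\str$; this should be a direct consequence of the Hausdorff approximation supplied by Theorem~\ref{thm:ComPsConj} combined with Claim~\ref{cl:S_k is small}.
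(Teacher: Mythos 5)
Your plan for Parts~(1)--(3) and (5) is in the right spirit and broadly matches the paper's argument (backward induction, tracking $\mathfrak D_k\cap\bLambda\subset S_k$ via Claims~\ref{cl:D1}--\ref{cl:D2}, and using Claim~\ref{cl:C} for the spacing of intervals). But Part~(4), which the paper itself flags as \emph{the} central step, is where your argument breaks down, and the break is conceptual rather than quantitative.

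You write ``under the univalent map $f\colon\bLambda_i\to\bLambda_{i-1}$,'' i.e.\ you assume that applying $f$ to the $i$-th bubble of the chain $B_x=(\bLambda'=\bLambda_1,\bLambda_2,\dots)$ yields the $(i-1)$-st bubble. This is false. The index in a bubble chain records attachment, not $f$-dynamics: $\bLambda_{i+1}$ is attached to $\bLambda_i$ with strictly larger \emph{generation}, but the generations need not increase by $1$, and $f(\bLambda_i)$ is the bubble of one lower generation, which in general is \emph{not} a member of $B_x$ at all. (The chain is only $f^p$-periodic after a shift: there is a sub-chain $B_x'$ with $f^p(B_x')=B_x$, not a step-by-step $f$-equivariance.) Once this identification fails, your reduction to $i=2$ collapses, and with it the arc argument, which is only set up for $i=2$. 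Also note that even for $i=2$ your arc argument has a hidden assumption: a path in $\mathfrak D'_k$ from $f^k(c_1)$ to $\bLambda_2$ need not pass through a neighborhood of the two attachment points you name; $\mathfrak D'_k$ is an open set and the arc can bypass the bubbles.

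The paper's actual proof of Part~(4) works quite differently. Take $i\ge 2$ \emph{minimal} with $T\coloneqq\mathfrak D_k\cap\bLambda_i\ne\emptyset$, and let $e$ be minimal with $f^e(\bLambda_i)=\bLambda'$ (so $e$ is essentially the generation of $\bLambda_i$). Using the $f^p$-periodicity of $B_x$ and the already-established case $k+p$ of Part~(4), one shows $e\le p$. Then each $f^j(\bLambda_i)$, $1\le j\le e$, is attached to $\bLambda\setminus(\Lambda(0)\cup\Lambda(1))$, hence attached to $S_{k+j}\subset\overline{\mathfrak D_{k+j}}$. Because $I_{k+e}\supset\{-\pp,-\pp+1\}$ forces $I_{k+1}\not\supset\{0,1\}$ (Claim~\ref{cl:C}), the map $f\colon\mathfrak D'_k\to\mathfrak D_{k+1}$ is univalent, so $T$ is the \emph{unique} $f$-preimage of $f(T)$ in $\mathfrak D'_k$. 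That unique preimage must lie in $\bLambda'_i$, the lift of $f(\bLambda_i)$ attached to $S_k$ --- a bubble different from $\bLambda_i$. Contradiction. This uses the minimality of $i$, the periodicity constant $p$, and a dichotomy between the two lifts of $f(\bLambda_i)$, none of which appears in your write-up. If you keep your framework, this is the step you would need to supply; as written, Part~(4) is not proved.
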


\begin{figure}[t!]
{\usetikzlibrary{arrows}
\pagestyle{empty}
%\definecolor{qqqqff}{rgb}{0.,0.,1.}
%\definecolor{xdxdff}{rgb}{0.49019607843137253,0.49019607843137253,1.}
\begin{tikzpicture}
\begin{scope}[scale =2]

\begin{scope}[shift={(0.66,0.088)},scale =0.5]

\filldraw[white, fill=orange, fill opacity=1]  (3.73,2.175) -- (3.98,2.41)  arc (45:132.4:0.31cm);

%\node[orange,left] at (3.5,2.435) {$T_k$}; 
%\draw [orange] (3.4,2.435) -- (3.73,2.33);

 \draw[orange, line width =0.15cm ] 
 (-1.3,-1)
 .. controls (-1.6,-1) and (-3,-0.3)..
  (-3,0)
  .. controls  (-3,0.5) and (-2.6,1.9)..
  (-2.5,2)
    .. controls   (-2,2.7) and (-1.2,2.9)..
  (-1,3)
      .. controls   (-0.5,3.2) and (0.5,3.2)..
  (1,3)
        .. controls   (1.3,2.75)  and  (2.0,2.85)..
 (2.3,2.8)
     .. controls    (2.7,2.7)  and  (3.3,2.55)..
(3.64,2.34);

\draw (-2.,0.)-- (0.,2.);
\draw (0.,2.)-- (2.08,0.);
\draw (2.08,0.0)-- (2.08,-0.15);
\draw (2.08,-0.15)-- (0.,-2.);
\draw (0.,-2.)-- (-2.,0.);
\draw (2.08,0)-- (3.72,2.18);
\draw (3.72,2.18)-- (5.,1.16);
\draw (5.,1.16)-- (4.,0.);
\draw (4.,0.)-- (5.02,-1.16);
\draw (5.02,-1.16)-- (4.,-2.);
\draw (4.,-2.)-- (2.08,-0.15);
\draw (3.72,2.18)-- (3.24,2.84);
\draw (3.24,2.84)-- (3.82,3.4);
\draw (3.82,3.4)-- (4.26,2.98);
\draw (4.26,2.98)-- (3.9,2.82);
\draw (3.9,2.82)-- (4.28,2.68);
\draw (4.28,2.68)-- (3.72,2.18);
\draw (4.,-2.)-- (3.42,-2.6);
\draw (3.42,-2.6)-- (3.82,-3.02);
\draw (4.34,-2.38)-- (4.,-2.);
\draw (3.82,-3.02)-- (4.32,-2.75);
\draw (4.32,-2.75)-- (4.09,-2.55);
\draw (4.09,-2.55)-- (4.34,-2.38);
\draw (3.82,-3.02)-- (3.67,-3.22);
\draw (3.67,-3.22)-- (3.84,-3.39);
\draw (3.84,-3.39)-- (4.,-3.2);
\draw (4.,-3.2)-- (3.82,-3.02);
\draw (3.82,3.4)-- (3.64,3.66);
\draw (3.64,3.66)-- (3.86,3.93);
\draw (3.86,3.93)-- (4.1,3.68);
\draw (4.1,3.68)-- (3.81064,3.41352);
\draw (3.86,3.93)-- (3.85,4.19);

\draw (3.8319859383222643,-3.6859085160679514)-- (3.84,-3.39);

\draw[red,fill=red, fill opacity=0.5]  (2.08,-0.075) circle (0.3cm);

\draw[red,fill=red, fill opacity=0.5]  (-1,-1) circle (0.3cm)
 (1.02, 1.02) circle (0.3cm)
 (-1.43, 0.58)circle (0.3cm);

%\draw[red,dashed,fill=red, fill opacity=0.1]  
%(3.75,2.2)circle (0.3cm);

\draw[red] (2.08,-0.65) node {$\mathfrak D_{k+3}$};
\draw[red] (-2.,-1.1) node {$\mathfrak D_{k}$};
\draw[orange]  (4.4,2.1 )node {$\mathfrak D_k {\color{black}  \cap} {\color{blue} \bLambda_2}$};

\draw[blue]  (3.799,3.1) node {$\bLambda_2$}
(-1.87, 1.7) node {$f^2(\bLambda_2)$}
(1, 2) node {$f(\bLambda_2)$};

%\draw (4,-1.6) edge[->,bend right] node[above]{$f^e$} (2.45,-0.1);
\draw (-0.6,-1) edge[->,bend right] (0.84, 0.63)
(0.48, 1.15)  edge[->,bend right]  (-0.97, 0.68)
(-1.17, 0.18) edge[->,bend right] (1.65, -0.16);

\draw (3.35,2.2) edge[->] node[above] {$f$}(1.25, 1.3);

\draw[blue] (-1.23, -1.94) node {$\bLambda'_2$};

\draw (-1.425,0.575)-- (-1.94,0.68);
\draw (-1.94,0.68)-- (-2.04,1.26);
\draw (-2.04,1.26)-- (-1.5,1.34);
\draw (-1.5,1.34)-- (-1.55,1.08);
\draw (-1.55,1.08)-- (-1.38,1.09);
\draw (-1.38,1.09)-- (-1.425,0.575);
\draw (1.019215987701768,1.0199846272098385)-- (1.06,1.61);
\draw (1.06,1.61)-- (1.61,1.73);
\draw (1.61,1.73)-- (1.66,1.33);
\draw (1.66,1.33)-- (1.45,1.3);
\draw (1.45,1.3)-- (1.67,1.07);
\draw (1.67,1.07)-- (1.019215987701768,1.0199846272098385);
\draw (-1.54,-1.14)-- (-1.,-1.);
\draw (-1.,-1.)-- (-0.99,-1.36);
\draw (-0.99,-1.36)-- (-1.28,-1.41);
\draw (-1.17,-1.65)-- (-1.28,-1.41);
\draw (-1.17,-1.65)-- (-1.55,-1.71);
\draw (-1.55,-1.71)-- (-1.54,-1.14);
\end{scope}

\draw[blue] (0.67,0.25) node{$\bLambda$};
\draw[blue] (2.34,0.05) node{$\bLambda'$}
(2.34,-0.15) node{$=f^3(\bLambda_2)$}
(2.59,-0.38) node{$=f^3(\bLambda'_2)$};

\end{scope}
\end{tikzpicture}
\caption{Illustration to the proof of Claim~\ref{cl:ContrD_k}, Part~\eqref{cl:ContrD_k:4} in case $p_x=3$. Suppose that $f^3$ maps the bubble $\bLambda_2$ (in $B_x$) to $\bLambda'$ and suppose that $ \mathfrak D_{k}\cap \bLambda_2\not=\emptyset$. Let $\bLambda'_2$ be the lift of $f(\bLambda_2)$ attached to $\bLambda$. Since $f(\bLambda_2)$ is attached to $S_{k+1}=\overline {\mathfrak D}_{k+1}\cap \bLambda$ and since ${\mathfrak D}_{k+1}\cup S_{k+1}$ does not surround the critical value, we obtain that the pullback of $f(\bLambda_2)$ along $f\colon \mathfrak D'_{k}\to {\mathfrak D}_{k+1}$ is attached to $S_{k}$ contradicting $ {\mathfrak D}_{k}\cap \bLambda_2\not=\emptyset$.} \label{Fig:KeyLemma:MainArgum}}
\end{figure}

\begin{proof}
We proceed by induction. Suppose that all of the statements are proven for moments $\{\tt, \dots, k+2,k+1\}$; let us prove them for $k$.

If $I_k\supset \{-\pp,-\pp+1\}$, then $\mathfrak \mathfrak D_{k+1}\supset S_{k+1}(0)\cup S_{k+1}(1)\ni c_1$ (see~\S\ref{sss:wideD_j}). Since  $D_{k+1}$ contains either $S_{k+1}(-1)$ or $ S_{k+1}(2)$, we see that $\mathfrak D'_{k}=f^{-1}(\mathfrak D_{k+1})$ intersects $\bLambda'$. 

Suppose $I_k\cap  \{-\pp,-\pp+1\}= \emptyset$. Then $ \mathfrak D_{k+1}$ does not contain $c_1$. Hence every point in $ \mathfrak D_{k+1}$ has at most one preimage under $f\mid \mathfrak D'_k$. Since $\mathfrak D_{k+1}\cap \bLambda$ is connected, every preimage of $\mathfrak D_{k+1}\cap \bLambda$ under $f\mid \mathfrak D'_k$ is in $\bLambda$. By~\eqref{eq:f of Lambda'}, $\mathfrak D'_k$ has empty intersection with $\bLambda'$. Since $\mathfrak D'_k\cup S_k$ does not surround $\bLambda'$, we also obtain $\mathfrak D_k\cap \bLambda'=\emptyset$. This proves Part~\eqref{cl:ContrD_k:1}.

 Part~\eqref{cl:ContrD_k:2} follows from $\mathfrak D_{k+1}\cap \bLambda\subset S_{k+1}$ (see Claim~\ref{cl:D2}), the definition of $\mathfrak D_{k}$, and the fact that $S_{k+1}$ is a small neighborhood of $c_1$, see Claim~\ref{cl:S_k is small}. 
 
 Part~\eqref{cl:ContrD_k:3} follows from Part~\eqref{cl:ContrD_k:1} combined with Claim~\ref{cl:C} (Part \eqref{cl:C:2}).

Let us now prove Part~\eqref{cl:ContrD_k:4}, see Figure~\ref{Fig:KeyLemma:MainArgum} for illustration. By continuity, Part~\eqref{cl:ContrD_k:4} holds for $k\in \{\tt,\dots, \tt-p\}$. Below we assume that $k<\tt-p$. Assume that Part~\eqref{cl:ContrD_k:4} does not hold; let $\mathfrak D_k\cap (B_x\setminus \Lambda')\not=\emptyset$; the case $\mathfrak D_k\cap (B_y\setminus \Lambda')\not=\emptyset$ is similar. Write \[B_x=(\bLambda'=\bLambda_1, \bLambda_2,\bLambda_3,\dots),\]
where $\bLambda_i$ is a bubble attached to $\bLambda_{i-1}$. Then there is a $\bLambda_i$  with $i\ge 2$ such that 
\[ \mathfrak D_k \cap \bLambda_i\not=\emptyset.\]
We assume that $i\ge 2$ is minimal and we claim that $i=2$. Recall from \S\ref{sss:BubbleChains} that $f^{p_x}$ maps $\Lambda_{k+1}$ to $\Lambda_k$, where $p_x\le p$ is the minimal period of $x$. Suppose $i>2$ and consider $B_x^{(i)}=\bigcup_{j\ge i} \bLambda_j$. Since $\mathfrak D_k\cap B_x^{(i)}\not=\emptyset$ and $(\mathfrak D'_k\cup S_k)\cap R_x=\emptyset$ (the latter follows from Claim~\ref{cl:Separ}), we obtain that $\mathfrak D'_k\cap B_x^{(i)}\not=\emptyset$; hence $\mathfrak D_{k+1}\cap f(B_x^{(i)})\not=\emptyset$. Applying induction we obtain $\mathfrak D_{k+p_x}\cap B_x^{(i-1)}=\mathfrak D_{k+p_x}\cap f^{p_x}(B_x^{(i)})\not=\emptyset$ contradicting the induction assumption that Part~\eqref{cl:ContrD_k:4} holds for $k+p_x$.

Consider the bubbles \[f(\bLambda_2),f^2(\bLambda_2), \dots , f^{p_x}(\bLambda_2)=\bLambda'.\]
By Claim~\ref{cl:ContOfBubbles} Part~\eqref{cl:ContOfBubbles:part:3} they are attached to $\bLambda\setminus (\Lambda(0)\cup \Lambda(1))$. Observe that $\mathfrak D_{k+p_x}$ intersects $\bLambda'$. Indeed, since $\mathfrak D'_k\cup S_k$ is disjoint from $R_x$, the disk $\mathfrak D'_k$ intersects $B'_x=\bigcup_{j\ge 2} \bLambda_j$; hence $\mathfrak D_{k+1}\cap f(B'_x)\not=\emptyset$. Applying induction we obtain $\mathfrak D_{k+p_x}\cap B_x={\mathfrak D_{k+p_x}\cap f^{p_x}(B'_x)\not=\emptyset}$. Therefore, $\mathfrak D_{k+p_x}\cap \bLambda'\not=\emptyset$ because $\mathfrak D_{k+p_x}$ is disjoint from $B'_x$ by the induction assumption that Part~\eqref{cl:ContrD_k:4} holds for $k+p_x$.

Since $\mathfrak D_{k+p_x}$ intersects $\bLambda'$ we have $I_{k+p_x}\supset \{-\pp, -\pp+1\}$. Therefore, each $f^j(\bLambda_2)$ with $j\in \{1,\dots, p_x\}$ is attached to $S_{k+j}\subset \overline {\mathfrak D_{k+j}}$. Moreover, every point in $\mathfrak D_{k+j}$ has at most one preimage under $f\mid \mathfrak D'_{k+j-1}$ for $j\in \{1,\dots, p_x\}$ because ${\mathfrak D_{k+j}\cap \bLambda  \subset S_{k+j}}$ does not contain $c_1$.

Let $\bLambda'_2$ be the lift of $f(\bLambda_2)$ attached to $S_{k}$. We note that $\bLambda'_2\not=\bLambda_2$ and $f(\bLambda_2)\not=\bLambda'$ (by~\eqref{eq:B'_x}). Recall that every point in $\mathfrak D_{k+1}$ has at most one preimage under $f\mid \mathfrak D'_k$. We claim that the lift of $f(\mathfrak D_k \cap \bLambda_2)$ under $f\mid \mathfrak D'_k$ is in $\bLambda'_2$ and not in $\bLambda_2$. Indeed, since $\mathfrak D_{k+1} \cup f(\bLambda_2)$ neither contains nor surrounds the critical value, the lift of $f(\mathfrak D_k \cap \bLambda_2)$ under $f\mid \mathfrak D'_k$ agrees with the lift of $f(\mathfrak D_k \cap \bLambda_2)$ under $f\mid \bLambda'_2$. This proves Part~\eqref{cl:ContrD_k:4}.

By Part~\eqref{cl:ContrD_k:2} $\mathfrak D_{k+1}$ is disjoint from $\partial^\frb U_f$ because $\mathfrak D_{k+1}$ can intersect $B_x\cup B_y$ only in a small neighborhood of $c_0$. Therefore, $f\colon \mathfrak D'_{k}\to \mathfrak D_{k+1}$ is a branched covering.
\end{proof}

This shows  $f^{\tt}\colon D_{0} \to D_{\tt}$ is a branched covering. Observe next that $D_0\cap \bLambda\subset S_0$ is a small neighborhood of $c_1$ that is disjoint from $\gamma_1$. We can easily separate $D_0\setminus \bLambda$ from $\gamma_1\setminus \bLambda$ using $\bLambda$ and finitely many backward iterated lifts of $B_x\cup B_y\cup R_x\cup R_y$. This finishes the proof of the Key Lemma.
\end{proof}

\section{Maximal prepacmen}
\label{s:MaxComPair}

Let $g : X \to Y$ be a holomorphic map between Riemann surfaces.  
Recall that $g$ is:
\begin{itemize}
\item proper, if $g^{-1}(K)$ is compact for each compact $K \subset Y$;
\item $\sigma$-proper (see \cite[\S 8]{McM3}) if each component of $g^{-1}(K)$ is compact for each compact $K \subset Y$; or equivalently if $X$ and $Y$ can be expressed as increasing unions of subsurfaces $X_i$, $Y_i$ such that $g : X_i \to Y_i$ is proper.
\end{itemize}
\noindent A proper map is clearly $\sigma$-proper.

A prepacman $\bF=(\bbf_-~,\bbf_+)$ of a pacman $f$ is called \emph{maximal} if both $\bbf_-$ and $\bbf_+$ extend to $\sigma$-proper maps $\bbf_-\colon \bX_-\to \C$ and $\bbf_+\colon \bX_+\to \C$. We will usually normalize $\bF$ such that $0=\psi^{-1}_\bF(\text{critical value})$, where $\psi_\bF$ is a quotient map from $\bF$ to $\bbf$, see~\S\ref{sec:PacmenRenorm}. Under this assumption $\bF$ is defined uniquely up to rescaling.

\begin{thm}[Existence of  maximal prepacmen]
\label{thm:MaxCommPair:short}
Every $f\in \WW^u$ sufficiently close to $f_\str$ has a maximal prepacman $\bF$ that depends analytically on $f$. 
 \end{thm}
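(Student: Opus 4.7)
The plan is to construct $\bF$ as a direct limit of rescaled prepacmen obtained by anti-renormalization of $f$. Since $f\in \WW^u$ is close to $f_\str$, the orbit $f_{-n}\coloneqq \RR^{-n}f$ is well-defined for all $n\ge 0$ and converges exponentially to $f_\str$; in particular, $f_{-n}\in \UU$ for all sufficiently large $n$, where $\UU$ is the neighborhood supplied by Key Lemma~\ref{keylem:ContrOfPullbacks}. Applying Lemma~\ref{lem:psi is adjusted} with $f_0=f_{-n}$ and $f_n=f$ yields a conformal embedding $\Phi_n\colon V(f)\setminus \gamma_1\to V(f_{-n})$ that realizes the prepacman $F$ of $f$ as
\[F^{(-n)}=\left(f_{-n}^{\aa_n}\colon U_{n,-}^{(-n)}\to S_n^{(-n)},\sp f_{-n}^{\bb_n}\colon U_{n,+}^{(-n)}\to S_n^{(-n)}\right)\]
inside the dynamical plane of $f_{-n}$. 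Thus every level $n$ furnishes a faithful copy of $F$ on arbitrarily small scales, and the program is to exploit the Key Lemma to extend these copies, then rescale to a fixed scale and pass to the limit.

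The essential new input is the Key Lemma: the iterates $f_{-n}^{\aa_n}$ and $f_{-n}^{\bb_n}$ extend from $U_{n,\pm}^{(-n)}$ to branched coverings $f_{-n}^\tt\colon D_0(f_{-n},\tt)\to D$ for $\tt\in\{\aa_n,\bb_n\}$, where $D$ is a fixed small disk around $c_1(f_\str)$ and $D_0(f_{-n},\tt)\subset U_{f_{-n}}\setminus \gamma_1(f_{-n})$. Because $D_0(f_{-n},\tt)$ avoids the forbidden boundary and $\gamma_1(f_{-n})$, these extensions descend through the dynamical identification $\beta_-^{(-n)}\sim \beta_+^{(-n)}$. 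Now choose affine normalizations $A_n$ (depending analytically on $f$) that send $c_1(f_{-n})$ to a fixed point and further normalize the scale, e.g.~by insisting that $A_n\circ \Phi_n$ agrees with a fixed affine chart at $c_1(f)$. Then the rescaled pairs
\[\bF_n\coloneqq \left(A_n\circ f_{-n}^{\aa_n}\circ A_n^{-1},\sp A_n\circ f_{-n}^{\bb_n}\circ A_n^{-1}\right)\]
all project, under the appropriate dynamical gluing, to the pacman $f$ on the common small nucleus. The dilatation of $\Phi_n$ tends to zero, so the rescaled domains $\bX_\pm^{(n)}\coloneqq A_n(D_0(f_{-n},\aa_n/\bb_n))$ grow without bound and eventually exhaust plane domains $\bX_\pm$.

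The key step is verifying consistency: for $m>n$, the intermediate change of variables $\Phi_n\circ \Phi_m^{-1}$ conjugates the relevant iterates of $f_{-m}$ to iterates of $f_{-n}$, while the Key Lemma's disks $D_0(f_{-k},\tt)$ are characterized uniquely as pullbacks of $D$ along the critical orbit of $f_{-k}$. Together with the commutation of the prepacmen on $\beta_0$, this forces $\bF_m\mid _{\bX_\pm^{(n)}}=\bF_n$. Setting $\bbf_\pm\coloneqq \lim_n \bF_{n,\pm}$ on $\bX_\pm\coloneqq \bigcup_n \bX_\pm^{(n)}$, each restriction $\bbf_\pm\mid_{\bX_\pm^{(n)}}\colon \bX_\pm^{(n)}\to A_n(D)$ is proper and the target disks $A_n(D)$ exhaust $\C$, giving $\sigma$-properness. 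Analytic dependence on $f$ follows from the analyticity of $\RR$ (Theorem~\ref{thm: An Pacm  self ope}), of $\Phi_n$, and of the holomorphic motion underlying the Key Lemma, together with the analytic choice of $A_n$.

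The main obstacle is precisely this consistency and the avoidance of the forbidden boundary across all levels simultaneously. A naive pullback of $D$ along iterates of $f_{-n}$ would generically meet $\gamma_1(f_{-n})$ or $\partial^\frb U_{f_{-n}}$ and destroy the branched covering structure, so the entire argument rests on the uniform blocking provided by the Key Lemma -- this is where the renormalization triangulations, bubble chains, and periodic ray approximations of \S\ref{ss:RenormalTriangul}--\S\ref{ss:RenTiling} all enter. The extraction of the limit is then straightforward direct-limit bookkeeping.
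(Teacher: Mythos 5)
Your outline mirrors the paper's proof of Theorem~\ref{thm:MaxCommPair:long}: embed the prepacman of $f$ into the dynamical plane of each anti-renormalization $\RR^{-n}f$ via Lemma~\ref{lem:psi is adjusted}, extend the resulting first-return iterates to commuting branched coverings over a fixed disk using Key Lemma~\ref{keylem:ContrOfPullbacks}, rescale, and take a union. The gap is in the rescaling step. You use \emph{affine} normalizations $A_n$ chosen to fix the $1$-jet of $A_n\circ\Phi_n$ at $c_1(f)$ and assert that $\bF_m\mid_{\bX_\pm^{(n)}}=\bF_n$ for $m>n$. For that to hold one would need $A_m\circ\Phi_m=A_n\circ\Phi_n$ as maps, not merely that they share a $1$-jet at a point. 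Since the composition of anti-renormalization changes of variables from depth $n$ to depth $m$ is a genuine nonaffine conformal map, no affine choice of $A_n$ can cancel its higher-order distortion; matching $1$-jets at $c_1(f)$ still leaves $A_n\circ\Phi_n\neq A_m\circ\Phi_m$, so the rescaled pairs $\bF_n$ and $\bF_m$ live on different regions of $\C$ and do not extend one another. The ``direct-limit bookkeeping'' therefore does not go through as written.

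The paper resolves exactly this by conjugating with the \emph{linearizing} charts $h_k^\#\circ T_k$ of \S\ref{s:MaxComPair} instead of affine maps. The Linearization lemma builds $h^\bullet_0$ as the Schr\"oder-type limit~\eqref{eq:h_f_0} of the non-autonomous sequence $(\phi^\bullet_i)$, and the functional equation~\eqref{eq:hf0:LinEq}, rewritten in the form~\eqref{eq:hf0:FuncEq}, gives $h^\bullet_0=h^\#_i\circ\left(\phi^\bullet_{i+1}\circ\dots\circ\phi^\bullet_0\right)$ for \emph{every} $i\le 0$. This is precisely the identity $A_n\circ\Phi_n=A_m\circ\Phi_m$ that you need: after applying $h^\#_k\circ T_k$, the conjugated commuting pairs at every depth $k$ coincide with a single pair $(\bbf_{0,-},\bbf_{0,+})$, and only the domains $\bW^{(k)}_{\pm}$ and $\bD^{(k)}$ grow with $|k|$. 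Replacing your $A_n$ by these charts and citing~\eqref{eq:hf0:FuncEq} closes the gap; the rest of your argument --- the role of the Key Lemma in keeping the pullbacks inside $V\setminus\gamma_1$, the exhaustion of $\C$ by the rescaled target disks, and the resulting $\sigma$-properness --- is sound and matches the paper.
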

 \noindent A refined statement will be proven as Theorem~\ref{thm:MaxCommPair:long}. The analytic dependence means that the restriction of a map to a disk compactly contained in the domain depends analytically in the associated Banach space. In the proof, we will show that $\bF$ is obtained from $f$ by an analytic change of variables. Note that analytic dependence is sufficient to check for one-parameter families.

\subsection{Linearization of $\psi$-coordinates}
Consider again $[f_0\colon U_0\to V]\in \WW^u$ close to $f_\str$. By definition of $\WW^u$, the map $f_0$ can be anti-renormalized infinitely many times. We define the \emph{tower of anti-renormalizations} as 
\[\TT(f_0) =  \left(F_k\right)_{k\le 0}.\]
Each $f_k$ embeds to the dynamical pane of $f_{k-1}$ as a prepacman  $F_{k}^{(k-1)}$ such that $f_{k,\pm}^{(k-1)}$ are iterates of $f_{k-1}$.

Let us specify the following translation \[ T_k\colon z\to z-c_1(f_k).\]
Let us now translate each $f_k$ so that $c_1(f_k)=0$. \emph{We mark the translated objects with ``$\bullet$.''} For $k\le0$, set \[\phi^{\bullet}_{k}(z):= T_{k-1} \circ \phi_{k}\circ T_k^{-1} \]
so that $\phi^{\bullet}_{k}(0)=0$. Similarly, define $U^{\bullet}_{k}\coloneqq T_k (U_k)$ and $V^{\bullet}_{k}\coloneqq T_k(V)$; and conjugate all $f_k$ and all $F_{k}=F^{(k)}_{k}$ by $T_k$; the resulting maps are denoted by $f_k^\bullet \colon U_k^\bullet \to V^\bullet_k$ and by
\[F^{\bullet }_{k}= (f^{\bullet }_{k,\pm }\colon U^{\bullet }_{k,\pm}\to S_{k}^{\bullet }).\]
We also write $\gamma^{\bullet}_1(f_k)\coloneqq T_k(\gamma_1)$. The tower  $\left(F^\bullet_k\right)_{k\le 0}$ is illustrated on Figure~\ref{Fig:Sf1dash}.

\begin{figure}[p]

{\begin{tikzpicture}

\begin{scope}[shift={(0,5)},scale =0.8]
\draw[blue, shift ={(0,0.6)},yscale =-1] (-1.1,4.3) edge[<-,bend left=8] node [above right] {$\phi^\bullet_{-2}$} (0.7,6.7);
\end{scope}

\begin{scope}[shift={(0,0)},scale =0.8]
\draw [rotate around={0.:(0.,0.)}] (0.,0.) ellipse (2.246099127742333cm and 1.0222334819623482cm);
\draw (-0.7,-0.04)-- (-1.4589248333679499,-0.7772346338620598);

\draw[red] (-0.7,-0.04)-- (-1.46,0.28);
\draw[red] (-1.46,0.28)-- (-1.06,0.64);
\draw[red] (-1.06,0.64)-- (-0.7,-0.04);
\draw (-1.5,-0.5) node {$\gamma^\bullet_1$};
\draw[red] (-.5,0.4) node {$S^\bullet_{-1}$};
\draw[blue, shift ={(0,0.6)},yscale =-1] (-1.1,0.3) edge[<-,bend left] node [ right] {$\phi^\bullet_{-1}$} (0.7,6.7);
\draw[shift= {(0,-1)}] (0.8,0.5) node{$f^\bullet_{-2}$};

\draw[shift={(1,-3)},blue]  (0,3.2) edge[->,bend left] node [above ] {$h^{\bullet}_{-2}$} (6,3);

{\begin{scope}[shift={(8,-1.)},scale =1.5,red]
\draw (-1.,-1.)-- (-1.,2.)-- (1.,2.)-- (1.,-1.);

\draw[scale=0.4] (-1.,-1.)-- (-1.,2.)-- (1.,2.)-- (1.,-1.);
\draw[scale=0.16] (-1.,-1.)-- (-1.,2.)-- (1.,2.)-- (1.,-1.);

\draw (0,1.4) node{$\bS_{-2}$};

\draw[blue] (-0.6,1.2) edge[->] node[above right] {$z\to \mu_\str z$}(-0.47,2.6);

\end{scope}} 

\end{scope}

%%%%%%%%%%%%%%%%%%%%%%%%%%%%%%%%%

\begin{scope}[shift={(0,-5)},scale =0.8]
\draw [rotate around={0.:(0.,0.)}] (0.,0.) ellipse (2.246099127742333cm and 1.0222334819623482cm);
\draw (-0.7,-0.04)-- (-1.4589248333679499,-0.7772346338620598);

\draw[red] (-0.7,-0.04)-- (-1.46,0.28);
\draw[red] (-1.46,0.28)-- (-1.06,0.64);
\draw[red] (-1.06,0.64)-- (-0.7,-0.04);
\draw (-1.5,-0.5) node {$\gamma^\bullet_1$};
\draw[red] (-.5,0.4) node {$S^\bullet_{0}$};
\draw[blue, shift ={(0,0.6)},yscale =-1] (-1.1,0.3) edge[<-,bend left] node [ right] {$\phi^\bullet_{0}$} (0.7,6.7);
\draw[shift= {(0,-1)}] (0.8,0.5) node{$f^\bullet_{-1}$};

\draw[shift={(1,-3)},blue]  (0,3.2) edge[->,bend left] node [above ] {$h^{\bullet}_{-1}$} (6,3);

{\begin{scope}[shift={(8,-1.)},scale =1.5,red]
\draw (-1.,-1.)-- (-1.,2.)-- (1.,2.)-- (1.,-1.);

\draw[scale=0.4] (-1.,-1.)-- (-1.,2.)-- (1.,2.)-- (1.,-1.);
%\draw[scale=0.16] (-1.,-1.)-- (-1.,2.)-- (1.,2.)-- (1.,-1.);

\draw (0,1.4) node{$\bS_{-1}$};
\draw [blue](-0.6,1.2) edge[->] node[right] {$z\to \mu_\str z$}(-0.25,4.6);
\end{scope}}

\end{scope}

%%%%%%%%%%%%%%%%%%%%%%%%%%%%%%%%%

\begin{scope}[shift={(0,-10)},scale =0.8]
\draw [rotate around={0.:(0.,0.)}] (0.,0.) ellipse (2.246099127742333cm and 1.0222334819623482cm);
\draw (-0.7,-0.04)-- (-1.4589248333679499,-0.7772346338620598);

%\draw[red] (-0.7,-0.04)-- (-1.46,0.28)-- (-1.06,0.64)-- (-0.7,-0.04);
\draw (-1.5,-0.5) node {$\gamma^\bullet_1$};
%\draw[red] (-.5,0.4) node {$S_{0}$};
%\draw[shift ={(0,0.6)},red,yscale =-1] (-1.1,0.3) edge[<-,bend left=10]  (-1.1,3.3);
\draw[shift= {(0,-1)}] (0.8,0.5) node{$f^\bullet_{0}$};

\draw[shift={(1,-3)},blue]  (0,3.2) edge[->,bend left] node [above ] {$h^{\bullet }_{0}$} (6,3);

{\begin{scope}[shift={(8,-1.)},scale =1.5,red]
\draw (-1.,-1.)-- (-1.,2.)-- (1.,2.)-- (1.,-1.);

%\draw[scale=0.4] (-1.,-1.)-- (-1.,2.)-- (1.,2.)-- (1.,-1.);
%\draw[scale=0.16] (-1.,-1.)-- (-1.,2.)-- (1.,2.)-- (1.,-1.);

\draw (0,1.4) node{$\bS_{0}$};

\draw [blue](-0.6,1.2) edge[->] node[right] {$z\to \mu_\str z$}(-0.25,4.6);

\end{scope}}

\end{scope}

\end{tikzpicture}}

\caption{Left: each pacman $f^\bullet_i$ embeds as a prepacman to the dynamical plane of $f^\bullet_{i-1}$ via $\phi^\bullet_{i}$. Right: sectors $S^\bullet_i$ after linearization of $\psi$-coordinates. Note that $S^\bullet_i$ can intersect $\gamma^\bullet_1$ in a small neighborhood of $\alpha^\bullet=T_i(\alpha)$.} \label{Fig:Sf1dash}
\vspace{128in}
%\clearpage
\end{figure}
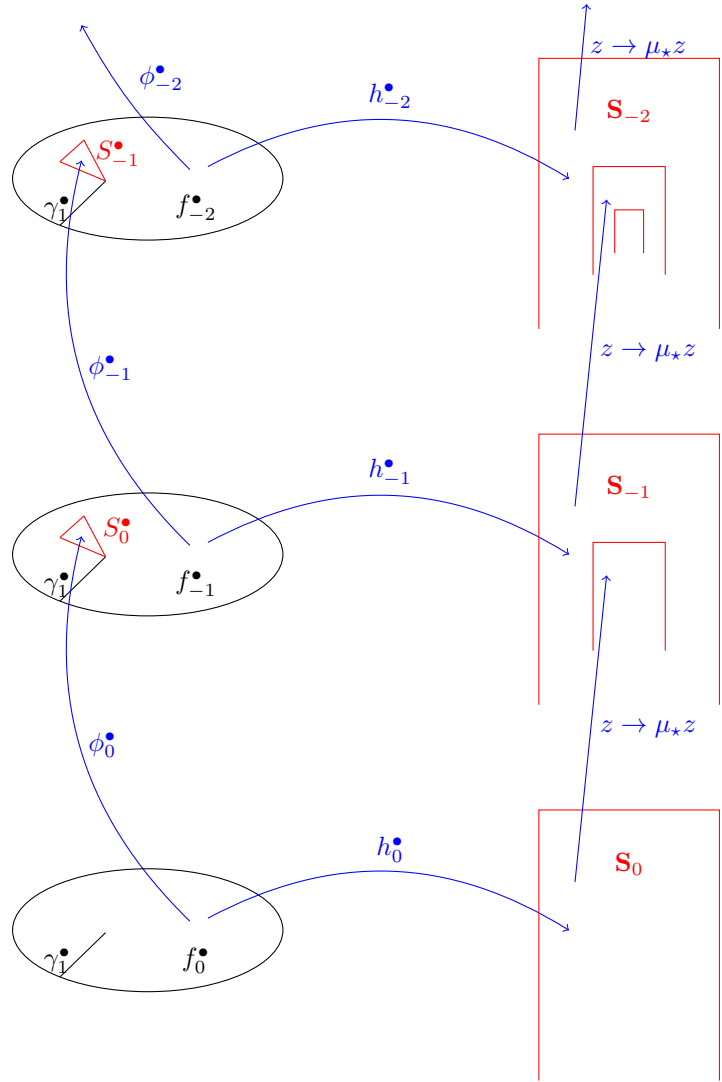

Denote by \[\mu_{*}:= \phi'_{*}(c_1(f_\str))= \left(\phi^{\bullet}_{*}\right)'(0), \sp\sp |\mu_\str |<1\]  the self-similarity coefficient of $\overline Z_\str$.

\begin{lem}[Linearization]
For every $f_0\in \WW^u$ sufficiently close to $f_\str$, the limit 
\begin{equation}
\label{eq:h_f_0}
h^\bullet_{0} (z)=h^\bullet_{f_0}\coloneqq \lim_{i\to - \infty} \frac{\phi^{\bullet}_{i+1}\circ \dots \circ \phi^{\bullet}_{0} (z)}{\mu_*^{-i}}
\end{equation}
is a univalent map on a certain neighborhood of $0$ (independent on $f_0$).
\end{lem}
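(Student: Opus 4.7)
The plan is to reduce the limit to an explicit telescoping series and estimate each term using Koenigs coordinates of $\phi_*^\bullet$ at $0$, exploiting the exponential convergence $f_k \to f_\str$ that comes from $f_0$ lying on the unstable manifold.

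First I would verify the basic normalizations. Because the renormalization change of variables $\psi_{k-1}$ conjugates $f_{k-1}$ to $f_k$ near the critical orbit, it sends $c_1(f_{k-1})$ to $c_1(f_k)$, so after recentering by $T_k$ its inverse satisfies $\phi_k^\bullet(0)=0$, with derivative $\mu_k:=(\phi_k^\bullet)'(0)$, and $\mu_k \to \mu_\str$ as $k\to -\infty$. Since $f_0 \in \WW^u$ and $\RR^{-1}$ restricted to $\WW^u$ is a contraction at some rate $\Lambda^{-1}<1$, we get $\dist(f_k,f_\str)=O(\Lambda^k)$ as $k\to-\infty$. By analytic dependence of the gluing map on $f$ (Theorem~\ref{thm:RenOper}), this promotes to
\[\|\phi_k^\bullet-\phi_\str^\bullet\|_{C^0(D_r)}=O(\Lambda^k),\qquad |\mu_k-\mu_\str|=O(\Lambda^k),\]
on some fixed disk $D_r=\{|z|<r\}$ independent of $f_0$ in a small neighborhood of $f_\str$.

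Next I would pass to the Koenigs coordinate. Since $|\mu_\str|<1$, there is a univalent $\kappa\colon D_{r}\to \C$ with $\kappa(0)=0$, $\kappa'(0)=1$, and $\kappa\circ\phi_\str^\bullet\circ\kappa^{-1}(w)=\mu_\str w$. In these coordinates set $\tilde\phi_k:=\kappa\circ\phi_k^\bullet\circ\kappa^{-1}$ and $\tilde\Phi_i:=\kappa\circ\Phi_i\circ\kappa^{-1}$ where $\Phi_i=\phi^\bullet_{i+1}\circ\dots\circ\phi^\bullet_0$. Writing $\tilde\phi_k(w)=\mu_\str w+\eta_k(w)$, we have $\eta_k(0)=0$, $\eta_k'(0)=\mu_k-\mu_\str$, and $\|\eta_k\|_{D_r}=O(\Lambda^k)$. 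Choose $\rho\in(|\mu_\str|,1)$ and shrink $r$ so that every $\tilde\phi_k$ is a strict $\rho$-contraction on $D_r$ uniformly in $k\ll 0$; then $|\tilde\Phi_{-n}(w)|\le C\rho^n$ on $D_r$.

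The main estimate is then the telescoping identity
\[\tilde h_{i-1}(z)-\tilde h_i(z)=\mu_\str^{i-1}\,\eta_i\bigl(\tilde\Phi_i(z)\bigr),\qquad \tilde h_i:=\mu_\str^i\,\tilde\Phi_i.\]
Using $|\eta_i(w)|\le |\mu_i-\mu_\str|\,|w|+C|w|^2$ together with $|\tilde\Phi_{-n}(z)|=O(\rho^n)$, the $i=-n$ term is bounded by
\[C|\mu_\str|^{-n-1}\bigl(\Lambda^{-n}\rho^n+\rho^{2n}\bigr)=C'\bigl((\rho/(\Lambda|\mu_\str|))^{n}+(\rho^2/|\mu_\str|)^n\bigr).\]
Taking $\rho$ close to $|\mu_\str|$ and $r$ small makes $\rho^2<|\mu_\str|$, and $\Lambda>1$ makes $\rho<\Lambda|\mu_\str|$; hence both geometric series are summable uniformly on $D_r$. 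Therefore $\tilde h_i$ converges uniformly to a bounded holomorphic limit, and conjugating back gives the uniform limit $h_0^\bullet$ on $\kappa^{-1}(D_r)$, a fixed neighborhood of $0$ independent of $f_0$. Uniform convergence of analytic maps preserves analytic dependence on the parameter $f_0$.

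Finally, univalence: the linear part of $h_0^\bullet$ at $0$ is $\prod_{k\le 0}(\mu_k/\mu_\str)$, which converges to a nonzero value because $|\mu_k/\mu_\str-1|=O(\Lambda^k)$ is summable. Each $h_i$ is univalent (it is a composition of univalent maps post-composed with a scaling), so Hurwitz's theorem upgrades the non-vanishing derivative at $0$ to univalence of $h_0^\bullet$ on a slightly smaller neighborhood of $0$. The only subtle point is uniform control of the contraction radius $r$ as $f_0$ varies, which is handled by the uniform exponential bounds above; the ``hard'' step is really the Koenigs-coordinate telescoping, after which everything is routine.
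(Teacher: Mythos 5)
Your proof is correct and reaches the same conclusion, but it packages the estimate differently from the paper. The paper argues multiplicatively: it shows the ratio $h^{(i-1)}(z)/h^{(i)}(z) = \mu_i/\mu_* + O\bigl(\phi^{\bullet}_{i+1}\circ\dots\circ\phi^{\bullet}_0(z)\bigr)$ tends to $1$ exponentially fast, so the telescoping \emph{product} converges; this requires only $\rho<1$ for the contraction rate of the composition. You argue additively, telescoping $\tilde h_{i-1}-\tilde h_i=\mu_\str^{i-1}\eta_i(\tilde\Phi_i)$ in a Koenigs chart for $\phi^\bullet_\str$; to make the quadratic remainder summable you then need the extra constraint $\rho^2<|\mu_\str|$, which is achievable by shrinking the disk but is a genuinely stronger hypothesis on $r$ than the paper needs. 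Passing to the Koenigs coordinate is a clean way to isolate $\eta_k=\tilde\phi_k-\mu_\str\,(\cdot)$ with $\|\eta_k\|=O(\Lambda^k)$, but it is not strictly necessary: the same telescoping works directly in the original coordinates using $\phi_k^\bullet(z)=\mu_k z+q_k(z)$ with $q_k=O(z^2)$ uniformly. Where your write-up adds value is the univalence step: the paper simply asserts that the limit is univalent, whereas you correctly identify the derivative at $0$ as the convergent infinite product $\prod_{k\le 0}(\mu_k/\mu_\str)\neq 0$ and invoke Hurwitz to upgrade local univalence of the approximants to univalence of the limit. The uniform radius $r$ (independent of $f_0$) is handled identically in both proofs via the uniform exponential bounds near $f_\str$ on the unstable manifold.
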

\noindent We remark that the linearization is normalized in such a way that $(h^\bullet_0)'(0)=1$ if $f_0=f_\str$.

\begin{proof}
Follows from a standard linearization argument. Write $\phi_{i}^{\bullet}(z)=\mu_iz +O(z^2)$; since $\phi^{\bullet}_{i}$ tends exponentially fast to $\phi^{\bullet}_{*}$ we see that $\mu_i$ tends exponentially fast to $\mu_*$ and that the constant in the error term does not depend on $i$. For $z$ in a small neighborhood of $0$, we have
\[\left|\phi^{\bullet}_{i+1}\circ \dots \circ\phi^{\bullet}_{0} (z)\right|\le C (|\mu_*| +\varepsilon )^i|z|\] for some constants $C$ and $\varepsilon>0$ such that $|\mu_*| +2\varepsilon<1$.
 Write 
\[h^{(i)}(z) \coloneqq  \frac{\phi^{\bullet}_{i+1}\circ \dots \circ \phi^{\bullet}_{0} (z)}{\mu_*^{-i}}.\]
Then
\[\frac{h^{(i-1)}(z)}{h^{(i)}(z)}= \frac{\phi^{\bullet}_{i}\left( \phi^{\bullet}_{i+1}\circ \dots \phi^{\bullet}_{0} (z) \right )}{\mu_*  \phi^{\bullet}_{i+1}\circ \dots \phi^{\bullet}_{0} (z)}= \frac{\mu_{-i+1}}{\mu_*}+O\left(\phi^{\bullet}_{i+1}\circ \dots \phi^{\bullet}_{0} (z) \right)\]
tends exponentially fast to $1$ in some neighborhood of $0$. This implies that $h^{(i)}(z)$ converges to a univalent map in some neighborhood of $0$.
\end{proof}

Let us write $h^\bullet_i=h_{f_i}$ and we set $\bS_i\coloneqq h_i^\bullet (S_i)$. \emph{We will usually use bold symbols for object in linear coordinates.} By construction~\eqref{eq:h_f_0}, the maps $h^\bullet_i$ satisfy the linearization equation (see Figure~\ref{Fig:Sf1dash})
\begin{equation}
\label{eq:hf0:LinEq}
h^{\bullet}_{i-1} \circ \phi_{i}^{\bullet}= [z\to \mu_\str  z] \circ h^{\bullet}_{i}. 
\end{equation}

For $i\le 0$, set
\begin{equation}
\label{eq:defn:h harsh}
h_{i}^{\#} (z) \coloneqq   \mu^{-i}_* h^\bullet_{0}(z).
\end{equation}
It follows from \eqref{eq:hf0:LinEq} that 
\begin{equation}
\label{eq:hf0:FuncEq}
 h^\bullet_{0}(z)= h^\#_{-1}\left(\phi^{\bullet}_{0}(z) \right)=\dots=h^\#_{i} \left(   \phi^{\bullet}_{i+1}  \circ\dots \circ \phi^{\bullet}_{0}(z)\right).  \end{equation}
\emph{We will usually use ``$\#$'' to mark linearized objects rescaled by $\mu_\str^{-i}$.}

\begin{lem}[Extension of $h^\bullet_{0}$]
Under the above assumptions $h_0^{\bullet }$ 
extends to a univalent map $h_0^{\bullet }\colon \intr(V^\bullet_0\setminus \gamma^{\bullet}_1)\to \C$.
\end{lem}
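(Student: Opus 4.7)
The strategy is to extend $h_0^\bullet$ by iterating the functional equation~\eqref{eq:hf0:FuncEq}, controlling the extension with the Key Lemma~\ref{keylem:ContrOfPullbacks}. Set $\Psi_i \coloneqq \phi_{i+1}^\bullet \circ \cdots \circ \phi_0^\bullet$. Each $\phi_k^\bullet$ is the inverse of a quotient gluing $\psi_k$ that is conformal on the interior of its sector, so $\Psi_i$ is univalent wherever it is defined. Thus the relation
\[
h_0^\bullet(z) \;=\; h_i^{\#}\bigl(\Psi_i(z)\bigr),
\]
initially valid in a neighborhood of $0$, identifies $h_0^\bullet$ as a composition of two pieces: the univalent embedding $\Psi_i$ (from a sector in $V_1^\bullet$ into $V_{i+1}^\bullet$), and $h_i^{\#}$ (a rescaled linearization defined near $0$ in the $i$-th anti-renormalization plane).

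My plan is to let $i = -n \to -\infty$ and to enlarge the domain of $\Psi_{-n}^{-1}$ by pulling back along the dynamics of $f_0^\bullet$ (equivalently, of $f_{-n}^\bullet$ after the embedding). For each $w \in \intr(V_0^\bullet \setminus \gamma_1^\bullet)$ I invoke the Key Lemma applied to the anti-renormalization $f_{-n}$, which lies close to $f_\str$ because $f_0 \in \WW^u$: for sufficiently large $n$ and some $\tt \in \{\aa_n, \bb_n\}$, the pullback $D_0$ of a small disk $D$ around $c_1(f_\str) = 0$ along the orbit $c_1(f_{-n}), \ldots, c_{1+\tt}(f_{-n})$ is contained in $U_{f_{-n}} \setminus \gamma_1^\bullet(f_{-n})$. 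Transporting these pullback domains through the embeddings $\phi_0^\bullet, \phi_{-1}^\bullet, \ldots, \phi_{-n+1}^\bullet$ and spreading them around by the $f_0^\bullet$-dynamics produces a cover of $\intr(V_0^\bullet \setminus \gamma_1^\bullet)$ by open sets on which the right-hand side of the functional equation makes sense and yields a well-defined value of $h_0^\bullet(w)$.

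Univalence of the extended map follows from three observations: (i) each local branch is a composition of univalent maps; (ii) the slit domain $\intr(V_0^\bullet \setminus \gamma_1^\bullet)$ is simply connected, so analytic continuation is single-valued and free of monodromy; (iii) any two local branches agree on the initial neighborhood of $0$ and hence everywhere on their common connected domain by the identity theorem. A standard normal-families argument, together with the compactness of schlicht functions, promotes the pointwise extension to a globally univalent map.

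The principal obstacle is precisely the possibility that a pullback disk could intersect $\gamma_1^\bullet$: across the cut the functional equation~\eqref{eq:hf0:FuncEq} is not applicable and the construction could develop branch points or monodromy. The Key Lemma is tailored to rule this out, by separating the forbidden boundary $\partial^\frb U_f$ from the relevant backward orbits using the bubble chains $B_x, B_y$ and external rays $R_x, R_y$ constructed in its proof. Once that separation is in hand, the rest of the argument is formal.
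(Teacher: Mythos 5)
Your proof misses the key structural input: the extension of the composition $\phi^{\bullet}_{i+1}\circ\cdots\circ\phi^{\bullet}_{0}$ to all of $\intr(V^\bullet_0\setminus\gamma^\bullet_1)$ is already the content of Lemma~\ref{lem:psi is adjusted}, applied with $f_i$ as the base pacman and $f_0 = \RR^{-i} f_i$ as its $(-i)$-th renormalization. There is nothing to ``enlarge'' and no disk pullbacks to control --- the gluing maps $\psi_k$ have been adjusted, in the construction of the renormalization tiling, precisely so that their inverses compose to a conformal map on the slit domain. Once this is in hand the proof is two lines: the composition contracts toward $0$ (since $|\mu_*|<1$), so for every $z$ in the slit domain there is an $i$ with $\phi^{\bullet}_{i+1}\circ\cdots\circ\phi^{\bullet}_{0}(z)$ in the small neighborhood of $0$ where $h^\bullet_i$ was constructed by~\eqref{eq:h_f_0}, and the functional equation~\eqref{eq:hf0:FuncEq} supplies the value of $h^\bullet_0(z)$; univalence is automatic since each factor in the composition is univalent.

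The detour through Key Lemma~\ref{keylem:ContrOfPullbacks} is both unnecessary and gap-riddled. The Key Lemma controls $f$-pullbacks of a small disk $D\ni c_1(f_\str)$ along the critical orbit, not the inverse gluing maps $\phi^\bullet_k$; in the paper it is deployed later, in Theorem~\ref{thm:MaxCommPair:long}, to push the prepacman past $V\setminus\gamma_1$ out to all of $\C$ --- a strictly harder goal than the one at hand. Your assertion that the transported pullback domains, spread around by the $f_0^\bullet$-dynamics, cover $\intr(V^\bullet_0\setminus\gamma^\bullet_1)$ is stated without proof; and if you tried to establish it by imitating the modulus argument in Theorem~\ref{thm:MaxCommPair:long}, you would need $h_k^\#$ (hence the present lemma) already in hand, making the argument circular. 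Replace the Key Lemma with a direct appeal to Lemma~\ref{lem:psi is adjusted}, drop the pullback machinery, and your sketch collapses to the paper's short proof.
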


\begin{proof}
 By Lemma~\ref{lem:psi is adjusted} the map $\phi^{\bullet}_{i-1}\circ \dots\circ \phi^{\bullet}_{0}$ extends to a conformal map defined on $\intr (V^\bullet_0\setminus \gamma^\bullet_1)$. Since $\phi^{\bullet}_{i-1}\circ \dots\circ  \phi^{\bullet}_{0}$  is contracting, for every $z\in \intr (V^\bullet_0\setminus \gamma^\bullet_1)$ there is an $i< 0$ such that $ \phi^{\bullet}_{i-1}  \circ\dots \circ \phi^{\bullet}_{0}(z)$ is within a neighborhood of $0$ where $h_{i} ^{\bullet }$ is defined (this is easily true if $f^\bullet_0=f^\bullet_\str$; applying Theorem~\ref{thm:ComPsConj} we obtain this property for all $f^\bullet_0$). Therefore,~\eqref{eq:hf0:FuncEq} extends dynamically $h^\bullet_{0}$ to $ \intr(V_0^\bullet \setminus \gamma^\bullet_1)$.
\end{proof}

Let us now conjugate every map $F_k^{\bullet }$ by $h_k^{\#}$; we define $\bF_k^{\# } \coloneqq h^{\# }_k \circ F_k^\bullet \circ \left(h^{\#}_k \right)^{-1}$.  We construct the \emph{tower in linear coordinates}
\begin{equation}
\label{eq:TowerTf0}
\TT^\# (\bF_0)= \left( \bF_{k}^{\#}\right)_{k\le 0} =\left( \bbf^{\# }_{k,\pm }\colon \bU^{\# }_{k,\pm}\to \bS_{k}^{\# }   \right)_{k\le 0},
\end{equation}
where
\begin{equation}
\label{eq:dfn:Ssharp} 
\intr\left(\bS^{\#}_{k}\right)= h^{\#}_{k}\left(V^{\bullet }_k \setminus \gamma^\bullet_1  \right)=h^{\#}_{k}\circ T_k\left(V_k \setminus \gamma_1  \right),
\end{equation}
and similarly other objects marked by ``$\#$'' are defined.

It follows from~\eqref{eq:RernReturns} that 
\begin{lem}
\label{lem:F_0 is iter of F_n}
There are numbers $m_{1,1}, m_{1,2},m_{2,1},m_{2,2}$ such that for $k< 0$ we have
\begin{align*}
\bbf^{\#}_{k+1,-} =  (\bbf^{\#}_{k,-})^{m_{1,1}}\circ  (\bbf^{\#}_{k,+})^{m_{1,2}},\\ 
\bbf^{\#}_{k+1,+} = (\bbf^{\#}_{k,-})^{m_{2,1}}\circ  (\bbf^{\#}_{k,+})^{m_{2,2}}.
\end{align*}
 \qed
\end{lem}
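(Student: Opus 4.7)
The plan is to reduce the lemma to the combinatorial decomposition of a single renormalization step, then propagate this down the tower via the compatibility of the linearizations $h^{\#}_k$.

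First I would verify the identity
\[
h^{\#}_{k+1} \;=\; h^{\#}_k \circ \phi^{\bullet}_{k+1}
\]
on a suitable neighborhood of $0$. Starting from the functional equation~\eqref{eq:hf0:FuncEq} combined with the definition~\eqref{eq:defn:h harsh}, the claimed identity is essentially tautological: one extra composition with $\phi^{\bullet}_{k+1}$ (which by~\eqref{eq:hf0:LinEq} corresponds to multiplication by $\mu_{*}$ in $h^{\bullet}$-coordinates) is absorbed by exactly one extra factor of $\mu_{*}^{-1}$, matching the shift of the rescaling exponent from $-k$ to $-(k{+}1)$.

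With this in hand, I would read off the image of $F_{k+1}^\bullet$ in $\#$-coordinates as
\[
\bF^{\#}_{k+1} \;=\; h^{\#}_{k+1}\circ F^{\bullet}_{k+1}\circ (h^{\#}_{k+1})^{-1}
\;=\; h^{\#}_k\circ F^{(k),\bullet}_{k+1}\circ (h^{\#}_k)^{-1},
\]
where $F^{(k),\bullet}_{k+1} = \phi^{\bullet}_{k+1}\circ F^{\bullet}_{k+1}\circ (\phi^{\bullet}_{k+1})^{-1}$ is the embedding of the level-$(k{+}1)$ prepacman into the dynamical plane of $f_k^{\bullet}$. By Lemma~\ref{lem:psi is adjusted} applied one step at a time, the two branches of $F^{(k),\bullet}_{k+1}$ are iterates $(f^{\bullet}_k)^{\aa_1}$ and $(f^{\bullet}_k)^{\bb_1}$ of the full pacman $f^{\bullet}_k$, where $\aa_1,\bb_1$ are the one-step renormalization return times from~\eqref{eq:RernReturns}; these are fixed integers that do not depend on $k$, since at every level of the tower we are iterating the same renormalization operator $\RR$.

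The final step is to split each such iterate into a product of the two branches $f^{\bullet}_{k,\pm}$. An orbit of $f^{\bullet}_k$ passes through $U^{\bullet}_{k,-}$ or $U^{\bullet}_{k,+}$ at each step according to which side of $\gamma^{\bullet}_1$ it sits on, so every application of $f^{\bullet}_k$ coincides with one of $f^{\bullet}_{k,-}$, $f^{\bullet}_{k,+}$. Since these two branches are lifts of the same holomorphic map, they commute on their common domain, and I can rearrange any such word into
\[
(f^{\bullet}_k)^{\aa_1}\big|_{U^{(k)}_{k+1,-}} \;=\; (f^{\bullet}_{k,-})^{m_{1,1}}\!\circ (f^{\bullet}_{k,+})^{m_{1,2}},\qquad
(f^{\bullet}_k)^{\bb_1}\big|_{U^{(k)}_{k+1,+}} \;=\; (f^{\bullet}_{k,-})^{m_{2,1}}\!\circ (f^{\bullet}_{k,+})^{m_{2,2}},
\]
with $m_{1,1}+m_{1,2}=\aa_1$ and $m_{2,1}+m_{2,2}=\bb_1$. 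The exponents $m_{i,j}$ are determined solely by the combinatorial picture of the sector renormalization (Appendix~\ref{ss:ap:SecRen}), which is identified across levels by Theorem~\ref{thm:ComPsConj}; hence they do not depend on $k$. Conjugating the two displayed identities by $h^{\#}_k$ preserves composition and delivers the stated formulae for $\bbf^{\#}_{k+1,\pm}$.

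The main obstacle is this final step: although the commutation of $f^{\bullet}_{k,\pm}$ on their overlap is immediate, one must check that the right-hand compositions make sense as maps between the relevant $\#$-domains, and that the particular ordering (all $f^{\#}_{k,+}$ factors first, then all $f^{\#}_{k,-}$ factors) is actually realized. This uses the explicit orbit picture of Figure~\ref{Fg:Prepacman:OrbitOfUi} together with the fact that $\aa_1,\bb_1 \geq 2$, which gives enough room on the overlap domain to perform the reordering without leaving the natural domains of the branches.
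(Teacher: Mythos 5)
Your argument is correct in substance and takes the same route the paper intends. The paper's own proof is a single pointer to the return-time relation~\eqref{eq:RernReturns}: the matrix $\mathbb M$ appearing there \emph{is} the matrix $(m_{i,j})$, and the lemma is the dynamical shadow of the fact that the level-$(k{+}1)$ return sector sits inside the level-$k$ one, with return combinatorics independent of $k$ because the same operator $\RR$ is applied at every step. You unpack that pointer into three explicit steps --- the compatibility $h^{\#}_{k+1}=h^{\#}_k\circ\phi^{\bullet}_{k+1}$ (which, once the sign in~\eqref{eq:defn:h harsh} is read consistently with~\eqref{eq:hf0:LinEq} and~\eqref{eq:hf0:FuncEq}, is exactly as tautological as you say), the embedding of $F_{k+1}$ into the $f_k$-plane via Lemma~\ref{lem:psi is adjusted}, and the factorization of the iterates $(f_k^\bullet)^{\aa_1},(f_k^\bullet)^{\bb_1}$ into the two branches --- and this is a faithful elaboration of the same computation.

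Two small corrections. The two branches $f^{\bullet}_{k,\pm}$ are distinguished by which side of $\gamma_0^\bullet$ (the dividing arc $\beta_0$ in the cut domain), not of $\gamma_1^\bullet$, the orbit point lies on. More importantly, your ``main obstacle'' paragraph overstates the issue. The orbit of a point in $U^{(k)}_{k+1,\pm}$ produces some balanced word of length $\aa_1$ (resp.~$\bb_1$) in the two symbols, not the sorted word; the point is precisely that the sorted form $(\bbf^{\#}_{k,-})^{m_{1,1}}\circ(\bbf^{\#}_{k,+})^{m_{1,2}}$ need not be ``actually realized'' by the orbit, because $\bbf^{\#}_{k,-}$ and $\bbf^{\#}_{k,+}$ are two distinct lifts of $f_k$ (cf.\ Appendix~\ref{ss:A1}) which commute by Definition~\ref{defn:Prepacmen}, so any word with exponent vector $(m_{1,1},m_{1,2})$ defines the same germ at $0$. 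Invoking $\aa_1,\bb_1\ge 2$ is not what makes the reordering work; the correct reading of the lemma is as an identity of analytic germs near $0$, with the domain questions deferred to Theorem~\ref{thm:MaxCommPair:long}, which is exactly how it is used in \S\ref{s:MaxParPrep}.
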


Note also that
\begin{equation}
\label{eq:defn:f hash}
\bbf^{\#}_{k,\pm} = \frac {1}{\mu_*^{k}}\bbf_{k,\pm}\left(\mu_*^{k} z \right).
\end{equation}

\subsection{Global extension of prepacmen in $\WW^u$}
Using Key Lemma~\ref{keylem:ContrOfPullbacks} we deduce
\begin{thm}[Existence of a maximal prepacman]
\label{thm:MaxCommPair:long} 
If $f_0\in \WW^u$  is sufficiently close to $f_\str$, then every pair $\bF_i^{\#}=\left( \bbf^{\#}_{k,\pm}\right)$ in the tower $\TT^{\#}(\bF_0)$ (see~\eqref{eq:TowerTf0}) extends to $\sigma$-proper branched coverings
\[\bbf^{\#}_{k,\pm}\colon \bX^{\#}_{k,\pm}\to \C,\] 
where $\bX^{\#}_{k,\pm}$ are open connected subsets of $\C$.
\end{thm}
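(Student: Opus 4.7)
The strategy is to use Key Lemma~\ref{keylem:ContrOfPullbacks} to control pullbacks of a fixed disk $D$ around $c_1(f_\str)$ along deep iterates of $f_0$, and then to transport these extensions into the linearized coordinates where the rescaling $h_k^\# = \mu_*^{-k} h_0^\bullet$ enlarges them to exhaust $\C$ in the limit.

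First I would reduce the problem to constructing the extensions at arbitrarily deep levels of the tower. By Lemma~\ref{lem:F_0 is iter of F_n} applied iteratively, for any $k \le 0$ and $N \ge 0$ the map $\bbf^\#_{k,\pm}$ is a composition of iterates of $\bbf^\#_{k-N,\pm}$, so $\sigma$-proper extensions at deep levels propagate upward by composition. Next I would identify $\bbf^\#_{-n,\pm}$ with a specific iterate of $f_0$: via the tower of anti-renormalizations (Lemma~\ref{lem:psi is adjusted}), the map $\bbf^\#_{-n,\pm}$ corresponds, after conjugation by $h_{-n}^\#$ and passage through the embedding $F^{(0)}_{-n}$, to the iterate $f_0^{\tt}$ with $\tt \in \{\aa_n, \bb_n\}$ in the dynamical plane of $f_0$. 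Applying the Key Lemma to this iterate yields a domain $D_0^{(\tt)} \subset U_{f_0} \setminus \gamma_1$ on which $f_0^{\tt} \colon D_0^{(\tt)} \to D$ is a branched covering, with $D_0^{(\tt)}$ crucially disjoint from the forbidden boundary $\partial^\frb U_{f_0}$.

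I would then transfer this branched covering to the linearized coordinate. Since $h_0^\bullet$ extends to a univalent map on $\intr(V_0^\bullet \setminus \gamma_1^\bullet)$ (as established following~\eqref{eq:hf0:FuncEq}), the branched covering pushes forward to a branched covering in $\C$, and the rescaling $h_{-n}^\# = \mu_*^n h_0^\bullet$ realizes it as $\bbf^\#_{-n,\pm}$ on the expanded target $\mu_*^n h_0^\bullet(D)$. To obtain $\sigma$-properness onto $\C$, I would then run this construction for all sufficiently large $n$ simultaneously: applying the Key Lemma at successive depths provides branched coverings whose targets, after the rescalings and use of the linearization equation~\eqref{eq:hf0:LinEq}, nest consistently and exhaust $\C$, since $|\mu_*| < 1$ forces $\mu_*^{-n} h_0^\bullet(D)$ to grow without bound as $n \to \infty$. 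Taking the union yields the required $\sigma$-proper branched coverings $\bbf^\#_{-n,\pm} \colon \bX^\#_{-n,\pm} \to \C$, and the composition reduction of the previous paragraph transfers them to every level $k \le 0$.

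The hard part will be verifying the compatibility of the extensions constructed at different depths. The linearization equation~\eqref{eq:hf0:LinEq}, which conjugates successive levels of the tower via $z \mapsto \mu_* z$, must be used to show that the branched covering obtained at depth $-(n+1)$ restricts on the appropriate scale to the one at depth $-n$, so that the union is a well-defined single-valued map. This compatibility is morally built into the tower structure, but rigorously checking it requires tracking that the Key Lemma's hypothesis --- that $f \in \RR^{-n}(\UU)$ for a sufficiently small neighborhood $\UU$ of $f_\str$ --- is preserved under all the anti-renormalizations, which is why we have assumed $f_0 \in \WW^u$ is sufficiently close to $f_\str$. Once this compatibility is established, the $\sigma$-properness is automatic: each level's extension is proper onto its finite target and the targets exhaust $\C$.
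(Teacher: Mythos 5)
Your high-level strategy matches the paper's: reduce to arbitrarily deep levels of the tower, use Key Lemma~\ref{keylem:ContrOfPullbacks} to extend a fixed disk $D$ around the critical value to branched coverings, and let the rescalings by powers of $\mu_*^{-1}$ exhaust $\C$. The reduction ``propagate upward by composition'' via Lemma~\ref{lem:F_0 is iter of F_n} is also sound, since compositions of $\sigma$-proper maps are $\sigma$-proper.

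However, there is a genuine gap: you apply the Key Lemma in the wrong dynamical plane, and this cannot be repaired. You claim that $\bbf^\#_{-n,\pm}$ corresponds to the iterate $f_0^\tt$ with $\tt\in\{\aa_n,\bb_n\}$ and then apply the Key Lemma to $f_0$. Both steps are reversed. First, $\bbf^\#_{-n,\pm}$ is the rescaled prepacman of the \emph{anti}-renormalization $f_{-n}$; in $f_{-n}$'s plane it is a \emph{short} iterate of $f_{-n}$. The long iterates $f_m^\tt$ with $\tt\in\{\aa_n,\bb_n\}$ arise from embedding the \emph{shallow} prepacman $F_0$ into the \emph{deep} plane of $f_m$, $m<0$, via $\phi_m\circ\cdots\circ\phi_{-1}$ --- so they represent $\bbf_{0,\pm}$, not $\bbf^\#_{-n,\pm}$. (The object ``$F^{(0)}_{-n}$'' you invoke does not exist: $F_{-n}$ does not embed into $f_0$'s plane; it is $F_0$ that embeds into $f_{-n}$'s plane.) Second, and more fatally, the Key Lemma's hypothesis requires $f\in\RR^{-n}(\UU)$, i.e.\ $\RR^n f$ must lie in the small neighborhood $\UU$ of $f_\str$. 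For $f=f_0\in\WW^u$ this fails once $n$ exceeds the escape time: $\RR^n f_0$ leaves $\UU$ along the unstable manifold. The Key Lemma must instead be applied to $f_k=\RR^k f_0$ with $k<0$, for which $\RR^{-k}f_k=f_0\in\UU$ holds for \emph{every} $k$, precisely because one moves \emph{backward} along the renormalization orbit, where $\RR^{-1}\mid\WW^u$ contracts toward $f_\str$. This is the paper's argument: embed the truncated prepacman $\mathfrak F_0$ into $f_k$'s plane, apply the Key Lemma to $f_k$ to extend the commuting pair $(f_k^{\aa_k},f_k^{\bb_k})$ to branched coverings over a disk $D$ around $c_1(f_k)$, transfer to linear coordinates, and let $k\to-\infty$. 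With your orientation the construction only produces a bounded range of levels and never exhausts $\C$.
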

\noindent Note that the case $f_0=f_\str$ follows from~\cite[Theorem 8.1]{McM3}.
\begin{proof}Let 
\[\mathfrak F_0 =\left(f_{0, \pm } \colon \mathfrak U_{0,\pm} \to \mathfrak S \coloneqq V\setminus (\gamma_1\cup O)\right)\] 
be a commuting pair obtained from $F_{0} = \left(f_{0, \pm } \colon U_{0,\pm}\to V\setminus \gamma_1\right)$ by removing a small neighborhood $O$ of $\alpha$ from $ V\setminus \gamma_1$ and by removing $f_{0,\pm}^{-1}(O)$ from $  U_{0,\pm}$. By Lemma~\ref{lem:psi is adjusted} the map
$\phi_{k-1}\circ   \dots\circ \phi_{0}$ embeds $\mathfrak F_0$ to the dynamical plane of $f_k$ as commuting pair denoted by
\begin{equation}
\label{eq:CommPaif:n}
\mathfrak F^{(k)}_0 =\left( f_k^{\aa_k},f^{\bb_k}_k \right)\colon \mathfrak U^{(k)}_{0,-} \cup \mathfrak U^{(k)}_{0,+} \to  \mathfrak S_0^{(k)}.
\end{equation}

Since $\phi_{k}$ is contracting at the critical value the diameter of $U^{(k)}_{0,-} \cup U^{(k)}_{0,+}\cup  \mathfrak S^{(k)}_{0}\ni c_1(f_n)$ tends to $0$. By Key Lemma~\ref{keylem:ContrOfPullbacks}, for a sufficiently big $k<0$
there is a small open topological disk $D$ around the critical value of $f_k$ such that  the pair~\eqref{eq:CommPaif:n} extends into a pair of commuting branched coverings
\begin{equation}
\label{eq:CommPaif:n:ext}
F^{(k)}_0 =\left( f_{k}^{\aa_k},f^{\bb_k}_{k} \right)\colon  W_{-}^{(k)} \cup   W^{(k)}_{+} \to D,
\end{equation} with $W_{-}^{(k)} \cup   W^{(k)}_{+} \cup D\subset V\setminus \gamma_1$.

Conjugating~\eqref{eq:CommPaif:n:ext} by $h_k^\#\circ T_k$ we obtain the commuting pair  
\[(\bbf_{0,-},\bbf_{0,+}) \colon  \bW^{(k)}_{-} \cup   \bW^{(k)}_{+} \to \bD^{(k)}.\]
 Since for a sufficiently big $t$ and all $m\le 0$ the modulus of the annulus $\bD^{(tm-t)} \setminus \bD^{(tm)}$ is uniformly bounded from $0$ we obtain $\displaystyle\bigcup_{k\ll 0} \bD^{(k)} =\C$. Setting
\begin{equation}
\label{eq:Xtm}
\bX_{0,-}\coloneqq \bigcup_{k\ll 0}  \bW^{(k)}_{-},  \;\;\; \bX_{0,+}\coloneqq \bigcup_{n\ll 0} \bW^{(k)}_{+}
\end{equation}
we obtain $\sigma$-proper maps $\bbf_{0,\pm}\colon \bX_{0,\pm}\to \C $, where $\bX_{0,\pm}$ are connected. Similarly, $(\bbf_{k,\pm}^{\#})$ extends to a pair of $\sigma$-proper maps.
\end{proof}

\section{Maximal parabolic prepacmen}
\label{s:MaxParPrep}
Since the multiplier of the $\alpha$-fixed point is in the unstable direction of $\RR$ at $f_\str$ (by Lemma~\ref{lem:eq:S3:R_prm}), we can 
 consider a parabolic pacman $f_0\in \WW^u$ close to $f_\str$ such that  Theorem~\ref{thm:MaxCommPair:long} applies for $f_0$. As in~\S\ref{s:MaxComPair}  we denote by $\bF_n=(\bbf_{n,\pm})$ the maximal prepacmen of $f_n=\RR^{n}f_0$ with $n\le 0$ and by $\bF_n^{\#}$ the rescaled version of $\bF_n$ so that $\bF_{0}$ is an iteration of $\bF^{\#}_n$, see Lemma~\ref{lem:F_0 is iter of F_n}.

\begin{figure}

\centering{\begin{tikzpicture}[red]

\begin{scope}[shift={(-1,0)},rotate=0,scale =0.4]
\draw (0,0) .. controls (1,1.2) and (3,1.5).. (5,1.5) .. controls (5.5,1) and (5.5,-1).. (5,-1.5) .. controls (3,-1.5) and (1,-1.2).. (0,0);

\draw[black] (3,0) edge[->,bend right] node [above] {$f_{0}$} (-1,2);
\end{scope}

\begin{scope}[shift={(-1,0)},rotate=120,scale =0.4]
\draw (0,0) .. controls (1,1.2) and (3,1.5).. (5,1.5) .. controls (5.5,1) and (5.5,-1).. (5,-1.5) .. controls (3,-1.5) and (1,-1.2).. (0,0);
\end{scope}

\begin{scope}[shift={(-1,0)},rotate=240,scale =0.4]
\draw (0,0) .. controls (1,1.2) and (3,1.5).. (5,1.5) .. controls (5.5,1) and (5.5,-1).. (5,-1.5) .. controls (3,-1.5) and (1,-1.2).. (0,0);
\end{scope}

\begin{scope}[shift={(5,0)},rotate=0,scale =0.4]
\draw (0,0) .. controls (1,0.4) and (3,0.7).. (5,0.7) .. controls (5.5,0.3) and (5.5,-0.3).. (5,-0.7) .. controls (3,-0.7) and (1,-0.4).. (0,0);

\draw[black] (2.5,0) edge[->,bend right] node [above] {$f_{-1}$} (-1.8,1.8);

\draw[black]  (-2,2) edge[<-,bend left] node [above] {$f_{0,-}$}  (0,3);
\draw[black]  (-4.5,0) edge[->,bend left] node [above left] {$f_{0,+}$}  (0,5);

\end{scope}
\begin{scope}[shift={(5,0)},rotate=45,scale =0.4]
\draw (0,0) .. controls (1,0.4) and (3,0.7).. (5,0.7) .. controls (5.5,0.3) and (5.5,-0.3).. (5,-0.7) .. controls (3,-0.7) and (1,-0.4).. (0,0);
\end{scope}
\begin{scope}[shift={(5,0)},rotate=90,scale =0.4]
\draw (0,0) .. controls (1,0.4) and (3,0.7).. (5,0.7) .. controls (5.5,0.3) and (5.5,-0.3).. (5,-0.7) .. controls (3,-0.7) and (1,-0.4).. (0,0);
\end{scope}
\begin{scope}[shift={(5,0)},rotate=135,scale =0.4]
\draw (0,0) .. controls (1,0.4) and (3,0.7).. (5,0.7) .. controls (5.5,0.3) and (5.5,-0.3).. (5,-0.7) .. controls (3,-0.7) and (1,-0.4).. (0,0);
\end{scope}
\begin{scope}[shift={(5,0)},rotate=180,scale =0.4]
\draw (0,0) .. controls (1,0.4) and (3,0.7).. (5,0.7) .. controls (5.5,0.3) and (5.5,-0.3).. (5,-0.7) .. controls (3,-0.7) and (1,-0.4).. (0,0);
\end{scope}
\begin{scope}[shift={(5,0)},rotate=225,scale =0.4]
\draw (0,0) .. controls (1,0.4) and (3,0.7).. (5,0.7) .. controls (5.5,0.3) and (5.5,-0.3).. (5,-0.7) .. controls (3,-0.7) and (1,-0.4).. (0,0);
\end{scope}
\begin{scope}[shift={(5,0)},rotate=270,scale =0.4]
\draw (0,0) .. controls (1,0.4) and (3,0.7).. (5,0.7) .. controls (5.5,0.3) and (5.5,-0.3).. (5,-0.7) .. controls (3,-0.7) and (1,-0.4).. (0,0);
\end{scope}
\begin{scope}[shift={(5,0)},rotate=315,scale =0.4]
\draw (0,0) .. controls (1,0.4) and (3,0.7).. (5,0.7) .. controls (5.5,0.3) and (5.5,-0.3).. (5,-0.7) .. controls (3,-0.7) and (1,-0.4).. (0,0);
\end{scope}
\begin{scope}[blue, shift={(5,0)},rotate=67.5,scale =0.4]
\draw (0,0) --(7,0);
\end{scope}
\begin{scope}[blue, shift={(5,0)},rotate=202.5,scale =0.4]
\draw (0,0) --(7,0);
\end{scope}

\end{tikzpicture}}
\caption{A parabolic pacman $f_0$ with rotation number $1/3$ embeds as a prepacman into the dynamical plane of a parabolic pacman $f_{-1}$ with rotation number $3/8$. We have $f_{0,-}=f_{-1}^3$ and $f_{0,+}=f_{-1}^2$.} \label{Fig:Ren3over8}
\end{figure}
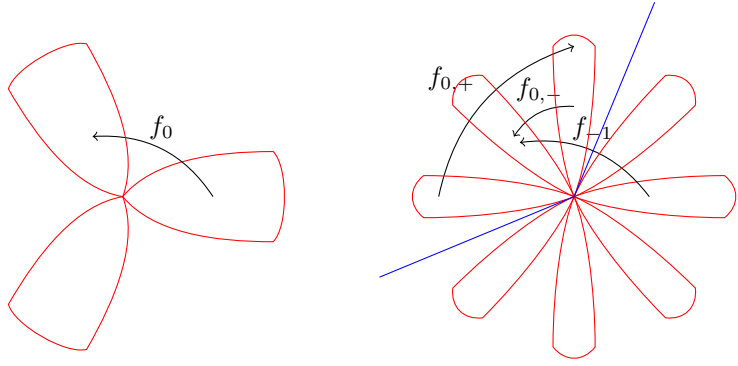

\subsection{The post-critical set of a maximal prepacman}
The \emph{forward orbit of $z\in \C$} under $\bF_n$ is
\[\orb_{z}(\bF_n):= \{\bbf_{n,-}^{s}\circ \bbf^{r}_{n,+}(z)\mid s,r \ge 0\};\]
  we do not require that $\bbf_{n,-}^{s}\circ \bbf^{r}_{n,+}(z)$ is defined for all pairs $ s,r$. A \emph{finite orbit of $z$} is 
 \[\orb_{z}^{\le q}(\bF_{n}):= \left\{\bbf_{n,-}^{s}\circ \bbf^{r}_{n,+}(z)\mid s,r \in \{ 0,1,\dots ,q\}\right\}.\]
 Similarly, $\orb_{z}(\bF^{\#}_n)$ and $\orb_{z}^{\le q}(\bF^{\#}_{n})$ are defined. Since $\bF_0$ is an iteration of $\bF^{\#}_n$, there is a $k>1$ such that \[\orb_{z}^{\le q}(\bF_{0})\subseteq \orb_{z}^{\le k^{-n}q}\left(\bF^{\#}_{n}\right)\]
for all $n\le 0$ and $z\in \C$.

An \emph{orbit path} of $\bF_{m}$ is a sequence $x_0, x_1,\dots , x_n$ such that either $x_{i+1}=\bbf_{m,-}(x_i)$ or $x_{i+1}=\bbf_{m,+}(x_i)$. Since $\bF_0$ is an iteration of $\bF^{\#}_n$, an orbit path of $\bF_{0}$ is a ``sub-orbit'' path of $\bF^{\#}_{n}$.

Let us denote by \[C(\bF_{k})\coloneqq \{z\in \C\mid \bbf'_{k,-}(z)=0 \text{ or }\bbf'_{k,+}(z)=0\}\] the set of critical points of $\bF_{k}$; its  \emph{post-critical set} is
\[P(\bF_{k})= \bigcup_{\substack{n+m> 0\\ n,m\ge 0}} \bbf_{k,-}^n\circ \bbf_{k,+}^m (C(\bF_{k})).\]  Similarly $P(\bF^{\#}_{n})$ is defined. Clearly,
\[P(\bF_{0}) \subset P\left(\bF^{\#}_{n}\right) = \mu_*^{n} P(\bF_{n}). \]

 Recall that $0$ is a critical value of $\bF^{\#}_{n}$ for all $n\le 0$; we denote by $\crt_n^{\#}$ the critical point of $\bF^{\#}_n$ such that $\crt_n^{\#}$ is identified with the critical point $c_0(f_n)$ under $\intr \bS^{\#}_n\simeq V\setminus \gamma_1$, see~\eqref{eq:dfn:Ssharp}.

\begin{lem}[Every critical orbit ``passes'' through $0$]
\label{lem:CritOrbPasses0}
For any critical point $x_0$ of $\bbf_{0,\iota}$ with $\iota\in\{-~,+\}$ the following holds. For all sufficiently big $n< 0$ there is an orbit path of $\bF_n^{\#}$ 
\begin{equation}
\label{eq:lem:CritOrbPasses0:OrbPath}
 x_0, x_1,x_2,\dots x_k;\;\; \sp x_{i}=\bbf^{\#}_{n, j(i)} (x_{i-1})
\end{equation}
such that 
\begin{itemize}
\item  $ \bbf_{0,\iota}= \bbf^{\#}_{n, j(k)}\circ \bbf^{\#}_{n, j(k-1)}\circ \dots \circ \bbf^{\#}_{n, j(1)}$, in particular $x_{k}= \bbf_{0,\iota}(x_0)$; 
\item $x_i=\crt_n^{\#}$ and $x_{i+1}=0$  for some $i$. 
\end{itemize}

Therefore,
\[P(\bF_{0})\subset \bigcup_{n\le 0} \orb_0\left( \bF^{\#}_{n}\right).\]
\end{lem}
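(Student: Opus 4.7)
The plan is to resolve $\bbf_{0,\iota}$ as a composition of the finer branches of $\bF_n^{\#}$ and then locate the critical behavior step by step. Concretely, iterating Lemma~\ref{lem:F_0 is iter of F_n} yields a factorization
\[
\bbf_{0,\iota}\;=\;\bbf^{\#}_{n, j(k)}\circ \bbf^{\#}_{n, j(k-1)}\circ \cdots \circ \bbf^{\#}_{n, j(1)}
\]
for every $n \le 0$, and setting $x_i := \bbf^{\#}_{n,j(i)}(x_{i-1})$ immediately produces an orbit path of $\bF_n^{\#}$ realizing the first bulleted requirement, with $x_k = \bbf_{0,\iota}(x_0)$. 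The hypothesis that $|n|$ is sufficiently large serves only to guarantee that $x_0$ lies in the domain on which the maximal branches $\bbf^{\#}_{n,\pm}\colon \bX^{\#}_{n,\pm}\to\C$ from Theorem~\ref{thm:MaxCommPair:long} actually realize this iterated composition; this is underwritten by their $\sigma$-properness.

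Next I would invoke the defining structural property of a prepacman (Definition~\ref{defn:Prepacmen} and Figure~\ref{Fg:Prepacman}): of the two maps $\bbf^{\#}_{n,-}$, $\bbf^{\#}_{n,+}$, exactly one is univalent while the other is a two-to-one branched cover carrying a single critical point. The normalization adopted in Section~\ref{s:MaxComPair} (namely $0 = \psi_{\bF}^{-1}(\mathrm{critical\ value})$, combined with the identification $\intr \bS_n^{\#}\simeq V\setminus\gamma_1$ of \eqref{eq:dfn:Ssharp}) identifies this unique critical point with $\crt_n^{\#}$ and its image with $0$. Now $(\bbf_{0,\iota})'(x_0)=0$ forces, by the chain rule, some factor $\bbf^{\#}_{n,j(i+1)}$ in the composition to have a critical point at $x_i$; by the uniqueness just recalled, $x_i=\crt_n^{\#}$ and hence $x_{i+1}=\bbf^{\#}_{n,j(i+1)}(\crt_n^{\#})=0$, giving the second bulleted requirement.

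The inclusion $P(\bF_0)\subset \bigcup_{n\le 0}\orb_0(\bF_n^{\#})$ then follows by propagating this along the post-critical orbit: any $y\in P(\bF_0)$ has the form $\bbf_{0,-}^s \circ \bbf_{0,+}^r(x_0)$ with $x_0\in C(\bF_0)$ and $s+r>0$, and factoring the innermost iterate (the one acting directly on the critical point $x_0$) via the first part of the lemma produces an orbit path of $\bF_n^{\#}$ that visits $0$ at some step $i+1$; factoring the remaining outer iterates through Lemma~\ref{lem:F_0 is iter of F_n} extends this orbit path all the way to $y$, and its suffix starting at $0$ exhibits $y$ as an element of $\orb_0(\bF_n^{\#})$. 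The only delicate point in this scheme is bookkeeping: pinning down precisely which factor in the iterated composition of Lemma~\ref{lem:F_0 is iter of F_n} is the critical one, and checking that for $|n|$ large the maximal-extension domains of Theorem~\ref{thm:MaxCommPair:long} accommodate every intermediate point of the orbit path; the remaining content is just the chain rule and the fact that each prepacman has a single critical point.
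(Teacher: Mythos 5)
There is a genuine gap in the uniqueness step. You assert that ``of the two maps $\bbf^{\#}_{n,-}$, $\bbf^{\#}_{n,+}$, exactly one is univalent while the other is a two-to-one branched cover carrying a single critical point.'' This is correct for the restricted prepacman $\bbf^{\#}_{n,\pm}\colon \bU^{\#}_{n,\pm}\to \bS^{\#}_n$ (where $\crt_n^{\#}$ is the unique critical point, identified with $c_0(f_n)$), but it is \emph{false} for the maximal $\sigma$-proper extensions $\bbf^{\#}_{n,\pm}\colon \bX^{\#}_{n,\pm}\to \C$ of Theorem~\ref{thm:MaxCommPair:long}, which are what you are actually composing in the factorization $\bbf_{0,\iota}=\bbf^{\#}_{n,j(k)}\circ\cdots\circ\bbf^{\#}_{n,j(1)}$. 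Those extensions generally carry many critical points --- indeed, if each $\bbf_{0,\pm}$ had a single critical point, the inclusion $P(\bF_0)\subset\bigcup_n\orb_0(\bF^{\#}_n)$ would be contentless. So finding a factor $\bbf^{\#}_{n,j(i+1)}$ with $(\bbf^{\#}_{n,j(i+1)})'(x_i)=0$ does \emph{not} force $x_i=\crt_n^{\#}$.

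To close the gap you must first establish that the orbit path $x_0,\dots,x_k$ stays inside the base domains $\bU^{\#}_{n,\pm}$, so that the critical factor you locate is forced to be $\crt_n^{\#}$. Your appeal to $\sigma$-properness does not deliver this. What does --- and what the paper's proof uses --- is the explicit domain structure coming out of Theorem~\ref{thm:MaxCommPair:long}: by \eqref{eq:Xtm}, $x_0\in\bW^{(n)}_{-}$ for some $n$, and $\bbf_{0,-}\mid\bW^{(n)}_{-}$ is conformally conjugate to $f_n^{\aa_n}\mid W^{(n)}_-\to D$, whose intermediate $f_n$-orbit remains in $U_n\setminus\gamma_1$ precisely because $W^{(n)}_-$ is the controlled iterated pullback of $D$ produced by Key Lemma~\ref{keylem:ContrOfPullbacks}. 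This is the one missing ingredient; once the orbit is pinned inside $\bU^{\#}_{n,\pm}$, your chain-rule and uniqueness steps go through and reduce to the paper's argument (a critical point of $f_n^{\aa_n}$ on $W^{(n)}_-$ must pass through $c_0(f_n)$ and hence through $0$).
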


\begin{proof}
Clearly, the second statement follows from the first. We will use notations from the proof of Theorem~\ref{thm:MaxCommPair:long}. Suppose for definiteness $\iota=``-''$. Recall~\eqref{eq:Xtm} that $\displaystyle\Dom \bbf_{0,-} =\bigcup_{n\ll 0}  \bW^{(n)}_{-} $; thus $x_0\in  \bW^{(n)}_{-} $ for some $n< 0$. The map $\bbf_{0,-}\mid  \bW^{(n)}_{-}$ is conformally conjugate to $f_{n}^{\aa_n}\mid W^{(n)}_-\to D$  (see~\eqref{eq:CommPaif:n:ext}) after identifying $ \bW^{(n)}_{-}$ with $W^{(n)}_{-}$. 
This shows that $x_0,  \bbf_{0,-}(x_0)$ us within an actual orbit $x_0, x_1,\dots, x_k$ of 
\[
(\bbf^\#_{n,\pm}\colon \bU^{\#}_{n,\pm}\to \bS^{\#}_n).
\]which is a prepacman of $f_n$.
We deduce that one of $x_i$ is $\crt_n^{\#}$ and  $x_{i+1}=0$.
\end{proof}

\subsection{Global attracting basin of a parabolic pacman}
\label{ss:AttrFlow}
Since $\bbf_{0,\pm}\colon \Dom\bbf_{0,\pm}\to \C$ are $\sigma$-proper commuting maps with maximal domain we have
\begin{equation}
\label{eq: dom f_0 pm}
\Dom(\bbf_{0,-}\circ \bbf_{0,+})=\Dom(\bbf_{0,+}\circ \bbf_{0,-})\subset \Dom \bF_0\coloneqq  \Dom \bbf_{0,-} \cap  \Dom \bbf_{0,+}.
\end{equation}

Note that for every $q\ge 1$ we have $f^q_0\not= \id$ in any small neighborhood of $\alpha(f_0)$ because, otherwise, considering a lift $\bbf^\rr_{0,-}\circ \bbf_{0,+}^\ss$ of $f_0^q$ we would obtain $\bbf^\rr_{0,-}\circ \bbf_{0,+}^\ss=\id$ in $\C$ which is impossible. Therefore, there is a small open attracting parabolic flower $H_{0}$ around the $\alpha$-fixed point of $f_0$. Each petal of $H_0$ lands at $\alpha$ at a well-defined angle. Assume $H_{0}$ is small enough so that $H_{0}\subset V\setminus \gamma_1$, possibly up to a slight rotation of $\gamma_1$. By Lemma~\ref{lem:gam1 rotate} the flower $H_0$ lifts to the dynamical plane of $\bF_0$ via the identification $V\setminus \gamma_1\simeq \intr \bS_{0}$; we denote by $\bH_{{0}}$ the lift.

 Let $\ee (\pp_0/\qq_0)\not=1$ be the multiplier of the $\alpha$-fixed point of $f_0$. Since $f_0$ is close to $f_\str$, we have $\qq_0>1$. By replacing $H_0$ with its sub-flower we can assume that there are exactly $\qq_0$ connected components of $H_{0}$ with combinatorial rotation number $\pp_0/\qq_0$. We enumerate them counterclockwise as $H_0^0,H_0^1,\dots, H_0^{\qq_0-1}$. Then $f_0$ maps $H_0^i$ to $H_0^{i+\pp_0}$. We will show in Corollary~\ref{cor:wHp:has:0} that $H_0$ is in fact unique; i.e.~$f_0$ has exactly $\qq_0$ attracting directions at $\alpha$. Denote by $\bH_0^i$ the lift of $H_0^i$ to the dynamical plane of $\bF_0$. 
 
 \begin{lem}
 \label{lem: wH is forw invar}
 There are $\rr,\ss\ge 1$ with $\rr+\ss=\qq_0$ such that 
 \[\bbf_{0,-}^\rr\circ\bbf_{0,+}^\ss (\bH_0^i)\subset \bH_0^i.\] 
 The set $\bH_0$ is in $\Dom(\bbf^a_{0,-}\circ \bbf^b_{0,+})$ for all $a,b\ge 0$.
 \end{lem}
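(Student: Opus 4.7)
The plan is to use the semi-conjugacy $\psi\colon S\to V$ coming from the gluing map of the prepacman $\bF_0$; recall that $\psi$ satisfies $\psi\circ \bbf_{0,\pm}=f_0\circ \psi$ wherever both sides make sense and that its restriction to $\intr S$ is a conformal homeomorphism onto $V\setminus \gamma_1$. Because $H_0\subset V\setminus \gamma_1$ by construction, the flower lifts uniquely through $\psi^{-1}$ to $\bH_0\subset \intr S$, each petal $H_0^i$ corresponding to a unique $\bH_0^i\subset \intr S$.

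The first step is to check that the original commuting pair $F_0=(f_{0,-}\colon U_{0,-}\to S,\, f_{0,+}\colon U_{0,+}\to S)$ permutes the petal lifts inside $S$ by $i\mapsto i+\pp_0\bmod \qq_0$: for $z\in \bH_0^i\cap U_{0,\pm}$, the image $f_{0,\pm}(z)$ lies in $S$ and projects under $\psi$ to $f_0(\psi(z))\in H_0^{i+\pp_0}$, so uniqueness of the lift in $\intr S$ forces it into $\bH_0^{i+\pp_0}$.

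Next I would close this up into a cycle. Starting from a point of $\bH_0^i$, at each step I apply whichever branch $\bbf_{0,-}$ or $\bbf_{0,+}$ is native to the sub-sector containing the current petal (taking $H_0$ small enough that each $\bH_0^j$ lies entirely in one of $T_-$, $T_+$). Since $f_0^{\qq_0}$ maps $H_0^i$ into itself, after exactly $\qq_0$ steps the orbit must return to $\bH_0^i$. Letting $\rr$ count the $\bbf_{0,-}$ applications and $\ss$ the $\bbf_{0,+}$ applications, one has $\rr+\ss=\qq_0$, and commutativity of $\bbf_{0,\pm}$ repackages the composition as $\bbf_{0,-}^\rr\circ \bbf_{0,+}^\ss$. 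Both $\rr,\ss\ge 1$ because $\qq_0\ge 2$ and the dividing arc $\beta_0$ partitions the $\qq_0$ petals into two non-empty groups, forcing both branches to be used along the loop.

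For the second assertion I would need to show more, namely that each branch $\bbf_{0,\pm}$ is defined on every petal $\bH_0^j$, not only on the petals of its native sub-sector. The $\sigma$-proper extensions $\bX_{0,\pm}$ from Theorem~\ref{thm:MaxCommPair:long} were built by pulling back a small disk near $c_1$; the idea is that in the cyclic loop constructed above each branch extends analytically across every $\bH_0^j$ by commuting with the other branch along the orbit, so $\bH_0\subset \bX_{0,-}\cap \bX_{0,+}$. Combining this with the forward invariance established above yields $\bH_0\subset \Dom(\bbf_{0,-}^a\circ \bbf_{0,+}^b)$ for all $a,b\ge 0$ by induction on $a+b$. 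The delicate step, and the one I expect to be the main obstacle, is precisely this domain verification, because it requires controlling how the pullback domains $\bW^{(k)}_\pm$ in the proof of Theorem~\ref{thm:MaxCommPair:long} fit together near $\alpha$ and cover the full cyclic loop rather than only the native sub-sector.
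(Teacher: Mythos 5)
Your argument for the first claim follows the same route as the paper: cut along $\gamma_1$, track the petals as they cycle with rotation number $\pp_0/\qq_0$, and count the applications of $\bbf_{0,-}$ versus $\bbf_{0,+}$ around one period of length $\qq_0$; the observation that $\rr,\ss\ge 1$ because $\gamma_1$ separates the petals into two non-empty groups is exactly what is meant in the paper by ``cutting the prepacman along $\gamma_1$.'' So that half is fine, just more verbose.

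For the second claim you have a gap: you flag the domain verification as ``the main obstacle'' and propose to re-examine how the pullback domains $\bW^{(k)}_\pm$ from the proof of Theorem~\ref{thm:MaxCommPair:long} fit together near $\alpha$, but that is not needed and would be the wrong place to look. The mechanism that closes the argument is the identity
\[
\Dom(\bbf_{0,-}\circ \bbf_{0,+})=\Dom(\bbf_{0,+}\circ \bbf_{0,-})\subset \Dom \bbf_{0,-} \cap  \Dom \bbf_{0,+},
\]
namely~\eqref{eq: dom f_0 pm}, which holds automatically because $\bbf_{0,\pm}$ are commuting $\sigma$-proper maps with maximal domains. Once forward invariance gives $\bH_0\subset \Dom(\bbf_{0,-}^{\rr j}\circ \bbf_{0,+}^{\ss j})$ for every $j\ge 0$, you take $j$ with $\rr j\ge a$ and $\ss j\ge b$; applying~\eqref{eq: dom f_0 pm} repeatedly lets you commute the factors past one another and truncate, yielding $\bH_0\subset \Dom(\bbf_{0,-}^{a}\circ \bbf_{0,+}^{b})$. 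In particular there is nothing delicate to control near $\alpha$ and no induction on $a+b$ is needed. You should also note that the same identity is what licenses the ``repackaging by commutativity'' you use in the first half when you rearrange the native ordering of branches into $\bbf_{0,-}^{\rr}\circ\bbf_{0,+}^{\ss}$: without~\eqref{eq: dom f_0 pm} it is not a priori clear that the rearranged composition has the same domain.
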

 \noindent It will follow from Proposition~\ref{prop:ClassOfPerComp} that $\bbf_{0,-}^\rr\circ\bbf_{0,+}^\ss\colon \bH_0^i\to \bH_0^i$ is the first return map.
\begin{proof}
We have $f_0^{\qq_0}(H_0^i)\subset H_0^i$. Cutting the prepacman $f_0$ along $\gamma_1$ we see that there are $\rr,\ss\ge 1$ with $\rr+\ss=\qq_0$ such that 
$f_{0,-}^\rr\circ f_{0,+}^\ss (H_0^i)\subset H_0^i.$ This implies the first claim. As a consequence, $\bH_0$ is in $\Dom(\bbf^{\rr j}_{0,-}\circ \bbf^{\ss j}_{0,+})$ for all $j\ge 0$. Combined with~\eqref{eq: dom f_0 pm}, we obtain the second claim. 
\end{proof} 
 As a consequence, all of the branches of $\bbf^a_{0,-}\circ \bbf^b_{0,+}$ with $a,b\in \Z$ are well defined for points in $\bH_0$. Set \[\bH\coloneqq \bigcup_{a,b\in \Z} (\bbf_{0,-})^a\circ (\bbf_{0,+})^{b} \left(\bH_{0} \right)\]
 to be the full orbit of $\bH_{{0}}$. Since $\bbf_{0,-}$, $\bbf_{0,+}$ commute and $\bH_0$ is forward invariant under $\bbf_{0,-}^\rr\circ\bbf_{0,+}^\ss$, the set $\bH$ is an open fully invariant subset of $\C$ within ${\Dom \bbf_{0,-}  \cap \Dom \bbf_{0,+} }$. We call $\bH$ the \emph{global attracting basin} of the $\alpha$-fixed point.

A connected component $\bH'$ of $\bH$ is \emph{periodic} if there are $s,r\in \N_{> 0}$ such that $\bbf_{0,-}^s\circ \bbf_{0,+}^r(\bH')=\bH'$. A pair $(s,r)$ is called a period of $\bH'$. We will show in Corollary~\ref{cor:no ghost per comp} that there is no component $\bH'$ of $\bH$ such that $\bbf^r_{0,-}(\bH') =\bH'$ or $\bbf^r_{0,+}(\bH') =\bH'$ for some $r>0$. 

 By Lemma~\ref{lem: wH is forw invar}, the components of $ \bH$ intersecting $\bH_0$ are $(\rr,\ss)$-periodic. Observe next that for any periodic component $\bH'$ and any component $\bH''$ of $\bH$ there are $a,b\ge 1$ with $\bbf_{0,-}^a\circ \bbf_{0,+}^b(\bH'')=\bH'$;~i.e.~$\bH'$ and $\bH''$ are dynamically related. Indeed, by definition there are $a',b'\in \Z$ such that a certain branch of $\bbf_{0,-}^{a'}\circ \bbf_{0,+}^{b'}$ maps $\bH''$  to $\bH'$. Applying $\bbf_{0,-}^{\ss t}\circ \bbf_{0,+}^{\rr t}$ with $t\gg 1$, we obtain required $a,b\ge 1$. As a consequence, all the periodic components have the same periods; in particular they are $(\rr,\ss)$-periodic.

\subsection{Attracting Fatou coordinates}
\label{ss:AttrFatCoord}
 It is classical that $f_0^{\qq_i}\colon H_{0}^{0}\to H_{0}^{0}$ admits \emph{attracting Fatou coordinates:} a univalent map $h:  H_{0}^{0} \to \C$ such that
\begin{itemize}
\item $h\circ f^{\qq_0}_0(z)= h(z)+1$; and
\item there is an $L>1$  such that 
\begin{equation}
\label{eq:dfn:L} h(H_{0}^{0})\supset  \{z\in \C \mid \Re(z)> L\}.
\end{equation}
\end{itemize}

There is a unique dynamical extension $h\colon H_{0}\to \C$ such that
\begin{equation}
\label{eq:defn:h}h\circ f_0(z)= h(z)+1/\qq_0.
\end{equation}

Lifting $h$ to the dynamical plane of $\bF_{0}$ we obtain $\bbh\colon  \bH_{0}\to \C$.

\begin{lem}[Fatou coordinates of $ \bH$]
\label{lem:FatCoordPar}
The map $\bbh\colon  \bH_{0}\to \C$ extends uniquely to a  map $\bbh\colon \bH\to \C$ satisfying 
\begin{equation}
\label{eq:defn:wid:h}
\bbh\circ \bbf_{0,\pm}(z)= \bbh(z)+1/\qq_0.
\end{equation}
 for any choice of ``$\pm$''. For every component $\bH'$ of $ \bH$, the map $\bbh\mid \bH'$ is $\sigma$-proper. The singular values of $\bbh$ are exactly the $\bbh$-images of the critical points of $\bF_0$ and their iterated preimages. 
 
 Moreover, components of $\bH_0$ are in different components of $ \bH$. The set $\bH$ is a proper subset of $\C$. By postcomposing $\bbh$ with a translation we can assume that
 \begin{equation}
 \label{eq:Norm of bbh}
 \bbh(0)=0. 
 \end{equation}
\end{lem}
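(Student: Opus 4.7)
The plan is to construct $\bbh$ on $\bH$ by backward transport of the Fatou coordinate already defined on $\bH_0$, then deduce the other properties from $\sigma$-properness of $\bF_0$ (Theorem~\ref{thm:MaxCommPair:long}) and commutativity of $\bbf_{0,\pm}$.

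First, I would establish the extension and its well-definedness. For any $z\in \bH$, definitionally $z=\bbf_{0,-}^{a'}\circ \bbf_{0,+}^{b'}(w)$ with $w\in \bH_0$ and $a',b'\in \Z$; after applying $\bbf_{0,-}^{\rr t}\circ \bbf_{0,+}^{\ss t}$ for large $t$ and using the $(\rr,\ss)$-invariance of $\bH_0$ from Lemma~\ref{lem: wH is forw invar} together with~\eqref{eq: dom f_0 pm}, we may assume $a',b'\ge 0$. I would then set
\[
\bbh(z)\coloneqq \bbh\bigl(\bbf_{0,-}^{a}\circ \bbf_{0,+}^{b}(z)\bigr)-(a+b)/\qq_0
\]
for any $a,b\ge 0$ with $\bbf_{0,-}^{a}\circ \bbf_{0,+}^{b}(z)\in \bH_0$. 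Crucially, both $\bbf_{0,-}$ and $\bbf_{0,+}$ descend, via the identification $\intr\bS_0\simeq V\setminus \gamma_1$, to $f_0$; consequently~\eqref{eq:defn:h} lifts to $\bbh\circ \bbf_{0,\pm}=\bbh+1/\qq_0$ on $\bH_0$ (this uses that $H_0\subset V\setminus \gamma_1$, possibly after a mild rotation of $\gamma_1$ as in Lemma~\ref{lem:gam1 rotate}, so each $H_0^i$ lifts unambiguously to $\bH_0^i$). Given any two admissible choices $(a_1,b_1),(a_2,b_2)$, commutativity of $\bbf_{0,\pm}$ lets one apply the missing iterates to the smaller pair, and the functional equation on $\bH_0$ then yields equality of the two candidate values. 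Holomorphy is automatic, and uniqueness follows because any extension satisfying the functional equation is determined by its values on $\bH_0$.

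Second, I would deduce $\sigma$-properness of $\bbh\mid \bH'$ for each component $\bH'\subset \bH$. Fix a component $\bH_0^i\subset \bH'\cap \bH_0$. On $H_0^i$ the classical Fatou coordinate $h$ is a conformal isomorphism onto its image (which contains a right half-plane by~\eqref{eq:dfn:L}), hence $\sigma$-proper (in fact proper). Exhausting $\bH'$ by the increasing sequence of preimages $\bigcup_{a+b\le N}(\bbf_{0,-}^a\circ \bbf_{0,+}^b)^{-1}(K_N)$ for compacts $K_N$ filling $\bH_0^i$, and using $\sigma$-properness of each $\bbf_{0,-}^a\circ \bbf_{0,+}^b$ from Theorem~\ref{thm:MaxCommPair:long}, each piece maps properly under $\bbh$ onto a left-shifted copy of $h(K_N)$. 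Since the functional equation shows $\bbh$ is locally of the form $(\text{Fatou coordinate})\circ (\bbf_{0,-}^a\circ \bbf_{0,+}^b)$ minus a constant, critical points of $\bbh$ are exactly the critical points of these compositions, i.e.\ iterated preimages of critical points of $\bF_0$. Asymptotic values coincide with critical values, since a $\sigma$-proper map has no ``finite-value'' asymptotic values outside its critical set.

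Third, I would prove that distinct components of $\bH_0$ lie in distinct components of $\bH$. The fixed-point theory of $\bbh$ does the work: on each component $\bH'$ intersecting $\bH_0$, the map $\bbh$ intertwines the first-return iterate $\bbf_{0,-}^\rr\circ\bbf_{0,+}^\ss$ (from Lemma~\ref{lem: wH is forw invar}) with translation by $(\rr+\ss)/\qq_0=1$, so $\bH'\cap \bH_0$ projects bijectively through $\bbh$ onto a ``fundamental half-plane modulo $\Z$''; if two petals $\bH_0^i,\bH_0^j$ with $i\ne j$ lived in the same component, the two Fatou coordinates on them would differ by a constant that is forced to be an integer multiple of $1/\qq_0$ (by the functional equation) while also being non-integer (by the petal combinatorics $\pp_0/\qq_0$), a contradiction. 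Finally, $\bH\subsetneq \C$ because the lifts of $\alpha$ to $\bS_0$ (and their orbits under $\bbf_{0,\pm}^{-1}$) lie in $\partial \bH$, and the critical value $c_1(f_0)$ lies in $H_0$ by Corollary~\ref{cor:wHp:has:0}, so $0=h_0^\#\circ T_0(c_1)\in \bH_0\subset \bH$ and the normalization $\bbh(0)=0$ is achieved by a single translation.

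The main obstacle is the separation-of-components claim: unlike the other parts, which follow mechanically from commutativity and $\sigma$-properness, it requires a genuinely combinatorial input — namely that the ``sheet structure'' of the maximal unfolding $\bF_0$ separates the $\qq_0$ parabolic petals, which I handle via the incompatibility between $\Z/\qq_0$-translation differences forced by the functional equation and the petal permutation by $\pp_0$.
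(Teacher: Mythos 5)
Your extension step (backward transport via $\bbh(z) = \bbh(\bbf_{0,-}^a\circ\bbf_{0,+}^b(z)) - (a+b)/\qq_0$, well-defined by commutativity) and your $\sigma$-properness step (propagate from $\bH_0$, where $\bbh$ is a conformal isomorphism onto a domain containing a half-plane, using $\sigma$-properness of $\bbf_{0,\pm}$) match the paper's argument. The claim on singular values is also handled the same way: $\bbh$ is locally $(\text{conformal})\circ\bbf_{0,-}^a\circ\bbf_{0,+}^b$, and a $\sigma$-proper map has no asymptotic values --- though your phrase ``asymptotic values coincide with critical values'' should just say there are none.

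The separation-of-components step is where you have a genuine gap. You assert that if $\bH_0^i$ and $\bH_0^j$ were in the same component then ``the two Fatou coordinates on them would differ by a constant that is an integer multiple of $1/\qq_0$ while also being non-integer.'' I cannot see what the two coordinates are or where the constant comes from: on a single component $\bH'$, $\bbh$ is \emph{one} holomorphic function, and its restrictions to $\bH_0^i$ and $\bH_0^j$ are each fully determined by the lift of $h$ (which is uniquely pinned down by~\eqref{eq:defn:h}), so there is no additive ambiguity to exploit, and no visible contradiction. The paper's argument is dynamical and much tighter: $\bbf_{0,-}^\rr\circ\bbf_{0,+}^\ss$ fixes each $\bH_0^z$ setwise (Lemma~\ref{lem: wH is forw invar}), every point in a periodic component eventually enters $\bH_0$ under this return map and then stays forever in the petal it first hits, the ``destination petal'' is locally constant, hence constant on the connected component; so $\bH^x$ sends everything to $\bH_0^x$ while $\bH^y$ sends everything to $\bH_0^y$, forcing $\bH^x\neq\bH^y$. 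The assertion $\bH\neq\C$ then follows for free (at least two components), whereas you introduce an extra argument via lifts of $\alpha$ that you would still need to justify lie on $\partial\bH$. Finally, note that you invoke Corollary~\ref{cor:wHp:has:0} for the normalization, but that corollary comes \emph{after} this lemma in the paper and relies on $\bbh\colon\bH^\per\to\C$ not being a covering map, i.e.\ on the present lemma; so this reference is circular and should be removed (the normalization is just a translation once one knows $0\in\bH$, which is elementary from the fact that the critical orbit enters $H_0$).
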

\begin{proof}
On $\bH_{0}$ Equation~\eqref{eq:defn:wid:h} is just a lift of\eqref{eq:defn:h}. Applying $\bbf_{0,\pm}^{-1}$ and using commutativity of $\bbf_{0,-},\bbf_{0,+}$, we obtain a unique extension of $\bbh$ to $\bH$ such that~\eqref{eq:defn:wid:h} holds.

Since $\bbf_{0,-},\bbf_{0,+}$ are $\sigma$-proper maps, so is $\bbh\mid \bH$. Indeed, suppose that $\bH'\subset  \bH$ is a periodic component intersecting $\bH_0$; the other cases follow by applying a certain branch of $\bbf^a_{0,-}\circ \bbf^b_{0,+}$, where $a,b\in\Z$. Recall from Lemma~\ref{lem: wH is forw invar} that  $\bH'$ is $(\rr,\ss)$-periodic. Consider a compact set $K\subset \C$. We denote by $\bK$ a connected component of the preimage of $K$ under $\bbh\mid \bH'$. Then for a sufficiently big 
$i\gg 1$, we have $\Re(K+ i)> L$ and $\bK_2\coloneqq  \bbf_{0,-}^{\rr i}\circ \bbf_{0,+}^{\ss i}( \bK)$ intersects $\bH_0$, where $L$ is a constant from~\eqref{eq:dfn:L}. Then $\bK_2\subset \bH_0$ and $\bK_2$ is compact as a connected component of the preimage of $K+ i$ under $\bbh\mid \bH' \cap  \bH_0$. We obtain that $\bK\subset \bbf_{0,-}^{-\rr i}\circ \bbf_{0,+}^{-\ss i}(\bK_2)$ is compact because $\bbf_{0,-}^{\rr i}\circ \bbf_{0,+}^{\ss i}$ is $\sigma$-proper. This also shows that singular values of  $\bbh$ are the $\bbh$-images of either critical points of $\bF_{0}$ or their iterated preimages. (We recall a $\sigma$-proper map has no asymptotic values.)

Let $\bH^x_0$ and $\bH^y_0$ be two different components of $\bH_0$ and let $\bH^x$ and $\bH^y$ be the periodic components of $\bH$ containing $\bH^x_0$ and $\bH^y_0$. Since all points in  $\bH^x$ and $\bH^y$, escape eventually to $\bH^x_0$ and $\bH^y_0$ under the iteration of $\bbf^\rr_{0,-}\circ \bbf_{0,+}^\ss$ we have  $\bH^x\not=\bH^y$. As a consequence $\bH\not=\C$. The claim concerning~\eqref{eq:Norm of bbh} is immediate. 
\end{proof}

From now on we assume that~\eqref{eq:Norm of bbh} holds. Denote by $\bH^\per\subset \bH$ the union of periodic components of $\bH$.

 \begin{cor}[Critical point]
 \label{cor:wHp:has:0}
 The set $ \bH^\per$ contains $P(\bF_0)$ and at least one critical point. In particular, $0\in  \bH^\per$. All the critical points of $\bF_0$ are within $\bH$. In the dynamical plane of $f_0$ the flower $H_0$ is unique: $f_0$ has exactly $\qq_0$ attracting direction at $\alpha$ cyclically permuted by $f_0$.
 \end{cor}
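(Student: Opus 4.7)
The plan is to combine a Fatou-coordinate argument with classical parabolic theory in the $f_0$-plane and Lemma~\ref{lem:CritOrbPasses0}. The central step is to show that every periodic component of $\bH$ contains a critical point of $\bF_0$; the rest of the corollary then follows by propagation along the critical orbit.

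\medskip

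For the central step, fix a periodic component $\bH' \subset \bH^\per$, which necessarily contains some $\bH_0^i$. Since $\bbh(\bH_0^i) \supset \{\Re z > L\}$ and backward iteration of the first-return map $T = \bbf_{0,-}^\rr \circ \bbf_{0,+}^\ss$ shifts the $\bbh$-image by $-1$ at each step, we have $\bbh(\bH') = \C$. If $\bbh|_{\bH'}$ had no critical points, then being $\sigma$-proper and surjective onto simply-connected $\C$, it would be a biholomorphism $\bH' \to \C$; but any open proper subset of $\C$ cannot be biholomorphic to the whole plane (since injective entire functions are affine, by Picard), contradicting $\bH \neq \C$ from Lemma~\ref{lem:FatCoordPar}. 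Hence $\bbh|_{\bH'}$ has a critical point, which by Lemma~\ref{lem:FatCoordPar} is an iterated $\bbf_{0,\pm}$-preimage of some $c \in C(\bF_0)$; forward iteration, which preserves $\bH^\per$ (the union of periodic components is $\bbf_{0,\pm}$-invariant), yields an actual critical point of $\bF_0$ in $\bH^\per$.

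\medskip

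To upgrade this to $0 \in \bH^\per$ I would invoke classical Fatou theory for the parabolic $f_0$: the critical point $c_0$ of $f_0$ lies in the immediate basin $H_0$, so through the prepacman identification $V \setminus \gamma_1 \simeq \intr \bS_0$ we obtain $\crt_0^\# \in \bH_0$ and $0 = \bbf_{0,\iota}(\crt_0^\#) \in \bH_0 \subset \bH^\per$. Lemma~\ref{lem:CritOrbPasses0} then gives $P(\bF_0) \subset \bigcup_{n \leq 0} \orb_0(\bF_n^\#) \subset \bH^\per$, using forward invariance of $\bH^\per$ under the finer $\bF_n^\#$-dynamics. Every $c' \in C(\bF_0)$ satisfies $\bbf_{0,\iota'}(c') \in P(\bF_0) \subset \bH$, and the $\bbf_{0,\pm}^{-1}$-saturation of $\bH$ (as the full orbit of $\bH_0$) gives $c' \in \bH$. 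Finally, uniqueness of $H_0$ follows because any additional attracting direction at $\alpha$ would produce a disjoint cycle of parabolic basins, each cycle requiring its own critical orbit by the Fatou-coordinate argument above; but $f_0$ has only the critical orbit of $c_0$, already realized by the cycle $H_0^0 \to H_0^1 \to \cdots \to H_0^{\qq_0-1}$.

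\medskip

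\textbf{The main obstacle} I anticipate is justifying the forward-invariance of $\bH^\per$ under the finer $\bF_n^\#$-dynamics appearing in Lemma~\ref{lem:CritOrbPasses0} — needed to conclude that every intermediate point of an orbit path from one periodic component to another actually lies in $\bH^\per$ rather than merely in $\bH$. This requires a careful combinatorial matching of the two commuting pairs $\bF_0$ and $\bF_n^\#$, using that $\bF_0$ is a specific iterate of $\bF_n^\#$ and that both dynamics have the same convergent forward orbits toward $\alpha$.
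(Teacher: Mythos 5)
Your argument for the first part — that $\bH^\per$ contains at least one critical point of $\bF_0$ — is correct and essentially matches the paper: you show $\bbh\mid\bH'$ cannot be an unbranched covering of $\C$, so it has a critical point, which by Lemma~\ref{lem:FatCoordPar} is a critical point of $\bF_0$ or an iterated preimage of one, and forward-invariance of $\bH^\per$ then places an actual critical point of $\bF_0$ in $\bH^\per$.

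The step ``\emph{classical Fatou theory for the parabolic $f_0$: the critical point $c_0$ lies in the immediate basin $H_0$, so $\crt_0^\#\in\bH_0$}'' is a genuine gap, for two reasons. First, $H_0$ is a \emph{small local} Leau--Fatou flower at $\alpha$ (see the setup in~\S\ref{ss:AttrFlow}), not the immediate attracting basin; the critical point is nowhere near it, so $\crt_0^\#\not\in\bH_0$. Second, and more fundamentally, the classical theorem that every parabolic basin of attraction captures a critical point holds for \emph{globally defined} rational maps. A pacman $f_0\colon U_0\to V$ is a partially defined map whose critical orbit can, a priori, escape through the forbidden boundary $\partial^\frb U_0$; indeed, ruling this out for $f_0\in\WW^u$ is precisely the content of this section's machinery (the $\sigma$-proper extension $\bF_0$, Key Lemma~\ref{keylem:ContrOfPullbacks}, etc.). Invoking the classical result here would be circular.

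The fix is already in your hands but misapplied. Having found a critical point $x_0\in C(\bF_0)\cap\bH^\per$ in the first step, apply Lemma~\ref{lem:CritOrbPasses0} \emph{to that} $x_0$: for $n\ll 0$ there is an orbit path of $\bF_n^\#$ from $x_0$ to $\bbf_{0,\iota}(x_0)$ containing $\crt_n^\#$ and then $0$. By Proposition~\ref{prop:ClassOfPerComp}, $\bbf_{n,\pm}^\#$ permute the components $\bH^i$, so $\bH^\per$ is forward $\bF_n^\#$-invariant; hence the entire orbit path, in particular $0$, lies in $\bH^\per$. (This also resolves the ``main obstacle'' you flag at the end: Proposition~\ref{prop:ClassOfPerComp} supplies the combinatorial matching directly.) After that, the deduction $P(\bF_0)\subset\bigcup_n\orb_0(\bF_n^\#)\subset\bH^\per$ via Lemma~\ref{lem:CritOrbPasses0} and full invariance of $\bH$ is correct, as is your uniqueness argument: a second flower's global basin would, by the same Fatou-coordinate argument, also contain $0$, which is impossible since $0$ is already in $\bH$.
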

\begin{proof}
Since $ \bbh\colon \bH^\per\to \C$ is not a covering map, $ \bH^\per$ contains at least one critical point of $\bF_0$. Since $ \bH^\per$ is forward invariant, $ \bH^\per$ contains $\crt_{n}^{\#}$ for all sufficiently big $n<0$, see Lemma~\ref{lem:CritOrbPasses0}. Therefore, $ \bH^\per$ contains all of the critical values of $\bF_0$. Since $\bH$ is fully invariant, it contains all of the critical points. As a consequence, $H_0$ is unique because the global attracting basin of another flower would also contain $0$.
\end{proof}

\subsection{Dynamics of periodic components}
\label{ss:Enumer of bH}
It follows from Lemma~\ref{lem: wH is forw invar} that 
\[\bH= \bigcup_{a,b\in \Z} (\bbf^{\#}_{n,-})^a\circ (\bbf^{\#}_{n,+})^{b} \left(\bH_{{0}} \right)\]
 for all $n\le 0$. It is also clear that $\bH^\per$ is the union of $\bF_n^{\#}$-periodic components.

\begin{figure}[t!]
\centering{\begin{tikzpicture}
\begin{scope}[shift={(0,0)},scale =0.2]
\draw[red] (-1.5,-15)  .. controls (-1.5,0.5).. (-1,1)..  controls (0,1.2) ..(1,1).. controls (1.5,0.5)..(1.5,-15);
\draw[<-] (0.5,-7) to node [above] {$\bbf_{0,-}$} (4.5,-7);
\draw[->] (0,-12) to node [above] {$\bbf_{0,+}$} (10,-12);
\draw (0,-3) edge[->,bend right] node [above] {$\bbf^\#_{-1,-}$} (-15,-3);
\end{scope}

\begin{scope}[shift={(-1,0)},scale =0.2]
\draw[red] (-1.5,-15)  .. controls (-1.5,0.5).. (-1,1)..  controls (0,1.2) ..(1,1).. controls (1.5,0.5)..(1.5,-15);
\draw[<-] (0.5,-8) to node [above] {$\bbf_{0,-}$} (4.5,-8);
\end{scope}

\begin{scope}[shift={(-2,0)},scale =0.2]
\draw[red]  (-1.5,-15)  .. controls (-1.5,0.5).. (-1,1)..  controls (0,1.2) ..(1,1).. controls (1.5,0.5)..(1.5,-15);
\draw[<-] (0.5,-7) to node [above] {$\bbf_{0,-}$} (4.5,-7);
\draw[->] (0,-14) to node [above] {$\bbf_{0,+}$} (10,-14);
\end{scope}
\begin{scope}[shift={(-3,0)},scale =0.2]
\draw[red]  (-1.5,-15)  .. controls (-1.5,0.5).. (-1,1)..  controls (0,1.2) ..(1,1).. controls (1.5,0.5)..(1.5,-15);
\end{scope}
\begin{scope}[shift={(-4,0)},scale =0.2]
\draw[red]  (-1.5,-15)  .. controls (-1.5,0.5).. (-1,1)..  controls (0,1.2) ..(1,1).. controls (1.5,0.5)..(1.5,-15);
\end{scope}
\begin{scope}[shift={(1,0)},scale =0.2]
\draw[red]  (-1.5,-15)  .. controls (-1.5,0.5).. (-1,1)..  controls (0,1.2) ..(1,1).. controls (1.5,0.5)..(1.5,-15);
\draw[<-] (0.5,-8) to node [above] {$\bbf_{0,-}$} (4.5,-8);
\end{scope}
\begin{scope}[shift={(2,0)},scale =0.2]
\draw[red]  (-1.5,-15)  .. controls (-1.5,0.5).. (-1,1)..  controls (0,1.2) ..(1,1).. controls (1.5,0.5)..(1.5,-15);
\draw (0,-4) edge [->,bend right] node [above] {$\bbf^{\#}_{-1,-}$} (-15,-4);
\draw (0,-1) edge [<-,bend right] node [above] {$\bbf^{\#}_{-1,+}$} (-25,-1);
\end{scope}
\begin{scope}[shift={(3,0)},scale =0.2]
\draw[red]  (-1.5,-15)  .. controls (-1.5,0.5).. (-1,1)..  controls (0,1.2) ..(1,1).. controls (1.5,0.5)..(1.5,-15);
\end{scope}
\begin{scope}[shift={(4,0)},scale =0.2]
\draw[red]  (-1.5,-15)  .. controls (-1.5,0.5).. (-1,1)..  controls (0,1.2) ..(1,1).. controls (1.5,0.5)..(1.5,-15);
\end{scope}
\draw (4,0.1) edge [->,bend right] node [above] {$\bbf^{\#}_{-2,-}$} (-4,0.1);
\end{tikzpicture}}
\caption{The maximal prepacman $\bF_0=(\bbf_{0,\pm})$ of a parabolic pacman $f_0$ with rotation number $1/3$, see Figure~\ref{Fig:Ren3over8}. The map $\bbf_{0,-}$ shifts periodic components of $\bH$ to the left while $\bbf_{0,+}$ shifts the periodic components of $\bH$ to the right. We have $\bbf_{n,-}= \bbf_{n-1,-}^2  \circ \bbf_{n-1,+}$ and  $\bbf_{n,+}=\bbf_{n-1,-}  \circ \bbf_{n- 1,+}$ for all $n$ (obtained from $f_{0,-}=f_1^3$ and $f_{0,+}=f_1^2$ in  Figure~\ref{Fig:Ren3over8}).} \label{Fig:ParPetOfMCommPair}
\end{figure}
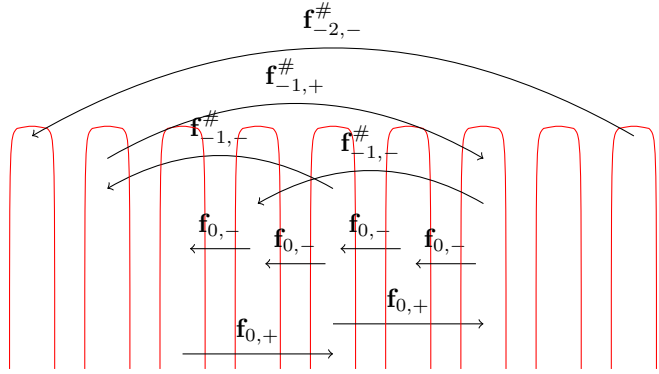

Let $H_{n}$ be a small parabolic attracting flower of $f_n$ admitting a lift to the dynamical plane of $\bF_{n}^{\#}$; we denote this lift by $\bH^\#_{n}\to H_{n}$. We denote by $\pp_n/\qq_n$ the combinatorial rotation number of $f_n$.

Let $I_n$ be an index set enumerating clockwise the connected components of $H_{n}$ starting with the component closest to $\gamma_1$. Since $H_{n}$ embeds naturally to the dynamical plane of $f_{n-1}$ (see Figure~\ref{Fig:Ren3over8}), we have a natural embedding of $I_n$ to $ I_{n-1}$.

Let us write 
\[I_0=\{-a_0,-a_0+1 , \dots b_0-1, b_0\}\]
with $a_0,b_0>0$ and $a_0+b_0+1=\qq_0$. The component of $H_{0}$ indexed by $i+1$ follows in the clockwise order the  component of $H_{0}$ indexed by $i$.
Then $f_0$ maps the component of $H_{0}$ indexed by $i$ to the component of $H_{0}$ indexed by either $i-\pp_0$ or $i+\qq_0-\pp_0$ depending on whether $i-\pp_0\ge -a_0$.

For every $n<0$, choose a parameterization  $I_n=\{-a_n,-a_n+1 , \dots b_n-1, b_n\}$ so that the natural embedding of $I_{n}$ to $I_{n-1}$ is viewed as $I_{n}\subset I_{n-1}$. Set $I_{-\infty}\coloneqq \bigcup_{n\le 0} I_n=\Z$.

Recall (see \S\ref{ss:AttrFlow}) that a connected component $\bH'$ of $\bH$ is periodic if $\bbf_{0,-}^s\circ \bbf_{0,+}^r(\bH')=\bH'$ for some $s,r\in \N_{> 0}$.
\begin{prop}[Parameterization of $\bH^\per$]
\label{prop:ClassOfPerComp}
The connected components of $\bH^\per$ are uniquely enumerated as $(\bH^i)_{i\in \Z}$ so that for every sufficiently big $n\ll0$ the component $\bH^i$ contains the image of the component of $H_{n}$ indexed by $i$ under $H_{n}\simeq \bH_{n}^{\#}\subset \bH^\per$.

The actions of $\bbf^{\#}_{n,\pm}$ on $(\bH^i)_{i\in \Z}$ are given (up to interchanging $\bbf^{\#}_{n,-}$ and $\bbf^{\#}_{n,+}$) by
\begin{equation}
\label{eq:ActionOnHp}
 \bbf^{\#}_{n,-}(\bH^i) = ( \bH^{i-\pp_n}) \text{ and } \bbf^{\#}_{n,+}(\bH^i) = \bH^{i+\qq_n-\pp_n}.
\end{equation}

Moreover, by re-enumerating components of $\bH_0$ we can assume that $\bH^0$ contains $0$.
\end{prop}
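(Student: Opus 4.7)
The strategy is to define the enumeration by exhausting $\bH^\per$ with the nested family of lifted flowers $\{\bH_n^\#\}_{n\le 0}$: each component of $\bH_n^\#$ will pick out a unique component of $\bH^\per$, and as $n\to -\infty$ these will exhaust all of $\bH^\per$ coherently with the nested indices $I_n\subset I_{n-1}\subset \cdots\subset I_{-\infty}=\Z$.

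First I would apply the argument of Lemma~\ref{lem:FatCoordPar} to the prepacman $\bF_n^\#$ in place of $\bF_0$: the Fatou coordinate construction goes through verbatim at level $n$ and shows that distinct components of $\bH_n^\#$ lie in distinct components of the $\bF_n^\#$-global basin; by the identification made in~\S\ref{ss:Enumer of bH} this global basin equals $\bH^\per$. Moreover, every component $\bH'$ of $\bH^\per$ contains exactly one component of $\bH_n^\#$. Indeed, $\bH'$ is $\bF_n^\#$-periodic, and the level-$n$ analogue of Lemma~\ref{lem: wH is forw invar} produces a first-return iterate of $\bbf_{n,\pm}^\#$ on $\bH'$ whose forward dynamics attracts every orbit in $\bH'$ into $\bH_n^\#$. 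Thus for each $n\le 0$ we obtain an injection from $I_n$ into the set of components of $\bH^\per$, sending $i\in I_n$ to the component containing the $i$-th petal of $\bH_n^\#$.

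Next I would verify compatibility between consecutive levels: for $i\in I_n\subset I_{n-1}$, the component of $\bH^\per$ assigned to $i$ at level $n$ coincides with the one assigned at level $n-1$. The natural embedding $H_n\hookrightarrow$ dynamical plane of $f_{n-1}$ (which underlies the inclusion $I_n\hookrightarrow I_{n-1}$ in~\S\ref{ss:Enumer of bH}) sends the $i$-th petal of $H_n$ into the $i$-th petal of $H_{n-1}$, since both parameterizations are clockwise from the petal closest to $\gamma_1$ and the embedding preserves this cyclic order. Lifting through $\intr\bS_n^\#$, the $i$-th component of $\bH_n^\#$ lies inside the $i$-th component of $\bH_{n-1}^\#$, and so they belong to the same periodic component $\bH^i$. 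Since $\bigcup_{n\le 0} I_n=\Z$ and since every component of $\bH^\per$ is captured at some level (by the surjectivity above), we obtain a well-defined enumeration $(\bH^i)_{i\in\Z}$ exhausting $\bH^\per$. Corollary~\ref{cor:wHp:has:0} then lets us shift the initial labeling of $\bH_0$ so that $0\in\bH^0$.

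Finally, for the action formulas~\eqref{eq:ActionOnHp}: the map $f_n$ acts on the $\qq_n$ petals of $H_n$ as the cyclic permutation $i\mapsto i-\pp_n\pmod{\qq_n}$ dictated by the combinatorial rotation number $\pp_n/\qq_n$. Cutting along $\gamma_1$ and passing to the prepacman $\bF_n^\#$ splits this single cyclic action into its two commuting branches: when the rotation by $-\pp_n$ does not cross $\gamma_1$ the action is realized by $\bbf_{n,-}^\#$ as $i\mapsto i-\pp_n$, and when it does cross $\gamma_1$ it is realized by $\bbf_{n,+}^\#$ as $i\mapsto i+(\qq_n-\pp_n)$ (or vice versa, giving the stated interchange ambiguity). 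Translating to the enumeration of $\bH^\per$ via the compatibility established above yields~\eqref{eq:ActionOnHp}.

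\textbf{Main obstacle.} The delicate step is the compatibility in the second paragraph: one must verify that the purely combinatorial inclusion $I_n\hookrightarrow I_{n-1}$ (defined via clockwise ordering from the petal nearest $\gamma_1$) really is realized geometrically by inclusion of the lifted petals in the same periodic component of $\bH^\per$. This requires controlling how the cyclic ordering around $\alpha$ is preserved through the anti-renormalization embeddings and how the cut $\gamma_1$ interacts with the parabolic-flower combinatorics at each level. Once this bookkeeping is in place, the remaining verifications — injectivity, exhaustion, and the action formulas — are direct consequences of Lemma~\ref{lem:FatCoordPar}, Lemma~\ref{lem: wH is forw invar}, and Corollary~\ref{cor:wHp:has:0}.
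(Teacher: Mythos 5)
Your proposal diverges from the paper's argument and, as written, has a gap in the exhaustion step. The claim that ``every component $\bH'$ of $\bH^\per$ contains exactly one component of $\bH_n^\#$'' cannot hold for a fixed $n$: the lifted flower $\bH_n^\#$ has only $\qq_n$ components, while $\bH^\per$ has $\Z$-many. If you intend the claim only for $n$ sufficiently negative (depending on $\bH'$), consistent with the proposition's ``for every sufficiently big $n\ll 0$,'' the justification you give does not close the loop: the first-return iterate on $\bH'$ pushes orbits toward $\Re(\bbh)\to+\infty$ in Fatou coordinates, but whether that absorbing region actually lies inside $\bH_n^\#$ is precisely what has to be shown and is not supplied by invoking ``the level-$n$ analogue of Lemma~\ref{lem: wH is forw invar}'' --- that lemma only asserts forward invariance of the petals of $\bH_0$ under the first return, not that an arbitrary periodic component eventually intersects the lifted flower. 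Without this, you cannot conclude that the enumeration $\{\bH^i\}_{i\in\Z}$ covers all of $\bH^\per$.

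The paper avoids the issue with a shorter route to exhaustion: by~\eqref{eq:ActionOnHp} the union $\bigcup_{i\in\Z}\bH^i$ is automatically forward-invariant under $\bbf_{0,\pm}$, and by the discussion at the end of \S\ref{ss:AttrFlow} every periodic component $\bH'$ satisfies $\bbf_{0,-}^a\circ\bbf_{0,+}^b(\bH^0)=\bH'$ for some $a,b\ge 1$. Since $\bH^0$ lies in the forward-invariant union, so does $\bH'$, giving $\bigcup_i\bH^i=\bH^\per$ with no need to show directly that each $\bH'$ meets some $\bH_n^\#$. Your treatment of injectivity (via Lemma~\ref{lem:FatCoordPar} at level $n$), of the nested compatibility $I_n\subset I_{n-1}$, of the action formulas from the cut along $\gamma_1$, and of the normalization $\bH^0\ni 0$ all track the paper's intent; I would simply replace your exhaustion argument by the forward-invariance-plus-reachability argument above.
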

\begin{proof} 
By construction, $I_{-\infty}  \simeq \Z$ enumerates all of the periodic components of $\bH$ intersecting $\bigcup_{n\le 0}\bH_{n}^{\#}$ with actions given by~\eqref{eq:ActionOnHp}. Since $\displaystyle\bigcup_{i\in \Z} \bH^i$ is forward invariant and since every periodic component is in the forward orbit of $\bH^0$  (see~\S\ref{ss:AttrFlow}), we obtain  $\displaystyle\bigcup_{i\in \Z} \bH^i= \bH^\per$. We can re-enumerate $(\bH^i)_{i\in \Z}$ in a unique way so that $\bH^0\ni 0$.
 \end{proof}

\begin{cor}
\label{cor:no ghost per comp}
There is no component $\bH'$ of $\bH$ such that $\bbf^r_{0,-}(\bH') =\bH'$ or $\bbf^r_{0,+}(\bH') =\bH'$ for some $r>0$.
\end{cor}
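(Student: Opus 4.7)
My plan is to reduce the statement to periodic components of $\bH$ and then read off a contradiction from the explicit action of $\bbf_{0,\pm}$ on the integer indices of $(\bH^i)_{i\in \Z}$ provided by Proposition~\ref{prop:ClassOfPerComp}.

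First, I would reduce to periodic components. Assume for contradiction that $\bbf_{0,-}^r(\bH') = \bH'$ for some component $\bH'$ of $\bH$ and some $r > 0$ (the $\bbf_{0,+}^r$ case is symmetric). The discussion in \S\ref{ss:AttrFlow} gives integers $s, t \ge 1$ such that $\bH^j \coloneqq \bbf_{0,-}^s \circ \bbf_{0,+}^t(\bH')$ is a periodic component. Because $\bH \subset \Dom\bbf_{0,-} \cap \Dom\bbf_{0,+}$ and the two branches commute on this common domain (Lemma~\ref{lem: wH is forw invar} combined with the full invariance of $\bH$), I may freely shuffle iterates:
\[
\bbf_{0,-}^r(\bH^j) \;=\; \bbf_{0,-}^s \circ \bbf_{0,+}^t \circ \bbf_{0,-}^r(\bH') \;=\; \bbf_{0,-}^s \circ \bbf_{0,+}^t(\bH') \;=\; \bH^j.
\]
So it suffices to rule out $\bbf_{0,-}^r(\bH^j) = \bH^j$ for any periodic component $\bH^j$.

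Second, I would apply Proposition~\ref{prop:ClassOfPerComp} at level $n = 0$, noting that $\bF_0^{\#} = \bF_0$. Up to interchanging the two branches, the actions on the integer index are $\bbf_{0,-}(\bH^i) = \bH^{i - \pp_0}$ and $\bbf_{0,+}(\bH^i) = \bH^{i + \qq_0 - \pp_0}$; crucially, the parametrization $i \mapsto \bH^i$ is a bijection from $\Z$ (not from $\Z/\qq_0\Z$) to the set of periodic components. Iterating gives $\bbf_{0,-}^r(\bH^j) = \bH^{j - r\pp_0}$, and injectivity of the parametrization forces the integer identity $r\pp_0 = 0$. Since $f_0$ is parabolic and close to $f_\str$ we have $\qq_0 > 1$ and $1 \le \pp_0 \le \qq_0 - 1$, so both $\pp_0$ and $\qq_0 - \pp_0$ are strictly positive and both shifts are nonzero -- contradiction.

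The only delicate point is the commutativity-based reduction in the first step: it relies on every point of $\bH$ actually lying in $\Dom\bbf_{0,-} \cap \Dom\bbf_{0,+}$ together with global commutation of the two branches, which is exactly what Lemma~\ref{lem: wH is forw invar} and the construction of $\bH$ as a fully invariant orbit deliver. Once that reduction is in place, the conclusion is forced by nothing more than the fact that the natural indexing of the periodic components is by $\Z$ rather than by $\Z/\qq_0\Z$.
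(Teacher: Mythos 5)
Your argument is correct and is essentially the same as the paper's: both reduce to a periodic component via a forward iterate $\bbf_{0,-}^a\circ\bbf_{0,+}^b$, then invoke the $\Z$-indexing of Proposition~\ref{prop:ClassOfPerComp} together with commutativity, and the paper's phrasing ``applying first $\bbf_{0,-}^r\mid\bH'$ and then $\bbf_{0,-}^a\circ\bbf_{0,+}^b$ differs from the reverse order'' is exactly your explicit computation $j - r\pp_0 \ne j$. The only cosmetic difference is that you spell out the index arithmetic where the paper states the noncommutativity contradiction in words.
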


\begin{proof}
Suppose converse and consider such $\bH'$, say $\bbf^r_{0,-}(\bH') =\bH'$. Choose $a,b\in \Z$ such that a certain branch of $\bbf_{0,-}^{a}\circ \bbf_{0,+}^{b}$ maps $\bH'$  to $\bH^0$. Recall that $(\rr,\ss)$ is a period of $\bH^0$. By postcomposing 
 $\bbf_{0,-}^{a}\circ \bbf_{0,+}^{b}$ with an iterate of $\bbf_{0,-}^{\rr }\circ \bbf_{0,+}^{\ss }$ we can assume that $a,b\ge0$. It now follows from Proposition~\ref{prop:ClassOfPerComp} that applying first $ \bbf^r_{0,-}\mid \bH'$ and then $\bbf_{0,-}^{a}\circ \bbf_{0,+}^{b}$ is different from applying $\bbf_{0,-}^{a}\circ \bbf_{0,+}^{b}\mid \bH'$ and then $\bbf^r_{0,-}$. This is a contradiction.
\end{proof}

\begin{cor} 
\label{cor:contr of 0 orb}
For $a,b,c,d\ge 0$ and $n\le 0$,
\[\left(\bbf_{n,-}^\#\right)^a\circ \left(\bbf_{n,+}^\#\right)^b(0)=\left( \bbf_{n,-}^\#\right)^c\circ \left(\bbf_{n,+}^\#\right)^d(0)\]
if and only if $a=c$ and $b=d$.
 \end{cor}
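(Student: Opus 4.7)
The plan is to prove the nontrivial ``only if'' direction by extracting two independent linear invariants of the orbit point $x\coloneqq(\bbf^{\#}_{n,-})^a\circ(\bbf^{\#}_{n,+})^b(0)$ and showing that together they force $(a-c,b-d)=(0,0)$. For the first invariant I will use the combinatorial index from Proposition~\ref{prop:ClassOfPerComp}: since $0\in\bH^0$ and the maps $\bbf^{\#}_{n,\pm}$ shift the integer index of $(\bH^i)_{i\in\Z}$ by $-\pp_n$ and $\qq_n-\pp_n$ respectively, the point $x$ lies in the periodic component $\bH^{-a\pp_n+b(\qq_n-\pp_n)}$; equating periodic components for $(a,b)$ and $(c,d)$ gives
\[
(a-c)\,\pp_n=(b-d)(\qq_n-\pp_n).\quad (\mathrm{I})
\]

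For the second invariant I will use the attracting Fatou coordinate $\bbh$ of Lemma~\ref{lem:FatCoordPar}, normalized by $\bbh(0)=0$ and satisfying $\bbh\circ\bbf_{0,\pm}=\bbh+1/\qq_0$. Propagating this relation through the composition formulas of Lemma~\ref{lem:F_0 is iter of F_n} will produce positive rational translation constants $\alpha_n,\beta_n$ with $\bbh\circ\bbf^{\#}_{n,-}=\bbh+\alpha_n$ and $\bbh\circ\bbf^{\#}_{n,+}=\bbh+\beta_n$; concretely $\alpha_n=\aa_n/\qq_0$ and $\beta_n=\bb_n/\qq_0$, as follows from transporting through the linearizing rescaling $h^{\#}_n$ and identifying $\bbf^{\#}_{n,\pm}$ with the iterated-return embedding $(f_0^{\aa_n},f_0^{\bb_n})$ of Lemma~\ref{lem:psi is adjusted} (extended to the anti-renormalization direction). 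Evaluating $\bbh$ at $x$ in two ways then gives
\[
(a-c)\,\aa_n+(b-d)\,\bb_n=0.\quad (\mathrm{II})
\]

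Combining (I) and (II) yields a $2\times 2$ homogeneous linear system in $(a-c, b-d)$ with matrix $\left(\begin{smallmatrix}\pp_n & -(\qq_n-\pp_n)\\ \aa_n & \bb_n\end{smallmatrix}\right)$. Since $0<\pp_n<\qq_n$ (the parabolic rotation numbers remain non-trivial at every anti-renormalization level) and $\aa_n,\bb_n>0$, the determinant $\pp_n\bb_n+(\qq_n-\pp_n)\,\aa_n$ is strictly positive, whence $(a-c,b-d)=(0,0)$; the converse direction is immediate.

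The hard part will be the precise identification $\alpha_n=\aa_n/\qq_0$, $\beta_n=\bb_n/\qq_0$ in the second step. It would be routine if $\bbf^{\#}_{n,\pm}$ were literally powers of $\bbf_{0,\pm}$, but these maps live at deeper levels of the rescaled tower, so one has to carry the base relation $\bbh\circ\bbf_{0,\pm}=\bbh+1/\qq_0$ across the linearizing rescalings $h^{\#}_k$ and match each $\bbf^{\#}_{n,\pm}$ with the appropriate iterate of $f_0$ in its unrescaled prepacman embedding. Once this matching is in place, positivity of the shifts reduces to positivity of the return times $\aa_n,\bb_n$, and the remainder is routine bookkeeping.
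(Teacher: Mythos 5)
Your two linear invariants are exactly the two ingredients of the paper's proof: the Fatou-coordinate translation (Lemma~\ref{lem:FatCoordPar}) and the integer index of the periodic component from Proposition~\ref{prop:ClassOfPerComp}. The divergence is in how $n<0$ is handled. The paper's one-line remark that it suffices to treat $n=0$ has the following content: by~\eqref{eq:defn:f hash}, the $\bF_n^{\#}$-orbit of $0$ is the image under $z\mapsto\mu_\str^{-n}z$ of the $\bF_n$-orbit of $0$, and $\bF_n$ is the level-$0$ maximal prepacman of $f_n$, which is an anti-renormalization of $f_0$ and hence still in $\WW^u$ and at least as close to $f_\str$; so the $n=0$ argument applied with $f_n$ in place of $f_0$ gives the result verbatim. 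You instead keep the fixed level-$0$ Fatou coordinate $\bbh$ and compute with it directly at level $n$, which requires $\bbh\circ\bbf_{n,-}^{\#}=\bbh+\alpha_n$ and $\bbh\circ\bbf_{n,+}^{\#}=\bbh+\beta_n$ with $\alpha_n,\beta_n>0$. You flag this as the hard part, and it is a genuine gap: \eqref{eq:defn:wid:h} is stated only for $\bbf_{0,\pm}$, and upgrading it to $\bbf^{\#}_{n,\pm}$ requires an additional rigidity-of-Fatou-coordinates argument for commuting maps that your proposal does not supply -- and that is precisely what the paper's tower-shift reduction quietly sidesteps. Two minor remarks: the precise values $\alpha_n=\aa_n/\qq_0$, $\beta_n=\bb_n/\qq_0$ are not actually needed -- positivity alone makes $(a-c)$ and $(b-d)$ have opposite signs by your (II) and the same sign by your (I), forcing both to vanish without the determinant computation; and~\eqref{eq:ActionOnHp} holds only up to interchanging $\bbf^{\#}_{n,-}$ and $\bbf^{\#}_{n,+}$, which is harmless here by symmetry but should be noted.
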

\begin{proof}
It is sufficient to prove it for $n=0$. Suppose $\bbf_{0,-}^a\circ\bbf_{0,+}^b(0)=\bbf_{0,-}^c\circ\bbf_{0,+}^d(0)$. It follows from~\eqref{eq:defn:wid:h} that $a+b=c+d$. If $(a,b)$ is not equal to $(c,d)$, then $\bbf_{0,-}^a\circ\bbf_{0,+}^b(0)$, $\bbf_{0,-}^c\circ\bbf_{0,+}^d(0)$ are in different connected components of $\bH^\per$, see~\eqref{eq:ActionOnHp}. Therefore, $a=c$ and $b=d$. 
\end{proof}

\subsection{Valuable flowers of parabolic pacmen}
\label{ss:ValFlower}
This subsection is a preparation for proving the Scaling Theorem (\S\ref{s:ScalinThm}); it will not be used in proving the Hyperbolicity Theorem (\S\ref{s:HypThm}).

\begin{defn}[Valuable flowers]
\label{dfn:ParValFlow}
Let $f$ be a parabolic pacman with rotation number $\pp/\qq$.
 A \emph{valuable  flower} (see Figure~\ref{Fig:SattValFlow}) is an open forward invariant set $\fH$ such that
\begin{itemize}
\item[(A)] $\fH\cup \{\alpha(f)\}$ is connected;
\item[(B)] $\fH$ has $\qq$ connected components $\fH^{0}, \fH^{1}\dots ,\fH^{\qq-1}$, called \emph{petals}, enumerated counterclockwise at $\alpha$; every $\fH^{i}$ is an open topological disk with a single access to $\alpha$;
\item[(C)] $f(\fH^{i})\subset \fH^{i + \pp}$;
\item[(D)] all of the points in $\fH$ are attracted by $\alpha$;
\item[(E)] $\fH^{-\pp}$ contains the critical point of $f$.
\end{itemize}
\end{defn}
\noindent We remark that a local flower (see~\S\ref{ss:AttrFlow}) satisfies (A)--(D).

We say a Siegel triangulation (see~\S\ref{ss:SiegTriang}) $\bDelta$ \emph{respects} a flower $\fH$ if every petal of $\fH$ is within a triangle of $\bDelta$.

\begin{thm}[Parabolic valuable flowers]
\label{thm:GlobParabFlower}
Let $f_0\in \WW^u$ be a parabolic pacman. Then for all sufficiently big $n\ll 0$ the pacman $f_n=\RR^{n}f_0$ has a valuable flower $\fH_n$ and a Siegel  triangulation $\bDelta(f_n)$ respecting $\fH_n$ such that
\begin{itemize}
\item  $\bDelta(f_n)$ has a wall $\bPi(f_n)$ approximating $\partial Z_\str$;
\item $\bDelta(f_{n-1})$ and $\fH_{n-1}$ are full lifts of $\bDelta(f_n)$ and $\fH_{n}$.
\end{itemize}

Moreover, for a given closed disk $\bD\subset \bH^0$ the flower $\fH_n$ with $n\ll0$  can be constructed in such a way that $\bD$ projects via $\intr\left(\bS^{\#}_{n}\right)\simeq V\setminus \gamma_1$ (see~\eqref{eq:dfn:Ssharp}) to a subset of $\fH_n^0$.
\end{thm}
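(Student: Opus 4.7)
The plan is to construct each petal of $\fH_n$ as the projection, via the identification $\intr(\bS_n^{\#}) \simeq V \setminus \gamma_1$, of a small disk in one of the periodic Fatou components $\bH^i$ of the global attracting basin of $\bF_0$ from Proposition~\ref{prop:ClassOfPerComp}. Accounting for the opposite orientations of the clockwise $\bH^i$-enumeration (see the paragraph preceding Proposition~\ref{prop:ClassOfPerComp}) versus the counterclockwise $\fH_n^i$-enumeration of Definition~\ref{dfn:ParValFlow}, the cyclic action \eqref{eq:ActionOnHp} provides the desired combinatorial structure of the flower. Since the rescaled sector $\bS^{\#}_n$ scales by $|\mu_*|^n$, it grows to fill $\C$ as $n\to -\infty$, so for $n\ll 0$ it contains any preassigned bounded region in the $\bF_0$-plane.

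Concretely, I would first enlarge the given closed disk $\bD \subset \bH^0$ to an open topological disk $\widetilde \bD \Supset \bD$ compactly contained in $\bH^0$, chosen via the Fatou coordinate $\bbh$ of Lemma~\ref{lem:FatCoordPar} so that $\widetilde \bD$ is strictly forward-invariant under the first-return map of $\bH^0$ under $\bF^{\#}_n$ (which, in Fatou coordinates, is a unit translation). For each $i \in \{0, 1, \ldots, \qq_n - 1\}$ I would then define a petal-candidate $\bD^i_n \subset \bH^i$ by forward-iterating $\widetilde \bD$ under the appropriate one of $\bbf^{\#}_{n,\pm}$. Since $\gcd(\pp_n, \qq_n) = 1$, each $\bH^i$ is visited exactly once in the cycle, yielding $\qq_n$ disks whose projections $\fH_n^i \subset V$ are the petals. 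Forward invariance $f_n(\fH^i_n)\subset \fH^{i+\pp_n}_n$ is then built in. Condition (E) of Definition~\ref{dfn:ParValFlow} follows because $\crt_n^{\#}$, the critical point of $\bbf^{\#}_{n,-}$, is by construction a preimage of $0\in \widetilde \bD \subset \bH^0$ and therefore lies in $\bH^{\pp_n}$, which identifies (after the orientation flip) with $\fH_n^{-\pp_n}$ containing $c_0(f_n)$.

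For the Siegel triangulation, I note that since $\RR$ expands on $\WW^u$, the antirenormalizations $\RR^n f_0$ with $n \ll 0$ lie arbitrarily close to $f_\str$, so Lemma~\ref{lem:SiegWall} yields a wall $\bPi(f_n)$ approximating $\partial Z_\str$. I would then build $\bDelta(f_n)$ around $\bPi(f_n)$ and use Lemma~\ref{lem:gam1 rotate} to adjust $\gamma_0, \gamma_1$ inside $Z_\str^r$ so that each petal sits inside a single triangle of $\bDelta(f_n)$; this is feasible because the petals are thin sectors at $\alpha$, organized angularly in the same way as the triangles. The full-lift compatibility of $\bDelta(f_{n-1})$ and $\fH_{n-1}$ with $\bDelta(f_n), \fH_n$ then follows from Lemma~\ref{lem:SiegTriangLifting} combined with Lemma~\ref{lem:F_0 is iter of F_n}: both constructions are coordinated through the common $\bF_0$-plane starting from the same $\widetilde \bD$, and $\bbf^{\#}_{n,\pm}$ is a composition of $\bbf^{\#}_{n-1,\pm}$, so cyclic orbits of $\widetilde \bD$ at level $n-1$ refine those at level $n$.

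The main technical obstacle will be verifying that each $\fH_n^i$ is an open topological disk (condition (B) of Definition~\ref{dfn:ParValFlow}), i.e.\ that the iterated $\bbf^{\#}_n$-orbits of $\widetilde \bD$ stay univalent. I would handle this by shrinking $\widetilde \bD$ if necessary so that its forward orbit avoids singular values of $\bbh$ — which, by Lemma~\ref{lem:FatCoordPar}, are exactly the $\bbh$-images of critical points of $\bF_0$ and their iterated preimages; these images are discrete in $\bbh(\bH)$ by Corollary~\ref{cor:contr of 0 orb}, so avoiding them amounts to a generic choice of $\widetilde \bD$.
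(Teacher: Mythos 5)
Your high-level picture --- using the $\bF_0$-plane and the periodic Fatou components $\bH^i$ of Proposition~\ref{prop:ClassOfPerComp} to design the petals, and obtaining them in the $f_n$-plane by projecting through $\intr(\bS^\#_n)\simeq V\setminus\gamma_1$ for $n\ll 0$ --- is the right starting point, and this is indeed how the paper constructs the \emph{local} valuable petal near the critical value (Claim~\ref{cl3:ValPetalExists}). But there is a genuine gap where you assert that forward invariance ``is then built in'' and then identify univalence of the $\bbf^\#_n$-orbits as the main technical obstacle. The crucial issue you have skipped is that $f_n\colon U_n\to V$ is a \emph{partial} map; for the family $(\fH_n^i)$ to be a valuable flower in the sense of Definition~\ref{dfn:ParValFlow}, one must show that the petals lie inside $U_n$ and that the genuine $f_n$-orbit that spreads the critical-value petal around $\alpha$ never escapes through the forbidden boundary $\partial^\frb U_n$. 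The attracting basin $\bH$ lives in the $\bF_0$-plane where the maps are $\sigma$-proper and therefore ``total'', so finiteness of the orbit there tells you nothing about whether the projections remain in $U_n$; and the identification $\intr(\bS^\#_n)\simeq V\setminus\gamma_1$ maps onto $V$, not onto $U_n$. This containment is precisely what Claim~\ref{cl3:Nr4} (``Blocking $\partial^\frb U_n$'') establishes, and it is the bulk of the paper's proof: it requires the construction of the $N$-walls $A_n$ respecting $H_n$ (\S\ref{sss:N Walls}), the stable periodic Julia rays $J_x,J_y$ cut out of $\Jul_\str$ and transferred to the nearby pacman $f_n$ (\S\ref{ss:InnerRays}--\ref{ss:AlmInnRays}), and a topological degree argument (illustrated in Figure~\ref{Fig:TopConstr}) ruling out a petal crossing $J_x\cup J_y$. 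None of this appears in your proposal, and without it conditions (B)--(D) of Definition~\ref{dfn:ParValFlow} are not actually verified for the genuine $f_n$-dynamics.

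A secondary issue: the bookkeeping between the $\bF_0$-plane and the $f_n$-plane petals is more delicate than your sketch acknowledges. One application of $\bbf^\#_{n,-}$ or $\bbf^\#_{n,+}$ corresponds to $f_n^\aa$ or $f_n^\bb$, \emph{not} to a single $f_n$-step, so iterating $\widetilde\bD$ by $\bbf^\#_{n,\pm}$ does not visit the petals in the counterclockwise $f_n$-order; the $\bH^i$ are indexed by $\Z$ (not a $\qq_n$-cycle) and only the finite window $I_n$ corresponds to the $\qq_n$ petals of $H_n$. The paper sidesteps all of this by constructing a \emph{single} valuable petal $\fH_n^0=H_n^0\cup D$ at the critical value and then spreading it by genuine $f_n$-iterates, which forces the blocking issue to the surface and is why Claim~\ref{cl3:Nr4} is unavoidable. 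The triangulation half of your argument and the condition (E) check are essentially right in spirit, but they inherit the same weakness: the full-lift compatibility relies on the flower actually sitting inside a Siegel triangulation that is stable under the antirenormalization, which again presupposes the containment in $U_n$.
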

\begin{proof}
Let us recall (see~\S\ref{ss:AttrFlow}) that a local flower $H_{0}$ was chosen sufficiently small such that $H_{0}\subset V\setminus \gamma_1$, possibly up to a slight rotation of $\gamma_1$ in a small neighborhood of $\alpha$. We denote by $\bDelta_0^\new$ the triangulation obtained from $\bDelta_0$ by this slight adjustment of $\gamma_1$.  By Lemma~\ref{lem:gam1 rotate}, the triangulation $\bDelta_0^\new$ admits a full lift $\bDelta_{-n}^\new$ to the dynamical plane of $f_n$ for all $n\le0$. Since $H_0$ is respected by $\bDelta_0^\new$, the flower $H_0$ also admits a full lift $H_n$ to the dynamical plane of $f_n$ such that $H_n$ is respected by $\bDelta^\new_{-n}$.

\subsubsection{Valuable petals} Recall that $\pp_n/\qq_n$ denotes the rotation number of $f_n$. A \emph{valuable petal} $\fH^j_n$ of $f_n$ is an open connected set attached to $\alpha$ such that
\begin{itemize}
\item $f_n^{\qq_n}$ extends analytically from a neighborhood of $\alpha$ to $f_n^{\qq_n}\colon \fH^j_n\to \fH^j_n$; (in particular, $\fH^j_n$ is  $f^{\qq_n}_n$-invariant)
\item  $f_n^{\qq_n}\colon \fH^j_n\to \fH^j_n$ has a critical point; and
\item all points in $\fH^j_n$ are attracted to $\alpha$ under $f_n^{\qq_n}$.
\end{itemize}

\begin{claim3}
\label{cl3:ValPetalExists}
There is an $n\ll 0$ such that $f_n$ has a valuable petal $\fH^0_n$ containing the critical value $0$ such that $\fH^0_n = H_n^0\cup D$, where $H_n^0$ is a petal of $H_n$ and $D$ is a small neighborhood of $c_1$ containing the projection of $\bD$ via~\eqref{eq:dfn:Ssharp}. Moreover, there is an $M>0$ such that $f^{\qq_nM}_n(\fH^0_n)\subset H_n$.
\end{claim3}
%\noindent Note that a valuable petal $\fH^j_n$ contains a unique petal of $H_n$. 
\begin{proof}
In the dynamical plane of $\bF_0$ consider the petal $\bH^0\ni 0$. Recall from~\S\ref{ss:Enumer of bH} that $\bH^\#_n$ denotes the lift of $H_n$ to the dynamical plane of $\bF^{\#}_n$. If $n\ll 0$ is sufficiently big, then $\bH^0$ contains a unique connected component of $\bH^\#_n$, call it $(\bH_n^\#)^0$. Note also that $(\bH_n^\#)^0= (\bH_m^\#)^0$ for all sufficiently big $n,m\ll 0$, see Proposition~\ref{prop:ClassOfPerComp}.

Enlarge $\bD$ to a bigger closed disk $\bD\subset \bH^0$ such that
\begin{itemize}
 \item $(\bH_n^\#)^0\cup \bD$ is forward invariant under the first return map $\bbf_{0,-}^\rr\circ\bbf_{0,+}^\ss$, see Lemma~\ref{lem: wH is forw invar}; and
\item $\bbf_{0,-}^\rr\circ\bbf_{0,+}^\ss\left((\bH_n^\#)^0\cup \bD\right)\ni 0$.
\end{itemize}

Since $\bD$ is compact, we have $\bD\subset \bS_n^{\#}$ for all sufficiently big $n\ll 0$. For such $n$ we can project $\bD$ to the dynamical plane of $f_n$; we denote this projection by $D\ni c_1$. By construction, $D\cup H_{n}^0$ is $f^{\qq_n}_n$-invariant: $f_n^{\qq_n}\colon H_{n}^0 \to H_{n}^0 $ has an analytic extension to $f_n^{\qq_n}\colon D\cup H_{n}^0\to D\cup H_{n}^0$. For $n\ll 0$, the disk $D$ is a small neighborhood of $c_1$.  
\end{proof}

For $n\ll0$, we enumerate petals of $H_n$ counterclockwise so that $H_n^0\subset \fH_n^0$. Choose a big $K$ (we will specify $K$ in~\S\ref{ss:InnerRays}). For $k\in \{0,1,\dots, K\}$ we define $D_k$ to be the image of $D_0=D$ under $f_n^{\qq_n k}$, and for $k\in\{-K,-K+1,\dots, -1\}$ we define $D_k$ to be the lift of $D_0$ along the orbit of $f_n^{-\qq_nk}\colon H_n^{\qq_n k}\to H_n^{0}$. Then
\begin{equation}
\label{eq:defn:val pet}
\fH^{\qq_n k}_n\coloneqq H_n^{\qq_n k}\cup D_k;
\end{equation}
 is a valuable petal extending $H_n^{\qq_n k}$ for all $k\in \{-K,\dots, K\}$. For $n\ll0$, all $\fH_n^{\qq_n k}$  are in a small neighborhood of $\overline Z_\str$.

\subsubsection{Walls respecting $H_n$} 
\label{sss:N Walls}
Set $N\coloneqq M+3$, where $M$ is defined in Claim~\ref{cl3:ValPetalExists}. Let us consider the dynamical plane of $f_0$. In a small neighborhood of $\alpha$ we can choose a univalent $(N+1)\qq_0$-wall $A_0$ \emph{respecting} $H_0$ in the following way:
\begin{itemize}
\item[(a)] $\alpha$ is in the bounded component $O_0$ of $\C\setminus A_0$ while the critical point and the critical value of $f_0$ are in the unbounded component of $\C\setminus A_0$; 
\item[(b)]  each petal $H_{0}^{i}$ intersects $A_0$ at a connected set;
\end{itemize}
and by enlarging $H_0$, we can also guarantee: 
\begin{itemize}
\item[(c)] $H_0$ contains all $z\in A_0\cup O_0$ with forward orbits in $A_0\cup O_0$.
\end{itemize}

We can also assume that the intersection of $A_0$ with each triangle of $\bDelta_0^\new$ is a closed topological rectangle. Lifting these rectangles to the dynamical plane of $f_n$ and spreading around them, we obtain a \emph{full lift} $A_n$ of $A_0$. Then $A_n$ is a univalent $N\qq_n$-wall (see Lemma~\ref{lem:ap:full lift of wall}) enclosing an open topological disk $O_n\ni \alpha$ such that $A_n$ respects $H_n$ as above (see (a)--(c)). Naturally, $A_n$ consists of closed topological rectangles: each rectangle is in a certain triangle of $\bDelta^\new_n$.

\begin{claim3}
\label{cl3:WallAppr partial Z}
For $n\ll0$, the wall $A_n$ approximates $\partial Z_\str$ (compare to Lemma \ref{lem:SiegWall}, Part~\eqref{Claim:Axil:on expansion:5}): $\partial Z_\str$ is a concatenation of arcs $J_0J_1\dots J_{m-1}$ such that $J_i$ is close to the $i$-th rectangle of $A_n$ counting counterclockwise.
\end{claim3}
\begin{proof}
By Theorem~\ref{thm:ComPsConj}, it is sufficient to prove such statement in the dynamical plane of $f_\str$: if $A_0$ is an annulus bounded by two equipotentials of $Z_\str$, then a full lift $A_n$ approximates $\partial Z_\str$ for a big $n$. Since the anti-renormalization change of variables for $f_\str$ is conjugate to $z\to z^{t}$ with $t<1$, the claim follows.
\end{proof}

Consider the dynamical plane of $f_\str$. Recall that $f_\str\mid \overline Z_\str$ is a homeomorphism. For $k\in \Z$, we define \[c_k\coloneqq \left(f_\str\mid \overline Z_\str\right)^k(c_0).\] Consider now the dynamical plane of $f_n$. For $k\in \{-K,-K+1,\dots, K\}$, we define $c_k(f_n)\in f_n^{k}\{ c_0\}$ to be the closest point to $c_k(f_\str)$. The point $c_k(f_n)$ is well defined as long as $f_n$ is in a small neighborhood of $f_\str$.

\begin{claim3}
\label{cl3:fH are small} For $k\in \{-K,-K+1,\dots, K\}$, we have
\begin{itemize}
\item $c_{k-1}(f_n)\in \fH_n^{\qq_n k}$; and
\item $ \fH_n^{\qq_n k}\setminus O_n$ is in a small neighborhood of $c_{k-1}$ 
\end{itemize}
\end{claim3}
\begin{proof}
The first statement follows from $c_{k-1}(f_n)\in D_k\subset  \fH_n^{\qq_n k}$, see~\eqref{eq:defn:val pet}. The second statement follows from the improvement of the domain.
\end{proof}

\begin{claim3}
\label{cl3:contr of rep petals}
Let $P$ be a connected component of $O_n\setminus H_n$. Then $f_n^{\qq_n i}\mid P$ is univalent for all $i\in \{1,\dots, N\}.$ Moreover, 
\[f_n^{\qq_ni}(P)\subset f_n^{\qq_nj}(P) \sp \text{ for all }i<j \text{ in }\{0,1,\dots, N\}.\]
\end{claim3}
\begin{proof}
The first claim follows from the assertion that $A_n$ is an $N\qq_n$-wall. The second claim follows from (c).
\end{proof}

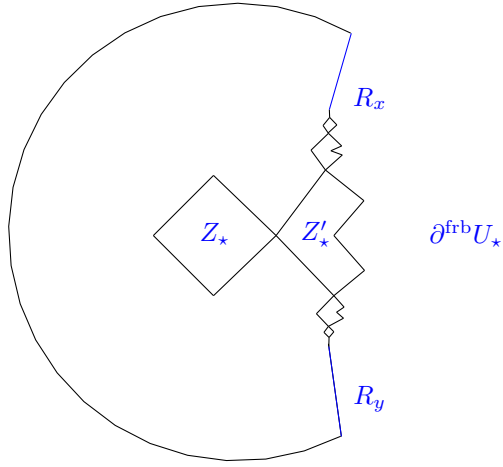
\begin{figure}[t!]
\usetikzlibrary{arrows}
\pagestyle{empty}
\begin{tikzpicture}
\begin{scope}[scale =2]

\begin{scope}[shift={(0.66,0.088)},scale =0.2]
\draw (-2.,0.)-- (0.,2.);
\draw (0.,2.)-- (2.08,0);
\draw (2.08,0)-- (0.,-2.);
\draw (0.,-2.)-- (-2.,0.);
\draw (2.08,0)-- (3.72,2.18);
\draw (3.72,2.18)-- (5.,1.16);
\draw (5.,1.16)-- (4.,0.);
\draw (4.,0.)-- (5.02,-1.16);
\draw (5.02,-1.16)-- (4.,-2.);
\draw (4.,-2.)-- (2.08,0);
\draw (3.72,2.18)-- (3.24,2.84);
\draw (3.24,2.84)-- (3.82,3.4);
\draw (3.82,3.4)-- (4.26,2.98);
\draw (4.26,2.98)-- (3.9,2.82);
\draw (3.9,2.82)-- (4.28,2.68);
\draw (4.28,2.68)-- (3.72,2.18);
\draw (4.,-2.)-- (3.42,-2.6);
\draw (3.42,-2.6)-- (3.82,-3.02);
\draw (4.34,-2.38)-- (4.,-2.);
\draw (3.82,-3.02)-- (4.32,-2.75);
\draw (4.32,-2.75)-- (4.09,-2.55);
\draw (4.09,-2.55)-- (4.34,-2.38);
\draw (3.82,-3.02)-- (3.67,-3.22);
\draw (3.67,-3.22)-- (3.84,-3.39);
\draw (3.84,-3.39)-- (4.,-3.2);
\draw (4.,-3.2)-- (3.82,-3.02);
\draw (3.82,3.4)-- (3.64,3.66);
\draw (3.64,3.66)-- (3.86,3.93);
\draw (3.86,3.93)-- (4.1,3.68);
\draw (4.1,3.68)-- (3.81064,3.41352);
\draw (3.86,3.93)-- (3.85,4.19);

\draw[blue]  (4.58,6.728)-- (3.85,4.19);
\draw (4.25,-6.682)-- (3.82,-3.6);
\draw[blue] (4.25,-6.682)-- (3.8319859383222643,-3.6859085160679514);
\draw (3.8319859383222643,-3.6859085160679514)-- (3.84,-3.39);
\end{scope}

\draw [shift={(0.8200819142056478,0.11146554573543112)}] plot[domain=1.0540413778662818:5.179936858384536,variable=\t]({1.*7.610224188310985*cos(\t r)/5+0.*7.610224188310985*sin(\t r)/5},{0.*7.610224188310985*cos(\t r)/5+1.*7.610224188310985*sin(\t r)/5});

\draw[blue] (0.67,0.1) node{$Z_\str$};
\draw[blue] (1.34,0.1) node{$Z'_\str$};
\draw[blue] (2.34,0.1) node{$\partial ^\frb U_\str$};
\draw[blue] (1.7, 1) node {$R_x$};
\draw[blue] (1.7,-1) node {$R_y$};
\end{scope}
\end{tikzpicture}
\caption{Separation of $\partial ^\frb U_\str$. Co-Siegel disk $Z'_\str$ together with its iterated lifts form two periodic bubble chains landing at periodic points $x$ and $y$. The bubble chains together with external rays $R_x$ and $R_y$  separate $\partial ^\frb U_\str$ from $\alpha$.}
 \label{Fig:SiegPacm:SepForbSector}
\end{figure}

\subsubsection{Julia rays in {$\partial \Jul_{\str}$}}
\label{ss:InnerRays}
Consider the dynamical plane of $f_\str: U_{\str}\to V$. By Theorem~\ref{thm:LocConnOfJul for StandPacm}, we can choose (see Figure~\ref{Fig:SiegPacm:SepForbSector}) two periodic points $x,y \in \Jul_{\str}$ together with two periodic external rays $R_x,R_y$ landing at $x$, $y$ and two periodic bubble chains $B_x,B_y$ landing at $x,y$ so that $x$ and $y$ are close to $\partial^\frb U_\str$ and $R_x\cup B_x\cup B_y\cup R_y$ separates $\partial ^\frb U_\str$ from $c_1$ as well as from all the remaining points in the forward orbits of $x,y$. Let $p$ be a common period of $x,y$. Set $K\coloneqq 4p$.

A \emph{Julia ray} $J$ of $\Jul_{\str}$ is a simple arc in $\Jul_{\str}$ starting at a point in $\partial Z_{\str}$.

\begin{claim3}
\label{clm3:IxIy}
There are Julia rays $J_x\subset B_x$ and $J_y\subset B_y$ such that $J_x$ and $J_y$ start at the critical point $c_0$ and land at $x$ and $y$ respectively. Moreover, $J_x$ and $J_y$ are periodic with period $p$: the rays $J_x$ and $J_y$ decompose as concatenations $J_{x}^{1}J_{x}^{2}J_{x}^{3}\dots$ and $J_{y}^{1}J_{y}^{2}J_{y}^{3}\dots $ such that $f^p_\str$ maps $J_{x}^{k}$ and $J_{y}^{k}$ to $J_{x}^{k-1}$ and $J_{y}^{k-1}$ respectively.
 \end{claim3}
\begin{proof}
Write $B_x=(Z_1, Z_2,\dots )$; since $x$ is close to $\partial ^\frb U_\str$ we see that $Z_1=\overline Z'_\str$. Since $x$ is periodic with period $p$, there is an $a>0$ such that $f^p$ maps $Z_{a+i}$ to $Z_i$ for all $i$.

Let $J_{x}^{1}\subset \Jul_\str$ be a simple arc in $\partial Z_1 \cup \partial Z_2 \cup \dots \cup \partial Z_{a}$ connecting the critical point $c_0$ to the point where $\partial Z_{a+1}$ is attached to $\partial Z_{a}$. We inductively define $J_{x}^{j}$ to be the iterated lift of $J_{x}^{j-1}$ such that $J_{x}^{j}$ starts where $J_{x}^{j-1}$ terminates. This constructs $J_x=J_{x}^{1}J_{x}^{2}J_{x}^{3}\dots$; similarly $J_y=J_{y}^{1}J_{y}^{2}J_{y}^{3}\dots $ is constructed.\end{proof}

\subsubsection{Julia rays for $f_n$}
\label{ss:AlmInnRays}
Recall that in Claim~\ref{clm3:IxIy} we specified Julia rays $J_x(f_\str)$ and $J_y(f_\str)$. Since $f_0$ is sufficiently close to $f_\str$, the periodic points $x,y$ exist in the dynamical plane of $f_0$ and are close to those of $f_\str$. For $n\ll0$ let us now construct \emph{Julia rays} $J_x(f_n)=J_{x}^{1}J_{x}^{2}J_{x}^{3}\dots $ and  $J_y(f_n)=J_{y}^{1}J_{y}^{2}J_{y}^{3}\dots $ such that
\begin{enumerate}
\item $f^p_n$ maps $J_{x}^{k}$ to $J_{x}^{k-1}$ and  $J_{y}^{k}$ to $J_{y}^{k-1}$ (compare with Claim~\ref{clm3:IxIy});
\item $J_{x}^{k}(f_n)$ and  $J_{y}^{k}(f_n)$ are in small neighborhoods of $J_{x}^{k}(f_*)$ and  $J_{y}^{k}(f_*)$ respectively;
\item for $z\in J_{x}^{1}\cup J_{x}^{2}\cup J_{y}^{1}\cup  J_{y}^{2}$ there is a $q\le 2p$ such that either $f_n^{q}(z)\in O_n$ or $\displaystyle f_n^{q}(z)\in  \bigcup_{|k|\le 2p}\fH_{n}^{ k\qq_n} $. In the former case we can assume that $f_n^\ell(z)\not\in A_n\cup O_n$ for $\ell\in \{0,1,\dots, q-1\}$.\label{pr:3:ss:AlmInnRays}
\end{enumerate}

\begin{proof}[Construction of $J_{x}$ and $J_y$]
We will use notations from the proof of Claim~\ref{clm3:IxIy}. By stability of periodic points, $x,y$ exist for $f_n$ and are close to $x(f_\str), y(f_\str)$. The curve $J_{x}^{1}$ is a simple arc in $\partial Z_1 \cup \partial Z_2 \cup \dots \cup \partial Z_{a}$. We split $J_1$ as the concatenation $\ell_1\cup \ell_2 \dots\cup \ell_{a}$ with $\ell_j=J_{x}^{1}\cap \partial Z_j$. Let $f_\str^{d(j)}$ be the smallest iterate mapping $ Z_j$ to $\overline Z_\str$. Since $J_x\subset \Jul_\str$, the curve \[\widetilde \ell_j \coloneqq f_\str^{d(j)}(\ell_j)\] is a simple arc in $\partial Z_\str$ connecting $c_1$ and a certain $c_{t(j)}$.

 Using Claims~\ref{cl3:WallAppr partial Z} and~\ref{cl3:fH are small}, we approximate each $\widetilde \ell_j (f_\str)$ by a curve $\widetilde \ell_j (f_n)$ within $O_n \cup \fH_{n}^{ t(j)+1}\cup\fH_{n}^{ 0}$. Lifting $\widetilde \ell_j (f_n)$ along the branch of $f_n^{d(j)}$ that is close to $f_\str^{d(j)}\mid   \ell_j (f_\str)$, we construct $\ell_j(f_n)$ that is close to $\ell_j(f_\str)$. Assembling all $\ell_j$, we construct $J_{x}^{1}(f_n)$. By continuity, pulling back $J_{x}^{1}(f_n)$ we construct finitely many $J_{x}^{k}(f_n)$ approximating $J_{x}^{k}(f_\str)$ such that the remaining curves $J_{x}^{k}(f_\str)$ are within the linearization domain of $x$. Taking pullbacks within the linearization domain of $x$, we construct a ray  $J_x(f_n)$ landing at $x$. Similarly,  $J_y$ is constructed. Property (3) follows from $|t(j)|\le p$.
\end{proof}

\subsubsection{Blocking $\partial ^\frb U_n$}%Proof of Theorem~\ref{thm:GlobParabFlower}
\label{sss:prf:thm:GlobParabFlower}

Recall from Claim~\eqref{cl3:ValPetalExists} that $f^{\qq_n M}_n(\fH^0_n)\subset H^0_n$. For $t\in \{M,M-1,M-2,\dots, 0\}$ we set $H_n^{(t)}$ to be the forward $f_n$-orbit of $f^{\qq_n k}_n(\fH^0_n)$.

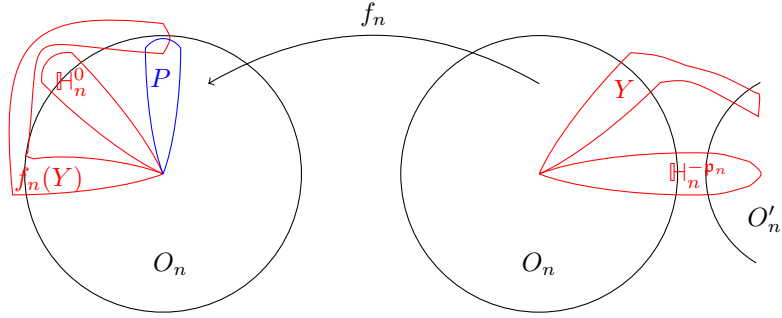
\begin{figure}

\centering{\begin{tikzpicture}[red]

\begin{scope}[shift={(0,0)},rotate=0,scale =0.4]

\draw[black] (0,-3) node {$O_n$};
\draw[black] (7.5,-1.5) node {$O'_n$};
 \draw[black]  (0,3) edge[->,bend right] node [above] {$f_{n}$}  (-11,3);

\begin{scope}[shift={(9,0)}]
\draw[black] plot[domain=2.067609495318839:4.1705147732538323,variable=\t]({1.*3.4616181187415807*cos(\t r)+0.*3.4616181187415807*sin(\t r)},{0.*3.4616181187415807*cos(\t r)+1.*3.4616181187415807*sin(\t r)});
\end{scope}

\draw[black] (0.,0.) circle (4.6);

\draw (0,0) .. controls (1,0.4) and (3,0.7).. 
(5,0.7) .. controls (6,0.7)..
(7, 0.4).. controls (7.5,0).. 
(7, -0.4).. controls (6,-0.7) ..
(5,-0.7) .. controls (3,-0.7) and (1,-0.4).. (0,0);

\draw  (5.3,0) node {$\fH_{n}^{-\pp_n}$};
\end{scope}

\begin{scope}[shift={(0,0)},rotate=45,scale =0.4]

\draw (0,0) .. controls (1,0.4) and (3,0.7).. 
(5,0.7) .. controls (5.6,0)..
(6, -0.9).. controls (6.6,-1.9)..
(7, -3.2).. controls (7.1,-3.3).. 
(6.5, -3.8).. controls (5.8,-1.3).. 
%(7, -0.4).. controls (6,-0.7) ..
(5,-0.7) .. controls (3,-0.7) and (1,-0.4).. (0,0);
\draw  (4,-0) node {$Y$};
\end{scope}

\begin{scope}[shift={(-5,0)},rotate=135]

\begin{scope}[shift={(0,0)},rotate=0,scale =0.4]

\draw[black] (0.,0.) circle (4.6);

\draw (0,0) .. controls (1,0.4) and (3,0.7).. (5,0.7) .. controls (5.5,0.3) and (5.5,-0.3).. (5,-0.7) .. controls (3,-0.7) and (1,-0.4).. (0,0);
\draw  (4.3,0) node {$\fH_{n}^{0}$};
\end{scope}

\begin{scope}[shift={(0,0)},rotate=45,scale =0.4]

\draw (0,0) .. controls (1,0.4) and (3,0.7).. 
(5,0.7).. controls (5.5,-4.5) and (4.5,-5.5)..
(0,-5).. controls (-0.3,-4.5)..
(0,-4).. controls (5,-4.3) and (4,-5.3)..
(4.5,-0.7) .. controls (4.6,-0.6)..
(4.3,-0.5) .. controls (3,-0.7) and (1,-0.4).. (0,0);
\draw  (3.8,0.2) node {$f_n(Y)$};
\end{scope}

\begin{scope}[blue,shift={(0,0)},rotate=-45,scale =0.335]
\draw (0,0) .. controls (1,0.4) and (3,0.7).. (5,0.7) .. controls (5.5,0.3) and (5.5,-0.3).. (5,-0.7) .. controls (3,-0.7) and (1,-0.4).. (0,0);
\draw  (3.8,0.1) node {$P$};
\end{scope}
\end{scope}

\draw[black] (-4.9,-1.2) node {$O_n$};
\end{tikzpicture}}
\caption{Illustraction to the proof of Claim~\ref{cl3:Nr4}. If $Y$ intersects $O'_n$, then applying $f_n$ we obtain that $P\cup f_n(Y)$ encloses $\fH_{n}^{0}$. Since $P$ is surrounded by the wall $A_n$, the set $f^{\qq_n M}_n\left( P\cup f_n(Y)\right)$ also encloses $ \fH_{n}^{0}$. Then $ f^{\qq_n}_n\mid f^{\qq_n M}_n\left( P\cup f_n(Y)\right)$ has degree one while $f^{\qq_n}_n\mid \fH_{n}^{0}$ has degree $2$; this is a contradiction.} \label{Fig:TopConstr}
\end{figure}

\begin{claim3}
\label{cl3:Nr4}
The flower $H_{n}^{(t)}$ does not intersect $\partial^\frb U_n$ for all $t\in \{M,\dots, 0\}$.
\end{claim3} 
\noindent As a consequence, $H_n$ extends to a required $\fH_n\coloneqq H_{n}^{(0)}$ for $n\ll 0$.
\begin{proof}
Recall that valuable petals $\fH_{n}^{ k\pp_n}\subset U_n$ with $|k| \le K$ are already constructed. Set
\begin{equation}
\label{eq:defn:H'n}
H'^{(t)}_{n}\coloneqq H^{(t)}_{n}\setminus \bigcup_{|k|\le K} \fH_{n}^{ k\pp_n}.
\end{equation}
Let us show that $H'^{(t)}_{n}$ does not hit $R_x\cup J_x\cup J_y\cup R_y$; this would imply that $H^{(t)}_n$ does not intersect $\partial ^\frb U_n$. Suppose converse; since $H'^{(t)}_{n}$ does not intersect $R_x\cup R_y$, we can consider the first moment $t$ (i.e.~$t$ is the closest to $M$) when $H'^{(t)}_{n}$ hits $J_x\cup J_y$. Denote by $X$ a petal of $H'^{(t)}_{n}$ intersecting $J_x\cup J_y$. Choose $z\in X \cap \left(J_x\cup J_y \right)$; we can assume that $z\in J_{x}^{1}\cup J_{x}^{2}\cup J_{y}^{1}\cup  J_{y}^{2}$, otherwise $t$ is not the first moment when $H'^{(t)}_{n}$ hits $J_x\cup J_y$. By~Property~\eqref{pr:3:ss:AlmInnRays} from \S\ref{ss:AlmInnRays}, there is a $q\le 2p$ such that either $f_n^{q}(z)\in O_n$ or $f_n^{q}(z)\in  \bigcup_{|k|\le 2p}\fH_{n}^{ k\qq_n} $. The latter would imply that $X$ is a petal in $\bigcup_{|k|\le 4p}\fH_{n}^{ k\qq_n}$; this contradicts to~\eqref{eq:defn:H'n}. Therefore, $f_n^{q}(z)\in O_n$.

Write \[O'_n\coloneqq f_{n}^{-1}(O_n)\setminus (A_n\cup O_n)\ni f_n^{q-1}(z)\] and  set $Y\coloneqq f^{q-1}(X)$. We have $O'_n\cap Y\ni  f_n^{q-1}(z)$,
see Figure~\ref{Fig:TopConstr}. Since $\fH^{-\pp_n}_n$ contains a critical point, we see that $f_n^q(z)$ is within a connected components $P$ of $ O_n\setminus (H_n\cup \{\alpha\})$ and, moreover, $P\cup f_n(Y)$ surrounds $\fH^{0}_n$.

 Let us apply $f_n^{\qq_n M}$ to $f_n(Y)\cup P$. By Claim~\ref{cl3:contr of rep petals} (recall that $N>M+1$, see \S\ref{sss:N Walls}), we have $f_n^{\qq_n M}(P)\subset (A_n\cup O_n)\setminus H_n$ and $f_n^{\qq_n M}(P)$ does not contain a critical point of $f_n^{\qq_n}$. On the other hand, $f_n^{\qq_n M+1}(Y)$ does not contain a critical point of $f^{\qq_n}_n$ as a subset of $H_n$. Note that $f^{\qq_n M}_n\left( P\cup f_n(Y)\right)$ still surrounds $\fH_n^0$. This is a contradiction: $f_n^{\qq_n}\mid f^{\qq_n M}_n\left( P\cup f_n(Y)\right)$ has degree one while $f_n^{\qq_n}\mid \fH_n^0$ has degree $2$.
\end{proof}

\subsubsection{Siegel triangulation}
It remains to construct a Siegel triangulation $\bDelta(f_n)$ respecting $\fH_n$ for $n\ll0$. In the dynamical plane of $f_n$, let us choose a curve $\ell_1\subset V$ connecting $\partial V$ to $\alpha$ such that $\ell_1$ enters $U_n$ in $O'_n$, then reaches $\partial \fH_{n}^{-\pp_n}$, then travels to $\alpha$ within $\partial  \fH_{n}^{-\pp_n}$. We can assume that $\ell_1\setminus O_n$ is disjoint from $\gamma_1\setminus O_n$. Observe that $\ell_1$ is liftable to the dynamical planes $f_m$ for all $m\le n.$ Indeed, $\ell_1\cap \partial  \fH_{n}^{-\pp_n}$ is liftable because so is $\partial  \fH_{n}^{-\pp_n}$, while  $\ell_1\setminus  \partial  \fH_{n}^{-\pp_n}$ is liftable because it is disjoint from $\gamma_1$.  

Let us slightly perturb $\ell_1$ so that the new $\ell_1$ is disjoint from $\fH_n$. Define $\ell_0$ to the preimage of $\ell_1$ connecting $\partial U_n$ to $\alpha$. Then $\ell_1\cup \ell_0$ splits $U_n$ into two closed sectors; they form the triangulation denoted by $\bDelta(f_n)$. We can assume that $\ell_1$ was chosen so that $\ell_1\setminus O_n$ and $\ell_0\setminus O_n$ are connected. We define the wall $\bPi(f_n)$ to be the closures of two connected components of $U_n\setminus (O_n\cup \ell_0\cup \ell_1).$ 

For $m\le n$ we define $\bDelta(f_m)$ and $\bPi(f_m)$ to be the full lifts of  $\bDelta(f_n)$ and $\bPi(f_n)$. Then $\bDelta(f_m)$ is a required triangulation for $m\ll n$.
\end{proof}

\section{Hyperbolicity Theorem} 
\label{s:HypThm}

Recall that by $\lambda_\str$ we denote the multiplier of the $\alpha$-fixed point of $f_\str$. For $\lambda$ close to $\lambda_\str$ set 
\[\FF(\lambda):= \{f\in \WW^u \mid \text{the multiplier of $\alpha$ is $\lambda$}\}\]
the analytic submanifold of $\WW^u$ parametrized by fixing the multiplier at $\alpha$. Then $\FF(\lambda)$ forms a foliation of a neighborhood of $f_\str$.

\subsection{Holomorphic motion of $P(\bF_{0})$.}

Let $\UU\subset \WW^u$ be a small neighborhood of $f_\str$ such that every $f\in \UU$ has a maximal prepacmen, see Theorem~\ref{thm:MaxCommPair:long}.

\begin{lem}[Holomorphic motion of the critical orbits]
\label{lem:HolMotOfP}
For every $\pp/\qq$, the set \[\bigcup_{n\le 0}\orb_0(\bF^{\#}_{n}) \] moves holomorphically with $f_0\in \FF(\ee (\pp/\qq))\cap \UU$. 
\end{lem}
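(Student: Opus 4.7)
The plan is to exhibit $E \coloneqq \bigcup_{n \le 0} \orb_0(\bF^{\#}_n)$ at a chosen basepoint $f_0^\star \in \FF(\ee(\pp/\qq)) \cap \UU$ as the range of a countable family of holomorphic $\C$-valued functions on $\FF(\ee(\pp/\qq)) \cap \UU$ that separate points at every parameter in the slice; this is precisely the data of a holomorphic motion over that slice.

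First I would upgrade the analytic dependence of $\bF_0$ on $f_0$ from Theorem~\ref{thm:MaxCommPair:long} to analytic dependence of all rescaled towers $\bF^{\#}_n$, $n \le 0$; given the scaling relation~\eqref{eq:defn:f hash} and the fact from Lemma~\ref{lem:F_0 is iter of F_n} that $\bF_0$ is obtained from $\bF^{\#}_n$ by a fixed iteration scheme, this is essentially formal. Consequently, for any triple $(n,a,b)$ with $n \le 0$ and $a,b \ge 0$, the candidate orbit point
\[
z_{n,a,b}(f_0) \coloneqq (\bbf^{\#}_{n,-})^a \circ (\bbf^{\#}_{n,+})^b(0)
\]
is a holomorphic function of $f_0$ on the open set where it is defined. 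The first technical check is that, after possibly shrinking $\UU$, each $z_{n,a,b}$ admits a holomorphic extension to all of $\FF(\ee(\pp/\qq)) \cap \UU$. I would derive this from the $\sigma$-properness of the $\bbf^{\#}_{n,\pm}$ combined with the inductive construction of the domains $\bX^{\#}_{n,\pm}$ as unions of backward pullbacks in the proof of Theorem~\ref{thm:MaxCommPair:long}: only finitely many compact branches of pullback are required for a fixed $(n,a,b)$, and all of them are controlled uniformly in $f_0$ by Key Lemma~\ref{keylem:ContrOfPullbacks}.

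Next I would fix a parabolic basepoint $f_0^\star \in \FF(\ee(\pp/\qq)) \cap \UU$ and, for each $z \in E(f_0^\star)$, choose a representative triple $(n,a,b)$ with $z = z_{n,a,b}(f_0^\star)$. The candidate motion is $\tau(f_0, z) \coloneqq z_{n,a,b}(f_0)$, and the remaining task is to verify its well-posedness (independence of the representative) and injectivity in $z$ at every $f_0$ in the slice. For this I would invoke Corollary~\ref{cor:contr of 0 orb}, which applies at every parabolic element of $\WW^u$ and in particular throughout $\FF(\ee(\pp/\qq)) \cap \UU$: two triples $(n,a,b)$ and $(n',a',b')$, rewritten with respect to $\min(n,n')$ using Lemma~\ref{lem:F_0 is iter of F_n}, yield the same orbit point if and only if their multi-indices coincide. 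Applied at $f_0^\star$ this shows the choice of representative does not affect the value of $\tau(f_0, z)$; applied at an arbitrary $f_0$ in the slice it shows that distinct $z \in E(f_0^\star)$ are sent to distinct images by $\tau(f_0, \cdot)$.

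The three axioms of a holomorphic motion then follow: holomorphy in $f_0$ from Step~1, injectivity in $z$ from Step~2, and $\tau(f_0^\star, \cdot) = \mathrm{id}$ by construction. The main obstacle I foresee is the uniform extension in Step~1: the $\sigma$-properness supplied by Theorem~\ref{thm:MaxCommPair:long} is asymptotic in nature, so verifying that a prescribed finite-length composition of $\bbf^{\#}_{n,\pm}$ remains defined at $0$ uniformly for $f_0$ varying over the entire slice requires unpacking the inductive construction of $\bX^{\#}_{n,\pm}$ together with the pullback control of Key Lemma~\ref{keylem:ContrOfPullbacks}; once this is in place, the rest of the argument reduces to combinatorial bookkeeping via Corollary~\ref{cor:contr of 0 orb}.
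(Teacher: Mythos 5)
Your proof is correct and follows essentially the same route as the paper: parametrize the nested union of orbits by holomorphic multi-indexed orbit points, invoke Corollary~\ref{cor:contr of 0 orb} (together with the iteration relation of Lemma~\ref{lem:F_0 is iter of F_n}) to rule out collisions, and pass to the increasing union. The one imprecision is in your Step~1: the reason the forward orbit of $0$ is defined uniformly over the slice is not pullback control via Key Lemma~\ref{keylem:ContrOfPullbacks} (which concerns backward orbits and is already consumed in establishing Theorem~\ref{thm:MaxCommPair:long}), but simply that $0\in\bH^\per$ and $\bH$ is forward invariant under $\bbf_{0,\pm}$ at every parabolic $f_0$ in the slice (Corollary~\ref{cor:wHp:has:0} and Lemma~\ref{lem: wH is forw invar}), so each $z_{n,a,b}$ is a finite composition of analytic maps defined at $0$ throughout $\FF(\ee(\pp/\qq))\cap\UU$.
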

\noindent Recall from Lemma~\ref{lem:CritOrbPasses0} that $P(\bF_{0})\subset \bigcup_{n\le 0}\orb_0(\bF^{\#}_{n}) $ thus $P(\bF_{0})$ also moves holomorphically with $f_0\in \FF(\ee (\pp/\qq))\cap \UU$.

\begin{proof} 
By Corollary~\ref{cor:contr of 0 orb}, points in $\orb_0(\bF^{\#}_{n})$ do not collide with each other when $f_0\in \FF(\ee (\pp/\qq))\cap \UU$ is deformed. This gives a holomorphic motion of  $\orb_0(\bF_{0})\subset \orb_0(\bF^{\#}_{1})\subset  \orb_0(\bF^{\#}_{2})\subset \dots$ and we can take the union.
\end{proof}

Let $\UU'\subset \UU$ be a neighborhood of $f_\str$ such that every non-empty $\FF(\lambda)\cap \UU'$ has radius at least three times less than those of $\FF(\lambda)\cap \UU$.

\begin{cor}[Extended holomorphic motions]
\label{cor:UnifMotOfP}
	For $f_0\in \FF(\ee (\pp/\qq))\cap \UU'$ there is a holomorphic motion $\tau(f_0)$ of $\widehat \C$ such that $\tau(f_0)$ is equivariant (with the dynamics of $(\bF^{\#}_{n})_n$) on 
 \[\bigcup_{n\le 0}\orb_0(\bF^{\#}_{n}) .\] 
\end{cor}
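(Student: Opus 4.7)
The plan is to build $\tau(f_0)$ by combining the holomorphic motion of $P\coloneqq \bigcup_{n\le 0}\orb_0(\bF^{\#}_{n})$ from Lemma~\ref{lem:HolMotOfP} with an extension theorem of S\l{}odkowski type. First I would observe that the motion produced in Lemma~\ref{lem:HolMotOfP} is, by construction, already equivariant on $P$: every point in $P$ is obtained as $\bbf^{\#}_{n,\sigma_1}\circ\cdots\circ \bbf^{\#}_{n,\sigma_k}(0)$ for some $n\le 0$ and some sequence of signs, the maps $\bbf^{\#}_{n,\pm}$ depend analytically on $f_0$ inside $\FF(\ee(\pp/\qq))\cap\UU$ (Theorem~\ref{thm:MaxCommPair:long}), and the motion simply tracks each orbit point through the analytic family. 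Corollary~\ref{cor:contr of 0 orb} guarantees that no two of these orbit points collide as $f_0$ varies, so the tracking is unambiguous and the compatibility $\bbf^{\#}_{n,\sigma}(\tau(f_0)(z))=\tau(f_0)(\bbf^{\#}_{n,\sigma}(z))$ holds tautologically.

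Next I would parametrize $\FF(\ee(\pp/\qq))\cap \UU$ by analytic disks and apply the S\l{}odkowski extension theorem (or equivalently the Bers--Royden extension) to each such disk, extending the holomorphic motion of $P$ to a holomorphic motion of all of $\widehat\C$. The only subtlety is that S\l{}odkowski extends over the same disk while Bers--Royden extends with holomorphic Beltrami dependence only over a disk of one-third the radius; the hypothesis that $\FF(\lambda)\cap\UU'$ has radius at most one third of $\FF(\lambda)\cap\UU$ is precisely what lets us pass from $\UU'$-parametrized motions to extensions coming from $\UU$-parametrized data. Since S\l{}odkowski's theorem does not destroy the original motion on $P$, the equivariance established in the previous step is preserved on $P$ after extension.

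Finally one checks that the resulting $\tau(f_0)$ is indeed a holomorphic motion of $\widehat\C$ based at $f_\str$: this is immediate from the extension theorem, which produces a motion with continuous dependence in $f_0$ and quasiconformal slices, while coinciding with the original motion on $P$. The equivariance clause in the statement asks for compatibility precisely on $P$, so nothing further is required.

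I do not expect any serious obstacle: the content of the corollary is really just ``put Lemma~\ref{lem:HolMotOfP} into S\l{}odkowski.'' The only mildly delicate point is to make sure that the radius loss of the extension theorem is absorbed by the choice of $\UU'\subset \UU$, which is exactly the reason the statement is phrased with two nested neighborhoods related by a factor of three.
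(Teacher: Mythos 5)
Your proposal is correct and takes essentially the same route as the paper, whose entire proof is ``Follows by applying the $\lambda$-lemma to the holomorphic motion from Lemma~\ref{lem:HolMotOfP}.'' Your added observations --- that the motion from Lemma~\ref{lem:HolMotOfP} is equivariant on the orbit set by construction (since it tracks orbit points through the analytic family, with Corollary~\ref{cor:contr of 0 orb} preventing collisions), that the Bers--Royden/Sullivan--Thurston form of the $\lambda$-lemma loses a factor of three in the radius, and that this is exactly what the nesting $\UU'\subset\UU$ absorbs --- correctly unpack what the paper leaves implicit.
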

\begin{proof}
Follows by applying the $\lambda$-lemma to the holomorphic motion from Lemma~\ref{lem:HolMotOfP}.
\end{proof}

\begin{cor}[Passing to the limit of holomorphic motions]
\label{cor:UnifMotOfP:2}
For $f_0\in \FF(\lambda_\str)\cap \UU'$ there is a holomorphic motion $\tau(f_0)$ of $\widehat \C$ such that $\tau(f_0)$ is equivariant  on 
 \[\bigcup_{n\le 0}\orb_0(\bF^{\#}_{n}) .\] 
\end{cor}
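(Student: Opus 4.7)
The plan is to realize $\tau(f_0)$ as a subsequential limit of the parabolic holomorphic motions from Corollary~\ref{cor:UnifMotOfP}, along a sequence of rotation numbers $\pp_k/\qq_k \to \theta_\str$.

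First I would trivialize the foliation $\{\FF(\lambda)\}$ near $f_0$. Since the multiplier map $f \mapsto \lambda(f)$ is analytic and submersive, its leaves are graphs over a common holomorphic transversal at $f_0$; this yields, for each $\lambda$ close to $\lambda_\str$, a canonical biholomorphism $\iota_\lambda \colon \FF(\lambda_\str) \cap \VV \to \FF(\lambda) \cap \VV'$ defined on a uniform neighborhood $\VV$ of $f_0$, with $\iota_{\lambda_\str} = \id$ and $\iota_\lambda \to \id$ as $\lambda \to \lambda_\str$. Fix $\lambda_k := \ee(\pp_k/\qq_k) \to \lambda_\str$. Applying Corollary~\ref{cor:UnifMotOfP} at $\iota_{\lambda_k}(f_0)$ on the slice $\FF(\lambda_k) \cap \UU'$ and pulling back by $\iota_{\lambda_k}$, I obtain a holomorphic motion $\tau_k$ of $\widehat\C$ over $\VV$, based at $f_0$, equivariant on the critical orbits of the parabolic pacmen $\iota_{\lambda_k}(g)$.

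Second, I would extract a limit. Because the radius of $\FF(\lambda_k) \cap \UU'$ is at least three times smaller than that of $\FF(\lambda_k) \cap \UU$, the $\lambda$-lemma ensures that every $\tau_k(g)$ is $K$-quasiconformal with $K$ independent of $k$ and $g \in \VV$. I would normalize each $\tau_k(g)$ by a M\"obius transformation so that three prescribed points in $P(\bF_0(g))$ are held fixed, for instance $0$, $\bbf^\#_{0,+}(0)$, and $\bbf^\#_{0,-}(0)$; these depend analytically on $g$ by Theorem~\ref{thm:MaxCommPair:short}, they stay uniformly separated by Corollary~\ref{cor:contr of 0 orb} applied to $\iota_{\lambda_k}(g)$ (which holds uniformly in $k$ by continuity), and they are precisely the points where equivariance is required. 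The normalized family $\{\tau_k(\cdot)\}$ is then precompact in the topology of uniform convergence on $\VV \times \widehat\C$; pass to a subsequential limit $\tau(g) := \lim_{k} \tau_k(g)$. Holomorphicity in $g$ survives the uniform limit, and injectivity of each $\tau(g)$ survives by compactness of $K$-qc maps, yielding a holomorphic motion of $\widehat\C$ over $\VV$.

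Third, equivariance on $\bigcup_{n \le 0} \orb_0(\bF^\#_n)$ passes to the limit: each orbit point $\bbf^{\#a}_{n,-}\circ \bbf^{\#b}_{n,+}(0)$ and each partial map $\bbf^\#_{n,\pm}$ depends analytically on the pacman (Theorem~\ref{thm:MaxCommPair:short}), so equivariance of $\tau_k$ at the critical-orbit points of $\iota_{\lambda_k}(g)$ converges to equivariance of $\tau$ at the critical-orbit points of $g$.

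The main obstacle is ensuring the limit is a genuine (nondegenerate, injective) motion of all of $\widehat\C$. This forces the choice of normalization points: they must move holomorphically on the Siegel slice, remain uniformly separated in $k$, and lie in the invariant set. The triple $\{0,\ \bbf^\#_{0,+}(0),\ \bbf^\#_{0,-}(0)\}$ meets all three requirements via Corollary~\ref{cor:contr of 0 orb} and Theorem~\ref{thm:MaxCommPair:short}, so the degeneration issue is bypassed and the extracted limit $\tau(f_0)$ is the desired motion.
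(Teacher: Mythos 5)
Your proposal is correct and follows essentially the same route as the paper: the paper's two-sentence proof is exactly "choose rotation numbers $\pp_n/\qq_n$ with $\ee(\pp_n/\qq_n)\to\ee(\theta_\str)$ and pass to the limit in Corollary~\ref{cor:UnifMotOfP}," and your argument is a careful expansion of that limiting procedure (uniform $K$-qc bound from the $\UU'\subset\UU$ radius gap, precompactness via the three marked points kept apart by Corollary~\ref{cor:contr of 0 orb}, equivariance surviving the limit by analytic dependence from Theorem~\ref{thm:MaxCommPair:short}). One small clarification: since equivariance already forces $\tau_k(g)(0)=0$, $\tau_k(g)(\infty)=\infty$, and $\tau_k(g)(\bbf^{\#}_{0,\pm}(0)(f_0))=\bbf^{\#}_{0,\pm}(0)(g)$, the marked-point images are automatically controlled and uniformly separated, so no additional M\"obius post-composition is needed (and such a post-composition would conjugate the target dynamics, which you'd then have to undo).
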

\begin{proof}
Choose a sequence $\pp_n/\qq_n$ such that $\ee (\pp_n/\qq_n)\to \ee(\theta_\str)$. By passing to the limit in Corollary~\ref{cor:UnifMotOfP} we obtain the required property.
\end{proof}
 
\begin{cor}
\label{cor:WWu has dim 1}
The dimension of $\FF(\lambda_\str)$ is $0$.
\end{cor}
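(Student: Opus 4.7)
The plan is to argue by contradiction: suppose $\dim\FF(\lambda_\str)\geq 1$. Since the multiplier $f\mapsto f'(\alpha)$ is a holomorphic function on the finite-dimensional manifold $\WW^u$, this gives $\dim\WW^u\geq 2$, so we can choose a non-constant holomorphic disc $\Gamma_\str=\{f_t\}_{t\in\mathbb D}\subset\FF(\lambda_\str)\cap\UU'$ with $f_0=f_\str$ and $\partial_tf_t|_{t=0}\neq 0$.

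First I would show that the critical point is non-escaping for every $f\in\Gamma_\str$. The idea is to approximate $\Gamma_\str$ by parabolic curves: holomorphic disks $\Gamma_n\subset\WW^u\cap\UU'$ on which the multiplier at $\alpha$ equals $\ee(\pp_n/\qq_n)$ with $\ee(\pp_n/\qq_n)\to\lambda_\str$; such disks exist because the multiplier map is a submersion. On each $\Gamma_n$ every pacman is parabolic, hence by Corollary~\ref{cor:wHp:has:0} the critical point lies in the global attracting basin $\bH^{\per}$ of the maximal prepacman, and in particular is non-escaping. Passing to a Hausdorff limit $\Gamma_n\to\Gamma_\str$ transfers the non-escaping property to every $f\in\Gamma_\str$.

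Second, I would apply Corollary~\ref{cor:UnifMotOfP:2} to obtain a holomorphic motion $\tau_t\colon\widehat\C\to\widehat\C$ over $\mathbb D$ which is equivariant on $E\coloneqq\bigcup_{n\leq 0}\orb_0(\bF^\#_n(f_\str))$ with respect to the tower $(\bF^\#_n(f_t))_{n\leq 0}$. By Corollary~\ref{cor:contr of 0 orb} each $\orb_0(\bF^\#_n)$ is countably infinite, and since $\bF^\#_n$ is the $\mu_\str^{-n}$-rescaling of the $\sigma$-proper pair $\bF_n$ whose domains $\bX^\#_{n,\pm}$ exhaust $\C$ (Theorem~\ref{thm:MaxCommPair:long}), the orbits $\orb_0(\bF^\#_n)$ accumulate densely on arbitrarily large compacts of $\C$ as $n\to-\infty$. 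Thus $\tau_t$ is equivariantly determined on a dense, self-similar subset of the plane.

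Third, I would invoke Yampolsky's holomorphic motions argument from \cite{Ya-posmeas}: under the non-escaping hypothesis established in the first step, the equivariance of $\tau_t$ on the multi-scale orbit set $E$, combined with the rescaling covariance relating $\bF^\#_n$ to $\bF_0$ via iteration (Lemma~\ref{lem:F_0 is iter of F_n}) and direct rescaling, forces the infinitesimal generator $\dot\tau_0$ of the motion to vanish at every point of $P(\bF_0)$, and then (by density and the Ahlfors--Bers theory) everywhere on $\widehat\C$. This contradicts $\partial_tf_t|_{t=0}\neq 0$. The main obstacle is precisely this last step: converting dense equivariance on rescaled orbits into pointwise rigidity of $\tau_t$ is the heart of Yampolsky's argument, and it is exactly where the non-escaping property of the critical point --- obtained via the parabolic approximation and Corollary~\ref{cor:wHp:has:0} --- becomes indispensable, because without it one cannot control the closure of the post-critical set well enough to propagate triviality from $E$ to all of $\C$.
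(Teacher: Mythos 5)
Your first two steps roughly track the paper's outline, but the actual proof in \S\ref{s:HypThm} is different and considerably shorter, and your third step contains a genuine gap.

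The paper does not re-derive non-escaping along $\Gamma_\str$ by a Hausdorff-limit argument: the relevant input is already packaged into Lemma~\ref{lem:HolMotOfP} and Corollaries~\ref{cor:UnifMotOfP}--\ref{cor:UnifMotOfP:2}, where one first gets the motion of $\bigcup_{n\le 0}\orb_0(\bF^{\#}_n)$ over the parabolic slices $\FF(\ee(\pp/\qq))$ using Corollary~\ref{cor:contr of 0 orb} (injectivity of the critical orbit for parabolic pacmen), then passes to the limit $\ee(\pp_n/\qq_n)\to\lambda_\str$ using normality of holomorphic motions. Once Corollary~\ref{cor:UnifMotOfP:2} is in hand, the paper finishes in two short steps that your proposal does not contain: project the holomorphic motion of $\overline{P(\bF_0)}$ to the dynamical plane, conclude that every $f\in\FF(\lambda_\str)\cap\UU'$ near $f_\str$ is a Siegel map with rotation number $\theta_\str$, and then invoke Theorem~\ref{thm:ExpConv} to place all such $f$ in the stable manifold $\WW^s$. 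Since $\FF(\lambda_\str)\subset\WW^u$ and $\RR\mid\WW^u$ is locally expanding, this forces $\FF(\lambda_\str)$ to be a single point.

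The gap in your third step is at ``forces the infinitesimal generator $\dot\tau_0$ of the motion to vanish at every point of $P(\bF_0)$, and then (by density$\ldots$) everywhere.'' Equivariance of a holomorphic motion with the dynamics does not by itself force the motion to be infinitesimally trivial; nontrivial equivariant motions are the norm (they are exactly what qc-deformation theory produces), and the paper's own proof of Theorem~\ref{thm:ScalThm} constructs nontrivial equivariant motions along $\WW^u$. Moreover the density claim is false: for $f_0$ parabolic in $\WW^u$, the forward orbit of $0$ under $\bF_0$ lies in the attracting basin $\bH$, which by Lemma~\ref{lem:FatCoordPar} is a \emph{proper} subset of $\C$; the exhaustion of $\C$ by the domains $\bX^{\#}_{n,\pm}$ says nothing about density of the orbit of $0$. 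What actually closes the argument is not propagation of triviality but Theorem~\ref{thm:ExpConv}: holomorphic motion of the postcritical set $\Rightarrow$ Siegel $\Rightarrow$ exponential convergence under $\RR$ $\Rightarrow$ stable manifold, contradicting membership in $\WW^u$. Without some substitute for Theorem~\ref{thm:ExpConv} your chain of reasoning cannot produce a contradiction.
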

\begin{proof}
Suppose the dimension of $\FF(\lambda_\str)$ is greater than $0$. Consider the space $\FF(\lambda_\str)\cap \UU'$. By Corollary~\ref{cor:UnifMotOfP:2} the set $\overline {P(\bF_{0})}\subset \overline{\bigcup_{n\le 0}\orb_0(\bF^{\#}_{n})}$ moves holomorphically with $f_0\in \FF(\lambda_\str)\cap \UU'$. Projecting this holomorphic motion to the dynamical plane of $f_0$, we obtain a holomorphic motion of the post-critical set of $f_0\in \FF(\lambda_\str)\cap \UU'$. Therefore, there is a small neighborhood of $f_\str$ in $\FF(\lambda_\str)\cap \UU'$ consisting of Siegel maps. But all such maps must be in the stable manifold of $f_\str$ by Theorem~\ref{thm:ExpConv}. 
\end{proof}

\subsection{The exponential convergence}
\label{ss:ExpConv}
The following theorem follows from~\cite[Theorem 8.1]{McM3}. 
\begin{thm}
\label{thm:ExpConv}
Suppose that a pacman $f\in \BB$ is Siegel of the same rotation number as $f_\str$ such that $f$ is sufficiently close to $f_\str$. Then $\RR^n f$ converges exponentially fast to $f_\str$. 
\end{thm}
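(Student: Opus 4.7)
The plan is to reduce the statement to McMullen's exponential contraction for the commuting-pair Siegel renormalization \cite[Thm.~8.1]{McM3}, and then transport that convergence from the commuting-pair model to the Banach ball $\BB$ of pacmen.

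First I would set up the commuting-pair comparison. By Corollary~\ref{cor:SP embeds in SM} any Siegel pacman $f$ whose rotation number equals that of $f_\str$ gives rise, after cutting along $\gamma_1$, to a Siegel prepacman $F$ which realises a holomorphic commuting pair in the sense of \cite{McM3}. The pacman renormalization operator $\RR$ is an iterate of the prime renormalization (Lemma~\ref{lem:eq:S3:R_prm}), and on the commuting-pair level the prime step is precisely the restriction/first-return operation used by McMullen. Thus the sequence $\RR^n f$ projects to the sequence $\RR_\cp^{nk}(F)$ for an appropriate iterate $k$ of McMullen's operator. Moreover, by Theorem~\ref{thm:HyrbrEquivOfPacm} the pacman $f$ is hybrid equivalent to $f_\str$ on a neighborhood of the Siegel disk, so $F$ lies in the stable manifold of the McMullen fixed point $F_\str$ corresponding to $f_\str$. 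Applying \cite[Thm.~8.1]{McM3} gives constants $C>0$ and $\rho\in(0,1)$ with
\begin{equation*}
\mathrm{dist}_{\cp}(\RR_\cp^{nk}F,\ F_\str)\ \le\ C\rho^n,
\end{equation*}
where $\mathrm{dist}_{\cp}$ is the natural Carath\'eodory distance on germs of commuting pairs near $F_\str$.

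Second I would promote this convergence from the commuting-pair germ to the full pacman Banach ball. The restriction map $\BB\to \{\text{commuting pair germs near }F_\str\}$, $g\mapsto G$, is continuous and injective on the set of Siegel pacmen with the given rotation number (Theorem~\ref{thm:HyrbrEquivOfPacm} guarantees that the germ determines $g$ up to hybrid equivalence, and the Banach representative is pinned down by fixing the domain $\widetilde U$ and the truncation). Because $\RR\colon \BB\dashrightarrow \BB$ is compact (Theorem~\ref{thm: An Pacm self ope}), the orbit $\{\RR^n f\}_{n\ge 0}$ is precompact in $\BB$, so every subsequential limit $g_\infty$ is again a Siegel pacman with the same rotation number whose commuting-pair germ equals that of $f_\str$. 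Uniqueness of such a representative forces $g_\infty=f_\str$, giving $\RR^n f\to f_\str$ in the $\BB$-norm. The exponential rate is then transported from the germ level to the $\BB$-norm by a standard three-circle/Hartogs argument: if $\RR^n f$ and $f_\str$ agree to order $\rho^n$ on a pair of compactly-nested neighborhoods of the closed Siegel disk of $f_\str$, then after one more application of the compact operator $\RR$ they agree to order $\rho^n$ (up to a uniform multiplicative constant) on the entire domain $\widetilde U$ of the next renormalization, because $\RR$ precomposes with restriction to a strictly smaller disk.

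The step I expect to be the main obstacle is the last one: upgrading the commuting-pair exponential rate to a norm rate in $\BB$. The delicate point is that a pacman carries information outside the Siegel disk (the arc $\gamma_1$, the co-Siegel geometry, the choice of $\psi_\str$), and one must verify that these ``non-Siegel'' degrees of freedom are also contracted exponentially. This is where compactness of $\RR$ plus the improvement of the domain built into the definition of $f_\str$ (Lemma~\ref{lem:FixedSiegelPacman}) are essential: every application of $\RR$ buys a definite amount of extra analyticity, so the normal-family argument above yields the desired exponential decay of $\|\RR^n f - f_\str\|$, completing the proof.
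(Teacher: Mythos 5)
The paper gives no proof of this theorem: it states only that the result ``follows from \cite[Theorem 8.1]{McM3}'' and records in the adjacent remark that the exponential rate can alternatively be extracted by a Schwarz-lemma variation as in \cites{L:FCT,AL-horseshoe}. Your proposal takes the same basic route (reduce to McMullen's commuting-pair contraction, then transfer to the Banach ball $\BB$), so it is an expansion of the citation rather than a rival argument, and the overall plan is sound.

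Two points are worth flagging. First, the appeal to Theorem~\ref{thm:HyrbrEquivOfPacm} needs a small justification: that theorem requires combinatorial equivalence (same rotation number \emph{and} matching external angle for $\gamma_0$), not merely equal rotation numbers. Since $f_\str$ is a standard Siegel pacman and for standard pacmen combinatorial equivalence reduces to the rotation number (as observed in \S\ref{ss:StandardPacmen}), one should either observe that $f$ inherits the standard normalization or else argue that nearby Siegel pacmen with the same rotation number carry the same external angle; this is essentially automatic but should be said. Second, the transfer from McMullen's Carath\'eodory/germ-level exponential contraction to a bound on $\|\RR^n f-f_\str\|_{\BB}$ is, as you correctly identify, the nontrivial step, and your ``three-circle/Hartogs'' sketch is what the paper's remark calls the ``Schwarz lemma variation''. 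The concrete mechanism is that $\RR$ factors through a restriction to a compactly-contained subdomain of $\widetilde U$ (the improvement of the domain in Lemma~\ref{lem:FixedSiegelPacman} and the restriction in the proof of Theorem~\ref{thm:RenOper}), so a Cauchy estimate promotes $C^0$-closeness on a neighborhood of $\overline Z_\str$ to closeness on all of $\widetilde U$ after one more application of $\RR$, at the cost of a uniform multiplicative constant. Once one has this Cauchy estimate, the subsequential-limit/compactness argument in your middle paragraph becomes redundant: McMullen's theorem already yields $\|\RR^n f - f_\str\|_{\text{near }\overline Z_\str}\le C\rho^n$, and one passes directly to $\|\RR^{n+1} f - f_\str\|_{\widetilde U}\le C'\rho^n$ without invoking normal families. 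So the proposal could be tightened by removing the detour through compactness, but no essential idea is missing.
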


\begin{rem}
The proof of~\cite[Theorem 8.1]{McM3} is based on a ``deep point argument''. Alternatively, the exponential convergence follows from a variation of the Schwarz lemma following the lines of~ \cites{L:FCT,AL-horseshoe}.
\end{rem}

\subsection{The Hyperbolicity Theorem}
\begin{thm}[Hyperbolicity of $\RR$]
\label{thm:RR is hyper}
The renormalization operator $\RR\colon \BB\to \BB$ is hyperbolic at $f_\str$ with one-dimensional unstable manifold $\WW^u$ and codimension-one stable manifold $\WW^s$.

 In a small neighborhood of $f_\str$ the stable manifold $\WW^s$ coincides with the set of pacmen in $\BB$ that have the same multiplier at the $\alpha$-fixed point as $f_\str$. Every pacman in $\WW^s$ is Siegel.
 
  In a small neighborhood of $f_\str$ the unstable manifold $\WW^u$ is parametrized by the multipliers of the $\alpha$-fixed points of $f\in \WW^u$.
\end{thm}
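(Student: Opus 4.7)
The plan is to identify the local stable set with the codimension-one submanifold $\FF(\lambda_\str)$, combine this with the dimension count $\dim \WW^u = 1$ coming from Corollary~\ref{cor:WWu has dim 1}, and then exclude any neutral spectrum via the Small Orbits argument from \cite{L:FCT}.

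First I would verify the identification $\WW^s_{\mathrm{loc}} = \FF(\lambda_\str)\cap \mathrm{nbhd}(f_\str)$. For the inclusion $\FF(\lambda_\str) \subseteq \WW^s$, any $f$ close to $f_\str$ with multiplier $\lambda_\str = \ee(\theta_\str)$ has rotation number $\theta_\str \in \Theta_\per$, and Corollary~\ref{cor:Sieg disk depends cont} promotes $f$ to a Siegel pacman combinatorially equivalent to $f_\str$; Theorem~\ref{thm:ExpConv} then gives uniform exponential contraction under $\RR$. For the reverse inclusion, if $\RR^n f \to f_\str$ then the rotation number $\theta_n$ of $\RR^n f$ tends to $\theta_\str$, while Lemma~\ref{lem:eq:S3:R_prm} identifies $\theta_n = \cRRc^{n\kk}(\theta_0)$. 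Since $\cRRc^\kk$ is a Gauss-type expanding map with $\theta_\str$ as a hyperbolic fixed point, this forces $\theta_0 = \theta_\str$, hence $\lambda(f) = \lambda_\str$ and $f \in \FF(\lambda_\str)$.

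Next I would pin down $\WW^u$. Compactness of $\RR$ (Theorem~\ref{thm: An Pacm  self ope}) makes the unstable spectrum finite, so $\WW^u$ is finite-dimensional; Corollary~\ref{cor:WWu has dim 1} then gives $\dim \WW^u = 1$. Because $\WW^u \cap \FF(\lambda_\str) = \{f_\str\}$ locally, the holomorphic multiplier function $\lambda\colon \WW^u \to \C$ is non-constant on a one-dimensional disk and therefore realizes a local biholomorphism at $f_\str$, yielding the parametrization claim. Moreover the $D\RR$-eigenvalue along $\WW^u$ is read off from the induced map on multipliers as (essentially) $(\cRRc^\kk)'(\theta_\str)$, which has modulus strictly greater than $1$; in particular the unstable direction is strictly expanding.

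It remains to exclude the possibility that $D\RR$ has further spectrum on the unit circle. I would invoke the Small Orbits argument of \cite{L:FCT}: a neutral eigenvalue would produce, via a Koenigs-type construction, a holomorphic curve $\Gamma \ni f_\str$ transverse to $\WW^u$ along which the forward iterates $\RR^n$ remain in a compact neighborhood of $f_\str$. Such $\Gamma$ must lie in the local stable set, which we have just identified with $\FF(\lambda_\str)$; but Theorem~\ref{thm:ExpConv} delivers uniform geometric contraction on $\FF(\lambda_\str)$, contradicting the existence of a neutral direction. The main obstacle I anticipate is the rigorous deployment of Small Orbits in the compact-operator Banach setting of Theorem~\ref{thm: An Pacm  self ope}, in particular ensuring that a putative neutral mode genuinely produces a holomorphic family of pacmen with the required bounded-orbit property, so that it can be fed into the contradiction above; once this is in place, the identification $\WW^s_{\mathrm{loc}} = \FF(\lambda_\str)$ together with $\dim \WW^u = 1$ forces hyperbolicity.
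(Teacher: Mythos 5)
Your overall architecture is close to the paper's, but there is a genuine circularity in the key step. You invoke Corollary~\ref{cor:Sieg disk depends cont} to conclude that every $f \in \FF(\lambda_\str)$ near $f_\str$ is a Siegel pacman, and from that deduce $\FF(\lambda_\str) \subseteq \WW^s$ via Theorem~\ref{thm:ExpConv}. But Corollary~\ref{cor:Sieg disk depends cont} is itself derived \emph{from} Theorem~\ref{thm:RR is hyper} (see the sentence preceding it in \S6.4: ``The following corollary follows from Theorem~\ref{thm:RR is hyper} combined with Lemma~\ref{lem:R_2 of Sieg map}''). So you are using the theorem to prove itself. The difficulty that Corollary~\ref{cor:Sieg disk depends cont} circumvents --- knowing a priori that a perturbation of $f_\str$ with the same multiplier is Siegel, equivalently that its whole renormalization orbit stays in a small neighborhood of $f_\str$ --- is exactly the non-trivial content that must be established first, and you cannot assume it.

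The paper's proof avoids this by reversing the logical order. It first restricts $\RR$ to the codimension-one submanifold $\BB^{*}$ of pacmen with multiplier $\lambda_\str$, notes that $\Diff(\RR|\BB^{*})$ already has spectral radius $\le 1$ (since $\dim \WW^u = 1$ by Corollary~\ref{cor:WWu has dim 1} and $\WW^u$ is transverse to $\BB^{*}$), and then \emph{assumes for contradiction} there is spectrum on the unit circle. The Small Orbits theorem of \cite{L:FCT} then produces a single pacman $f\in \BB^{*}$ whose $\RR$-orbit stays in a small neighborhood of $f_\str$ yet decays subexponentially. At that point --- and only at that point --- the map $f$ is known to be infinitely renormalizable with orbit remaining near $f_\str$, so Corollary~\ref{cor:inf ren are hybr conj} (not Corollary~\ref{cor:Sieg disk depends cont}) applies and shows $f$ is Siegel. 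Then Theorem~\ref{thm:ExpConv} forces exponential convergence, contradicting slowness. This yields $\BB^{*} \subseteq \WW^s$ and hence hyperbolicity. You should replace your circular use of Corollary~\ref{cor:Sieg disk depends cont} by Corollary~\ref{cor:inf ren are hybr conj}, and restructure so that the Small Orbits step \emph{produces} the map on which the Siegel/exponential-convergence dichotomy is tested, rather than being applied after an identification of $\WW^s$ you have not yet legitimately established. Your remaining observations --- $\dim \WW^u=1$ from Corollary~\ref{cor:WWu has dim 1}, and the multiplier giving a local parametrization of $\WW^u$ because it is non-constant on a one-dimensional leaf transverse to $\BB^{*}$ --- are sound and match the paper.
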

\begin{proof}
It was already shown in Corollary~\ref{cor:WWu has dim 1} that the dimension of $\WW^u$ is one. Let us show that $\WW^s$ has codimension one. Denote by $\BB^{*}$ the submanifold of $\BB$ consisting of all the pacmen with the same multiplier at the $\alpha$-fixed point as $f_\str$. Then $\RR$ naturally restricts to $\RR\colon \BB^{*}\to \BB^{*}$. Consider the derivative $\Diff(\RR\mid \BB^* )$; by Corollary~\ref{cor:WWu has dim 1}  the spectrum of $\Diff(\RR\mid \BB^* )$ is within the closed unit disk. Suppose that the spectrum of $\Diff(\BB^* )$  intersects the unit circle. By \cite[Small orbits theorem]{L:FCT} $\RR\mid \BB^*$ has a small slow orbit: there is an $f\in \BB^*$ such that $f$ is infinitely many times renormalizable but 
\[\lim_{n \to +\infty} \frac 1 n \log \norm{\RR^n f)} =0.\]
Moreover, it can be assumed that $\{\RR^n f\}_{n\ge 0}$ is in a sufficiently small neighborhood of $f_\str$. By Corollary~\ref{cor:inf ren are hybr conj}, $f$ is a Siegel pacman and by Theorem~\ref{thm:ExpConv}, $\RR^n f$ converges exponentially fast to $f_\str$. This is a contradiction. Therefore, the spectrum of $\RR$ is compactly contained in the unit disk, all of the pacmen in $\BB^*$ are infinitely renormalizable and thus are Siegel (Corollary~\ref{cor:inf ren are hybr conj}). The submanifold $\BB^*$ coincides with $\WW^s$ in a small neighborhood of $f_\str$. 
\end{proof}

\subsection{Control of Siegel disks}
The following lemma follows from \cite[Theorem 8.1]{McM3} combined with Theorem~\ref{thm:HybrCongSiegMaps} and Lemma~\ref{lem:SiegPacmExist}.
\begin{lem}
\label{lem:R_2 of Sieg map}
Every Siegel map $f$ has a pacman renormalization $\RR_2 f$ such that $\RR_2 f$ is in $ \BB$ and is sufficiently close to $f_\str$. \qed
\end{lem}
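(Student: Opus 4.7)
The plan is to first reduce to the case where $f$ has rotation number $\theta_\str$ itself, since for $\RR_2 f \in \BB$ the multiplier at $\alpha(\RR_2 f)$ must be close to $\lambda_\str = \ee(\theta_\str)$, and by Lemma~\ref{lem:eq:S3:R_prm} a finite initial iteration of pacman renormalization can move any rotation number in the finite cycle of $\theta_\str$ under~\eqref{eq:ActOnRotNumb} to $\theta_\str$. With this reduction in hand, Theorem~\ref{thm:HybrCongSiegMaps} supplies a hybrid conjugacy, on neighborhoods of the closed Siegel disks, between $f$ and the quadratic polynomial $p_{\theta_\str}$.

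Next I would invoke Lemma~\ref{lem:SiegPacmExist} to obtain, for any chosen truncation level and any non-precritical base point $x \in \partial Z_{p_{\theta_\str}}$, a Siegel prepacman of $p_{\theta_\str}$ around $x$ whose rotation number is some iterate of $\theta_\str$ under~\eqref{eq:ActOnRotNumb}. Since $\theta_\str \in \Theta_\per$, I can choose the number of sector-renormalization steps so that the rotation number of the resulting pacman is $\theta_\str$ itself. Pulling the prepacman back via the hybrid conjugacy produces a standard Siegel pacman renormalization $g_1$ of $f$ with rotation number $\theta_\str$. Iterating the same construction (each $g_n$ is itself a Siegel map, so Corollary~\ref{cor:SP embeds in SM} applies) yields a sequence $g_1, g_2, g_3, \dots$ of standard Siegel pacmen, all of rotation number $\theta_\str$, with $g_{n+1}$ a pacman renormalization of $g_n$.

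Then I would appeal to McMullen's exponential convergence theorem \cite[Theorem 8.1]{McM3}: the Siegel (commuting pair) renormalizations of $p_{\theta_\str}$ converge exponentially fast to the McMullen fixed point $g_\str$, which by Lemma~\ref{lem:FixedSiegelPacman} is conformally conjugate on a neighborhood of $Z_\str$ to $f_\str$. Combined with the improvement of the domain (Theorem~\ref{thm:RenFixedPoint} and Corollary~\ref{cor:psi1:thm:RenFixedPoint}), the gluing maps realizing the $g_n$ as pacmen can be normalized so that all $g_n$ are defined on the common domain $\widetilde U$, and McMullen's convergence translates into uniform convergence $g_n \to f_\str$ on $\widetilde U$. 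For $N$ sufficiently large, $g_N \in N_{\widetilde U}(f_\str, \delta) = \BB$ and is as close to $f_\str$ as required; I would define $\RR_2 f := g_N$.

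The hardest part will be the final translation: converting McMullen's exponential convergence (stated in the commuting pair category, up to conformal conjugacy, and with moving domains) into uniform convergence of pacmen in a fixed Banach ball. One must carefully fix the normalizations (location of the critical value, truncation level, choice of gluing map $\psi_n$) at each renormalization step so that the successive $g_n$ live in a common Banach ambient $N_{\widetilde U}(f_\str,\delta)$, and then verify that the conformal conjugacy between the Siegel renormalizations of $p_{\theta_\str}$ and the normalized pacmen $g_n$ can be made arbitrarily close to the identity on $\widetilde U$. The improvement of the domain granted by Lemma~\ref{lem:FixedSiegelPacman} is the key ingredient that makes this common Banach ambient possible.
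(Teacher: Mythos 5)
Your proposal uses exactly the three results the paper cites---McMullen's Theorem~8.1, Theorem~\ref{thm:HybrCongSiegMaps}, and Lemma~\ref{lem:SiegPacmExist}---so the overall architecture matches the paper's intent (the paper itself gives no further detail beyond that citation). However, there is a logical mismatch in the middle of your argument: you build the sequence $g_1, g_2, \dots$ by iterating pacman renormalization on $f$, but then invoke McMullen's exponential convergence for the commuting-pair renormalizations of the \emph{polynomial} $p_{\theta_\str}$. Those are different sequences, related to your $g_n$ only through a chain of hybrid conjugacies $h_n$ whose deviations from the identity you have not controlled. Asserting that these $h_n$ tend to the identity is essentially the contraction statement you are trying to establish, so as written the argument is close to circular. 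The clean fix is to apply McMullen's theorem to $f$ directly: the theorem applies to the full class of Siegel (holomorphic commuting pair) dynamics, not only to quadratic polynomials, and yields $\RR_\cp^n f \to g_\str$ exponentially fast without any detour through $p_{\theta_\str}$. In that reading, Theorem~\ref{thm:HybrCongSiegMaps} and Lemma~\ref{lem:SiegPacmExist} serve (via Corollary~\ref{cor:SP embeds in SM}) to show that $f$ admits a pacman renormalization at all, not to transfer convergence from the polynomial.

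The remaining work you flag---pinning down the gluing normalizations and domains so that a deep enough renormalization lands in the fixed Banach ball $N_{\widetilde U}(f_\str,\delta)$---is indeed the substantive content of the lemma, and you are right that the improvement of the domain (Lemma~\ref{lem:FixedSiegelPacman}, Theorem~\ref{thm:RenFixedPoint}) is what makes a common ambient $\widetilde U$ possible. One more minor point: your opening reduction to rotation number $\theta_\str$ is superfluous, since in the context of this lemma $f$ is already a Siegel map with rotation number $\theta_\str$.
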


We say a holomorphic map $f\colon U\to V$ is \emph{locally Siegel} if it has a distinguished Siegel fixed point. The following corollary follows from Theorem~\ref{thm:RR is hyper} combined with Lemma~\ref{lem:R_2 of Sieg map}

\begin{cor}
\label{cor:Sieg disk depends cont}
Let $f\colon U\to W$ be a Siegel map with rotation number $\theta\in \Theta_\per$ and let $N(f)$ be a small Banach neighborhood of $f$. Then every locally Siegel map $g\in N(f)$ with rotation number $\theta$ is a Siegel map. Moreover, the Siegel disk $\overline Z_g$ depends continuously on $g$. \qed
\end{cor}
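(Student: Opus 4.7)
The plan is to combine Lemma~\ref{lem:R_2 of Sieg map} with the characterization of the stable manifold in Theorem~\ref{thm:RR is hyper} and then unwind the resulting pacman renormalization back to $g$. First, apply Lemma~\ref{lem:R_2 of Sieg map} to the Siegel map $f$ to obtain a pacman renormalization $\RR_2 f\in\BB$ lying in a prescribed small neighborhood of $f_\str$. By construction, the rotation number of $\RR_2 f$ at its $\alpha$-fixed point equals $\theta_\str$. Next, invoke Theorem~\ref{thm:RenOper} to extend $\RR_2$ to a compact analytic operator on a Banach neighborhood of $f$: after shrinking $N(f)$ if necessary, $\RR_2 g$ is defined for every $g\in N(f)$ and depends analytically on $g$, so $\RR_2 g$ is as close to $f_\str$ in $\BB$ as we wish.

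Now suppose $g\in N(f)$ is locally Siegel with rotation number $\theta$. By Lemma~\ref{lem:eq:S3:R_prm}, indifferent pacmen are sent by $\RR_2$ to indifferent pacmen whose rotation number is a fixed iterate of the combinatorial map~\eqref{eq:S3:R_prm}; since this iterate fixes $\theta$ (because it fixes the rotation number of $f$, which equals $\theta$), the pacman $\RR_2 g$ has rotation number $\theta_\str$, i.e.~the same multiplier at $\alpha$ as $f_\str$. By Theorem~\ref{thm:RR is hyper}, the local stable manifold $\WW^s$ near $f_\str$ coincides with the set of pacmen in $\BB$ sharing this multiplier, and every element of $\WW^s$ is a Siegel pacman. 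Hence $\RR_2 g$ is a Siegel pacman. Unwinding the construction of $\RR_2$ from Lemma~\ref{lem:R_2 of Sieg map}: the prepacman used to define $\RR_2$ in the dynamical plane of $g$ is (via the renormalization change of variables) conformally conjugate to $\RR_2 g$; spreading around the Siegel quasidisk of $\RR_2 g$ under the dynamics of $g$ yields a forward-invariant quasidisk in $\Dom g$ compactly contained there, with $g$ conformally conjugate on it to the rotation by $\theta$ and with the critical point on its boundary. By the very definition of a Siegel map in~\S\ref{s:SiegPacm}, this shows $g$ is a Siegel map with Siegel disk containing this quasidisk. Combined with the assumption that $g$ is already locally Siegel with the same multiplier, the constructed quasidisk is necessarily the full Siegel disk $\overline Z_g$.

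For the proximity statement, observe that $\RR_2 g$ is close to $\RR_2 f$ in the Banach topology and both are close to $f_\str$; applying Theorem~\ref{thm:ComPsConj} to each and using that qc combinatorial pseudo-conjugacies of high level are close to the identity on the Siegel triangulation $\bDelta_n$ (which approximates $\overline Z_\str$) shows that $\overline Z_{\RR_2 g}$ and $\overline Z_{\RR_2 f}$ lie in a small Hausdorff neighborhood of $\overline Z_{f_\str}$, and therefore of each other. Since the renormalization change of variables depends analytically on the map, spreading back the Siegel quasidisks along the prepacman dynamics yields that $\overline Z_g$ is in a small Hausdorff neighborhood of $\overline Z_f$. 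The main point to be careful about is Step~2 --- matching the rotation number of $\RR_2 g$ with that of $f_\str$ and verifying that the forward-invariant quasidisk reconstructed from $\overline Z_{\RR_2 g}$ truly agrees with (and not merely sits inside) the maximal linearization domain of $g$; the former is taken care of by the combinatorial formula~\eqref{eq:S3:R_prm} and the hypothesis on $\theta$, while the latter is automatic because the reconstructed set is a quasidisk containing a critical point on its boundary, and the Siegel disk of a unicritical map cannot extend past its unique critical boundary point.
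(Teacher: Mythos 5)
Your argument is correct and follows exactly the route the paper indicates when it states that the corollary ``follows from Theorem~\ref{thm:RR is hyper} combined with Lemma~\ref{lem:R_2 of Sieg map}'': renormalize $f$ into the Banach ball $\BB$ near $f_\str$ via $\RR_2$, extend $\RR_2$ analytically over $N(f)$ (Theorem~\ref{thm:RenOper}), use the combinatorial action on rotation numbers (Lemma~\ref{lem:eq:S3:R_prm}) to place $\RR_2 g$ on the codimension-one submanifold of multiplier $\ee(\theta_\str)$, invoke the identification of that submanifold with $\WW^s$ and the fact that $\WW^s$ consists of Siegel pacmen (Theorem~\ref{thm:RR is hyper}), and then pull back and spread around to recover a Siegel quasidisk for $g$; the Hausdorff proximity then follows from the pseudo-conjugacies of Theorem~\ref{thm:ComPsConj}. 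One place to be a touch more careful is the assertion that the domain $D$ reconstructed by spreading around the sector $\psi_g^{-1}(Z_{\RR_2 g}\setminus\gamma_1)$ is itself a quasidisk: this does not come for free from ``finite union of conformal images of a quasidisk sector''. The cleanest justification is to note that $\RR_2 g$ and $\RR_2 f$ are hybrid conjugate Siegel pacmen (Theorem~\ref{thm:HyrbrEquivOfPacm}, or Corollary~\ref{cor:inf ren are hybr conj}), and this qc conjugacy, transported back through the analytic change of variables and spread around, carries $\overline Z_f$ (already a quasidisk, since $f$ is a Siegel map) to $\overline Z_g$, exhibiting the latter as a quasidisk directly; this simultaneously yields the proximity of $\overline Z_g$ to $\overline Z_f$ and avoids the need to control the corners of the tiled boundary by hand. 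Your argument identifying the reconstructed $D$ with the maximal linearization domain of $g$ (via the uniqueness of the linearizing coordinate and the fact that a linearization domain contains no critical points, so cannot properly contain a rotation quasidisk with a critical boundary point) is correct as stated.
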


%%%%%%%%%%%%%%%%%
%%%%%%%%%%%%%%%%%
\section{Scaling Theorem}
\label{s:ScalinThm}
In this section we prove a refined version of Theorem~\ref{scaling thm}. Consider  $\theta_\str\in \Theta_\per$ and let $f$ be a Siegel map with rotation number $\theta_\str$. Let $\UU\ni f$ be a small Banach neighborhood of $f$ and let $\WW\subset \UU$ be a one-dimensional slice containing $f$ such that $\WW$ is transverse to the hybrid class of $f$; i.e. in a small neighborhood of $f\in \WW$ all maps have different multipliers at their $\alpha$-fixed points.

\begin{figure}
\begin{tikzpicture}
\node at (0,0){\includegraphics[scale=0.86]{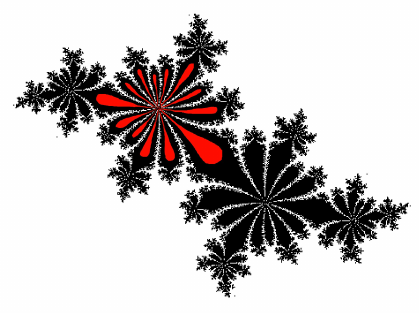}};
\node at (6,0){\includegraphics[scale=0.725]{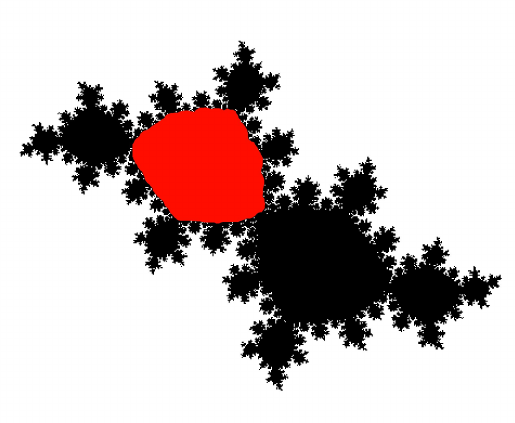}};

\end{tikzpicture}

\caption{A satellite valuable flower (red) of the $5/13$ Rabbit approximates the golden Siegel disk (also red).}
\label{Fig:SattValFlow}
\end{figure}

We say a map $g\in \UU$ is \emph{satellite} if it has a satellite valuable flower: 
\begin{defn}[Satellite valuable flowers]
\label{dfn:SatValFlow}
 A \emph{satellite valuable flower} of $g$ is an open forward invariant set $\fH$ such that (see Figure~\ref{Fig:SattValFlow})
\begin{itemize}
\item[(A)] $\fH\cup \{\alpha(g)\}$ is connected;
\item[(B)] $\fH$ has $\qq$ connected components $\fH^{0}, \fH^{1}\dots ,\fH^{\qq-1}$, called \emph{petals}, enumerated counterclockwise at $\alpha$; every $\fH^{i}$ is an open topological disk with a single access to $\alpha$;
\item[(C)] $g(\fH^{i})\subset \fH^{i + \pp}$, where $\pp$ is coprime to $\qq$;
\item[(D)] there is an attracting periodic cycle $\gamma=(\gamma_0,\gamma_1,\dots, \gamma_{\qq-1})$ with $\gamma_i\in \fH^i$ attracting all points in $\fH$;
\item[(E)] $\fH^{-\pp}$ contains the critical point of $g$.
\end{itemize}
The number $\pp/\qq$ is called the \emph{combinatorial rotation number} of $\fH$. The \emph{multiplier} of $\fH$ is the multiplier of $\gamma$. 
\end{defn}

For convenience, let us say that a parabolic valuable flower (see Definition~\ref{dfn:ParValFlow}) with rotation number $\pp/\qq$ is a satellite valuable flower with rotation number $\pp/\qq$ and multiplier $1$.

By Lemma~\ref{lem:eq:S3:R_prm}, $\RR$ acts on the rotation numbers of indifferent pacmen as $\cRRc^\kk$ for a certain $\kk\ge 1$; see also Remark~\ref{lem:min ren per}. 
\begin{thm}
\label{thm:ScalThm}
Suppose a sequence $(\pp_n/\qq_n)_{n=0}^{-\infty}$ converges to $\theta_\str$ so that $\cRRc^\kk (\pp_n/\qq_n)= \pp_{n+1}/\qq_{n+1}$. Fix $\lambda_1\in \Disk$ and a small neighborhood of $\overline Z_f$. Then there is a continuous path $\lambda_t\in  \Disk$ with $ t\in (0,1]$ emerging from $1=\lambda_0$ such that for every sufficiently big $n\ll0$ there is a unique path $g_{n,t}\in \WW$, where $t\in[0,1]$, with the following properties
\begin{itemize}
\item $g_{n,t}$ has a satellite valuable flower $\fH_{n,t}$ with rotation number $\pp_n/\qq_n$ and multiplier $\lambda_t$;
\item all $\fH_{n,t}$ are in the given small neighborhood of $\overline Z_f$ and depend continuously on $t$; and
\item  $\dist(f, g_{n,t})  \sim \left((\cRRc^\kk)' (\theta_\str) \right)^n$ for every $t$.
\end{itemize}
\end{thm}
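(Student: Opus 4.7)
The plan is to exploit the hyperbolicity of $\RR$ at $f_\str$ (Theorem~\ref{thm:RR is hyper}) by first locating parabolic pacmen with valuable flowers on the unstable manifold and then sweeping out the associated satellite hyperbolic components via a quasiconformal surgery.

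First I would transfer the problem to $\BB$. By Lemma~\ref{lem:R_2 of Sieg map}, $f$ has a pacman renormalization $\tilde f \in \BB$ close to $f_\str$. Since $\WW$ is transverse to the hybrid class of $f$ and hybrid classes are the local stable leaves of $\RR$ (Theorem~\ref{thm:RR is hyper}), the pacman renormalization carries $\WW$ to an analytic curve $\widetilde \WW \subset \BB$ through $\tilde f$, transverse to $\WW^s$, on which the $\alpha$-multiplier $\mu$ serves as a local analytic coordinate with $\mu(\tilde f) = \lambda_\str := \ee(\theta_\str)$. By Theorem~\ref{thm:RR is hyper}, $\WW^u$ is one-dimensional and parametrized by $\mu$. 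For each $n \le 0$, let $h_n \in \WW^u$ be the unique pacman with $\mu(h_n) = \ee(\pp_n/\qq_n)$; this $h_n$ is parabolic of combinatorial rotation number $\pp_n/\qq_n$. By Lemma~\ref{lem:eq:S3:R_prm} combined with the hypothesis $\cRRc^\kk(\pp_n/\qq_n) = \pp_{n+1}/\qq_{n+1}$, we have $\RR h_n = h_{n+1}$, so $(h_n)_{n\le 0}$ is precisely an anti-renormalization tower of $h_0$ along $\WW^u$.

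Applying Theorem~\ref{thm:GlobParabFlower} to this tower, for every sufficiently negative $n$ the pacman $h_n$ carries a valuable parabolic flower $\fH_n$ contained in the prescribed neighborhood of $\overline Z_f$. On each petal of $\fH_n$ I would perform a Douady--Hubbard type quasiconformal surgery: working in the attracting Fatou coordinates of the satellite cycle, I scale the Beltrami coefficient by a parameter so that the resulting pacman $h_{n,t}$ carries a satellite cycle of prescribed multiplier $\lambda_t$, where $\lambda_t$ traces a continuous path in $\overline\Disk$ from $\lambda_0 = 1$ to $\lambda_1$. Compactness of $\overline{\fH_n}$ in the prescribed neighborhood of $\overline Z_f$ keeps $h_{n,t}$ and its flower inside that neighborhood, yields continuity in $t$, and (since the deformation is a hybrid conjugacy off $\fH_n$) preserves the forward invariance and cyclic structure required by Definition~\ref{dfn:SatValFlow}. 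The family $\{h_{n,t}\}_{t\in[0,1]}$ sweeps out a one-dimensional analytic disk $\HH_n \subset \BB$ through $h_n$, on which $\lambda$ is an analytic coordinate; parabolic perturbation theory shows that $\mu(h_{n,t})$ depends non-degenerately on $\lambda_t$ and moves off the unit circle into $|\mu| > 1$, so $\HH_n$ is transverse to $\WW^s$ at $h_n$. Intersecting $\widetilde \WW$ with the hybrid leaf through $h_{n,t}$ yields a unique point $\tilde g_{n,t} \in \widetilde\WW$ with the same $\alpha$-multiplier, and transporting it back via the analytic renormalization isomorphism $\widetilde\WW \to \WW$ produces the required $g_{n,t}$. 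Theorem~\ref{thm:HyrbrEquivOfPacm} combined with the hybrid slide guarantees that $g_{n,t}$ inherits a satellite valuable flower with rotation number $\pp_n/\qq_n$ and multiplier $\lambda_t$, while uniqueness follows from uniqueness of the $\lambda$-parametrization of $\HH_n$.

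For the scaling, the derivative of $\RR$ restricted to $\WW^u$ at $f_\str$, in either the $\theta$-coordinate or the equivalent $\mu$-coordinate, equals $(\cRRc^\kk)'(\theta_\str)$ by Lemma~\ref{lem:eq:S3:R_prm} (the chain-rule factors from $\mu = \ee(\theta)$ cancel at the fixed point). Hence $|\mu(h_n) - \lambda_\str| \sim ((\cRRc^\kk)'(\theta_\str))^n$. Since $\RR\HH_n = \HH_{n+1}$ (renormalization preserves satellite combinatorics and the satellite multiplier), the $\mu$-diameter of $\HH_n$ contracts at the same rate, so $|\mu(h_{n,t}) - \ee(\pp_n/\qq_n)| = O(((\cRRc^\kk)'(\theta_\str))^n)$ uniformly in $t$. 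Combining these estimates and using that $\widetilde\WW \to \WW$ has bounded nonzero derivative in $\mu$, we conclude $\dist(f,g_{n,t}) \sim ((\cRRc^\kk)'(\theta_\str))^n$. The main obstacle is the third step: showing that the surgery produces a genuine local satellite hyperbolic component $\HH_n$ transverse to $\WW^s$, that $\RR\HH_n = \HH_{n+1}$ so the self-similar diameter estimate is valid, and that $\widetilde\WW$ intersects each level set of the satellite multiplier in $\HH_n$ in a unique point; preserving the full forward-invariant valuable flower structure (rather than only a local attracting flower) under the deformation is the delicate technical point, and it is precisely here that the global flower of Theorem~\ref{thm:GlobParabFlower} is essential.
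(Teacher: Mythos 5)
Your overall plan is the right one and matches the paper's architecture: anchor parabolic pacmen on $\WW^u$, invoke Theorem~\ref{thm:GlobParabFlower} to get their valuable flowers, deform by qc surgery to reach the target multiplier $\lambda_1$, and finally intersect with the pacman-renormalized copy $\widetilde\WW$ of $\WW$ to pull the estimate back. Two steps, however, are asserted rather than obtained, and each hides the hard part of the paper's argument.

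First, the qc deformation. You perform the surgery directly at the level of the pacman $h_n$ and claim "compactness of $\overline{\fH_n}$ in the prescribed neighborhood of $\overline Z_f$ keeps $h_{n,t}$ \ldots inside that neighborhood, yields continuity in $t$." That controls the location of the flower, not the pacman's position in the Banach ball $\BB$. When $\lambda_1$ is far from the initial multiplier (in the extreme case $\lambda_1=0$, requiring a separate surgery the proposal doesn't distinguish), the qc conjugacy $\tau_{n,t}$ is far from identity, and there is no \emph{a priori} reason $h_{n,t}$ remains a pacman in $\BB$ for all $t\in[0,1]$. The paper resolves this by doing the surgery once, on the maximal prepacman $\bG_0$, projecting back to all renormalization levels with the equicontinuity that $|\mu_\str|<1$ supplies, and then bootstrapping: the deformation is valid only for $t\in[0,T]$ with $T$ small, after which one passes to an accumulation point $q_{m,t}=\lim \RR^{-n+m}(g_{n,t})\in\WW^u$ (hence close to $f_\str$ again) and restarts on $[T,2T]$, iterating finitely many times. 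Without this bootstrap the existence of the path $h_{n,t}$ across all of $[0,1]$ is unproved.

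Second, the final intersection-and-scaling step. You write "Intersecting $\widetilde\WW$ with the hybrid leaf through $h_{n,t}$ yields a unique point\ldots" and then deduce the scaling from self-similarity of the $\HH_n$ and bounded nonzero derivative of $\widetilde\WW\to\WW$. This is the conclusion, not an argument: in the pacman framework there is no off-the-shelf straightening theorem producing codimension-one hybrid leaves with controlled holonomy. The paper manufactures exactly this object as the lamination $\bFol=\{\Fol_{n,t}\}\cup\{\WW^s\}$: $\Fol_{m,t}$ is defined near $q_{m,t}$ as the level set of the satellite-cycle multiplier, and $\Fol_{n,t}$ for $n\le m$ is the $\RR^{m-n}$-preimage; hyperbolicity of $\RR$ is what guarantees this is a codimension-one lamination in a neighborhood of $f_\str$, transverse to $\RR_2(\WW)$. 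The scaling estimate then follows because the holonomy along $\bFol$ is asymptotically conformal ($\lambda$-lemma in Banach balls, \cite[App.\ 2]{L:FCT}); your "bounded nonzero derivative" heuristic is not enough, because the leaf through $h_{n,t}$ is not \emph{a priori} an analytic graph over $\WW^u$. You also need the claim $\RR\HH_n=\HH_{n+1}$ to transfer the contraction rate, and this too is exactly the invariance of $\bFol$.

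In short: the computation of the rate via Lemma~\ref{lem:eq:S3:R_prm} and $\mu=\ee(\theta)$, and the role of the valuable flowers, are correct and in line with the paper; but the bootstrap that keeps the deformation inside $\BB$ and the construction of the renormalization-invariant lamination $\bFol$ with asymptotically conformal holonomy are the actual content of the proof, and both are absent from the proposal.
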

\noindent Note that the path $g_{n,t}$ starts at a unique parabolic map in $\WW$ with rotation number $\pp_n/\qq_n$. 

\subsection{Proof of Theorem~\ref{thm:ScalThm}}The proof is split into short subsections. Consider a pacman hyperbolic renormalization operator $\RR\colon \BB\dashrightarrow \BB$ around a fixed point $f_\str=\RR(f_\str)$ with rotation number $\theta_\str$.  As before, $\WW^u$ denotes the unstable manifold of $\RR$ at $f_\str$.

\subsubsection{Perturbation of parabolic pacmen}
\label{sss:ScThm:PertParPacm}
By shifting the sequence $(\pp_n/\qq_n)_n$ we can assume that $\pp_0/\qq_0$ is close to $\theta_\str$. Then there is a unique parabolic pacman $f_0\in \WW^u$ with rotation number $\pp_0/\qq_0$. Then $f_n \coloneqq \RR^{n}f_0,\sp n\le 0$ has rotation number $\pp_n/\qq_n$. By Theorem~\ref{thm:GlobParabFlower} and possibly by further shifting $(\pp_n/\qq_n)_n$, we can assume that: 
\begin{itemize}
\item each $f_n$ has a valuable flower $\fH(f_n)$ at the $\alpha$-fixed;
\item each $f_n$ has a triangulation $\bDelta (f_n)$ respecting $\fH(f_n)$: every petal of $\fH(f_n)$ is within a triangle of $\bDelta(f_n)$; 
\item $\bDelta (f_n)$ has a wall $\bPi(f_n)$ approximating $\partial Z_\str$;
\item $\bDelta (f_n)$ and $\fH(f_n)$ are the full lifts of $\bDelta (f_{n+1})$ and $\fH(f_{n+1})$.
\end{itemize}

 Let $g_0\in \WW^u$ be a slight perturbation of $f_0$ that splits $\alpha$ into a repelling fixed point $\alpha$ and an attracting cycle $\gamma(g_0)$ such that $\alpha$ is on the boundary of the immediate attracting basin of $ \gamma(g_0)$. Then $\bDelta (f_0)$, $\bPi(g_0)$, $\fH(f_0)$ are perturbed to $\bDelta (g_0)$, $\bPi(g_0)$, $\fH(g_0)$ such that all points in $\fH(g_0)$ are attracted by $ \gamma(g_0)$. We can assume that the perturbation is sufficiently small such that $\bPi (g_0)$ still approximates $\partial Z_\str$. By Lemma~\ref{lem:SiegTriangLifting}, there are full lifts $\bDelta (g_n)$,  $\fH(g_n)$ of $\bDelta (g_0)$, $\fH(g_0)$.
 
As before, we denote by $\bF_n$ and $\bG_n$ the maximal prepacmen of $f_n$ and $g_n$ and we denote $\bG^\#_n$ the the rescaled version of $\bG_n$ such that $\bG_0=\bG_{0}^{\#}$ is an iteration of $\bG^{\#}_n$. Recall from \S\ref{ss:AttrFlow}~that $\fH(f_0)$ admits a global extension $\bH(\bF_0)$ in the dynamical plane of $\bF_0$. Similarly, we now define the maximal extension $\bH(\bG_n)$ of  $\fH(g_n)$.

 Each $\fH(g_n)$ lifts to the dynamical plane of $\bG^{\#}_n$; denote by $\bH (g_0)$ the lift of  $\fH(g_0)$.  Similar to~\eqref{eq: dom f_0 pm}, we set 
\[\bH(\bG_0)\coloneqq \bigcup_{a,b\in \Z} (\bbg_{0,-})^a\circ (\bbg_{0,+})^{b} \left(\bH(g_0) \right)\]
 to be the full orbit of $\bH(g_0)$. The same argument as in the proof of Lemma~\ref{lem: wH is forw invar} shows that $\bH(\bG_0)$ is fully invariant and is within $\Dom \bG_{0,-} \cap \Dom \bG_{0,+} $. 
 
  Denote by $\bH^\per(\bG_0)$ the union of periodic components of $\bH(\bG_0)$. The same argument as in the proof of Proposition~\ref{prop:ClassOfPerComp} shows:
 \begin{prop}[Parameterization of $\bH^\per(\bG_0)$]
The connected components of $\bH^\per(\bG_0)$ 
are uniquely enumerated as $(\bH^i)_{i\in \Z}$ such that $\bH^0\ni 0$ and such that the actions of $\bbg^{\#0}_{n,\pm}$ on $(\bH^i)_{i\in \Z}$ are given (up to interchanging $\bbg_{n,-}$ and $\bbg_{n,+}$) by
\begin{equation}
 \bbg^{\#}_{n,-}(\bH^i) = ( \bH^{i-\pp_n}) \text{ and } \bbg^{\#}_{n,+}(\bH^i) = \bH^{i+\qq_n-\pp_n}. \qed
\end{equation}
\end{prop}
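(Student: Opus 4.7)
The strategy is to mirror the proof of Proposition~\ref{prop:ClassOfPerComp}, replacing the role of the local parabolic flower $H_n$ with the satellite valuable flower $\fH(g_n)$ and its attracting cycle $\gamma(g_n)$. All of the topological combinatorics go through verbatim; only the underlying dynamical mechanism (attracting cycle instead of parabolic basin) changes.

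First I would index the components of $\fH(g_n)$. Exactly as in \S\ref{ss:Enumer of bH}, let $I_n=\{-a_n,-a_n+1,\dots,b_n\}$ enumerate the $\qq_n$ petals of $\fH(g_n)$ in clockwise order, starting with the component closest to $\gamma_1$. By the perturbation setup in \S\ref{sss:ScThm:PertParPacm}, the triangulation $\bDelta(g_n)$ respects $\fH(g_n)$, and $\bDelta(g_{n-1}),\fH(g_{n-1})$ are the full lifts of $\bDelta(g_n),\fH(g_n)$. The embedding of $g_n$ as a prepacman in the dynamical plane of $g_{n-1}$ therefore gives a natural inclusion $I_n\hookrightarrow I_{n-1}$, and we take $I_{-\infty}=\bigcup_{n\le 0} I_n=\Z$.

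Next I would transfer this enumeration to $\bH^\per(\bG_0)$. Each petal $\fH(g_n)^i$ lifts canonically via $\intr\bS^{\#}_n\simeq V\setminus\gamma_1$ to a component of the lifted flower $\bH(g_n)\subset\bH(\bG_0)$; call this lift $\bH(g_n)^i$. Since every point in $\bH(g_n)$ is attracted to the lifted cycle, every $\bH(g_n)^i$ is contained in a periodic component $\bH^i$ of $\bH(\bG_0)$ for the commuting pair action. Compatibility of $I_n\subset I_{n-1}$ with full lifts shows that $\bH^i$ is independent of the choice of sufficiently large $-n$. The action formula
\[
\bbg^{\#}_{n,-}(\bH^i)=\bH^{i-\pp_n},\quad \bbg^{\#}_{n,+}(\bH^i)=\bH^{i+\qq_n-\pp_n}
\]
follows (up to interchanging $\bbg^{\#}_{n,\pm}$) from the fact that the commuting pair $\bG^\#_n$ is obtained by cutting $g_n$ along $\gamma_1$: one branch realizes the rotation by $\pp_n$ steps on one side of the cut and the other branch by $\qq_n-\pp_n$ steps on the other side, exactly as in the parabolic computation.

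Finally I would verify surjectivity: $\bigcup_{i\in\Z}\bH^i=\bH^\per(\bG_0)$. The argument from \S\ref{ss:AttrFlow} carries over unchanged. For any periodic component $\bH'$ of $\bH(\bG_0)$ and any component $\bH''$ of $\bH(\bG_0)$, there exist integers $a',b'\in\Z$ with a branch of $\bbg_{0,-}^{a'}\circ\bbg_{0,+}^{b'}$ mapping $\bH''$ onto $\bH'$; postcomposing with a high iterate of the first-return map to $\bH(g_0)$ makes $a',b'\ge 1$, so every periodic component lies in the forward orbit of $\bH^0$ under the pair $(\bbg_{0,-},\bbg_{0,+})$. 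Since that forward orbit is exactly $\bigcup_{i\in\Z}\bH^i$ by the action formula, the enumeration is exhaustive. Re-enumerating once so that $\bH^0\ni 0$ (possible because $0\in\bH(\bG_0)$ lies in some periodic component, as $0$ is a critical value and critical orbits accumulate on the attracting cycle) gives the unique enumeration claimed.

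The main subtlety will be step two: confirming that the full-lift construction of $\fH(g_n)$ really does produce, in the linearized coordinates on $\bS^{\#}_n$, a set whose periodic saturation under $(\bbg^{\#}_{n,-},\bbg^{\#}_{n,+})$ agrees with $\bH^\per(\bG_0)$ independently of $n$. This is where one must use that $\bG_0$ is an iterate of $\bG^{\#}_n$ (the direct analog of Lemma~\ref{lem:F_0 is iter of F_n}) together with Corollary~\ref{cor:contr of 0 orb}-type non-collision for orbits of $0$, so that the indexing does not degenerate when $n\to -\infty$. Everything else is a transcription of the parabolic proof.
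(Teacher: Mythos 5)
Your proof is correct and follows exactly the route the paper intends: the paper itself gives no independent argument for this proposition, stating only that ``the same argument as in the proof of Proposition~\ref{prop:ClassOfPerComp} shows'' it, and your transcription (indexing petals via $I_n\subset I_{n-1}$, lifting to $\bH^\per(\bG_0)$, reading off the action from the cut along $\gamma_1$, and exhausting periodic components by the forward-orbit argument from \S\ref{ss:AttrFlow}) is precisely that argument carried out in full. The subtlety you flag about stability of the indexing as $n\to-\infty$ is well taken; it is handled in the original construction of \S\ref{ss:Enumer of bH} by the compatibility of full lifts, exactly as you indicate.
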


 \subsubsection{QC-deformation of $g_n$}
 \label{ss:ScalThm:QC-deform}
Suppose first that $\lambda_1\not= 0$. Denote by $\lambda_0$ the multiplier of $ \gamma(g_0)$. Let $\bbg_{0,-}^\rr\circ \bbg_{0,+}^\ss\colon \bH^0(\bG_0)\to \bH^0(\bG_0)$ be the first return map (compare with Lemma~\ref{lem: wH is forw invar}). There is a semiconjugacy $\bbh\colon \bH^0(\bG_0) \to \C$ from $\bbg_{0,-}^\rr\circ \bbg_{0,+}^\ss$ to the linear map $z\to \lambda_0 z$. Choose a continuous path of qc maps $\tau_t\colon \C\to \C$ with $t\in  [0,1]$ such that $\tau_0=\id$ and  $\tau_t$ conjugates $z\to \lambda_0z$ to $z\to \lambda_t z$.

Lifting $\tau_t$ under $\bbh$ and spreading dynamically the associated Beltrami form, we obtain a qc map $\btau_t\colon \C\to \C$ conjugating $\bG_0$ to a maximal prepacman $\bG_{0,t}$; similarly $\btau_t$ conjugates $\bG^\#_{n,t}$ to a maximal prepacman $\bG^\#_{n,t}$ for $n\le 0$.

Define now $\tau_{n,t}$ to be the projection of $\btau_t$ to the dynamical plane of $g_n$ via $\intr \bS^{\#}_{k} \simeq V\setminus \gamma_1$ (see~\eqref{eq:dfn:Ssharp}); we normalize $\tau_{n,t}$ to preserve $\alpha(g_n)$ and $c_1(g_n)$. By compactness of qc-maps, there is a small $T>0$ such that all $g_{n,t}$ are in $\BB$ for $t\le T$. For $m\le0$ consider the sequence $\RR^{-n+m}(g_{n,t})$. All pacmen in this sequence are qc-conjugate with uniform dilatation. Moreover, the conjugacies preserve the critical value and the $\alpha$-fixed point because of the normalization for renormalization change of variables, see~\S\ref{ss:Pacm}. By compactness of qc-maps, $\RR^{-n+m}(g_{n,t})$ has an accumulated point $q_{m,t}\in \BB$, and moreover, we can assume that $\RR q_{m,t}=q_{m+1,t}$; i.e.~$q_{m,t}\in \WW^u$ and $q_{m,t}$ tends to $f_\str$ as $m$ tends to $-\infty$. We define $\bDelta(q_{n,t}),\bPi(q_{n,t}), \fH(q_{n,t})$ to be the images of  $\bDelta(g_{n,t}),\bPi(g_{n,t}), \fH(g_{n,t})$ via the qc-conjugacy from $g_{n,t}$ to $q_{n,t}$. By improvement of the domain, $\bDelta(q_{n,t})$ is in a small neighborhood of $\overline Z_\str$ and $\bPi(q_{n,t})$ approximates $\partial Z_\str$ for $n\ll0$. By shifting the sequence $(\pp_n/\qq_n)_n$ we can assume that this already occurs for $n=0$. We can now repeat the above argument and construct $q_{n,t}$ for $t\in [T,2T]$. After finitely many repetitions, we construct $q_{n,t}$ for all $t$ in $[0,1]$.

 \subsubsection{QC-surgery towards the center}
 \label{ss:ScalThm:QC-surj}
Suppose now $\lambda_1=0$.  In this case we apply a qc-surgery. As in~\S\ref{ss:ScalThm:QC-deform} we denote by  $\lambda_0$ the multiplier of $ \gamma(g_0)$. 
 
 Consider the first return map  \[\bbw_0\coloneqq\bbg_{0,-}^\rr\circ \bbg_{0,+}^\ss\colon \bH^0(\bG_0)\to \bH^0(\bG_0).\] It has a unique attracting fixed point $\bgamma^0$ and a unique critical value at $0$. Thus $\bbw_0$ has also a unique critical point. We can choose a small disk $\bD$ around $\bgamma^0$ such that 
\begin{itemize}
 \item   $0\in   \bbw_0(\bD)\Subset \bD$;
 \item $\bbw_0\colon  \bH^0(\bG_0)\setminus \bD \to   \bH^0(\bG_0)\setminus \bbw_0(\bD)$ is a $2$-to-$1$ covering map.
\end{itemize}
By Theorem~\ref{thm:GlobParabFlower}, we can project $\bD$ to a disk within $\fH(g_0)$.    
We claim that there is a continuous path of qc maps $\btau'_t\colon \bH^0(\bG_0)\to \bH^0(\bG_0)$ 
 and a continuous path $\bbw_t\colon \bH^0(\bG_0)\to \bH^0(\bG_0)$ such that
 \begin{itemize}
 \item $\btau'_t$ is equivariant on $\bH^0(\bG_0)\setminus \bD$;  
 \item $\bbw_t$ has a unique critical value at $0$ and a unique attracting fixed point at $\bgamma_{0,t}$;
 \item  $\bgamma_{0,1}=0$; i.e.~$0$ is supperattracting fixed point of $\bbw_1$.
 \end{itemize}
Indeed, it is sufficient to construct $\bbw_t\mid \bD$ and $\btau'_t \mid \bD$ equivariant on $\partial \bD$; pulling back the Beltrami  differential of $\btau'_t \mid \bD$ via the covering map $\bbw_0\mid  \bH^0(\bG_0)\setminus \bD$ gives the  Beltrami  differential for $\btau'_t \mid \bH^0(\bG_0)$.

Applying  $\bG_0$, we spread dynamically the Beltrami form of $\btau'$ to obtain a global qc map $\btau_t\colon \C\to \C$ which is unique up to affine rescaling. Spreading dynamically the surgery, we obtain a continuous path of maximal prepacmen $\bG^\#_{n,t}$. Define now $\tau_{n,t}$ to be the projection of $\btau_t$ to the dynamical plane of $g_n$ via $\intr\bS^{\#}_{n}\simeq V\setminus \gamma_1$;  similarly, $g_{n,t}$ is the projection of $\bG^\#_{n,t}$. The argument now continues in the same way as in~\S\ref{ss:ScalThm:QC-deform}.

\subsubsection{Lamination around $f_\str$} 
In~~\S\ref{sss:ScThm:PertParPacm},~\S
\ref{ss:ScalThm:QC-deform}, \S\ref{ss:ScalThm:QC-surj} we constructed continuous paths $q_{n,t}\in \WW^u,\sp n\ll0$ with $\RR(q_{n,t})=q_{n+1,t}$ so that each $q_{n,t}$ has a valuable flower $\fH(q_{n,t})$ with multiplier $\lambda_t$, where $\lambda_{0}=1$. Moreover, $\fH(q_{n,t})$ is within a triangulation $\bDelta (q_n)$ respecting $\fH(q_{n,t})$ such that the wall $\bPi(f_n)$ approximate $\partial Z_\str$.

For a big $m\ll 0$,  we define $\Fol_{m,t}$ to be the set of all pacmen close to $q_{m,t}$ such that the multiplier of $\gamma(q_{m,t})$ is $\lambda_t$. Locally $(\Fol_{m,t})_{t}$ is a codimension-one lamination of $\BB$. Since $\Fol_{m,t}$ is in a small neighborhood of $q_{m,t}$, every pacman $g\in\Fol_{m,t}$ has a valuable flower $\fH(g)$ and a triangulation $\bDelta (g) $ respecting $\fH(g)$ such that  $\bDelta (g) $ and $\fH(g)$ depend  continuously on $g$. The wall $\bPi (g) $ approximates $\partial Z_\str$.

For $n\le m$, we define
\[\Fol_{n,t}\coloneqq \{g\in \BB\mid \RR^{m-n}(g)\in \Fol_{m,t}\}.\] 
Since $\RR$ is hyperbolic,
 \begin{equation}
\label{eq:ScThm:bfol} 
\bFol\coloneqq \{\Fol_{n,t}\}_{n,t}\cup \{\WW^s\}
\end{equation} forms a codimension-one lamination in a neighborhood of $f_\str$. A pacman $g\in \Fol_{n,t}$ has $\fH(g)$  and $\bDelta (g) $ satisfying the same conditions as above. In particular, all the pacmen in $\Fol_{n,t}$ are hybrid conjugate in neighborhoods of their valuable flowers.  

\subsubsection{Scaling}
 By Corollary~\ref{cor:SP embeds in SM}, the Siegel map $f$ can be renormalized to a pacman. By Lemma~\ref{lem:R_2 of Sieg map} we can assume that the renormalization of $f$ is within a small neighborhood of $f_\str$. This allows us to define an analytic renormalization operator $\RR_2\colon\UU\dashrightarrow \BB$ from a small neighborhood of $f$ to a small neighborhood of $f_\str$. Since maps in $\WW$ have different multipliers, the image of $\WW$ under $\RR_2$ is transverse to the lamination $\bFol$, see~\eqref{eq:ScThm:bfol}. 
 
 We define $f_{n,t}$ to be the unique intersection of $\Fol_{n,t}$ with the image of $\WW$ under $\RR_2$, and we define $g_{n,t}\in \WW$ to be the preimage of $f_{n,t}$ via $\RR_2$. Since $\bPi(f_{n,t})$ approximates $\partial Z_\str$, the triangulation $\bDelta (f_{n,t})$ and the valuable flower $\fH(f_{n,t})$ have full lifts $\bDelta(g_{n,t})$ and $\fH(g_{n,t})$, see Lemma~\ref{lem:SiegTriangLifting}. Since the holonomy along $\bFol$ is asymptotically conformal \cite[Appendix 2, The $\lambda$-lemma (quasi- conformality)]{L:FCT}, we obtain the scaling result for $g_{n,t}$.
 \subsubsection{Uniqueness of $g_{n,t}$}

Recall (Theorem~\ref{thm:RR is hyper}) that $\WW^u$ is parametrized by the multipliers of the $\alpha$-fixed points. Therefore, parabolic pacmen with rotation numbers $\pp_n/\qq_n, \sp n\ll0$ are unique. As a consequence the paths of satellite pacmen emerging from these parabolic pacmen are unique. 
Similarly, parabolic maps $g_{n,0}\in W$ with rotation numbers $\pp_n/\qq_n$ are unique; thus the paths $g_{n,t}$ are unique. 
\qed

\appendix
\section{Sector renormalizations of a rotation}
\label{ss:ap:SecRen}
Consider $\theta\in \R/\Z$ and let \[\Lbb_\theta\colon \colon \overline{\Disk}\to \overline{\Disk},\sp z\to \ee(\theta) z\] 
be the corresponding rotation of the closed unit disk by angle $\theta$. 
\subsection{Prime renormalization of a rotation}

Assume that $\theta\not=0$ and consider a closed internal ray $\Ibb$ of $\overline \Disk$. A \emph{fundamental sector} $\Sby\subset \overline \Disk$ of $\Lbb_\theta$ is the smallest closed sector bounded by $\Ibb$ and $\Lbb_\theta (\Ibb)$. If $\theta=1/2$, then $\Ibb\cup \Lbb_\theta (\Ibb)$ is a diameter and both sectors of $\overline \Disk$ bounded by $\Ibb\cup \Lbb_\theta (\Ibb)$ are fundamental. The angle $\omega$ at the vertex of $\Sby$ is $\theta$ if $\theta\in [0,1/2]$ or $1-\theta$ if $1-\theta\in [0,1/2]$.

A fundamental sector is defined uniquely up to rotation; let us first rotate it such that $1\in \overline \Disk\setminus \Sby$.  Set $\Sby_-\coloneqq \Lbb^{-1}_\theta(\Sby)$ and set $\Sby_+$ to be the closure of $\Disk\setminus (\Sby\cup \Sby_-)$, see Figure~\ref{Fg:RenOfRotDisc}. Then
\begin{equation}
\label{eq:ap:FRM:DeletSect}
(\Lbb_\theta\mid \Sby_+,\sp  \Lbb^2_\theta\mid \Sby_-)
\end{equation}is the first return of points in $\Sby_-\cup \Sby_+$ back to $\Sby_-\cup \Sby_+$. The \emph{prime renormalization of $\Lbb_\theta$} is the rotation $\Lbb_{\cRRc(\theta)}\colon \overline \Disk\to \overline \Disk $ obtained from~\eqref{eq:ap:FRM:DeletSect} by applying the gluing map 
\[\psi_{\prm}\colon \Sby_-\cup \Sby_+\to \Disk,\sp\sp z\to z^{1/(1-\omega)}.\]

\begin{lem}
\label{lem:ActOfPrimRenorm}
We have 
\begin{equation}
\label{eq:R_prm}
\cRRc(\theta)  = \begin{cases} \frac{\theta}{1-\theta}& \mbox{if }0\le \theta \le \frac{1}{2}, \\
\frac{2\theta-1}{\theta} & \mbox{if }\frac{1}{2}\le \theta\le 1.
\end{cases}
\end{equation}

Present $\theta$ using continued fractions in the following ways
\[\theta=[0;a_1,a_2,\dots]= 1-[0;b_1,b_2,\dots].\]
with $a_i,b_i\in \N_{>0}$.
Then 
\[\cRRc([0;a_1,a_2,\dots])=\begin{cases}
[0;a_1-1,a_2,\dots]& \text{ if }a_1>1, \\
1 - [0;a_2,a_3\dots]& \text{ if }a_1=1,
\end{cases}\]
and 
\[\cRRc(1-[0;b_1,b_2,\dots])=\begin{cases}
1-[0;b_1-1,b_2,\dots]& \text{ if }b_1>1, \\
  [0;b_2,b_3\dots]& \text{ if }b_1=1.
\end{cases}\]
\end{lem}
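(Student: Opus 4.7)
The two formulas for $\cRRc$ are obtained by a direct calculation of angular widths and rescaling; the continued fraction identities then follow by pure algebra. The plan is first to treat the two cases $\theta \le \tfrac12$ and $\theta \ge \tfrac12$ separately, working entirely on the circle $\R/\Z$ (identified with $\partial\Disk$ via $\alpha\mapsto\ee(\alpha)$), and second to translate the resulting formula to continued fraction expansions.

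Fix an initial ray $\Ibb$ at angle $\phi_0\in\R/\Z$ chosen so that $1\notin\Sby$. In Case 1 ($0<\theta\le\tfrac12$) we have $\omega=\theta$, and after rotating we may take $\Sby=[0,\theta]$, hence $\Sby_-=\Lbb_\theta^{-1}(\Sby)=[1-\theta,1]$ and $\Sby_+=[\theta,1-\theta]$; their combined angular measure is $1-\theta$. A direct check shows that $\Lbb_\theta(\Sby_+)=[2\theta,1]$ and $\Lbb_\theta^2(\Sby_-)=[\theta,2\theta]$ are both contained in $\Sby_-\cup\Sby_+=[\theta,1]$, confirming that $(\Lbb_\theta\!\mid\!\Sby_+,\,\Lbb_\theta^2\!\mid\!\Sby_-)$ is indeed the first return map, and that one cannot return faster. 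In Case 2 ($\tfrac12\le\theta<1$) we have $\omega=1-\theta$; taking $\Sby=[\theta,1]$ yields $\Sby_-=[0,1-\theta]$ and $\Sby_+=[1-\theta,\theta]$, of total angular measure $\theta$, and $\Lbb_\theta(\Sby_+)=[0,2\theta-1]\subset\Sby_-$, $\Lbb_\theta^2(\Sby_-)=[2\theta-1,\theta]\subset\Sby_-\cup\Sby_+$, again verifying the commuting-pair structure.

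Next, the gluing $\psi_{\prm}(z)=z^{1/(1-\omega)}$ rescales angles on $\Sby_-\cup\Sby_+$ by the factor $1/(1-\omega)$ (after translating the endpoint of $\Sby$ to $0$). A point at old angle $\alpha$ acquires new angle $\beta=(\alpha-\phi_0)/(1-\omega)$ on the renormalized circle. Computing the angular displacement induced by the return map gives, in Case 1, $\beta'-\beta=\theta/(1-\theta)$ for points of $\Sby_+$, and $\beta'-\beta\equiv(2\theta-1)/(1-\theta)\equiv\theta/(1-\theta)\pmod 1$ for points of $\Sby_-$; in Case 2, both displacements evaluate to $(\theta-1)/\theta\equiv(2\theta-1)/\theta\pmod 1$. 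Thus on the quotient circle the dynamics is rigid rotation, and reading off the rotation number in each case produces exactly \eqref{eq:R_prm}.

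Finally, the continued fraction statements are a short algebraic computation. Writing $\theta=1/(a_1+x)$ with $x=[0;a_2,a_3,\ldots]$, in the range $\theta\le\tfrac12$ (equivalently $a_1\ge 2$) one has $\cRRc(\theta)=\theta/(1-\theta)=1/(a_1-1+x)=[0;a_1-1,a_2,\ldots]$, while if $a_1=1$ then $\theta>\tfrac12$ and $\cRRc(\theta)=2-1/\theta=2-(1+x)=1-x$. The symmetric identities starting from $\theta=1-[0;b_1,b_2,\ldots]$ are obtained by substituting $y=[0;b_2,b_3,\ldots]$ into the appropriate branch of \eqref{eq:R_prm}; the case $b_1\ge 2$ (so $\theta>\tfrac12$) produces $1-[0;b_1-1,b_2,\ldots]$, and the case $b_1=1$ (so $\theta<\tfrac12$) produces $[0;b_2,b_3,\ldots]$. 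The only mildly delicate point in the argument is keeping track of which of $\Sby_\pm$ plays which role in Case 2 so that the first return map is correctly identified as $(\Lbb_\theta\!\mid\!\Sby_+,\Lbb_\theta^2\!\mid\!\Sby_-)$ rather than the opposite assignment; once the angular intervals above are written down this verification is purely mechanical.
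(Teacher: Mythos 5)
Your proof is correct and follows essentially the same route as the paper: compute the effect of the gluing on the rotation (the paper does this as a one-line conjugation $z\mapsto(\ee(\theta)z^{1-\omega})^{1/(1-\omega)}$, while you unpack it into angle-coordinate arithmetic on the subsectors and verify the first-return structure explicitly, but the substance is identical), then derive the continued-fraction identities by direct algebra. One minor notational slip: you introduce $\phi_0$ as the angle of $\Ibb$, but the translation in $\beta=(\alpha-\phi_0)/(1-\omega)$ should subtract the angle of $\Lbb_\theta(\Ibb)$ (i.e.\ $\theta$ in Case~1), not $\phi_0=0$; your displacement computations clearly use the correct constant, so this does not affect the argument.
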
  
\noindent As a consequence, $\theta$ is periodic under $\cRRc$ if and only if there is a $\theta'$ with periodic continued fraction expansion such that $\theta=\cRRc^n(\theta')$ for some $n\ge 0$.
\begin{proof}
Follows by routine calculations. If $\theta\in [0,1/2]$, then projecting $z\to \ee(\theta) z$ by $\psi_\prm$ we obtain
\[z\to \left(\ee(\theta) z ^{1-\theta}\right)^{1/1-\theta}=\ee\left(\frac{\theta}{1-\theta}\right)z.\]
If $\theta\in [1/2,1]$, then projecting $z\to \ee(\theta-1) z$ by $\psi_\prm$ we obtain
\[z\to \left(\ee(\theta-1) z ^{\theta}\right)^{1/\theta}=\ee\left(\frac{\theta-1}{\theta}\right)z.\]
Observe that $\frac{\theta-1}{\theta}\in [-1,0]$; adding $+1$ we obtain  
$\frac{2\theta-1}{\theta}$.

Write \[\theta= \frac{1}{a_1+[0;a_2,a_3,\dots]}.\] and observe that $\theta \in [0,1/2]$ if and only if $a_1>1$. (With the exception $\theta=[0;1,1]$.) If $a_1>1$, then
\[\frac{\theta}{1-\theta}=\frac{1}{a_1+[0;a_2,a_3,\dots] -1}=\cRRc(\theta).\] If $a_1=1$, then
\[\frac{2\theta-1}{\theta}=2-a_1-[0;a_2,a_3,\dots] =\cRRc(\theta).\]
Similarly $\cRRc(1-[0;b_1,b_2,\dots])$ is verified.
\end{proof}

\begin{figure}[t!]
\begin{tikzpicture}[  scale=1.3]

  \begin{scope}[shift={(0,0)}, scale =0.5]

\draw(0.,0.) circle (4.cm);

\draw[red ,shift={(0.,0.)}, fill=red, fill opacity=0.2]
 plot[domain=2.557797598845188:3.8669922442899227,variable=\t]({1.*4.*cos(\t r)+0.*4.*sin(\t r)},{0.*4.*cos(\t r)+1.*4.*sin(\t r)});

\draw[draw opacity=0, red, fill, fill opacity=0.2] (0.,0.)-- (-3.337507402085898,2.204777617135533)
  -- (-2.9929375178397235,-2.653737932484554)--(0.,0.);
  \draw[red ] (0.,0.)-- (-3.337507402085898,2.204777617135533)
   (-2.9929375178397235,-2.653737932484554)--(0.,0.);
    \draw[blue] 
   (1.26, 3.8)--(0,0);
   \draw (-0.42, 2.44) node {$\Sby_-$};
   \draw (-2, 0) node {$\Sby$}; 
    \draw (1.5, -1.5) node {$\Sby_+$}; 
    \draw (-0.62, 2.14) edge[->,bend right] node[left]{$\Lbb_\theta$}(-1.8,0.3);
    \draw (-2,-0.7) node {(delete)};
    \draw  (-1.8,-0.9) edge[->,bend right] node[left]{$\Lbb_\theta$}(-0.32, -2.5);
     \draw (1.94, 0.68) edge[->,bend right] node[right]{$\Lbb_\theta$}(-0., 2.24);XX
\end{scope}

 \begin{scope}[ shift={(5,0)}, scale =0.5, rotate around={180:(0,0)}]
\draw(0.,0.) circle (4.cm);

\begin{scope}[rotate around={7:(0,0)}]
\draw[blue ,shift={(0.,0.)}, fill=blue, fill opacity=0.2]
 plot[domain=2.557797598845188:3.8669922442899227,variable=\t]({1.*4.*cos(\t r)+0.*4.*sin(\t r)},{0.*4.*cos(\t r)+1.*4.*sin(\t r)});

\draw[draw opacity=0, blue, fill, fill opacity=0.2] (0.,0.)-- (-3.337507402085898,2.204777617135533)
  -- (-2.9929375178397235,-2.653737932484554)--(0.,0.);
    \draw[blue ] (0.,0.)-- (-3.337507402085898,2.204777617135533)
   (-2.9929375178397235,-2.653737932484554)--(0.,0.);
\end{scope}  
  
  \node at (-4.3, 0){$1$};

   \draw[blue] (0,0)--(-4, -0);
   \draw (-3, -1.3) node {$\Sbb_+$};
   \draw (-2.92, 0.86) node {$\Sbb_-$};
  %  \draw[blue] 
  % (1.26, 3.8)--(0,0);
%   \draw (-0.42, 2.44) node {$\Sbb_-$};
 %  \draw (-2, 0) node {$\Sbb$}; 
%   \draw (1.5, -1.5) node {$\Sbb_+$}; 
    \draw (2,-0.7) node {\LARGE (delete)};
      \draw (-0.62, 2.14) edge[->,bend right] node[right]{$\Lbb_\theta$}(-1.8,0.3);

    \draw  (-1.8,-0.9) edge[->,bend right] node[right]{$\Lbb_\theta$}(-0.32, -2.5);
     \draw (1.94, 0.68) edge[->,bend right] node[left]{$\Lbb_\theta$}(-0., 2.24);
     \draw[fill]
     (0.22, -2.6) circle (0.05cm)
     (0.9, -2.5) circle (0.05cm)
     (1.6, -2.1) circle (0.05cm);
\end{scope}    
    
\end{tikzpicture}
\caption{Left: the prime renormalization deletes a fundamental sector $\Sby$ and projects $(\Lbb^2_\theta \mid \Sby_-~,\sp \Lbb_\theta \mid \Sby_+)$ to a new rotation. Right: $\left(\Lbb^{\qq+1}_\theta\mid \Sbb_-~,\sp \Lbb^{\qq}_\theta \mid \Sbb_+\right)$ is the first return map to a fundamental sector $\Sby=\Sbb_-\cup\Sbb_+$}.
\label{Fg:RenOfRotDisc}
\end{figure}

\subsection{Sector renormalization}
\label{ss:RenRotUnitDisc}
A \emph{sector renormalization} $\RR$ of $\Lbb_\theta$ is 
\begin{itemize}
\item a \emph{renormalization} sector $\Sbb$ presented as a union of two subsectors  $\Sbb_-\cup \Sbb_+$ normalized so that $1\in \Sbb_-\cap \Sbb_+$;
\item  a pair of iterates, called a sector \emph{pre-renormalization}, 
\begin{equation}
\label{eq:app:FirstReturnToSector}
\left(\Lbb_\theta ^\aa\mid \Sbb_- , \sp\sp \Lbb_\theta ^\bb\mid \Sbb_+\right)
\end{equation}
realizing the first return of points in $\Sbb_-\cup \Sbb_+$ back to $\Sbb$; and 
\item the gluing map 
\[\psi\colon \Sbb_-\cup \Sbb_+\to \overline \Disk,\sp\sp z\to z^{1/\omega},\]
projecting~\eqref{eq:app:FirstReturnToSector} to a new rotation $\Lbb_\mu$,
where $\omega$ is the angle of $\Sbb$ at $0$.
\end{itemize}
We write $\RR \Lbb_\theta =\Lbb_\mu$, and we call $\aa$ and $\bb$ the \emph{renormalization return times}. We allow one of the sectors $\Sbb_\pm$ to degenerate, but not both. Note that the assumption $1\in \Sbb_-\cap \Sbb_+$; can always be achieved using rotation.

The prime renormalization is an example of a sector renormalization.

Suppose two sector renormalizations $\RR_1(\Lbb_\theta)=\Lbb_\mu$ and $\RR_2(\Lbb_\mu)=\Lbb_\nu$ are given. The \emph{composition} $\RR_2\circ \RR_1(\Lbb_\theta)=\Lbb_\nu$ is obtained by lifting the pre-renormalization of $\RR_2$ to the dynamical plane of $\Lbb_\theta$.

\begin{lem}
\label{lem:SectRen is prime power}
A sector renormalization is an iteration of the prime renormalization.
\end{lem}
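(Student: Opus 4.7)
The plan is to induct on the total return time $\aa+\bb$. For the base case, if $\aa+\bb=3$, then one of the branches (say $\Sbb_+$) has return time $1$, meaning $\Lbb_\theta(\Sbb_+)\subseteq\Sbb$. Combined with the first-return property, this forces $\Sbb_+$ to be a fundamental sector in the sense of the prime renormalization, and the gluing map of $\RR$ agrees with $\psi_\prm$ up to the normalization placing $1$ in the boundary; so $\RR$ coincides with the prime renormalization in this case.

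For the inductive step, I will lift the picture to the universal cover: $\Lbb_\theta$ becomes the translation $T_\theta\colon x\mapsto x+\theta$ on $\R/\Z$, and the sector $\Sbb=\Sbb_-\cup\Sbb_+$ lifts to a half-open interval $J\subset\R/\Z$ whose first-return map under $T_\theta$ is a two-interval exchange realized by the two iterates $T_\theta^{\aa}, T_\theta^{\bb}$. Because the fundamental sector $\Sby$ is only defined up to rotation and we are allowed to normalize by a rotation placing $1\in\partial\Sbb$, I may choose $\Sby$ so that $\Sby$ lies in the complement $\R/\Z\setminus J$. This is possible provided $|J|<1-\theta$ (if $\theta\le 1/2$), which I will verify from the assumption that the return times are well defined and at least one of them exceeds $1$ — otherwise one is in the base case. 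Under this choice $J\subseteq \Sby_-\cup\Sby_+$.

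Once $\Sbb\subseteq\Sby_-\cup\Sby_+$, the gluing map $\psi_\prm$ of the prime renormalization pushes $\Sbb$ forward to a sector $\Sbb'\coloneqq\psi_\prm(\Sbb)$ in the renormalized disk, and the first-return map of $\Lbb_\theta$ to $\Sbb$ is conjugated by $\psi_\prm$ to the first-return map of $\Lbb_{\cRRc(\theta)}$ to $\Sbb'$. This produces a sector renormalization $\RR'$ of $\Lbb_{\cRRc(\theta)}$ with new return times $\aa',\bb'$, and I will check that each iterate of $\Lbb_\theta$ composing $\Lbb_\theta^\aa|\Sbb_-$ or $\Lbb_\theta^\bb|\Sbb_+$ corresponds either to a single iterate or to two iterates of $\Lbb_{\cRRc(\theta)}$ (depending on whether the orbit point lies in $\Sby_+$ or $\Sby_-$), so that $\aa'+\bb'<\aa+\bb$ strictly. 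By the induction hypothesis $\RR'$ is an iterate of prime renormalizations, and hence so is $\RR=\RR'\circ\RR_\prm$.

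The main obstacle is the verification that $\Sbb$ can be placed inside $\Sby_-\cup\Sby_+$, i.e.\ that the (rotated) fundamental sector can always be arranged to lie outside $\Sbb$: when $\aa+\bb>3$ this requires ruling out the degenerate configuration in which the first-return map to $\Sbb$ takes at most one iterate on each side. I expect to handle this by comparing the angle of $\Sbb$ at $0$ with the fundamental angle $\omega_\theta=\min(\theta,1-\theta)$ using the fact that the two first-return iterates on $\Sbb_-$ and $\Sbb_+$ differ by $1$ (the return time of the smaller $\Sby_\pm$ to $\Sby_-\cup\Sby_+$), so a sector renormalization that is not prime necessarily has a complement in the disk of angle $\ge\omega_\theta$, giving room to insert $\Sby$.
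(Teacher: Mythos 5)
Your overall strategy coincides with the paper's: induct on $\aa+\bb$, handle $\aa+\bb=3$ as the prime base case, and in the inductive step factor $\RR$ through $\RRc$. The paper's own proof is a two-sentence sketch that omits exactly the verification you flag — that $\Sbb$ can be placed inside $\Sby_-\cup\Sby_+$ — so noticing that gap is to your credit. However, the specific fact you lean on to close it is false, and a secondary bookkeeping step is stated backwards.

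\textbf{The claim $|\aa-\bb|=1$ is wrong.} The return times of a general sector renormalization need not differ by $1$. Concretely, for $\theta$ irrational near $1/4$ (e.g.\ $\theta\approx 0.26$) one checks directly that $\Sbb_-=[0,\theta]$, $\Sbb_+=[-1+3\theta,0]$ gives a valid sector pre-renormalization with $(\aa,\bb)=(3,1)$, which is $\RRc^2$; the difference is $2$. More generally, iterating $\RRc$ makes the return times follow a Farey-type recursion (compare \eqref{eq:RernReturns} and \eqref{eq:app:aa1 bb1}) in which the difference $|\aa_n-\bb_n|$ grows; only the prime renormalization and the fast renormalization $\RR_\fast$ have $|\aa-\bb|=1$.

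The inequality you actually need, $\omega_\Sbb \le 1-\omega_\theta$ when $\aa+\bb>3$, can be obtained without this. The boundary rays of the renormalization triangulation associated to $\Sbb$ are exactly $\{\Lbb_\theta^i(\Ibb_0)\}_{i=0}^{\aa+\bb-1}$, where $\Ibb_0=\Sbb_-\cap\Sbb_+$, and $\omega_\Sbb=\omega_-+\omega_+$ is the gap of the smaller set $\{\Lbb_\theta^i(\Ibb_0)\}_{i=1}^{\aa+\bb-1}$ obtained by deleting $\Ibb_0$. That set has $\aa+\bb-1\ge 3$ points, and the gaps of $n\ge 3$ consecutive iterates of a rotation are strictly smaller than $1-\omega_\theta$ (adding a third point necessarily splits the gap of length $1-\omega_\theta$). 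Thus $\omega_\Sbb<1-\omega_\theta$, the complement of $\Sbb$ has angle $>\omega_\theta$, and a fundamental sector $\Sby$ can be rotated into that complement.

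\textbf{Minor:} you say each $\Lbb_\theta$-step unwinds to one or two $\Lbb_{\cRRc(\theta)}$-steps; it is the other way around. Under $\psi_\prm$ each $\Lbb_{\cRRc(\theta)}$-step corresponds to one $\Lbb_\theta$-step (if the point lies over $\Sby_+$) or two (if over $\Sby_-$), so $\aa'\le\aa$, $\bb'\le\bb$, and the inequality is strict since some orbit point must land in $\Sby_-$. Your conclusion $\aa'+\bb'<\aa+\bb$ is correct; the mechanism is reversed.
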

\begin{proof}
Suppose $\RR$ is a sector renormalization with renormalization return times $\aa$ and $\bb$ as above. Proceed by induction on $\aa+\bb$. If $\aa+\bb=3$, then $\RR$ is the prime renormalization. Otherwise, we project the pre-renormalization of $\RR$ to the dynamical plane of $\RRc(\Lbb_\theta)$ and obtain the new sector renormalization $\RR'$ of $\RRc(\Lbb_\theta)$ so that 
\[\RR' \circ \RRc(\Lbb_\theta)=\RR(\Lbb_\theta).\]
The renormalization return times $a',b'$ of $\RR'$ satisfy $a'+b'< a+b$.
\end{proof}

Consider again the fundamental sector $\Sby$ bounded by $\Ibb$ and $\Lbb_\theta(\Ibb)$. There is a unique $\aa>0$
 such that $\Lbb^{-\aa}(\Ibb)\subset \Sby$. Up to rotation, we can assume that $\Lbb^{-\aa}(\Ibb)$ lands at $1$. We define $\Sbb_+$ to be the subsector of $\Sby$ bounded $\Ibb$ and $\Lbb^{-\aa}(\Ibb)$ and we define  $\Sbb_-$ to be the subsector of $\Sby$ bounded $\Lbb(\Ibb)$ and $\Lbb^{-\aa}(\Ibb)$. Then
 \[\left(\Lbb_\theta ^\aa\mid \Sbb_- , \sp\sp \Lbb_\theta ^{\aa+1}\mid \Sbb_+\right)\]
is a sector pre-renormalization, called the \emph{first return to the fundamental sector}, see Figure~\ref{Fg:RenOfRotDisc}. We denote by $\RR_{\fast}$ the associated sector renormalization and we write $\mu=\cRR_\fast(\theta)$ if $\RR_{\fast}(\Lbb_\theta)=\Lbb_\mu$. 

By Lemma~\ref{lem:SectRen is prime power}, for every $\theta\not=0$ there is a unique $\nn(\theta)$ such that $\cRR_\fast(\theta)=\cRRc^{\nn(\theta)}(\theta)$. We note that if $\theta\in\{1/m, 1-1/m\}$ with $m>1$, then $\nn(\theta)=m-1$. (In this case the sector $\Sbb_-$ is degenerate.)

\subsection{Renormalization triangulation}
Given a sector pre-renormalization~\eqref{eq:app:FirstReturnToSector}, the set of sectors \[\bigcup_{i=0}^{\aa-1} \Lbb_\theta (\Sbb_-)\bigcup_{i=0}^{\bb-1} \Lbb_\theta(\Sbb_+) \]
is called a \emph{renormalization triangulation} of $\Disk$. Alternatively, consider the associated renormalization $\Lbb_{\mu}=\RR(\Lbb_\theta)$. The internal rays towards $1$ and $\Lbb_{\mu}(1)$ split $\Disk$ into two closed sectors $\Sbz_0$ and $\Sbz_{1}$. We call $\{\Sbz_-,  \Sbz_{+}\}$ the \emph{basic triangulation of $\Lbb_\mu$}. Lifting the sectors $\Sbz_-,  \Sbz_{+}$ via the gluing map, and spreading them dynamically we obtain the renormalization triangulation. We also say that  the renormalization triangulation is the \emph{full lift} of the basic triangulation.   

Let $\Theta_N$ be the set of angles $\theta$ such that $\theta= [0;a_1,a_2,\dots]$ with $|a_i| \le N$ or $\theta= 1-[0;a_1,a_2,\dots]$ with $|a_i| \le N$. By Lemmas~\ref{lem:ActOfPrimRenorm} and~\ref{lem:SectRen is prime power}, the set $\Theta_N$ is invariant under any sector renormalization.

\begin{lem}
\label{lem:ap:CompTiling}
For every $N$ there is a $t>1$ with the following property. Consider the renormalization triangulation associated with some sector renormalization of $\Lbb_\theta$, where $\theta\in \Theta_N$. Then any two triangles have comparable angles at $0$: the ratio of the angles is between $1/t$ and $t$.
\end{lem}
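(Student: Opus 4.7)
The plan is to show that every triangle in the renormalization triangulation has angle at $0$ equal to one of only two values, and that the ratio of these two values is bounded in terms of $N$ alone.

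First I would reduce to controlling the rotation number $\mu$ of $\RR \Lbb_\theta = \Lbb_\mu$. By Lemma~\ref{lem:SectRen is prime power}, any sector renormalization is a finite composition of prime renormalizations, and by Lemma~\ref{lem:ActOfPrimRenorm} the action of $\RRc$ on continued fraction expansions either decreases $a_1$ by one or shifts to $1-[0;a_2,a_3,\dots]$ (and similarly for the expansion of $1-\theta$). In particular, the bound $a_i\le N$ on partial quotients is preserved, so $\Theta_N$ is forward-invariant under $\cRRc$, and hence $\mu\in\Theta_N$.

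Next I would read off the angles of the basic sub-sectors $\Sbb_-$ and $\Sbb_+$ of $\Sbb$. The gluing map $\psi(z)=z^{1/\omega}$ scales angles at $0$ by the factor $1/\omega$, and by construction $\psi(\Sbb_-),\psi(\Sbb_+)$ are (in some order) the two pieces $\Sbz_-,\Sbz_+$ of the basic triangulation of $\Lbb_\mu$, whose angles at $0$ are $\mu$ and $1-\mu$. Hence $\Sbb_-$ and $\Sbb_+$ have angles $\omega\mu$ and $\omega(1-\mu)$ at $0$. The renormalization triangulation is
\[
\bigcup_{i=0}^{\aa-1}\Lbb_\theta^{i}(\Sbb_-)\ \cup\ \bigcup_{j=0}^{\bb-1}\Lbb_\theta^{j}(\Sbb_+),
\]
and since rotations preserve angles at $0$, every triangle has angle exactly $\omega\mu$ or $\omega(1-\mu)$. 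Thus the ratio of any two such angles is $\mu/(1-\mu)$ or its reciprocal.

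Finally I would give a uniform lower bound on $\min(\mu,1-\mu)$ for $\mu\in\Theta_N$. Writing $\mu=[0;a_1,a_2,\dots]$ with $a_i\le N$ (the case $\mu=1-[0;a_1,a_2,\dots]$ is symmetric), a short direct estimate gives $\mu\ge 1/(a_1+1)\ge 1/(N+1)$; for $1-\mu$, the only way it can be small is if $a_1=1$, in which case $1-\mu=\xi/(1+\xi)$ with $\xi=[0;a_2,a_3,\dots]\ge 1/(N+1)$, yielding $1-\mu\ge 1/(N+2)$. Therefore $\mu/(1-\mu)\le N+1$, and the conclusion holds with $t=N+1$.

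There is no serious obstacle here; the argument is essentially bookkeeping. The only point that requires a little care is the last elementary estimate: one must check the boundary case $a_1=1$ (where $\mu$ is close to $1$) to confirm that bounded partial quotients force $1-\mu$ to stay uniformly away from $0$, and symmetrically for the other normalization $\mu=1-[0;b_1,b_2,\dots]$.
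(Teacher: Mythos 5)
Your proof is correct and takes essentially the same route as the paper's: observe that $\Theta_N$ is forward-invariant under the sector renormalization, note that the two sectors of the basic triangulation of $\Lbb_\mu$ have angles $\mu$ and $1-\mu$, that lifting via the gluing map multiplies both by $\omega$ while spreading dynamically preserves angles, and that bounded partial quotients keep $\mu$ uniformly away from $0$ and $1$. The paper's proof is more terse (it just notes that $\Lbb_\mu(1)$ stays outside a fixed neighborhood of $1$), but the content is identical; your explicit bound $t=N+1$ is a welcome addition.
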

\begin{proof}
There is a neighborhood $U$ of $1$ such that for all $\theta\in \Theta_N$ we have $\Lbb_\theta(1)\not\in U$. Therefore, both sectors in the basic triangulation have comparable angles at $0$ uniformly on $\theta\in \Theta_N$. Since a renormalization triangulation is the full lift of a basic triangulation, the lemma is proven.   
\end{proof}

\subsection{Periodic case}
\label{sss:ap:SectRen:PerCase}
It follows from Lemmas~\ref{lem:ActOfPrimRenorm} and~\ref{lem:SectRen is prime power} that $\Lbb_\theta$ is a fixed point of some sector renormalization if and only if $\theta\in  \Theta_\per$. Suppose  $\theta\in \Theta_\per$ and choose a sector renormalization $\RR_1$ such that $\RR_1(\Lbb_\theta)=\Lbb_\theta$. Write $\RR_n\coloneqq \RR_1^{n}$ and denote by $\aa_n,\bb_n$ and $\psi_n$ the renormalization return times and the gluing map of $\RR_n$. Then $\psi_n=\psi_1^n$ and there is a matrix $\mathbb M$ with positive entries such that
\begin{equation}
 \label{eq:RernReturns}\left(\begin{matrix}
         \aa_n \\ \bb_n
        \end{matrix}\right) =\mathbb M^{n-1} \left(\begin{matrix}
         \aa_1 \\ \bb_1
        \end{matrix}\right). 
\end{equation}
As a consequence, $\aa_n, \bb_n$ have exponentially growth with the same exponent. 

We also note that 
\begin{equation}
\label{eq:app:aa1 bb1}
\aa_1,\bb_1\ge 2
\end{equation}
because $\RR_1=\RRc^t$ with $t>1$.

\section{Lifting of curves under anti-renormalization}
\label{s:SectRenAndAntiren}

In this appendix we give a sufficient condition for liftability of arcs under a sector  anti-renormalization. This implies that the sector anti-renormalization is robust with respect to a particular choice of cutting arcs, see Theorem~\ref{thm:RobAntiRen}.

\subsection{Robustness of anti-renormalization}
\label{ss:A1}

Consider a closed pointed topological disk $(W, 0)$ and let $U,V$ be two closed topological subdisks of $W$ such that $0\in \intr(U\cap V)$. 
A homeomorphism $f\colon U\to V$ fixing $0$ is called a \emph{partial homeomorphism} of $(W,0)$ and is denoted by $f\colon W\dashrightarrow W$ or
$f\colon (W,0)\dashrightarrow (W,0)$. If $U=V=W$, then $f$ is an actual self-homeomorphism of $(W,0)$.

\subsubsection{Leaves over \boldmath$f\colon (W,0)\dashrightarrow (W,0)$}

Let $\gamma_0, \gamma_1$ be two simple arcs connecting $0$ to points in  $\partial W$ such that $\gamma_0$ and $\gamma_1$ are disjoint except for $0$ and such that $\gamma_1$ is the image of $\gamma_0$ in the following sense: $\gamma'_0\coloneqq \gamma_0\cap U$ and  $\gamma'_1\coloneqq \gamma_1\cap V$ are simple closed curves such that $f$ maps $\gamma'_0$ to $\gamma'_1$. Such pair $\gamma_0, \gamma_1$ is called \emph{dividing}. Then $\gamma_0 \cup \gamma_1$ splits $W$ into two closed sectors $\A$ and $\B$ denoted so that $\intr \A, \gamma_1,\intr \B,\gamma_0$ are clockwise oriented around $0$, see the left-hand of~Figure~\ref{Fig:f and its 1 3 anti ren}. We say that $\gamma_0=\ell(\A)=\rho(\B)$ is the \emph{left boundary of $\A$} and the \emph{right boundary of $\B$} and we say that $\gamma_1=\rho(\A)=\ell(\B)$ is the \emph{right boundary of $\A$} and \emph{the left boundary of $\B$}. 

Let $X,Y$ be topological spaces and let $g\colon X\dashrightarrow Y$ be a partially defined continuous map. We define
\[X\sqcup_g Y\coloneqq  X\sqcup Y/ (\Dom g \ni x\sim g(x)\in \Im g).\] 

Consider a (finite or infinite) sequence $(S_k)_k$, where each $S_k$ is a copy of either $\A$ or $\B$. Define the partial map $g_k\colon \rho(S_k)\dashrightarrow \ell(S_{k+1})$ by
\begin{equation}
\label{eq:deff:g_k}
    g_k\coloneqq \begin{cases}
    \id \colon \gamma'_1\to \gamma'_1& \text{ if } (S_k,S_{k+1})\cong (\A,\B),\\
        \id \colon \gamma'_0\to \gamma'_0& \text{ if } (S_k,S_{k+1})\cong (\B,\A),\\
        f^{-1} \colon \gamma'_1\to \gamma'_0& \text{ if } (S_k,S_{k+1})\cong (\A,\A),\\
          f \colon \gamma'_0\to \gamma'_1& \text{ if } (S_k,S_{k+1})\cong (\B,\B).
    \end{cases}
\end{equation} The \emph{dynamical gluing of  $(S_k)_k$} is \[\dots S_{k-1} \sqcup_{g_{k-1}} S_k \sqcup_{g_k} S_{k+1}\sqcup_{g_{k+1}} \dots.\] The \emph{jump $\iota(k)$ from $S_{k} $ to $S_{k+1}$} is set to be $0,0,-1,1$ if $(S_k,S_{k+1})$ is a copy of $(\A,\B),$ $(\B,\A),$ $(\A,\A),$ $(\B,\B)$ respectively.

  For a sequence $\seq=(a_i)_{i\in I}$ we denote by $\seq[i]$ the $i$-th element in $\seq$; i.e.~$\seq[i]=a_i$.
\begin{defn}[Leaves of $f\colon W\dashrightarrow W$]
\label{defn:leaf over f}
Suppose $\seq \in \{\A,\B\}^{\Z}$. Set $W_{\seq}[i]$ to be a copy of the closed sector $\seq[i]$. The \emph{leaf $W_{\seq}$} is the surface obtained by the dynamical gluing of $(W_{\seq}[i])_{i\in \Z}$.

The \emph{projection} $\pi\colon W_{\seq} \to W$ maps each $\intr W_{\seq}[i] \cup \rho(W_\seq[i])$ to $\intr \seq[i]\cup \rho(\seq[i])$. By $\pi^{-1}_{\seq,i}\colon \intr \seq[i]\cup \rho(\seq[i]) \to \intr W_{\seq}[i] \cup \rho(W_\seq[i])$ we denote the corresponding inverse branch.
\end{defn}

 Note that if $\seq[i]=\seq[i+1]$, then $\pi$ is discontinuous at $W_\seq[i]\cap W_\seq[i+1]$. As $z$ approaches $W_\seq[i]\cap W_\seq[i+1]$ from $\intr W_\seq[i]$, respectively $\intr W_\seq[i+1]$, its image $\pi(z)$ approaches $\rho(\seq[i])$, respectively $\ell(\seq[i+1])\not = \rho(\seq[i])$.

For every $\seq$, there is a unique point $\widetilde 0\in W_\seq$ such that $\pi(\widetilde 0)=0$. By construction, $W_\seq\setminus \{\widetilde 0\}$ is topologically a closed half-plane.
 
For $J\subseteq \Z$ we write $\displaystyle W_\seq[J]= \bigcup _{j\in J} W_\seq[j]$. To keep notation simple, we write $W_\seq[\ge i] =W_\seq[\{k\mid k\ge i\}]$ and similar for ``$>$'',``$\le$'', ``$<$''.

\subsubsection{Lifts of curves}
\label{sss:lifts of curves}
Let $\alpha\colon [0,1]\to W\setminus \{0\}$ be a curve in $W$. A \emph{lift of $\alpha$ to $W_{\seq}$} is a curve $\widetilde \alpha\colon [0,1]\to W_{\seq}$ such that
\begin{itemize}
\item for every $t\in[0,1]$, there is an $n(t)\in \Z$ such that $\pi(\widetilde \alpha(t))=f^ {n(t)}(\alpha(t))$;
\item $n(0)=0$;  
\item $n(t)$ is constant for all $t$ for which $\widetilde \alpha(t)$ is within some $\intr W_{\seq}[i] \cup \rho(W_\seq[i])$; and 
\item if $\widetilde \alpha(t')\in \intr W_{\seq}[i]$ while $\widetilde\alpha(t)\in \intr W_{\seq}[i+1]$, then $n(t)-n(t')$ is equal to the jump from $W_{\seq}[i]$ to $W_{\seq}[i+1]$.
\end{itemize}
In other words, whenever $\alpha$ crosses the boundary of $\seq[i]$, the lift of $\alpha$ is adjusted to respect the dynamical gluing. Similarly is defined a lift of a curve parametrized by $[0,1)$. Note that $\pi(\widetilde \alpha)$ is, in general, discontinuous.

For every curve $\alpha$ as above, there is at most one lift  of $\alpha$ starting at a given preimage of $\alpha(0)$ under $\pi\colon W_\seq\to W$. It is easy to see that there is an $\varepsilon>0$ such that all lifts (specified by the starting points) of $\alpha\colon [0,\varepsilon]\to W$ exist, and thus unique. The main question we address is the existence of the global lifts.

If $\alpha\colon [0,1)\to W\setminus \{0\}$ is such that $\alpha(1)=\lim _{t\to 1}\alpha (t)=0$, then we say that a lift $\widetilde \alpha$ of $\alpha$ \emph{lands at $\widetilde 0$} if $\pi(\widetilde \alpha (t))\to 0$ as $t\to 1$.

\begin{figure}
\centering{\begin{tikzpicture}
\draw[red] (0,0) circle (2.5cm);

 \draw  (0,0) -- (2.5,0);
 \node at (2.15,0.15) {$\gamma_0$};
\draw[rotate around={155:(0,0)}] (0,0) -- (2.5,0);
\draw (-1, 1)node {$\gamma_1=f(\gamma_0)$};

\draw (0.7,1.8) node {$\B$};
\draw (0,-1.25) node {$\A$}; 
 
%\draw[blue] (1.5,0.05) edge[->, bend right] node[above]{$f$} (-1.25, 0.7); 

\begin{scope}[shift={(6,0)}]

\draw[red] (0,0) circle (2.5cm);

\draw  (0,0) -- (2.5,0); 
\draw[rotate around={90:(0,0)}] (0,0) -- (2.5,0);
\draw[rotate around={225:(0,0)}] (0,0) -- (2.5,0);
 
\draw (1.1,1.1) node {$\B$};
\draw (0.8,-1.65) node {$\A$}; 

\draw (-1.75,0.7) node {$\A$}; 

\draw[blue] (0.6,0.8) edge[<-, bend right] node[above]{$f$} (-0.8,0.5); 
%\draw (0.9,1.3) edge[->, bend right] node[above]{$f$} (-1.55,0.9); 
\draw[blue] (-0.9,0.1) edge[<-, bend right] node[left]{$\id$} (0.1,-1); 
%\draw (-1.75,0.2) edge[->, bend right] node[left]{$\id$} (0.2,-1.65); 
%\draw (1.2,-1.65) edge[->, bend right] node[right]{$f$} (1.3,1.1); 
 
\draw[blue] (0.7,-1.) edge[<-, bend right] node[right]{$f$} (0.9,0.6); 
 
\node at (2.15,0.15) {$\gamma_0$};
\node at (2.15,-0.2) {$\gamma_0$};

\node at (0.2,2.15) {$\gamma_1$};
\node at (-0.15,2.15) {$\gamma_1$};

\node at (-1.65,-1.4) {$\gamma_0$};
\node at (-1.3,-1.65) {$\gamma_1$};

\end{scope}

\end{tikzpicture}}
\caption{Left: a homeomorphism $f\colon W\to W$ and a diving pair $\gamma_0,\gamma_1$. Right: the $1/3$ anti-renormalization of $f$ (with respect to the clockwise orientation).}
\label{Fig:f and its 1 3 anti ren}
\end{figure}
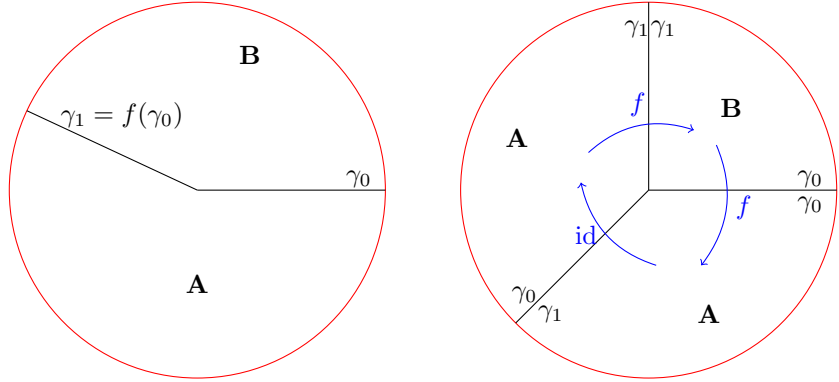

\subsubsection{Anti-renormalizations}
\label{sss:anti renorm} We will now show that for every $\pp/\qq$ there is a unique anti-renormalization with rotation number $\pp/\qq$.
\begin{lem}
\label{lem:seq:p:q}
For every $\qq\in \N_{>2}$ and every $\pp\in \{1,2,\dots ,\qq-1\}$ coprime with $\qq$ there exists a unique $\qq$-periodic sequence $\seq\in \{\A,\B\}^\Z$ such that
\begin{itemize}
\item $\seq[0]=\A$ and $\seq[-1]=\B$; 
\item for every $j\in \Z$ with $(j\mod \qq)\not\in \{-\pp ,-\pp-1\}$, we have $\seq[j+\pp]=\seq[j]$;
\item $\seq[-\pp-1]=\A$ and $\seq[-\pp]=\B$.
\end{itemize}
\end{lem}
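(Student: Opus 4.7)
The plan is to reduce the problem to understanding the single cycle structure of translation by $\pp$ on $\Z/\qq\Z$, and then to show that the propagation rule together with the two boundary values $\seq[0]=\A$ and $\seq[-1]=\B$ forces the sequence uniquely, with the remaining two conditions $\seq[-\pp-1]=\A$, $\seq[-\pp]=\B$ automatically consistent.

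First I would set up the orbit. Let $T\colon\Z/\qq\Z\to\Z/\qq\Z$ be the map $T(j)=j+\pp$. Since $\gcd(\pp,\qq)=1$, the map $T$ acts as a single $\qq$-cycle, so the orbit $\mathcal O=(0,T(0),T^2(0),\dots,T^{\qq-1}(0))$ enumerates each residue class modulo $\qq$ exactly once. Hence specifying $\seq\in\{\A,\B\}^\Z$ $\qq$-periodically is equivalent to specifying the values $\seq[T^k(0)]$ for $k=0,1,\dots,\qq-1$.

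Next I would interpret the propagation condition along $\mathcal O$. The hypothesis $\seq[j+\pp]=\seq[j]$ for $j\not\equiv -\pp,-\pp-1\pmod\qq$ says that, traversing $\mathcal O$ in order, the value of $\seq$ can only change when the current index equals $-\pp$ or $-\pp-1$. I would identify these two ``break indices'' along the orbit: since $T^{\qq-1}(0)\equiv -\pp$, one break occurs at step $k=\qq-1$; and letting $k^\ast\in\{0,\dots,\qq-1\}$ be the unique index with $T^{k^\ast}(0)\equiv -\pp-1$, the other break occurs at step $k^\ast$. A short calculation shows $T^{k^\ast+1}(0)\equiv -1$, so the two break steps $k^\ast$ and $\qq-1$ partition $\mathcal O$ into exactly two arcs: $\mathcal O_\A=\{T^k(0):0\le k\le k^\ast\}$ (containing $0$ and ending at $-\pp-1$) and $\mathcal O_\B=\{T^k(0):k^\ast+1\le k\le \qq-1\}$ (starting at $-1$ and ending at $-\pp$).

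Then I would define $\seq$ by declaring $\seq[j]=\A$ if $(j\bmod\qq)\in\mathcal O_\A$ and $\seq[j]=\B$ if $(j\bmod\qq)\in\mathcal O_\B$, extended $\qq$-periodically. Verification is routine: constancy on each arc gives the propagation rule, and the four boundary conditions hold because $0$ and $-\pp-1$ lie in $\mathcal O_\A$ while $-1$ and $-\pp$ lie in $\mathcal O_\B$. Uniqueness follows symmetrically: any $\seq$ meeting the hypotheses must be constant on each arc of $\mathcal O$ by propagation, and the values on those arcs are pinned down by $\seq[0]=\A$ and $\seq[-1]=\B$.

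The only subtlety I expect is handling the degenerate regimes $\pp=1$ and $\pp=\qq-1$, where some of the four prescribed positions collide (for instance $-\pp=-1$ when $\pp=1$, or $-\pp-1\equiv 0$ when $\pp=\qq-1$). In each case two of the four boundary conditions become redundant, and one of the arcs $\mathcal O_\A,\mathcal O_\B$ reduces to a single element, but the construction above remains valid and produces the correct sequence; this will be the main thing to double-check explicitly in the write-up.
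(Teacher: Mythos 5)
Your argument is correct and follows essentially the same route as the paper's: the paper directly exhibits the sequence via the formula $\seq[i\pp+j\qq]=\A$ for $i\in\{0,\dots,\aa-1\}$ and $\seq[-1+i\pp+j\qq]=\B$ for $i\in\{0,\dots,\bb-1\}$, where $\pp\aa\equiv-1$ and $\pp\bb\equiv1\pmod\qq$, which is precisely your two-arc decomposition of the single $\qq$-cycle $j\mapsto j+\pp$ with $k^\ast=\aa-1$. Your write-up is a bit more explicit about why the four boundary conditions and the propagation rule are simultaneously satisfiable, but the underlying idea is identical.
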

\begin{proof} 
 Since $\pp$ and $\qq$ are coprime, there are unique $\aa,\bb\in\{ 1,2,\dots,\qq-1\}$ such that 
\begin{align*}
\pp \aa=-1\sp \mod\sp\qq, \\
\pp \bb=1\sp \mod\sp\qq.
\end{align*}
Note that $\aa+\bb=\qq$. We have:
\begin{itemize}
\item $\seq[i\pp +j\qq]=\A$ for all $i\in \{0,1,\dots, \aa-1\}$ and all $j\in \Z$; and
\item $\seq[-1+i\pp +j\qq]=\B$ for all $i\in \{0,\dots, \bb-1\}$ and all $j\in \Z$.
\end{itemize}
\end{proof}
The numbers $\aa, \bb$ appearing in the proof of Lemma~\ref{lem:seq:p:q} are called the \emph{renormalization return times}.

For a sequence $\seq$ as in Lemma~\ref{lem:seq:p:q}, let 
\[\seq/\qq \in \{\A,\B\}^{\Z/\qq\Z},\sp\sp (\seq/\qq)[i] \coloneqq \seq[i+\Z\qq]\]
be the quotient sequence, and let $W_{\seq/\qq}$ be the quotient of the leaf $W_{\seq}$ by identifying each $W_\seq[k]$ with $W_\seq[k+\qq]$. We denote by $\pi\colon W_{\seq/\qq}\to W$ the natural projection. Then the \emph{$\pp/\qq$-anti-renormalization $f_{-1}\colon W_{\seq/\qq}\dashrightarrow W_{\seq/\qq}$}  is defined as follows (see Figure~\ref{Fig:f and its 1 3 anti ren}):
\begin{itemize}
\item for every $j\not\in \{-\pp-1,-\pp\}$, the map $f_{-1}\colon W_{\seq/\qq}[j] \to W_{\seq/\qq}[j+\pp]$ is the natural isomorphism;
\item the map $f_{-1}\colon W_{\seq/\qq}[-\pp-1,-\pp]\dashrightarrow W_{\seq/\qq}[-1,0]$ is ${f\colon W\setminus \gamma_0\dashrightarrow  W\setminus \gamma_1}$.
\end{itemize}
Note that $\pp/\qq$ is the clockwise rotation number.

By construction, $(f_{-1}^\aa\mid W_{\seq/\qq}[0],\sp{ f_{-1}^\bb\mid W_{\seq/\qq}[-1])}$ is the first return 
of $f_{-1}$ back to $W_{\seq/\qq}[-1,0]$. After appropriate gluing of arcs in $\partial \left(W_{\seq/\qq}[-1,0]\right)$, the map ${(f_{-1}^\aa\mid W_{\seq/\qq}[0],\sp{ f_{-1}^\bb\mid W_{\seq/\qq}[-1])}}$ is $f\colon W\dashrightarrow W$.

Denote by 
\begin{equation}
\label{eq:ap:gammas seq qq} 
\gamma_0^{\seq/\qq},\sp\sp \gamma_p^{\seq/\qq}
\end{equation}
 the left boundaries of $W_{\seq/\qq}[0]$ and $W_{\seq/\qq}[p]$ respectively. Then $\gamma_0^{\seq/\qq}, \gamma_p^{\seq/\qq}$ is a dividing pair for   $f_{-1}\colon W_{\seq/\qq}\dashrightarrow W_{\seq/\qq}$ and the anti-renormalization procedure can be iterated.

Let $\beta$ be a curve in $W$ and let $\widetilde \beta$ be a lift of $\beta$ to $W_\seq$. The image of $\widetilde \beta$ in $W_{\seq/\qq}\simeq W_\seq/\sim$ is called a \emph{lift of $\beta$ to $W_{\seq/\qq}$}. For example, $\gamma_0^{\seq/\qq}$ is a lift of $\gamma_0$.

\subsubsection{Prime anti-renormalization}
\label{sss:PrimeAntiRen}
 The $1/3$ and $2/3$-anti-renormalizations are called \emph{prime}. It is easy to check that 
\begin{itemize}
\item if $\pp/\qq=1/3$, then $\seq/\qq=(\A,\A,\B)$;
\item if $\pp/\qq=2/3$, then $\seq/\qq=(\A,\B,\B)$.
\end{itemize}

\begin{lem}[Compare with Lemma~\ref{lem:SectRen is prime power}]
\label{lem:AntiRen is prime power}
Any anti-renormalization is an iteration of prime anti-renormalizations.
\end{lem}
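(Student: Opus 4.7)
The lemma is the formal dual of Lemma~\ref{lem:SectRen is prime power} (every sector renormalization is an iterate of the prime renormalization), and the plan is to transfer that result through the duality between renormalization and anti-renormalization.

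First I would observe that if $f_{-1}\colon W_{\seq/\qq}\dashrightarrow W_{\seq/\qq}$ is the $\pp/\qq$-anti-renormalization of $f\colon W\dashrightarrow W$, then by the very construction in~\S\ref{sss:anti renorm} the pair $(f_{-1}^\aa\mid W_{\seq/\qq}[0],\sp f_{-1}^\bb\mid W_{\seq/\qq}[-1])$ is the first return of $f_{-1}$ to $W_{\seq/\qq}[-1,0]$, and after the prescribed gluing of boundary arcs it recovers $f$. In other words, $f$ is a sector renormalization of $f_{-1}$ with return times $\aa,\bb$, so anti-renormalization is literally the inverse operation to sector renormalization.

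Next I would apply Lemma~\ref{lem:SectRen is prime power} to this sector renormalization to factor it as a composition of prime sector renormalizations. This produces a chain of intermediate maps $h_0,h_1,\dots,h_k$ with $h_0=f_{-1}$ and $h_k=f$, where each $h_i$ is the prime sector renormalization of $h_{i-1}$. By the definition recalled in~\S\ref{sss:PrimeAntiRen}, passing from $h_i$ back to $h_{i-1}$ is precisely a prime ($1/3$ or $2/3$) anti-renormalization. Reversing the chain thus expresses the original $\pp/\qq$-anti-renormalization as a composition of prime anti-renormalizations.

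The main obstacle is the combinatorial bookkeeping: one must verify that the sequence of prime steps dictated by the action $\cRRc$ on rotation numbers (Lemma~\ref{lem:ActOfPrimRenorm}) assembles into precisely the combinatorics $\seq/\qq$ of Lemma~\ref{lem:seq:p:q}. This, however, is already implicit in the proof of Lemma~\ref{lem:SectRen is prime power}: the continued-fraction expansion of $\pp/\qq$ determines the prime steps uniquely, and the uniqueness clause of Lemma~\ref{lem:seq:p:q} forces the intermediate combinatorics to match the corresponding $\pp_i/\qq_i$ at every stage. Hence no further analysis beyond careful tracking of the sequences $\seq[\,\cdot\,]$ under each prime step is required.
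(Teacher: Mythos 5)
Your proposal is correct in substance and is, in fact, the same induction as the paper's in different clothing.  The paper proves the lemma directly by induction on $\qq$: for $\qq>3$ it observes that the $\pp/\qq$-anti-renormalization is the $1/3$- (respectively $2/3$-) anti-renormalization of the $\cRRc(\pp/\qq)$-anti-renormalization, and then appeals to the inductive hypothesis.  You instead notice that $f$ is the sector renormalization of $f_{-1}$ with return times $\aa,\bb$, invoke Lemma~\ref{lem:SectRen is prime power} to factor that sector renormalization as $\RRc^k$, and invert the resulting chain $h_0=f_{-1},\dots,h_k=f$ to read off the prime anti-renormalizations.  Both routes reduce $\pp/\qq$ to $\cRRc(\pp/\qq)$ at each step, so the inductive content is identical.

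The one thing that keeps your route from being a free lunch: Lemma~\ref{lem:SectRen is prime power} is formally stated (and proved) only for rigid rotations $\Lbb_\theta\colon \overline\Disk\to\overline\Disk$, whereas here the object whose sector renormalization you are factoring is an abstract partial homeomorphism $f_{-1}\colon W_{\seq/\qq}\dashrightarrow W_{\seq/\qq}$ with no disk-rotation structure.  To apply that lemma you must observe that its proof — an induction on $\aa+\bb$ using the projection to the prime-renormalized plane — is purely combinatorial and hence carries over to the $\{\A,\B\}^{\Z}$-sequence formalism of Appendix~B; and you must check that the factorization it produces matches the canonical prime anti-renormalization of~\S\ref{sss:PrimeAntiRen} at every stage (i.e.\ that each $h_{i-1}$ really is \emph{the} prime anti-renormalization of $h_i$, not merely some preimage under $\RRc$).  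You gesture at this in the last paragraph, but the appeal to ``uniqueness in Lemma~\ref{lem:seq:p:q}'' does not by itself close the loop: that lemma pins down the periodic sequence $\seq$ given $\pp/\qq$, not that the intermediate combinatorics produced by inverting the rotation-world factorization coincide with the prime-anti-renormalization recursion.  Once you write out that verification you find yourself re-deriving precisely the two bullet points of the paper's proof.  So either route is fine, and neither is shorter than the other; the paper's has the advantage of staying self-contained inside Appendix~B.
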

\begin{proof}
We proceed by induction on $\qq$. Assume $\qq>3$, define $\pp'/\qq'\coloneqq \cRRc(\pp/\qq)$ (see~\eqref{eq:R_prm}), and observe that $\qq'<\qq$.
\begin{itemize}
\item  If $0<\pp<\qq/2$, then the $\pp/\qq$-anti-renormalization is the $1/3$-anti-renormalization of the $\pp'/\qq'$-anti-renormalization.
\item  If $\qq/2<\pp<\qq$, then the $\pp/\qq$ anti-renormalization is the $2/3$-anti-renormalization of the $\pp'/\qq'$-anti-renormalization.
\end{itemize}
\end{proof}

Denote by $\seqo\coloneqq (\A,\B)^{\Z}$ the sequence in $\{\A,\B\}^{\Z}$ with even entries equal to $\A$ and odd entries equal to $\B$. Simplifying notations, we write $W_{\seqo}=W_{\oseq}$.

Suppose that $f\colon W\to W$ is a homeomorphism. In this case anti-renormalizations of $f$ can be defined canonically (i.e.~independent of the choice of $\gamma_0,\gamma_1$) as follows. Observe first that
%Then
\begin{equation}
\label{eq:wWtoW0}
\pi\colon W_{\oseq}\setminus \{\widetilde 0\} \to W\setminus \{0\}
\end{equation}
is a universal cover. Let $( \wgamma_i^\oseq \subset W_\oseq)_{i\in \Z}$ be all the lifts of $\gamma_0$ and $\gamma_1$ enumerated from left to right such that $W_\oseq[i]$ is between $\wgamma_{i}^\oseq$ and $\wgamma_{i+1}^\oseq$; in particular, $\wgamma_i^\oseq$ is a lift of $\gamma_{i \mod 2}$. Let \[f_-\ ,\   f_+ \colon  W_\oseq \to  W_\oseq\] 
 be the lifts of $f\colon  W \to  W$ specified so that \[f_-(\wgamma_{2 i}^\oseq)=\wgamma_{2 i-1}^\oseq\sp \text{ and }\sp f_+(\wgamma_{2 i}^\oseq)=\wgamma_{2 i+ 1}^\oseq. \]
Observe that $f_-$ and $f_+$ commute. Write 
\[\tau \coloneqq f_-^{-1}\circ f_+ \colon  W_\oseq \to  W_\oseq; \]then $\tau\mid (W_{\oseq}\setminus \{\widetilde 0\})$ is a deck transformation of $\widetilde W$ and we can rewrite~\eqref{eq:wWtoW0} as \[W\setminus \{0\} \simeq \left(W_{\oseq}\setminus \{\widetilde 0\}\right)/  \langle \tau \rangle.\] We also write $W \simeq W_\oseq/  \langle \tau \rangle$.

\begin{lem}[The $1/3$-anti-renormalization]
\label{lem:1 3 AntiRen}
Suppose $f\colon W\to W$ is a self-homeomorphism. Set
\begin{align*}
f_{-1,+}\coloneqq& f_+, \\
f_{-1,-}\coloneqq&\tau^{-1}=f_{-}\circ f_+^{-1},\\
\tau_{-1}\coloneqq&f_{-1,-}^{-1}\circ f_{-1,+}= f^{-1}_{-}\circ f_+^2,
\end{align*}
Then  $\tau_{-1}$ acts properly discontinuously on $W_{\oseq}\setminus \{\widetilde 0\}$. We view \[\left(W_{\oseq}\setminus \{\widetilde 0\}\right)/\langle\tau_{-1} \rangle\] as a punctured closed topological disk and we view $ W_\oseq /\langle \tau_{-1} \rangle$ as a closed topological disk.

Let $f_{-1}\colon W_{-1}\to W_{-1}$ be the $1/3$-anti-renormalization of $f$. Then $f_{-1}$ is  conjugate to 
 \[ f_{-1,-}^{}/\langle \tau_{-1} \rangle = f_{-1,+}/\langle \tau_{-1} \rangle\colon W_\oseq /\langle \tau_{-1} \rangle\to W_\oseq /\langle \tau_{-1} \rangle  \]
 by the conjugacy \[h\colon W_{-1}\to W_\oseq [0,1,2]/\langle\tau_{-1}\rangle \] mapping 
 \[W_{-1}[0],\sp W_{-1}[1],\sp  W_{-1}[2]\]
respectively to  \[ {W_\oseq [2]/\langle \tau_{-1}\rangle},  \sp {W_\oseq[0]/\langle \tau_{-1}\rangle}, \sp{W_\oseq[1]/\langle \tau_{-1}\rangle}\]  which are copies of $\A,\A,\B$.
\end{lem}

\begin{figure}
\centering{\begin{tikzpicture}
\draw (2,3)--(2,-3); 
\draw (4,3)--(4,-3); 
\draw (6,3)--(6,-3); 
\draw (8,3)--(8,-3);
\draw (10,3)--(10,-3);
\draw (12,3)--(12,-3);

\draw (3,2.1) node{$W_\oseq[-1]$}; 
\draw (5,2.1) node{$W_\oseq[0]$}; 
\draw (7,2.1) node{$ W_\oseq[1]$}; 
\draw (9,2.1) node{$ W_\oseq[2]$}; 
\draw (11,2.1) node{$ W_\oseq[3]$}; 

\draw[blue] (5,0.5) edge [bend left,<-]node[above]{$f_{-1,-}$} (9,0.6);

\draw[blue] (4.5,-1) edge [bend left,->]node[above ]{$f_{-1,+}$} (6.5,-1);
\draw[blue] (6.5,-2.2) edge [bend left,->]node[above ]{$f_{-1,+}$} (8.5,-2.2);

\draw[red] (1.8,3)--(12.1,3);

\draw (2.35,2.6) node{$ \wgamma^\oseq_{-1}$}; 
\draw (4.3,2.6) node{$ \wgamma^\oseq_0$}; 
\draw (6.3,2.6)  node{$ \wgamma^\oseq_1$}; 
\draw (8.3,2.6) node{$ \wgamma^\oseq_2$};
\draw (10.3,2.6)  node{$ \wgamma^\oseq_3$};

\draw[blue] (7.8,3) .. controls (7.8,1.5)  .. (8.1,0)
..controls (8.3, -1.5)..(8.3,-3);
\draw[blue] (8.7,0) node{$ f_+(\wgamma^\oseq_1)$};

%\draw[blue] (9.8,3) .. controls (9.8,1.5)  .. (10.1,0) ..controls (10.3, -1.5)..(10.3,-3);
%\draw[blue] (10.7,0) node{$ f_+(\wgamma^\oseq_2)$};

\end{tikzpicture}}
\caption{Illustration to Lemma~\ref{lem:AntiRen is prime power}: ${f_{-1,-}\colon W_\oseq[2]\to W_\oseq[0]}$ and $f_{-1,+}\colon W_\oseq[0,1]\to W_\oseq[1,2]$ become the $1/3$-anti-renormalization of $f$ after gluing $\wgamma^\oseq_0$ and $\wgamma^\oseq_3$; see also Figure~\ref{Fig:f and its 1 3 anti ren}.} 
\label{Fig:FundReg:F}
\end{figure}

\begin{proof}
Clearly, $W_\oseq[0,1,2]$ is a fundamental domain for $\tau_{-1}$. It is easy to see (see Figure~\ref{Fig:FundReg:F}) that $h$ identifies
\begin{itemize}
\item $f_{-1} \colon W_{-1} [0]\to W_{-1} [1]$ (which is $\id \colon \A \to \A$)  with \[f_{-1,-}=\tau^{-1}\colon W_\oseq[2]\to W_\oseq[0].\]

\item $f_{-1} \colon W_{-1} [1,2]\to W_{-1} [2,0]$   (which is $f\colon W\setminus \gamma_0\to W\setminus \gamma_1$) with
\[ f_{-1,+}\colon W_\oseq[0,1]\to W_\oseq[1,2] .\] 
\end{itemize}
\end{proof}

\begin{rem}
\label{rem:lem:1 3 AntiRen} 

The proof of Lemma~\ref{lem:1 3 AntiRen} shows also that 
$h$ is uniquely characterized by the following properties:
\begin{itemize}
\item $h$ maps $\gamma_0^{\seq/3}$ to $\wgamma_2^\oseq /\langle\tau\rangle$ (see~\eqref{eq:ap:gammas seq qq});
\item if $\ell\subset W\setminus \{0\}$ is a curve starting at $\gamma_0$ and $\widetilde \ell\subset W_{-1} $ is the unique lift of $\ell$ starting at some point of $\gamma_0^{\seq/3}$, then $h(\widetilde \ell /\langle\tau_{-1} \rangle)\subset W_\oseq$ is the unique lift of $\ell$ starting at $\wgamma_2^\oseq$.   
\end{itemize}
\end{rem}

Similar to Lemma~\ref{lem:1 3 AntiRen} we have
\begin{lem}[The $2/3$-anti-renormalization]
\label{lem:2 3 AntiRen} Suppose $f\colon W\to W$ is a self-homeomorphism.
Set
\begin{align*}
f_{-1,+}\coloneqq& \tau =f_+\circ f_-^{-1}, \\
f_{-1,-}\coloneqq&f_{-},\\
\tau_{-1}\coloneqq&f_{-1,-}^{-1}\circ f_{-1,+}= f^{-2}_{-}\circ f_+,
\end{align*}
Then  $\tau_{-1}$ acts properly discontinuously on $W_{\oseq}\setminus \{\widetilde 0\}$. We view \[\left(W_{\oseq}\setminus \{\widetilde 0\}\right)/\langle\tau_{-1} \rangle\] as a punctured closed topological disk and we view $ W_\oseq /\langle \tau_{-1} \rangle$ as a closed topological disk.

Let $f_{-1}\colon W_{-1}\to W_{-1}$ be the $2/3$-anti-renormalization of $f$. Then $f_{-1}$ is  conjugate to 
 \[ f_{-1,-}^{}/\langle \tau_{-1} \rangle = f_{-1,+}/\langle \tau_{-1} \rangle\colon W_\oseq /\langle \tau_{-1} \rangle\to W_\oseq /\langle \tau_{-1} \rangle  \]
 by the conjugacy \[h\colon W_{-1}\to W_\oseq [-1,0,1]/\langle\tau_{-1}\rangle \] mapping 
 \[W_{-1}[0],\sp W_{-1}[1],\sp  W_{-1}[2]\] respectively to \[ {W_\oseq [0]/\langle \tau_{-1}\rangle},  \sp {W_\oseq[1]/\langle \tau_{-1}\rangle}, \sp{W_\oseq[-1]/\langle \tau_{-1}\rangle}\] which are the copies of $\A,\B,\B$.
\qed
\end{lem}
\subsubsection{Fences}
\label{sss:fences} Consider again a partial homeomorphism $f\colon W\dashrightarrow W$ and let $\seq$ be an anti-renormalization sequence from Lemma~\ref{lem:seq:p:q}. We view $W$ as a subset of $\C$.

A \emph{fence} is a simple closed curve $\wall\subset \Dom f\cap \Im f$ such that
\begin{itemize}
\item $0$ is in the bounded component $\inn$ of $\C\setminus \wall$; and
\item $\wall$ intersects $\gamma_0$ at a single point $x$ and  $\wall$ intersects $\gamma_1$ at $f(x)$.
\end{itemize}

Let $f_{-1}\colon W_{\seq/\qq}\dashrightarrow W_{\seq/\qq}$ be an anti-renormalization of $f_{-1}$ as in~\S\ref{sss:anti renorm}. We denote by $\wall_\seq$ the lift of $\wall$ to $W_\seq$ and we denote by $\wall_{\seq/\qq}$ the projection of $\wall_\seq$ to $W_{\seq/\qq}$.

\begin{lem}
\label{lem:lift of  fence}
The curve $\wall_{\seq/\qq}$ is again a fence respecting $\gamma_0^{\seq/\qq}, \gamma_p^{\seq/\qq}$; see~\eqref{eq:ap:gammas seq qq}.
\end{lem}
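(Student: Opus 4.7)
The plan is to construct $\wall_{\seq/\qq}$ explicitly as a concatenation of simple arcs, one in each sector $W_{\seq/\qq}[i]$ for $i\in\{0,1,\dots,\qq-1\}$, and read off the fence properties from the construction. The points $x$ and $f(x)$ split $\wall$ into two simple subarcs, $\wall_\A \coloneqq \wall\cap\A$ running from $x$ to $f(x)$ through $\A$, and $\wall_\B \coloneqq \wall\cap\B$ running from $f(x)$ back to $x$ through $\B$. Since $\pi\colon \intr W_{\seq/\qq}[i] \to \intr \seq[i]$ is a homeomorphism, each arc $\wall_\A$ (resp.\ $\wall_\B$) lifts uniquely to a simple arc $\widetilde{\wall}_i$ in $W_{\seq/\qq}[i]$ whenever $\seq[i]=\A$ (resp.\ $\B$), with one endpoint a lift of $x$ on $\ell(W_{\seq/\qq}[i])$ or $\rho(W_{\seq/\qq}[i])$ and the other a lift of $f(x)$ on the remaining side.

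Next I would verify that consecutive pieces $\widetilde{\wall}_i$ and $\widetilde{\wall}_{i+1}$ match at the shared boundary under the dynamic gluing $g_i$ from~\eqref{eq:deff:g_k}. There are four cases for $(\seq[i],\seq[i+1])$, in which $g_i$ is $\id$, $\id$, $f^{-1}$, or $f$ respectively; in each case the right endpoint of $\widetilde{\wall}_i$ and the left endpoint of $\widetilde{\wall}_{i+1}$ are precisely the pair of lifts identified by $g_i$. The one fact driving every case is the fence hypothesis $f(\wall\cap\gamma_0)=\wall\cap\gamma_1$. Performing this matching cyclically for $i=0,\dots,\qq-1$ and using the $(\seq[\qq-1],\seq[0])=(\B,\A)$ gluing to close up assembles the pieces into a simple closed curve $\wall_{\seq/\qq}$, simple because each piece is simple and distinct pieces meet only at the matching endpoints on separating boundaries.

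It remains to check the two defining properties of a fence for $f_{-1}$ relative to $\gamma_0^{\seq/\qq}$ and $\gamma_\pp^{\seq/\qq}$. Each $\widetilde{\wall}_i$ lies in the interior of its sector except at its two boundary endpoints, so $\wall_{\seq/\qq}$ meets $\gamma_0^{\seq/\qq}=\ell(W_{\seq/\qq}[0])$ only at the gluing point $x_{\seq/\qq}$ between $\widetilde{\wall}_{\qq-1}$ and $\widetilde{\wall}_0$, and meets $\gamma_\pp^{\seq/\qq}=\ell(W_{\seq/\qq}[\pp])$ only at the gluing point between $\widetilde{\wall}_{\pp-1}$ and $\widetilde{\wall}_\pp$. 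That this second point equals $f_{-1}(x_{\seq/\qq})$ follows from the semiconjugacy $\pi\circ f_{-1}=f\circ \pi$ holding on $W_{\seq/\qq}[0]$, since $\gamma_\pp^{\seq/\qq}$ is a simple arc carrying a unique lift of $x$ or of $f(x)$. The center $\widetilde 0$ is enclosed by $\wall_{\seq/\qq}$ because in each $W_{\seq/\qq}[i]$ the arc $\widetilde{\wall}_i$ separates the vertex from the outer boundary $\partial W\cap W_{\seq/\qq}[i]$, inherited from $0\in\inn$ downstairs in $W$.

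The main obstacle is the simultaneous endpoint-matching for the four gluing cases in step two together with the special subcase $0\in\{-\pp-1,-\pp\}\bmod\qq$ where $f_{-1}$ on $W_{\seq/\qq}[0]$ is the original branched $f$ rather than a natural sector isomorphism; once these are dispatched the fence conditions are transparent. An alternative route via Lemma~\ref{lem:AntiRen is prime power} would reduce to the prime cases $\pp/\qq\in\{1/3,2/3\}$ and lift $\wall$ to a $\tau_{-1}$-invariant bi-infinite simple curve in the universal cover $W_\oseq$ via Lemmas~\ref{lem:1 3 AntiRen}--\ref{lem:2 3 AntiRen} before descending, but this merely repackages the same endpoint analysis.
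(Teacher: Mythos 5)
Your proposal is correct and follows essentially the same approach as the paper's (much terser) proof: decompose $\wall_{\seq/\qq}$ into arcs $\widetilde{\wall}_i\subset W_{\seq/\qq}[i]$ and check that consecutive pieces match at shared boundaries because the gluing maps $g_k$ of~\eqref{eq:deff:g_k} respect $\wall\cap(\gamma_0\cup\gamma_1)$, driven by the fence hypothesis $f(x)\in\gamma_1\cap\wall$. Your extra verification of the two fence properties for $f_{-1}$ and the remark on the prime-factorization alternative are fine elaborations of the same underlying idea.
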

\begin{proof}
Every $\wall_{\seq/\qq}\cap W_{\seq/\qq}[i]$ is an arc connecting a point on the left boundary of $W_{\seq/\qq}[i]$ to a point on the right boundary of $W_{\seq/\qq}[i]$. Moreover,  $\wall_{\seq/\qq}\cap W_{\seq/\qq}[i]$ meets  $\wall_{\seq/\qq}\cap W_{\seq/\qq}[i+1]$ because $g_k\colon \rho(S_i)\dashrightarrow \ell(S_{i+1})$ (see~\eqref{eq:deff:g_k}) respects the intersection of $\wall$ with $\gamma_0,\gamma_1$.
\end{proof}

\subsubsection{Robustness of anti-renormalization}
\begin{thm}
\label{thm:RobAntiRen}
Let $f\colon W\dashrightarrow W$ be a partial homeomorphism, $\gamma_0,\gamma_1\subset W$ be a dividing pair of arcs, $Q\subset \Dom f$ be a fence respecting $\gamma_0,\gamma_1$ and enclosing $\inn\ni 0$, and let \[f_{-1}\colon W_{-1}\dashrightarrow W_{-1}\] be the $\pp/\qq$-anti-renormalization of $f$; see~\S\ref{sss:anti renorm}. 

Assume that $\gamma^\new_0,\gamma^\new_1$ is another pair of dividing arcs such that  ${\gamma^\new_0\setminus \inn},{\gamma^\new_1\setminus \inn}$ coincides with $\gamma_0\setminus \inn,\gamma_1\setminus \inn$. Denote by \[f_{-1,\new}\colon W_{-1,\new }\dashrightarrow W_{-1,\new }\] the $\pp/\qq$-anti-renormalization of $f$ relative to the pair $\gamma^\new_0,\gamma_1^\new$. Then $f_{-1}$ and $f_{-1,\new}$ are naturally conjugate by $h\colon W_{-1}\to W_{-1,\new }$ uniquely specified by the following properties:
\begin{enumerate}
\item $\pi\circ h(z)=\pi(z)$ for every $z\in W_{-1}\setminus\inn_{-1}$, where $\inn_{-1}$ is the topological disk enclosed by $\wall_{-1}$, see Lemma~\ref{lem:lift of  fence}; and\label{eq:1:thm:RobAntiRen}
\item if $\widetilde \beta\subset W_{-1}$ is a lift of a curve $\beta\subset W$, then $h(\widetilde \beta)$ is a lift of $\beta$ to $W_{-1,\new}$.\label{eq:2:thm:RobAntiRen}
\end{enumerate}
\end{thm}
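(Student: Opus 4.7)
The plan is to prove the theorem by induction on the denominator $\qq$, reducing the base case to the two prime anti-renormalizations ($\pp/\qq\in\{1/3,2/3\}$) and then constructing $h$ explicitly by separating the interior and exterior of the lifted fence.

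For the inductive step I would invoke Lemma~\ref{lem:AntiRen is prime power} to write the $\pp/\qq$-anti-renormalization as an iteration of prime anti-renormalizations. Lemma~\ref{lem:lift of  fence} provides a lifted fence $\wall_{\seq/\qq}\subset W_{-1}$ respecting the lifted dividing pair $\gamma_0^{\seq/\qq},\gamma_p^{\seq/\qq}$, and because $\gamma_0^\new\setminus\inn=\gamma_0\setminus\inn$ (and similarly for $\gamma_1$), the same external coincidence propagates to the lifted pair outside $\wall_{\seq/\qq}$. So the inductive hypothesis applies at the next level, and it suffices to prove the theorem in the prime case.

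For the prime case I would use Lemma~\ref{lem:1 3 AntiRen} (or its analog, Lemma~\ref{lem:2 3 AntiRen}) and construct $h$ in two pieces. \emph{Outside the lifted fence}: the sector-pieces $\A\setminus\inn,\B\setminus\inn$ literally coincide with $\A^\new\setminus\inn,\B^\new\setminus\inn$, and the gluing maps~\eqref{eq:deff:g_k} along arcs in $\gamma_i\setminus\inn$ are identical (since only $f$ and these external arcs enter). Consequently $W_{-1}\setminus\inn_{-1}$ is canonically identified with $W_{-1,\new}\setminus\inn_{-1,\new}$ by a map $h_\out$ that equals the identity on projections to $W$, yielding property~\eqref{eq:1:thm:RobAntiRen}. \emph{Inside the lifted fence}: the boundary identification is already supplied by $h_\out|_{\wall_{-1}}$. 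I would choose an ambient homeomorphism $\psi\colon\inn\to\inn$ fixing $\partial\inn=\wall$ and the point $0$ pointwise, sending $(\gamma_0\cap\inn,\gamma_1\cap\inn)$ onto $(\gamma_0^\new\cap\inn,\gamma_1^\new\cap\inn)$ and respecting the sector labeling $(\A\cap\inn,\B\cap\inn)\to(\A^\new\cap\inn,\B^\new\cap\inn)$; such $\psi$ exists by the standard ambient-isotopy argument for $Y$-graphs in a disk. Transporting $\psi$ into each sector-copy of $\inn_{-1}$ yields $h_\inn\colon\inn_{-1}\to\inn_{-1,\new}$, and since $\psi$ is the identity on $\wall$, $h_\inn$ and $h_\out$ agree on $\wall_{-1}$, assembling to the required homeomorphism $h\colon W_{-1}\to W_{-1,\new}$.

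To verify the conjugacy $h\circ f_{-1}=f_{-1,\new}\circ h$, outside $\inn_{-1}$ it is immediate because $h_\out$ intertwines identically-glued dynamics; inside, the map $f_{-1}$ acts either by permuting sector-copies (trivially compatible with $\psi$) or, on $W_{-1}[-\pp-1,-\pp]\to W_{-1}[-1,0]$, by a single application of $f$, and since $f$ itself is unchanged and $\psi$ carries $\gamma_i$ to $\gamma_i^\new$, the arc-level identification $f|_{\gamma_0}\leftrightarrow f|_{\gamma_0^\new}$ is consistent with $\psi$. Property~\eqref{eq:2:thm:RobAntiRen} then follows because $h$ preserves the combinatorial sequence of sectors traversed by any lifted curve, so $h$ sends lifts to lifts. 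Uniqueness is forced: property~\eqref{eq:1:thm:RobAntiRen} pins down $h$ on the open set $W_{-1}\setminus\inn_{-1}$, and~\eqref{eq:2:thm:RobAntiRen} then determines $h$ on $\inn_{-1}$ through lifts of curves entering $\inn$. The main obstacle is the interior conjugacy check, where $\psi$ is only a topological homeomorphism and does \emph{not} conjugate $f$ to itself; this is resolved by noting that all nontrivial action of $f_{-1}$ inside $\inn_{-1}$ is funneled through the single "jump" step along arcs $\gamma_0,\gamma_1$, and only the combinatorics of those arcs relative to $f$ is relevant, which is precisely what the construction of $\psi$ preserves.
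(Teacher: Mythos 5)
Your overall reduction to the prime case via Lemma~\ref{lem:AntiRen is prime power} and Lemma~\ref{lem:lift of fence} matches the paper's strategy, but the construction of $h$ on $\inn_{-1}$ has a genuine gap. You define $h_\inn$ by transporting an ambient homeomorphism $\psi\colon\inn\to\inn$ (fixing $\wall$ and $0$, sending $\gamma_i\cap\inn$ to $\gamma_i^\new\cap\inn$) into each sector-copy. For this to produce a \emph{well-defined} map, $\psi$ must already intertwine the gluing data on the interior arcs: when $\seq[k]=\seq[k+1]$ the gluing $g_k$ is $f^{\mp 1}$ along $\gamma'_1\cap\inn$ or $\gamma'_0\cap\inn$ (see~\eqref{eq:deff:g_k}), so one needs $\psi\circ (f\mid \gamma_0\cap\inn) = (f\mid\gamma_0^\new\cap\inn)\circ\psi$. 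That much you could arrange by choosing $\psi\mid\gamma_0$ freely and forcing $\psi\mid\gamma_1 := f\circ(\psi\mid\gamma_0)\circ f^{-1}$. But then the conjugacy check fails: on $W_{-1}[-\pp-1,-\pp]$ the map $f_{-1}$ acts by a genuine application of $f$ to an entire copy of $W$, so $h\circ f_{-1}=f_{-1,\new}\circ h$ there forces $\psi\circ f = f\circ\psi$ on a two-dimensional region of $\inn$, not just on the arcs. A generic ambient isotopy of a $Y$-graph in a disk will not commute with $f$, and you give no reason such an $f$-equivariant $\psi$ exists. Your final remark that ``only the combinatorics of those arcs relative to $f$ is relevant'' is where the gap lives: the map $f_{-1}$ applies $f$ itself across the jump, not merely an arc identification, so the combinatorics of the arcs is not enough.

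The paper sidesteps this by never constructing an ambient interior homeomorphism at all. In its main proof, after reducing to the prime case it passes to the universal cover $W_\oseq$ of $W\setminus\{0\}$ and invokes Lemmas~\ref{lem:1 3 AntiRen} and~\ref{lem:2 3 AntiRen}: the $1/3$- and $2/3$-anti-renormalizations are presented as $W_\oseq/\langle\tau_{-1}\rangle$, a description that makes no reference whatsoever to the choice of dividing pair, so $h$ is the induced map between two models of the same quotient and the conjugacy is automatic. The paper also records an alternative ``manual'' proof (at the end of Appendix~\ref{ss:ap:LiftThm}) that is closest in spirit to yours: it uses Theorem~\ref{thm:AntiRen:Lifts} to show that all lifts of $\gamma_0^\new,\gamma_1^\new$ to $W_{-1}$ exist, are disjoint and land at $\widetilde 0$, and then defines $h$ by matching the sectors of $W_{-1}$ cut out by these \emph{dynamically constructed} lifts with the corresponding sectors of $W_{-1,\new}$. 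The difference from your construction is crucial: those sectors are identified with copies of $\A^\new,\B^\new$ via the projection $\pi$ and appropriate iterates of $f$, so $h$ is determined by the lift structure and automatically satisfies Property~\eqref{eq:2:thm:RobAntiRen}, rather than being an ad hoc topological choice whose compatibility with the dynamics would still have to be proved.
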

\begin{proof}
Since the pair ${\gamma^\new_0\setminus \inn},{\gamma^\new_1\setminus \inn}$ coincides with $\gamma_0\setminus \inn,\gamma_1\setminus \inn$, Condition~\eqref{eq:1:thm:RobAntiRen} uniquely specifies $h\mid W_{-1}\setminus\inn_{-1}$. 

Let us now extend $f\colon W\dashrightarrow W$ to a homeomorphism $f\colon W\to W$ mapping $\gamma_0$ to $\gamma_1$. The extension changes $f_{-1}\mid W_{-1}\setminus \inn_{-1}$ and $f_{-1, \new}\mid W_{-1,\new }\setminus \inn_{-1,\new }$ but does not affect $f_{-1}\mid  \inn_{-1}$ and $f_{-1, \new}\mid  \inn_{-1,\new }$. Therefore, it is sufficient to prove the theorem under the assumption that $f\colon W\to W$ is a homeomorphism.

Since every anti-renormalization is an iteration of prime anti-renormalizations (see~Lemma~\ref{lem:AntiRen is prime power}), we can further assume that $f_{-1}$ and $f_{-1,\new}$ are prime anti-renormalizations. By Lemmas~\ref{lem:1 3 AntiRen} and~\ref{lem:2 3 AntiRen} both $f_{-1}$ and $f_{-1,\new}$ are naturally conjugate to 
 \[ f_{-1,-}^{}/\langle \tau_{-1} \rangle = f_{-1,+}/\langle \tau_{-1} \rangle\colon W_\oseq /\langle \tau_{-1} \rangle\to W_\oseq /\langle \tau_{-1} \rangle , \] 
 which is independent on the choice of $\gamma_0,\gamma_1$. It remains to observe that the conjugacy between $f_{-1}$ and $f_{-1,\new}$ satisfies Condition~\ref{eq:2:thm:RobAntiRen} -- see Remark~\ref{rem:lem:1 3 AntiRen}. 
\end{proof}
\begin{cor}[Lifting condition]
\label{cor:LiftCond}
The curves $\gamma_{0,\new}$ and $\gamma_{1,\new}$ have unique lifts
\begin{equation}
\label{eq:cor:LiftCond}
h^{-1}\left(\gamma_0^{\seq/\qq,\new}\right),\sp\sp h^{-1}\left(\gamma_p^{\seq/\qq,\new}\right)\subset W_{-1}
\end{equation}
(see~\eqref{eq:ap:gammas seq qq}) such that the pair~\eqref{eq:cor:LiftCond} coincide with $\gamma_0^{\seq/\qq}, \gamma_p^{\seq/\qq}$ in $W_{-1}\setminus \inn$. Moreover,~\eqref{eq:cor:LiftCond} is a dividing pair.\qed
\end{cor}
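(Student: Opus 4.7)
The plan is to derive the corollary as a direct consequence of Theorem~\ref{thm:RobAntiRen} by pulling back the canonical dividing pair of the new anti-renormalization through the conjugacy $h$. First, invoke the theorem to obtain $h\colon W_{-1}\to W_{-1,\new}$ and note that, by the symmetry of the construction (ultimately based on the canonical fundamental-domain descriptions of Lemmas~\ref{lem:1 3 AntiRen} and~\ref{lem:2 3 AntiRen} together with the iterative reduction of Lemma~\ref{lem:AntiRen is prime power}), both $h$ and $h^{-1}$ satisfy the lift-to-lift property~\eqref{eq:2:thm:RobAntiRen}. Since $\gamma_0^{\seq/\qq,\new}$ and $\gamma_\pp^{\seq/\qq,\new}$ are, by construction, lifts to $W_{-1,\new}$ of $\gamma_{0,\new}$ and $\gamma_{1,\new}=f(\gamma_{0,\new})$ respectively, define the curves in~\eqref{eq:cor:LiftCond} as their $h^{-1}$-images and conclude immediately that they are lifts of $\gamma_{0,\new},\gamma_{1,\new}$ to $W_{-1}$ in the sense of~\S\ref{sss:lifts of curves}.

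Next, check uniqueness and the coincidence on $W_{-1}\setminus \inn_{-1}$. A lift is uniquely determined by its starting point on $\partial W_{-1}$; since $\gamma_{0,\new}$ agrees with $\gamma_0$ outside $\inn$ (and likewise for $\gamma_{1,\new}$ and $\gamma_1$), each new curve starts at the same point of $\partial W$ as the corresponding old curve. Property~\eqref{eq:1:thm:RobAntiRen} guarantees that $\pi\circ h=\pi$ on $W_{-1}\setminus\inn_{-1}$, so the candidate lift $h^{-1}(\gamma_0^{\seq/\qq,\new})$ begins at the same preimage as $\gamma_0^{\seq/\qq}$ and its $\pi$-image coincides with that of $\gamma_0^{\seq/\qq}$ until it first crosses into $\inn_{-1}$. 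Since both curves are lifts (in the sense of~\S\ref{sss:lifts of curves}) of arcs that agree outside $\inn$, the two lifts themselves literally coincide on $W_{-1}\setminus\inn_{-1}$. The same reasoning applies to the pair $h^{-1}(\gamma_\pp^{\seq/\qq,\new})$ and $\gamma_\pp^{\seq/\qq}$.

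For the dividing-pair assertion, use that $(\gamma_0^{\seq/\qq,\new},\gamma_\pp^{\seq/\qq,\new})$ is a dividing pair for $f_{-1,\new}$ (recorded immediately after~\eqref{eq:ap:gammas seq qq}) together with the fact that $h$ is a topological conjugacy between $f_{-1}$ and $f_{-1,\new}$; the defining axioms for a dividing pair, namely that the two simple arcs from $\widetilde 0$ to $\partial W_{-1}$ are disjoint away from $\widetilde 0$ and that one is the $f_{-1}$-image of the other on the relevant sub-arcs, are transported verbatim by the homeomorphism $h^{-1}$. The only step where anything nontrivial happens is verifying the symmetry of property~\eqref{eq:2:thm:RobAntiRen} under interchange of the two sides; but this is immediate from the canonical construction of $h$ in the proof of Theorem~\ref{thm:RobAntiRen}, where $h$ and $h^{-1}$ are both obtained by factoring through the same universal-cover model $W_\oseq/\langle\tau_{-1}\rangle$.
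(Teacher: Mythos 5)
Your argument is correct, and it is the same approach the paper takes: the corollary is stated with an immediate \qed\ because it is meant to follow directly from Theorem~\ref{thm:RobAntiRen}, and your proof correctly unpacks that derivation. You define the candidate lifts as the $h^{-1}$-images of the canonical dividing pair $(\gamma_0^{\seq/\qq,\new},\gamma_\pp^{\seq/\qq,\new})$ in $W_{-1,\new}$, use property~\eqref{eq:1:thm:RobAntiRen} for the coincidence on $W_{-1}\setminus\inn_{-1}$ and for uniqueness (a lift is determined by its starting point, which is pinned down by the coincidence condition since $\gamma_{i,\new}=\gamma_i$ outside $\inn$), and transport the dividing-pair axioms through the homeomorphism~$h$.

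The one place worth tightening is the claim that $h^{-1}$ also enjoys the lift-to-lift property~\eqref{eq:2:thm:RobAntiRen}. Your appeal to ``symmetry of the construction'' and to the universal-cover model is correct in spirit — the universal cover $W_\oseq\setminus\{\widetilde 0\}\to W\setminus\{0\}$ does not depend on the choice of dividing pair, and both $h$ and $h^{-1}$ factor through it — but a cleaner and fully rigorous justification is available and you should prefer it: apply Theorem~\ref{thm:RobAntiRen} with the roles of $(\gamma_0,\gamma_1)$ and $(\gamma_0^\new,\gamma_1^\new)$ swapped to produce a conjugacy $g\colon W_{-1,\new}\to W_{-1}$ enjoying properties~\eqref{eq:1:thm:RobAntiRen}--\eqref{eq:2:thm:RobAntiRen} in the reverse direction; then $g\circ h\colon W_{-1}\to W_{-1}$ satisfies the two defining properties of the (trivial) conjugacy obtained by applying the theorem with $\gamma_i^\new=\gamma_i$, and the uniqueness clause of the theorem forces $g\circ h=\id$, i.e.\ $g=h^{-1}$. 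With this substitution your proof is complete and matches the intended derivation.
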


\subsection{Lifting theorem}
\label{ss:ap:LiftThm}
In this subsection we give a sufficient condition for liftability of arcs to leaves. This gives an alternative ``manual'' proof of Theorem~\ref{thm:RobAntiRen}. Theorem~\ref{thm:AntiRen:Lifts} is not used directly elsewhere in the paper.

As before, we consider a partial homeomorphism $f\colon (W,0)\dashrightarrow (W,0)$ and we assume that $\gamma_0,\gamma_1$ is a dividing pair of curves. We also view $W$ as a subset of $\C$. 

A \emph{wall around $0$ respecting $\gamma_0,\gamma_1$} is either a closed annulus or a simple closed curve $\wall\subset U\cap V$ such that
\begin{enumerate}
\item $\C\setminus \wall$ has two connected components. Moreover, denoting by $\inn$ the bounded component of $\C\setminus \wall$, we have $0\in \inn$.
\item $\gamma_0\cap \wall$ and $\gamma_1\cap \wall$ are connected.
\item if $x\in \inn$, then $f^{\pm 1}(x)\in \wall\cup \inn$.
\end{enumerate}
In other words, points in $W$ do not jump over $\wall$ under the iteration of $f$. If $\wall$ is a simple closed curve, then $f$ restricts to an actual homeomorphism $f\colon \inn\to \inn$.

\begin{rem}
\label{rem:wall has a fence}
Note that a wall contains a fence, see~\S\ref{sss:fences}. Therefore, in the statement of Theorem~\ref{thm:RobAntiRen} we can replace a fence with a wall.
\end{rem}

For a sequence $\seq\in \{\A,\B\}^\Z$ we denote by $\wall_\seq$ and $\inn_\seq$ the closures of the preimages of $\wall$ and $\inn$ under $\pi\colon W_\seq\to W$.  We denote by $\wall_\seq[i]$ and $\inn_\seq[i]$ the intersections of $\wall_\seq$ and $\inn_\seq$ with $W_\seq[i]$. %We call $\wall_\seq[i]$ and $\inn_\seq[i]$ \emph{rectangles and triangles} respectively.

\begin{lem}
The set $\wall_\seq$ is connected. The closure of the connected component of $W_\seq\setminus \wall_\seq$ containing $\widetilde 0$ is $\inn_\seq$.
\end{lem}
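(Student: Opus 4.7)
The plan is to build $\wall_\seq$ and $\inn_\seq$ piece-by-piece from $\wall_\seq[i]$ and $\inn_\seq[i]$ and then verify that the dynamical gluing from~\eqref{eq:deff:g_k} identifies boundary points of successive pieces in a way that preserves connectedness. First I would show that each $\wall_\seq[i]$ is connected. Since $\pi$ restricted to $\intr W_\seq[i]\cup\rho(W_\seq[i])$ is a homeomorphism onto $\intr\seq[i]\cup\rho(\seq[i])$, the set $\wall_\seq[i]$ is a homeomorphic copy of $\wall\cap\seq[i]$. The wall $\wall$ is connected by hypothesis and is cut by $\gamma_0\cup\gamma_1$ into the two pieces $\wall\cap\A$ and $\wall\cap\B$; each of these is connected because $\wall\cap\gamma_0$ and $\wall\cap\gamma_1$ are connected (condition~(2)), so removing them from the annulus or Jordan curve $\wall$ yields at most two connected arcs/rectangles, one lying in each sector.

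Next I would verify that consecutive pieces $\wall_\seq[i]$ and $\wall_\seq[i+1]$ share a point after gluing. The map $g_i\colon\rho(W_\seq[i])\dashrightarrow\ell(W_\seq[i+1])$ is in each of the four cases either the identity on $\gamma'_j\cap\wall$ or the map $f^{\pm 1}$ restricted to such a piece. Condition~(3), applied to the fact that $\wall=\partial\inn$, implies $f(\wall\cap\gamma_0)=\wall\cap\gamma_1$; hence $g_i$ sends a point of $\wall$ on the right boundary of $W_\seq[i]$ to a point of $\wall$ on the left boundary of $W_\seq[i+1]$. Consequently $\wall_\seq[i]\cap\wall_\seq[i+1]\neq\emptyset$ for every $i$. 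Since $\wall_\seq=\bigcup_i\wall_\seq[i]$ is a union of connected sets indexed by $\Z$ with consecutive members overlapping, it is connected.

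For the second claim, the same two-step argument applies to $\inn_\seq$: each $\inn_\seq[i]$ is connected (as it is homeomorphic to $\inn\cap\seq[i]$, and $\inn\cap\A$, $\inn\cap\B$ are connected because $\gamma_0\cap\inn$ and $\gamma_1\cap\inn$ are connected open arcs emanating from $0$), and successive pieces $\inn_\seq[i],\inn_\seq[i+1]$ share the lift of $\gamma'_j\cap\inn$ after the gluing (again using that $g_i$ preserves $\inn\cap\gamma_j$ by condition~(3)). Thus $\inn_\seq$ is connected and contains $\widetilde 0$. To conclude that $\inn_\seq$ equals the closure of the component $\Omega$ of $W_\seq\setminus\wall_\seq$ containing $\widetilde 0$, I would note that $\intr\inn_\seq\subset\Omega$ by connectedness, while any path in $W_\seq$ from $\widetilde 0$ to a point outside $\inn_\seq$ must, when projected to $W$ and tracked through the sectors, cross $\wall$ in some sector (since $\wall$ separates $\inn$ from $W\setminus(\inn\cup\wall)$ in each of $\A$ and $\B$), hence crosses $\wall_\seq$.

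The main obstacle is the last separation step, because a path in $W_\seq$ can switch between adjacent sheets $W_\seq[i],W_\seq[i+1]$ without projecting to a continuous path in $W$, so one has to verify that such a switch happens precisely on the arcs where the gluing identifies $\wall$ with $\wall$ and $\inn$ with $\inn$. This is exactly what condition~(3) guarantees: the $g_i$ send $\wall\mapsto\wall$ and $\inn\mapsto\inn$ on the glued arcs, so the partition $W_\seq=\inn_\seq\cup\wall_\seq\cup(W_\seq\setminus(\inn_\seq\cup\wall_\seq))$ is preserved under the sheet transitions, and any path from $\widetilde 0$ escaping $\inn_\seq$ must cross $\wall_\seq$.
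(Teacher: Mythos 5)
Your argument follows the same route as the paper's (very terse) proof: condition~(3) on the wall forces consecutive pieces $\wall_\seq[i]$ and $\wall_\seq[i+1]$ to share a point after gluing, so $\wall_\seq$ is a chain of connected sets and hence connected, and the separation claim then follows because the sheet transitions respect the wall. Your write-up is a correct expansion of that one-line argument.

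One overstatement to watch: condition~(3) does \emph{not} give $f(\wall\cap\gamma_0)=\wall\cap\gamma_1$, nor that the gluing maps $g_i$ send $\inn\to\inn$ and $\wall\to\wall$ pointwise on the glued arcs. Condition~(3) only says $f^{\pm1}(\inn)\subset\inn\cup\wall$ (where defined). Applied to the arcs $\gamma_0,\gamma_1$ this yields, in any natural parametrization fixing $0$, that the image interval $f(\wall\cap\gamma_0)$ has its inner endpoint no further out than the outer endpoint of $\wall\cap\gamma_1$, and its outer endpoint no further in than the inner endpoint of $\wall\cap\gamma_1$; so the two intervals \emph{intersect}, which is all you actually use. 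Similarly for the separation step, what condition~(3) prevents is precisely a sheet transition from an $\inn$-point directly to the unbounded exterior; a transition from $\inn$ to $\wall$ or from $\wall$ to $\inn$ is perfectly possible, but that point still lies on $\wall_\seq$ (or on $\overline{\inn_\seq}$), so the escaping path still meets $\wall_\seq$. If you replace the claim ``$g_i$ sends $\wall\mapsto\wall$ and $\inn\mapsto\inn$'' with ``$g_i$ sends $\inn$ into $\inn\cup\wall$ and hence never jumps from $\inn$ to the unbounded component,'' the argument is clean.
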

\begin{proof}
Follows from the definition: since points in $\inn$ do not jump over $\wall$ every  $\wall_\seq[i]$ intersects $\wall_\seq[i+1]$, therefore $\wall_\seq$ is connected and the claim follows.  
\end{proof}

Suppose $f_{-1}\colon W_{\seq/\qq}\to W_{\seq/\qq}$ is an anti-renormalization of $f$ and suppose $W$ has a wall $\wall$ (respected by $\gamma_0,\gamma_1,f$) enclosing $\inn$. The image of $\wall_\seq$ in $W_{\seq/\qq}$ is called the \emph{full lift $\wall_{\seq/\qq}$ of} $\wall$. Similarly, we denote by $\inn_{\seq/\qq}$ the image of $\inn_\seq$ in  $W_{\seq/\qq}$. We say that $\wall$ is an \emph{$N$-wall} if it take at least $N$ iterates of $f^{\pm1}$ for points in $\inn$ to cross $\wall$. It follows by definition that:
\begin{lem}
\label{lem:ap:full lift of wall}
If $\wall$ is an \emph{$N$-wall}, then  $\wall_{\seq/\qq}$ is an $(N-1)\min \{\aa,\bb\}$-wall.\qed
\end{lem}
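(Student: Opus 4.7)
\smallskip
\noindent\textbf{Proof plan.} The plan is to track a forward $f_{-1}$-orbit $z_0=z,\ z_{k+1}=f_{-1}(z_k)$ starting at $z\in\inn_{\seq/\qq}$ under the projection $\pi\colon W_{\seq/\qq}\to W$, and to show that the induced orbit on $W$ advances by $f$ only occasionally. By the explicit description of $f_{-1}$ in~\S\ref{sss:anti renorm}, if $z_k\in W_{\seq/\qq}[i_k]$ then $i_{k+1}\equiv i_k+\pp\pmod{\qq}$, and two cases occur: either $i_k\notin\{-\pp-1,-\pp\}$, in which case $f_{-1}$ is the natural isomorphism between two copies of the same sector $\seq[i_k]=\seq[i_k+\pp]$ and hence $\pi(z_{k+1})=\pi(z_k)$; or $i_k\in\{-\pp-1,-\pp\}$, in which case $f_{-1}$ coincides with $f$ and $\pi(z_{k+1})=f(\pi(z_k))$. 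I will call the second type of step an \emph{$f$-step}.

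The next step is to exploit the arithmetic of $\aa,\bb$ to prove that $f$-steps are sparse. After any $f$-step the orbit lies in $W_{\seq/\qq}[-1]\cup W_{\seq/\qq}[0]$. Starting in sector $0$, the next visit of the rotation $i\mapsto i+\pp$ to $\{-\pp-1,-\pp\}$ happens after exactly $\aa$ iterations, since $\aa$ is by definition the smallest positive integer with $\aa\pp\equiv -1\pmod\qq$; starting in sector $-1$ the analogous count gives $\bb$. Therefore consecutive $f$-steps are separated by a gap of $\aa$ or $\bb$, hence at least $\min\{\aa,\bb\}$. Together with the trivial bound that the first $f$-step occurs at iteration $\ge 1$, this yields that the number of $f$-steps in $m$ consecutive iterations of $f_{-1}$ is at most $1+\lfloor (m-1)/\min\{\aa,\bb\}\rfloor$.

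Substituting $m=(N-1)\min\{\aa,\bb\}-1$ into this bound gives at most $N-1$ $f$-steps (a direct check covers the case $\min\{\aa,\bb\}=1$, where the bound degenerates to $m\le N-2$ and is trivial). Consequently $\pi(z_m)=f^n(\pi(z))$ for some $n\le N-1$; since $\wall$ is an $N$-wall and $\pi(z)\in\inn$, this keeps $\pi(z_m)$ inside $\inn\cup\wall$, so $z_m\in \inn_{\seq/\qq}\cup \wall_{\seq/\qq}$. A symmetric count for $f_{-1}^{-1}$ -- whose $f^{-1}$-steps occur when the source sector is in $\{-1,0\}$, again giving gaps of $\aa$ or $\bb$ -- handles backward iterations. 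The only mildly technical point is the bookkeeping of the gap estimate; once the projection picture and the observation that the natural isomorphisms act trivially on $\pi$ are in place, the lemma reduces to this rotation-counting exercise.
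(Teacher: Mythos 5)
Your argument is correct and is the natural (indeed the only reasonable) way to see this, which the paper dismisses as ``it follows by definition'' with no proof supplied. The key observation --- that a single step of $f_{-1}$ advances the sector index by $\pp\pmod\qq$ and only projects to an actual application of $f$ when the source sector lies in $\{-\pp-1,-\pp\}$, with such $f$-steps separated by exactly $\aa$ or $\bb$ iterates because $\aa\pp\equiv-1$, $\bb\pp\equiv1\pmod\qq$ --- is exactly what is needed, and your reduction to a gap-counting estimate for a rotation orbit is the right one.

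Two cosmetic quibbles. First, from sector $0$ the orbit \emph{enters} $\{-\pp-1,-\pp\}$ after $\aa-1$ iterates (since $(\aa-1)\pp\equiv-\pp-1$), so the $f$-step itself is the $\aa$-th iterate; you state it as ``after exactly $\aa$ iterations,'' which is the right gap length but the wrong description of the visiting time. Second, you substitute $m=(N-1)\min\{\aa,\bb\}-1$ rather than $m=(N-1)\min\{\aa,\bb\}$; the latter is what the lemma requires, and with it your formula $1+\lfloor(m-1)/\min\{\aa,\bb\}\rfloor$ gives at most $N-1$ applications of $f$, uniformly including $\min\{\aa,\bb\}=1$, so the separate ``direct check'' is unnecessary. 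Neither issue affects the validity of the conclusion.
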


Since for a periodic combinatorics $\min \{\aa,\bb\}\ge 2$ (see~\eqref{eq:app:aa1 bb1}), we have: 
\begin{cor}
\label{cor:lift of 2wall is 2wall}
Suppose $f_{-1}\colon W_{\seq/\qq}\to W_{\seq/\qq}$ is an anti-renormalization of $f$ associated with a periodic combinatorics, see~\S\ref{sss:ap:SectRen:PerCase}. Then a lift of a $2$-wall  (respected by $\gamma_0,\gamma_1,f$) is again a $2$-wall. \qed
\end{cor}

Let  $\beta_0$, $\beta_1\coloneqq f(\beta_0)\subset W$ be two simple curves ending at $0$ such that they are disjoint away from $0$. We say that $\beta_0,\beta_1$ \emph{respect} $(\wall,\gamma_0,\gamma_1)$ if
\begin{enumerate}
\item $\beta_0$, $\beta_1$ start outside of $\inn\cup \wall$;
\item $\beta_0$, $\beta_1$ do not intersect $(\gamma_0\cup \gamma_1)\setminus \inn$; and
\item $\beta_0\cap \wall$ and $\beta_1\cap \wall$ are connected subset of different connected components of $\wall\setminus (\gamma_0\cap \gamma_1)$. 
\end{enumerate}
If we think that the components of $\wall\setminus (\gamma_0\cap \gamma_1)$ are gates of the wall $\wall$, then (3) says that $\beta_0,\beta_1$ enter $\inn$ through different gates.

\begin{rem}
\label{rem:on dividing curves}
We can slightly relax Conditions (2) and (3) to allow $\beta_0,\beta_1$ to touch (but not cross-intersect) $\gamma_0,\gamma_1$ in $W\setminus \inn$. For example, we can allow $\beta_0\setminus \inn=\gamma_0\setminus\inn$ and $\beta_1\setminus \inn=\gamma_1\setminus \inn$. 
\end{rem}

We say that a sequence $\seq$ is \emph{mixed} if $(\A,\B)$ or $(\B,\A)$ appears infinitely many times in both $\seq[\ge 0]$ and $\seq[\le 0]$.

\begin{thm}[Lifting of curves]
\label{thm:AntiRen:Lifts}
Let $f\colon W\dashrightarrow W$ be a partial homeomorphism, let $\wall\subset W$ be a wall respecting $\gamma_0,\gamma_1$, and let $\beta_0, \beta_1=f(\beta_0)$ be a pair of curves respecting $(\wall,\gamma_0,\gamma_1)$.  Let $\seq \in \{\A,\B\}^{\Z}$ be a sequence of a mixed type. Then all lifts of $\beta_0,\beta_1$ in $W_\seq$ exist, are pairwise disjoint, and land at $\wzero$.
\end{thm}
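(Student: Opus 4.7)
The plan is to use the wall condition to reduce the lifting problem to the interior of $\inn$, track how iterates of $\beta_0, \beta_1$ populate sectors of $W_\seq$, and then invoke the mixed-sequence hypothesis to control the landing at $\wzero$.

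First I would handle the part of the curves outside the wall. By Condition~(2) in the definition of ``respecting $(\wall, \gamma_0, \gamma_1)$'', neither $\beta_0$ nor $\beta_1$ meets $(\gamma_0 \cup \gamma_1) \setminus \inn$; by Condition~(3) each crosses $\wall$ exactly once, entering through different gates. Hence the initial subarc of $\beta_j$ from its starting point to its entry into $\inn$ lies in a single open sector of $W$, and in each $W_\seq[i]$ of matching type it lifts uniquely and trivially. This reduces everything to lifting $\beta_j \cap (\inn \cup \wall)$ into $\inn_\seq \cup \wall_\seq$. Now the wall property that $\inn \cup \wall$ is stable under $f^{\pm 1}$ wherever defined implies every iterate $f^n(\beta_0 \cap \inn)$ is a well-defined simple arc in $\inn \cup \wall$ landing at $0$. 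These iterates are pairwise disjoint away from $0$: any common point would force $\beta_0$ and some $f^k(\beta_0)$ with $k \ne 0$ to enter $\wall$ through the same gate, contradicting Condition~(3) applied to the pair $\beta_0, \beta_1 = f(\beta_0)$.

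Next I would describe each lift combinatorially. A lift $\widetilde\beta_j : [0,1) \to W_\seq$ traverses a sequence of sectors $W_\seq[i_0], W_\seq[i_1], \dots$, where each transition $i_{k-1} \to i_k$ is caused by $\beta_j$ crossing $\gamma_0$ or $\gamma_1$ inside $\inn$. The lift definition of \S\ref{sss:lifts of curves} gives $n(t_k) - n(t_{k-1}) = \iota(i_{k-1}) \in \{-1, 0, 1\}$, the jump of the pair $(\seq[i_{k-1}], \seq[i_k])$, together with $\pi(\widetilde\beta_j(t)) = f^{n(t)}(\beta_j(t))$. Local existence is immediate. Global existence holds because the $\pi$-image stays in $\inn \cup \wall$, where every iterate of $\beta_j$ needed for the continuation is defined, so the lift has no obstruction at an interior sector boundary.

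The main obstacle is showing each lift lands at $\wzero$. Let $(c_1, c_2, \dots)$ be the ordered crossings of $\beta_j \cap \inn$ with $\gamma_0 \cup \gamma_1$. If this sequence is finite, the lift becomes eventually stationary in some $W_\seq[i_\infty]$ and $\pi(\widetilde\beta_j(t)) = f^{n_\infty}(\beta_j(t)) \to 0$, forcing $\widetilde\beta_j(t) \to \wzero$. If it is infinite, the crossings accumulate at $0$, the curve $\beta_j$ spirals, and a prescribed infinite word of transition types is produced. Here the mixed condition on $\seq$ is essential: because both $(\A,\B)$ and $(\B,\A)$ transitions (each with jump $0$) occur unboundedly often in each tail of $\seq$, a balancing argument comparing the spiral pattern of $\beta_j$ with the block structure of $\seq$ shows that the cumulative index $n(t)$ stays bounded; continuity of each iterate at $0$ then gives $\pi(\widetilde\beta_j(t)) \to 0$ and hence $\widetilde\beta_j(t) \to \wzero$. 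Finally, disjointness of distinct lifts follows from the deterministic nature of the lifting procedure: two lifts occupying the same sector at the same time would have equal $n(t)$ and hence equal $\pi$-image, so by uniqueness of local lifts they would coincide throughout. The crux of the proof is thus the balancing argument in the infinite-spiral case, which is where the ``mixed'' hypothesis does the essential work.
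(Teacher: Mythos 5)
Your proposal contains a genuine gap: both the global-existence step and the landing argument rely on claims that fail in general.

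First, the landing argument. You assert that the mixed hypothesis forces the cumulative index $n(t)$ to ``stay bounded.'' This is false. The mixed condition only requires that $(\A,\B)$ or $(\B,\A)$ appear infinitely often in both tails of $\seq$; it does not forbid arbitrarily long runs of a single symbol. Take $\seq$ with blocks $\A^{a_1}\B\A^{a_2}\B\cdots$ where $a_i\to\infty$: this is of mixed type, yet a lift traversing $W_\seq[i],\ldots,W_\seq[i+a_j]$ across a run of $\A$'s of length $a_j$ picks up cumulative jump $-(a_j-1)$, so $n(t)$ is unbounded. (In the paper's notation, when the lift sits in $W_\seq[k]$ one has $n(t)=\nu(k)-\mu(k)$ by Claim~\ref{cl2:jump is nu min mu}, and nothing in the mixed hypothesis constrains $\nu(k)-\mu(k)$.) So landing at $\wzero$ cannot be derived from continuity of a fixed iterate at $0$; some other mechanism is required.

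Second, the global-existence step. You write that the lift has no obstruction ``because the $\pi$-image stays in $\inn\cup\wall$, where every iterate of $\beta_j$ needed for the continuation is defined.'' But the iterate being \emph{defined} is not the issue: the continuation requires $f^{n(t)}(\beta_0(t))$ to lie in the \emph{correct sector} $\seq[i_k]$, which is a nontrivial constraint. The counterexample in \S\ref{sss:ap:ExmClockw} (Figure~\ref{Fig:ViolatingMixing}, constant sequence $\seq=(\dots,\A,\A,\A,\dots)$) has the $\pi$-image of every candidate lift contained in $\inn\cup\wall$, yet only $\beta_0\cap\mathbf{X}$ with $\mathbf{X}=\A\cup f(\A)\cup f^2(\A)\cup\cdots$ is liftable: once $\beta_0(t)$ leaves $\mathbf{X}$ there is no way to continue. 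The mixed hypothesis is therefore needed already for existence, not just for landing, and your argument does not use it there.

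The paper's proof is organized quite differently. It first lifts everything to $W_\oseq$ with $\oseq=(\A,\B)^\Z$, where $\pi\colon \inn_{\oseq}\setminus\{\widetilde 0\}\to\inn\setminus\{0\}$ is a genuine universal covering and all lifts $\wbeta^\oseq_i$ exist unconditionally, are pairwise disjoint, and land at $\widetilde 0$. It then transports these lifts to $W_\seq$ by decomposing the ``gulfs'' $D_{>0},D_{<0}$ bounded by $\wgamma^\oseq$ and $\wbeta^\oseq_0$ into ``channels'' $D_k$ (Claims~\ref{cl2:4possOfD} and~\ref{cl2:splt of D0}). The mixed hypothesis enters precisely via Claim~\ref{cl2:kappa tens inft} ($\kappa(j)\to\pm\infty$), which is what guarantees that the iterates $f_+^{-\mu(k)}\circ f_-^{\nu(k)}$ keep sweeping rightward so that every point of the gulf escapes into some channel $T_k$; this is exactly where a non-mixed sequence fails. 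Landing at $\wzero$ then comes not from any boundedness of $n(t)$ but from the fact that $T_{\kappa\circ k(n)}\subset\inn_\oseq[\kappa\circ k(n)]$ is squeezed between $\wbeta^\oseq_0$ (which lands at $\widetilde 0$) and the rays $\wgamma^\oseq_i$, so $\pi(T_{\kappa\circ k(n)})\to 0$ as $\kappa\circ k(n)\to\infty$.
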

\begin{proof}

We split the proof into short subsections.

\subsubsection{Notations and Conventions}
As in~\S\ref{sss:PrimeAntiRen} we denote by $\seqo\coloneqq (\A,\B)^{\Z}$ the sequence in $\{\A,\B\}^{\Z}$ with even entries equal to $\A$ and odd entries equal to $\B$. Simplifying notations, we write $W_{\seqo}=W_{\oseq}$, $\inn_{\seqo}=\inn_{\oseq}$, and $\wall_{\seqo}=\wall_{\oseq}$. We note that $\pi\colon \inn_{\oseq }\setminus \{\widetilde 0\} \to \inn\setminus \{0\}$ and  $\pi\colon \wall_{\oseq }\to \wall$ are universal coverings. However, $\pi \colon W_{\oseq }\setminus \{\widetilde 0\} \to W\setminus \{0\}$ needs not be a covering map: the sectors of $W_{\oseq }$ are glued through $\gamma'_i$ and not through $\gamma_i$.

Denote by $U_{\oseq }$ and $V_{\oseq }$ the preimages of $U$ and $V$ under $\pi\colon W_{\oseq}\to W$. The map $f\colon U\setminus \{0\} \to V\setminus\{0\}$ admits a lift $\widetilde f\colon U_{\oseq }\setminus\{\widetilde 0\} \to V_{\oseq } \setminus\{\widetilde 0\}$ unique up to the action of the group of decks transformation, which is isomorphic to $\Z$. 
We always set $\widetilde f(\widetilde 0)\coloneqq \widetilde 0$ -- this is a continuous extension. We often write $\widetilde f$ as a partial homeomorphism $ W_{\oseq }\dashrightarrow W_{\oseq }$, and call it a \emph{lift} of $f\colon W\dashrightarrow W$.  

We specify two lifts  $\widetilde f_-~,\widetilde f_+\colon W_{\oseq }\dashrightarrow W_{\oseq }$ of $f$ as follows
\begin{itemize}
\item $\widetilde f_-$ maps $\rho(W_{\oseq }[1])$ (which is a copy of $\gamma_0$) to $\ell(W_{\oseq }[1])$; 
\item $\widetilde f_+$ maps $\rho(W_{\oseq }[1])$ to $\rho(W_{\oseq }[2])$. 
\end{itemize}
To simplify notation, we omit the tilde: $f_-=\widetilde f_-$ and $f_+=\widetilde f_+$. Note that $f_-$, $f_+^{-1}$ move points slightly to the left, while $f_+, f_-^{-1}$ move points slightly to the right.

We make the following assumptions. We assume that $\beta_0$ starts and thus crosses the wall in $\A$ while $\beta_1$ starts and thus crosses the wall in $\B$. We also assume that $\seq[0]=\A$. All other cases are completely analogous. 

We parametrize all the lifts of $\beta_0,\beta_1$ in $W_\seq$ by starting points: for $i\in\Z$ we denote by $\widetilde \beta_i=\widetilde \beta_i^\seq$ the lift of $\beta_{0}$ (if $\seq[i]=\A$) or of $\beta_1$  (if $\seq[i]=\B$) starting in $W_{\seq}[i]$. Recall that every lift $\widetilde \beta_i$ exists locally around its starting point. We will show that $\wbeta_0$ exists and lands at $\widetilde 0$, by a completely analogous argument all $\widetilde \beta_i$ exist and land at $\widetilde 0$.

Similarly we parametrize all the lifts of $\beta_0,\beta_1$ in $W_\oseq$ by starting points: we denote by $\wbeta^{\oseq}_i$ the lift of $\beta_{0}$ (if $i$ is even) or of $\beta_1$ (if $i$ is odd) starting at a point in $W_{\oseq }[i]$. Since 
$\pi\colon \inn_{\oseq } \setminus\{\widetilde 0\}\to \inn\setminus \{0\}$ is a universal cover, all $\wbeta^{\oseq}_i$ exist, pairwise disjoint, and land at $\widetilde 0$. 

We also write $\wgamma_i =\rho(W_\seq[i-1])$ and $\wgamma^{\oseq}_i =\rho(W_{\oseq }[i-1])$. (By construction, $\wgamma^{\oseq}_i$ is a lift of $\gamma_{0}$ or of $\gamma_1$ under $\pi\colon W_{\oseq }\to W$.)

\subsubsection{Example: clockwise spiraling}
\label{sss:ap:ExmClockw}
\begin{figure}
\centering{\begin{tikzpicture}
\draw[red] (0,0) circle (2.5cm);
\draw[red] (0,0) circle (2.4cm);
\draw[red] (-2,-1.9) node {$\wall$};

 \draw  (0,0) -- (2.7,0);
 \node at (2.15,0.15) {$\gamma_0$};
\draw[rotate around={155:(0,0)}] (0,0) -- (2.7,0);
\draw (-1.8, 1)node {$\gamma_1$};

\draw[blue] (1.8,1) node {$\beta_1$};
\draw[blue] (0.5,-1.9) node {$\beta_0$};
 
\draw (0,1.5) node {$\B$};
\draw (0,-1.25) node {$\A$};

\draw [rotate around={185:(0,0)},blue,domain=0:10,variable=\t,smooth,samples=100]
        plot ({3\t r}: {0.025*\t*\t});
        \draw [blue,domain=0:10,variable=\t,smooth,samples=100]
        plot ({3\t r}: {0.025*\t*\t});

\draw (4,3)--(4,-3); 
\draw (6,3)--(6,-3); 
\draw (8,3)--(8,-3);
\draw (10,3)--(10,-3);

\draw (5,3) node{$ \A$}; 
\draw (7,3) node{$ \B$}; 
\draw (9,3) node{$ \A$}; 

\draw[red] (3.5,2.6)--(10.1,2.6);
\draw[red] (3.5,2.8)--(10.1,2.8);

\draw (3.7,2.3) node{$ \wgamma^\oseq_0$}; 
\draw (6.3,2.3)  node{$ \wgamma^\oseq_1$}; 
\draw (8.3,2.3) node{$ \wgamma^\oseq_2$};
\draw (10.3,2.2)  node{$ \wgamma^\oseq_3$};
\draw[red] (10.4,2.8)  node{$ \wall_\oseq$};

\begin{scope}[shift={(0,0.2)}]

\draw[blue] (4.35,2.6)--(6,2);
\draw[blue,dashed] (6,2)--(10,0.7);
\draw[blue] (4.5,2.1) node{$ \wbeta_0^{\oseq}$};
\draw[blue] (6.4,1.5) node{$ \wbeta'$};
%\draw[blue] (6.5,2.5)--(10,-0.9);    

 \begin{scope}[shift={(0,-1.5)}]
 \draw[blue] (4,2.7)--(6,2);
\draw[blue,dashed] (6,2)--(10,0.7);
\draw[blue] (4.6,1.9) node{$f^{-1}_+( \wbeta')$};
\draw[blue] (6.4,1.5) node{$ \wbeta''$};
\end{scope}

 \end{scope}

 \begin{scope}[shift={(0,-2.8)}]
 \draw[blue] (4,2.7)--(8,1.33);
\draw[blue,dashed] (8,1.33)--(10,0.7);
\draw[blue] (4.6,1.9)  node{$f^{-1}_+( \wbeta'')$};
\draw[blue] (8.6,1.4) node{$ \wbeta^{(3)}$};
 \end{scope}

  \begin{scope}[shift={(2,-4.5)}]
 \draw[blue] (4,2.7)--(6,2);
\draw[blue] (6,2)--(7,1.65);
\draw[blue] (4.8,1.9) node{$f_-( \wbeta^{(3)})$};
%\draw[blue] (6.8,2) node{$ \wbeta_0$};
 \end{scope}

\end{tikzpicture}}
\caption{Left: $\beta_0$ and $\beta_1$ spiral clockwise around $0$. Right: the curve $\wbeta^{(k+1)}$ is $f_{\pm}^{\mp 1 }(\wbeta^{(k)})$ truncated by an appropriate $\wgamma^\oseq_\ell$.}
\label{Fig:BetasGammas}
\end{figure}

\begin{figure}[p]

{\begin{tikzpicture}

\begin{scope}[scale=1.6]

 \node at (2.15,0.15) {$\gamma_0$};

\draw (-1.9, 1.1)node {$\gamma_1$};
\draw[red] (0,0) circle (2.5cm);
\draw[red] (0,0) circle (2.4cm);
\draw[red] (-2,-1.9) node {$\wall$};

 \draw  (0,0) -- (2.7,0);
\draw[rotate around={155:(0,0)}] (0,0) -- (2.7,0);

\draw (0,2) node {$\B$};
\draw (0,-.8) node {$\A$}; 

\draw[blue] (2,-1.9) node {$\ell_0$};

\draw [rotate around={185:(0,0)},blue,domain=7.17:10,variable=\t,smooth,samples=100]
        plot ({3\t r}: {0.25*\t});

\begin{scope}[scale=0.8]  
\draw [rotate around={15:(0,0)}, blue,domain=2.445:6.02,variable=\t,smooth,samples=50]
        plot ({\t r}: {0.9*sqrt{\t}});
        \begin{scope}[scale=0.8]
\draw [ rotate around={15:(0,0)},
   blue,domain=2.445:6.02,variable=\t,smooth,samples=50]
        plot ({\t r}: {0.7*sqrt{\t}});        
\draw [rotate around={15:(0,0)}, blue,domain=4.37:2.445*4,variable=\t,smooth,samples=50]
        plot ({(\t/2-2.445) r}: {0.35*sqrt{\t}});
\draw [rotate around={15:(0,0)}, blue,domain=4.37:2.445*4,variable=\t,smooth,samples=50]
        plot ({(\t/2-2.445) r}: {0.15*sqrt{\t}});                
\draw [rotate around={15:(0,0)}, blue,domain=2.445:6.02,variable=\t,smooth,samples=50]
        plot ({\t r}: {0.128*sqrt{\t}});        

 \draw[blue] (1.51,-0.18) node {$\ell_2$};
 \draw[blue] (-0.45,0) node {$\ell_3$};       
        \end{scope}
  \draw[blue] (2.25,0.15) node {$\ell_1$};        
\end{scope}

\end{scope}

\begin{scope}[shift={(-6.5,-9)}]
\draw (2,2.9)--(2,-3); 
\draw (4,3)--(4,-3); 
\draw (6,3.1)--(6,-3); 
\draw (8,3.1)--(8,-3);
\draw (10,3)--(10,-3);
\draw (12,3)--(12,-3);
\draw (1.5,2.9) node{$ \B$}; 
\draw (3,2.9) node{$ \A$}; 
\draw (5,3) node{$ \A$}; 
\draw (7,3.1) node{$ \A$}; 
\draw (9,3.1) node{$ \B$}; 
\draw (11,3) node{$ \B$}; 
\draw (12.5,3) node{$ \A$}; 
\draw[red] (12.9,2.5)  node{$ \wall_\seq$};

\draw[blue] (2.7,2.6) -- (12.5,-2.5); 

\node[blue] at (2.7,2) {$\pi^{-1}_{\seq,0}(\ell_0)$};
\node[blue] at (4.7,1) {$\pi^{-1}_{\seq,1}(\ell_1)$};
\node[blue] at (6.7,0) {$\pi^{-1}_{\seq,2}(\ell_2)$};
\node[blue] at (8.7,-1.05) {$\pi^{-1}_{\seq,3}(\ell_2)$};
\node[blue] at (10.7,-2.1) {$\pi^{-1}_{\seq,4}(\ell_3)$};
\node[blue] at (12.7,-3.) {$\pi^{-1}_{\seq,5}(\ell_3)$};

\draw[red] (1.5,2.4)--(4,2.4);
\draw[red] (1.5,2.6)--(4,2.6);
\draw[red] (4,2.5)--(6,2.5);
\draw[red] (4,2.7)--(6,2.7);
\draw[red] (6,2.6)--(10,2.6);
\draw[red] (6,2.8)--(10,2.8);
\draw[red] (10,2.5)--(12.5,2.5);
\draw[red] (10,2.7)--(12.5,2.7);
%\draw[red] (3.5,2.6)--(10.1,2.6);
%\draw[red] (3.5,2.8)--(10.1,2.8);

\end{scope}
\end{tikzpicture}}

\caption{Top: the curves $\ell_i$ are disjoint and within $\inn\cap \wall$. Bottom: construction of $\wbeta_0$ as the concatentaion of appropriate lifts of $\ell_i$.} \label{Fig:BetasGammas2}
\vspace{128in}
%\clearpage
\end{figure}
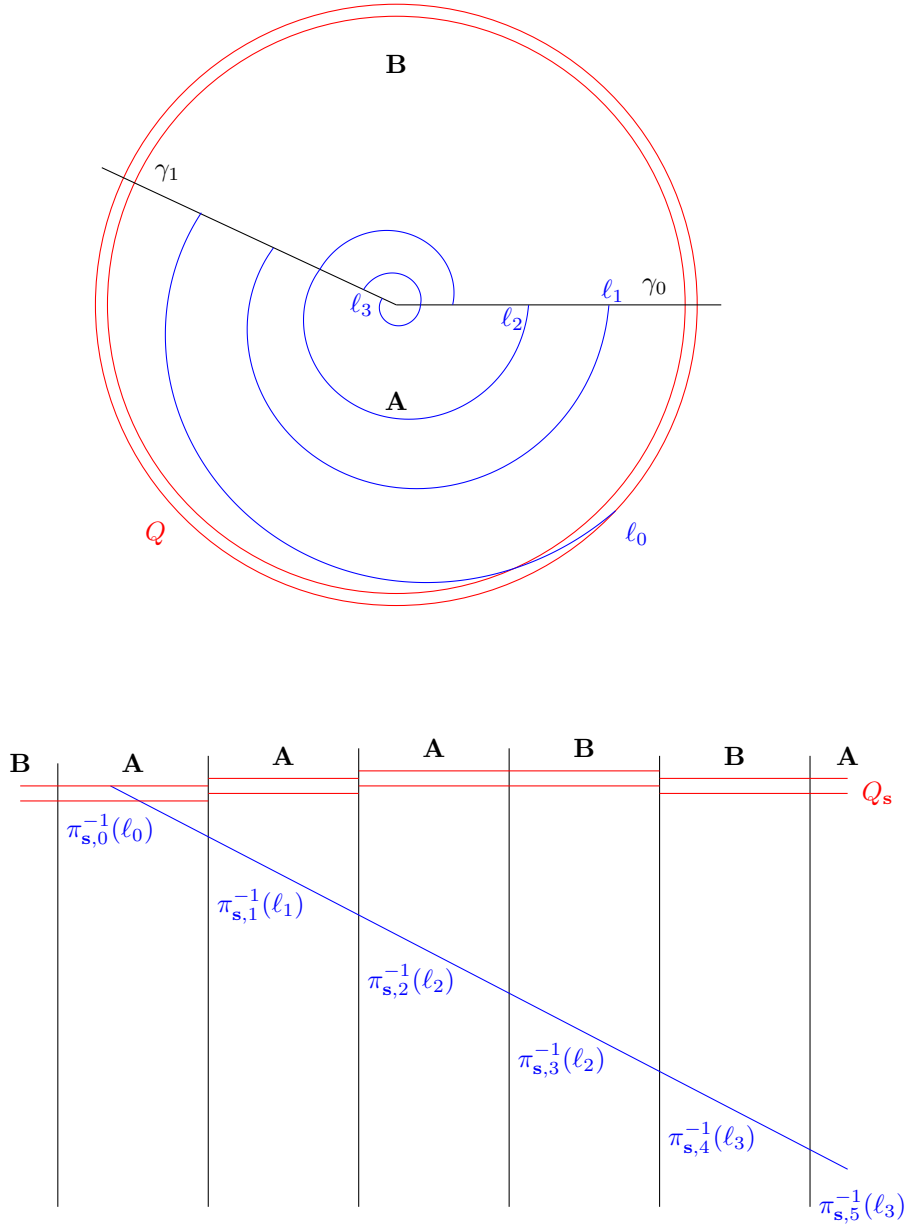 

Let us illustrate the idea of the proof in the case when $\beta_0$ and $\beta_1$ spiral clockwise around $0$, see the left-hand of Figure~\ref{Fig:BetasGammas}. Take the following $5$-periodic sequence: \[\seq[5k,5k+1,5k+2,5k+3,5k+4]=(\A,\A,\A,\B,\B) \sp\sp \forall k\in \Z.\]

Let us inductively construct the curves $\wbeta^{(k)}$, as it is illustrated on the right part of Figure~\ref{Fig:BetasGammas}:
\begin{itemize}
\item $\wbeta'$ is the sub-curve of $\wbeta^\oseq_0$ (a lift of $\beta_0$) on the right of $\wgamma^\oseq_1$. Since $\seq[0,1]=(\A,\A)$ consider $f_+^{-1}(\wbeta')$. (The curve $\wbeta'$ is in the domain of $f_+^{-1}$ because $\wbeta'$ is below the wall.)
\item $\wbeta''$ is the sub-curve of $f_+^{-1}(\wbeta')$ on the right of $\wgamma^\oseq_1$. Since $\seq[1,2]=(\A,\A)$ consider $f_+^{-1}(\wbeta'')$. (The curve $\wbeta''$ is in the domain of $f_+^{-1}$ because $\wbeta''$ is below $\wbeta'$.)
\item $\wbeta^{(3)}$ is the  sub-curve of $f_+^{-1}(\wbeta'')$ on the right of $\wgamma^\oseq_2$ -- the subindex is $2$ because $\seq[2,3]=(\A,\B)$ but $\seq[3,4]=(\B,\B)$.
\item Since $\seq[3,4]=(\B,\B)$ we consider next $f_-(\wbeta^{(3)})$.  (The curve $\wbeta^{(3)}$ is in the domain of $f_-$ because  $\wbeta^{(3)}$ is below $\wbeta''$. )
\item $\wbeta^{(4)}$ is the subcurve of $f_-(\wbeta^{(3)})$ on the right of $\wgamma^\oseq_3$ -- the subindex is $3$ because $\seq[4,5]=(\B,\A)$ but $\seq[5,6]=(\A,\A)$. 
\item Since $\seq[5,6]=(\A,\A)$, we consider next $f_+^{-}(\wbeta^{(4)})$.
\end{itemize} 
The construction continues by periodicity.

Define now ${\widetilde \ell_0\coloneqq \wbeta^\oseq_0\setminus \wbeta'}$, $\sp {\widetilde \ell_1\coloneqq f_+^{-1}(\wbeta')\setminus \wbeta''},$ $\sp{\widetilde \ell_2\coloneqq f_+^{-1}(\wbeta'')\setminus \wbeta^{(3)}},$\newline ${\widetilde \ell_3\coloneqq f_-(\wbeta^{(3)})\setminus \wbeta^{(4)},\dots},$  and $\ell_i\coloneqq \pi(\widetilde \ell_i)$, see the upper part of Figure~\ref{Fig:BetasGammas2}. Then the curve $\wbeta_0$ is the concatenation (see the bottom part of Figure~\ref{Fig:BetasGammas2}) of
\begin{itemize}
\item $\pi_{\seq ,0}^{-1}(\ell_0)$; followed by
\item $ \pi_{\seq,1}^{-1}(\ell_1)$ -- because $f^{-1}$ maps the end point of $\ell_0$ to the starting point of $\ell_1$ (recall that $\seq[0,1]=(\A,\A)$); followed by 
\item $\pi_{\seq,2}^{-1}(\ell_2\cap \A)$ -- because $f^{-1}$ maps the end point of $\ell_1$ to the starting point of $\ell_2$; followed by
\item $\pi_{\seq,3}^{-1}(\ell_2\cap \B)$ -- because $\seq[2,3]=(\A,\B)$; followed by  
\item $\pi_{\seq,4}^{-1}(\ell_3\cap \B)$ -- because $f$ maps the end point of $\ell_2$ to the starting point of $\ell_3$; followed by
\item  $\pi_{\seq,4}^{-1}(\ell_3\cap \A)$ -- because $\seq[4,5]=(\B,\A)$.
\end{itemize} 
The construction continues by periodicity.

 Note that $\ell_1\subset \inn\cup \wall$ ($\ell_1$ can intersect $\wall$) and $\ell_1$ is disjoint from $\ell_0$. The curve $\ell_2\cap \A$ is separated by $\ell_1$ from $\wall$ while $\ell_2\cap \B$ is separated by the continuation of $\ell_1$ from $\wall$. By induction, all $\ell_i$ are well defined.

Let $\inn_\oseq[\kappa(j)]\cup \wall_\oseq[\kappa(j)]$ be the strip where $\widetilde \ell_j$ starts. Since $\seq$ is mixing, we have $\kappa(j)\to +\infty$. Therefore, every $z\in \wbeta^\oseq_0$ eventually escapes to a certain $\widetilde \ell_j$ under the iteration of \[f_+^{-1},\sp f_+^{-1},\sp f_-,\sp f_+^{-1},\sp f_+^{-1},\sp f_-,\dots\] This shows that $\wbeta_0$ is a complete lift of $\beta_0\setminus \{0\}$. Since the curves $\widetilde \ell_j$ tend to the right and they are all below $\wbeta^\oseq_0$, the curves  $\widetilde \ell_j$ tend to $\widetilde 0$. Therefore, $\ell_i$ tend to $0$; i.e.~$\wbeta_0$ lands at $\widetilde 0$.

\subsubsection{Example: no mixing condition} Let us illustrate that Theorem~\ref{thm:AntiRen:Lifts} fails if $\seq$ is not mixing. Suppose $\seq=(\dots,\A,\A,\A ,\dots)$. Choose $f\colon W\dashrightarrow W$
as it shown on Figure~\ref{Fig:ViolatingMixing}: the curves $\gamma_i=f^i(\gamma_0)$ accumulate at $\gamma_\infty$ and the sector between $\gamma_0$ and $\gamma_{\infty}$ (counting clockwise) is the concatenation \[\A\cup f(\A)\cup f^{2}(\A)\cup \dots \eqqcolon {\mathbf X}\]
Then only $\beta_0\cap {\mathbf X}$ is liftable to $W_{\seq}$.

\begin{figure}
\centering{\begin{tikzpicture}[scale=1.4]
\begin{scope}
\draw[red] (0,0) circle (2.5cm);
\draw[red] (0,0) circle (2.4cm);
\draw[red] (-2,-1.9) node {$\wall$};

 \draw  (0,0) -- (2.7,0);
 \node at (2.15,0.15) {$\gamma_0$};

\begin{scope}[rotate around={60:(0,0)}]
\draw[rotate around={155:(0,0)}] (0,0) -- (2.7,0);
\draw (-1.8, 1)node {$\gamma_1$};
\end{scope}

\draw[rotate around={155:(0,0)}] (0,0) -- (2.7,0);
\draw (-1.8, 1)node {$\gamma_2$};

\begin{scope}[rotate around={-40:(0,0)}]
\draw[rotate around={155:(0,0)}] (0,0) -- (2.7,0);
\draw (-1.8, 1)node {$\gamma_3$};
\end{scope}

\begin{scope}[rotate around={-70:(0,0)}]
\draw[rotate around={155:(0,0)}] (0,0) -- (2.7,0);
\draw (-1.6, 1)node {$\gamma_\infty$};
\end{scope}

\draw[blue] (0.5,-1.9) node {$\beta_0$};
 
\draw (-0.2,1.5) node {$\dots$};
\draw (0,-1.25) node {$\A$}; 
 
 \draw (-1.05,-0.1) node {$f(\A)$}; 
 
 \draw (-1.25,1.4) node {$f^2(\A)$};  
 
\draw [rotate around={185:(0,0)}, blue,domain=5.95:10,variable=\t,smooth,samples=100]
        plot ({3\t r}: {0.025*\t*\t});
\draw [rotate around={185:(0,0)}, blue,dashed,domain=0:5.95,variable=\t,smooth,samples=100]
        plot ({3\t r}: {0.025*\t*\t});        
\end{scope}    
\end{tikzpicture}}    
\caption{Only $\beta_0\cap (\A\cup f(\A)\cup f^2(\A)\cup\dots)$ is liftable to $W_{\seq}$ if $\seq=(\dots,\A,\A,\A ,\dots)$.}   
   \label{Fig:ViolatingMixing}
\end{figure}

 \subsubsection{Combinatorics of jumps}
 \label{ss:CombOfJumps}
We now adapt the argument form~\S\ref{sss:ap:ExmClockw} to a possibility that $\wbeta_0$ oscillates at $\widetilde 0$. We start by introducing additional notations.

We define the following quantities. Recall first that for $j\in \Z$ the jump is defined by 
\[ \iota(j)\coloneqq \begin{cases}
0& \text{ if }\seq[j-1,j]\in  \{(\A,\B), (\B,\A)\}, \\
1& \text{ if }\seq[j-1,j]= (\B,\B),\\
-1& \text{ if }\seq[j-1,j]= (\A,\A).
\end{cases}\]
For $j>0$ define 
  \begin{equation}
  \label{eq:dfn:numukappa+}
    \begin{aligned}
      \nu(j)= & \#\{k\in \{1,\dots, j\} \mid \iota(k)=1\},\\
      \mu(j)= & \#\{k\in \{1,\dots, j\} \mid \iota(k)=-1\}, \\
      \kappa(j) =& \#\{k\in \{1,\dots, j\} \mid \iota(k)=0\},
    \end{aligned}
  \end{equation}
 while for $j<0$ define 
 \begin{equation}
   \label{eq:dfn:numukappa-}
   \begin{aligned}
     \nu(j)= &- \#\{k\in \{j+1,\dots, 0\} \mid \iota(k)=1\},\\
     \mu(j)= &- \#\{k\in \{j+1,\dots, 0\} \mid \iota(k)=-1\}, \\
     \kappa(j) =& -\#\{k\in \{j+1,\dots, 0\} \mid \iota(k)=0\}.
   \end{aligned}
 \end{equation}
 In particular, $\mu(j)+\nu(j)+\kappa(j)=j$ for all $j\not=0$. We also write $\mu(0)=\nu(0)=\kappa(0)=0$.
 
 For $i<j$ we define the \emph{jump from $W_\seq[i]$ to $W_\seq[j]$} to be the sum of jumps from $W_\seq[i+k]$ to $W_\seq[i+k+1]$ with $k$ ranging from $0$ to $j-i-1$. The \emph{jump from $W_\seq[j]$ to $W_\seq[i]$} is defined to be the negative of the jump from $W_\seq[i]$ to $W_\seq[j]$. It follows from definitions:
 \begin{claim2}
 \label{cl2:jump is nu min mu}
 The jump from $W_\seq [0]$ to $W_\seq [k]$ is $\nu(k)-\mu(k)$.\qed
 \end{claim2}

\begin{claim2}
\label{cl2:lifting form}
Suppose that the lift $\widetilde \beta_0$ exists for all $t\in [0,\bar t]$. If $\widetilde \beta_0(t)\in {\intr W_\seq [k]\cup \rho(W_\seq [k])}$, then
\begin{equation}
\label{eq:MainId}
 \widetilde \beta_0(t) = \pi^ {-1}_{\seq,k}\left(f^ {\nu(k)-\mu(k)}\circ \beta_0(t)\right)= \pi^ {-1}_{\seq,k}\circ \pi \left(f^ {\nu(k)}_-\circ f_+^ {-\mu(k)}\circ \wbeta^{\oseq}_0(t)\right)
\end{equation}
and all maps in this equation are well defined. 
\end{claim2}

\begin{proof}
If $\widetilde \beta_0(t)\in  {\intr W_\seq [k]\cup \rho(W_\seq [k])}$, then by definition of the lift of a curve and by Claim~\ref{cl2:jump is nu min mu} we have
$\pi(\widetilde \beta_0(t))= f^{\nu(k)-\mu(k)}(\beta_0(t))$. This is the first equality in~\eqref{eq:MainId}. The second equality holds because $f^ {\nu(k)}_-\circ f_+^ {-\mu(k)}$ is a lift of $f^ {\nu(k)-\mu(k)}$.
\end{proof}

Since $\seq$ is of mixed type, we obviously have:
\begin{claim2}
\label{cl2:kappa tens inft}
 If $j \to \pm \infty$, then $\kappa(j)\to \pm\infty$ respectively.\qed
\end{claim2}
 
\subsubsection{Basic dynamical properties} For $X\subset W$ and $n\in \Z$, we write \[f^n(X)=f^n\left( X\cap \Dom f^n\right).\]

\begin{claim2}
\label{cl2:A}
For $n\in \Z$ we have
\[f^n(\gamma_0)\cap f^{n+1}(\gamma_0)=\{0\}\sp \text{ and }\sp f^n(\beta_0)\cap f^{n+1}(\beta_0)=\{0\}.\]
\end{claim2}
\begin{proof}
Follows from $\gamma_0\cap \gamma_1=\{0\}= \beta_0\cap \beta_1$ and the assumption that $f$ is a partial homeomorphism. 
\end{proof}

\begin{claim2}
\label{cl2:B} The curve $\wgamma^{\oseq}_{0}\cap \inn_{\oseq }$ is in the domains of $f_{\pm}^{\pm1}$ (for any choice of ``$+$'' and ``$-$''). Moreover, we have:
\begin{itemize}
\item[(1)] $ f_+^{-1}\left(\wgamma^{\oseq}_{1}\cap \inn_{\oseq } \right) \subset \wgamma^\oseq_0,$
\item[(2)]   $ f_-\left(\wgamma^{\oseq}_{1}\cap \inn_{\oseq } \right)  \subset W_\oseq[-1,0]\cup \inn_\oseq[-1,0],$
\item[(3)] $  f_+\left(\wgamma^{\oseq}_{0}\cap \inn_{\oseq } \right)  \subset \wgamma^\oseq_1,$ 
\item[(4)]  $f_-^{-1}\left(\wgamma^{\oseq}_{0}\cap \inn_{\oseq } \right)  \subset W_\oseq[0,1]\cup \inn_\oseq[0,1].$ 
\end{itemize}
\end{claim2}
\begin{proof}
Recall that the lifts $f_+$ and $f_-$ are specified so that $f_+^{-1}$ and $f_-$ move points slightly to the left while $f_+$ and $f_-^{-1}$ move points slightly to the right. Therefore, 
\begin{itemize}
\item (1) follows from $f^{-1}(\gamma_1)\subset \gamma_0$;
\item (2) follows from $f^{-1}(\gamma_0)\cap \gamma_0=\{0\}$, see Claim~\ref{cl2:A}.
\item (3) follows from $f(\gamma_0)\subset \gamma_1$;
\item (4) follows from $f(\gamma_1)\cap \gamma_1=\{0\}$, see Claim~\ref{cl2:A}.
\end{itemize}
\end{proof}

\subsubsection{Gulfs $D_{>0}$ and $D_{<0}$}
Let us define the \emph{gulf} $D_{> 0}$ to be the closed region in $\inn_{\oseq }[> 0]$ located on the right of $\wgamma^{\oseq}_1=\rho(W_{\oseq }[0])$ and on the left of $\wbeta^{\oseq}_0$. We recall that both $\wgamma^{\oseq}_1$ (and similarly $\wbeta^{\oseq}_0$) decomposes $\inn_\oseq$ into two connected  components; thus $D_{> 0}$  is well defined.  Similarly, the \emph{gulf} $D_{<0}$ is the closed region in $\inn_{\oseq }[<0]$ located on the left of $\wgamma^{\oseq}_0$ and on the right of $ \wbeta^{\oseq}_0$.

\begin{claim2}
\label{cl2:4possOfD}
Both $D_{>0}$ and $D_{<0}$ are in the domains of $f_\pm^{\pm 1}$  (for any choice of ``$+$'' and ``$-$'').
 Moreover, we have:
\begin{itemize}
\item[(1)] $f_+^{-1} \left(D_{>0}\right) \subset D_{>0}\cup \inn_{\oseq }[0]\cup \wall_{\oseq }[0],$
\item[(2)]   $f_- \left(D_{>0}\right)\subset D_{>0}\cup \inn_{\oseq }[-1,0]\cup \wall_{\oseq }[-1,0] ,$
\item[(3)] $ f_+ \left(D_{<0}\right)\subset D_{<0}\cup \inn_{\oseq }[0]\cup \wall_{\oseq }[0],$ 
\item[(4)]  $f_-^{-1} \left(D_{>0}\right) \subset D_{<0}\cup \inn_{\oseq }[0,1]\cup \wall_{\oseq }[0,1] .$ 
\end{itemize}
\end{claim2}
\begin{proof}
The first claim follows from $D_{>0}\cup D_{<0}\subset \inn_\oseq$. Statements (1)--(4) follow from Statements (1)--(4) of Claim~\ref{cl2:B} respectively. Indeed, by definition, $f_+^{-1}(D_{>0})$ is bounded by $f_+^{-1}(\wgamma^{\oseq}_1 \cap D_{>0} )\subset \wgamma^{\oseq}_{0}$ and by $f_+^{-1}(\wbeta^{\oseq}_0\cap D_{>0})$; this implies (1). Other Statements are analogous. 
\end{proof}

\begin{figure}
\centering{\begin{tikzpicture}

\draw (-4,2.6)--(-4,-3);
\draw (-2,2.6)--(-2,-3);
\draw (0,2.6)--(0,-3); 
\draw (2,2.6)--(2,-3); 
\draw (4,2.6)--(4,-3); 
\draw (6,2.6)--(6,-3); 
\draw (8,2.6)--(8,-3);
\draw[red] (-4.2,2.1)--(8.2,2.1);
\draw[red] (-4.2,2.6)--(8.2,2.6);

\draw (-2.3,2.3) node{$ \wgamma^\oseq_{-1}$}; 
\draw (-0.3,2.3) node{$ \wgamma^\oseq_0$}; 
\draw (1.7,2.3)  node{$ \wgamma^\oseq_1$}; 
\draw (3.7,2.3) node{$ \wgamma^\oseq_2$};
\draw (5.7,2.3)  node{$ \wgamma^\oseq_3$};

\draw (1,1.5) node{$\inn_{\oseq}[0]$}; 
\draw (-1,1.5) node{$\inn_{\oseq}[-1]$}; 
\draw (3,1.5) node{$\inn_{\oseq}[1]$};

\draw[blue] plot[smooth,tension=1]
  coordinates{(1,2.6) (1.3,2) (2,1.5)}; 

\draw[ draw =blue,fill=blue, fill opacity=0.2] plot[smooth,tension=1]
  coordinates{(2,1.5) (3.5,1)(5,0.5) (5,0.1) 
  (3.5,-0.2)(2,-0.1)};
  
  \draw[blue] plot[smooth,tension=1]
  coordinates{(2,-0.1)(1,-0.05)(0,-0.1)}; 
  
  \draw[ fill=blue,draw =blue, fill opacity=0.2] plot[smooth,tension=1]
  coordinates{(0,-0.1) (-2.2,-0.5) (-3.2,-1)   (-2.2,-1.5)(0,-2.2)};

\draw[blue] plot[smooth,tension=1]
  coordinates{(0,-2.2) (1,-2.3) (2,-2.3)};

  \draw[ draw=blue] plot[smooth,tension=1]
  coordinates{(2,-2.3) (3.9,-2.5) (5.9,-2.8)(8,-3)};
  \draw[ fill=blue,draw opacity=0, fill opacity=0.2] plot[smooth,tension=1]
  coordinates{(2,-2.3) (3.9,-2.5) (5.9,-2.8)(8,-3)(2,-3)};

  \draw[blue] (5.5,0.2)  node{$\wbeta^\oseq_0$};
\draw[blue]  (2,1.5) --  (2,-0.1);
\draw[blue] (3.2,0.5) node {$D_{>0}$};
%\draw[blue] (3.5,1) --  (3.5,-0.2);
%\draw[blue] (4.3,0.3) node {$D_{>1}$};
\draw[blue] (0,-0.1)--(0,-2);
%\draw[blue] (-1.5,-1) node {$D_{-1}$};

%\draw[blue] (-2.2,-0.5)--(-2.2,-1.5);
\draw[blue] (-1.4,-1) node{$D_{<0}$};

\draw[blue]   (2,-2.3) -- (2,-3);
\draw[blue] (3,-2.7) node{$D_{>0}$};
  %(2,-2.3) (4,-2.5) (5.9,-2.8)(8,-3)
  %\draw[blue]   (3.9,-2.5) -- (3.9,-3);
%\draw[blue] (4.3,-2.9) node{$D_{>1}$};

%%%%%%%%%%%%%%%%%%%%%%%%%%%%%%%%%%
%%%%%%%%%%%%%%%%%%%%%%%%%%%%%%%%%%
%%%%%%%%%%%%%%%%%%%%%%%%%%%%%%%%%%

\begin{scope}[shift={(0,-6.5)}]
\draw (-4,2.6)--(-4,-3);
\draw (-2,2.6)--(-2,-3);
\draw (0,2.6)--(0,-3); 
\draw (2,2.6)--(2,-3); 
\draw (4,2.6)--(4,-3); 
\draw (6,2.6)--(6,-3); 
\draw (8,2.6)--(8,-3);
\draw[red] (-4.2,2.1)--(8.2,2.1);
\draw[red] (-4.2,2.6)--(8.2,2.6);

\draw (-2.3,2.3) node{$ \wgamma^\oseq_{-1}$}; 
\draw (-0.3,2.3) node{$ \wgamma^\oseq_0$}; 
\draw (1.7,2.3)  node{$ \wgamma^\oseq_1$}; 
\draw (3.7,2.3) node{$ \wgamma^\oseq_2$};
\draw (5.7,2.3)  node{$ \wgamma^\oseq_3$};

\draw (1,1.725) node{$\inn_{\oseq}[0]$}; 
\draw (-1,1.5) node{$\inn_{\oseq}[-1]$}; 
\draw (3,1.5) node{$\inn_{\oseq}[1]$}; 

%%%%(2,1.5) (3.5,1)(5,0.5) (5,0.1)  (3.5,-0.2)(2,-0.1)

\draw[blue] plot[smooth,tension=1]
  coordinates{(1,2.6) (1.3,2) (2,1.5)}; 

\draw[ draw =blue, ] plot[smooth,tension=1]
  coordinates{(2,1.5) (3.5,1)(5,0.5) (5,0.1) 
  (3.5,-0.2)(2,-0.1)};
  
  \draw[blue] plot[smooth,tension=1]
  coordinates{(2,-0.1)(1,-0.05)(0,-0.1)}; 
  
  \draw[ draw =blue,] plot[smooth,tension=1]
  coordinates{(0,-0.1) (-2.2,-0.5) (-3.2,-1)   (-2.2,-1.5)(0,-2.2)};

\draw[blue] plot[smooth,tension=1]
  coordinates{(0,-2.2) (1,-2.3) (2,-2.3)};

  \draw[ draw=blue] plot[smooth,tension=1]
  coordinates{(2,-2.3) (3.9,-2.5) (5.9,-2.8)(8,-3)};
  %\draw[ fill=blue,draw opacity=0, fill opacity=0.2] plot[smooth,tension=1]
 % coordinates{(2,-2.3) (3.9,-2.5) (5.9,-2.8)(8,-3)(2,-3)};

  \draw[blue] (5.5,0.2)  node{$\wbeta^\oseq_0$};
%\draw[blue]  (2,1.5) --  (2,-0.1);
%\draw[blue] (3.2,0.8) node {$D_{>0}$};
%\draw[blue] (3.5,1) --  (3.5,-0.2);
%\draw[blue] (4.3,0.3) node {$D_{>1}$};
%\draw[blue] (0,-0.1)--(0,-2);
%\draw[blue] (-1.5,-1) node {$D_{-1}$};

%\draw[blue] (-2.2,-0.5)--(-2.2,-1.5);
%\draw[blue] (-2.65,-1) node{$D_{<-1}$};

\begin{scope}[shift={(-2,0.2)}]
\draw[green,fill=green, fill opacity=0.2] plot[smooth,tension=1]
coordinates{ (2,1.5) (3.5,1)(4,0.85)(5,0.5) (5,0.1)
  (3.5,0)(2,0.2)};
  \draw[green](4,0.85) -- (4,0);
  \draw[green](2,1.5)-- (2,0.2);
  \draw[green](2.85,0.5) node {$T_{1}$};
  \draw[green](4.55,0.4) node {$T_{>1}$};
\end{scope}

\begin{scope}[shift={(2,0)}]
\draw[green,fill=green, fill opacity=0.2] plot[smooth,tension=1]
coordinates{(0,-0.3) (-2.2,-0.5) (-3,-1) 
  (-2.2,-1.5)(0,-2)};
  \draw[green] (-2,-0.45)-- (-2,-1.55);
  \draw[green] (0,-0.3)--(0,-2);
  \draw[green] (-1,-1) node{$T_{-1}$};
  \draw[green] (-2.5,-1) node{$T_{<-1}$};
  \end{scope}
  
%\draw[blue]   (2,-2.3) -- (2,-3);
%\draw[blue] (2.5,-2.7) node{$D_{1}$};
  %(2,-2.3) (4,-2.5) (5.9,-2.8)(8,-3)
%  \draw[blue]   (3.9,-2.5) -- (3.9,-3);
%\draw[blue] (4.3,-2.9) node{$D_{>1}$};
\end{scope}

\begin{scope}[shift={(0,-13)}]
\draw (-4,2.6)--(-4,-3);
\draw (-2,2.6)--(-2,-3);
\draw (0,2.6)--(0,-3); 
\draw (2,2.6)--(2,-3); 
\draw (4,2.6)--(4,-3); 
\draw (6,2.6)--(6,-3); 
\draw (8,2.6)--(8,-3);
\draw[red] (-4.2,2.1)--(8.2,2.1);
\draw[red] (-4.2,2.6)--(8.2,2.6);

\draw (-2.3,2.3) node{$ \wgamma^\oseq_{-1}$}; 
\draw (-0.3,2.3) node{$ \wgamma^\oseq_0$}; 
\draw (1.7,2.3)  node{$ \wgamma^\oseq_1$}; 
\draw (3.7,2.3) node{$ \wgamma^\oseq_2$};
\draw (5.7,2.3)  node{$ \wgamma^\oseq_3$};

\draw (1,1.5) node{$\inn_{\oseq}[0]$}; 
\draw (-1,1.5) node{$\inn_{\oseq}[-1]$}; 
\draw (3,1.5) node{$\inn_{\oseq}[1]$};

\draw[blue] plot[smooth,tension=1]
  coordinates{(1,2.6) (1.3,2) (2,1.5)}; 

\draw[ draw =blue,fill=blue, fill opacity=0.2] plot[smooth,tension=1]
  coordinates{(2,1.5) (3.5,1)(5,0.5) (5,0.1) 
  (3.5,-0.2)(2,-0.1)};
  
  \draw[blue] plot[smooth,tension=1]
  coordinates{(2,-0.1)(1,-0.05)(0,-0.1)}; 
  
  \draw[ fill=blue,draw =blue, fill opacity=0.2] plot[smooth,tension=1]
  coordinates{(0,-0.1) (-2.2,-0.5) (-3.2,-1)   (-2.2,-1.5)(0,-2.2)};

\draw[blue] plot[smooth,tension=1]
  coordinates{(0,-2.2) (1,-2.3) (2,-2.3)};

  \draw[ draw=blue] plot[smooth,tension=1]
  coordinates{(2,-2.3) (3.9,-2.5) (5.9,-2.8)(8,-3)};
  \draw[ fill=blue,draw opacity=0, fill opacity=0.2] plot[smooth,tension=1]
  coordinates{(2,-2.3) (3.9,-2.5) (5.9,-2.8)(8,-3)(2,-3)};

  \draw[blue] (5.5,0.2)  node{$\wbeta^\oseq_0$};
\draw[blue]  (2,1.5) --  (2,-0.1);

\draw[blue] (0,-0.1)--(0,-2);
\draw[blue] (-1.3,-1) node {$D_{-1}$};

\draw[blue] (-2.2,-0.5)--(-2.2,-1.5);
\draw[blue] (-2.65,-1) node{$D_{<-1}$};

\draw[blue] (2.8,0.5) node {$D_1$};
\draw[blue] (3.5,1) --  (3.5,-0.2);
\draw[blue] (4.3,0.3) node {$D_{>1}$};

\draw[blue]   (2,-2.3) -- (2,-3);
\draw[blue] (2.5,-2.7) node{$D_{1}$};
  %(2,-2.3) (4,-2.5) (5.9,-2.8)(8,-3)
  \draw[blue]   (3.9,-2.5) -- (3.9,-3);
\draw[blue] (4.3,-2.8) node{$D_{>1}$};
\end{scope}
\end{tikzpicture}}
\caption{Top: closed regions $D_{>0}$ and $D_{<0}$. Middle: assuming that  $\seq[-1,0,1]=(\A,\A,\A)$, we have $f^{-1}_+(D_{>0})=T_{>0}=T_1\cup T_{>1}$ and $f_-(D_{<0})=T_{<0}=T_{-1}\cup T_{<-1}$. Bottom: decompositions $D_{1}\cup D_{>1}=D_{>0}$ and $D_{-1}\cup D_{<-1}= D_{<0}$.}\label{Fig:ConstrD_i}
\vspace{128in}
\end{figure}
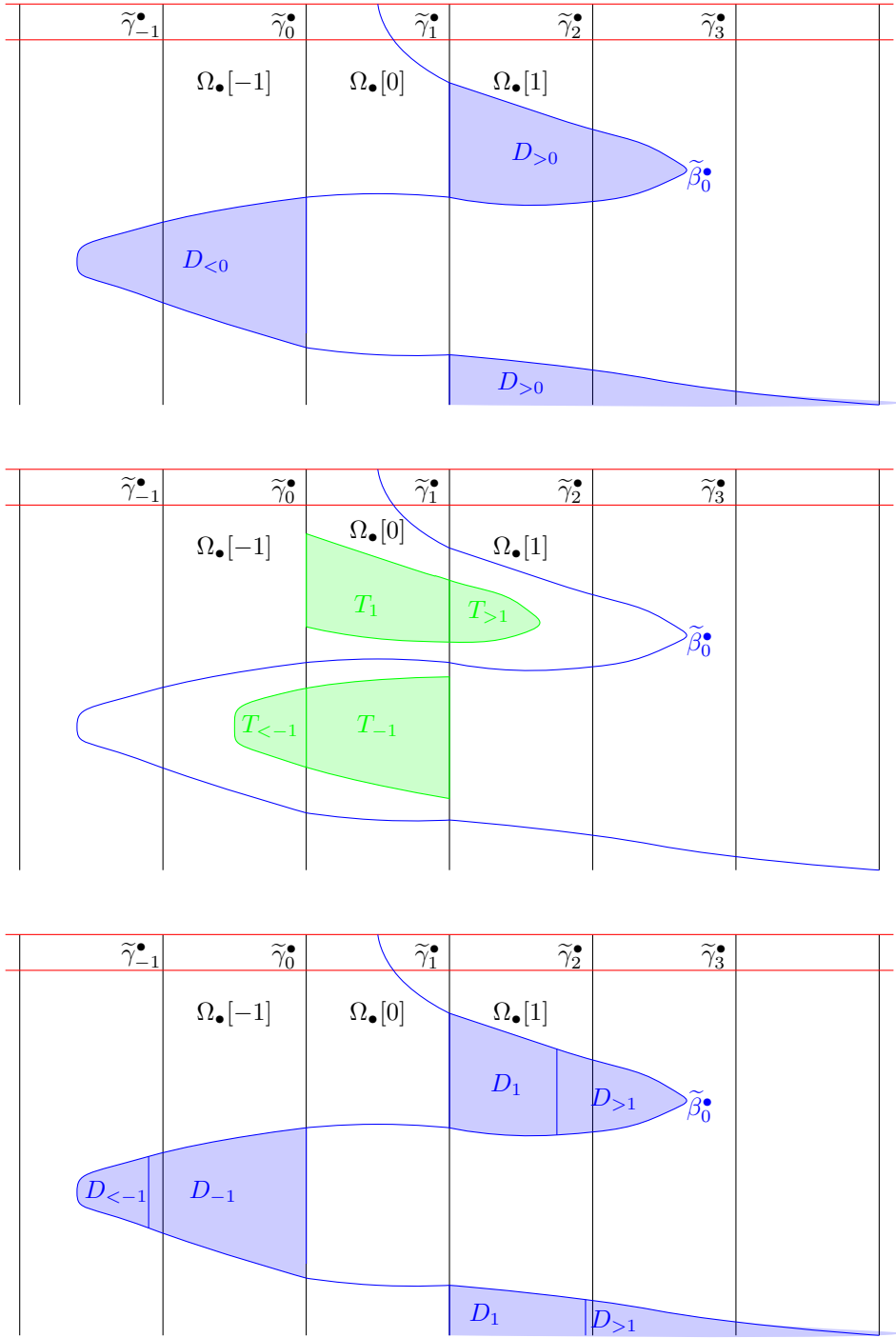

\subsubsection{Channels $D_{k},D_{-k},T_{k},T_{-k}$} Let us now apply inductively $f_{+}^ {-\mu(k)}\circ   f_{-}^ {\nu(k)}$ to $D_{>0}$; on each step we define $D_k$ as the set of points that escape to $W_\oseq [\kappa(k)]$, see Figure~\ref{Fig:ConstrD_i} and its caption. 

Consider first the case $k=1$. If $\seq[1]=\A$, then $\nu(1)=0$, $\sp \mu(1)=1$, and $\kappa(1)=0$. In this case (see Figure~\ref{Fig:ConstrD_i}) we define 
\begin{align*}
T_{>0}\coloneqq f_+^{-1}(D_{>0},)\\
T_1\coloneqq T_{>0}\cap W[0],\\
 D_1\coloneqq  f_+(T_1),\\
T_{>1}\coloneqq \overline{T_{>0}\setminus T_1},\\
D_{>1}\coloneqq  f_+(T_{>1}).
\end{align*}

If $\seq[1]=\B$, then $\nu(1)=0$, $\sp \mu(k)=0$, and $\kappa(1)=1$. In this case we define
\begin{align*}
T_{>0}\coloneqq D_{>0},\\
T_1=D_1\coloneqq T_{>0}\cap \inn[1],\\
T_{>1}=D_{>1}\coloneqq \overline{T_{>0}\setminus T_1}.
\end{align*} 

In general, for $k>0$ we set inductively 
\begin{align*}
T_k\coloneqq  &   f_{+}^ {-\mu(k)}\circ   f_{-}^ {\nu(k)} (D_{>k-1}) \cap W_{\oseq }[\kappa(k)],\\
D_{k}\coloneqq  & f_{+}^ {\mu(k)}\circ   f_{-}^ {-\nu(k)} \left(T_k \right),\\
T_{>k+1} \coloneqq& \overline{T_{>k}\setminus T_{k+1}},\\
D_{>k+1} \coloneqq& \overline{D_{>k}\setminus D_{k+1}},
\end{align*}
and similarly, for $k<0$, we set
\begin{align*}
T_{-k}\coloneqq  &   f_{+}^ {-\mu(-k)}\circ   f_{-}^ {\nu(-k)} (D_{<-k+1}) \cap W_{\oseq }[\kappa(-k)],\\
D_{-k}\coloneqq  & f_{+}^ {\mu(-k)}\circ   f_{-}^ {-\nu(-k)} \left(T_{-k} \right),\\
T_{<-k-1} \coloneqq& \overline{T_{<-k}\setminus T_{-k-1}},\\
D_{<-k-1} \coloneqq& \overline{D_{<-k}\setminus D_{-k-1}}.
\end{align*}
The case $\seq[-1,0,1]=(\A,\A,\A)$ is in Figure~\ref{Fig:ConstrD_i}. We call $D_{>k}, D_{<-k}$ \emph{gulfs}, and we say that $T_k, D_k$ are \emph{channels}. The channels $T_k$ play the role of the curves $\widetilde \ell_k$ from~\S\ref{sss:ap:ExmClockw}. 

By an easy induction $f_{+}^ {-\mu(k)}\circ   f_{-}^ {\nu(k)}(D_{>k})  \subset D_{>0}$ for $k>0$; thus by Claim~\ref{cl2:4possOfD} the gulf $D_{>k}$ is in the domain of $f_{+}^ {-\mu(k+1)}\circ   f_{-}^ {\nu(k+1)}$. Similarly, $D_{<-k}$ is in the domain of $f_{+}^ {-\mu(-k-1)}\circ   f_{-}^ {\nu(-k-1)}$.

For $k\not= 0$ write $\ell(T_k )\coloneqq \ell(W_{\oseq }[\kappa(k)])\cap T_k$ and $\rho(T_k )\coloneqq \rho(W_{\oseq }[\kappa(k)]) \cap T_k$.

\begin{claim2}
\label{cl2:splt of D0}
The gulfs $D_{>0}$ and $D_{<0}$ are the unions $D_{1}\cup D_{2} \cup D_{3}\cup \dots $ and $D_{-1}\cup D_{-2} \cup D_{-3}\cup \dots $ respectively. Moreover, $D_i\cap D_{j}=\emptyset$ if $|i-j|\ge 0$. Write $\delta\coloneqq D_k\cap D_{k+1}$ for $k\not\in \{-1,0\}$. Then 
\begin{align*} 
f_{+}^ {-\mu(k)}\circ   f_{-}^ {\nu(k)}(\delta) &\subset \rho (T_k)\subset \wgamma^\oseq_{\kappa(k)+1},\\ 
f_{+}^ {-\mu(k+1)}\circ   f_{-}^ {\nu(k+1)}(\delta)& \subset \ell (T_{k+1}) \subset \wgamma^\oseq_{\kappa(k+1)}.
\end{align*} 
\end{claim2}

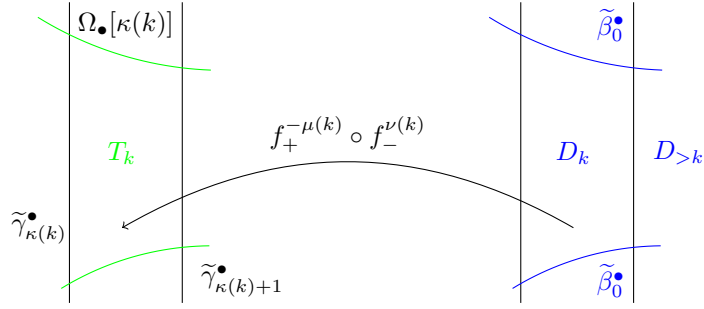
\begin{figure}
\centering{\begin{tikzpicture}

\begin{scope}[shift={(0,0)}]
\draw (-2.5,-2.)-- (-2.5,2.);
\draw (-1.,-2.)-- (-1.,2.);
\draw[green] [shift={(-0.44,5.8)}] plot[domain=4.157393253046103:4.674432535196376,variable=\t]({1.*4.706421145626473*cos(\t r)+0.*4.706421145626473*sin(\t r)},{0.*4.706421145626473*cos(\t r)+1.*4.706421145626473*sin(\t r)});
\draw[green] [shift={(-0.6,-5.08)}] plot[domain=1.5812126167258664:2.12139123904915,variable=\t]({1.*3.840208327682236*cos(\t r)+0.*3.840208327682236*sin(\t r)},{0.*3.840208327682236*cos(\t r)+1.*3.840208327682236*sin(\t r)});

\draw (-1.75,1.7) node {$\inn_\oseq[\kappa(k)]$};
\draw[green] (-1.8,0) node {$T_k$};
\draw (-2.9,-1) node {$\wgamma^\oseq_{\kappa(k)}$};
\draw (-0.2,-1.7) node {$\wgamma^\oseq_{\kappa(k)+1}$};
\end{scope}

\draw (-1.8,-1) edge[<-,bend left] node [above] {$f_{+}^ {-\mu(k)}\circ   f_{-}^ {\nu(k)}$} (4.2,-1);

\begin{scope}[shift={(6,0)}]
\draw[] (-2.5,-2.)-- (-2.5,2.);
\draw[] (-1.,-2.)-- (-1.,2.);
\draw[blue] [shift={(-0.44,5.8)}] plot[domain=4.157393253046103:4.674432535196376,variable=\t]({1.*4.706421145626473*cos(\t r)+0.*4.706421145626473*sin(\t r)},{0.*4.706421145626473*cos(\t r)+1.*4.706421145626473*sin(\t r)});
\draw[blue] [shift={(-0.6,-5.08)}] plot[domain=1.5812126167258664:2.12139123904915,variable=\t]({1.*3.840208327682236*cos(\t r)+0.*3.840208327682236*sin(\t r)},{0.*3.840208327682236*cos(\t r)+1.*3.840208327682236*sin(\t r)});

\draw[blue] (-1.8,0) node {$D_k$};
\draw[blue] (-0.4,0) node {$D_{>k}$};
\draw[blue] (-1.3,1.7) node {$\wbeta^\oseq_{0}$};
\draw[blue] (-1.3,-1.7) node {$\wbeta^\oseq_{0}$};
\end{scope}
\end{tikzpicture}}
\caption{The region $D_k$ is the set of points in $D_{>0}$ that escape to $\inn_\oseq [\kappa(k)]$ under $f_{+}^ {-\mu(k)}\circ   f_{-}^ {\nu(k)}$. The region $D_{>k}$ is the set of points in $D_{>0}$ on the right of $D_k$.}\label{Fig:DkTk}
\end{figure}
\begin{proof}
We will verify the claim for $D_{>0}$; the case of $D_{<0}$ is similar.

 By induction, if for $z\in D_{>0}$ the point $f_{+}^ {-\mu(k)}\circ   f_{-}^ {\nu(k)}(z)$ is on the right of $W_\oseq[\kappa(k)]$, then $f_{+}^ {-\mu(k+1)}\circ   f_{-}^ {\nu(k+1)}(z)$ is ether on the right of $W_\oseq[\kappa(k+1)]$ or ${f_{+}^ {-\mu(k+1)}\circ   f_{-}^ {\nu(k+1)}(z)\in W_\oseq[\kappa(k+1)]}$. Thus points in $D_{>k}$ do not jump over $T_{k+1}$ under one iteration. Recall that $f_+^{-1}$ and $f_-$ move points to the left. By Claim~\ref{cl2:kappa tens inft}, $\kappa(k)\to +\infty$. Therefore, every point in $D_{>0}$ eventually escapes to some $T_k\subset W_\oseq[\kappa(k)].$ Let us now show that a point in $D_{>0}$ escapes to at most two (neighboring) $T_k$th.

For $k>0$ the channel $T_k= f_{+}^ {-\mu(k)}\circ   f_{-}^ {\nu(k)} (D_k) $ is on the left of $f_{+}^ {-\mu(k)}\circ   f_{-}^ {\nu(k)} (\wbeta^{\oseq}_0)$, on the left of $\wgamma^{\oseq}_{\kappa(k)+1}$, and on the right of
 $\wgamma^{\oseq}_{\kappa(k)}$. On the other hand,  $f_{+}^ {-\mu(k)}\circ   f_{-}^ {\nu(k)} (D_{>k})$ is on the right of $\wgamma^{\oseq}_{\kappa(k)+1}$. It is now easy to see that \[T_k\cap f_{+}^ {-\mu(k)}\circ   f_{-}^ {\nu(k)} (D_{>k}) = T_k\cap f_{+}^ {-\mu(k)}\circ   f_{-}^ {\nu(k)} (D_{k+1})=f_{+}^ {-\mu(k)}\circ   f_{-}^ {\nu(k)} (\delta)\subset \wgamma^\oseq_{\kappa(k)+1}.\]
This proves that a point in $D_{>0}$ escapes to at most two $T_k$th; it also verifies the first identity. Similarly, the second identity is verified.
\end{proof}

\subsubsection{The natural map from $D_{<0}\cup \inn_{\oseq }[0]\cup \wall_{\oseq }  \cup D_{>0}$ to $W_\seq$}For convenience, let us extend by continuity the map $\pi^{-1}_{\seq,i}\colon \intr \seq[i]\cup \rho(\seq[i]) \to \intr W_{\seq}[i] \cup \rho(W_\seq[i])$ to $\pi^{-1}_{\seq,i}\colon  \seq[i] \to  W_{\seq}[i]$.  

We define the map $\theta_0\colon \inn_{\oseq }[0]\cup \wall_{\oseq }[0]\to \inn_\seq[0]\cup \wall_\seq[0]$ to be $\pi^{-1}_{\seq,0}\circ \pi$. For $k\not=0$ we define the map $\theta_k\colon D_k\to W_\seq[k]$ as $ f_{+}^ {-\mu(k)}\circ   f_{-}^ {\nu(k)}\colon D_k\to T_k$, followed by ${\pi\colon T_k\to \seq[k]\subset W}$, and followed by $\pi^{-1}_{\seq, k}\colon \seq[k]\to W_\seq[k]$.   Combining $\theta_k$ and $\theta_0$ we obtain the map  \[\theta \colon D_{<0}\cup \inn_{\oseq }[0]\cup \wall_{\oseq }[0]  \cup D_{>0}\to W_\seq\] such that $\theta\mid D_k=\theta_k$ and $\theta\mid \inn_\oseq[0] \cup \wall_\seq [0]=\theta _0$. We note that there is no ambiguity on $D_{k}\cap D_{k+1}$:

\begin{claim2}
\label{cl2: nat map}
The map $\theta$ is a homeomorphism on its image. Furthermore, for every curve \[\alpha \colon [0,1]\to D_{<0}\cup \inn_{\oseq }[0]\cup \wall_{\oseq }[0]  \cup D_{>0}\] starting in $ \inn_{\oseq }[0]\cup \wall_{\oseq }[0]$, the lift of  $\pi(\alpha)$ starting in $\inn_{\seq}[0]\cup \wall_{\seq}[0]$ exists and is equal to $\theta(\alpha)$.
\end{claim2}
\begin{proof}
It is routine to check that $\theta_k$ and $ \theta_j$ agree on $\Dom \theta_k \cap\Dom \theta_{j} =\emptyset$. Indeed, if $|k-j|>1$, then $\Dom \theta_k \cap\Dom \theta_{j} =\emptyset$ by Claim~\ref{cl2:splt of D0}. If $j=k+1$, then writing $\delta\coloneqq \Dom \theta_k \cap\Dom \theta_{k+1}$, we check (see again Claim~\ref{cl2:splt of D0}) that $\theta_k\mid \delta =\theta_{k+1}\mid \delta$ and $\theta_k(\delta)\subset W_\seq[k]\cap W_\seq[k+1]$. Since  $\theta_k$ and $ \theta_j$  have disjoint images away from $\Dom \theta_k \cap\Dom \theta_{j}$, we obtain that $\theta$ is a homeomorphism.

By the definition, $\Im(\theta_k)\subset W_\seq[k]$ and $f_{+}^ {\mu(k)}\circ   f_{-}^ {-\nu(k)}$ transfers $D_k$ to $W_\seq[k]$ -- this is  the correct number of iterations, see Claim~\ref{cl2:lifting form} and~\eqref{eq:MainId}. Therefore, $\theta(\alpha)$ is the lift of $\pi(\alpha)$ starting in $\wall\oseq[0]\cup\inn\oseq[0]$.

\end{proof}
\noindent As a corollary, we obtain
\begin{claim2}
All $ \wbeta_m$ exist.
\end{claim2}
\begin{proof}
By Claim~\ref{cl2: nat map},  $ \wbeta_0=\theta(\wbeta^\oseq_0)$ exists. By a similar argument all $\widetilde \beta_m$ exist; let us give a brief sketch. For every $\wbeta^\oseq_m$  define $D'_{>m}$ to be the closed region on the left of $\wbeta^\oseq_m$ and on the right of $\wgamma^\oseq_{m+1}$, define  $D'_{<m}$ to be the closed region on the right of $\wbeta^\oseq_m$ and on the left of $\wgamma^\oseq_m$. As in \S\ref{ss:CombOfJumps} specify the quantities $\nu(k),\mu(k),\kappa(k)$ with the only difference is that the count in~\eqref{eq:dfn:numukappa+} starts from $m+1$ instead of $1$ (and similar in~\eqref{eq:dfn:numukappa-}). By the same argument as for $\wbeta_0$ we construct 
 \[\theta' \colon D_{<m}\cup \inn_{\oseq }[m]\cup \wall_{\oseq }[m]  \cup D_{>m}\to W_\seq\] 
 such that Claim~\ref{cl2: nat map} (with necessary adjustments) holds for $\wbeta_m$.
\end{proof}

\begin{claim2}
\label{cl2: beta land at 0}
All $\widetilde \beta_i$ land at $\widetilde 0$. 
\end{claim2}
\begin{proof}
Let us show that $\widetilde \beta_0$ lands at $\widetilde 0$; other cases are completely analogous. By Claim~\ref{cl2: nat map} we have $ \wbeta_0=\theta(\wbeta_0^{\oseq})$. Parametrize $\beta_0$ as $\beta_0\colon [0,1]\to W$ with $\beta_0(1)=0$.

Choose a big $M>0$. Since $\theta\mid \Dom \theta \cap W_\oseq[-M,-M+1,\dots, M]$ is continuous we have
\begin{itemize}
\item if $\wbeta^{\oseq}_0(t_n)\in W_{\oseq}[ -M, -M+1, \dots, M]$ and $t_n\to1-0$, then $\pi(\theta(\wbeta^{\oseq}_0(t_n))\to 0$.
\end{itemize}
It remains to show that if $t_n\to 1-0$ such that $\wbeta^{\oseq}_0(t)\in W_{\oseq}[>M_n]\cup W_{\oseq}[<-M_n]$ with $M_n\to +\infty$, then $\pi(\theta(\wbeta^{\oseq}_0(t_n))\to 0$.

Write $\wbeta^{\oseq}_0(t_n)\in D_{k(n)}$; then $k(n)\to \pm \infty$. By Claim~\ref{cl2:kappa tens inft} $\kappa\circ k(n)\to \pm \infty$. Recall that $T_{\kappa\circ k(n)}\subset \inn_\oseq[\kappa\circ k(n)]$ (see Figure~\ref{Fig:DkTk}) and that $T_{\kappa\circ k(n)}\subset \inn_\oseq[\kappa\circ k(n)]$ is separated by $\wbeta_0^\oseq$ from $\wall_\oseq[\kappa\circ k(n)]$.

Since $\pi(\wbeta_0^\oseq\cap  W_\oseq[\kappa\circ k(n)])\to 0$, we obtain that $\pi(T_{\kappa\circ k(n)})\to 0$. Since ${\pi(T_{\kappa\circ k(n)})\ni \pi(\theta(\wbeta^{\oseq}_0(t_n))}$, we obtain  $\pi(\theta(\wbeta^{\oseq}_0(t_n))\to 0$.
\end{proof}

\begin{claim2}
All lifts of  $\beta_0,\beta_1$ are pairwise disjoint. 
\end{claim2}
\begin{proof}
By Claim~\ref{cl2: beta land at 0} all  $\widetilde \beta_i$ land at $\widetilde 0$; therefore, every $\widetilde \beta_i$ disconnects $\inn_\seq$ into two connected components. It follows from Claims~\ref{cl2:lifting form} and~\ref{cl2:A} that  $\widetilde \beta_{i}, \widetilde \beta_{i+1}$ are disjoint. Since $\wbeta_{i-1}$ is on the left from  $\wbeta_{i}$ while $\wbeta_{i+1}$ is on the right from  $\wbeta_{i}$, we obtain that $\wbeta_{i-1}$ and $\wbeta_{i+1}$ are also disjoint.  Repeating the argument, we obtain that all $\widetilde \beta_{i}$ are pairwise disjoint.
\end{proof}

\end{proof}

\subsubsection{Proof of Theorem~\ref{thm:RobAntiRen}}

Let $D\subset W\setminus \{0\}$ be a topological disk. A \emph{lift} of $D$ to $W_\seq$ is defined in the same way as a lift of a curve, see~\S\ref{sss:lifts of curves}. Alternatively, a lift $\iota\colon D\to W_\seq$ of $D$ is characterized by the property that if $\alpha\subset D$ is a curve, then $\iota(\alpha)$ is the lift of $\alpha\subset D$ starting in $D$. A \emph{lift} of $D$ to $W_{\seq/\qq}$ is the projection of a lift of $D$ to $W_\seq$. 

Since ${\gamma^\new_0\setminus \inn},{\gamma^\new_1\setminus \inn}$ coincide with $\gamma_0\setminus \inn,\gamma_1\setminus \inn$, Condition~\eqref{eq:1:thm:RobAntiRen} uniquely specifies $h\mid W_{\seq/\qq}\setminus\inn_{\seq/\qq}$. 

Since the pair $\gamma^\new_0,\gamma^\new_1$ is dividing, it splits $W$ into two closed sectors, call them $\A_\new$ and $\B_\new$ specified so that $\A_\new\setminus \inn=\A\setminus \inn$ and $\B_\new\setminus \inn=\B\setminus \inn$. We need to show that all lifts of $\A_\new$ and $\B_\new$ to $W_{\seq/\qq}$ exist. By Theorem~\ref{thm:AntiRen:Lifts} (see also Remark~\ref{rem:on dividing curves}), all lifts of $\gamma^\new_0,\gamma^\new_1$ to $W_\seq$ exist, pairwise disjoint, and land at $\widetilde 0$. Projecting to $W_{\seq/\qq}$, we obtain that all lifts of $\gamma^\new_0,\gamma^\new_1$ to $W_{\seq/\qq}$ exist, pairwise disjoint, and land at $0$. The lifts of $\gamma^\new_0,\gamma^\new_1$ split $W_{\seq/\qq}$ into $\qq$ closed sectors; each of them is a lift of either $\A_\new$ or $\B_\new$. Mapping this lifts of $\A_\new$ or $\B_\new$ to the corresponding sectors of $W_{\seq/\qq,\new}$, we obtain a required $h$. 

Since a lift of a curve (if it exists) is uniquely specified by a starting point, the conjugacy $h$ is unique.
\qed

\begin{rem} Anti-renormalization can easily be defined for a partial branched covering $f_0\colon (W,0)\dashrightarrow (W,0)$ of any degree. In this case it is natural to assume that $\gamma_0$ does not contain a critical point of $f$. To apply Theorem~\ref{thm:RobAntiRen}, it is sufficient to assume that there is a \emph{univalent} fence $\wall$ (respected by $\gamma_0,\gamma_1,f$) enclosing $\inn$ such that $f\mid \wall\cup \inn$ has degree one. The anti-renormalization is robust with respect to a replacement $\gamma_0,\gamma_1$ with a new pair $\gamma^\new_0,\gamma^\new_1$ as above.
\end{rem}

\section{The molecule conjecture}
\label{S:ApMolec}
Let us denote by $\Mol$ the main molecule of the Mandelbrot set; i.e.~$\Mol$ is the smallest closed subset of $\MM$ containing the main hyperbolic component as well as all hyperbolic components obtained from the main component via parabolic bifurcations; see~\cites{DH:Orsay,L-book} for the background on the Mandelbrot set. In this appendix we write $f_c(z)=z^2+c$.

\subsection{Branner-Douady maps}
\label{ss:ap:BranDouad}

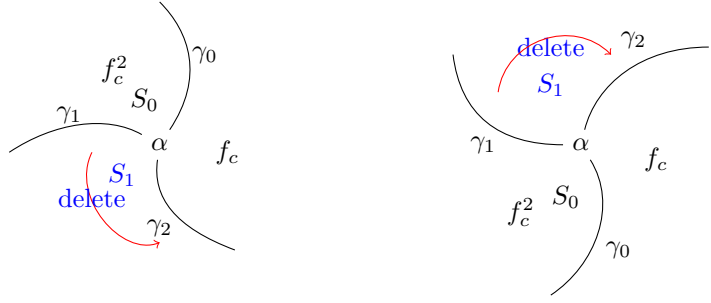
\begin{figure}[t!]
\[
\begin{tikzpicture}

\node (v1) at (-1.8,1) {$\alpha$};

\draw (v1) .. controls (-1.2,1.9) and (-1.4,2.5) .. (-1.8,2.9);

\draw (v1) .. controls (-2.5,1.4) and (-3.2,1.3) .. (-3.8,0.9);
\draw (v1) .. controls (-1.9,0.3) and (-1.6,-0.1) .. (-0.8,-0.4);

\node (v_2) at (3.8,1) {$\alpha$};
\draw (v_2) .. controls (2.7,1) and (2.2,1.4) .. (2.1,2.2);
\draw (v_2) .. controls (4.3,0.2) and (4,-0.6) .. (3.4,-1);
\draw (v_2) .. controls (4,1.8) and (4.6,2.3) .. (5.5,2.3);
%\node (v_3) at (9,1) {$\alpha$};

%\draw (v_3) .. controls (9,2) and (8.8,2.4) .. (8.4,2.9);
%\draw (v_3) .. controls (9,0) and (8.8,-0.4) .. (8.4,-0.9);

\draw [red,->] (-2.7,0.9) .. controls (-3,0.3) and (-2.3,-0.5) .. (-1.8,-0.3);
\node[blue] at (-2.7,0.3) {delete};
\node[blue] at (3.4,2.3) {delete};
%\node[blue] at (10.1,0.8) {delete};

\draw[red,<-] (4.2,2.2) .. controls (3.7,2.7) and (2.8,2.4) .. (2.7,1.7);
%\draw[red,->] (9.3,0) .. controls (10.1,0.4) and (10.1,1.6) .. (9.3,2);
\node at (4.3,-0.4) {$\gamma_0$};
%\node at (9.4,2.6) {$\gamma_0=\gamma_2$};
\node at (-1.2,2.2) {$\gamma_0$};
\node at (2.5,1) {$\gamma_1$};
\node at (-3,1.4) {$\gamma_1$};
%\node at (9.2,0) {$\gamma_1$};
\node at (-1.8,-0.1) {$\gamma_2$};
\node at (4.5,2.4) {$\gamma_2$};
\node at (-2.4,2) {$f_c^2$};
%\node at (8.1,1.4) {$f_c^2$};
\node at (3,0.1) {$f_c^2$};

\node[blue] at (-2.3,0.6) {$S_1$};
%\node[blue] at (9.5,1.3) {$S_1$};
\node[blue] at (3.4,1.8) {$S_1$};
\node at (-2,1.6) {$S_0$};
%\node at (8.5,0.7) {$S_0$};
\node at (3.6,0.3) {$S_0$};
\node at (-0.9,0.9) {$f_c$};
\node at (4.8,0.8) {$f_c$};
\end{tikzpicture}\]
\caption{Possible local dynamics at the $\alpha$-fixed point.}
\label{Fg:LocalPict}
 \end{figure}

Let us denote by $\LL_{ \pp/\qq}$ the primary $\pp/ \qq$-limb of the Mandelbrot set and let us denote by $\MM_{ \pp/ \qq}\subset \LL_{ \pp /\qq}$ the $\pp/\qq$-satellite small copy of $\MM.$  We also write $\LL_{0/1}=\MM_{0/1}=\MM$.

In \cite{BD} Branner and Douady constructed a partial surjective continuous map $\mRRc:\LL_{1/3}\dashrightarrow \LL_{1/2}$ such that its inverse $\mRRc^{-1}\colon \LL_{1/2} \to L_{1/3}$ is an embedding. This construction could be easily generalized to a continuous map ${\mRRc:\LL_{\pp/\qq}\dashrightarrow \LL_{\mRRc(\pp/\qq)}}$, where (compare to~\eqref{eq:R_prm})
\[\cRRc\left(\pp/\qq\right) = \begin{cases} \frac{\pp}{\qq-\pp}& \mbox{if }0< \frac \pp \qq \le \frac{1}{2} \\[1em]
\frac{2\pp-\qq}{\pp} & \mbox{if }\frac{1}{2}\le \frac{\pp}{\qq}< 1,
\end{cases}\]
as follows. Recall that $c\in \LL_{ \pp /\qq}$ if and only if in the dynamical plane of $f_c$ there are exactly $\qq$ external rays landing at the $\alpha$-fixed point and the rotation number of these rays is $\pp/\qq$; i.e.~if $\gamma$ is a ray landing at $\alpha$, then there are $\pp-1$ rays landing at $\alpha$ between $\gamma$ and $f_c(\gamma)$ counting counterclockwise. 

Choose an external ray $\gamma_0$ landing at $\alpha$ in the dynamical plane of $f_c$ with $c\in \LL_{ \pp/ \qq}$. Define $\gamma_1=f_c(\gamma_0)$ and $\gamma_2=f_c(\gamma_1)$. Denote by $S_0$ the open sector between $\gamma_0$ and $\gamma_1$ not containing $\gamma_2$, see Figure~\ref{Fg:LocalPict}. Similarly, let $S_1$ be the open sector between $\gamma_1$ and $\gamma_2$ not containing $\gamma_0$. We assume that $\gamma_0$ is chosen such that $S_1$ does not contain the critical value, thus $S_1$ has two conformal lifts, one of them is $S_0$, we denote by $S'_0$ the other. If $S_1\supset S'_0$, then replace $S'_0$ by its unique lift in $\C\setminus S_1$.

Let us delete $S_1$, glue $\gamma_1$ and $\gamma_2$ dynamically $\gamma_1\ni x\sim f(x)\in\gamma_2$, and iterate $f_c$ twice on $S_0$. We obtain a new map denoted by $\bar f_c\colon \C\setminus S'_0\to \C$. 
The \emph{filled-in Julia set} $\overline K_c$ of $\bar f_c$ is the set of points with bounded orbits that do not escape to $S'_0$. The set $\overline K_c$ is connected if and only if $0$ does not escape to $S'_0$; in this case the new local dynamics of $\bar f_c$ at $\alpha$ has rotation number $\cRRc(\pp/\qq)$ and, moreover, $\bar f_c$ is hybrid equivalent to a quadratic polynomial $f_{\mRRc(c)}$ with $c\in \LL_{\cRRc(\pp/\qq)}$. This defines the map $\mRRc:\LL_{\pp/\qq}\dashrightarrow \LL_{\RRc(\pp/\qq)}$. 

 In general, $\mRRc:\LL_{\pp/\qq}\dashrightarrow \LL_{\cRRc(\pp/\qq)}$ depends on the choice of $\gamma_0$. However, if $c\in \MM_{\pp/\qq}$, then $\mRRc(c)\in \MM_{\cRRc(\pp/\qq)}$ and $\mRRc:\MM_{\pp/\qq}\to \MM_{\cRRc(\pp/\qq)}$ coincides with the canonical homeomorphism between small copies of the Mandelbrot set.

\begin{rem}
 The Branner--Douady surjery has also been studied by Riedl~\cite{R}; he showed, in particular, that every dyadic Misiurewicz parameter is connected through a simple arc (vein) in the Mandelbrot set to the origin.
\end{rem}

\subsection{The molecule and the fast molecule maps}
Denote by $\HH$ the main hyperbolic component of $\MM$. Recall that a parameter $c\in \partial \HH$ is parametrized by the multiplier $\ee(\theta(c))$ of its non-repelling  fixed point. We define \emph{the molecule map} $\mRRc\colon \MM\dashrightarrow \MM$ such that 
\begin{itemize}
\item $\mRRc\colon \LL_{\pp/\qq}\dashrightarrow \LL_{\cRRc(\pp/\qq)}$ is the Branner--Douady renormalization map for $\pp/\qq\neq 0/1$ and for some choice of $\gamma_0$; and
\item if $c\in \partial \HH$, then $\mRRc(c)$ is so that
\[\theta (\mRRc(c))  = \begin{cases} \frac{\theta(c)}{1-\theta(c)}& \mbox{if }0\le \theta(c) \le \frac{1}{2}, \\[1em]
\frac{2\theta(c)-1}{\theta(c)} & \mbox{if }\frac{1}{2}\le \theta(c)\le 1.
\end{cases}\] 
\end{itemize}

Siegel parameters of periodic type are exactly periodic points of $\mRRc\mid \partial \HH$ (Lemma~\ref{lem:SectRen is prime power}). Furthermore, for a satellite copy of the Mandelbrot set $\MM_s$, there is an $n\ge 1$ such that $\mRRc^n\colon \MM_s\to \MM$ is the Douady-Hubbard straightening map.

The map $\mRRc\colon \MM\dashrightarrow \MM$ is combinatorially modeled by $Q(z)\coloneqq z(z+1)^2$, see Figure~\ref{Fg:Molec}. The latter map has a unique parabolic fixed point as $0$. The attracting basin of $0$ contains exactly one critical point of $Q$. The second critical point is a preimage of $0$. Denote by $F$ the invariant Fatou component of $Q$. We can extend $\mRRc$ to $ \HH$ so that $\mRRc\mid \overline \HH$ is conjugate, say by $\pi$, to $Q\mid \overline F$. Then $\pi$ extends uniquely to a monotone continuous map $\pi:\Mol \to K_Q$ semi-conjugating $\mRRc\mid  \Mol$ and $Q\mid K_Q$, where $K_Q$ is the filled-in Julia set of $Q$:

\[
\begin{tikzpicture}[description/.style={fill=white,inner sep=2pt}]
    \matrix (m) [matrix of math nodes, row sep=3em,
    column sep=4.5em, text height=1.5ex, text depth=0.25ex]
    { \Mol& \Mol\\
      K_Q& K_Q\\};    \path[->,font=\scriptsize]
    (m-1-1) edge node[description] {$\mRRc$} (m-1-2) edge node[description] {$\pi$} (m-2-1)
    (m-1-2) edge node[description] {$\pi$} (m-2-2)
    (m-2-1) edge node[description] {$Q$} (m-2-2);
\end{tikzpicture}
\]

If the MLC-conjecture holds, then $\pi$ is a homeomorphism.

\begin{figure}[t!]
\includegraphics[width=6cm]{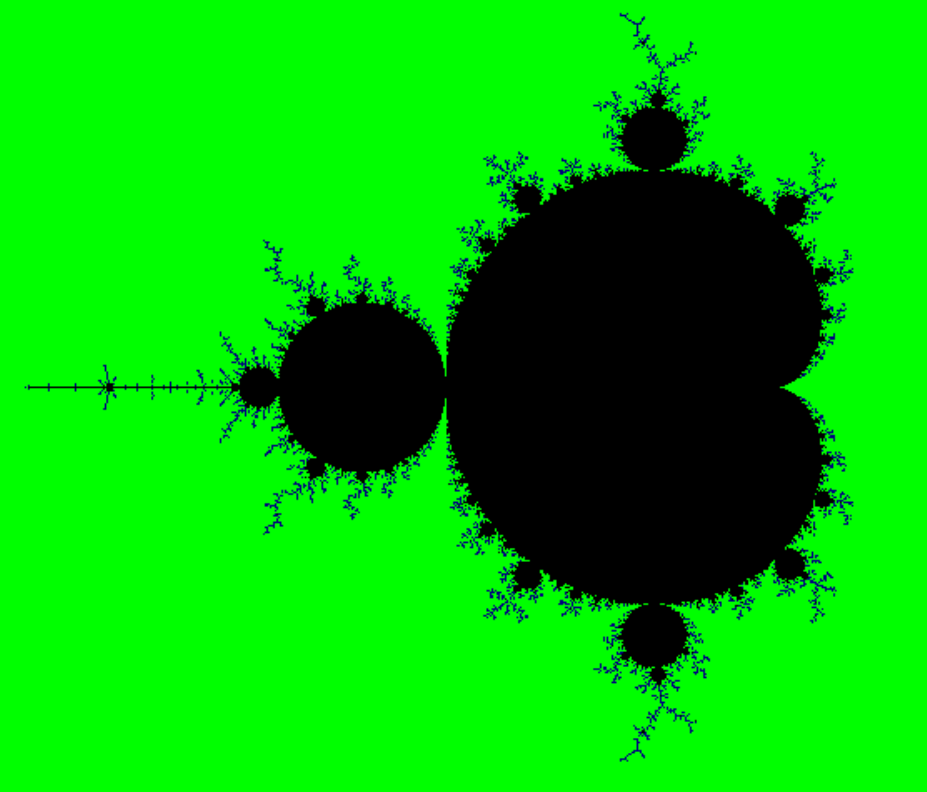}
\includegraphics[width=6cm]{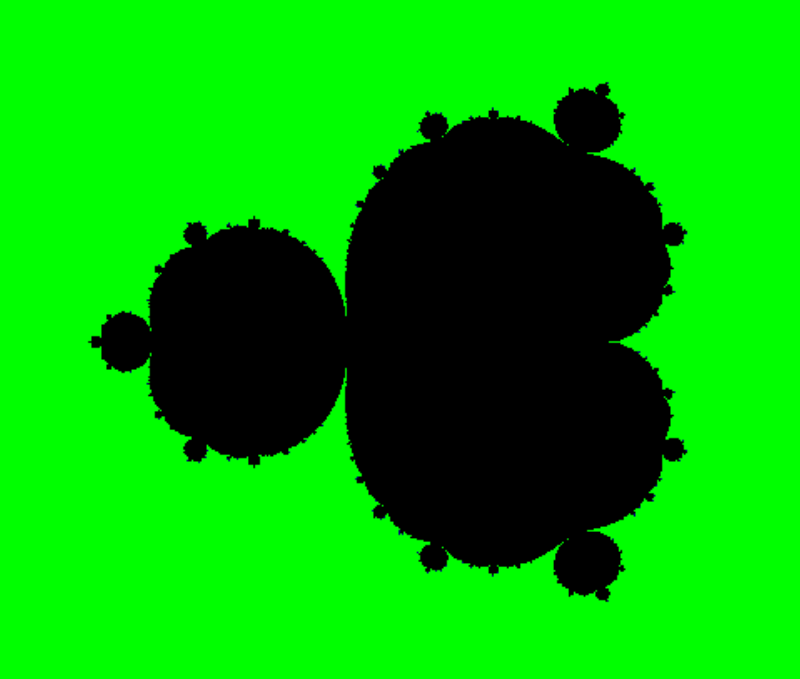}
\caption{Left: the Mandelbrot set. Right: the filled Julia set of $Q(z)=z(z+1)^2$.}
\label{Fg:Molec}
\end{figure}

For every $c\in \partial \Delta \setminus\{\text{cusp}\}$ define $\nn(c)\coloneqq \nn(\theta_c)$, where $\theta_c$ is the rotation number of $f_c$ and $\nn(\theta)$ is specified by $\cRR_\fast(\theta)=\cRRc^{\nn(\theta)}(\theta)$, see~\S\ref{ss:RenRotUnitDisc}. For every $c\in \LL_{\pp/\qq}$ define $\nn(c)\coloneqq \nn(c_{\pp/\qq})$, where $c_{\pp/\qq}$ is the root of $\LL_{\pp/\qq}$. The \emph{fast Molecule map} is a partial map  on $\MM$ defined by
\[\mRR_{\fast}(c)= \mRRc^{\nn(c)}(c).\]
The restriction $\mRR_{\fast}\mid \partial \Mol\setminus\{\text{cusp}\}$ is continuous but it does not extend continuously to the cusp: $\RR_{\fast}(\partial \MM_{1/n})=\partial \MM$.

\subsection{The Molecule Conjecture}Given a renormalization operator $\RR\colon \BB\dashrightarrow \BB$, its \emph{renormalization horseshoe} is the set of points in $\BB$ with bi-infinite pre-compact orbits. We conjecture that there is a pacman renormalization operator $\RR_\fast\colon \BB_\Mol\to \BB_\Mol$ with the following properties. The operator $\RR_\fast$ is hyperbolic and piecewise analytic with one-dimensional unstable direction such that its renormalization horseshoe $ \RR_\fast\colon \Hors_\Mol\to\Hors_\Mol$ is compact and combinatorially associated with $\RR_\fast\mid \Mol\setminus \{\text{cusp}\}$ as follows.

There is a  continuous surjective map $\rho\colon \Hors_{\Mol}\to \Mol$ that is a semi-conjugacy away from the cusp:
\[
\begin{tikzpicture}[description/.style={fill=white,inner sep=2pt}]
    \matrix (m) [matrix of math nodes, row sep=3em,
    column sep=5.5em, text height=1.5ex, text depth=0.25ex]
    {  \Hors_{\Mol}\setminus \rho^{-1}(\text {cusp})&  \Hors_{\Mol}\setminus \rho^{-1}(\text {cusp})\\
      \partial\Mol\setminus \{\text{cusp}\}&  \partial\Mol\setminus \{\text{cusp}\}\\};    \path[->,font=\scriptsize]
    (m-1-1) edge node[description] {$\RR_\fast$} (m-1-2) edge node[description] {$\rho$} (m-2-1)
    (m-1-2) edge node[description] {$\rho$} (m-2-2)
    (m-2-1) edge node[description] {$\mRR_\fast$} (m-2-2);
\end{tikzpicture}.\]
Denote by $\partial^\irr \Mol$ the set of non-parabolic parameters in $\partial \Mol$. Conjecturally, $\RR_\fast\mid  \Hors_{\Mol}$ is the natural extension of ${\mRR_\fast\mid \partial \Mol\setminus \{\text{cusp}\}}$ compactified by adding limits to parabolic parameters at all possible directions. Such construction is known as a parabolic enrichment, see~\cites{La,D-discont}.

The space $\BB_\Mol$ has a codimension-one stable lamination $(\Fol^s_c)_{c\in \Mol}$ such that all pacmen in $\Fol^s_c$ are hybrid conjugate to $f_c$ in neighborhoods of their ``mother hedgehogs'', see~\S\ref{ss:ap:conj:UppSemic}. For every $f\in \Hors_\Mol$, the leaf $\Fol^s_{\rho(f)}$ is a stable manifold of $\RR_\fast$ at $f$. The unstable manifold of $\RR_\fast$ at $f$ is parametrized by a parabolic enrichment of a neighborhood of $\rho(f)$. Locally, $\RR_\fast$  can be factorize as an iterate of $\RRc\colon \BB_\Mol\to \BB_\Mol$; however the latter operator has parabolic behavior at $\rho^{-1}(\text{cusp})$.

The Molecule Conjecture contains both Theorem~\ref{thm:RR is hyper} (for periodic type parameters from $\partial \HH$) and the Inou-Shishikura theory \cite{IS} (for high type parameters from $\partial \HH$).  It also implies the local connectivity of the Mandelbrot set for all parameters on the main (and thus any) molecule.

\subsection{Conjecture on the upper semicontinuity of the mother hedgehog}
\label{ss:ap:conj:UppSemic}
A closely related conjecture is the upper semicontinuity of the mother hedgehog. For a non-parabolic parameter $c\in \partial \HH$, consider the closed Siegel disk $\overline Z_c$ of $f_c$; if $f_c$ has a Cremer point, then $\overline Z_c\coloneqq \{\alpha\}$. If $\overline Z_c$ contains a critical point, then we set $H_c\coloneqq \overline Z_c$. Otherwise, $f_c$ has a \emph{hedgehog} (see~\cite{PM}): a compact closed connected filled-in forward invariant set $H'\supsetneqq \overline Z_c$ such that $f_c\colon H'\to H'$ is a homeomorphism. We define $H_c$ to be the \emph{mother hedgehog} (see~\cite{Ch}): the closure of the union of all of the hedgehogs of $f_c$.

Recall that the filled-in Julia set $K_g$ of a polynomial depends upper semicontinuously on $g$. Thinking of $H_c$ as an indifferent-dynamical analogue of $K_g$, we conjecture:
\begin{conj}
\label{conj:ap:UppSemiCont}
The mother hedgehog $H_c$ depends upper semicontinuously on $c$. 
\end{conj}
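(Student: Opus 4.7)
The plan is to combine a purely topological characterization of the mother hedgehog as a ``connected component of non--escaping points'' with the renormalization apparatus of the paper, in order to upgrade obvious upper semicontinuity of escaping behavior to upper semicontinuity of $H_c$.

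First I would fix a large Jordan neighborhood $V$ of $\alpha(f_c)$ (chosen large enough to contain the mother hedgehog for all $c$ in a prescribed compact subset of $\partial\HH\cup\partial \MM_{\pp/\qq}$) and consider the local filled set
\[
\Kfilled_c^{\,\loc} \;:=\; \bigcap_{n\ge 0} f_c^{-n}(\overline V).
\]
By a standard diagonal extraction argument, $c\mapsto \Kfilled_c^{\,\loc}$ is upper semicontinuous in Hausdorff topology: a limit of orbits that stay in $\overline V$ still stays in $\overline V$. The mother hedgehog is then the connected component of $\alpha$ in $\Kfilled_c^{\,\loc}$ (using Pérez--Marco's characterization together with the fact that any hedgehog is contained in such a non--escaping set for an appropriate $V$). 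The core difficulty, therefore, is showing that passing to the component of $\alpha$ preserves upper semicontinuity --- connectedness can \emph{a priori} collapse under perturbation and leave stray islands close to $H_{c_0}$ that are no longer attached.

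The next step is local: at $c_0\in\partial\HH$ with combinatorially periodic rotation number $\theta_\str\in\Theta_\per$, I would transfer the problem to pacman space via the straightening/renormalization operator $\RR_2$ of Lemma~\ref{lem:R_2 of Sieg map} and work near the renormalization fixed pacman $f_\str$. The renormalization triangulations $\bDelta_n(f)$ of Lemma~\ref{lem:psi is adjusted}, together with their walls $\bPi_n(f)$ approximating $\partial Z_\str$ (Lemma~\ref{lem:SiegWall}), give a shrinking family of forward--invariant neighborhoods of the hedgehog whose full lifts exist robustly under perturbations thanks to Theorem~\ref{thm:RobAntiRen} and Theorem~\ref{thm:ComPsConj}. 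The combinatorial pseudo--conjugacy then supplies qc--homeomorphisms that are close to the identity on $\bDelta_n$, so the approximations $\bDelta_n(f_c)$ depend continuously on $c$ (including perturbations that leave the periodic--type locus). Sandwiching $H_{c}$ between $\Kfilled_{c}^{\,\loc}$ (from above) and $\bigcap_n\bDelta_n(f_c)$ (from below, via stability of Siegel disks, Corollary~\ref{cor:Sieg disk depends cont}), upper semicontinuity at such $c_0$ follows. For parameters of bounded type the same argument applies uniformly along the hyperbolic horseshoe of Theorem~\ref{thm:HypRenBoundCase}, extending the conclusion to any compact piece of $\partial\HH$ avoiding a neighborhood of parabolic cusps.

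The main obstacle is the passage to the cusps and to unbounded combinatorics. At a parabolic parameter $c_0=c(\pp/\qq)$, the Siegel disk of a nearby map can suddenly bloat through a Leau--Fatou flower, so the approximating triangulations are not straightforwardly continuous; one needs the parabolic enrichment built into the conjectural molecule operator $\RR_\fast\colon \BB_\Mol\to \BB_\Mol$ to control these bifurcations uniformly (and, at Cremer parameters of high arithmetical type, to rule out hedgehog growth capturing the critical orbit). Consequently I would expect a complete proof to be contingent on the Molecule Conjecture: granting the compact hyperbolic operator $\RR_\fast$ and its stable lamination $(\Fol^s_c)_{c\in\Mol}$, the stable leaves fiber a Banach neighborhood of $\Hors_\Mol$ by hybrid classes of mother hedgehogs, and the argument of the previous paragraph globalizes by compactness of $\Hors_\Mol$. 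Without $\RR_\fast$, the case of Cremer parameters with Liouvillean rotation numbers looks genuinely out of reach by the methods of this paper.
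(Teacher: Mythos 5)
The statement you were asked to prove is explicitly labeled a \emph{Conjecture} in the paper, and the paper offers no proof of it --- only a brief discussion of what is known. You correctly recognized this, and your honest assessment that a complete argument would be contingent on the Molecule Conjecture (and that the Cremer/Liouvillean case is ``genuinely out of reach by the methods of this paper'') matches the authors' own framing exactly. So there is no ``paper proof'' to compare against; the right yardstick is whether your partial outline is sound and how it relates to the two hints the paper does drop.

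For the bounded--type sub-case, the paper attributes the result to ``continuity of the Douady--Ghys surgery'' and notes that Theorem~\ref{thm:HypRenBoundCase} yields a general version of Corollary~\ref{cor:Sieg disk depends cont}. Your route via the renormalization triangulations $\bDelta_n$, their walls $\bPi_n$, and the combinatorial pseudo-conjugacy of Theorem~\ref{thm:ComPsConj} is a reasonable alternative, and is in fact closer to the machinery the paper actually develops; the Douady--Ghys surgery argument is more elementary and more self-contained but does not obviously generalize, which is presumably why the paper also gestures at the horseshoe. Two things in your sketch deserve more care. First, the identification of the mother hedgehog with the connected component of $\alpha$ in $\Kfilled_c^{\,\loc}=\bigcap_n f_c^{-n}(\overline V)$ is not an off-the-shelf Pérez--Marco statement: hedgehogs are defined relative to a Jordan domain as full compact connected invariant sets touching the boundary, and a priori the $\alpha$--component of a local non--escaping set could strictly contain the mother hedgehog (e.g.~it could pick up entire preperiodic Fatou pieces in the parabolic case). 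You would need to prove this identification, or replace it with a sandwich between two sets whose dependence on $c$ you control directly. Second, your ``sandwich from below by $\bigcap_n\bDelta_n(f_c)$'' silently assumes the perturbed parameter stays in the locus where the full renormalization tower exists; for upper semicontinuity one must also handle perturbations that leave the Siegel/indifferent locus altogether, where the triangulations are still defined but the hedgehog degenerates --- that direction is in fact the easy one, but it should be said. With those caveats, your proposal is a sensible and faithful partial treatment of an open problem, consistent with what the paper claims is currently provable.
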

For bounded type parameters (i.e.~when $H_c$ is a Siegel quasidisk) Conjecture~\ref{conj:ap:UppSemiCont} follows from the continuity of the Douady-Ghys surgery.

Conjecture~\ref{conj:ap:UppSemiCont} can be adjusted for parabolic parameters $c\in \HH$ as follows. Let $A_c$ be the immediate attracting basin of the parabolic fixed point $\alpha$. Then there is a choice of a valuable flower $H_c$ with $\overline H_c\subset  A_c\cup \{\alpha\}$ such that $H_c$ depends upper semicontinuously on $c\in \partial \HH$. For example, $H_c$ is the union of all limiting mother hedgehogs for perturbations of $f_c$. 

Similarly, Conjecture~\ref{conj:ap:UppSemiCont} can be adjusted for  all parameters in $\partial \Mol$. Our result on the control of the valuable flower (see Theorem~\ref{thm:ScalThm}) can be thought as a partial case of this general conjecture.

Conjecture~\ref{conj:ap:UppSemiCont} and its generalizations describe in a convenient way how an attracting fixed point bifurcates into repelling. An important consequence is control of the post-critical set: if a perturbation of $f_c$ is within $\Mol$, then the new post-critical set is within a small neighborhood of $H_c$. A statement of this sort (for parabolic parameters approximating a Siegel polynomial) was proven by Buff and Ch\'eritat, see~\cite[Corollary 4]{BC}. This was a necessary ingredient in constructing a Julia set with positive measure.

\end{document}